\newtheorem{Thm}{Theorem}[section]
\newtheorem{Lem}[Thm]{Lemma}
\newtheorem{Pro}[Thm]{Proposition}
\newtheorem{Cor}[Thm]{Corollary}
\newtheorem{Con}[Thm]{Conjecture}
\theoremstyle{definition}
\newtheorem{Def}[Thm]{Definition}
\newtheorem{Ex}[Thm]{Example}
\newtheorem{Rem}[Thm]{Remark}
\begin{document}
	
\begin{titlepage}
	\begin{center}
	
	\LARGE \textbf{Towards Homological Mirror Symmetry}\\[1.62cm] 
			
	\Large Alessandro Imparato\footnote{Department of Mathematics, ETH Zürich, Autumn 2020--Winter 2021. This work was last updated on December 22, 2022.} \\[3.14cm]

	\begin{abstract}
		This is an expository article on the A-side of Kontsevich's Homological Mirror Symmetry Conjecture. We give first a self-contained study of $A_\infty$-categories and their homological algebra, and later restrict to Fukaya categories, with particular emphasis on the basics of the underlying Floer theory, and the geometric features therein. Finally, we place the theory in the context of mirror symmetry, building towards its main predictions. 
	\end{abstract}

	\end{center}

	\vspace*{7.5cm}
	
	\noindent This article is the author's Master Thesis, supervised by Prof. Dr. William John Merry and submitted for the degree \textit{Master of Science ETH in Mathematics}. It should be accessible to graduate students.\\
	
	\noindent\normalsize\textbf{Key words}: $A_\infty$-category, split-closed derived category, Lagrangian submanifold, Lagrangian Floer cohomology, Fukaya category, Mirror Symmetry, Homological Mirror Symmetry.
	
\end{titlepage}

\newpage
\pagenumbering{roman}
\thispagestyle{plain}

\tableofcontents

\newpage

\setcounter{page}{1}

\pagenumbering{arabic}

\section*{Introduction}\markboth{INTRODUCTION}{INTRODUCTION}
\addcontentsline{toc}{section}{Introduction}
\thispagestyle{plain}

\vspace*{1cm}

At its core, Homological Mirror Symmetry expresses a relation between symplectic and algebraic geometry through category theoretic structures. This sentence alone tells us a great deal about the intrinsic beauty of this subject, and the difficulties its ambitious goal presents. 

The conjecture guiding the overall research was first stated by the Russian-born mathematician Maxim Kontsevich in his famous talk (transcripted in \cite{[Kon94]}) at the International Congress of Mathematicians of 1994, held in Zürich, and builds upon the numerical predictions of Mirror Symmetry, a mathematical branch evolved from string theory. The latter claims the existence of a duality between pairs of ``mirror'' Calabi--Yau manifolds, which are some specific kind of complex symplectic manifolds. Physically speaking, these geometrical objects possess the necessary extra dimensions along which strings vibrations are theorised to occur, and their duality translates into a congruence of the physical laws therein described. 

Kontsevich gave the Mirror Symmetry Conjecture a categorical character, reformulating it through the language of derived categories, a topic of great interest to homological algebra. Roughly speaking, the claimed correspondence now selects only part of the underlying Calabi--Yau structures: namely, given a pair $(X,X^\dagger)$ of mirror Calabi--Yau manifolds, there is a duality between the symplectic structure of $X$ (the ``A-side'') --- encoded in its Fukaya category $\mathscr{F}(X)$ --- and the algebraic geometric structure of $X^\dagger$ (the ``B-side'') --- encoded in its category of coherent sheaves $\mathsf{Coh}(X^\dagger)$ --- and vice versa, the B-side of $X$ reflects the A-side of $X^\dagger$. Explicitly, it is conjectured that there are two equivalences of triangulated categories,
\[
\mathsf{D}^\pi(\mathscr{F}(X))\cong\mathsf{D^b}(\mathsf{Coh}(X^\dagger))\qquad\text{and}\qquad \mathsf{D}^\pi(\mathscr{F}(X^\dagger))\cong\mathsf{D^b}(\mathsf{Coh}(X))\,.
\] 

Our goal is to plausibly explain the exact meaning of such expressions. Actually, the landscape is so rich we will only be able to discuss the A-side, merely providing some hints of the ``bigger picture'' described above. Moreover, that offered is just a polished version of a much more profound and hence unavoidably insidious theory; we will often work in significantly simplified settings, especially when dealing with Floer theory in the second part, encouraging, whenever sensible, a comprehensive look rather than a single in-depth analysis. Hopefully, such an approach will convey the essentiality of each pure mathematical branch homological mirror symmetry borrows from. After all, this synergy between distinct areas of mathematics is perhaps the most appealing virtue offered. \\    

The work consists of two parts: a general though self-contained treatment of the abstract algebraic notion of $A_\infty$-categories, and a subsequent specialization to a particular instance thereof, the Fukaya category. Specifically, we start our journey in Chapter 1 with $A_\infty$-algebras, the prototypes of $A_\infty$-categories, which we properly introduce in Chapter 2 together with the necessary toolkit. In Chapter 3, we apply this new technology to forge the intermediate notion of bounded derived category $\mathsf{D^b}(\mathcal{A})$ associated to an $A_\infty$-category $\mathcal{A}$. In Chapter 4, we enrich it to the split-closed derived category $\mathsf{D}^\pi(\mathcal{A})$ appearing in the homological mirror conjecture. Next, in Chapter 5, we move our focus to symplectic manifolds and study Lagrangian Floer homology, integral to the definition of Fukaya category $\mathscr{F}(X)$, which we provide in Chapter 6 along with the description of its $A_\infty$-structure and special geometric features. Finally, in Chapter 7, we give a survey of the theory leading to the Homological Mirror Symmetry Conjecture, briefly touching on its physical motivation and origins in mirror symmetry, and aiming at a qualitative understanding of the B-side as well. \\

\noindent\textbf{Acknowledgments.} I would like to thank my supervisor, Will Merry, for carefully reading through the various iterations of this thesis and, more importantly, always seconding my ideas on how to best develop it, until its publication.

\newpage

\part{Homological Algebra}
\markboth{I\quad HOMOLOGICAL ALGEBRA}{I\quad HOMOLOGICAL ALGEBRA}
\thispagestyle{plain}

\vspace*{2cm}

The first incarnation of $A_\infty$-categories were studied by Stasheff in his \textit{Homotopy associativity of H-spaces}, dating 1963 (see \cite{[Sta63]}). In an attempt to find useful tools for the study of topological spaces with group-like features, he developed the notion of $A_\infty$-algebras: these are graded vector spaces (properly, algebras) which are associative only up to a system of higher order homotopies fulfilling certain relations (whence the notation ``$A_\infty$''); as such, they can be regarded as an enriched version of differential graded algebras. Packing together this data, one speaks of $A_\infty$-structure. The earliest example studied by Stasheff were loop spaces, instances of the so-called $A_\infty$-spaces, which result from endowing topological spaces with suitable $A_\infty$-structures.

Soon after their discovery, $A_\infty$-algebras became objects of interest of many other illustrious mathematicians of that period, Kadeishvili among these (particularly in \cite{[Kad80]}), though always within the topological context, specifically homotopy theory. This tendence didn't change until the early 90s, when $A_\infty$-structures started to exhibit their relevance towards geometry and mathematical physics through works of the various Getzler--Jones (\cite{[GJ90]}), Fukaya (\cite{[Fuk93]}) and Stasheff himself. In particular, Fukaya's take sparked Kontsevich's Homological Mirror Symmetry Conjecture of 1994 (\cite{[Kon94]}), which heavily relies on the $A_\infty$-structure of Fukaya categories in the symplectic A-side. 

Specifically, $A_\infty$-categories took the stage. Thinkable as $A_\infty$-algebras with ``several objects'', $A_\infty$-categories are variations of the classical notion of categories. Albeit similar in aspect, since consisting of objects and morphisms --- the latters collected in graded vector spaces (or modules), the hom-spaces --- they are not categories in general, because they lack units and violate associativity. Like for $A_\infty$-algebras, associativity is supplanted by more complicated equations involving higher order ``composition maps'' which generalize the aforementioned homotopies, providing the final piece of data. 

The goal of this first part is to give a thorough introduction to the algebraic machinery of $A_\infty$-categories, gradually building towards the item $\mathsf{D}^\pi(\mathcal{A})$ appearing on the A-side of the Homological Mirror Symmetry Conjecture. We do this under the slogan: ``the larger the structure, the easier its description''. The precise plan is articulated as follows:
\begin{itemize}[leftmargin=0.5cm]
	\item In Chapter 1, we introduce $A_\infty$-algebras and $A_\infty$-modules in order to acquire some familiarity with the type of algebraic structures in the ensuing chapters. 
	
	\item In Chapter 2, we present the core ingredients of the subject without taking a breath. We define $A_\infty$-categories and their cohomological counterparts, $A_\infty$-functors between $A_\infty$-categories, themselves related through pre-natural transformations. After a brief technical digression in Hochschild cohomology, we give the notion of homotopic $A_\infty$-functors and see how they take part in the fabrication of $A_\infty$-categories. Then we move one step up, presenting $A_\infty$-modules over $A_\infty$-categories, which offer a way of studying the latters indirectly.
	\newline The basic vocabulary being set, we discuss the matter of units, with special focus on cohomologically unital $A_\infty$-categories and quasi-equivalences thereof, in fact the ``best calibrated'' sort of $A_\infty$-functors appearing in most propositions. Finally, we define Yoneda embeddings, which are special $A_\infty$-functors fundamental in bridging $A_\infty$-categories to their respective category of $A_\infty$-modules.
	
	\item In Chapter 3, we discuss the idea of triangulated categories, a major topic in homological algebra, first axiomatized by Jean-Louis Verdier (in \cite{[Ver96]}). After a few preliminaries, we highlight some fundamental operations and constructions for $A_\infty$-modules, particularly an abstract parent of the well-known mapping cone from topology and some special diagrams called exact triangles.
	\newline Shortly afterwards, we will see these are in fact part of the program to extend triangulation to the $A_\infty$-world, ultimately resulting in the $A_\infty$-category $Tw\mathcal{A}$ of twisted complexes. The latter is then applied to produce a couple criteria of triangularity for $A_\infty$-categories, finally proving that their cohomological categories are triangulated in the classical sense. This culminates in the definition of bounded derived category $\mathsf{D^b}(\mathcal{A})$.
	
	\item In Chapter 4, we enlarge our construction to account for direct summands forming objects of $\mathsf{D^b}(\mathcal{A})$. We explore the classical notion of split images and split-closure, and later translate it to the abstract realm of $A_\infty$-categories. We use the so obtained technology to finally produce the split-closed derived category $\mathsf{D}^\pi(\mathcal{A})$. At the end, we also discuss qualitatively the twisting procedure, which links the algebra of Fukaya categories to their subtended geometry.       
\end{itemize}   

References for Chapter 1 are \cite{[Kel01]} and \cite{[Kel06]}. Chapter 2 to 4 are based on part I of the excellent book \cite{[Sei08]} by Paul Seidel, currently one of the major experts in the area. If not otherwise explicited, every notion must be traced back to it. Also, at the beginning of each chapter we dedicate a section to the necessary prerequisites from category theory and homological algebra; especially when it comes to derived categories and functors, we will refer to \cite{[GM03]}. However, the advanced reader might skip these preliminaries so to preserve a smoother reading experience.

\newpage
\pagestyle{fancy}

\section{$A_{\infty}$-algebras}
\thispagestyle{plain}

\subsection{Example of non-associativity: $A_\infty$-spaces}\label{ch1.0}

Let us begin our study of $A_\infty$-algebras from a topological example which delivers most clearly the concept of non-associativity. Let $(X,x_0)$ be a pointed topological space, $\Omega X \coloneqq \{ f: [0,1]\rightarrow X\mid  f \text{ continuous, } f(0)=f(1)=x_0\}$ the space of loops in $X$ based at $x_0\in X$. We can define a composition map by concatenating loops: $m_2: \Omega X \times \Omega X\rightarrow \Omega X,\; (f_1,f_2) \mapsto (f_1\ast f_2)$, where
\[
(f_1\ast f_2)(t) \coloneqq  
\begin{cases}
f_1(2t) & $if $ 0\leq t \leq 1/2 \\ 
f_2(2t-1) & $if $ 1/2 \leq t \leq 1
\end{cases}\;, 
\]
so that the resulting loop ``spends half the time'' along $f_1$, then half along $f_2$. Obviously, $m_2$ is not associative, since $m_2(m_2(f_1,f_2),f_3) \neq m_2(f_1,m_2(f_2,f_3))$ for a third $f_3 \in\Omega X$ (in the former concatenation only a ``quarter of time'' is spent along $f_1$, against half in the latter). 

However, we can modify the time distribution via a homotopy $m_3: [0,1] \times (\Omega X)^3 \rightarrow \Omega X$ which continuously deforms the concatenation $(f_1\ast f_2)\ast f_3$ into $f_1\ast (f_2\ast f_3)$. The same $m_3$ can be used to move between the five possible concatenations of four loops: $(f_1\ast(f_2\ast f_3))\ast f_4$, $((f_1\ast f_2)\ast f_3)\ast f_4$, $(f_1\ast f_2)\ast(f_3\ast f_4)$, $f_1\ast(f_2\ast(f_3 \ast f_4))$ and $f_1\ast((f_2\ast f_3)\ast f_4)$. Drawing schematically these configurations as vertices of a graph whose sides represent the path of $m_3$, one ends up with the boundary of a pentagon. Call it $K_4$. 

In turn, this can be extended to generic $(n-2)$-dimensional polytopes $K_n$ by iteratively defining homotopies $m_n: K_n \times (\Omega X)^n\rightarrow \Omega X$ for $n>3$ (we set $K_2 \coloneqq \{*\}$ and $K_3 \coloneqq [0,1]$; see Figure \ref{associahedra}). 

\begin{figure}[htp]
	\centering
	\includegraphics[width=0.9\textwidth]{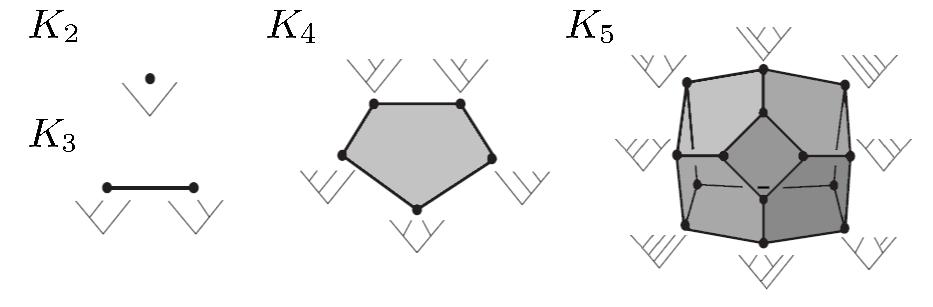}		  
	\caption{The first four associahedra $K_2, K_3, K_4, K_5$. Each concatenation at the vertices can be represented by a binary tree: for example, $K_2$ consists of a single point $(f_1\ast f_2)\in\Omega X$, whose binary tree is trivial; $K_3$ has two vertices, $((f_1\ast f_2)\ast f_3)$ and $(f_1\ast(f_2\ast f_3))$ respectively; the concatenations forming $K_4$ are listed above. On the other hand, higher dimensional cells parametrize trees with more nodal ramifications: for example, the 1-cell of $K_3$ is labeled by a trifurcated tree, encoding $f_1\ast f_2\ast f_3$. \newline
	[Source: \texttt{http://pseudomonad.blogspot.com/2010/08/m-theory-lesson-345.html}]}
	\label{associahedra}
\end{figure}

These $K_n$ are called \textbf{associahedra}\index{associahedron}, first constructed by Stasheff in \cite{[Sta63]}. He included them in his definition of \textit{$A_\infty$-spaces}: topological spaces $Y$ with a collection of compatible maps $m_n: K_n \times Y^n\rightarrow Y$, for $n\geq 2$. As we have just seen, the loop space $\Omega X$ is a prototypical example of $A_\infty$-space. Moreover, for any such $Y$, the singular cochain complex $C^{\bullet}(Y)$ serves as our first case of an $A_\infty$-algebra. Let us see why.

\subsection{$A_\infty$-algebras and morphisms}\label{ch1.1}

Throughout this thesis, we will work over a fixed ground field $\mathbb{K}$ and assume that gradings are taken over $\mathbb{Z}$, unless otherwise stated. We start with a basic review of graded vector spaces and complexes (cf. \cite[section 2.1]{[Kel06]}).

\begin{Def}\label{gradedvectorspace}		
	Let $V$ be a $\mathbb{Z}$-graded $\mathbb{K}$-vector space, thus of the form $V = \bigoplus_{p\in\mathbb{Z}}V^p$. Its \textit{$n$-fold shift} $V[n]$, for $n\in\mathbb{Z}$, is the graded vector space with all indices shifted $n$ times downwards, so that $(V[n])^p = V^{p+n}$ for all $p\in\mathbb{Z}$. We write $SV\coloneqq V[1]$ and call it the \textit{suspension} of $V$.
	
	Given another graded vector space $V'$, a morphism of graded vector spaces $f:V\rightarrow V'[d]$ is a homogeneous graded $\mathbb{K}$-linear map of degree $d = |f|\in\mathbb{Z}$ (so that a homogeneous element $v\in V$ of degree $p = |v|$ will be mapped to a homogeneous element $f(v)\in V'$ of degree $p+d$; for practicality, we will often suppress information about the degree of morphisms). 
	
	Graded vector spaces and their morphisms form the \textbf{category $\mathsf{GradVect}$}\index{category!of graded vector spaces}.
\end{Def}

When considering the tensor product $f\otimes g:V\otimes W\rightarrow V'\otimes W'$ of two morphisms of graded vector spaces $f:V\rightarrow V'$ and $g:W\rightarrow W'$, the output is subject to the \textit{Koszul sign rule}:
\[
(f\otimes g)(v\otimes w) = (-1)^{|g||v|}f(v)\otimes g(w)\;,
\]
for all homogeneous elements $v\in V, w\in W$.

\begin{Def}\label{complex}							
	A graded vector space $V$ is a (\textit{cochain}) \textit{complex} when endowed with a differential $d_V:V\rightarrow V[1]$, a homogeneous morphism of degree 1 such that $d_V\circ d_V = 0$. We define $d_{SV} \coloneqq -d_{V}$, while
	\[
	d(f) \coloneqq  d_{V'}\circ f -(-1)^{|f|}f\circ d_V
	\]
	for a morphism of graded vector spaces $f:V\rightarrow V'$. The latter becomes a morphism of complexes (a cochain map) if and only if $d(f) = 0$. 
	
	Complexes and their morphisms form the \textbf{category $\mathsf{Ch}$}\index{category!of complexes}. We can therefore study its cohomological counterpart $H(\mathsf{Ch})$, whose objects are graded vector spaces $H(V)= \bigoplus_{p\in\mathbb{Z}} H^p(V)$ and whose morphisms are the induced graded morphisms (indeed $H(\mathsf{Ch})\cong\mathsf{GradVect}$ as categories). 
	
	Finally, we recall that two morphisms of complexes $f$ and $f'$ are homotopic if and only if there exists a morphism of graded vector spaces $h$ such that $f - f' = d(h)$. Homological algebra then tells us that the induced maps in cohomology are the same: $H^p(f) = H^p(f')$ for all $p\in\mathbb{Z}$.    
\end{Def}

We proceed to define $A_\infty$-algebras, also known as \textit{strongly homotopy associative algebras}, for reasons that will become clear in Remark \ref{Ainfalgrem} below. The main reference is \cite[sections 3--4]{[Kel01]}. 

\begin{Def}\label{Ainfalgebra}							
	An \textbf{$A_\infty$-algebra}\index{aaa@$A_\infty$-algebra} over $\mathbb{K}$ is a $\mathbb{Z}$-graded $\mathbb{K}$-vector space
	\[
	A = \bigoplus_{p\in\mathbb{Z}}A^p
	\] 
	endowed with graded morphisms $m_n: A^{\otimes n}\rightarrow A[2-n]$, for $n\geq 1$, satisfying the equations
	\begin{equation}\label{Ainfalgasseqs}
	\sum_{r+s+t=n}\mkern-12mu(-1)^{r+st}m_{r+1+t}(\textup{id}_{A}^{ \otimes r}\otimes m_s\otimes {\textup{id}_{A}^{\otimes t}}) = 0\;,
	\end{equation}	
	as maps $A^{\otimes n}\rightarrow A$, for all $n\geq 1$.
	
	$A$ is \textit{strictly unital} if endowed with some $1_A\in A^0$ such that $m_1(1_A) = 0,\; m_2(1_A, a) = a = m_2(a, 1_A)$ for all $a\in A$ and $m_i(...,1_A,...) = 0$ if $i>2$. (If it exists, $1_A$ is unique; we call it \textit{strict unit}.)
	
	An \textit{$A_\infty$-subalgebra} $B$ of $A$ is a graded subspace such that all morphisms $m_n$ restrict to $B$.  
\end{Def}

Here we won't concern ourselves too much about the signs appearing in (\ref{Ainfalgasseqs}), since explicit inputs will modify it according to the Koszul rule from Definition \ref{gradedvectorspace}. We will be more precise when discussing $A_\infty$-categories, where all formulae are presented in this fashion.

\begin{Rem}\label{Ainfalgsusp}							
	If one wishes to make the degree of each morphism $m_n$ of Definition \ref{Ainfalgebra} independent from $n$, there is a canonical way to do it using suspensions: define a bijective correspondence between the $m_n$'s and the graded maps $b_n: (SA)^{\otimes n}\rightarrow SA$ by imposing commutativity of the diagram
	\[
	\begin{tikzcd}
		A^{\otimes n}\arrow[r, "m_n"]\arrow[d, "s^{\otimes n}"'] & A\arrow[d, "s"] \\
		(SA)^{\otimes n}\arrow[r, "b_n"'] & SA
	\end{tikzcd}
	\]
	for all $n\geq 1$. Here $s: A\rightarrow SA,\; a\in A^p \mapsto a\in (SA)^{p-1}$ has degree $-1$, so that each $b_n$ has $|b_n| = -(-1)\cdot n + (2-n) + (-1) = 1$. In particular, $b_1 = -m_1$ and $b_2(a_1,a_2) = (-1)^{|a_1|}m_2(a_1,a_2)$. One can prove that the $m_n$'s give an $A_\infty$-algebra structure on $A$ if and only if
	\[
	\sum_{r+s+t=n}\mkern-12mu b_{r+1+t}(\textup{id}_{A}^{ \otimes r}\otimes b_s\otimes {\textup{id}_{A}^{\otimes t}}) = 0
	\]
	for all $n\geq 1$, where we note that the signs in \eqref{Ainfalgasseqs} have vanished.
	
	Concretely, we will almost never work with suspensions, but one should account for them when trying to put together the various sign conventions.
\end{Rem}

\begin{Rem}\label{Ainfalgrem}						
	The set of equations \eqref{Ainfalgasseqs} yields the following consequences:
	\begin{itemize}[leftmargin=0.5cm]
		\item $m_1 m_1 = 0$, so that $(A,m_1)$ is a cochain complex (then, by definition, so is $(SA,-m_1)$).
		\item $m_1 m_2 = m_2 (m_1\otimes \textup{id}_A + \textup{id}_A\otimes m_1)$ makes the differential $m_1$ a derivation with respect to the multiplication\footnote{We refer to it as ``multiplication'' because, in the particular situation when $A^p = 0$ for all $p\neq 0$, $A=A^0$ has $m_2$ as only non-vanishing map, of degree 0, hence thought of as multiplication for the ordinary associative algebra $A$.} $m_2$ (that is, compatible through the graded Leibniz rule --- as one would expect). Since $(m_1\otimes \textup{id}_A + \textup{id}_A\otimes m_1)$ is the differential of $A^{\otimes 2}$, $m_2$ is a morphism of complexes.
		\item $m_2(\textup{id}_A\otimes m_2 - m_2\otimes \textup{id}_A) = m_1 m_3 + m_3(m_1\otimes \textup{id}_A\otimes \textup{id}_A + \textup{id}_A\otimes m_1\otimes \textup{id}_A$ $+ \textup{id}_A\otimes \textup{id}_A\otimes m_1) = d(m_3)$ tells us that $m_2$ is associative \textit{only up to homotopy}! (whence the terminology ``strongly homotopy associative'' algebra). Therefore, the corresponding cohomology $H(A)$ is an associative graded algebra with respect to $H(m_2)$ (its unit element, if it exists, is induced by the \textit{cohomological unit} of $A$).
		\item if $m_n = 0$ for all $n>2$, then $(A,m_1,m_2)$ turns into an associative\footnote{Here $d(m_3) = 0$ vacuously.} differential $\mathbb{Z}$-graded algebra; conversely, any such algebra can be seen as particular instance of an $A_\infty$-algebra.   
	\end{itemize}
\end{Rem}

The second bullet point exposes the nature of $A_\infty$-algebras as algebras which are associative up to higher homotopies (and possibly lack identities). Also, the third bullet point already foreshadows the advantage of working at cohomological level, one of our guiding principles in the theory to come.

\begin{Def}\label{Ainfalgmorph}				
	Given two $A_\infty$-algebras $A$ and $B$, with morphisms $m_{n}^{A}$ respectively $m_{n}^{B}$, a \textbf{morphism of $A_\infty$-algebras}\index{morphism!of $A_\infty$-algebras} $f:A\rightarrow B$ is a sequence of graded maps $f_n:A^{\otimes n}\rightarrow B[1-n]$, for $n\geq 1$, fulfilling the equations
	\begin{equation}\label{Ainfalgmorpheqs}
	\sum_{r+s+t=n}\mkern-12mu(-1)^{r+st}f_{r+1+t}(\textup{id}_{A}^{\otimes r}\otimes m_{s}^{A}\otimes {\textup{id}_{A}^{\otimes t}}) = \mkern-24mu\sum_{\substack{1\leq r\leq n \\ s_1+...+s_r = n}}\mkern-24mu(-1)^u m_{r}^{B}(f_{s_1}\otimes ...\otimes f_{s_r})\;
	\end{equation}
	for all $n\geq 1$, where $u =\coloneqq \sum_{i=1}^{r-1}(r-i)(s_i-1)$.
	
	Composition with a second morphism of $A_\infty$-algebras $g:B\rightarrow C$ is specified by the collection of graded maps
	\[
	(g\circ f)_n \coloneqq  \mkern-24mu\sum_{\substack{1\leq r\leq n \\ s_1+...+s_r = n}}\mkern-24mu(-1)^u g_r\circ (f_{s_1}\otimes ...\otimes f_{s_r})
	\]
	(plugging them into \eqref{Ainfalgmorpheqs} shows that indeed $g\circ f:A\rightarrow C$ is a morphism of $A_\infty$-algebras).
	
	For $A=B$, it is easy to see that $f$ with $f_1 \coloneqq \textup{id}_A$ and $f_n \coloneqq 0$ otherwise corresponds to the identity morphism on $A$.
\end{Def}

In particular, the first two equalities of \eqref{Ainfalgmorpheqs}, $f_1 m_1^A = m_1^B f_1$ and $f_1 m_2^A - m_2^B(f_1\otimes f_1) = m_1^B f_2 + f_2(m_1^A\otimes \textup{id}_A + \textup{id}_A\otimes m_1^A) = d(f_2)$, tell us that $f_1$ is a morphism of complexes commuting with $m_2$ only up to homotopy. Again, the signs in \eqref{Ainfalgmorpheqs} can be dispensed with upon use of suspensions (see Remark \ref{Ainfalgsusp}). Finally, we present a few definitions that will reappear in their $A_\infty$-categorical formulation.

\begin{Def}							
	A morphism of $A_\infty$-algebras $f:A\rightarrow B$ is called a \textit{quasi-isomorphism} if $f_1$ is a quasi-isomorphism, that is, if it induces an algebra isomorphism $H(f_1): H(A)\rightarrow H(B)$.
	
	A morphism of strictly unital $A_\infty$-algebras $f:(A,1_A)\rightarrow (B,1_B)$ is itself called \textit{strictly unital} if $f_1(1_A) = 1_B$ and $f_i(...,1_A,...) = 0$ for $i>1$. 
\end{Def}

\subsection{$A_{\infty}$-modules and morphisms}\label{ch1.2}

\begin{Def}\label{Ainfalgmod}							
	Let $A$ be an $A_\infty$-algebra, equipped with morphisms $m_{n}^{A}$ as in Definition \ref{Ainfalgebra}. An \textbf{$A_\infty$-module over $A$}\index{aaa@$A_\infty$-module!over an $A_\infty$-algebra} is a $\mathbb{Z}$-graded $\mathbb{K}$-vector space 
	\[
	M = \bigoplus_{p\in\mathbb{Z}}M^p
	\]
	endowed with maps $m_{n}^{M}: M\otimes A^{\otimes n-1}\rightarrow M[2-n]$, for $n\geq 1$, fulfilling equations similar to (\ref{Ainfalgasseqs}):
	\begin{equation}\label{Ainfalgmodasseq}
	\begin{split}
	\sum_{r+s+t=n}\mkern-12mu(-1)^{r+st}m_{r+1+t}^{M}(\textup{id}_{M}\otimes \textup{id}_{A}^{\otimes r-1}\otimes m_{s}^{A}\otimes \textup{id}_{A}^{\otimes t}) &= 0\qquad \text{if }r>0,	\\
	\sum_{s+t=n}\mkern-6mu(-1)^{st}m_{1+t}^{M}(m_{s}^{M}\otimes \textup{id}_{A}^{\otimes t}) &= 0\qquad \text{if }r=0.
	\end{split}
	\end{equation}
\end{Def}

\begin{Def}\label{Ainfmodmorph}				
	Let $M$ and $N$ be $A_\infty$-modules over an $A_\infty$-algebra $A$, with morphisms $m_{n}^{M}$ respectively $m_{n}^{N}$. A \textbf{morphism of $A_\infty$-modules}\index{morphism!of $A_\infty$-modules} $f:M\rightarrow N$ is a sequence of graded maps $f_n:M\otimes A^{\otimes n-1}\rightarrow N[1-n]$, for $n\geq 1$, fulfilling the equations
	\begin{equation}\label{Ainfmodmorpheqs}
	\sum_{r+s+t=n}\mkern-12mu(-1)^{r+st}f_{r+1+t}(\textup{id}_A^{\otimes r}\otimes m_{s}^M\otimes \textup{id}_{A}^{\otimes t}) = \mkern-6mu\sum_{s+t = n}\mkern-6mu(-1)^{(s+1)t}m_{1+t}^{N}(f_s\otimes \textup{id}_{A}^{\otimes t})\;
	\end{equation}
	for all $n\geq 1$, where a distinction between the cases $r>0$ and $r=0$ as in last definition must be made.
	
	Composition with a second morphism of $A_\infty$-modules $g:N\rightarrow P$ is specified by a collection of graded maps
	\[
	(g\circ f)_n \coloneqq  \mkern-6mu\sum_{s+t=n}\mkern-6mu(-1)^{(s-1)t} f_{1+t}\circ (g_s\otimes \textup{id}_{A}^{\otimes t})
	\]
	(which can be shown to satisfy equations \eqref{Ainfmodmorpheqs}). The definitions of identity morphism and quasi-isomorphism are identical to the $A_\infty$-algebra case.
	
	$A_\infty$-modules together with morphisms of $A_\infty$-modules form a category, which we denote by $\mathsf{C}_\infty (A)$.
\end{Def}

Now that we have had a little taste of the kind of algebraic structures that Part I deals with, let us move to the more general notion of \textit{$A_\infty$-categories}.

\newpage

\section{$A_{\infty}$-categories}
\thispagestyle{plain}

\subsection{Category theoretic preliminaries}\label{ch2.1}

Before studying $A_\infty$-categories, we summarize here some basic definitions and results from category theory which will appear frequently and play a key role in the proofs to come. Recall that we are working over some fixed ground field $\mathbb{K}$.

\begin{Def}											
	A category $\mathsf{C}$ is said to be:
	\begin{itemize}[leftmargin=0.5cm]
		\item \textit{linear} and \textit{graded} if all its Hom-sets are graded $\mathbb{K}$-vector spaces and composition of compatible morphisms is a bilinear operation over $\mathbb{K}$;
		\item \textit{differential $\mathbb{Z}$-graded} (\textit{dg}) if each Hom-set is a $\mathbb{Z}$-graded (cochain) complex, so that composition of morphisms is a cochain map and the differential of each identity is zero (example: the category $\mathsf{Ch}$ from Definition \ref{complex});
		\item \textit{small} if both obj($\mathsf{C}$) and the collection of all Hom-sets are sets;
		\item \textit{non-unital} if possibly lacking some identity morphisms (if so, $\mathsf{C}$ is not an actual category).		
	\end{itemize}
\end{Def}

\begin{Def}											
	Let $\mathsf{C}$ and $\mathsf{D}$ be categories. A functor $\mathsf{F}:\mathsf{C}\rightarrow\mathsf{D}$ is:
	\begin{itemize}[leftmargin=0.5cm]
		\item an \textit{isomorphism} if there exists a functor $\mathsf{G}:\mathsf{D}\rightarrow\mathsf{C}$ such that $\mathsf{G}\circ\mathsf{F} = \mathsf{Id_C}$ and $\mathsf{F}\circ\mathsf{G} = \mathsf{Id_D}$ as functors; in this case $\mathsf{C}$ and $\mathsf{D}$ are called \textit{isomorphic};
		\item an \textit{equivalence} if there exists a functor $\mathsf{G}:\mathsf{D}\rightarrow\mathsf{C}$ (sometimes called \textit{quasi-inverse}) and natural isomorphisms $\Phi: \mathsf{Id_C}\rightarrow\mathsf{G}\circ\mathsf{F}$ and $\Psi: \mathsf{F}\circ\mathsf{G}\rightarrow \mathsf{Id_D}$ (meaning that each $\Phi_X\in \textup{Hom}_\mathsf{C}\big(X,\mathsf{G}(\mathsf{F}(X))\big)$ for $X\in \textup{obj}(\mathsf{C})$ and each $\Psi_Y\in \textup{Hom}_\mathsf{D}(\mathsf{F}(\mathsf{G}(Y)),Y)$ for $Y\in \textup{obj}(\mathsf{D})$ is an isomorphism of objects); then $\mathsf{C}$ and $\mathsf{D}$ are called \textit{equivalent}; 
		\item \textit{full} and \textit{faithful} if $\mathsf{F}_{X,Y}:\textup{Hom}_\mathsf{C}(X,Y)\rightarrow\textup{Hom}_\mathsf{D}(\mathsf{F}(X),\mathsf{F}(Y))$ is surjective respectively injective for all $X, Y \in$ obj$(\mathsf{C})$ (\textit{fully faithful} if both);
		\item \textit{essentially surjective} if for all $Y\in$ obj($\mathsf{D}$) there are an $X\in$ obj($\mathsf{C}$) and an isomorphism of objects $f\in$ Hom$_\mathsf{D}(Y,\mathsf{F}(X))$; 
		\item an \textit{embedding} if faithful and injective on objects (that is, $\mathsf{F}(X_1)=\mathsf{F}(X_2)\in\textup{obj}(\mathsf{D})$ implies $X_1=X_2\in\textup{obj}(\mathsf{C})$);
		\item \textit{linear} and \textit{graded} if each $\mathsf{F}_{X,Y}$ as above is a $\mathbb{K}$-linear, graded morphism between graded vector spaces (assuming $\mathsf{C}, \mathsf{D}$ are linear, graded);
		\item \textit{\textup(non-\textup)unital} if (not) carrying all identity morphisms to identity morphisms. 
	\end{itemize} 
\end{Def}

The following results are an easy consequence of above definitions (for example, their proofs can be found in \cite{[GM03]}).

\begin{Lem}\label{aboutfunctors}				
	A functor $\mathsf{F}:\mathsf{C}\rightarrow\mathsf{D}$ is:
	\begin{enumerate}[leftmargin=0.6cm] 
		\renewcommand{\theenumi}{\roman{enumi}}
		\item an isomorphism if and only if it is bijective on objects and \textup{Hom}-sets,
		\item an equivalence if and only if it is full, faithful and essentially surjective.  
	\end{enumerate}
	In particular, isomorphisms are equivalences, thus full and faithful.
\end{Lem}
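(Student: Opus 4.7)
The plan is to prove both equivalences separately and then read off the last statement. For part (i), I would begin with the forward direction, which is essentially unpacking the definition: a two-sided inverse functor $\mathsf{G}$ to $\mathsf{F}$ yields a two-sided inverse on objects and, for each pair $X,Y$, on the corresponding Hom-set. For the converse, I would build $\mathsf{G}$ directly: on an object $Y\in\textup{obj}(\mathsf{D})$ set $\mathsf{G}(Y)$ to be the unique $X$ with $\mathsf{F}(X)=Y$, and on a morphism $g\in\textup{Hom}_\mathsf{D}(Y_1,Y_2)$ set $\mathsf{G}(g)$ to be the unique preimage of $g$ under $\mathsf{F}_{\mathsf{G}(Y_1),\mathsf{G}(Y_2)}$. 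Verifying that $\mathsf{G}$ preserves composition and identities reduces to checking the corresponding equalities after applying $\mathsf{F}$, which holds by functoriality of $\mathsf{F}$ and is then transferred back using injectivity of $\mathsf{F}$ on Hom-sets.

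For part (ii), the forward direction I would handle by exploiting the natural isomorphisms $\Phi,\Psi$: for $f\in\textup{Hom}_\mathsf{C}(X,Y)$, naturality of $\Phi$ at $f$ gives the identity $\mathsf{G}(\mathsf{F}(f))=\Phi_Y\circ f\circ\Phi_X^{-1}$, so that $\mathsf{G}\circ\mathsf{F}$ acts on each Hom-set by conjugation, hence bijectively. This forces $\mathsf{F}_{X,Y}$ to be injective; the symmetric argument using $\Psi$ for $\mathsf{F}\circ\mathsf{G}$ then forces $\mathsf{F}_{X,Y}$ to be surjective. Essential surjectivity is immediate from the isomorphism $\Psi_Y^{-1}:Y\rightarrow\mathsf{F}(\mathsf{G}(Y))$ for any $Y$.

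The main obstacle will be the converse of (ii), where a quasi-inverse has to be assembled by hand from the hypotheses. Using essential surjectivity I would pick, for each $Y\in\textup{obj}(\mathsf{D})$, an object $\mathsf{G}(Y)\in\textup{obj}(\mathsf{C})$ together with an isomorphism $\Psi_Y:\mathsf{F}(\mathsf{G}(Y))\rightarrow Y$; then for $g:Y_1\rightarrow Y_2$ use full faithfulness to define $\mathsf{G}(g)$ as the unique morphism satisfying $\mathsf{F}(\mathsf{G}(g))=\Psi_{Y_2}^{-1}\circ g\circ\Psi_{Y_1}$. Functoriality of $\mathsf{G}$ will follow from faithfulness of $\mathsf{F}$ applied to the candidate equalities, naturality of $\Psi$ is built into the definition of $\mathsf{G}$ on morphisms, and $\Phi_X$ is obtained by taking the unique preimage of $\Psi_{\mathsf{F}(X)}^{-1}$ under $\mathsf{F}$, which is automatically an isomorphism because fully faithful functors reflect isomorphisms. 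Finally, the concluding sentence of the lemma will fall out of (i) and (ii) together: a functor bijective on objects and Hom-sets is trivially fully faithful, and essentially surjective with witness $\textup{id}_Y$ for each $Y$.
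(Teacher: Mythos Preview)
Your outline is correct and follows the standard textbook argument. One small remark on the forward direction of (ii): from bijectivity of $\mathsf{F}\circ\mathsf{G}$ on Hom-sets you only directly get that $\mathsf{F}_{\mathsf{G}(Y_1),\mathsf{G}(Y_2)}$ is surjective, not $\mathsf{F}_{X,Y}$ for arbitrary $X,Y$; you need to transport along the isomorphisms $\Phi_X:X\rightarrow\mathsf{G}(\mathsf{F}(X))$ to conclude in general. This is routine and you clearly have the right idea.

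As for comparison with the paper: there is nothing to compare. The paper does not prove this lemma at all --- it simply states that the results are easy consequences of the definitions and points to \cite{[GM03]} for proofs. Your write-up supplies exactly what the paper omits, and does so via the standard route one would find in that reference.
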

\vspace*{0.15cm}

\noindent\minibox[frame]{Henceforth, we will assume that all our ($A_\infty$-)categories are linear and small,\\ unless otherwise stated.}

\subsection{$A_{\infty}$-categories and $A_{\infty}$-functors}\label{ch2.2}

\begin{Def}\label{Ainfcat}								
	A \textbf{non-unital $A_\infty$-category}\index{aaa@$A_\infty$-category} $\mathcal{A}$ consists of the following data:
	\begin{itemize}[leftmargin=0.5cm]
		\renewcommand{\labelitemi}{\textendash}
		\item a set of objects obj($\mathcal{A}$),
		\item for any pair $X_0, X_1\in$ obj($\mathcal{A}$), a $\mathbb{Z}$-graded $\mathbb{K}$-vector space 
		\[
		\textup{hom}_\mathcal{A}(X_0,X_1) = \bigoplus_{p\in\mathbb{Z}}\textup{hom}_\mathcal{A}^{p}(X_0,X_1)\;,
		\]
		whose elements we still call morphisms (homogeneous of degree $p$ when belonging to $\textup{hom}_\mathcal{A}^{p}(X_0,X_1)$),
		\item (multi)linear graded maps
		\begin{equation}\label{compomap}
		\mu_{\mathcal{A}}^{d}:\textup{hom}_\mathcal{A}(X_{d-1},X_d)\otimes...\otimes\textup{hom}_\mathcal{A}(X_0,X_1)\rightarrow\textup{hom}_\mathcal{A}(X_0,X_d)[2-d]
		\end{equation}
		for every $d\geq 1$ and $X_0,...,X_d\in\textup{obj}(\mathcal{A})$, which we refer to as \textbf{composition maps}\index{composition maps}. These must satisfy the quadratic \textit{$A_\infty$-associativity equations}
		\begin{equation}\label{Ainfcatasseqs}
		\mkern-30mu\sum_{\substack{1\leq m\leq d \\ \quad 0\leq n\leq d-m}}\mkern-24mu(-1)^{\dagger_n}\mu_{\mathcal{A}}^{d-m+1}(a_d,...,a_{n+m+1},\underbrace{\mu_{\mathcal{A}}^{m}(a_{n+m},...,a_{n+1})}_{\in \textup{hom}_{\mathcal{A}}(X_n,X_{n+m})},a_n,...,a_1) = 0
		\end{equation}
		in $\textup{hom}_\mathcal{A}(X_0,X_d)$, where $a_i\!\in\!\textup{hom}_{\mathcal{A}}(X_{i-1},X_i)$ and $\dagger_n \!\coloneqq\! (\sum_{i=1}^{n}|a_i|)-n$.\footnote{$\dagger_n$, along with other variables regulating the signs of our expressions, will be henceforth used as standard notation, without further comment.}\break For $d=1,2$, the first two such equations read $\mu_\mathcal{A}^1(\mu_\mathcal{A}^1(a_1)) = 0$ and\break $\mu_\mathcal{A}^1(\mu_\mathcal{A}^2(a_2,a_1)) = (-1)^{|a_1|}\mu_\mathcal{A}^2(\mu_\mathcal{A}^1(a_2),a_1) -\mu_\mathcal{A}^2(a_2,\mu_\mathcal{A}^1(a_1))$.
	\end{itemize}
	An \textbf{$A_\infty$-subcategory}\index{aaa@$A_\infty$-subcategory} $\mathcal{B}\subset\mathcal{A}$ is an $A_\infty$-category such that $\textup{obj}(\mathcal{B})\subset\textup{obj}(\mathcal{A})$, $\textup{hom}_\mathcal{B}(X_0,X_1)\subset\textup{hom}_\mathcal{A}(X_0,X_1)$ are graded subspaces for all $X_0,X_1\in\textup{obj}(\mathcal{B})$, and the composition maps $\mu_\mathcal{B}^d$ have target spaces within $\mathcal{B}$.  
\end{Def}

The labeling system in \eqref{compomap} and \eqref{Ainfcatasseqs} can be somewhat misleading: it is an attempt to reconcile various standard conventions. For consistency's sake, we will be faithful, here and in the future, to the notation adopted by Seidel in \cite{[Sei08]}.

\begin{Rem}\label{Ainfcatrem}				
	Consider the case where $\text{obj}(\mathcal{A})\coloneqq\{X\}$. Then the only possible graded vector space $A\coloneqq  \textup{hom}_\mathcal{A}(X,X)$ endowed with graded maps $m_n \coloneqq  \mu_{\mathcal{A}}^n$ is indeed an $A_\infty$-algebra, since the equations (\ref{Ainfalgasseqs}) precisely coincide with those in (\ref{Ainfcatasseqs}) (possibly up to a sign).
	
	Moreover, an $A_\infty$-category $\mathcal{A}$ with $\mu_{\mathcal{A}}^d = 0$ if $d>2$ is a differential graded category, and vice versa (for example, the dg category $\mathsf{Ch}$ can be considered an $A_\infty$-category). 
\end{Rem}

The $A_\infty$-associativity equations yield the exact same conclusions we made in Remark \ref{Ainfalgrem} for $A_\infty$-algebras: non-unital $A_\infty$-categories are not categories in general, since they may lack identity morphisms and each composition map $\mu_{\mathcal{A}}^2$ is not necessarily associative. 

However, $\mu_{\mathcal{A}}^1\mu_{\mathcal{A}}^1 = 0$, so that each morphism space $\textup{hom}_\mathcal{A}(X_0,X_1)$ is a cochain complex with differential $\mu_{\mathcal{A}}^1:\textup{hom}_\mathcal{A}(X_0,X_1)\rightarrow\textup{hom}_\mathcal{A}(X_0,X_1)[1]$. Hence, it makes sense to talk about cocycles (any $a\in\textup{hom}_\mathcal{A}(X_0,X_1)$ such that $\mu_\mathcal{A}^1(a)=0$), coboundaries (any $a=\mu_\mathcal{A}^1(b)$ for some $b\in\textup{hom}_\mathcal{A}(X_0,X_1)$) and cohomology:

\begin{Def}\label{cohomcategory}			
	Let $\mathcal{A}$ be a non-unital $A_\infty$-category. Its \textbf{cohomological category}\index{cohomological category} $H(\mathcal{A})$ is defined as follows: $\textup{obj}(H(\mathcal{A})) \coloneqq  \text{obj}(\mathcal{A})$, each pair $X_0, X_1\in \textup{obj}(\mathcal{A})$ yields a graded cohomology group 
	\begin{equation}
		\text{Hom}_{H(\mathcal{A})}(X_0,X_1) \coloneqq  H(\textup{hom}_\mathcal{A}(X_0,X_1)) = \bigoplus_{p\in\mathbb{Z}}H^p(\textup{hom}_\mathcal{A}(X_0,X_1))\,,
	\end{equation}
	and the ``usual'' associative (!) composition map is\footnote{Observe the choice of notation: whenever we speak of cohomological categories, we adopt the capital ``H'' in ``Hom'' (against ``hom'' for $A_\infty$-categories) and the composition symbol ``$\circ$''.} 
	\begin{align}\label{cohomcatcompo}
	\circ:\; &\textup{Hom}_{H(\mathcal{A})}(X_1,X_2)\times\textup{Hom}_{H(\mathcal{A})}(X_0,X_1)\rightarrow\textup{Hom}_{H(\mathcal{A})}(X_0,X_2),  \\
	&(\langle a_2\rangle,\langle a_1\rangle) \mapsto \langle a_2\rangle\circ\langle a_1\rangle \coloneqq  (-1)^{|a_1|}\langle\mu_{\mathcal{A}}^2(a_2,a_1)\rangle\;.\nonumber
	\end{align}
	This is a non-unital ordinary linear graded category.
	
	We designate by $H^0(\mathcal{A}) \subset H(\mathcal{A})$ the \textbf{zeroth cohomological subcategory}\index{zeroth cohomological subcategory}, having $\textup{obj}(H^0(\mathcal{A}))\! \coloneqq\!  \textup{obj}(H(\mathcal{A}))$ and $\text{Hom}_{H^0(\mathcal{A})}(X_0,X_1)\! \coloneqq\! H^0(\textup{hom}_\mathcal{A}(X_0,X_1))$.
\end{Def}

\begin{Def}												
	A \textbf{non-unital $A_\infty$-functor}\index{aaa@$A_\infty$-functor} $\mathcal{F}:\mathcal{A}\rightarrow\mathcal{B}$ between two non-unital $A_\infty$-categories $\mathcal{A}$ and $\mathcal{B}$ is a map $\mathcal{F}:\textup{obj}(\mathcal{A})\rightarrow\textup{obj}(\mathcal{B})$ together with a sequence of (multi)linear graded maps
	\begin{equation}\label{Ainffunc}
	\mkern-9mu\mathcal{F}^d:\textup{hom}_\mathcal{A}(X_{d-1},X_d)\otimes...\otimes\textup{hom}_\mathcal{A}(X_0,X_1)\rightarrow\textup{hom}_\mathcal{B}(\mathcal{F}(X_0),\mathcal{F}(X_d))[1-d]
	\end{equation}
	for every $d\geq 1$ and $X_0,...,X_d\in$ obj($\mathcal{A}$), fulfilling the equations
	\begin{align}\label{Ainffunceqs}
	&\sum_{\substack{1\leq r\leq d \\ s_1+...+s_r = d}}\mkern-24mu\mu_{\mathcal{B}}^r(\mathcal{F}^{s_r}(a_d,...,a_{d-s_r+1}),...,\mathcal{F}^{s_1}(a_{s_1},...,a_1)) =	\\ 
	&\;\sum_{\substack{1\leq m\leq d \\ 0\leq n\leq d-m}}\mkern-18mu(-1)^{\dagger_n}\mathcal{F}^{d-m+1}(a_d,...,a_{n+m+1},\mu_{\mathcal{A}}^{m}(a_{n+m},...,a_{n+1}),a_n,...,a_1)\;, \nonumber
	\end{align}
	in $\textup{hom}_\mathcal{B}(\mathcal{F}(X_0),\mathcal{F}(X_d))$, where $a_i\in\textup{hom}_{\mathcal{A}}(X_{i-1},X_i)$. 
	
	Given another non-unital $A_\infty$-functor $\mathcal{G}:\mathcal{B}\rightarrow\mathcal{C}$, the (strictly associative) composition $\mathcal{G}\circ\mathcal{F}:\mathcal{A}\rightarrow\mathcal{C}$ is the $A_\infty$-functor specified by
	\begin{equation}
	(\mathcal{G}\circ\mathcal{F})^d(a_d,...,a_1) \coloneqq  \mkern-18mu\sum_{\substack{1\leq r\leq d \\ s_1+...+s_r = d}}\mkern-18mu\mathcal{G}^r(\mathcal{F}^{s_r}(a_d,...,a_{d-s_r+1}),...,\mathcal{F}^{s_1}(a_{s_1},...,a_1)) 
	\end{equation}
	in $\textup{hom}_\mathcal{C}\big(\mathcal{G}(\mathcal{F}(X_0)),\mathcal{G}(\mathcal{F}(X_d))\big)$ (one can show that these do satisfy the defining equations \eqref{Ainffunceqs}). In particular, $(\mathcal{G}\circ\mathcal{F})^1(a_1) = \mathcal{G}^1(\mathcal{F}^1(a_1))$ and $(\mathcal{G}\circ\mathcal{F})^2(a_2,a_1)=\mathcal{G}^1(\mathcal{F}^2(a_2,a_1))+\mathcal{G}^2(\mathcal{F}^1(a_2),\mathcal{F}^1(a_1))$.
	
	If $\mathcal{A}=\mathcal{B}$ and $\mathcal{A}$ is unital, the identity $A_\infty$-functor $\mathcal{I}d_\mathcal{A}$ is given by $\mathcal{I}d_\mathcal{A}(X)\coloneqq X$,\; $\mathcal{I}d_\mathcal{A}^1 \coloneqq \textup{id}_{\textup{hom}_\mathcal{A}(X,X)}$ for $X\in\textup{obj}(\mathcal{A})$, and $\mathcal{I}d_\mathcal{A}^k \coloneqq 0$ for all $k>1$.
\end{Def}

As expected, an $A_\infty$-functor between one-object $A_\infty$-categories is just an $A_\infty$-algebra morphism. 

From \eqref{Ainffunceqs} we deduce that the respective first order maps commute: for any $a_1\in\textup{hom}_\mathcal{A}(X_0,X_1)$ holds $\mu_{\mathcal{B}}^1(\mathcal{F}^1(a_1))\!=\!\mathcal{F}^1(\mu_{\mathcal{A}}^1(a_1))$. The case $d=2$ produces instead $\mu_\mathcal{B}^1(\mathcal{F}^2(a_2,a_1))+\mu_\mathcal{B}^2(\mathcal{F}^1(a_2),\mathcal{F}^1(a_1)) = \mathcal{F}^1(\mu_\mathcal{A}^2(a_2,a_1)) + \mathcal{F}^2(a_2,\mathcal{A}^1(a_1))$ $+(-1)^{|a_1|-1}\mathcal{F}^2(\mu_\mathcal{A}^1(a_2),$ $a_1)$. These equations are compatible with what we obtained from Definition \ref{Ainfalgmorph} (again, up to some signs).

\begin{Def}\label{cohomfunctor}				
	A non-unital $A_\infty$-functor $\mathcal{F}:\mathcal{A}\rightarrow\mathcal{B}$ induces a \textbf{cohomological functor}\index{cohomological functor} $H(\mathcal{F}): H(\mathcal{A})\rightarrow H(\mathcal{B})$ which maps $X\in\textup{obj}(H(\mathcal{A}))\mapsto \mathcal{F}(X)\in\textup{obj}(H(\mathcal{B}))$ and 
	\begin{equation}\label{cohomAinffunc}
	\langle a_1\rangle\in\textup{Hom}_{H(\mathcal{A})}(X_0,X_1) \longmapsto \langle \mathcal{F}^1(a_1)\rangle\in\textup{Hom}_{H(\mathcal{B})}(\mathcal{F}(X_0),\mathcal{F}(X_1))\;,
	\end{equation}
	for each $X_0,X_1\in\textup{obj}(H(\mathcal{A}))$. This is an ordinary non-unital linear graded functor. We call $\mathcal{F}$:
	\begin{itemize}[leftmargin=0.5cm]
		\item a \textbf{quasi-isomorphism}\index{quasi-isomorphism} if $H(\mathcal{F})$ is an isomorphism;
		\item \textbf{cohomologically full and faithful}\index{aaa@$A_\infty$-functor!cohomologically full and faithful} if $H(\mathcal{F})$ is full and faithful;
		\item \textbf{cohomologically essentially surjective}\index{aaa@$A_\infty$-functor!cohomologically essentially surjective} if $H(\mathcal{F})$ is essentially surjective. 
	\end{itemize}
	Note that $H(\mathcal{F}): H(\mathcal{A})\rightarrow H(\mathcal{B})$ itself reduces to $H^0(\mathcal{F}): H^0(\mathcal{A})\rightarrow H^0(\mathcal{B})$ on the zeroth cohomological categories.
\end{Def}

As an immediate consequence of Lemma \ref{aboutfunctors}, quasi-isomorphisms are cohomologically full and faithful $A_\infty$-functors. 

There is a particular instance of $A_\infty$-functors which will become useful in simplifying the setting of several proofs. The definition exploits the following (non-trivial) fact: given an $A_\infty$-category $\mathcal{A}$ and a sequence of graded maps $(\Phi^0,\Phi^1,...)$ as in \eqref{Ainffunc} such that $\Phi^1:\textup{hom}_\mathcal{A}(X_0,X_1)\rightarrow\textup{hom}_\mathcal{A}(X_0,X_1)$ is a linear automorphism, recursive solving of the equations \eqref{Ainffunceqs} with respect to $\mu_\mathcal{B}$ produces a unique alternative $A_\infty$-structure $\tilde{\mathcal{A}}$ for $\mathcal{A}$, given by $\textup{obj}(\tilde{\mathcal{A}}) \coloneqq \textup{obj}(\mathcal{A})$, $\textup{hom}_{\tilde{\mathcal{A}}}(X_0,X_1) \coloneqq \textup{hom}_\mathcal{A}(X_0,X_1)$ and $\mu_{\tilde{\mathcal{A}}}\coloneqq\mu_\mathcal{B}$. Then the multilinear graded maps $\Phi^d$ fulfill \eqref{Ainffunceqs}, by construction.

\begin{Def}\label{formaldiffeo}					
	Let $\mathcal{A}$ be an $A_\infty$-category, with alternative $A_\infty$-structure $\tilde{\mathcal{A}}$ constructed from a sequence of graded maps $(\Phi^0,\Phi^1,...)$ as above. The associated non-unital $A_\infty$-functor $\Phi:\mathcal{A}\rightarrow\Phi_*\mathcal{A} \coloneqq  \tilde{\mathcal{A}}$ is called \textbf{formal diffeomorphism}\index{formal diffeomorphism}. By equation \eqref{cohomAinffunc} and choice of $\Phi^1$, it is in particular a quasi-isomorphism (hence cohomologically full and faithful).   
\end{Def}

\subsection{Pre-natural transformations and compositions}\label{ch2.3}

In standard category theory we can look at maps between functors, the natural transformations. There is of course an adaptation also for $A_\infty$-functors.

\begin{Def}									
	Given two non-unital $A_\infty$-categories $\mathcal{A}$ and $\mathcal{B}$, we define $\mathcal{Q}= nu\text{-}\!fun(\mathcal{A},\mathcal{B})$ to be the non-unital $A_\infty$-category specified by:
	\begin{itemize}[leftmargin=0.5cm]
		\renewcommand{\labelitemi}{\textendash}
		\item a set obj($\mathcal{Q}$) consisting of non-unital $A_\infty$-functors $\mathcal{F}:\mathcal{A}\rightarrow\mathcal{B}$;
		\item for any pair $\mathcal{F}_0,\mathcal{F}_1\in$ obj($\mathcal{Q}$), morphisms whose homogeneous parts $T\in\textup{hom}_\mathcal{Q}^g(\mathcal{F}_0,\mathcal{F}_1)$ are sequences $(T^0,T^1, ...)$, where each $T^d$ for $d\geq 1$ is a collection of (multi)linear graded maps
		\begin{equation}\label{Ainffunccatmorph}
		\mkern-9mu\textup{hom}_\mathcal{A}(X_{d-1},X_d)\otimes...\otimes\textup{hom}_\mathcal{A}(X_0,X_1)\!\rightarrow\!\textup{hom}_\mathcal{B}(\mathcal{F}_0(X_0),\mathcal{F}_1(X_d))[g-d],
		\end{equation}
		where $X_0,...,X_d\in\textup{obj}(\mathcal{A})$. Instead, $T^0$ must be regarded as a collection $\{T_{X}^0\equiv T^0(X)\in\textup{hom}_\mathcal{B}^g(\mathcal{F}_0(X),\mathcal{F}_1(X))\mid X\in\textup{obj}(\mathcal{A})\}$ (in accordance with the standard definition of natural transformation). $T$ is called \textbf{pre-natural transformation}\index{pre-natural transformation} of degree $|T| = g$ from $\mathcal{F}_0$ to $\mathcal{F}_1$;
		\item composition maps $\mu_{\mathcal{Q}}^d$ like those of \eqref{compomap}, so that for $T_k\in\textup{hom}_\mathcal{Q}(\mathcal{F}_{k-1},\mathcal{F}_k)$ the output $S\coloneqq \mu_{\mathcal{Q}}^d(T_d,...,T_1)$ $\in\textup{hom}_\mathcal{Q}(\mathcal{F}_0,\mathcal{F}_d)[2-d]$ is again a sequence $(S^0,S^1,...)$ as in \eqref{Ainffunccatmorph}. The first two cases $d=1,2$ are:\footnote{Note: by definition, in \eqref{shit1} only $s_i$ can be zero, while in \eqref{shit2} both $s_i$ and $s_j$ can.}
		\begin{align}\label{shit1}
		\mkern-12mu\mu_{\mathcal{Q}}^1&(T_1)^h(a_h,...,a_1) \coloneqq\mkern-18mu\sum_{1\leq r\leq h+1}\mkern-24mu\sum_{\substack{1\leq i\leq r\\\quad  s_1+...+s_r=h}}\mkern-30mu(-1)^{\ddagger_i^1}\mu_{\mathcal{B}}^r\Big(\mathcal{F}_{1}^{s_r}(a_h,...,a_{h-s_r+1}),...,\nonumber
		\\ &\mathcal{F}_1^{s_{i+1}}(a_{h-...-s_{i+2}},...,a_{s_1+...+s_i+1}),T_{1}^{s_i}(a_{s_1+...+s_i},...,a_{s_1+...+s_{i-1}+1}),\nonumber\\ &\mathcal{F}_0^{s_{i-1}}(a_{s_1+...+s_{i-1}},...,a_{s_1+...+s_{i-2}+1}),...,\mathcal{F}_0^{s_1}(a_{s_1},...,a_1)\Big)\nonumber \\
		&-\mkern-30mu\sum_{\substack{1\leq m\leq h \\ \quad 0\leq n\leq h-m}}\mkern-24mu(-1)^{\dagger_{n}+|T_1|-1}T_{1}^{h-m+1}(a_h,...,\mu_{\mathcal{A}}^{m}(a_{n+m},...,a_{n+1}),...,a_1)  	
		\end{align}
		in $\textup{hom}_\mathcal{B}(\mathcal{F}_0(X_0),\mathcal{F}_1(X_h))$, and  
		\begin{align}\label{shit2}
		\mu_{\mathcal{Q}}^2&(T_2,T_1)^h(a_h,...,a_1) \coloneqq\mkern-18mu\sum_{1\leq r\leq h+2}\mkern-12mu\sum_{\substack{1\leq i<j\leq r\\\quad  s_1+...+s_r=h}}\mkern-30mu(-1)^{\ddagger_i^1+\ddagger_j^2}\mu_{\mathcal{B}}^r\Big(\mathcal{F}_{2}^{s_r}(a_h,...,a_{h-s_r+1}),\nonumber \\
		&...,\mathcal{F}_2^{s_{j+1}}(...),T_{2}^{s_j}(a_{s_1+...+s_j},...,a_{s_1+...+s_{j-1}+1}),\mathcal{F}_{1}^{s_{j-1}}(...),\nonumber\\
		&...,\mathcal{F}_{1}^{s_{i+1}}(...),T_{1}^{s_i}(a_{s_1+...+s_i},...,a_{s_1+...+s_{i-1}+1}),\mathcal{F}_{0}^{s_{i-1}}(...),\nonumber\\ &...,\mathcal{F}_{0}^{s_1}(a_{s_1},...,a_1)\Big)
		\end{align}
		in $\textup{hom}_\mathcal{B}(\mathcal{F}_0(X_0),\mathcal{F}_2(X_h))$, where $h\geq 1$, $a_k\in\textup{hom}_\mathcal{A}(X_{k-1},X_k)$ and $\ddagger_i^j \coloneqq (|T_j|-1)\cdot\sum_{k=1}^{s_1+...+s_{i-1}}(|a_k|-1)$.
	\end{itemize}
\end{Def}

Higher order formulae for $\mu_{\mathcal{Q}}^d(T_d,...,T_1)$ can be found for example in Fukaya's work (\cite[Definition 10.15]{[Fuk99]}). Actual computations are best tackled with help of some programming. Here, we limit ourselves to give the first two terms of $\eqref{shit1}$: 
\begin{equation}\label{cutie0}
\mu_{\mathcal{Q}}^1(T)^0 = \{\mu_{\mathcal{Q}}^1(T)_{X}^0 = \mu_{\mathcal{B}}^1(T_{X}^0)\in\textup{hom}_\mathcal{B}(\mathcal{F}_0(X),\mathcal{F}_1(X))\mid  X\in\textup{obj}(\mathcal{A})\}\,,
\end{equation}
\vspace*{-0.75cm}
\begin{align}\label{cutie1}
\mkern-46mu\mu_{\mathcal{Q}}^1(T)^1(a_1) =\; & \mu_{\mathcal{B}}^1(T^1(a_1))+(-1)^{|T|-1}T^1(\mu_{\mathcal{A}}^1(a_1))+\mu_{\mathcal{B}}^2(\mathcal{F}_{1}^1(a_1),T_{X_0}^0)\nonumber \\
&\!+(-1)^{(|T|-1)(|a_1|-1)}\mu_{\mathcal{B}}^2(T_{X_1}^0,\mathcal{F}_{0}^1(a_1))\;.
\end{align}

\begin{Def}										
	A pre-natural transformation which is closed with respect to the boundary operator of $\textup{hom}_\mathcal{Q}(\mathcal{F}_0,\mathcal{F}_1)$, that is, any $T\in\textup{hom}_\mathcal{Q}(\mathcal{F}_0,\mathcal{F}_1)$ with $\mu_{\mathcal{Q}}^1(T)^d = 0$ for all $d\geq 0$, is called an ($A_\infty$-)\textbf{natural transformation}\index{natural transformation}.
\end{Def}  

Consider a natural transformation $T\in\textup{hom}_\mathcal{Q}(\mathcal{F}_0,\mathcal{F}_1)$. By \eqref{cutie1} and \eqref{cohomcatcompo}, for all $a_1\in\textup{hom}_\mathcal{A}(X_0,X_1)$ the classes $\langle T_{X}^0\rangle$ satisfy the naturality condition $\langle T_{X_1}^0\rangle\circ\langle \mathcal{F}_{0}^1(a_1)\rangle $ $= (-1)^{|a_1|\cdot|T|}\langle \mathcal{F}_{1}^1(a_1)\rangle\circ\langle T_{X_0}^0\rangle \in \textup{Hom}_{H(\mathcal{B})}(\mathcal{F}_0(X_0),\mathcal{F}_1(X_1))$. Therefore, $H(T)\equiv H(T^0)\coloneqq \{\langle T_{X}^0\rangle\in \textup{Hom}_{H(\mathcal{B})}(\mathcal{F}_0(X),\mathcal{F}_1(X))\mid X\in\textup{obj}(\mathcal{A})\}$ is a ``usual'' natural transformation, and we may define:

\begin{Def}\label{Nu-Fun}										
	Let $\mathcal{Q}= nu\text{-}\!fun(\mathcal{A},\mathcal{B})$. Then $\mathcal{H(Q)}=Nu\text{-}\!Fun(H(\mathcal{A}),H(\mathcal{B}))$ denotes the (non-unital) category whose objects are non-unital cohomological functors $H(\mathcal{F}):H(\mathcal{A})\rightarrow H(\mathcal{B})$ and whose morphisms are their natural trans-\break formations $H(T)\!\in\!\textup{Hom}_{\mathcal{H(Q)}}(H(\mathcal{F}),$ $H(\mathcal{G}))$, defined as in the last paragraph. We obtain the non-unital functor:
	\begin{equation}\label{Hfunctor}
	\mathcal{H}: H(\mathcal{Q})\longrightarrow \mathcal{H(Q)},\; 
	\begin{cases}
	\mathcal{F}\in\textup{obj}(\mathcal{Q})\longmapsto H(\mathcal{F}) \\
	\langle T\rangle\in\textup{Hom}_{H(\mathcal{Q})}(\mathcal{F},\mathcal{G})\longmapsto H(T)
	\end{cases}
	\end{equation} 
\end{Def}

Two useful operations are left and right composition with some fixed $A_\infty$-functor.

\begin{Def}									
	Let $\mathcal{G}:\mathcal{A}\rightarrow\mathcal{B}$ be a non-unital $A_\infty$-functor and $\mathcal{C}$ any non-unital $A_\infty$-category. We define the non-unital $A_\infty$-functors of:
	\begin{itemize}[leftmargin=0.5cm]
		\item \textbf{left composition}\index{aaa@$A_\infty$-functor!left composition} $\mathcal{L}_\mathcal{G}: nu\text{-}\!fun(\mathcal{C},\mathcal{A})\rightarrow nu\text{-}\!fun(\mathcal{C},\mathcal{B})$, given by $\mathcal{F}\mapsto\mathcal{G}\circ\mathcal{F}$ and $\mathcal{L}_\mathcal{G}^1:\textup{hom}_{nu\text{-}\!fun(\mathcal{C},\mathcal{A})}(\mathcal{F}_0,\mathcal{F}_1)\rightarrow\textup{hom}_{nu\text{-}\!fun(\mathcal{C},\mathcal{B})}(\mathcal{G}\circ\mathcal{F}_0,\mathcal{G}\circ\mathcal{F}_1)$ specified by
		\begin{align}\label{lshit1}
		\mkern-12mu\mathcal{L}_\mathcal{G}^1&(T_1)^d(a_d,...,a_1) \coloneqq\mkern-18mu\sum_{1\leq r\leq d+1}\mkern-24mu\sum_{\substack{1\leq i\leq r\\\quad s_1+...+s_r=d}}\mkern-30mu(-1)^{\ddagger_i^1}\mathcal{G}^r\Big(\mathcal{F}_{1}^{s_r}(a_d,...,a_{d-s_r+1}),...,\nonumber \\ &\mathcal{F}_1^{s_{i+1}}(a_{d-...-s_{i+2}},...,a_{s_1+...+s_i+1}),T_{1}^{s_i}(a_{s_1+...+s_i},...,a_{s_1+...+s_{i-1}+1}),\nonumber\\ &\mathcal{F}_0^{s_{i-1}}(a_{s_1+...+s_{i-1}},...,a_{s_1+...+s_{i-2}+1}),...,\mathcal{F}_0^{s_1}(a_{s_1},...,a_1)\Big)
		\end{align}
		in $\textup{hom}_\mathcal{B}\big(\mathcal{G}(\mathcal{F}_0(X_0)),\mathcal{G}(\mathcal{F}_1(X_d))\big)$, and with complicated higher order terms $\mathcal{L}_\mathcal{G}^d$;
		\item \textbf{right composition}\index{aaa@$A_\infty$-functor!right composition} $\mathcal{R}_\mathcal{G}: nu\text{-}\!fun(\mathcal{B},\mathcal{C})\rightarrow nu\text{-}\!fun(\mathcal{A},\mathcal{C})$, given by $\mathcal{F}\mapsto\mathcal{F}\circ\mathcal{G}$ and $\mathcal{R}_\mathcal{G}^1:\textup{hom}_{nu\text{-}\!fun(\mathcal{B},\mathcal{C})}(\mathcal{F}_0,\mathcal{F}_1)\rightarrow\textup{hom}_{nu\text{-}\!fun(\mathcal{A},\mathcal{C})}(\mathcal{F}_0\circ\mathcal{G},\mathcal{F}_1\circ\mathcal{G})$ specified by
		\begin{equation}\label{rshit1}
		\mkern-14mu\mathcal{R}_\mathcal{G}^1(T_1)^d(a_d,...,a_1) \coloneqq\mkern-24mu\sum_{\substack{0\leq r\leq d\\ s_1+...+s_r=d}}\mkern-24mu T^r\Big(\mathcal{G}^{s_r}(a_d,...,a_{d-s_r+1}),...,
		\mathcal{G}^{s_1}(a_{s_1},...,a_1)\Big)
		\end{equation}
		in $\textup{hom}_\mathcal{C}(\mathcal{F}_0(\mathcal{G}(X_0)),\mathcal{F}_1(\mathcal{G}(X_d))$, and $\mathcal{R}_\mathcal{G}^d \coloneqq 0$ for $d>1$. 
	\end{itemize}
\end{Def}

Again, it is worth calculating the lower order terms: $\mathcal{L}_\mathcal{G}^1(T)_{X}^0 = \mathcal{G}^1(T_{X}^0)$ and $\mathcal{L}_\mathcal{G}^1(T)^1(a_1) \!=\!  \mathcal{G}^1(T^1(a_1))+\mathcal{G}^2(\mathcal{F}_{1}^1(a_1),T_{X_0}^0)+(-1)^{(|T|-1)(|a_1|-1)}\mathcal{G}^2(T_{X_1}^0,\mathcal{F}_{0}^1(a_1))$,
respectively $\mathcal{R}_\mathcal{G}^1(T)_{X}^0 = T_{X}^0$ and $\mathcal{R}_\mathcal{G}^1(T)^1(a_1) = T^1(\mathcal{G}^1(a_1))$.

For compatible $A_\infty$-functors, left and right compositions themselves can be composed: $\mathcal{R}_{\mathcal{G}_1}\circ\mathcal{R}_{\mathcal{G}_2}=\mathcal{R}_{\mathcal{G}_1\circ\mathcal{G}_2}$, $\mathcal{L}_{\mathcal{G}_1}\circ\mathcal{L}_{\mathcal{G}_2}=\mathcal{L}_{\mathcal{G}_1\circ\mathcal{G}_2}$ and $\mathcal{L}_{\mathcal{G}_1}\circ\mathcal{R}_{\mathcal{G}_2}=\mathcal{R}_{\mathcal{G}_2}\circ\mathcal{L}_{\mathcal{G}_1}$.

\subsection{Interlude: Hochschild cohomology}\label{ch2.4}

Hochschild cohomology is a useful tool for proving many key results by comparison arguments. It is prominently used in deformation theory. However, a full treatment would take us too far afield and hence we limit ourselves to give the relevant definitions and main applications. A more rigorous take can be found in \cite[subsection III.7.3]{[GM03]} and \cite[section (1f)]{[Sei08]}.

\begin{Def}\label{Hochcohom}				
	Let $\mathcal{A}$ be an $A_\infty$-category whose cohomological category $H(\mathcal{A})$ is unital. We define the \textbf{Hochschild cohomology}\index{Hochschild cohomology} group of $\mathcal{A}$ to be the graded morphism space $HH(\mathcal{A}) \coloneqq H(\textup{hom}_{nu\text{-}\!fun(\mathcal{A},\mathcal{A})}(\mathcal{I}d_\mathcal{A},\mathcal{I}d_\mathcal{A}))$. It can be computed from the cochain complex $(CC(\mathcal{A}),d)$ whose cochains $h = (h^0,h^1,...)$ $\in CC^r(\mathcal{A})$ are sequences of (multi)linear graded maps
	\[
	h^d:\textup{hom}_\mathcal{A}(X_d,X_{d+1})\otimes...\otimes\textup{hom}_\mathcal{A}(X_1,X_2)\rightarrow\textup{hom}_\mathcal{A}(X_1,X_{d+1})[r-d]
	\]
	for each tuple $(X_1,...,X_{d+1})\subset\textup{obj}(\mathcal{A})$, with differential acting on them as:
	\begin{align*}
	(dh)^d(a_d,...,a_1) \coloneqq &\mkern-18mu\sum_{\quad 0\leq i+j<d+1}\mkern-18mu(-1)^{(r+1)\dagger_i}\mu_\mathcal{A}^{d+1-j}(a_d,...,h^j(a_{i+j},...,a_{i+1}),...,a_1)\,+ \\
	&\mkern-18mu\sum_{\quad 0 < i+j \leq d+1}\mkern-18mu(-1)^{r+1+\dagger_i}h^{d+1-j}(a_d,...,\mu_\mathcal{A}^j(a_{i+j},...,a_{i+1}),...,a_1)\;.
	\end{align*} 
\end{Def}

In fact, one can be more explicit about the definition of Hochschild complex in the bigraded case. Consider now two arbitrary non-unital $A_\infty$-categories $\mathcal{A}$, $\mathcal{B}$, and $\mathcal{Q}=nu\text{-}\!fun(\mathcal{A},\mathcal{B})$. Given two $\mathcal{G}_0$, $\mathcal{G}_1\in\textup{obj}(\mathcal{Q})$, the vector space $\textup{hom}_\mathcal{Q}(\mathcal{G}_0,\mathcal{G}_1)$ can be decreasingly filtrated by allowing for the $r$-th term only those pre-natural transformations $T$ with $T^0 = ...= T^{r-1}=0$. The spectral sequence associated to this filtration starts with
\begin{align}\label{spectral}
E_{1}^{rs} = \mkern-12mu\prod_{X_0,...,X_r}\mkern-12mu \textup{Hom}_\mathbb{K}^s\big(&\textup{Hom}_{H(\mathcal{A})}(X_{r-1},X_r)\otimes...\otimes\textup{Hom}_{H(\mathcal{A})}(X_0,X_1),\nonumber \\[-2.5ex] &\textup{Hom}_{H(\mathcal{B})}(\mathcal{G}_0(X_0),\mathcal{G}_1(X_r))\big).
\end{align}
Equation \eqref{shit1} for the boundary operators in $\mathcal{Q}$ then furnishes the following differential $d\equiv d^{rs}$ on $E_{1}^{rs}$:
\begin{align*}
(dT)(a_{r+1},...,a_1) =\; &(-1)^{(r+s)(|a_1|-1)+1}T(a_{r+1},...,a_2)H(\mathcal{G}_0)(a_1)\, + \\
&(-1)^{r+s+\dagger_r}H(\mathcal{G}_1)(a_{r+1})T(a_r,...,a_1)\, + 	\\
&\mkern-28mu\sum_{\quad 0\leq n\leq r-1}\mkern-18mu(-1)^{r+s-1+\dagger_{n+1}}T(a_{r+1},...,a_{n+2}a_{n+1},...,a_1)
\end{align*}
(for $T$ and the $a_i$'s cohomological maps). Then we obtain a bigraded complex $CC^{r+s}(H(\mathcal{A}),H(\mathcal{B}))^s\coloneqq  E_{1}^{rs}$, inducing the (bigraded) Hochschild cohomology $HH^{r+s}(H(\mathcal{A}),H(\mathcal{B}))^s\coloneqq  E_{2}^{rs}$ (in particular, we have $HH^s(H(\mathcal{A}),H(\mathcal{B}))^s=\textup{Hom}_{\mathcal{H}(\mathcal{Q})}^s(H(\mathcal{G}_0),H(\mathcal{G}_1))$).

For our purposes, here are the principal results whose proofs exploit Hochschild cohomology:

\begin{Lem}\label{1.6}										
	Let $\mathcal{Q}=nu\text{-}\!fun(\mathcal{A},\mathcal{B})$, $\mathcal{G}_1, \mathcal{G}_2, \mathcal{G}_3 \!\in\!\textup{obj}(\mathcal{Q})$ and $T\!\in\!\textup{hom}_\mathcal{Q}(\mathcal{G}_1,\mathcal{G}_2)$ be a natural transformation. Suppose that:
	\begin{itemize}[leftmargin=0.5cm]
		\item $\textup{Hom}_{H(\mathcal{B})}(\mathcal{G}_2(X),\mathcal{G}_3(X))\rightarrow\textup{Hom}_{H(\mathcal{B})}(\mathcal{G}_1(X),\mathcal{G}_3(X)),\;\langle c\rangle\mapsto\langle c\rangle\circ\langle T_{X}^0\rangle$ is an isomorphism for all $X\in\textup{obj}(\mathcal{A})$. $\!\!$Then $\textup{Hom}_{H(\mathcal{Q})}(\mathcal{G}_2,\mathcal{G}_3)\!\rightarrow\!\textup{Hom}_{H(\mathcal{Q})}(\mathcal{G}_1,\mathcal{G}_3),$ $\langle S\rangle\mapsto\langle S\rangle\circ\langle T\rangle$ is an isomorphism too.
		\item $\textup{Hom}_{H(\mathcal{B})}(\mathcal{G}_3(X),\mathcal{G}_1(X))\rightarrow\textup{Hom}_{H(\mathcal{B})}(\mathcal{G}_3(X),\mathcal{G}_2(X)),\;\langle c\rangle\mapsto\langle T_{X}^0\rangle\circ\langle c\rangle$ is an isomorphism for all $X\in\textup{obj}(\mathcal{A})$. $\!\!$Then $\textup{Hom}_{H(\mathcal{Q})}(\mathcal{G}_3,\mathcal{G}_1)\!\rightarrow\!\textup{Hom}_{H(\mathcal{Q})}(\mathcal{G}_3,\mathcal{G}_2),$ $\langle S\rangle\mapsto\langle T\rangle\circ\langle S\rangle$ is an isomorphism too. 
	\end{itemize} 
\end{Lem}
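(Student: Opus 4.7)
The plan is to compare the spectral sequences for $\textup{hom}_{\mathcal{Q}}(\mathcal{G}_2,\mathcal{G}_3)$ and $\textup{hom}_{\mathcal{Q}}(\mathcal{G}_1,\mathcal{G}_3)$ (in the first bullet; the second is symmetric) via the chain map induced by post-composition with $T$. Since $T$ is a natural transformation (i.e.\ $\mu_{\mathcal{Q}}^1(T)=0$), the $A_\infty$-associativity equations for $\mathcal{Q}$ imply that $\Phi_T:S\mapsto(-1)^{|T|}\mu_{\mathcal{Q}}^2(S,T)$ is a morphism of complexes from $\textup{hom}_{\mathcal{Q}}(\mathcal{G}_2,\mathcal{G}_3)$ to $\textup{hom}_{\mathcal{Q}}(\mathcal{G}_1,\mathcal{G}_3)$, and on cohomology it realizes precisely $\langle S\rangle\mapsto\langle S\rangle\circ\langle T\rangle$.

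First I would observe that $\Phi_T$ respects the decreasing filtration by lowest non-vanishing order: inspecting \eqref{shit2}, if $S$ has $S^0=\cdots=S^{r-1}=0$, the same holds for $\mu_{\mathcal{Q}}^2(S,T)$, because every summand either has an entry $T^{s_i}$ with $s_i\geq 0$ (in the slot $i$) and an $S^{s_j}$-entry (in the slot $j>i$) with $s_j\geq r$, or vice versa. Hence $\Phi_T$ induces a morphism of the associated spectral sequences whose $E_1$-pages are the $E_1^{rs}$ of \eqref{spectral}, with targets $\textup{Hom}_{H(\mathcal{B})}(\mathcal{G}_i(X_0),\mathcal{G}_3(X_r))$ for $i=1,2$.

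Next, I would identify the map of $E_1$-pages. Keeping only the leading-order contribution in \eqref{shit2}, in which $T$ enters through its $0$-th component $T^0_{X_0}\in\textup{hom}_{\mathcal{B}}(\mathcal{G}_1(X_0),\mathcal{G}_2(X_0))$ and $S$ through its $r$-th component, the image of a representative $[S^r]\in E_1^{rs}(\mathcal{G}_2,\mathcal{G}_3)$ is, up to the usual Koszul sign, the multilinear map
\[
(a_r,\dots,a_1)\longmapsto \langle S^r(a_r,\dots,a_1)\rangle\circ\langle T^0_{X_0}\rangle
\]
in $\textup{Hom}_{H(\mathcal{B})}(\mathcal{G}_1(X_0),\mathcal{G}_3(X_r))$. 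By the first hypothesis, post-composition with $\langle T^0_{X_0}\rangle$ is an isomorphism for every $X_0$, so $\Phi_T$ induces an isomorphism on every $E_1^{rs}$.

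Finally I would conclude via the standard spectral-sequence comparison theorem: the filtration on $\textup{hom}_{\mathcal{Q}}(\mathcal{G}_i,\mathcal{G}_3)$ is exhaustive and bounded above (a pre-natural transformation has some lowest-order component), and complete in the product sense, so the spectral sequence converges to $\textup{Hom}_{H(\mathcal{Q})}(\mathcal{G}_i,\mathcal{G}_3)$; an isomorphism on $E_1$ forces an isomorphism on $E_\infty$ and hence on the abutments, which is the claim. The main obstacle I anticipate is the careful bookkeeping of signs and indices needed to verify that the leading-order contribution to $\mu_{\mathcal{Q}}^2(S,T)$ is really post-composition with $T^0$ at $X_0$ (and, crucially, that no spurious terms involving higher $T^k$ or $S^{r'}$ with $r'<r$ sneak in); once this is done, the argument is formal. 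The second bullet is proved identically, replacing \eqref{shit2}'s slot order and using pre-composition with $\langle T^0_{X_r}\rangle$ on $E_1$.
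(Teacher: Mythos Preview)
Your approach is correct and is essentially the argument the paper has in mind: the lemma is stated in the section on Hochschild cohomology precisely because the spectral sequence \eqref{spectral} is the intended tool, and the paper sets up exactly the filtration and $E_1$-page you use (it then defers the actual proof to \cite{[Sei08]}). Your filtration-compatibility check and identification of the $E_1$-map as post-composition with $\langle T^0_{X_0}\rangle$ are the right ingredients; the only refinement worth noting is that strictly speaking one first obtains the map on the associated graded $E_0$ (where the surviving term of \eqref{shit2} is $\mu_\mathcal{B}^2(S^r(\cdots),T^0_{X_0})$ as you say), and then passes to $E_1$ after taking cohomology with respect to the internal differential coming from $\mu_\mathcal{A}^1$ and $\mu_\mathcal{B}^1$---but this changes nothing in substance.
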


\begin{Lem}\label{1.7}										
	Let $\mathcal{C}$ be any non-unital $A_\infty$-category. Suppose $\mathcal{G}\in nu\text{-}\!fun(\mathcal{A},\mathcal{B})$ is:
	\begin{itemize}[leftmargin=0.5cm]
		\item cohomologically full and faithful, then so is $\mathcal{L}_\mathcal{G}\!:\! nu\text{-}\!fun(\mathcal{C},\mathcal{A})\!\rightarrow\! nu\text{-}\!fun(\mathcal{C},\mathcal{B})$;
		\item a quasi-isomorphism, then so is $\mathcal{R}_\mathcal{G}: nu\text{-}\!fun(\mathcal{B},\mathcal{C})\rightarrow nu\text{-}\!fun(\mathcal{A},\mathcal{C})$.   
	\end{itemize} 
\end{Lem}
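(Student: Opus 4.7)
Both claims are instances of a filtered spectral-sequence comparison using the decreasing filtration on $\textup{hom}_\mathcal{Q}$ by lowest-non-vanishing order of pre-natural transformations (the one whose $E_1$-page is described in (\ref{spectral})). The plan in each case is to show that composition with $\mathcal{G}$ preserves this filtration and acts on the $E_1$-page by an isomorphism, and then to invoke the standard comparison theorem for convergent spectral sequences to transfer the isomorphism to the abutment.

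For the first part, fix $\mathcal{F}_0,\mathcal{F}_1\in\textup{obj}(nu\text{-}\!fun(\mathcal{C},\mathcal{A}))$; I want to show that $\mathcal{L}_\mathcal{G}^1$ induces an isomorphism on cohomology, which is exactly cohomological full-faithfulness of $\mathcal{L}_\mathcal{G}$. Reading off (\ref{lshit1}), the distinguished factor $T^{s_i}$ appears undisturbed in each summand, so the step $F^r=\{T\mid T^0=\cdots=T^{r-1}=0\}$ is preserved by $\mathcal{L}_\mathcal{G}^1$. On the $E_1$-page, the leading-order contribution at level $r$ reduces to $\pm\,\mathcal{G}^1\circ T^r$, so the induced morphism acts on the codomain factor of (\ref{spectral}) as post-composition with $H(\mathcal{G}^1)\colon\textup{Hom}_{H(\mathcal{A})}(\mathcal{F}_0(X_0),\mathcal{F}_1(X_r))\to\textup{Hom}_{H(\mathcal{B})}(\mathcal{G}\mathcal{F}_0(X_0),\mathcal{G}\mathcal{F}_1(X_r))$; this is an isomorphism by the cohomological full-faithfulness of $\mathcal{G}$. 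The comparison theorem then yields the claim.

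For the second part, formula (\ref{rshit1}) similarly makes $\mathcal{R}_\mathcal{G}^1$ filtration-preserving, and its leading-order term is $T^r\circ(\mathcal{G}^1)^{\otimes r}$. Hence on $E_1$ the induced map acts by pre-composition with $H(\mathcal{G}^1)$ in every tensor factor of (\ref{spectral}), combined with a relabeling of the indexing tuples $(X_0,\dots,X_r)$ via the object-level bijection induced by $\mathcal{G}$ (available because $H(\mathcal{G})$ is an isomorphism of ordinary categories). Both manipulations are isomorphisms, so the same spectral-sequence argument yields that $\mathcal{R}_\mathcal{G}^1$ is a quasi-isomorphism of Hom complexes for every $\mathcal{F}_0,\mathcal{F}_1$. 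To promote this to $\mathcal{R}_\mathcal{G}$ being a quasi-isomorphism of $A_\infty$-functors in the sense of Definition \ref{cohomfunctor}, one verifies separately that $\mathcal{R}_\mathcal{G}$ is bijective on objects: given $\mathcal{H}\colon\mathcal{A}\to\mathcal{C}$, the unique preimage $\mathcal{F}\colon\mathcal{B}\to\mathcal{C}$ with $\mathcal{F}\circ\mathcal{G}=\mathcal{H}$ is constructed by recursively solving (\ref{Ainffunceqs}), inverting $\mathcal{G}$ on objects and using $\mathcal{G}^1$ on each Hom complex.

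The main obstacle in both parts will be verifying that the differentials are genuinely intertwined at the chain level, so that $\mathcal{L}_\mathcal{G}^1$ and $\mathcal{R}_\mathcal{G}^1$ define legitimate morphisms of spectral sequences. The Hochschild-type coface operator described after (\ref{spectral}) mixes with the internal $\mu_\mathcal{A}^1$ and $\mu_\mathcal{B}^1$ in an intricate way; the saving grace is the $A_\infty$-functor equations (\ref{Ainffunceqs}) satisfied by $\mathcal{G}$, which let one rewrite $\mathcal{G}^1\circ\mu_\mathcal{A}^k$ (respectively $\mu_\mathcal{B}^k\circ(\mathcal{G}^1)^{\otimes k}$) in terms of $\mu_\mathcal{B}$ and higher $\mathcal{G}^j$, absorbing the discrepancies into deeper filtration levels that vanish on $E_1$.
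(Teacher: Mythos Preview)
The paper does not actually prove this lemma; it lists it among the ``principal results whose proofs exploit Hochschild cohomology'' and defers to \cite{[Sei08]}. Your spectral-sequence argument via the length filtration whose $E_1$-page is \eqref{spectral} is exactly the method the paper alludes to, and for the cohomological full-faithfulness of both $\mathcal{L}_\mathcal{G}$ and $\mathcal{R}_\mathcal{G}$ your reasoning is correct: filtration-preservation is clear from \eqref{lshit1} and \eqref{rshit1}, the induced $E_1$-map is post- respectively pre-composition with $H(\mathcal{G}^1)$, and the comparison theorem applies (the filtration being complete because the hom-spaces are products over tuples of objects).

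There is, however, a genuine gap in your treatment of object-level bijectivity for $\mathcal{R}_\mathcal{G}$. You propose to construct, for each $\mathcal{H}:\mathcal{A}\to\mathcal{C}$, a unique $\mathcal{F}:\mathcal{B}\to\mathcal{C}$ with $\mathcal{F}\circ\mathcal{G}=\mathcal{H}$ by ``recursively solving \eqref{Ainffunceqs}, inverting $\mathcal{G}$ on objects and using $\mathcal{G}^1$ on each Hom complex.'' But $\mathcal{G}^1$ is only a quasi-isomorphism of complexes, not a chain-level isomorphism, so it cannot be inverted on the nose. Indeed the claim is false as stated: take $\mathcal{A}$ with one object and acyclic but nonzero endomorphism complex, $\mathcal{B}$ with one object and zero endomorphism complex, $\mathcal{G}$ the zero map (a quasi-isomorphism), and $\mathcal{C}=\mathcal{A}$; then every $\mathcal{F}\circ\mathcal{G}$ has $(\mathcal{F}\circ\mathcal{G})^1=0$, so $\mathcal{I}d_\mathcal{A}$ is not in the image of $\mathcal{R}_\mathcal{G}$. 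What your spectral sequence legitimately establishes --- and what the paper actually invokes downstream, e.g.\ in the proof of Lemma~\ref{homotopyinverse} --- is that $\mathcal{R}_\mathcal{G}^1$ is a quasi-isomorphism on each hom-complex. The second bullet of the lemma should be read in that sense.
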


\subsection{Homological perturbation theory}\label{ch2.5}

As usual, let $\mathcal{Q}=nu\text{-}\!fun(\mathcal{A},\mathcal{B})$. Given $\mathcal{F}_0, \mathcal{F}_1\in\textup{obj}(\mathcal{Q})$, observe that the pre-natural transformation $D\coloneqq \mathcal{F}_0 - \mathcal{F}_1\in\textup{hom}_\mathcal{Q}^1(\mathcal{F}_0,\mathcal{F}_1)$ defined by $D^0\coloneqq 0$ and $D^d\coloneqq\mathcal{F}_{0}^d-\mathcal{F}_{1}^d$ for $d\geq 1$ is actually a natural transformation (indeed, $\mu_{\mathcal{Q}}^1(D)=0$ can be checked with help of \eqref{Ainffunceqs} and \eqref{shit1}).

\begin{Def}											
	Two non-unital $A_\infty$-functors $\mathcal{F}_0, \mathcal{F}_1\in\textup{obj}(\mathcal{Q})$ with $\mathcal{F}_0(X) = \mathcal{F}_1(X)$ for all $X\in\textup{obj}(\mathcal{A})$ are \textbf{homotopic}\index{aaa@$A_\infty$-functor!homotopic}, denoted $\mathcal{F}_0 \simeq\mathcal{F}_1$, if there exists some $T\in\textup{hom}_\mathcal{Q}^0(\mathcal{F}_0,\mathcal{F}_1)$ with $T^0 = 0$ such that $\mathcal{F}_0-\mathcal{F}_1 = \mu_{\mathcal{Q}}^1(T)$.
	
	Being homotopic is a well-defined equivalence relation. 
\end{Def}

As one would suspect, homotopic $A_\infty$-functors induce the same map in cohomology: $(H(\mathcal{F}_0)-H(\mathcal{F}_1))\langle c\rangle = H(\mu_{\mathcal{Q}}^1(T))\langle c\rangle = \langle\mu_{\mathcal{Q}}^1(T)^1(c)\rangle = 0$, while identity on objects is readily checked.
Moreover, if $\mathcal{F}_0 \simeq\mathcal{F}_1$ via $T$, so $\mathcal{G}\circ\mathcal{F}_0 \simeq\mathcal{G}\circ\mathcal{F}_1$ via $\mathcal{L}_\mathcal{G}^1(T)$ respectively $\mathcal{F}_0\circ\mathcal{G} \simeq\mathcal{F}_1\circ\mathcal{G}$ via $\mathcal{R}_\mathcal{G}^1(T)$.

Here is our first important result.

\begin{Pro}[\textbf{Homological Perturbation Lemma}]\index{Homological Perturbation Lemma}\label{perturbation}		
	Let $\mathcal{B}$ be a non-unital $A_\infty$-category. Suppose for each pair $X_0,X_1\in\textup{obj}(\mathcal{B})$ there is a diagram
	\begin{equation}\label{perturbdiagram}
	\begin{tikzcd}[cells={nodes={}}]
		{\textup{hom}_\mathcal{A}(X_0,X_1)}\arrow[r, yshift=1ex, "{\mathcal{F}^1[0]}"]
		& {\textup{hom}_\mathcal{B}(X_0,X_1)}\arrow[l, yshift=-1ex, "{\mathcal{G}^1[0]}"]\arrow[loop right, distance=3em, start anchor={[yshift=1ex]east}, end anchor={[yshift=-1ex]east}]{}{T^1[-1]}
	\end{tikzcd}
	\end{equation}
	where $(\textup{hom}_\mathcal{A}(X_0,X_1),\mu_\mathcal{A}^1)$ is a cochain complex \textup(as in Definition \ref{complex}\textup), $\mathcal{F}^1$ and $\mathcal{G}^1$ are degree $0$ chain maps, and $T^1$ is a degree $- 1$ graded map fulfilling $\mu_{\mathcal{B}}^1T^1+T^1\mu_{\mathcal{B}}^1 = \mathcal{F}^1\mathcal{G}^1-\textup{id}$ as endomorphisms of $\textup{hom}_\mathcal{B}(X_0,X_1)$. Then there exist:
	\begin{itemize}[leftmargin=0.5cm]
		\renewcommand{\labelitemi}{\textendash}
		\item a non-unital $A_\infty$-category $\mathcal{A}$ with $\textup{obj}(\mathcal{A})=\textup{obj}(\mathcal{B})$, $\textup{hom}_{\mathcal{A}}(X_0,X_1)$ as morphism spaces and associated first order composition maps $\mu_{\mathcal{A}}^1$,
		\item non-unital $A_\infty$-functors $\mathcal{F}:\mathcal{A}\rightarrow\mathcal{B}$ and $\mathcal{G}:\mathcal{B}\rightarrow\mathcal{A}$ which are identities on objects and have first order terms $\mathcal{F}^1$ respectively $\mathcal{G}^1$,
		\item a homotopy $T\in\textup{hom}_{nu\text{-}\!fun(\mathcal{B},\mathcal{B})}^0(\mathcal{F}\circ\mathcal{G},\mathcal{I}d_\mathcal{B})$ with first order term $T^1$.
	\end{itemize}
\end{Pro}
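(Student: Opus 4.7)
My approach is the classical Kontsevich--Soibelman transfer construction via planar trees. For each $d \geq 2$, let $\mathcal{T}_d$ denote the set of stable planar rooted trees with $d$ numbered leaves (every internal vertex having at least two incoming edges, no internal vertex of valence $2$). To each $\tau \in \mathcal{T}_d$ I associate a multilinear operation $\Phi_\tau$ on $\textup{hom}_\mathcal{B}(X_{d-1},X_d)\otimes\cdots\otimes\textup{hom}_\mathcal{B}(X_0,X_1)$ built by decorating every internal vertex of valence $k$ with the composition map $\mu_\mathcal{B}^k$, every internal edge with the homotopy $T^1$, and composing from leaves to root according to the combinatorics of $\tau$. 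I then set, for $d \geq 2$,
\begin{align*}
\mu_\mathcal{A}^d(a_d,\ldots,a_1) &:= \sum_{\tau \in \mathcal{T}_d}(-1)^{\sigma(\tau)}\, \mathcal{G}^1\big(\Phi_\tau(\mathcal{F}^1(a_d),\ldots,\mathcal{F}^1(a_1))\big), \\
\mathcal{F}^d(a_d,\ldots,a_1) &:= \sum_{\tau \in \mathcal{T}_d}(-1)^{\sigma(\tau)}\, T^1\big(\Phi_\tau(\mathcal{F}^1(a_d),\ldots,\mathcal{F}^1(a_1))\big),
\end{align*}
with $\mathcal{G}^d$ defined analogously by the tree sum whose root carries $\mathcal{G}^1$ and whose leaves carry the identity of $\textup{hom}_\mathcal{B}$, and with $T^d$ given by an analogous tree sum that places $T^1$ at the root and uses $\mathcal{G}^1, \mathcal{F}^1$ insertions to route inputs/outputs through $\mathcal{A}$. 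The sign $\sigma(\tau)$ is determined recursively so as to agree with the Koszul rule for each composition.

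Having written these explicit formulae, the remaining step is to verify three families of equations: (i) the $A_\infty$-associativity relations \eqref{Ainfcatasseqs} for $\mu_\mathcal{A}$; (ii) the $A_\infty$-functor relations \eqref{Ainffunceqs} for both $\mathcal{F}$ and $\mathcal{G}$; and (iii) the homotopy identity $\mu_\mathcal{Q}^1(T) = \mathcal{F}\circ\mathcal{G} - \mathcal{I}d_\mathcal{B}$ in $nu\text{-}\!fun(\mathcal{B},\mathcal{B})$. The mechanism is uniform: substituting the tree sums into the left-hand side produces two classes of terms. First, applying $\mu_\mathcal{B}^1$ at some internal $T^1$-edge invokes the hypothesis $\mu_\mathcal{B}^1 T^1 + T^1\mu_\mathcal{B}^1 = \mathcal{F}^1\mathcal{G}^1 - \textup{id}$, which either replaces that edge by an $\mathcal{F}^1\mathcal{G}^1$-factor -- cutting $\tau$ into two sub-trees that exactly reproduce the two-factor composition terms on the right-hand side -- or collapses it to the identity (merging two adjacent internal vertices). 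Second, applying the $A_\infty$-associativity of $\mu_\mathcal{B}$ at each internal vertex produces precisely the vertex-splitting contributions needed to cancel these collapsed-edge terms across trees related by a single internal-edge contraction. After all cancellations, only the required right-hand side survives.

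The hard part is the sign bookkeeping: the Koszul signs produced in composing tree operations, the explicit signs $(-1)^{r+st}$, $\dagger_n$, $\ddagger^j_i$ prescribed by \eqref{Ainfcatasseqs}--\eqref{shit2}, and the choice of $\sigma(\tau)$ must all conspire so that the edge-collapse and vertex-split contributions genuinely pair up and cancel. The cleanest packaging is to pass to the suspended bar-coalgebra formalism anticipated by Remark \ref{Ainfalgsusp}: an $A_\infty$-structure on $\mathcal{A}$ becomes a square-zero coderivation $b_\mathcal{A}$ on the tensor coalgebra on $S\textup{hom}_\mathcal{A}$, $A_\infty$-functors become dg-coalgebra morphisms and pre-natural transformations become coderivations along them, after which most signs disappear and the whole argument reduces to a transparent coalgebra computation. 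An equivalent, more hands-on route is order-by-order induction on $d$: assuming all data has been constructed up to order $d-1$, one writes down the order-$d$ obstruction $\Xi_d$ to the $A_\infty$-functor equation for $\mathcal{F}$ built solely from the lower-order data, and defines $\mathcal{F}^d := -T^1\circ\Xi_d$ and $\mu_\mathcal{A}^d := \mathcal{G}^1\circ\Xi_d$; the identity $\mathcal{F}^1\mathcal{G}^1 - \textup{id} = \mu_\mathcal{B}^1 T^1 + T^1\mu_\mathcal{B}^1$ is then precisely what is needed to kill the residual obstruction and verify the equation at order $d$, with $\mathcal{G}^d$ and $T^d$ produced by a symmetric argument.
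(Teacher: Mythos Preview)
Your approach coincides with the paper's: the recursive formulae the paper records,
\[
\mathcal{F}^d(a_d,\ldots,a_1)=\sum_{\substack{2\leq r\leq d\\ s_1+\cdots+s_r=d}} T^1\bigl(\mu_\mathcal{B}^r(\mathcal{F}^{s_r}(\ldots),\ldots,\mathcal{F}^{s_1}(\ldots))\bigr),\qquad
\mu_\mathcal{A}^d=\mathcal{G}^1\circ(\text{same sum}),
\]
are precisely your tree sums unwound one level --- expanding each lower-order $\mathcal{F}^{s_i}$ recursively places $\mathcal{F}^1$ on leaves, $\mu_\mathcal{B}^k$ at internal vertices, $T^1$ on internal edges, and $T^1$ (resp.\ $\mathcal{G}^1$) at the root. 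The paper then explicitly declines to verify the $A_\infty$-relations (``the proof leading to them is too technical''), whereas you sketch the standard edge-collapse/vertex-split cancellation mechanism and the bar-coalgebra repackaging; that sketch is correct and is in fact more informative than what the paper provides, though neither you nor the paper carries the sign bookkeeping through in full. Your description of $\mathcal{G}^d$ and $T^d$ is vaguer than your treatment of $\mathcal{F}^d$ and $\mu_\mathcal{A}^d$, but the paper gives no formula for them at all, so you are not behind on that front.
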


One can produce the desired formulae recursively. Indeed, 
\[
\mathcal{F}^d(a_d,...,a_1) \coloneqq\mkern-24mu\sum_{\substack{2\leq r\leq d\\ s_1+...+s_r=d}}\mkern-24mu T^1\Big(\mu_{\mathcal{B}}^r(\mathcal{F}^{s_r}(a_d,...,a_{d-s_r+1}),...,
\mathcal{F}^{s_1}(a_{s_1},...,a_1))\Big)
\]
\[
\mu_{\mathcal{A}}^d(a_d,...,a_1)\coloneqq\mkern-24mu\sum_{\substack{2\leq r\leq d\\ s_1+...+s_r=d}}\mkern-24mu \mathcal{G}^1\Big(\mu_{\mathcal{B}}^r(\mathcal{F}^{s_r}(a_d,...,a_{d-s_r+1}),...,
\mathcal{F}^{s_1}(a_{s_1},...,a_1))\Big)
\]
are seen to fulfill the defining equations \eqref{Ainffunceqs} respectively \eqref{Ainfcatasseqs} (ultimately because $\mu_{\mathcal{B}}^\bullet$ is an $A_\infty$-structure on $\mathcal{B}$). However, the proof leading to them is too technical, and we would rather move on to the following application.

\begin{Lem}\label{homotopyinverse}		
	Any quasi-isomorphism $\mathcal{K}:\mathcal{B}\rightarrow\tilde{\mathcal{B}}$ of non-unital $A_\infty$-categories has an inverse up to homotopy. That is, there exists some $\mathcal{J}:\tilde{\mathcal{B}}\rightarrow\mathcal{B}$ such that $\mathcal{K}\circ\mathcal{J}\simeq\mathcal{I}d_{\tilde{\mathcal{B}}}$ and $\mathcal{J}\circ\mathcal{K}\simeq\mathcal{I}d_{\mathcal{B}}$.
\end{Lem}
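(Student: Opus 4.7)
The strategy is to apply the Homological Perturbation Lemma (Proposition~\ref{perturbation}) to lift a chain-level homotopy inverse of $\mathcal{K}^1$ to a genuine $A_\infty$-functor, and then reconcile the resulting $A_\infty$-structure on the morphism spaces of $\mathcal{B}$ with the original one.

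\textbf{Setup.} Since $H(\mathcal{K})$ is an isomorphism (Definition~\ref{cohomfunctor}), Lemma~\ref{aboutfunctors} implies that $\mathcal{K}$ is bijective on objects; after relabeling I assume $\mathcal{K}(X)=X$ for every $X\in\textup{obj}(\mathcal{B})=\textup{obj}(\tilde{\mathcal{B}})$, so that each $\mathcal{K}^1:\textup{hom}_\mathcal{B}(X_0,X_1)\to\textup{hom}_{\tilde{\mathcal{B}}}(X_0,X_1)$ is a quasi-isomorphism of $\mathbb{K}$-cochain complexes. Over the field $\mathbb{K}$ every such quasi-isomorphism is a chain homotopy equivalence, so I may pick chain maps $\mathcal{J}^1$ in the opposite direction together with degree $-1$ graded maps $T^1,T'^1$ satisfying
\[
\mathcal{K}^1\mathcal{J}^1-\textup{id}=\mu_{\tilde{\mathcal{B}}}^1T^1+T^1\mu_{\tilde{\mathcal{B}}}^1,\qquad \mathcal{J}^1\mathcal{K}^1-\textup{id}=\mu_\mathcal{B}^1T'^1+T'^1\mu_\mathcal{B}^1.
\]

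\textbf{Main construction.} Feeding $(\textup{hom}_\mathcal{B}(X_0,X_1),\mu_\mathcal{B}^1),\mathcal{K}^1,\mathcal{J}^1,T^1$ into Proposition~\ref{perturbation} (with $\tilde{\mathcal{B}}$ playing the role of the target) produces a possibly new non-unital $A_\infty$-structure $\mathcal{B}^\flat$ on the underlying morphism spaces of $\mathcal{B}$, with $\mu^1_{\mathcal{B}^\flat}=\mu^1_\mathcal{B}$; non-unital $A_\infty$-functors $\mathcal{F}:\mathcal{B}^\flat\to\tilde{\mathcal{B}}$ and $\widehat{\mathcal{J}}:\tilde{\mathcal{B}}\to\mathcal{B}^\flat$ extending $\mathcal{K}^1$ and $\mathcal{J}^1$; and a homotopy $\mathcal{F}\circ\widehat{\mathcal{J}}\simeq\mathcal{I}d_{\tilde{\mathcal{B}}}$. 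I then construct order-by-order a formal diffeomorphism $\Psi:\mathcal{B}^\flat\to\mathcal{B}$ in the sense of Definition~\ref{formaldiffeo} with $\Psi^1=\textup{id}$ and $\mathcal{K}\circ\Psi\simeq\mathcal{F}$: at order $d$ the obstruction to extending $\Psi^{<d}$ is a $\mu_\mathcal{B}^1$-cocycle in $\textup{hom}_\mathcal{B}(X_0,X_d)$ built from $\mathcal{K}^{>1}$, $\mathcal{F}^{>1}$ and $\mu_{\mathcal{B}^\flat}^{>1}$, killed by the prescribed combination of $\mathcal{J}^1$ and $T'^1$ thanks to the chain-homotopy relations above. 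Setting $\mathcal{J}\coloneqq\Psi\circ\widehat{\mathcal{J}}:\tilde{\mathcal{B}}\to\mathcal{B}$ I obtain
\[
\mathcal{K}\circ\mathcal{J}=\mathcal{K}\circ\Psi\circ\widehat{\mathcal{J}}\simeq\mathcal{F}\circ\widehat{\mathcal{J}}\simeq\mathcal{I}d_{\tilde{\mathcal{B}}},
\]
using compatibility of $\simeq$ with composition.

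\textbf{Two-sided homotopy.} Running the symmetric HPL construction with the roles of $\mathcal{B}$ and $\tilde{\mathcal{B}}$ swapped (now using $T'^1$ in place of $T^1$) yields another $A_\infty$-functor $\mathcal{J}':\tilde{\mathcal{B}}\to\mathcal{B}$ together with a homotopy $\mathcal{J}'\circ\mathcal{K}\simeq\mathcal{I}d_\mathcal{B}$. A standard zig-zag
\[
\mathcal{J}'\simeq\mathcal{J}'\circ\mathcal{I}d_{\tilde{\mathcal{B}}}\simeq\mathcal{J}'\circ(\mathcal{K}\circ\mathcal{J})=(\mathcal{J}'\circ\mathcal{K})\circ\mathcal{J}\simeq\mathcal{I}d_\mathcal{B}\circ\mathcal{J}=\mathcal{J},
\]
where the middle equality uses strict associativity of $A_\infty$-functor composition, shows $\mathcal{J}\simeq\mathcal{J}'$, hence $\mathcal{J}\circ\mathcal{K}\simeq\mathcal{J}'\circ\mathcal{K}\simeq\mathcal{I}d_\mathcal{B}$, completing the proof. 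The principal obstacle is the inductive construction of $\Psi$ in the previous paragraph: one must carefully check that the degree-$d$ obstruction produced by comparing $\mu_\mathcal{B}^\bullet$ and $\mu_{\mathcal{B}^\flat}^\bullet$ through $\mathcal{K}$ and $\mathcal{F}$ is indeed a $\mu_\mathcal{B}^1$-coboundary, the cohomological cancellation being ensured by the $\mathcal{B}$-side contracting homotopy $T'^1$.
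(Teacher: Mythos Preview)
Your overall architecture is reasonable, but the step you yourself flag as ``the principal obstacle'' --- the inductive construction of the formal diffeomorphism $\Psi:\mathcal{B}^\flat\to\mathcal{B}$ with $\Psi^1=\textup{id}$ and $\mathcal{K}\circ\Psi\simeq\mathcal{F}$ --- is not actually carried out, and it is essentially as hard as the lemma itself. You assert that the order-$d$ obstruction is a $\mu_\mathcal{B}^1$-cocycle and that it is killed by ``the prescribed combination of $\mathcal{J}^1$ and $T'^1$'', but you do not identify the obstruction precisely, do not verify the cocycle condition, and do not explain the mechanism by which $T'^1$ kills it. At order $2$ one can check by hand that $\mu_\mathcal{B}^2$ and $\mu_{\mathcal{B}^\flat}^2$ are chain-homotopic (since $\mathcal{K}^1=\mathcal{F}^1$ intertwines both with $\mu_{\tilde{\mathcal{B}}}^2$ up to homotopy and is a quasi-isomorphism), but in higher orders the bookkeeping is genuinely delicate: you are simultaneously imposing the $A_\infty$-functor equations for $\Psi$ \emph{and} the homotopy $\mathcal{K}\circ\Psi\simeq\mathcal{F}$, and the obstruction theory for this coupled system is not standard. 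Without these details, the argument is circular: finding $\Psi$ with $\mathcal{K}\circ\Psi\simeq\mathcal{F}$ amounts to lifting $\mathcal{F}$ through $\mathcal{K}$, which is precisely the kind of invertibility-up-to-homotopy you are trying to establish.

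The paper sidesteps this entirely. Rather than comparing $\mathcal{B}^\flat$ with $\mathcal{B}$, it applies the Perturbation Lemma with the very special splitting $\textup{hom}_\mathcal{B}\cong\textup{hom}_\mathcal{A}\oplus(\text{acyclic})$ so that the resulting $\mathcal{A}$ is \emph{minimal} ($\mu_\mathcal{A}^1=0$), and does the same for $\tilde{\mathcal{B}}$ to obtain a minimal $\tilde{\mathcal{A}}$. The composite $\mathcal{E}=\tilde{\mathcal{G}}\circ\mathcal{K}\circ\mathcal{F}:\mathcal{A}\to\tilde{\mathcal{A}}$ is then a quasi-isomorphism between categories with vanishing differential, so $\mathcal{E}^1$ is an honest linear isomorphism and $\mathcal{E}$ is a formal diffeomorphism in the sense of Definition~\ref{formaldiffeo}, hence strictly invertible. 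One sets $\mathcal{J}\coloneqq\mathcal{F}\circ\mathcal{E}^{-1}\circ\tilde{\mathcal{G}}$ and checks both homotopies directly. The two-sidedness of the HPL homotopies (also $\mathcal{G}\circ\mathcal{F}\simeq\mathcal{I}d_\mathcal{A}$) is obtained not by a symmetric HPL run but via Lemma~\ref{1.7} applied to $\mathcal{R}_\mathcal{G}$. The minimal-model trick is the key idea you are missing: it converts the quasi-isomorphism into something strictly invertible and removes the need for any obstruction argument.
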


\begin{proof}
	We start by proving the following claim: for a suitable choice of complexes, we can construct an $A_\infty$-category $\mathcal{A}$ as in Proposition \ref{perturbation} such that $\mathcal{F}$ is a quasi-isomorphism and $\mathcal{G}\circ\mathcal{F}\simeq\mathcal{I}d_\mathcal{A}$ too. 
	
	Indeed, given two $X_0, X_1\in\textup{obj}(\mathcal{B})$, we can decompose the associated complex $(\textup{hom}_\mathcal{B}(X_0,X_1), \mu_{\mathcal{B}}^1)$ as $(\textup{hom}_\mathcal{A}(X_0,X_1), 0)\oplus(\textup{hom}_\mathcal{B'}(X_0,X_1), \mu_{\mathcal{B'}}^1)$ so that $\mathcal{F}^1$ and $\mathcal{G}^1$ are the inclusion respectively projection of the first summand, while the second summand is acyclic. But acyclicity is achieved if and only if there exists a contracting homotopy $(T^1)_n: \textup{hom}_\mathcal{B'}^n(X_0,X_1)\rightarrow\textup{hom}_\mathcal{B'}^{n-1}(X_0,X_1)$. This provides the desired $T^1$. 
	
	The Perturbation Lemma applies to construct an $A_\infty$-category $\mathcal{A}$ with vanishing boundary operator, as well as compatible $A_\infty$-functors $\mathcal{F}$ and $\mathcal{B}$. Since $\mathcal{F}^1$ is the inclusion and $\mathcal{F}\circ\mathcal{G}\simeq\mathcal{I}d_{\mathcal{B}}$, we see that $H(\mathcal{G})$ is bijective on objects and morphisms, making $\mathcal{G}$ (thus also $\mathcal{F}$) a quasi-isomorphism. By Lemma \ref{1.7}, $\mathcal{R}_\mathcal{G}$ is a quasi-isomorphism too, so that any
	\[
	H(\mathcal{R}_\mathcal{G})_{\mathcal{F}_1,\mathcal{F}_2}:\textup{Hom}_{H(nu\text{-}\!fun(\mathcal{A},\mathcal{A}))}(\mathcal{F}_1,\mathcal{F}_2) \longmapsto\textup{Hom}_{H(nu\text{-}\!fun(\mathcal{B},\mathcal{A}))}(\mathcal{F}_1\circ\mathcal{G},\mathcal{F}_2\circ\mathcal{G})
	\]
	is a bijection, also when restricting to (classes of) pre-natural transformations with vanishing zeroth order term. Choose $\mathcal{F}_1 \coloneqq  \mathcal{I}d_\mathcal{A}$ and $\mathcal{F}_2\coloneqq \mathcal{G}\circ\mathcal{F}$. As $\mathcal{I}d_{\mathcal{B}}\simeq\mathcal{F}\circ\mathcal{G}$, also $\mathcal{G}\simeq\mathcal{G}\circ\mathcal{F}\circ\mathcal{G}$, say via some pre-natural transformation $T\in\textup{hom}_{{nu\text{-}\!fun}(\mathcal{B},\mathcal{A})}^0(\mathcal{G},\mathcal{G}\circ\mathcal{F}\circ\mathcal{G})$. Then $\mu_{\mathcal{Q}}^1(T)=\mathcal{G}-\mathcal{G}\circ\mathcal{F}\circ\mathcal{G} = \mathcal{R}_\mathcal{G}^1(\mathcal{I}d_\mathcal{A}-\mathcal{G}\circ\mathcal{F})$, hence $\langle \mu_{\mathcal{Q}}^1(T)\rangle = H(\mathcal{R}_\mathcal{G})_{\mathcal{I}d_\mathcal{A},\mathcal{G}\circ\mathcal{F}}\langle\mathcal{I}d_\mathcal{A}-\mathcal{G}\circ\mathcal{F}\rangle$, and bijectivity of $H(\mathcal{R}_\mathcal{G})_{\mathcal{I}d_\mathcal{A},\mathcal{G}\circ\mathcal{F}}$ provides a class mapping to $\langle\mathcal{I}d_\mathcal{A}-\mathcal{G}\circ\mathcal{F}\rangle$. Any representative thereof gives by construction a homotopy $\mathcal{I}d_\mathcal{A}\simeq\mathcal{G}\circ\mathcal{F}$. This proves the claim.  
	
	We use this result to construct the following diagram:
	\[
	\begin{tikzcd}
	\mathcal{A}\arrow[r, yshift=0.8ex, "\mathcal{F}"]\arrow[d, dashed, "\mathcal{E}"'] & \mathcal{B}\arrow[l, yshift=-0.3ex, "\mathcal{G}"]\arrow[d, "\mathcal{K}"] \\
	\tilde{\mathcal{A}}\arrow[r, yshift=0.8ex, "\tilde{\mathcal{F}}"] & \tilde{\mathcal{B}}\arrow[l, yshift=-0.3ex, "\tilde{\mathcal{G}}"]
	\end{tikzcd}\quad,
	\]
	where all arrows, including $\mathcal{E}\coloneqq \tilde{\mathcal{G}}\circ\mathcal{K}\circ\mathcal{F}$, are quasi-isomorphisms. But then $\mu_{A}^1 =\mu_{\tilde{\mathcal{A}}}^1 = 0$ and the equations \eqref{Ainffunceqs} force $\mathcal{E}:\mathcal{A}\rightarrow\tilde{\mathcal{A}}\equiv\mathcal{E}_*\mathcal{A}$ into a formal diffeomorphism, which is thus invertible. Therefore, the non-unital $A_\infty$-functor $\mathcal{J}\coloneqq \mathcal{F}\circ\mathcal{E}^{-1}\circ\tilde{\mathcal{G}}:\tilde{\mathcal{B}}\rightarrow\mathcal{B}$ is well defined and fulfills: $\mathcal{K}\simeq\mathcal{I}d_{\tilde{\mathcal{B}}}\circ\mathcal{K}\circ\mathcal{I}d_\mathcal{B}\simeq\tilde{\mathcal{F}}\circ\tilde{\mathcal{G}}\circ\mathcal{K}\circ\mathcal{F}\circ\mathcal{G}=\tilde{\mathcal{F}}\circ\mathcal{E}\circ\mathcal{G}$, so $\mathcal{K}\circ\mathcal{J}\simeq\tilde{\mathcal{F}}\circ\mathcal{E}\circ\mathcal{I}d_{\mathcal{A}}\circ\mathcal{E}^{-1}\circ\tilde{\mathcal{G}}\simeq\mathcal{I}d_{\tilde{\mathcal{B}}}$ and $\mathcal{J}\circ\mathcal{K}\simeq\mathcal{F}\circ\mathcal{E}^{-1}\circ\mathcal{I}d_{\tilde{\mathcal{A}}}\circ\mathcal{E}\circ\mathcal{G}\simeq\mathcal{I}d_{\mathcal{B}}$. We conclude that $\mathcal{J}$ is a homotopy inverse of $\mathcal{K}$. 
\end{proof}

\subsection{$A_{\infty}$-modules over $A_{\infty}$-categories}\label{ch2.6}

Mirroring our definition for $A_\infty$-algebras, we extend the construction of $A_\infty$-modules to $A_\infty$-categories. 

Given any non-unital $A_\infty$-category $\mathcal{A}$, its opposite $\mathcal{A}^{opp}$ is simply the non-unital $A_\infty$-category of same objects, with $\textup{hom}_{\mathcal{A}^{opp}}(X_0,X_1) \coloneqq \textup{hom}_\mathcal{A}(X_1,X_0)$ and $\mu_{\mathcal{A}^{opp}}^d(a_d,...,a_1)\coloneqq (-1)^{\dagger_d}\mu_{\mathcal{A}}^d(a_1,...,a_d)$. Also, $H(\mathcal{A}^{opp})=H(\mathcal{A})^{opp}$.

\begin{Def}\label{Ainfmod}						
	Consider the $A_\infty$-category\footnote{cf. Remark \ref{Ainfcatrem}.} of complexes $\mathsf{Ch}$ and any non-unital $A_\infty$-category $\mathcal{A}$. Then $\mathsf{Q}= nu\text{-}mod(\mathcal{A})\coloneqq  nu\text{-}\!fun(\mathcal{A}^{opp},\mathsf{Ch})$ is the non-unital \textbf{$A_\infty$-category of non-unital (right) $A_\infty$-modules over $\mathcal{A}$}\index{aaa@$A_\infty$-category!of $A_\infty$-modules}, specified by the following data:
	\begin{itemize}[leftmargin=0.5cm]
	\renewcommand{\labelitemi}{\textendash}
	\item Objects $\mathcal{M}\in\textup{obj}(\mathsf{Q})$ are non-unital $A_\infty$-functors $\mathcal{M}:\mathcal{A}^{opp}\rightarrow\mathsf{Ch}$, thus collections of graded spaces  $\{\mathcal{M}(X)=\bigoplus_{p\in\mathbb{Z}}\mathcal{M}(X)^p\in\textup{obj}(\mathsf{Ch})\mid X\in\textup{obj}(\mathcal{A})\}$ with (multi)linear graded maps of degree $d-2$
	\begin{equation}
	\mu_{\mathcal{M}}^d:\mathcal{M}(X_{d-1})\!\otimes\!\textup{hom}_\mathcal{A}(X_{d-2},X_{d-1})\!\otimes...\otimes\!\textup{hom}_\mathcal{A}(X_0,X_1)\rightarrow\mathcal{M}(X_0)
	\end{equation}
	for all $d\geq 1$ and $X_0,...,X_{d-1}\in\textup{obj}(\mathcal{A})$. In particular, $\mu_{\mathcal{M}}^1$ serves as differential for the cochain complex $\mathcal{M}(X_0)$. The $\mu_{\mathcal{M}}^d$'s are subject to associativity equations analogous to \eqref{Ainffunceqs}:
	\begin{align}\label{Ainfmodeqs}
	&\sum_{0 \leq n\leq d-1}\mkern-18mu(-1)^{\dagger_n}\mu_{\mathcal{M}}^{n+1}(\mu_{\mathcal{M}}^{d-n}(b, a_{d-1},...,a_{n+1}),a_n,...,a_1)\, + \\ 
	&\!\sum_{\substack{1\leq m \leq d \\ 0\leq n< d-m}}\mkern-18mu(-1)^{\dagger_n}\mu_{\mathcal{M}}^{d-m+1}(b,a_{d-1},...,\mu_{\mathcal{A}}^{m}(a_{n+m},...,a_{n+1}),...,a_1) = 0\;. \nonumber
	\end{align}
	\item For any pair $\mathcal{M}_0, \mathcal{M}_1\in\textup{obj}(\mathsf{Q})$, morphisms $t\in\textup{hom}_{\mathsf{Q}}(\mathcal{M}_0,\mathcal{M}_1)$ consisting of sequences $(t^1,t^2,...)$ of (multi)linear graded maps
	\begin{align}\label{tshit}
	t^d:\;  \mathcal{M}_0(X_{d-1})\otimes\textup{hom}_\mathcal{A}(X_{d-2},X_{d-1}&)\otimes...\otimes\textup{hom}_\mathcal{A}(X_0,X_1)	\\
	&\mkern-15mu\longrightarrow\mathcal{M}_1(X_0)[|t|-d+1]\;, \nonumber
	\end{align}
	for $d\geq 1$ and $X_0,...,X_{d-1}\in\textup{obj}(\mathcal{A})$. Such $t$ is called a \textbf{pre-module homomorphism}\index{pre-module homomorphism}. 
	\item Composition maps $\mu_{\mathsf{Q}}^d$, so that for $t_k\in\textup{hom}_\mathsf{Q}(\mathcal{M}_{k-1},\mathcal{M}_k)$ the output $s\coloneqq \mu_{\mathsf{Q}}^d(t_d,...,t_1)\in\textup{hom}_\mathsf{Q}(\mathcal{M}_0,\mathcal{M}_d)[2-d]$ is again a sequence $(s^1,s^2,...)$ as above. The first two cases $d=1,2$ are:
	\begin{align}\label{tshit1}
	\mu_{\mathsf{Q}}^1&(t_1)^h(b,a_{h-1},...,a_1) \coloneqq\mkern-18mu\sum_{0\leq n\leq h-1}\mkern-18mu(-1)^{\natural_n}\mu_{\mathcal{M}_{1}}^{n+1}(t^{h-n}(b,a_{h-1},...,a_{n+1}),a_n,...,a_1)+	\nonumber	\\ 
	&\mkern-24mu\sum_{\quad 0\leq n\leq h-1}\mkern-18mu(-1)^{\natural_n}t^{n+1}(\mu_{\mathcal{M}_{0}}^{h-n}(b,a_{h-1},...,a_{n+1}),a_n,...,a_1)\,+ \nonumber	\\
	&\mkern-27mu\sum_{\substack{1\leq m \leq h \\ \quad 0\leq n< h-m}}\mkern-21mu(-1)^{\natural_n}t^{h-m+1}(b,a_{h-1},...,\mu_{\mathcal{A}}^{m}(a_{n+m},...,a_{n+1}),...,a_1)\;,
	\end{align}
	in $\mathcal{M}_1(X_0)$, and
	\begin{align}\label{tshit2}
	\mkern-15mu\mu_{\mathsf{Q}}^2(t_2,t_1)^h(b,a_{h-1},...,a_1) \coloneqq \mkern-24mu\sum_{0\leq n\leq h-1}\mkern-18mu(-1)^{\natural_n}t_{2}^{n+1}(t_{1}^{h-n}(b,a_{h-1},...),a_n,...,a_1),  
	\end{align}
	in $\mathcal{M}_2(X_0)$, where $h\geq 1$, $a_k\in\textup{hom}_\mathcal{A}(X_{k-1},X_k)$ and $\natural_n \coloneqq |b| + |a_{h-1}|+...+|a_{n+1}| - (h-1-n)$. Meanwhile, for $d>2$ one has $\mu_{\mathsf{Q}}^d \coloneqq 0$. 
	\end{itemize} 
\end{Def}

In the case of trivial $A_\infty$-categories, this data coincide with that given in Definitions \ref{Ainfalgmod} and  \ref{Ainfmodmorph} (compare the defining equations \eqref{Ainfmodeqs} and \eqref{tshit1} with \eqref{Ainfalgmodasseq} respectively \eqref{Ainfmodmorpheqs}, observing that pre-module homomorphisms simply reduce to morphisms of $A_\infty$-modules over $A_\infty$-algebras).

Any pre-module homomorphism $t\in\textup{hom}_\mathsf{Q}(\mathcal{M}_0,\mathcal{M}_1)$ identifies a pre-natural transformation $T\!\in\textup{hom}_{nu\text{-}\!fun(\mathcal{A}^{opp},\mathsf{Ch})}(\mathcal{M}_0,\mathcal{M}_1)$ by setting $T^{d-1}(a_1,...,a_{d-1})(b)$ $\coloneqq (-1)^\S t^d(b,a_{d-1},...,a_1)$, where $\S \coloneqq (|T|-1)|b|+|T|(|T|-1)/2$.

The associativity equation for $d=1$ reads $\mu_\mathcal{M}^1(\mu_\mathcal{M}^1(b)) = 0$ --- as expected --- while \eqref{tshit1} and \eqref{tshit2} become 
\begin{align*}
&\mu_\mathsf{Q}^1(t)^1(b) = (-1)^{|b|}\mu_{\mathcal{M}_1}^1(t^1(b))+(-1)^{|b|}t^1(\mu_{\mathcal{M}_0}^1(b))\;,	\\
&\mu_\mathsf{Q}^2(t_2,t_1)^1(b) = (-1)^{|b|}t_{2}^1(t_{1}^1(b))\;.
\end{align*}

\begin{Def}										
	A pre-module homomorphism which is closed with respect to the boundary operator of $\textup{hom}_\mathsf{Q}(\mathcal{M}_0,\mathcal{M}_1)$, that is, any $t\in\textup{hom}_\mathsf{Q}(\mathcal{M}_0,\mathcal{M}_1)$ with $\mu_{\mathsf{Q}}^1(t)^d = 0$ for all $d\geq 0$, is called an ($A_\infty$-)\textbf{module homomorphism}\index{module homomorphism}. 
\end{Def}  

Being an $A_\infty$-functor, $\mathcal{M}\in\textup{obj}(\mathsf{Q})$ induces a non-unital (contravariant) functor $H(\mathcal{M}):H(\mathcal{A})^{opp}\rightarrow H(\mathsf{Ch})\cong\mathsf{GradVect}$ to the category of graded vector spaces (as described in Section \ref{ch1.1}). 

\begin{Def}\label{cohommod}						
	Let $\mathcal{M}\in\textup{obj}(\mathsf{Q})$ be an $A_\infty$-module. The \textbf{cohomological module}\index{cohomological module} $H(\mathcal{M}):H(\mathcal{A})^{opp}\rightarrow \mathsf{GradVect}$ of $\mathcal{M}$ is an $H(\mathcal{A})$-module consisting of a family $\{(H(\mathcal{M})(X),\partial_X)\mid X\!\in\textup{obj}(\mathcal{A})\}$, where $H(\mathcal{M})(X)\coloneqq H(\mathcal{M}(X))  \in\textup{obj}(\mathsf{GradVect})$ and $\partial_X(b) \coloneqq  (-1)^{|b|}\mu_{\mathcal{M}}^1(b)\in\mathcal{M}(X)[1]$ for $b\in\mathcal{M}(X)$, endowed with module multiplication 
	\[
	H(\mathcal{M})(X_1)\otimes \textup{Hom}_{H(\mathcal{A})}(X_0,X_1)\rightarrow H(\mathcal{M})(X_0),\;\;\langle b\rangle\cdot\langle a\rangle \coloneqq (-1)^{|a|}\langle\mu_{\mathcal{M}}^2(b,a)\rangle\;,
	\]
	(cf. equation \eqref{cohomcatcompo}). Cohomological modules are the objects of the non-unital category $\mathcal{H}(\mathsf{Q})= Nu\text{-}\!Fun(H(\mathcal{A})^{opp},\mathsf{GradVect})$ ($=:\mathsf{Mod}^0(H(\mathcal{A}))$, if unital). 
	
	Moreover, any module homomorphism $t\in\textup{hom}_\mathsf{Q}(\mathcal{M}_0,\mathcal{M}_1)$ induces an ordinary module homorphism $H(t)\in\textup{Hom}_{\mathcal{H}(\mathsf{Q})}(H(\mathcal{M}_0),H(\mathcal{M}_1))$, namely a family of graded maps
	\[
	H(t)_X: H(\mathcal{M}_0)(X)\rightarrow H(\mathcal{M}_1)(X),\,\langle b\rangle\mapsto\langle (-1)^{|b|}t^1(b)\rangle\;,
	\]
	for all $X\in\textup{obj}(\mathcal{A})$.
\end{Def}

\subsection{Strict unitality and cohomological unitality}\label{ch2.7}

As we have already observed at the beginning of Section \ref{ch2.2}, $A_\infty$-categories generally lack identity morphisms, but --- as one would guess --- it is desirable to work with them, as they simplify the overall theory significantly. So, here we introduce two different notions of unitality (generalizing those encountered with $A_\infty$-algebras) which turn out to be equivalent.

\begin{Def}												
	Let $\mathcal{A}$ be a non-unital $A_\infty$-category. We call it:
	\begin{itemize}[leftmargin=0.5cm]
		\item \textbf{strictly unital}\index{aaa@$A_\infty$-category!strictly unital} if for each object $X$ there exists some $e_X\in\textup{hom}_{\mathcal{A}}^0(X,X)$ (then unique), called a \textbf{strict unit}\index{unit!strict}, fulfilling the conditions:
		\begin{align}\label{unitalrequirm}
			&\mu_\mathcal{A}^1(e_X) = 0\;,\nonumber\\ &(-1)^{|a_1|}\mu_\mathcal{A}^2(e_{X_1},a_1) = a_1 = \mu_\mathcal{A}^2(a_1,e_{X_0})\;,\\ &\mu_{\mathcal{A}}^d(a_{d-1},...,a_{n+1},e_{X_n},a_n,...,a_1) = 0\;,\nonumber
		\end{align}
		for $d>2$, $0\leq n<d$ and $a_i\in\textup{hom}_\mathcal{A}(X_{i-1},X_i)$ (in particular, $e_X$ must be a cocycle);
		\item \textbf{c-unital}\index{aaa@$A_\infty$-category!c-unital} (for cohomologically unital) if the cohomological category $H(\mathcal{A})$ is unital (thus an ``actual'' linear graded category). We call any $e_X\in\textup{hom}_{\mathcal{A}}^0(X,X)$ inducing the identity morphism on $X$ in $H(\mathcal{A})$ a \textbf{c-unit}\index{unit!cohomological} ($e_X$ must necessarily be a degree 0 cocycle). 
	\end{itemize}
\end{Def}

Strict unitality implies c-unitality, which can be proved by making use of the middle notion of homotopy unitality: roughly, a \textit{homotopy unital} $A_\infty$-category is one endowed with additional composition maps $\mu_\mathcal{A}^{d,(i_d,...,i_0)}$ of degree $2-d-2\sum_{k=0}^di_k$, for all $d+i_0+...+i_d>0$, coinciding with those of Definition \ref{Ainfcat} when $i_0=...=i_d=0$, and fulfilling some modified compatible $A_\infty$-associativity equations. 

Actually, the converse implication is true as well (up to quasi-isomorphism given by a formal diffeomorphism):

\begin{Lem}\label{cunitstrictunit}		
	Let $\mathcal{A}$ be a c-unital $A_\infty$-category such that for all $X\in\textup{obj}(\mathcal{A})$ either $\textup{hom}_\mathcal{A}(X,X) = \{0\}$ or $H(\textup{hom}_\mathcal{A}(X,X))$ is non-trivial. Then there exists a formal diffeomorphism $\Phi:\mathcal{A}\rightarrow\tilde{\mathcal{A}}=\Phi_*\mathcal{A}$ with $\Phi_{X_0,X_1}^1 = \textup{id}_{\textup{hom}_\mathcal{A}(X_0,X_1)}$ and such that $\Phi_*\mathcal{A}$ is strictly unital.
\end{Lem}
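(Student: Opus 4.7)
The plan is to construct $\Phi$ inductively in $d$, setting $\Phi^1 \coloneqq \textup{id}$ and choosing each $\Phi^d$ for $d \geq 2$ so as to force the strict-unit conditions \eqref{unitalrequirm} on the pushforward composition maps $\mu_{\tilde{\mathcal{A}}}^d$. First, I pick for every $X \in \textup{obj}(\mathcal{A})$ with $\textup{hom}_\mathcal{A}(X,X)\neq\{0\}$ a cocycle representative $e_X\in\textup{hom}_\mathcal{A}^0(X,X)$ of the cohomological identity of $X$; c-unitality gives existence, and the hypothesis that $H(\textup{hom}_\mathcal{A}(X,X))$ is non-trivial in this case prevents the identity of $X$ in $H(\mathcal{A})$ from being zero, so that $e_X$ is a genuine non-zero cocycle. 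For those $X$ with $\textup{hom}_\mathcal{A}(X,X)=\{0\}$ the conditions \eqref{unitalrequirm} are vacuous. Because $\Phi^1=\textup{id}$, the order-$1$ functor equation forces $\mu_{\tilde{\mathcal{A}}}^1 = \mu_\mathcal{A}^1$, so $\mu_{\tilde{\mathcal{A}}}^1(e_X) = 0$ holds automatically.

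Unpacking \eqref{Ainffunceqs} at order $d$ and isolating the terms that depend linearly on $\Phi^d$ — the summand $\mu_{\tilde{\mathcal{A}}}^1(\Phi^d(a_d,\ldots,a_1))$ on the left, and the summands coming from $m=1$ on the right, namely $\pm\Phi^d(a_d,\ldots,\mu_\mathcal{A}^1 a_i,\ldots,a_1)$ — one rearranges into a relation of the shape
\[
\mu_{\tilde{\mathcal{A}}}^d(a_d,\ldots,a_1) \;=\; \mu_\mathcal{A}^d(a_d,\ldots,a_1) \;-\; (\delta\Phi^d)(a_d,\ldots,a_1) \;+\; R^d(a_d,\ldots,a_1),
\]
where $\delta$ is a Hochschild-style coboundary on multilinear graded maps and $R^d$ is a combination built entirely from $\Phi^{<d}$, $\mu_\mathcal{A}^{<d}$ and $\mu_{\tilde{\mathcal{A}}}^{<d}$, all fixed by the preceding inductive stages. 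Choosing $\Phi^d$ thus amounts exactly to shifting $\mu_{\tilde{\mathcal{A}}}^d$ by a $\delta$-coboundary.

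On each tuple $(a_d,\ldots,a_1)$ that contains some $e_{X_n}$ I prescribe $\mu_{\tilde{\mathcal{A}}}^d$ by the corresponding clause of \eqref{unitalrequirm}, and read off the required value of $(\delta\Phi^d)$ on that tuple from the displayed equation; this reduces to solving $\mu_\mathcal{A}^1(\Phi^d(\cdots)) = \Theta$ for a concrete $\Theta$ assembled from $R^d$ and $\mu_\mathcal{A}^d$. The key observation is that $\Theta$ is a $\mu_\mathcal{A}^1$-cocycle — a direct consequence of the $A_\infty$-associativity equations \eqref{Ainfcatasseqs} applied to the $\mu_{\tilde{\mathcal{A}}}^{<d}$ already constructed — and that its cohomology class measures exactly the failure of $\langle e_{X_n}\rangle$ to act as a strict identity in $H(\mathcal{A})$, hence vanishes by c-unitality. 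A primitive then exists and is taken as the value of $\Phi^d$ on that tuple. On tuples containing no $e_X$ I simply set $\Phi^d\coloneqq 0$ and let the displayed equation define $\mu_{\tilde{\mathcal{A}}}^d$.

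The main obstacle is consistency across tuples containing several $e_{X_n}$'s simultaneously: the various strict-unit prescriptions — and, for $d>2$, the vanishing of $\mu_{\tilde{\mathcal{A}}}^d$ as soon as any $e_{X_n}$ appears — must match before any choice of $\Phi^d$ is made. The resolution is that the strict-unitality already imposed at orders $<d$, propagated through \eqref{Ainfcatasseqs}, automatically forces $\Theta$ to vanish in every multi-unit tuple, so the competing prescriptions agree. Once this bookkeeping is verified, the inductively constructed $\Phi$ satisfies \eqref{Ainffunceqs} in full by construction, is a formal diffeomorphism since $\Phi^1 = \textup{id}$ is invertible, and $\Phi_*\mathcal{A}$ is strictly unital with the chosen $e_X$'s as strict units.
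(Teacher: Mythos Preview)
Your overall inductive strategy is the same as the paper's: both pick cocycle representatives $e_X$, set $\Phi^1=\textup{id}$, and build $\Phi^d$, $\mu_{\tilde{\mathcal{A}}}^d$ order by order so as to enforce \eqref{unitalrequirm} progressively. The paper organises the induction by writing down an explicit (complicated) closed formula for $\mu_{\tilde{\mathcal{A}}}^d$ and then deferring the verification that these vanish on unit inputs to \cite[Lemma~2.1]{[Sei08]}, whereas you phrase it as an obstruction-theory argument. Both routes are legitimate, but your version has a real gap at the key step.

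For $d=2$ your argument is fine: c-unitality says precisely that $\mu_\mathcal{A}^2(e_{X_1},\cdot)-(-1)^{|\cdot|}\textup{id}$ and $\mu_\mathcal{A}^2(\cdot,e_{X_0})-\textup{id}$ are null-homotopic, so the required $\Phi^2$ exists. For $d\geq 3$, however, your claim that the cohomology class of $\Theta$ ``vanishes by c-unitality'' is not justified. C-unitality is a statement about the induced product on $H(\mathcal{A})$, hence only about $\mu^2$; it says nothing direct about $\mu_\mathcal{A}^d(\ldots,e_{X_n},\ldots)$ for $d\geq 3$. That $\Theta$ is $\delta$-\emph{closed} does follow from the $A_\infty$-relations together with the strict unitality already imposed at lower orders, but closedness does not give exactness: you must still show the induced map on cohomology vanishes, and your explanation (``measures exactly the failure of $\langle e_{X_n}\rangle$ to act as a strict identity in $H(\mathcal{A})$'') is only literally correct at $d=2$. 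This is exactly the step whose ``strategical approach'' the paper outsources to Seidel --- it is genuinely non-trivial and your sketch does not supply it. The same applies to your treatment of the multi-unit consistency: asserting that ``$\Theta$ vanishes in every multi-unit tuple'' by propagation through \eqref{Ainfcatasseqs} is the hard part, not a bookkeeping remark.

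A minor additional imprecision: defining a multilinear $\Phi^d$ separately ``on tuples containing some $e_{X_n}$'' versus ``on tuples containing no $e_X$'' only makes sense after choosing, for each $X$, a direct-sum complement $\mathbb{K}e_X\oplus C_X=\textup{hom}_\mathcal{A}(X,X)$; this should be stated explicitly.
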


\begin{proof}
	(\textit{Sketch}) As in most cases, we lay out a recursive construction. First, for each $X\in\textup{obj}(\mathcal{A})$ there exists by assumption a c-unit $e_X\in\textup{hom}_\mathcal{A}^0(X,X)$, which is in particular a cocycle. Set $\mu_{\tilde{\mathcal{A}}}^1 \coloneqq \mu_\mathcal{A}^1$, then clearly $\mu_{\tilde{\mathcal{A}}}^1(e_X)=0$. Construct\footnote{Recall from Definition \ref{formaldiffeo} that $\Phi_*\mathcal{A}$ has same morphism spaces as $\mathcal{A}$!} $\mu_{\tilde{\mathcal{A}}}^2:\textup{hom}_\mathcal{A}(X_1,X_2)\otimes\textup{hom}_\mathcal{A}(X_0,X_1)\rightarrow\textup{hom}_\mathcal{A}(X_0,X_2)$ so as to represent the composition law \eqref{cohomcatcompo} for $H(\mathcal{A})$ and satisfy the second equation in \eqref{unitalrequirm}. Let $\Phi^1\coloneqq\textup{id}$, and define $\Phi^2$ to be a chain homotopy between $\mu_\mathcal{A}^2$ and $\mu_{\tilde{\mathcal{A}}}^2$, on each morphism space.
	
	The construction now becomes more involved, ultimately producing the desired formal diffeomorphism $\Phi:\mathcal{A}\rightarrow\tilde{\mathcal{A}}$ from composition maps:
	\begin{align*}
	\mu_{\tilde{\mathcal{A}}}^d&(a_d,...,a_1) \coloneqq \mu_\mathcal{A}^d(a_d,...,a_1)-(-1)^\gamma\mu_\mathcal{A}^2(\mu_\mathcal{A}^d(a_d,...,e_{X_{n+1}},a_{n+1},...,a_2),a_1)\\
	&-(-1)^{\dagger_{n+1}}\sum_{j<n}(-1)^{\alpha_j}\mu_\mathcal{A}^d(a_d,...,e_{X_{n+1}},a_{n+1},...,\mu_{\mathcal{A}}^2(a_{j+2},a_{j+1}),...,a_1)\\
	&-(-1)^{\dagger_n}\sum_{i,j}(-1)^{\beta_j}\mu_{\mathcal{A}}^{d+2-i}(b_{d+1},...,\mu_\mathcal{A}^i(b_{j+i},...,b_{j+1}),b_j,...,b_1)\;, 
	\end{align*}
	where $0\leq n<d$, $(b_1,...,b_{d+1}) \coloneqq (a_1,...,a_n,e_{X_n},a_{n+1},...,a_d)$,  $\gamma \coloneqq |a_2|+...+|a_{n+1}|-n$, $\alpha_j \coloneqq |a_1|+...+|a_j|-j$, $\beta_j \coloneqq |b_1|+...+|b_j|-j$ and the ranges for $i,j$ in last summand are: $i=1$, $j\neq n$; $i=2$, $j>n$; $i=d$, $j=0$; $i=d+1$. 
	
	Now, one must check that such higher order $\mu_{\tilde{\mathcal{A}}}^d$'s vanish when having an identity morphism among their entries. For the strategical approach to this we refer to \cite[Lemma 2.1]{[Sei08]}.
\end{proof}

\begin{Def}\label{unitalfunc}						
	Let $\mathcal{F}:\mathcal{A}\rightarrow\mathcal{B}$ be a functor of $A_\infty$-categories. If $\mathcal{A}$ and $\mathcal{B}$ are:
	\begin{itemize}[leftmargin=0.5cm]
		\item strictly unital, then we call $\mathcal{F}$ a \textbf{strictly unital $A_\infty$-functor}\index{aaa@$A_\infty$-functor!strictly unital} if $\mathcal{F}^1(e_X) = e_{\mathcal{F}(X)}$ for all $X\in\textup{obj}(\mathcal{A})$, and $\mathcal{F}^d(a_{d-1},...,a_{n+1},e_{X_n},a_n,...,a_1) = 0$ if $d>1$;
		\item c-unital, then we call $\mathcal{F}$ a \textbf{c-unital $A_\infty$-functor}\index{aaa@$A_\infty$-functor!c-unital} if $H(\mathcal{F})$ is unital (thus an ``actual'' linear graded functor).
	\end{itemize}
\end{Def} 

We put emphasis on c-unitality, as this is a characteristic of Fukaya categories, which we will introduce in the second part of this thesis. For now, let us investigate unitality in functor categories.

\begin{Pro}\label{funccatunitality}			
	Let $\mathcal{A}$ be an $A_\infty$-category, $\mathcal{C}$ any non-unital $A_\infty$-category. Then:
	\renewcommand{\theenumi}{\roman{enumi}}
	\begin{enumerate}[leftmargin=0.5cm]
		\item if $\mathcal{A}$ is strictly unital, then so is $\mathcal{Q}= nu\text{-}\!fun(\mathcal{C},\mathcal{A})$;
		\item if $\mathcal{A}$ is c-unital, then so is $\mathcal{Q}$. 
	\end{enumerate}
	In the latter case, if $E_\mathcal{F}\in\textup{hom}_\mathcal{Q}^0(\mathcal{F},\mathcal{F})$ is a c-unit, thus representing $\textup{id}_\mathcal{F}\in\textup{Hom}_{H(\mathcal{Q})}^0(\mathcal{F},\mathcal{F})$, then each $E_\mathcal{F}^0(X)\equiv E_{\mathcal{F},X}^0\in\textup{hom}_\mathcal{A}^0(\mathcal{F}(X),\mathcal{F}(X))$ for $X\in\textup{obj}(\mathcal{C})$ is a c-unit, thus representing $\textup{id}_{\mathcal{F}(X)}\in\textup{Hom}_{H(\mathcal{A})}^0(\mathcal{F}(X),\mathcal{F}(X))$.
\end{Pro}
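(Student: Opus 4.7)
For (i), the natural candidate for a strict unit of $\mathcal{F}\in\textup{obj}(\mathcal{Q})$ is the pre-natural transformation $E_\mathcal{F}\in\textup{hom}_{\mathcal{Q}}^0(\mathcal{F},\mathcal{F})$ defined by $E_\mathcal{F}^0(X)\coloneqq e_{\mathcal{F}(X)}$ and $E_\mathcal{F}^d\coloneqq 0$ for $d\geq 1$, with $e_{\mathcal{F}(X)}$ the strict unit of $\mathcal{F}(X)$ in $\mathcal{A}$. My plan is to verify the three axioms \eqref{unitalrequirm} directly from the formulas of Section \ref{ch2.3}. The cocycle condition splits by order: at $h=0$ equation \eqref{cutie0} reduces it to $\mu_\mathcal{A}^1(e_{\mathcal{F}(X)})=0$, which holds by strict unitality of $\mathcal{A}$; for $h\geq 1$ the second sum in \eqref{shit1} vanishes because $E_\mathcal{F}^{h-m+1}=0$, while in the first sum only the summands with $s_i=0$ survive and become $\mu_\mathcal{A}^r(\ldots,e_{\mathcal{F}(X_n)},\ldots)$, killed by strict unitality whenever $r\neq 2$; the leftover $r=2$ terms consist of one piece with the unit to the right and one with the unit to the left of $\mathcal{F}^h(a_h,\ldots,a_1)$, and these cancel after a careful sign check using the $\ddagger_i^1$-convention and the identity $|\mathcal{F}^h(a_h,\ldots,a_1)|=\dagger_h+1$. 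The bilateral unit equations for $\mu_\mathcal{Q}^2(E_{\mathcal{F}_1},T)$ and $\mu_\mathcal{Q}^2(T,E_{\mathcal{F}_0})$ follow from \eqref{shit2} by the same mechanism: strict unitality in $\mathcal{A}$ kills each $r>2$ term, and the surviving $r=2$ term collapses to $T^h$ with the expected sign. For the higher axiom with $d\geq 3$, the general formula for $\mu_\mathcal{Q}^d(\ldots,E_\mathcal{F},\ldots)$ is a sum of $\mu_\mathcal{A}^r$-terms whose number of slots satisfies $r\geq d\geq 3$, each containing an entry $e_{\mathcal{F}(X_n)}$, so every summand vanishes by strict unitality.

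For (ii), the strategy is to reduce to (i) via Lemma \ref{cunitstrictunit} (whose hypothesis is satisfied in the relevant components of a c-unital $\mathcal{A}$, since any object with zero cohomological identity interacts trivially with the rest and may be discarded without changing $\mathcal{Q}$). This yields a formal diffeomorphism $\Phi:\mathcal{A}\rightarrow\tilde{\mathcal{A}}=\Phi_*\mathcal{A}$ with $\Phi^1=\textup{id}$ and $\tilde{\mathcal{A}}$ strictly unital. Formal diffeomorphisms are strictly invertible, so $\mathcal{L}_\Phi\circ\mathcal{L}_{\Phi^{-1}}=\mathcal{L}_{\textup{Id}}=\mathcal{I}d$ and symmetrically, making $\mathcal{L}_\Phi:\mathcal{Q}\rightarrow\tilde{\mathcal{Q}}\coloneqq nu\text{-}\!fun(\mathcal{C},\tilde{\mathcal{A}})$ an isomorphism of $A_\infty$-categories, in particular a quasi-isomorphism. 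By (i), $\tilde{\mathcal{Q}}$ is strictly unital, hence c-unital, and c-unitality transports across the induced isomorphism $H(\mathcal{L}_\Phi):H(\mathcal{Q})\rightarrow H(\tilde{\mathcal{Q}})$ to yield c-unitality of $\mathcal{Q}$.

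For the final assertion, let $E_\mathcal{F}\in\textup{hom}_\mathcal{Q}^0(\mathcal{F},\mathcal{F})$ be any c-unit. Its image $\mathcal{L}_\Phi^1(E_\mathcal{F})$ is a c-unit of $\Phi\circ\mathcal{F}$ in $\tilde{\mathcal{Q}}$, and is therefore cohomologous there to the strict unit $E_{\Phi\circ\mathcal{F}}$ built in (i), whose zeroth component is $e_{\mathcal{F}(X)}$. Specialising \eqref{lshit1} to $d=0$ together with $\Phi^1=\textup{id}$ shows that the zeroth component of $\mathcal{L}_\Phi^1(E_\mathcal{F})$ is exactly $E_{\mathcal{F},X}^0$, so $\langle E_{\mathcal{F},X}^0\rangle=\langle e_{\mathcal{F}(X)}\rangle$ as cohomology classes. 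Since $\mu_\mathcal{A}^1=\mu_{\tilde{\mathcal{A}}}^1$ the underlying cochain complexes agree, and the sketch of Lemma \ref{cunitstrictunit} provides $\Phi^2$ as a chain homotopy between $\mu_\mathcal{A}^2$ and $\mu_{\tilde{\mathcal{A}}}^2$, so the induced cohomological compositions coincide; hence $\langle E_{\mathcal{F},X}^0\rangle$ acts as $\textup{id}_{\mathcal{F}(X)}$ in $H(\mathcal{A})$ precisely because $\langle e_{\mathcal{F}(X)}\rangle$ does in $H(\tilde{\mathcal{A}})$. The main technical obstacle is the sign bookkeeping in part (i) — in particular, confirming that the pair of $r=2$ summands in $\mu_\mathcal{Q}^1(E_\mathcal{F})^h$ cancel exactly under the Koszul conventions — while the remainder of the argument is largely formal, relying on the strict invertibility of formal diffeomorphisms and the fact that $\mathcal{L}_\Phi^1$ acts as $\Phi^1=\textup{id}$ on zeroth-order data.
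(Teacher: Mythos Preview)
Your proof is correct and follows essentially the same route as the paper: the same strict unit $E_\mathcal{F}$ in part (i), reduction to the strictly unital case via the formal diffeomorphism $\Phi$ of Lemma~\ref{cunitstrictunit} in part (ii), and extraction of the zeroth component to identify $\langle E_{\mathcal{F},X}^0\rangle$ with $\langle e_{\mathcal{F}(X)}\rangle$ for the final claim. One pleasant simplification in your version is that you exploit the strict invertibility of $\Phi$ to make $\mathcal{L}_\Phi$ an honest isomorphism of $A_\infty$-categories, whereas the paper instead invokes Lemma~\ref{1.7} (whose proof rests on the Hochschild spectral sequence) to conclude only that $\mathcal{L}_\Phi$ is a quasi-isomorphism; your route is more elementary here and avoids that machinery.
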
 

\begin{proof}
	\renewcommand{\theenumi}{\roman{enumi}}
	\ \vspace*{0.0cm}
	\begin{enumerate}[leftmargin=0.5cm]
		\item We check that strict units for $\mathcal{Q}$ are all the pre-natural transformations $E_\mathcal{F}\in\textup{hom}_\mathcal{Q}^0(\mathcal{F},\mathcal{F})$ given by $E_\mathcal{F}^0(X) \coloneqq  e_{\mathcal{F}(X)}\in\textup{hom}_\mathcal{A}^0(\mathcal{F}(X),\mathcal{F}(X))$ (the strict unit of $\mathcal{F}(X)$) and $E_\mathcal{F}^d \coloneqq  0$ if $d>0$. For example, substitution in \eqref{cutie0} gives $\mu_\mathcal{Q}^1(E_\mathcal{F})_{X}^0 = \mu_\mathcal{A}^1(e_{\mathcal{F}(X)}) = 0$, while \eqref{cutie1} yields $\mu_\mathcal{Q}^1(E_\mathcal{F})^1(a) = 0+0+\mu_\mathcal{A}^2(\mathcal{F}^1(a),e_{\mathcal{F}(X_0)})+(-1)^{1-|a|}\mu_\mathcal{A}^2(e_{\mathcal{F}(X_1)},\mathcal{F}^1(a)) = \mathcal{F}^1(a) + (-1)^{1-|a|+|a|}$ $\cdot\mathcal{F}^1(a) = 0$. Indeed, the equations \eqref{shit1} show that $\mu_{\mathcal{Q}}^1(E_\mathcal{F})^d = 0$ for all higher $d$, hence $\mu_\mathcal{Q}^1(E_\mathcal{F}) = 0$ (in particular, $E_\mathcal{F}$ is a natural transformation). By \eqref{shit2}, the second requirement for strict unitality is also met, while the third asks for higher degree $\mu_\mathcal{Q}^d$'s (see again \cite{[Fuk99]}).    
		
		\item We reduce this case to the strictly unital one from above. Thereto, consider a formal diffeomorphism $\Phi:\mathcal{A}\rightarrow\tilde{\mathcal{A}}$ with $\Phi^1=\textup{id}$ on morphism spaces as given by Lemma \ref{cunitstrictunit}, so that $\tilde{\mathcal{A}}$ is strictly unital (then so is $\tilde{\mathcal{Q}}\coloneqq nu\text{-}\!fun(\mathcal{C},\tilde{\mathcal{A}}) $, by part i.). By definition, $\Phi$ is a quasi-isomorphism, and hence so is its left composition $\mathcal{L}_{\Phi}:\mathcal{Q}\rightarrow\tilde{\mathcal{Q}}$ (by Lemma \ref{1.7}). For each $\mathcal{F}\in\textup{obj}(\mathcal{Q})$, this enables us to find some $E_\mathcal{F}\in\textup{hom}_\mathcal{Q}^0(\mathcal{F},\mathcal{F})$ such that $H(\mathcal{L}_{\Phi})\langle E_\mathcal{F}\rangle = \langle \mathcal{L}_{\Phi}^1(E_\mathcal{F})\rangle$ is the identity morphism  $\langle E_{\tilde{\mathcal{F}}}\rangle\in\textup{Hom}_{H(\tilde{\mathcal{Q}})}^0(\tilde{\mathcal{F}},\tilde{\mathcal{F}})$ induced by the unique strict unit for $\tilde{\mathcal{F}}\coloneqq \mathcal{L}_\Phi(\mathcal{F})\in\textup{obj}(\tilde{\mathcal{Q}})$. Functoriality of the isomorphism $H(\mathcal{L}_\Phi)$ guarantees that $\langle E_\mathcal{F}\rangle$ itself is an identity morphism in $H(\mathcal{Q})$, the unique such for $\mathcal{F}$, which proves that $H(\mathcal{Q})$ is unital, thus $\mathcal{Q}$ c-unital. 
	\end{enumerate}
	\vspace*{0.2cm}
	Now, the functor $\mathcal{H}$ of \eqref{Hfunctor} maps $\langle E_\mathcal{F}\rangle = \textup{id}_\mathcal{F}$ to the identity natural transformation $H(E_\mathcal{F})\in\textup{Hom}_{\mathcal{H}(\mathcal{Q})}(H(\mathcal{F}),H(\mathcal{F}))$ in $\mathcal{H}(\mathcal{Q}) = Nu\text{-}\!Fun(H(\mathcal{C}),H(\mathcal{A}))$. The definition of $\Phi$ (which makes $H(\mathcal{F}) = H(\tilde{\mathcal{F}})$ and $H(\mathcal{A})\cong H(\tilde{\mathcal{A}})$, thus $H(\mathcal{Q})\cong H(\tilde{\mathcal{Q}})$) allows us to construct the commutative diagram
	\[
	\begin{tikzcd}
	{\textup{Hom}_{H(\mathcal{Q})}(\mathcal{F},\mathcal{F})} \arrow[rd, "\mathcal{H}"']\arrow[rr, "H(\mathcal{L}_\Phi)"] & & {\textup{Hom}_{H(\tilde{\mathcal{Q}})}(\tilde{\mathcal{F}},\tilde{\mathcal{F}})}\arrow[ld, "\mathcal{H}"]	\\
	&{\textup{Hom}_{\mathcal{H}(\mathcal{Q})}(H(\mathcal{F}),H(\mathcal{F}))} &
	\end{tikzcd}\quad,
	\]
	which in turn gives $H(E_\mathcal{F}) = \mathcal{H}(\langle E_\mathcal{F}\rangle)=\mathcal{H}\circ H(\mathcal{L}_\Phi)\langle E_\mathcal{F}\rangle = \mathcal{H}(\langle\tilde{E}_\mathcal{F}\rangle) = H(E_{\tilde{\mathcal{F}}})$ as identity natural transformations, so that each $\langle E_\mathcal{F}^0(X)\rangle = \langle E_{\tilde{\mathcal{F}}}^0(X)\rangle$ for $X\in\textup{obj}(\mathcal{C})$ (cf. Definition \ref{Nu-Fun}) must be the identity $\textup{id}_{\mathcal{F}(X)}$ in $H^0(\mathcal{A})$. 
\end{proof}

\begin{Lem}\label{cunitalcompo}					
	Let $\mathcal{G}:\mathcal{A}\rightarrow\mathcal{B}$ be a c-unital $A_\infty$-functor. Then for any non-unital $A_\infty$-category $\mathcal{C}$, $\mathcal{L}_\mathcal{G}:nu\text{-}\!fun(\mathcal{C},\mathcal{A})\rightarrow nu\text{-}\!fun(\mathcal{C},\mathcal{B})$ is c-unital. A similar result holds for right composition.
\end{Lem}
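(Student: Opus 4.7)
\emph{Proof sketch.} The plan is to reduce the statement to a direct computation at cohomology level, using the naturality formulas for $\mathcal{L}_\mathcal{G}^1$ and $\mathcal{R}_\mathcal{G}^1$ together with Lemma \ref{1.6}. First note that c-unitality of $\mathcal{G}$ forces both $\mathcal{A}$ and $\mathcal{B}$ to be c-unital, so by Proposition \ref{funccatunitality}(ii) the categories $\mathcal{Q} \coloneqq nu\text{-}\!fun(\mathcal{C},\mathcal{A})$ and $\tilde{\mathcal{Q}} \coloneqq nu\text{-}\!fun(\mathcal{C},\mathcal{B})$ are themselves c-unital. Hence the task is only to check that the non-unital functor $H(\mathcal{L}_\mathcal{G})$ preserves identities: given any $\mathcal{F}\in\textup{obj}(\mathcal{Q})$ and c-unit $E_\mathcal{F}$ of $\mathcal{F}$, I must verify that $T \coloneqq \mathcal{L}_\mathcal{G}^1(E_\mathcal{F})\in\textup{hom}_{\tilde{\mathcal{Q}}}^0(\mathcal{G}\circ\mathcal{F},\mathcal{G}\circ\mathcal{F})$ represents $\textup{id}_{\mathcal{G}\circ\mathcal{F}}$ in $H(\tilde{\mathcal{Q}})$.

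To this end, I would inspect the zeroth-order term of $T$, which by the explicit low-order formula for $\mathcal{L}_\mathcal{G}^1$ equals $T_X^0 = \mathcal{G}^1(E_{\mathcal{F},X}^0)$. The final part of Proposition \ref{funccatunitality} gives $\langle E_{\mathcal{F},X}^0\rangle = \textup{id}_{\mathcal{F}(X)}$ in $H(\mathcal{A})$, and c-unitality of $\mathcal{G}$ then yields $\langle T_X^0\rangle = \textup{id}_{\mathcal{G}(\mathcal{F}(X))}$ in $H(\mathcal{B})$. Consequently, right composition by $\langle T_X^0\rangle$ acts as the identity on $\textup{Hom}_{H(\mathcal{B})}(\mathcal{G}(\mathcal{F}(X)),\mathcal{G}(\mathcal{F}(X)))$ and is in particular an isomorphism for every $X\in\textup{obj}(\mathcal{C})$. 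Applying the first bullet of Lemma \ref{1.6} with $\mathcal{G}_1=\mathcal{G}_2=\mathcal{G}_3=\mathcal{G}\circ\mathcal{F}$ promotes this pointwise fact to the statement that right composition with $\langle T\rangle$ is an automorphism of $\textup{Hom}_{H(\tilde{\mathcal{Q}})}(\mathcal{G}\circ\mathcal{F},\mathcal{G}\circ\mathcal{F})$.

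Finally, functoriality of $H(\mathcal{L}_\mathcal{G})$ applied to the tautology $\langle E_\mathcal{F}\rangle\circ\langle E_\mathcal{F}\rangle = \langle E_\mathcal{F}\rangle$ gives the idempotency $\langle T\rangle\circ\langle T\rangle = \langle T\rangle$. Combined with $\textup{id}_{\mathcal{G}\circ\mathcal{F}}\circ\langle T\rangle = \langle T\rangle$ and the injectivity of right composition with $\langle T\rangle$ established in the previous step, we conclude $\langle T\rangle = \textup{id}_{\mathcal{G}\circ\mathcal{F}}$. The analogous argument for $\mathcal{R}_\mathcal{G}$ is even more direct, since now $\mathcal{R}_\mathcal{G}^1(E_\mathcal{F})_X^0 = E_{\mathcal{F},\mathcal{G}(X)}^0$ already represents an identity in $H(\mathcal{C})$ (when $\mathcal{C}$ is c-unital), and the same idempotency-plus-injectivity trick closes the case. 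The main obstacle is precisely the gap between "cohomologically correct on zeroth-order terms" and "cohomologically the unit as a whole pre-natural transformation": the higher-order components of $T$ could a priori carry nontrivial information, and it is Lemma \ref{1.6} that bridges this gap by allowing us to trade cohomological control on $T^0$ for an injectivity statement about $\langle T\rangle$ itself.
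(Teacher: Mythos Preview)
Your proof is correct and follows essentially the same route as the paper's: both compute $\mathcal{L}_\mathcal{G}^1(E_\mathcal{F})_X^0 = \mathcal{G}^1(E_{\mathcal{F},X}^0)$, use Proposition~\ref{funccatunitality} together with c-unitality of $\mathcal{G}$ to see this is a c-unit in $\mathcal{B}$, invoke Lemma~\ref{1.6} with $\mathcal{G}_1=\mathcal{G}_2=\mathcal{G}_3=\mathcal{G}\circ\mathcal{F}$ to make right composition with $\langle T\rangle$ an automorphism, and then combine this with the idempotency $\langle T\rangle\circ\langle T\rangle=\langle T\rangle$ coming from functoriality of $H(\mathcal{L}_\mathcal{G})$. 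The only cosmetic difference is that the paper phrases the final step as ``idempotent plus invertible implies identity'', whereas you use injectivity of $(\,\cdot\,)\circ\langle T\rangle$ applied to $\langle T\rangle\circ\langle T\rangle = \textup{id}_{\mathcal{G}\circ\mathcal{F}}\circ\langle T\rangle$ directly; these are equivalent.
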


\begin{proof}
	For $E_\mathcal{F}\in\textup{hom}_{nu\text{-}\!fun(\mathcal{C},\mathcal{A})}^0(\mathcal{F},\mathcal{F})$ a c-unit, we know by Proposition \ref{funccatunitality} that $\langle E_\mathcal{F}^0(X)\rangle = \textup{id}_{\mathcal{F}(X)}$ in $H(\mathcal{A})$ for each $X\in\textup{obj}(\mathcal{C})$. Then $\langle\mathcal{L}_\mathcal{G}^1(E_\mathcal{F})_{X}^0\rangle = \langle\mathcal{G}^1(E_\mathcal{F}^0(X))\rangle = H(\mathcal{G})\langle E_\mathcal{F}^0(X)\rangle$ (using \eqref{lshit1} for $d=0$) shows that also $\mathcal{L}_\mathcal{G}^1(E_\mathcal{F})_{X}^0$ $\in\textup{hom}_\mathcal{B}^0\big(\mathcal{G}(\mathcal{F}(X)),\mathcal{G}(\mathcal{F}(X))\big)$ is a c-unit (as $\mathcal{G}$ is c-unital, $H(\mathcal{G})$ will preserve identity morphisms).
	
	Now we apply Lemma \ref{1.6} to the following data: $\mathcal{Q}\coloneqq nu\text{-}\!fun(\mathcal{C},\mathcal{B})$, $\mathcal{G}_1=\mathcal{G}_2=\mathcal{G}_3\coloneqq \mathcal{G}\circ\mathcal{F}$,\, $T\coloneqq \mathcal{L}_\mathcal{G}^1(E_\mathcal{F})\in\textup{hom}_\mathcal{Q}^0(\mathcal{G}\circ\mathcal{F},\mathcal{G}\circ\mathcal{F})$, where the argument from last paragraph ensures that right composition with each $\langle T_{X}^0\rangle = \textup{id}_{\mathcal{G}(\mathcal{F}(X))}$ is indeed an automorphism of $\textup{Hom}_{H(\mathcal{B})}\big(\mathcal{G}(\mathcal{F}(X)),\mathcal{G}(\mathcal{F}(X))\big)$. Consequently, right composition with $\langle\mathcal{L}_\mathcal{G}^1(E_\mathcal{F})\rangle$ is an automorphism of $\textup{Hom}_{H(\mathcal{Q})}(\mathcal{G}\circ\mathcal{F},\mathcal{G}\circ\mathcal{F})$. 
	
	But $\mathcal{Q}$ is c-unital by Proposition \ref{funccatunitality}, and thus $H(\mathcal{Q})$ possesses an identity $\textup{id}_{\mathcal{G}\circ\mathcal{F}}$ and whichever $\langle S\rangle\in\textup{Hom}_{H(\mathcal{Q})}(\mathcal{G}\circ\mathcal{F},\mathcal{G}\circ\mathcal{F})$ maps to it under right composition with $\langle\mathcal{L}_\mathcal{G}^1(E_\mathcal{F})\rangle$ is an inverse for $\langle\mathcal{L}_\mathcal{G}^1(E_\mathcal{F})\rangle$. Moreover, by functoriality of $H(\mathcal{L}_\mathcal{G})$ and equation \eqref{cohomcatcompo}, $\langle\mathcal{L}_\mathcal{G}^1(E_\mathcal{F})\rangle\circ\langle\mathcal{L}_\mathcal{G}^1(E_\mathcal{F})\rangle = \langle\mathcal{L}_\mathcal{G}^1(\mu_\mathcal{Q}^2(E_\mathcal{F},E_\mathcal{F}))\rangle = \langle\mathcal{L}_\mathcal{G}^1(E_\mathcal{F})\rangle$ is idempotent. The only idempotent invertible map is the identity, whence we conclude that $H(\mathcal{L}_\mathcal{G}): \langle E_\mathcal{F}\rangle\mapsto\langle\mathcal{L}_\mathcal{G}^1(E_\mathcal{F})\rangle$ preserves identities, that is, $\mathcal{L}_\mathcal{G}$ is c-unital.
\end{proof}

\begin{Lem}\label{homotopicareiso}				
	Let $\mathcal{A}$ be a c-unital $A_\infty$-category, $\mathcal{C}$ any non-unital one. If $\mathcal{F}_0,\mathcal{F}_1: \mathcal{C}\rightarrow\mathcal{A}$ are homotopic non-unital $A_\infty$-functors, then $\mathcal{F}_0\cong\mathcal{F}_1$ as objects of $H^0(nu\text{-}\!fun(\mathcal{C},\mathcal{A}))$.
\end{Lem}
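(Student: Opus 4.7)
The plan is to produce a cocycle $S\in\textup{hom}_\mathcal{Q}^0(\mathcal{F}_0,\mathcal{F}_1)$, where $\mathcal{Q}\coloneqq nu\text{-}\!fun(\mathcal{C},\mathcal{A})$, whose cohomology class is invertible in $H^0(\mathcal{Q})$ (itself a unital category by Proposition \ref{funccatunitality}). The decisive observation is that by Lemma \ref{1.6}, it is enough to exhibit such an $S$ whose zeroth component satisfies $\langle S^0_X\rangle\in\textup{Hom}_{H(\mathcal{A})}(\mathcal{F}_0(X),\mathcal{F}_1(X))$ being an isomorphism for every $X\in\textup{obj}(\mathcal{C})$: Lemma \ref{1.6} will then make pre- and post-composition with $\langle S\rangle$ isomorphisms of every Hom-space in $H(\mathcal{Q})$, and the standard Yoneda argument forces $\langle S\rangle$ itself to be invertible in $H^0(\mathcal{Q})$. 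Note that such a candidate $S^0_X$ exists on abstract grounds: homotopic functors agree on objects, so any c-unit of $\mathcal{F}_0(X)=\mathcal{F}_1(X)$ serves as a representative of $\textup{id}$.

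First I would reduce to the case where $\mathcal{A}$ is strictly unital. Lemma \ref{cunitstrictunit} provides a formal diffeomorphism $\Phi:\mathcal{A}\to\tilde{\mathcal{A}}\coloneqq\Phi_*\mathcal{A}$ with $\tilde{\mathcal{A}}$ strictly unital and $\Phi^1=\textup{id}$. Setting $\tilde{\mathcal{F}}_i\coloneqq\Phi\circ\mathcal{F}_i$, these again agree on objects, and as remarked after the definition of homotopy they remain homotopic, now via $\mathcal{L}_\Phi^1(T)$. Since $\Phi$ is a quasi-isomorphism, Lemma \ref{1.7} makes $\mathcal{L}_\Phi$ at least cohomologically full and faithful, so fully-faithful functors reflect isomorphisms: any iso $\tilde{\mathcal{F}}_0\cong\tilde{\mathcal{F}}_1$ in $H^0(\tilde{\mathcal{Q}})$ will pull back to the desired iso $\mathcal{F}_0\cong\mathcal{F}_1$ in $H^0(\mathcal{Q})$.

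In the strictly unital setting I would then try $S\coloneqq E+\mathcal{L}_\Phi^1(T)\in\textup{hom}_{\tilde{\mathcal{Q}}}^0(\tilde{\mathcal{F}}_0,\tilde{\mathcal{F}}_1)$, where $E$ has $E^0_X\coloneqq e_{\tilde{\mathcal{F}}_0(X)}=e_{\tilde{\mathcal{F}}_1(X)}$ and $E^d\coloneqq 0$ for $d\geq 1$ (well-defined because the two functors coincide on objects). Clearly $\langle S^0_X\rangle=\textup{id}$ in $H(\tilde{\mathcal{A}})$, so the Lemma \ref{1.6} criterion is met once I verify $\mu_{\tilde{\mathcal{Q}}}^1(S)=0$. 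By linearity, $\mu_{\tilde{\mathcal{Q}}}^1(S)=\mu_{\tilde{\mathcal{Q}}}^1(E)+\mu_{\tilde{\mathcal{Q}}}^1(\mathcal{L}_\Phi^1(T))$, and the second summand is $\tilde{\mathcal{F}}_0-\tilde{\mathcal{F}}_1$ by the definition of homotopy. Evaluating formula (\ref{shit1}) on $E$ collapses drastically: the strict-unit axioms (\ref{unitalrequirm}) annihilate every term in which $e$ is inserted into a $\mu_{\tilde{\mathcal{A}}}^r$ with $r\neq 2$, while the second sum in (\ref{shit1}) vanishes because it requires $E^{h-m+1}$ with $h-m+1\geq 1$. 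What survives are the two $r=2$ summands with $e$ adjacent to a single $\tilde{\mathcal{F}}_j^h$ block, and these should collapse via $\mu_{\tilde{\mathcal{A}}}^2(b,e)=b$ and $(-1)^{|b|}\mu_{\tilde{\mathcal{A}}}^2(e,b)=b$ to produce $\tilde{\mathcal{F}}_1^h-\tilde{\mathcal{F}}_0^h$, cancelling the $T$-contribution.

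The main obstacle is precisely that final sign-matching: one must read off the Koszul signs $\ddagger_i^1$ in (\ref{shit1}) for the two surviving insertions of $e$ at positions $i=1$ (giving $\mu_{\tilde{\mathcal{A}}}^2(\tilde{\mathcal{F}}_1^h(\ldots),e)=\tilde{\mathcal{F}}_1^h(\ldots)$) and $i=2$ (giving $(-1)^{|\tilde{\mathcal{F}}_0^h(\ldots)|}\mu_{\tilde{\mathcal{A}}}^2(e,\tilde{\mathcal{F}}_0^h(\ldots))=\tilde{\mathcal{F}}_0^h(\ldots)$), and verify that together with the degree-shifting sign $(-1)^{|a_1|-1}$ in the second line of (\ref{unitalrequirm}) they assemble into exactly $\tilde{\mathcal{F}}_1^h-\tilde{\mathcal{F}}_0^h$ rather than some other combination. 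This is tedious but mechanical, and once carried out delivers $\mu_{\tilde{\mathcal{Q}}}^1(S)=0$; Lemma \ref{1.6} then concludes.
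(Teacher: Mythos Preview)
Your proposal is correct and follows essentially the same approach as the paper: reduce to the strictly unital case via Lemma~\ref{cunitstrictunit}, build the natural transformation $S$ by placing strict units in degree~$0$ and the homotopy data in higher degrees, verify $\mu_{\tilde{\mathcal{Q}}}^1(S)=0$ using the strict-unit axioms, then invoke Lemma~\ref{1.6} and reflect the isomorphism back via the cohomologically full and faithful $\mathcal{L}_\Phi$. Your decomposition $S=E+\tilde T$ together with linearity of $\mu_{\tilde{\mathcal{Q}}}^1$ is a clean way to organise the sign check (the paper instead phrases it as ``$T$ is a homotopy iff $S$ is closed''), and your explicit appeal to the Yoneda argument for invertibility of $\langle S\rangle$ is slightly more careful than the paper's rather terse conclusion, but these are presentational differences only.
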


\begin{proof}
	As in the proof of Proposition \ref{funccatunitality}, we pass to the strictly unital image $\tilde{\mathcal{A}}=\Phi_*\mathcal{A}$ via a suitable formal diffeomorphism $\Phi$, recalling that $\mathcal{F}_0\simeq\mathcal{F}_1$ implies $\tilde{\mathcal{F}}_0\simeq\tilde{\mathcal{F}}_1$, where $\tilde{\mathcal{F}}_i \coloneqq  \Phi\circ\mathcal{F}_i$. 
	
	Set again $\mathcal{Q}=nu\text{-}\!fun(\mathcal{C},\mathcal{A})$ and $\tilde{\mathcal{Q}}=nu\text{-}\!fun(\mathcal{C},\tilde{\mathcal{A}})$. Let $T\in\textup{hom}_{\tilde{\mathcal{Q}}}^0(\tilde{\mathcal{F}}_0,\tilde{\mathcal{F}}_1)$ have $T^0=0$, and define $S\in\textup{hom}_{\tilde{\mathcal{Q}}}^0(\tilde{\mathcal{F}}_0,\tilde{\mathcal{F}}_1)$ such that $S_{X}^0 = e_{\tilde{\mathcal{F}}_0(X)} = e_{\tilde{\mathcal{F}}_1(X)}$\footnote{Recall that two $A_\infty$-functors can be homotopic only when coinciding on objects!} for all $X\in\textup{obj}(\mathcal{C})$ (then $\mu_{\tilde{\mathcal{Q}}}^1(S)_{X}^0 = \mu_{\tilde{\mathcal{A}}}^1(e_{\tilde{\mathcal{F}}_0(X)}) = 0$) and $S^d = T^d$ if $d>0$. The equations \eqref{shit1} show that $T$ is a homotopy from $\tilde{\mathcal{F}}_0$ to $\tilde{\mathcal{F}}_1$ if and only if $S$ is a natural transformation, as \eqref{cutie1} readily suggests:
	$\mu_{\tilde{\mathcal{Q}}}^1(S)^1(a) = \mu_{\tilde{\mathcal{A}}}^1(S^1(a))-S^1(\mu_{\mathcal{C}}^1(a))+\mu_{\tilde{\mathcal{A}}}^2(\tilde{\mathcal{F}}_{1}^1(a),S_{X_0}^0) +  (-1)^{1-|a|}\mu_{\tilde{\mathcal{A}}}^2(S_{X_1}^0,\tilde{\mathcal{F}}_{0}^1(a)) = \mu_{\tilde{\mathcal{A}}}^1(S^1(a))-S^1(\mu_{\mathcal{C}}^1(a))$ $+\tilde{\mathcal{F}}_{1}^1(a)-\tilde{\mathcal{F}}_{0}^1(a) = 0$ if and only if $\mu_{\tilde{\mathcal{Q}}}^1(T) = \tilde{\mathcal{F}}_0 - \tilde{\mathcal{F}}_1$.
	
	Now, apply Lemma \ref{1.6} to the data: $\mathcal{G}_1\coloneqq \tilde{\mathcal{F}}_0$, $\mathcal{G}_2=\mathcal{G}_3\coloneqq \tilde{\mathcal{F}}_1$ and natural transformation $S\in\textup{hom}_{\tilde{\mathcal{Q}}}(\tilde{\mathcal{F}}_0,\tilde{\mathcal{F}}_1)$, so that $\langle c\rangle\mapsto\langle c\rangle\circ\langle S_{X}^0\rangle = \langle\mu_{\tilde{\mathcal{A}}}(c,e_{\tilde{\mathcal{F}}_0(X)})\rangle = \langle c\rangle$ is indeed an isomorphism. Then $\textup{Hom}_{H(\tilde{\mathcal{Q}})}^0(\tilde{\mathcal{F}}_1,\tilde{\mathcal{F}}_1)\rightarrow\textup{Hom}_{H(\tilde{\mathcal{Q}})}^0(\tilde{\mathcal{F}}_0,\tilde{\mathcal{F}}_1)$ given by right composition with $\langle S\rangle$ is an isomorphism. $\tilde{\mathcal{Q}}$ being strictly unital (since $\tilde{\mathcal{A}}$ is), it is also c-unital, so that we can find an identity morphism $\textup{id}_{\tilde{\mathcal{F}}_1}\in\textup{Hom}_{H(\tilde{\mathcal{Q}})}^0(\tilde{\mathcal{F}}_1,\tilde{\mathcal{F}}_1)$. Then $\tilde{\mathcal{F}}_0 \cong\tilde{\mathcal{F}}_1\in\textup{obj}(H^0(\tilde{\mathcal{Q}}))$ via the isomorphism $\textup{id}_{\tilde{\mathcal{F}}_1}\circ\langle S\rangle:\tilde{\mathcal{F}}_0\rightarrow\tilde{\mathcal{F}}_1$.
	
	Finally, $\Phi$ being a quasi-isomorphism, it is also cohomologically full and faithful, hence so is $\mathcal{L}_\Phi:\mathcal{Q}\rightarrow\tilde{\mathcal{Q}}$ by Lemma \ref{1.7}. This implies that $H(\mathcal{L}_\Phi)_{\mathcal{F}_0,\mathcal{F}_1}$ is bijective, and hence preserves isomorphisms. Therefore, $\mathcal{F}_0 \cong\mathcal{F}_1\in\textup{obj}(H^0(\mathcal{Q}))$, as desired. 	
\end{proof}

\subsection{Quasi-equivalences}\label{ch2.8}

\begin{Def}														
	For $\mathcal{A}$ and $\mathcal{B}$ c-unital $A_\infty$-categories, let $fun(\mathcal{A},\mathcal{B})$ denote the full $A_\infty$-subcategory of $nu\text{-}\!fun(\mathcal{A},\mathcal{B})$ consisting of c-unital $A_\infty$-functors. 
	
	Any $\mathcal{F}\in\textup{obj}(fun(\mathcal{A},\mathcal{B}))$ is called \textbf{quasi-equivalence}\index{quasi-equivalence} if $H(\mathcal{F})$ is an equivalence. In particular, by Lemma \ref{aboutfunctors}, we have the following chain of implications for $\mathcal{F}$:
	\vspace*{0.2cm}
	
	\noindent\minibox[frame]{quasi-isomorphism $\implies$ quasi-equivalence $\implies$ cohomologically full, faithful \\ $\mkern+338mu\implies$ c-unital}	
\end{Def}

In the following, given any c-unital $A_\infty$-category $\mathcal{A}$, write $\tilde{\mathcal{A}}\subset\mathcal{A}$ for the full c-unital $A_\infty$-subcategory making the (c-unital) inclusion $\mathcal{I}:\tilde{\mathcal{A}}\rightarrow\mathcal{A}$ a quasi-equivalence.

\begin{Lem}\label{restrictionquasi}				
	Given another c-unital $A_\infty$-category $\mathcal{B}$, the restriction map $\mathcal{Q}=fun(\mathcal{A},\mathcal{B})\rightarrow \tilde{\mathcal{Q}} = fun(\tilde{\mathcal{A}},\mathcal{B})$ specified by $\mathcal{F}\mapsto\tilde{\mathcal{F}}\equiv\mathcal{F}|_{\tilde{\mathcal{A}}}$ and $T\in\textup{hom}_\mathcal{Q}(\mathcal{F}_0,\mathcal{F}_1)\mapsto \tilde{T}\in\textup{hom}_{\tilde{\mathcal{Q}}}(\tilde{\mathcal{F}}_0,\tilde{\mathcal{F}}_1)$ is a quasi-equivalence.
\end{Lem}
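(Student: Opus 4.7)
The restriction map in question is precisely the right composition $\mathcal{R}_\mathcal{I}:\mathcal{Q}\to\tilde{\mathcal{Q}}$ with the (c-unital) inclusion $\mathcal{I}:\tilde{\mathcal{A}}\hookrightarrow\mathcal{A}$, which is a quasi-equivalence by hypothesis. C-unitality of $\mathcal{R}_\mathcal{I}$ is immediate from Lemma \ref{cunitalcompo}, so the plan reduces to verifying that $H(\mathcal{R}_\mathcal{I})$ is fully faithful and essentially surjective. The two halves of the argument are independent and use quite different tools.

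For fully faithfulness, I would invoke the bigraded Hochschild cohomology spectral sequence of Section \ref{ch2.4} applied to the filtration of $\textup{hom}_\mathcal{Q}(\mathcal{F}_0,\mathcal{F}_1)$ by vanishing of the lowest-order components of pre-natural transformations. The $E_1$-page \eqref{spectral} depends only on the cohomological hom-spaces of $\mathcal{A}$ (resp.\ $\tilde{\mathcal{A}}$) and on $H(\mathcal{F}_0), H(\mathcal{F}_1)$ through the targets $\textup{Hom}_{H(\mathcal{B})}(H(\mathcal{F}_0)(X_0),H(\mathcal{F}_1)(X_r))$. The map induced by $\mathcal{R}_\mathcal{I}$ on the $E_1$-page is precisely the pullback of such multilinear maps along $H(\mathcal{I}):H(\tilde{\mathcal{A}})\to H(\mathcal{A})$; since $H(\mathcal{I})$ is an equivalence of linear graded categories, this pullback is an isomorphism on each $E_1^{rs}$ (a standard consequence of full-faithfulness and essential surjectivity, transferring multilinear data along $H(\mathcal{I})$-representatives of every tuple of objects). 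The spectral sequence comparison theorem then yields an isomorphism on $E_\infty$, hence on $\textup{Hom}_{H(\mathcal{Q})}(\mathcal{F}_0,\mathcal{F}_1)$ in every degree.

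For essential surjectivity, given $\tilde{\mathcal{F}}\in\textup{obj}(\tilde{\mathcal{Q}})$ I would construct $\mathcal{F}\in\textup{obj}(\mathcal{Q})$ with $\mathcal{F}\circ\mathcal{I}\cong\tilde{\mathcal{F}}$ in $H^0(\tilde{\mathcal{Q}})$, as follows. For each $X\in\textup{obj}(\mathcal{A})\setminus\textup{obj}(\tilde{\mathcal{A}})$, use essential surjectivity of $H(\mathcal{I})$ to select $\tilde{X}\in\textup{obj}(\tilde{\mathcal{A}})$ together with cocycle lifts $\alpha_X\in\textup{hom}_\mathcal{A}^0(X,\tilde{X})$ and $\beta_X\in\textup{hom}_\mathcal{A}^0(\tilde{X},X)$ of a pair of mutually inverse isomorphisms in $H^0(\mathcal{A})$, and a primitive realizing $\mu_\mathcal{A}^2(\beta_X,\alpha_X)-e_X = \mu_\mathcal{A}^1(h_X)$; when $X\in\textup{obj}(\tilde{\mathcal{A}})$, take $\tilde{X}=X$ and $\alpha_X=\beta_X=e_X$. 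Set $\mathcal{F}(X)\coloneqq\tilde{\mathcal{F}}(\tilde{X})$. After replacing $\mathcal{A}$ and $\mathcal{B}$ by strictly unital models via formal diffeomorphisms (Lemma \ref{cunitstrictunit}), the first-order data $\mathcal{F}^1$ are defined on each $\textup{hom}_\mathcal{A}(X_0,X_1)$ by conjugation with $\alpha_{X_1}$ and $\beta_{X_0}$ composed with $\tilde{\mathcal{F}}^1$; the necessary contracting homotopies $T^1$ come from the $h_\bullet$. This sets up exactly the diagram \eqref{perturbdiagram} required by the Homological Perturbation Lemma \ref{perturbation}, which then produces the higher-order components $\mathcal{F}^d$ compatibly with the $A_\infty$-equations \eqref{Ainffunceqs}. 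The resulting $\mathcal{F}$ is cohomologically unital, and the perturbation also yields a pre-natural transformation realizing $\mathcal{F}\circ\mathcal{I}\simeq\tilde{\mathcal{F}}$; an application of Lemma \ref{homotopicareiso} upgrades this homotopy to an isomorphism in $H^0(\tilde{\mathcal{Q}})$.

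The main obstacle is essential surjectivity, specifically the coherent recursive construction of $\mathcal{F}^d$: even after reducing to the strictly unital setting, one must verify that the choices of lifts $\alpha_X,\beta_X,h_X$ can be assembled so that the Perturbation Lemma applies simultaneously on every morphism complex and outputs a \emph{c-unital} functor. The full-faithfulness half, by contrast, is largely formal once the spectral sequence from Section \ref{ch2.4} is in place.
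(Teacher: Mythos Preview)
Your overall two-step strategy --- cohomological full faithfulness via the filtration spectral sequence, essential surjectivity via an extension of $\tilde{\mathcal{F}}$ --- matches the paper's sketch. But both halves, as written, contain a concrete error.

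For full faithfulness, the claim that restriction is an isomorphism already on $E_1^{rs}$ is false. Elements of $E_1^{rs}$ in \eqref{spectral} are \emph{arbitrary} families of multilinear maps indexed by all tuples $(X_0,\ldots,X_r)$ in $\textup{obj}(\mathcal{A})$; there is no naturality constraint at this stage, so the values on $\tilde{\mathcal{A}}$-tuples do not determine the values on the remaining (isomorphic but distinct) tuples. Whenever $\textup{obj}(\tilde{\mathcal{A}})\subsetneq\textup{obj}(\mathcal{A})$, the restriction $E_1^{rs}\to\tilde{E}_1^{rs}$ is a genuine projection with nonzero kernel. The correct statement --- and what the paper's ``arguing with Hochschild cohomology'' is pointing to --- is that restriction is a quasi-isomorphism of the Hochschild complexes $(E_1^{\bullet s},d_1)$, i.e.\ an isomorphism on $E_2$; this is the invariance of Hochschild cohomology under the equivalence $H(\mathcal{I})$ of linear graded categories. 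After that correction the spectral sequence comparison goes through.

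For essential surjectivity, the Homological Perturbation Lemma as stated in Proposition~\ref{perturbation} is not the right tool: it transfers an $A_\infty$-\emph{structure} along a homotopy retract of hom-complexes, outputting a new $A_\infty$-category together with quasi-inverse functors. It does not manufacture an $A_\infty$-functor $\mathcal{A}\to\mathcal{B}$ from one $\tilde{\mathcal{A}}\to\mathcal{B}$, and your conjugation data $\alpha_X,\beta_X,h_X$ do not assemble into a diagram of the shape \eqref{perturbdiagram} for any pair of complexes to which the lemma applies. The paper (following Seidel) instead extends $\tilde{\mathcal{F}}$ order by order via an obstruction argument: having built $\mathcal{F}^1,\ldots,\mathcal{F}^{d-1}$ extending $\tilde{\mathcal{F}}$, the failure of \eqref{Ainffunceqs} at order $d$ is a Hochschild cocycle whose class vanishes (again by invariance under $H(\mathcal{I})$), and any cobounding choice gives $\mathcal{F}^d$. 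This produces a \emph{strict} extension $\mathcal{F}|_{\tilde{\mathcal{A}}}=\tilde{\mathcal{F}}$, which is exactly what the paragraph after the lemma uses to build the retraction $\mathcal{P}$.
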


\begin{proof}
	(\textit{Sketch}) Arguing with Hochschild cohomology, one can prove that $T\in\textup{hom}_\mathcal{Q}(\mathcal{F}_0,\mathcal{F}_1)\mapsto \tilde{T}\in\textup{hom}_{\tilde{\mathcal{Q}}}(\tilde{\mathcal{F}}_0,\tilde{\mathcal{F}}_1)$ is a quasi-isomorphism, thus cohomologically full and faithful, and that any c-unital $\tilde{\mathcal{F}}:\tilde{\mathcal{A}}\rightarrow\mathcal{B}$ can be extended uniquely to an $\mathcal{F}\in\textup{obj}(\mathcal{Q})$, making the restriction cohomologically essentially surjective. By Lemma \ref{aboutfunctors}, these two conditions imply quasi-equivalence. 
\end{proof}

The extension procedure alluded to in last proof, when applied to the identity $\mathcal{I}d_{\tilde{\mathcal{A}}}\in fun(\tilde{\mathcal{A}},\tilde{\mathcal{A}})$, produces a c-unital $A_\infty$-functor $\mathcal{P}:\mathcal{A}\rightarrow\tilde{\mathcal{A}}$ such that $\mathcal{P}|_{\tilde{\mathcal{A}}} = \mathcal{P}\circ\mathcal{I} = \mathcal{I}d_{\tilde{\mathcal{A}}}$ and which is a quasi-equivalence (as $\mathcal{I}$ is). Actually, $\mathcal{I}\circ\mathcal{P}\cong{\mathcal{I}d_\mathcal{A}}\in\textup{obj}\big(H^0(fun(\mathcal{A},\mathcal{A}))\big)$, because $\mathcal{L}_\mathcal{P}$ is cohomologically full and faithful (by Lemma \ref{1.7}), and thus
\vspace*{-0.5cm}
\[
H(\mathcal{L}_\mathcal{P}):\textup{Hom}_{H(fun(\mathcal{A},\mathcal{A}))}^0(\mathcal{I}\circ\mathcal{P},\mathcal{I}d_\mathcal{A})\rightarrow\textup{Hom}_{H(fun(\mathcal{A},\tilde{\mathcal{A}}))}^0(\overbrace{\mathcal{L}_\mathcal{P}(\mathcal{I}\circ\mathcal{P})}^{=\mathcal{L}_\mathcal{P}(\mathcal{I}d_\mathcal{A})},\mathcal{L}_\mathcal{P}(\mathcal{I}d_\mathcal{A}))
\]
a bijection mapping the identity of $\mathcal{L}_\mathcal{P}(\mathcal{I}d_\mathcal{A})$ back to the desired isomorphism.

These maps take part in proving the following very important result, which explains why we care about quasi-equivalences. We will use it repeatedly throughout next section.  

\begin{Thm}\label{aboutquasiequiv}			
	Let $\mathcal{A}, \mathcal{B}$ be c-unital $A_\infty$-categories, $\mathcal{F}:\mathcal{A}\rightarrow\mathcal{B}$ a quasi-equivalence. Then there exists a quasi-equivalence $\mathcal{G}:\mathcal{B}\rightarrow\mathcal{A}$ such that 
	\[
	\mathcal{G}\circ\mathcal{F}\cong\mathcal{I}d_\mathcal{A}\in\textup{obj}\big(H^0(fun(\mathcal{A},\mathcal{A}))\big)\,,\quad\mathcal{F}\circ\mathcal{G}\cong\mathcal{I}d_\mathcal{B}\in\textup{obj}\big(H^0(fun(\mathcal{B},\mathcal{B}))\big)\,.
	\]
\end{Thm}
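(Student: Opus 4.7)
The plan is to reduce the theorem to the bijective-on-objects case --- where Lemma \ref{homotopyinverse} supplies a strict homotopy inverse --- and then transport the result back to $\mathcal{A}$ and $\mathcal{B}$ via the inclusion/projection machinery described just before the statement (i.e.\ the consequence of Lemma \ref{restrictionquasi}).

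\textbf{Step 1 (small models).} Let $\tilde{\mathcal{B}}\subset\mathcal{B}$ be the full subcategory on the objects $\{\mathcal{F}(X)\mid X\in\textup{obj}(\mathcal{A})\}$, and $\tilde{\mathcal{A}}\subset\mathcal{A}$ the full subcategory obtained by choosing, for every $Y\in\textup{obj}(\tilde{\mathcal{B}})$, one $X\in\textup{obj}(\mathcal{A})$ with $\mathcal{F}(X)=Y$. The inclusions $\mathcal{I}_\mathcal{A}:\tilde{\mathcal{A}}\hookrightarrow\mathcal{A}$ and $\mathcal{I}_\mathcal{B}:\tilde{\mathcal{B}}\hookrightarrow\mathcal{B}$ are both quasi-equivalences: cohomological fullness and faithfulness are tautological; essential surjectivity of $\mathcal{I}_\mathcal{B}$ is directly that of $H(\mathcal{F})$; and for $\mathcal{I}_\mathcal{A}$, each $X\in\textup{obj}(\mathcal{A})$ has $\mathcal{F}(X)=\mathcal{F}(X')$ for some $X'\in\textup{obj}(\tilde{\mathcal{A}})$, whereupon cohomological fullness and faithfulness of $\mathcal{F}$ pull $\textup{id}_{\mathcal{F}(X)}$ back to a pair of mutually inverse morphisms witnessing $X\cong X'$ in $H(\mathcal{A})$. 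The construction recalled after Lemma \ref{restrictionquasi} therefore supplies c-unital quasi-equivalences $\mathcal{P}_\mathcal{A}:\mathcal{A}\to\tilde{\mathcal{A}}$ and $\mathcal{P}_\mathcal{B}:\mathcal{B}\to\tilde{\mathcal{B}}$ with $\mathcal{P}_\bullet\circ\mathcal{I}_\bullet=\mathcal{I}d_{\tilde{\bullet}}$ on the nose and $\mathcal{I}_\bullet\circ\mathcal{P}_\bullet\cong\mathcal{I}d_\bullet$ in the appropriate $H^0(fun)$.

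\textbf{Step 2 (construction of $\mathcal{G}$).} By construction $\mathcal{F}$ restricts to a c-unital $A_\infty$-functor $\tilde{\mathcal{F}}:\tilde{\mathcal{A}}\to\tilde{\mathcal{B}}$ which is bijective on objects and cohomologically full and faithful, hence a quasi-isomorphism by Lemma \ref{aboutfunctors}.i. Lemma \ref{homotopyinverse} then delivers a homotopy inverse $\tilde{\mathcal{J}}:\tilde{\mathcal{B}}\to\tilde{\mathcal{A}}$ satisfying $\tilde{\mathcal{J}}\circ\tilde{\mathcal{F}}\simeq\mathcal{I}d_{\tilde{\mathcal{A}}}$ and $\tilde{\mathcal{F}}\circ\tilde{\mathcal{J}}\simeq\mathcal{I}d_{\tilde{\mathcal{B}}}$; it is automatically c-unital because $H(\tilde{\mathcal{J}})$ is forced to be the two-sided inverse of the unital $H(\tilde{\mathcal{F}})$. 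I then set
\[
\mathcal{G}\coloneqq\mathcal{I}_\mathcal{A}\circ\tilde{\mathcal{J}}\circ\mathcal{P}_\mathcal{B}:\mathcal{B}\longrightarrow\mathcal{A}\,,
\]
manifestly c-unital as a composition of c-unital functors.

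\textbf{Step 3 (verification).} Exploiting the strict identity $\mathcal{F}\circ\mathcal{I}_\mathcal{A}=\mathcal{I}_\mathcal{B}\circ\tilde{\mathcal{F}}$ (built into the choice of $\tilde{\mathcal{B}}$), the easier direction reads
\[
\mathcal{F}\circ\mathcal{G}\;=\;\mathcal{I}_\mathcal{B}\circ\tilde{\mathcal{F}}\circ\tilde{\mathcal{J}}\circ\mathcal{P}_\mathcal{B}\;\simeq\;\mathcal{I}_\mathcal{B}\circ\mathcal{P}_\mathcal{B}\;\cong\;\mathcal{I}d_\mathcal{B}\,,
\]
where preservation of homotopy under left and right composition (noted at the start of Section \ref{ch2.5}) combined with Lemma \ref{homotopicareiso} promote the middle homotopy to a cohomological isomorphism. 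For $\mathcal{G}\circ\mathcal{F}\cong\mathcal{I}d_\mathcal{A}$ the pivotal step is
\[
\mathcal{P}_\mathcal{B}\circ\mathcal{F}\;\cong\;\tilde{\mathcal{F}}\circ\mathcal{P}_\mathcal{A}\quad\text{in }H^0(fun(\mathcal{A},\tilde{\mathcal{B}}))\,,
\]
which I would obtain by post-composing each side with $\mathcal{I}_\mathcal{B}$: Lemma \ref{cunitalcompo} makes $\mathcal{R}_\mathcal{F}$ and $\mathcal{L}_\mathcal{F}$ preserve cohomological isomorphisms, so $\mathcal{I}_\mathcal{B}\circ\mathcal{P}_\mathcal{B}\circ\mathcal{F}\cong\mathcal{F}$ and $\mathcal{I}_\mathcal{B}\circ\tilde{\mathcal{F}}\circ\mathcal{P}_\mathcal{A}=\mathcal{F}\circ\mathcal{I}_\mathcal{A}\circ\mathcal{P}_\mathcal{A}\cong\mathcal{F}$, after which Lemma \ref{1.7} (cohomological fullness and faithfulness of $\mathcal{L}_{\mathcal{I}_\mathcal{B}}$) reflects the isomorphism back. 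Granted this, the same chain
\[
\mathcal{G}\circ\mathcal{F}\;\cong\;\mathcal{I}_\mathcal{A}\circ\tilde{\mathcal{J}}\circ\tilde{\mathcal{F}}\circ\mathcal{P}_\mathcal{A}\;\simeq\;\mathcal{I}_\mathcal{A}\circ\mathcal{P}_\mathcal{A}\;\cong\;\mathcal{I}d_\mathcal{A}
\]
completes the proof. That $\mathcal{G}$ is itself a quasi-equivalence is free: $H(\mathcal{F})$ and $H(\mathcal{G})$ are mutually inverse up to natural isomorphism in $H^0(fun)$, so $H(\mathcal{G})$ is an equivalence. The principal technical obstacle is precisely the non-strict identity $\mathcal{P}_\mathcal{B}\circ\mathcal{F}\cong\tilde{\mathcal{F}}\circ\mathcal{P}_\mathcal{A}$ --- on a generic $X\in\mathcal{A}$ the two composites select possibly different representatives of the same $\mathcal{F}$-fibre, so strict equality is hopeless and one must work inside $H^0(fun)$ and invoke the cancellation property of the cohomologically fully faithful $\mathcal{L}_{\mathcal{I}_\mathcal{B}}$.
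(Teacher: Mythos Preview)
Your argument is correct and follows essentially the same route as the paper: pass to full subcategories $\tilde{\mathcal{A}}\subset\mathcal{A}$, $\tilde{\mathcal{B}}\subset\mathcal{B}$ with quasi-equivalent inclusions, apply Lemma~\ref{homotopyinverse} to the resulting quasi-isomorphism between the small models, then promote via Lemma~\ref{homotopicareiso} and the $\mathcal{I}/\mathcal{P}$ machinery. The only cosmetic difference is that the paper takes $\tilde{\mathcal{F}}\coloneqq\mathcal{P}_\mathcal{B}\circ\mathcal{F}\circ\mathcal{I}_\mathcal{A}$ as a composite, whereas you take $\tilde{\mathcal{F}}$ to be the literal restriction of $\mathcal{F}$ (gaining the strict identity $\mathcal{F}\circ\mathcal{I}_\mathcal{A}=\mathcal{I}_\mathcal{B}\circ\tilde{\mathcal{F}}$); your version is in fact more explicit about the final verification---including the reflection step through $\mathcal{L}_{\mathcal{I}_\mathcal{B}}$---which the paper leaves implicit.
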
 

\begin{proof}
	Denote by $\tilde{\mathcal{A}}\subset\mathcal{A}$ and $\tilde{\mathcal{B}}\subset\mathcal{B}$ the full c-unital $A_\infty$-subcategories making the inclusions $\mathcal{I}_\mathcal{A}:\tilde{\mathcal{A}}\rightarrow\mathcal{A}$ respectively $\mathcal{I}_\mathcal{B}:\tilde{\mathcal{B}}\rightarrow\mathcal{B}$ quasi-equivalences, invertible up to isomorphism via $\mathcal{P}_\mathcal{A}$ and $\mathcal{P}_\mathcal{B}$, by the argument given in last paragraph. Writing $\tilde{\mathcal{F}}\coloneqq {\mathcal{P}_\mathcal{B}}\circ\mathcal{F}\circ{\mathcal{I}_\mathcal{A}}:\tilde{\mathcal{A}}\rightarrow\tilde{\mathcal{B}}$, we see that $\tilde{\mathcal{F}}$ is actually a quasi-isomorphism, and hence by Lemma \ref{homotopyinverse} has a homotopy inverse $\tilde{\mathcal{G}}:\tilde{\mathcal{B}}\rightarrow\tilde{\mathcal{A}}$, so that $\tilde{\mathcal{F}}\circ\tilde{\mathcal{G}}\simeq\mathcal{I}d_{\tilde{\mathcal{B}}}$ and $\tilde{\mathcal{G}}\circ\tilde{\mathcal{F}}\simeq\mathcal{I}d_{\tilde{\mathcal{A}}}$. Now, Lemma \ref{homotopicareiso} yields $\tilde{\mathcal{F}}\circ\tilde{\mathcal{G}}\cong\mathcal{I}d_{\tilde{\mathcal{B}}}$ in $H^0(fun(\tilde{\mathcal{B}},\tilde{\mathcal{B}}))$ and $\tilde{\mathcal{G}}\circ\tilde{\mathcal{F}}\cong\mathcal{I}d_{\tilde{\mathcal{A}}}$ in $H^0(fun(\tilde{\mathcal{A}},\tilde{\mathcal{A}}))$.
\end{proof}

\begin{Cor}							
	Let $\mathcal{C}$ be any c-unital $A_\infty$-category. If $\mathcal{G}:\mathcal{A}\rightarrow\mathcal{B}$ is a quasi-equivalence, then so is its right composition $\mathcal{R}_\mathcal{G}:fun(\mathcal{B},\mathcal{C})\rightarrow fun(\mathcal{A},\mathcal{C})$ \textup(and its left $\mathcal{L}_\mathcal{G}:fun(\mathcal{C},\mathcal{A})\rightarrow fun(\mathcal{C},\mathcal{B})$\textup).
\end{Cor}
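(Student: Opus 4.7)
The plan is to deduce this corollary directly from Theorem \ref{aboutquasiequiv} by exhibiting $\mathcal{R}_\mathcal{H}$ (for a quasi-inverse $\mathcal{H}$ of $\mathcal{G}$) as a cohomological quasi-inverse for $\mathcal{R}_\mathcal{G}$. First I would apply Theorem \ref{aboutquasiequiv} to $\mathcal{G}$ to obtain a quasi-equivalence $\mathcal{H}:\mathcal{B}\to\mathcal{A}$ together with isomorphisms $\mathcal{H}\circ\mathcal{G}\cong\mathcal{I}d_\mathcal{A}\in\textup{obj}(H^0(fun(\mathcal{A},\mathcal{A})))$ and $\mathcal{G}\circ\mathcal{H}\cong\mathcal{I}d_\mathcal{B}\in\textup{obj}(H^0(fun(\mathcal{B},\mathcal{B})))$. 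Since quasi-equivalences are in particular c-unital, Lemma \ref{cunitalcompo} guarantees that both $\mathcal{R}_\mathcal{G}:fun(\mathcal{B},\mathcal{C})\to fun(\mathcal{A},\mathcal{C})$ and $\mathcal{R}_\mathcal{H}:fun(\mathcal{A},\mathcal{C})\to fun(\mathcal{B},\mathcal{C})$ are well-defined c-unital $A_\infty$-functors on the relevant functor $A_\infty$-categories, so that by Lemma \ref{aboutfunctors} it suffices to verify that $H(\mathcal{R}_\mathcal{G})$ is cohomologically essentially surjective, full, and faithful.

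Essential surjectivity is immediate: given any $\mathcal{F}\in\textup{obj}(fun(\mathcal{A},\mathcal{C}))$, form $\mathcal{F}\circ\mathcal{H}\in\textup{obj}(fun(\mathcal{B},\mathcal{C}))$; then $\mathcal{R}_\mathcal{G}(\mathcal{F}\circ\mathcal{H})=\mathcal{F}\circ\mathcal{H}\circ\mathcal{G}$, and since $H^0(\mathcal{L}_\mathcal{F}):H^0(fun(\mathcal{A},\mathcal{A}))\to H^0(fun(\mathcal{A},\mathcal{C}))$ is a functor it preserves the isomorphism $\mathcal{H}\circ\mathcal{G}\cong\mathcal{I}d_\mathcal{A}$, yielding $\mathcal{F}\circ\mathcal{H}\circ\mathcal{G}\cong\mathcal{F}$ in $H^0(fun(\mathcal{A},\mathcal{C}))$. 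For fullness and faithfulness, fix $\mathcal{F}_1,\mathcal{F}_2\in\textup{obj}(fun(\mathcal{B},\mathcal{C}))$ and consider the two-step composite
\[
\textup{Hom}_{H(fun(\mathcal{B},\mathcal{C}))}(\mathcal{F}_1,\mathcal{F}_2)\xrightarrow{H(\mathcal{R}_\mathcal{G})}\textup{Hom}_{H(fun(\mathcal{A},\mathcal{C}))}(\mathcal{F}_1\!\circ\!\mathcal{G},\mathcal{F}_2\!\circ\!\mathcal{G})\xrightarrow{H(\mathcal{R}_\mathcal{H})}\textup{Hom}_{H(fun(\mathcal{B},\mathcal{C}))}(\mathcal{F}_1\!\circ\!\mathcal{G}\!\circ\!\mathcal{H},\mathcal{F}_2\!\circ\!\mathcal{G}\!\circ\!\mathcal{H}).
\]
Applying $H^0(\mathcal{L}_{\mathcal{F}_i})$ to the isomorphism $\mathcal{G}\circ\mathcal{H}\cong\mathcal{I}d_\mathcal{B}$ produces isomorphisms $\mathcal{F}_i\circ\mathcal{G}\circ\mathcal{H}\cong\mathcal{F}_i$ in $H^0(fun(\mathcal{B},\mathcal{C}))$, so that by Lemma \ref{1.6} pre- and post-composition with these isomorphisms gives a bijection identifying the rightmost $\textup{Hom}$-space with the leftmost. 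The key claim is that this identification coincides with the displayed composite; granting it, one obtains that $H(\mathcal{R}_\mathcal{H})\circ H(\mathcal{R}_\mathcal{G})$ is a bijection, so $H(\mathcal{R}_\mathcal{G})$ is injective on cohomology, and the symmetric argument with $H(\mathcal{R}_\mathcal{G})\circ H(\mathcal{R}_\mathcal{H})$ supplies surjectivity.

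This completes the proof that $H(\mathcal{R}_\mathcal{G})$ is an equivalence, i.e. $\mathcal{R}_\mathcal{G}$ is a quasi-equivalence. The statement for $\mathcal{L}_\mathcal{G}$ is entirely parallel, with $\mathcal{L}_{\mathcal{G}_1}\circ\mathcal{L}_{\mathcal{G}_2}=\mathcal{L}_{\mathcal{G}_1\circ\mathcal{G}_2}$ and c-unitality of $\mathcal{L}_\mathcal{G},\mathcal{L}_\mathcal{H}$ (Lemma \ref{cunitalcompo}) replacing their right-composition analogues. The main obstacle I anticipate is justifying the key claim above — that the two-step $H(\mathcal{R}_\mathcal{H})\circ H(\mathcal{R}_\mathcal{G})$ really matches conjugation by the Lemma \ref{1.6} isomorphisms. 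This is a 2-categorical naturality assertion about whiskering a cohomology-level natural isomorphism $\mathcal{G}\circ\mathcal{H}\cong\mathcal{I}d_\mathcal{B}$ by the arbitrary functors $\mathcal{F}_1,\mathcal{F}_2$, and verifying it rigorously requires unwinding formula \eqref{rshit1} for right composition together with the action of $\mathcal{L}_{\mathcal{F}_i}$ on pre-natural transformations — a bookkeeping exercise akin to (and in the same spirit as) the Hochschild-cohomology arguments underlying Lemma \ref{1.7}.
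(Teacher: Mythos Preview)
Your essential-surjectivity argument is correct and in fact coincides with the paper's: apply $\mathcal{L}_\mathcal{E}$ (your $\mathcal{L}_\mathcal{F}$) to the isomorphism $\mathcal{H}\circ\mathcal{G}\cong\mathcal{I}d_\mathcal{A}$ supplied by Theorem~\ref{aboutquasiequiv}. The difference lies in full/faithfulness. The paper does not attempt your direct quasi-inverse argument there; instead it invokes the already-established results Lemma~\ref{1.7} (for $\mathcal{G}$ a quasi-isomorphism) and Lemma~\ref{restrictionquasi} (for $\mathcal{G}$ the inclusion of a full subcategory), both of which give cohomological full faithfulness of $\mathcal{R}_\mathcal{G}$ outright. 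The general quasi-equivalence reduces to these two cases via the factorization implicit in the proof of Theorem~\ref{aboutquasiequiv}, together with $\mathcal{R}_{\mathcal{G}_1}\circ\mathcal{R}_{\mathcal{G}_2}=\mathcal{R}_{\mathcal{G}_2\circ\mathcal{G}_1}$.

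Your alternative route --- showing $H(\mathcal{R}_\mathcal{H})\circ H(\mathcal{R}_\mathcal{G})=H(\mathcal{R}_{\mathcal{G}\circ\mathcal{H}})$ is a bijection on Hom-spaces --- has a real gap, and you correctly identify it. The issue is that $\mathcal{G}\circ\mathcal{H}\cong\mathcal{I}d_\mathcal{B}$ is only an isomorphism of \emph{objects} in $H^0(fun(\mathcal{B},\mathcal{B}))$; to conclude that $\mathcal{R}_{\mathcal{G}\circ\mathcal{H}}$ and $\mathcal{R}_{\mathcal{I}d_\mathcal{B}}=\mathcal{I}d$ induce the same maps on cohomological Hom-spaces you need the assignment $\mathcal{K}\mapsto\mathcal{R}_\mathcal{K}$ to respect such isomorphisms --- equivalently, an $A_\infty$ interchange law relating left whiskering of $T$ by $\mathcal{F}_i$ to right composition. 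The paper never constructs this (right composition $\mathcal{R}_\mathcal{G}$ is defined for fixed $\mathcal{G}$, not as an $A_\infty$-functor in $\mathcal{G}$), and establishing it rigorously is a filtration/spectral-sequence argument of the same weight as Lemma~\ref{1.7} itself. So your route does not actually bypass the Hochschild-style work; it just relocates it. The paper's proof is cleaner precisely because it cashes in Lemma~\ref{1.7} and Lemma~\ref{restrictionquasi} where that work has already been done.
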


\begin{proof}
	By above, we have two cases which imply the general result: either $\mathcal{G}$ is a quasi-isomorphism or it is the inclusion of a full $A_\infty$-subcategory $\tilde{\mathcal{B}}\coloneqq \mathcal{A}$ (so that the restriction of Lemma \ref{restrictionquasi} is actually $\mathcal{R}_\mathcal{G}$). In both cases, $\mathcal{R}_\mathcal{G}$ is cohomologically full and faithful (by Lemmas \ref{1.7} respectively \ref{restrictionquasi}). 
	
	Now, by Theorem \ref{aboutquasiequiv} we can find a quasi-equivalence $\mathcal{F}:\mathcal{B}\rightarrow\mathcal{A}$ such that $\mathcal{F}\circ\mathcal{G}\cong\mathcal{I}d_\mathcal{A}$ as objects in cohomology, and Lemma \ref{cunitalcompo} ensures that $\mathcal{L}_\mathcal{E}:fun(\mathcal{A},\mathcal{A})\rightarrow fun(\mathcal{A},\mathcal{C})$ is well defined for any $\mathcal{E}\in\textup{obj}(fun(\mathcal{A},\mathcal{C}))$. We have $\mathcal{R}_\mathcal{G}(\mathcal{E}\circ\mathcal{F})=\mathcal{L}_\mathcal{E}(\mathcal{F}\circ\mathcal{G})\cong\mathcal{L}_\mathcal{E}(\mathcal{I}d_\mathcal{A}) = \mathcal{E}$ in $H^0(fun(\mathcal{A},\mathcal{C}))$. Hence $\mathcal{R}_\mathcal{G}$ is cohomologically essentially surjective. Together with fullness and faithfulness, this makes it a quasi-equivalence (the argument for $\mathcal{L}_\mathcal{G}$ is similar). 
\end{proof}

Finally, the notion of unitality for $A_\infty$-modules. 

\begin{Rem}											
	Notice that the dg category $\mathsf{Ch}$ is strictly unital: for any complex $(V,d_V)\in\textup{obj}(\mathsf{Ch})$, the unique strict unit is given by $e_V\coloneqq \textup{id}_V\in\textup{hom}_\mathsf{Ch}^0(V,V)$. Indeed, by Definition \ref{complex}: $\mu_\mathsf{Ch}^1(\textup{id}_V)= d(\textup{id}_V)= d_V\circ \textup{id}_V - (-1)^0\textup{id}_V\circ d_V = 0$, $(-1)^{|f|}\mu_\mathsf{Ch}^2(\textup{id}_W,f) = (-1)^{2|f|}\textup{id}_W\circ f = f = f\circ \textup{id}_V = \mu_\mathsf{Ch}^2(f,\textup{id}_V)$ for all $f\in$ $\textup{Hom}_\mathsf{Ch}(V,W)$, and readily $\mu_\mathsf{Ch}^d = 0$ if $d>2$. 
\end{Rem}

By last Remark and Proposition \ref{funccatunitality}, for any non-unital $A_\infty$-category $\mathcal{A}$, $\mathsf{Q}= nu\text{-}mod(\mathcal{A}) = nu\text{-}\!fun(\mathcal{A}^{opp},\mathsf{Ch})$ is strictly unital. The strict unit associated to $\mathcal{M}\in\textup{obj}(\mathsf{Q})$ is the module homomorphism $t\coloneqq  e_\mathcal{M}\in\textup{hom}_\mathsf{Q}^0(\mathcal{M},\mathcal{M})$ given by 
\begin{equation}\label{modunit}
e_\mathcal{M}^1(b)\coloneqq (-1)^{|b|}b\in\mathcal{M}(X_0)\qquad\text{and}\qquad e_\mathcal{M}^d \coloneqq  0 \quad\text{ if }d>1\,. 
\end{equation}

We verify this in dimension 1 with the help of equations \eqref{tshit1} and \eqref{tshit2}: $\mu_\mathsf{Q}^1(t)^1(b)$ $= (-1)^{|b|}\mu_\mathcal{M}^1(t^1(b)) + (-1)^{|b|}t^1(\mu_\mathcal{M}^1(b)) = \mu_\mathcal{M}^1(b) + (-1)^{2|b|+1}\mu_\mathcal{M}^1(b) = 0$ and $(-1)^{|t_1|}\mu_\mathsf{Q}^2(t,t_1)^1(b) = (-1)^{|t_1|+|b|}t^1(t_{1}^1(b)) = t_{1}^1(b) = \mu_\mathsf{Q}^2(t_1,t)^1(b)$.

\begin{Def}\label{mod}											
	Let $\mathcal{A}$ be a c-unital $A_\infty$-category. An $A_\infty$-module $\mathcal{M}\in\textup{obj}(\mathsf{Q})$ is \textbf{c-unital}\index{aaa@$A_\infty$-module!c-unital} if the $H(\mathcal{A})$-module $H(\mathcal{M})$ is unital, that is, if c-units $e_X\in\textup{hom}_\mathcal{A}^0(X,X)$ yield maps $\mu_\mathcal{M}^2(\;\cdot\;,e_X) :\mathcal{M}(X)\rightarrow\mathcal{M}(X)$ inducing the identity in cohomology. 
	
	Denote by $mod(\mathcal{A})\!\subset \!nu\text{-}mod(\mathcal{A})$ the full subcategory of c-unital $A_\infty$-modules.
\end{Def}

\subsection{The Yoneda embedding}\label{ch2.9}

Before moving to triangulation of $A_\infty$-categories, we discuss an important $A_\infty$-functor which will play a major role in the coming constructions.

\begin{Def}\label{Yonedaemb}						
	The \textbf{Yoneda embedding}\index{Yoneda embedding} for the non-unital $A_\infty$-category $\mathcal{A}$ is the non-unital $A_\infty$-functor $\Upsilon\equiv\Upsilon_\mathcal{A}:\mathcal{A}\rightarrow\mathsf{Q}= nu\text{-}mod(\mathcal{A})$ which:
	\begin{itemize}[leftmargin=0.5cm]
	\renewcommand{\labelitemi}{\textendash}
	\item maps $Y\in\textup{obj}(\mathcal{A})$ to $\mathcal{Y}\in\textup{obj}(\mathsf{Q})$ (called \textbf{Yoneda module}\index{Yoneda module} of $Y$) defined by $\mathcal{Y}(X)\coloneqq \textup{hom}_\mathcal{A}(X,Y)$ and $\mu_\mathcal{Y}^d\coloneqq \mu_\mathcal{A}^d:\textup{hom}_\mathcal{A}(X_{d-1},Y)\otimes...\otimes\textup{hom}_\mathcal{A}(X_0,X_1)\rightarrow\textup{hom}_\mathcal{A}(X_0,Y)$ for $d\geq 1$ and $X,X_0,...,X_{d-1}\in\textup{obj}(\mathcal{A})$,
	\item consists of a sequence of graded maps $\Upsilon^d$ as in \eqref{Ainffunc}, whose first order term $\Upsilon^1$ (of degree 0) maps $c_1\!\in\!\textup{hom}_\mathcal{A}(Y_0,Y_1)$ to the pre-module homorphism $\Upsilon^1(c_1)\!\in\!\textup{hom}_\mathsf{Q}(\mathcal{Y}_0,\mathcal{Y}_1)$ given by
	\begin{align}\label{Yonedamap}
	\Upsilon^1(c_1)^d\!:\;& \mathcal{Y}_0(X_{d-1})\otimes\textup{hom}_\mathcal{A}(X_{d-2},X_{d-1})\otimes...\otimes\textup{hom}_\mathcal{A}(X_0,X_1)\rightarrow\mathcal{Y}_1(X_0) \nonumber \\
	&(b,a_{d-1},...,a_1)\mapsto\mu_\mathcal{A}^{d+1}(c_1,b,a_{d-1},...,a_1),
	\end{align}
	The higher order $\Upsilon^d$ are defined similarly: $\Upsilon^2(c_2,c_1)^d(b,a_{d-1},...,a_1)\!\coloneqq \mu_\mathcal{A}^{d+2}(c_2,c_1,b,a_{d-1},...,a_1)\in\mathcal{Y}_2$ for $c_2\in\textup{hom}_\mathcal{A}(Y_1,Y_2)$, and so on.
	\end{itemize}
	If moreover $\mathcal{A}$ is c-unital, then by Definition \ref{mod} the Yoneda embedding is actually an $A_\infty$-functor $\Upsilon:\mathcal{A}\rightarrow mod(\mathcal{A})$.
\end{Def}

One can show that for a c-unital $A_\infty$-category $\mathcal{A}$, $\Upsilon_\mathcal{A}$ is cohomologically full and faithful (the proof involves Hochschild cohomology; see \cite[section (2g)]{[Sei08]}). Furthermore, for any $A_\infty$-module $\mathcal{M}$ and $Y\in\textup{obj}(\mathcal{A})$, we have a natural chain map 
\vspace*{-0.4cm}
\begin{align}\label{lambda}
&\lambda=\lambda_\mathcal{M}:\mathcal{M}(Y)\rightarrow\textup{hom}_\mathsf{Q}(\mathcal{Y},\mathcal{M}), \nonumber \\
&\lambda(c)^d(b,a_{d-1},...,a_1) \coloneqq \mu_\mathcal{M}^{d+1}(c,b,a_{d-1},...,a_1)  
\end{align}
extending $\Upsilon^1$. Actually, $\lambda(c)$ is a module homomorphism as soon as $c\in\mathcal{M}(Y)$ is a cocycle, by equation \eqref{tshit1}. Moreover, equation \eqref{tshit2} shows that:
\begin{equation}\label{lambdaproperties}
\begin{aligned}
\langle\lambda_\mathcal{M}(\mu_\mathcal{M}^2(c,b))\rangle &= \langle\mu_\mathsf{Q}^2(\lambda_\mathcal{M}(c),\Upsilon^1(b))\rangle\in\textup{Hom}_{H(\mathsf{Q})}(\mathcal{Y}_0,\mathcal{M})\;,\\
\langle\mu_\mathsf{Q}^2(t,\lambda_{\mathcal{M}_0}(\tilde{c}))\rangle &= \langle\lambda_{\mathcal{M}_1}(t^1(\tilde{c}))\rangle\in\textup{Hom}_{H(\mathsf{Q})}(\mathcal{Y},\mathcal{M}_1)\;,
\end{aligned}
\end{equation}
for any $\langle c\rangle\in H(\mathcal{M}(Y_1))$, $\langle b\rangle\in\textup{Hom}_{H(\mathcal{A})}(Y_0,Y_1)$, $\langle t\rangle\in\textup{Hom}_{H(\mathsf{Q})}(\mathcal{M}_0,\mathcal{M}_1)$ and $\langle \tilde{c}\rangle\in H(\mathcal{M}_0(Y))$.

In particular, the extension of the classical Yoneda Lemma holds in the c-unital case (see \cite[Lemma 2.12]{[Sei08]}).

\begin{Lem}[Yoneda Lemma]\label{Yonedalemma}
	Let $\mathcal{A}$ be a c-unital $A_\infty$-category, $\mathsf{Q}=mod(\mathcal{A})$. Then the module homomorphism $\lambda_\mathcal{M}$ from \eqref{lambda} is actually a quasi-isomorphism, for any $\mathcal{M}\in\textup{obj}(\mathsf{Q})$ and $Y\in\textup{obj}(\mathcal{A})$. 
\end{Lem}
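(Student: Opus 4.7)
The plan is to exhibit an explicit homotopy inverse to $\lambda_\mathcal{M}$. Fix a c-unit $e_Y\in\textup{hom}_\mathcal{A}^0(Y,Y)$, and define the \emph{evaluation map}
$$\varepsilon:\textup{hom}_\mathsf{Q}(\mathcal{Y},\mathcal{M})\longrightarrow\mathcal{M}(Y),\qquad t=(t^1,t^2,\ldots)\longmapsto t^1(e_Y).$$
First I would check that $\varepsilon$ is a degree-zero chain map: specializing \eqref{tshit1} to $h=1$, $b=e_Y$ gives $\mu_\mathsf{Q}^1(t)^1(e_Y)=\mu_\mathcal{M}^1(t^1(e_Y))+t^1(\mu_\mathcal{A}^1(e_Y))=\mu_\mathcal{M}^1(\varepsilon(t))$, since $e_Y$ is a cocycle.

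Next, the composition $\varepsilon\circ\lambda_\mathcal{M}$ is immediate: by definition
$$\varepsilon(\lambda_\mathcal{M}(c))=\lambda_\mathcal{M}(c)^1(e_Y)=\mu_\mathcal{M}^2(c,e_Y),$$
and the c-unitality hypothesis on $\mathcal{M}\in\textup{obj}(mod(\mathcal{A}))$ (Definition \ref{mod}) says precisely that $\mu_\mathcal{M}^2(\,\cdot\,,e_Y)$ induces the identity on $H(\mathcal{M}(Y))$. Hence $H(\varepsilon)\circ H(\lambda_\mathcal{M})=\textup{id}$, which proves injectivity of $H(\lambda_\mathcal{M})$.

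For the converse — the heart of the argument — I would first reduce to the strictly unital case via Lemma \ref{cunitstrictunit}, choosing a formal diffeomorphism $\Phi:\mathcal{A}\to\tilde{\mathcal{A}}$ with $\Phi^1=\textup{id}$; since $\Phi$ is a quasi-isomorphism it does not alter the cohomologies of $\mathcal{M}(Y)$ or $\textup{hom}_\mathsf{Q}(\mathcal{Y},\mathcal{M})$, and the Yoneda map $\lambda_\mathcal{M}$ transports to the corresponding $\lambda_{\tilde{\mathcal{M}}}$ up to these quasi-isomorphisms. Assuming $\mathcal{A}$ strictly unital with strict unit $e_Y$, I would then define, for any module homomorphism $t$, a candidate pre-module homotopy $h\in\textup{hom}_\mathsf{Q}^{|t|-1}(\mathcal{Y},\mathcal{M})$ by
$$h^d(b,a_{d-1},\ldots,a_1)\coloneqq(-1)^{\natural}\,t^{d+1}(e_Y,b,a_{d-1},\ldots,a_1),$$
for suitable Koszul sign $\natural$. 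Substituting $h$ into \eqref{tshit1}, the strict-unit relations $\mu_\mathcal{A}^d(\ldots,e_Y,\ldots)=0$ for $d\neq 2$ and $\mu_\mathcal{A}^2(\,\cdot\,,e_Y)=\textup{id}$ kill all terms involving a $\mu_\mathcal{A}^m$-insertion adjacent to $e_Y$ of length $m>2$, while combining with the cocycle equation $\mu_\mathsf{Q}^1(t)^{d+1}(e_Y,b,a_{d-1},\ldots,a_1)=0$ should leave precisely $t^d(b,a_{d-1},\ldots,a_1)-\lambda_\mathcal{M}(t^1(e_Y))^d(b,a_{d-1},\ldots,a_1)$. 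This shows $[t]=[\lambda_\mathcal{M}(\varepsilon(t))]$ in cohomology, completing surjectivity.

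The main obstacle is this last combinatorial cancellation: pairing the various $\mu_\mathcal{M}$-, $\mu_\mathcal{A}$- and $h$-terms of \eqref{tshit1} with their counterparts in the expansion of $\mu_\mathsf{Q}^1(t)^{d+1}$ requires bookkeeping consistent with the Koszul rule, and an incorrect choice of $\natural$ collapses the wrong terms. A more conceptual alternative — outlined in \cite[section~(2g)]{[Sei08]} and hinted at by the earlier use of Hochschild cohomology — is to exploit the decreasing length filtration $F^r\textup{hom}_\mathsf{Q}(\mathcal{Y},\mathcal{M})=\{t:t^1=\cdots=t^{r-1}=0\}$; the associated spectral sequence has $E_1^{rs}$ built from cohomological Hom spaces as in \eqref{spectral}, the bar-type differential on $E_1$ is acyclic outside the column $r=1$ thanks to the classical Yoneda lemma applied to the $H(\mathcal{A})$-module $H(\mathcal{M})$, and collapse at $E_2$ reproduces $H(\mathcal{M}(Y))$ as the abutment, finishing the proof without explicit signs.
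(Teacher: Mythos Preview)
The paper does not actually prove this lemma; it only states it and refers to \cite[Lemma~2.12]{[Sei08]}, having already remarked that the cohomological fullness and faithfulness of $\Upsilon$ ``involves Hochschild cohomology; see \cite[section~(2g)]{[Sei08]}''. Your spectral-sequence alternative at the end is therefore exactly the argument the paper defers to: filter $\textup{hom}_\mathsf{Q}(\mathcal{Y},\mathcal{M})$ by length, identify the $E_1$-page via \eqref{spectral}, and invoke the classical Yoneda lemma for the $H(\mathcal{A})$-module $H(\mathcal{M})$ to collapse at $E_2$.

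Your primary route --- an explicit homotopy inverse $\varepsilon$ --- is a genuinely different and more elementary argument, and it is correct. Two refinements are worth noting. First, the assignment $t\mapsto h(t)$ is linear, so instead of assuming $t$ is a cocycle and invoking $\mu_\mathsf{Q}^1(t)^{d+1}(e_Y,b,\ldots)=0$, you can compute $\mu_\mathsf{Q}^1(h(t))+h(\mu_\mathsf{Q}^1(t))$ for arbitrary $t$; the terms you extracted from the cocycle condition resurface as $h(\mu_\mathsf{Q}^1(t))$, and you obtain an honest chain homotopy $\lambda_\mathcal{M}\circ\varepsilon\simeq\textup{id}$. Second, the reduction to strict unitality via Lemma~\ref{cunitstrictunit} needs slightly more than you say: $\mathcal{M}$ is a module over $\mathcal{A}$, not $\tilde{\mathcal{A}}$, so you must push it forward via $(\Phi^{-1})^*$ (legitimate since a formal diffeomorphism is invertible) and then check that $\lambda_\mathcal{M}$ and $\lambda_{\tilde{\mathcal{M}}}$ are intertwined by the induced identifications --- straightforward since $\Phi^1=\textup{id}$ keeps the underlying complexes unchanged. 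What your approach buys over Seidel's is transparency and an explicit inverse; what the filtration buys is freedom from sign bookkeeping and a uniform template reusable for Lemmas~\ref{1.6} and~\ref{1.7}.
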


There is a way to relate Yoneda embeddings of distinct $A_\infty$-categories $\mathcal{A}$, $\mathcal{B}$. It makes use of the following construction.

\begin{Def}\label{pullback}									
	 A non-unital $A_\infty$-functor $\mathcal{G}:\mathcal{A}\rightarrow\mathcal{B}$ induces a non-unital (dg) \textbf{pullback functor}\index{pullback functor} $\mathcal{G}^*\coloneqq \mathcal{R}_{\mathcal{G}^{opp}}: nu\text{-}mod(\mathcal{B})\rightarrow nu\text{-}mod(\mathcal{A})$, defined as follows:
	\begin{itemize}[leftmargin=0.5cm]
		\renewcommand{\labelitemi}{\textendash}
		\item $\mathcal{M}\in\textup{obj}(nu\text{-}mod(\mathcal{B}))$ is mapped to the module $\mathcal{G}^*\mathcal{M}\in\textup{obj}(nu\text{-}mod(\mathcal{A}))$ given by $(\mathcal{G}^*\mathcal{M})(X) \coloneqq \mathcal{M}(\mathcal{G}(X))\in\textup{obj}(\mathsf{Ch})$ for $X\in\textup{obj}(\mathcal{A})$, with associated
		\[
		\mu_{\mathcal{G}^*\!\mathcal{M}}^d(b,a_{d-1},...,a_1) \coloneqq\mkern-24mu \sum_{\substack{1 \leq r\leq d \\ s_1+...+s_r=d}}\mkern-24mu\mu_{\mathcal{M}}^r(b, \mathcal{G}^{s_r}(a_{d-1},...,a_{d-s_r}),...,\mathcal{G}^{s_1}(a_{s_1},...,a_1))\;,
		\]
		\item $t\in\textup{hom}_{nu\text{-}mod(\mathcal{B})}(\mathcal{M}_0,\mathcal{M}_1)$ is mapped to the pre-module homomorphism $\mathcal{G}^*t\in\textup{hom}_{nu\text{-}mod(\mathcal{A})}(\mathcal{G}^*\mathcal{M}_0,\mathcal{G}^*\mathcal{M}_1)$ given by
		\[
		(\mathcal{G}^*t)^d(b,a_{d-1},...,a_1) \coloneqq\mkern-24mu \sum_{\substack{1 \leq r\leq d \\ s_1+...+s_r=d}}\mkern-24mu t^r(b, \mathcal{G}^{s_r}(a_{d-1},...,a_{d-s_r}),...,\mathcal{G}^{s_1}(a_{s_1},...,a_1))\;.
		\]   
	\end{itemize}
	If $\mathcal{G}$ is c-unital, then so is $\mathcal{G}^*$ (by Lemma \ref{cunitalcompo}), resulting in a strictly unital (dg) functor $mod(\mathcal{B})\rightarrow mod(\mathcal{A})$. If moreover $\mathcal{G}$ is a quasi-equivalence, then so is $\mathcal{G}^*$.
\end{Def}

Consider now a non-unital $A_\infty$-functor $\mathcal{F}:\mathcal{A}\rightarrow\mathcal{B}$. Then the respective Yoneda embeddings $\Upsilon_\mathcal{A}$ and $\Upsilon_\mathcal{B}$ are related via the natural transformation $T:\Upsilon_\mathcal{A}\rightarrow\mathcal{F}^*\circ\Upsilon_\mathcal{B}\circ\mathcal{F}$ in $nu\text{-}\!fun(\mathcal{A},\mathsf{Q})$, whose $T^1:\textup{hom}_\mathcal{A}(Y_0,Y_1)\rightarrow\textup{hom}_\mathsf{Q}(\Upsilon_\mathcal{A}(Y_0),\mathcal{F}^*\Upsilon_\mathcal{B}\mathcal{F}(Y_1))$ assigns to each morphism in $\mathcal{A}$ a module homorphism in $\mathsf{Q}$, itself a collection of graded maps
\vspace*{-0.5cm}
\begin{align}
	&(\Upsilon_\mathcal{A}(Y))(X_{d-1})\!\otimes\!\textup{hom}_\mathcal{A}(X_{d-2},X_{d-1})\!\otimes\!...\!\otimes\!\textup{hom}_\mathcal{A}(X_0,X_1)\!\rightarrow\!\overbrace{(\mathcal{F}^*\Upsilon_\mathcal{B}\mathcal{F}(Y))(X_0)}^{= \textup{hom}_\mathcal{B}(\mathcal{F}(X_0),\mathcal{F}(Y))},\nonumber \\
	&(b,a_{d-1},...,a_1)\mapsto(-1)^{\dagger_{d-1}+|b|}\mathcal{F}^d(b,a_{d-1},...,a_1)\;.
\end{align}

\begin{Rem}\label{commutingYoneda}					
	If we assume that $\mathcal{A}$, $\mathcal{B}$ are c-unital and $\mathcal{F}$ is cohomologically full and faithful, it turns out that $T^1$ is an isomorphism of c-unital $A_\infty$-modules. Therefore, Lemma \ref{1.6} implies that $T$ is an isomorphism in the respective cohomological category, producing commutativity of the diagram
	\[
	\begin{tikzcd}
	\mathcal{A}\arrow[r, "\mathcal{F}"]\arrow[d, "\Upsilon_\mathcal{A}"'] & \mathcal{B}\arrow[d, "\Upsilon_\mathcal{B}"] \\
	{mod(\mathcal{A})} & {mod(\mathcal{B})}\arrow[l, "\mathcal{F}^*"]
	\end{tikzcd},
	\]
	up to canonical isomorphism in $H^0\big(fun(\mathcal{A},mod(\mathcal{A}))\big)$.
\end{Rem}
\vspace*{0.2cm}

\noindent\minibox[frame]{Henceforth, all $A_\infty$-categories, $A_\infty$-functors and $A_\infty$-modules are assumed to \\ be c-unital, unless otherwise stated.}

\newpage

\section{Triangulation}
\thispagestyle{plain}

\subsection{Category theoretic preliminaries}\label{ch3.1}

In this subsection we recall the definition of a triangulated category, adopting the concise version of \cite[section 1]{[May01]}, and later introduce the qualitative construction of derived categories. First some fundamentals.

\begin{Def}											
	\ \vspace*{0.0cm}
	\begin{itemize}[leftmargin=0.5cm]
		\item A (linear) category $\mathsf{C}$ is called \textit{additive} if it has a zero object, denoted 0 $\in\textup{obj}(\mathsf{C})$, and contains all finite products and coproducts, so that for any two $X, Y\in\textup{obj}(\mathsf{C})$ the canonical map $X\sqcup Y\rightarrow X \sqcap Y$ is an isomorphism and $\textup{Hom}_\mathsf{C}(X,Y)$ an abelian group.
		\item A functor $\mathsf{F}:\mathsf{C}\rightarrow\mathsf{D}$ between additive categories is \textit{additive} if it maps the zero object of $\mathsf{C}$ to that of $\mathsf{D}$, and if there are isomorphisms $\mathsf{F}(X\oplus Y)\cong \mathsf{F}(X)\oplus\mathsf{F}(Y)$ respecting inclusions and projections of direct summands, for any $X, Y\in\textup{obj}(\mathsf{C})$.
\end{itemize}
\end{Def}

Recall that an object $0$ of $\mathsf{C}$ is the zero object (then unique up to isomorphism) if both initial and terminal, that is, both $\textup{Hom}_\mathsf{C}(0,X)=\{\alpha_X\}$ and $\textup{Hom}_\mathsf{C}(X,0)=\{\omega_X\}$ possess just one morphism each, for all $X\in\textup{obj}(\mathsf{C})$.

\begin{Def}\label{Verdier}					
	Let $\mathsf{C}$ be an additive category. A \textbf{triangulation}\index{triangulation} on $\mathsf{C}$ is an additive equivalence $T:\mathsf{C}\rightarrow\mathsf{C}$ (with inverse $T^{-1}$), referred to as a \textbf{translation functor}\index{translation functor}, together with a collection of diagrams, called \textbf{distinguished triangles}\index{distinguished triangles}, of the form
	\[
	X\xrightarrow{f}Y\xrightarrow{g}Z\xrightarrow{h}T(X),
	\]
	for some $X, Y, Z\in\textup{obj}(\mathsf{C})$ and morphisms $f, g, h$ (where the target object of $h$ is the ``translated'' $T(X)\equiv X[1]\in\textup{obj}(\mathsf{C})$). When the objects are tacitly understood, we write $(f,g,h)$ for short. These give rise to periodic triangular diagrams --- whence their name --- of the form
	\[
	\begin{tikzcd}
	X\arrow[rr, "f"] & & Y\arrow[dl, "g"]\\
	& Z\arrow[ul, "{h[1]}"]
	\end{tikzcd}\quad,
	\]
	where $h[1]$ means that $h$ is actually a morphism $Z\rightarrow T(X)$. Distinguished triangles must satisfy the following axioms:
	\begin{itemize}[leftmargin=0.5cm]
		\renewcommand{\labelitemi}{\textendash}
		\item (\textbf{T1}) For any $X, Y\in\textup{obj}(\mathsf{C})$ and morphism $f:X\rightarrow Y$:
		\begin{enumerate}[leftmargin=0.5cm]
			\item $X\xrightarrow{\textup{id}_X}X\xrightarrow{\omega_X} 0\xrightarrow{\alpha_X} T(X)$ is distinguished (note that $\alpha_X$ and $\omega_X$ are unique);
			\item $f$ is part of some distinguished triangle $(f,g,h)$ as above (then we call $Y$ the \textit{extension of $Z$ by $X$}, and $Z$ the \textit{cone of $f$});
			\item any triangle isomorphic to a distinguished triangle $(f,g,h)$ is distinguished, that is, if $X\cong X', Y\cong Y', Z\cong Z'$ via $u,v$ respectively $w$, then $X'\xrightarrow{v\circ f\circ u^{-1}}Y'\xrightarrow{w\circ g\circ v^{-1}}Z'\xrightarrow{T(v)\circ h\circ w^{-1}}T(X')$ is distinguished.
		\end{enumerate}
	\item (\textbf{T2}) If $(f,g,h)$ is distinguished, so are $(g,h,-T(f))$ and $(-T^{-1}(h),f,g)$. These correspond to the ``counterclockwise and clockwise rotated'' triangles:
	\[
	\begin{tikzcd}
	Y\arrow[rr, "g"] & & Z\arrow[dl, "h"]\\
	& T(X)\arrow[ul, "{-T(f)[1]}"]
	\end{tikzcd}\quad\text{and}\quad
	\begin{tikzcd}
	T^{-1}(Z)\arrow[rr, "-T^{-1}(h)"] & & X\arrow[dl, "f"]\\
	& Y\arrow[ul, "{g[1]}"]
	\end{tikzcd}\quad.
	\]
	(This also explains the periodicity we mentioned above: by repeated application of $T$ and $T^{-1}$, we can circle around a triangle an indefinite number of times, obtaining the infinite chain $...\rightarrow T^{-1}(X)\rightarrow T^{-1}(Y)\rightarrow T^{-1}(Z)\rightarrow X\rightarrow Y\rightarrow Z\rightarrow T(X)\rightarrow T(Y)\rightarrow T(Z)\rightarrow...$ .)
	\item (\textbf{T3}) The diagram
	\[
	\begin{tikzcd}
	X \arrow[rr, bend left = 45, "h"']\arrow[dr, "f"] & & Z\arrow[rr, bend left = 45, "g'"']\arrow[dr, "h'"] & & W \arrow[rr, bend left = 45, "j''"']\arrow[dr, "g''"] & & \mkern-6mu T(U) \\
	& Y\arrow[ur, "g"]\arrow[dr, "f'"] & & V \arrow[ur, dashed, "j'"]\arrow[dr, "h''"] & & \mkern-6mu T(Y)\arrow[ur, "T(f')"] \\
	& & U\arrow[ur, dashed, "j"]\arrow[rr, bend right = 45, "f''"] & & \mkern-6mu T(X)\arrow[ur, "T(f)"]
	\end{tikzcd}
	\]
	made of distinguished triangles $(f,f',f'')$, $(g,g',g'')$, $(h,h',h'')$, where $h = g\circ f$ and $j''= T(f')\circ g''$, can be completed to a commutative diagram via dashed arrows $j$ and $j'$ forming a distinguished triangle $(j,j',j'')$.
	\end{itemize}
	Then $\mathsf{C}=(\mathsf{C},T_\mathsf{C})$ is said to be a \textbf{triangulated category}\index{triangulated category}.	
\end{Def}

Jean-Louis Verdier originally identified four axioms for triangulated categories, which he labeled (TR1)--(TR4) (see \cite[section II.1]{[Ver96]}). His (TR1) coincide with (T1) above, (TR2) with (T2) and (TR4) with (T3). The latter is known as \textbf{Verdier's axiom}\index{Verdier's axiom}, or \textit{octahedral axiom} (after one possible reshaping of the commutative braid we pictured). (TR3) can actually be deduced from the other axioms; we will need it later on in its $A_\infty$-categorical formulation, thus we show its derivation.

\begin{Lem}[TR3]\label{tr3}								
	Assume the rows of the diagram
	\[
	\begin{tikzcd}
	X\arrow[r,"f"]\arrow[d,"i"] & Y \arrow[r,"g"]\arrow[d,"j"] & Z \arrow[r,"h"]\arrow[d, dashed, "k"] & T(X)\arrow[d, "T(i)"] \\
	X'\arrow[r, "f'"] & Y'\arrow[r, "g'"] & Z'\arrow[r, "h'"] & T(X')
	\end{tikzcd}
	\]
	in $\mathsf{C}$ are two distinguished triangles and the left square commutes. Then there is a dashed arrow $k$ making the whole diagram commute; moreover, if $i$ and $j$ are isomorphisms, so is $k$.
\end{Lem}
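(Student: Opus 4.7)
The strategy is to deduce the lemma from axioms (T1)--(T3). I would first record a preliminary fact used throughout: for every $W\in\textup{obj}(\mathsf{C})$, the functors $\textup{Hom}_{\mathsf{C}}(W,-)$ and $\textup{Hom}_{\mathsf{C}}(-,W)$ send distinguished triangles to long exact sequences of abelian groups. This is a standard diagram chase from (T1) and (T2), using the immediate consequence that any two consecutive morphisms in a distinguished triangle compose to zero --- which itself follows from (T1)(i) applied to $X\xrightarrow{\textup{id}_X}X\to 0\to T(X)$, combined with the rotation invariance of (T2).

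For the construction of $k$, the cleanest route is via the octahedral axiom (T3) applied twice to the two equal factorisations of $\varphi \coloneqq j\circ f = f'\circ i$. Using (T1)(ii), complete $\varphi$ to a single distinguished triangle $X\xrightarrow{\varphi}Y'\xrightarrow{\psi}W\xrightarrow{\chi}T(X)$, used in both applications. Applying (T3) to $\varphi = j\circ f$ would yield a distinguished triangle $Z\xrightarrow{\alpha}W\xrightarrow{\beta}C(j)\to T(Z)$ together with braid relations $\alpha\circ g = \psi\circ j$ and $\chi\circ\alpha = h$; applying (T3) to $\varphi = f'\circ i$ would symmetrically yield a distinguished triangle $C(i)\xrightarrow{\alpha'}W\xrightarrow{\beta'}Z'\to T(C(i))$ together with $\beta'\circ\psi = g'$ and, after tracking the remaining edge of the octahedral braid, the identity $h'\circ\beta' = T(i)\circ\chi$. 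Setting $k \coloneqq \beta'\circ\alpha : Z \to Z'$, direct substitution then gives $k\circ g = \beta'\circ\psi\circ j = g'\circ j$ and $h'\circ k = T(i)\circ\chi\circ\alpha = T(i)\circ h$, making the whole diagram commute.

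For the final assertion, suppose $i$ and $j$ are isomorphisms. For each test object $V\in\textup{obj}(\mathsf{C})$, I would apply the long exact sequence for $\textup{Hom}_{\mathsf{C}}(V,-)$ to both rows, producing two exact sequences linked vertically by the maps induced from $i$, $j$, $k$, $T(i)$ and $T(j)$. Four of these five induced maps are bijections by hypothesis, so the five lemma --- in its usual form for exact sequences of abelian groups --- forces $\textup{Hom}_{\mathsf{C}}(V,k)$ to be a bijection as well. Since this holds for every $V$, the Yoneda lemma implies $k$ is an isomorphism in $\mathsf{C}$.

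The main obstacle I anticipate is the careful bookkeeping in the middle paragraph, specifically extracting the compatibility $h'\circ\beta' = T(i)\circ\chi$ from the octahedral braid. A workable route is to combine $\chi\circ\alpha'$ equals the third map of the $i$-triangle with the vanishing $T(i)\circ(\text{third map of }i\text{-triangle}) = 0$ (obtained by rotating $(i,\tilde{i},\tilde{l})$ via (T2)), and then invoke exactness of $\textup{Hom}_{\mathsf{C}}(-,T(X'))$ on the distinguished triangle $C(i)\xrightarrow{\alpha'}W\xrightarrow{\beta'}Z'\to T(C(i))$ together with a uniqueness argument to identify the resulting factorisation with $h'$. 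This routing through several distinguished triangles is where the proof is most easily mishandled; the remaining steps are essentially formal manipulations with hom-exactness.
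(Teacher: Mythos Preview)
Your proposal is correct and matches the paper's proof: both complete $j\circ f = f'\circ i$ to a single distinguished triangle, apply (T3) once to each factorisation, and set $k$ as the composite through the shared cone; for the isomorphism claim the paper invokes the Five Lemma after extending the rows via (T2), which is your Hom/Yoneda argument made implicit. Your anticipated obstacle is not one: the identity $h'\circ\beta' = T(i)\circ\chi$ is among the commutativities built directly into the (T3) braid (it is the square $V\to W\to T(Y)$ versus $V\to T(X)\to T(Y)$ in the paper's diagram), so no auxiliary exactness or uniqueness argument is needed there.
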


\begin{proof}
	We apply point 2. of axiom (T1) to the morphisms $i$, $j$, $j\circ f$, finding distinguished triangles $X\xrightarrow{i}X'\xrightarrow{i'}X''\xrightarrow{i''}T(X)$, $Y\xrightarrow{j}Y'\xrightarrow{j'}Y''\xrightarrow{j''}T(Y)$ and $X\xrightarrow{j\circ f}Y'\xrightarrow{p}V\xrightarrow{q}T(X)$. Then we use (T3) twice with input data $(f,g,h)$, $(j,j',j'')$, $(j\circ f,p,q)$ respectively $(i,i',i'')$, $(f',g',h')$, $(f'\circ i,p,q)$ to obtain distinguished triangles 
	\[
	Z\xrightarrow{s}V\xrightarrow{t}Y''\xrightarrow{T(g)\circ j''}T(Z) \quad\text{resp.}\quad X''\xrightarrow{s'}V\xrightarrow{t'}Z'\xrightarrow{T(i')\circ h'}T(X'')\,.
	\]
	Now, it is just a matter of commutativity of the two resulting braids to show that $k\coloneqq t'\circ s: Z\rightarrow Z'$ does the job: $k\circ g = t'\circ(s\circ g) = t'\circ(p\circ j) = g'\circ j$ and $h'\circ k = (h'\circ t')\circ s = (T(i)\circ q)\circ s = T(i)\circ h$.
	
	Moreover, if both $i$ and $j$ are isomorphisms, then the equivalence $T$ preserves them. By axiom (T2) we can legitimately extend the rows to $T(Y)$ and $T(Y')$, via $-T(f)$ respectively $-T(f')$. Since, as established, the vertical arrows $i$, $j$, $T(i)$ and $T(j):T(Y)\rightarrow T(Y')$ are isomorphisms, the Five Lemma gives the desired result. 
\end{proof}

Actually, the ``clockwise rotated'' triangle in (T2) can itself be deduced from the remaining axioms, which are then the sufficient requirements for a category to be triangulated (more on this in \cite{[May01]}). Finally, we include some terminology that will ring a bell later on.

\begin{Def}\label{triangfunc}					
	Let $(\mathsf{C},T_\mathsf{C})$ and $(\mathsf{D},T_\mathsf{D})$ be triangulated categories. 
	\begin{itemize}[leftmargin=0.5cm]
	\item A functor $\mathsf{F}:\mathsf{C}\rightarrow\mathsf{D}$ is called \textbf{triangulated}\index{functor!triangulated/exact} (or \textit{exact}) if it commutes with translations and maps distinguished triangles $X\xrightarrow{f}Y\xrightarrow{g}Z\xrightarrow{h}T_\mathsf{C}(X)$ in $\mathsf{C}$ to distinguished triangles $\mathsf{F}(X)\xrightarrow{\mathsf{F}(f)}\mathsf{F}(Y)\xrightarrow{\mathsf{F}(g)}\mathsf{F}(Z)\xrightarrow{\mathsf{F}(h)}\mathsf{F}(T_\mathsf{C}(X))=T_\mathsf{D}(\mathsf{F}(X))$ in~$\mathsf{D}$.
	\item A triangle $(f,g,h)$ in $\mathsf{C}$ is called \textbf{exact}\index{triangle!exact} if it induces long exact sequences (of abelian groups) upon application of the functors $\textup{Hom}_\mathsf{C}(\cdot,C)$ and $\textup{Hom}_\mathsf{C}(C,\cdot)$, for every $C\in\textup{obj}(\mathsf{C})$.
	\end{itemize}
\end{Def}

The subsequent lemma is yet another exercise in applying (T1)--(T3); a proof is given by Verdier (cf. \cite[section II.1.2.1]{[Ver96]}).

\begin{Lem}										
	Every distinguished triangle $(f,g,h)$ is exact, thus yielding long exact sequences which, written as periodic triangular diagrams, have form:
	\[
	\begin{tikzcd}
	\textup{Hom}_\mathsf{C}(C,X)\arrow[rr, "{\textup{Hom}_\mathsf{C}(C,f)}"] & & {\textup{Hom}_\mathsf{C}(C,Y)}\arrow[dl, "{\textup{Hom}_\mathsf{C}(C,g)}"]\\
	& {\textup{Hom}_\mathsf{C}(C,Z)}\arrow[ul, "{\textup{Hom}_\mathsf{C}(C,h)[1]}"]
	\end{tikzcd}\quad,
	\]
	\[
	\begin{tikzcd}
	\textup{Hom}_\mathsf{C}(X,C)\arrow[dr, "{\textup{Hom}_\mathsf{C}(h,C)[1]}"'] & & {\textup{Hom}_\mathsf{C}(Y,C)}\arrow[ll, "{\textup{Hom}_\mathsf{C}(f,C)}"']\\
	& {\textup{Hom}_\mathsf{C}(Z,C)}\arrow[ur, "{\textup{Hom}_\mathsf{C}(g,C)}"']
	\end{tikzcd}\quad.
	\]
\end{Lem}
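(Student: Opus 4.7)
The plan is to prove exactness of each of the two induced chains at a single position and then propagate it to every other position using (T2): since every rotation of a distinguished triangle is again distinguished, exactness of the bi-infinite chains associated to $(f,g,h)$ will follow from the fact that, for every distinguished triangle, the three-term sequences
\[
\textup{Hom}_\mathsf{C}(C,X)\xrightarrow{f_*}\textup{Hom}_\mathsf{C}(C,Y)\xrightarrow{g_*}\textup{Hom}_\mathsf{C}(C,Z),\qquad \textup{Hom}_\mathsf{C}(Z,C)\xrightarrow{g^*}\textup{Hom}_\mathsf{C}(Y,C)\xrightarrow{f^*}\textup{Hom}_\mathsf{C}(X,C)
\]
are exact at their middle terms. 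The workhorse throughout will be Lemma \ref{tr3}.

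First I would verify that $g\circ f=0$, which gives the easy inclusions $\textup{im}(f_*)\subseteq\ker(g_*)$ and $\textup{im}(g^*)\subseteq\ker(f^*)$. This is immediate from Lemma \ref{tr3} applied to Row 1 $=$ the trivial distinguished triangle $X\xrightarrow{\textup{id}_X}X\to 0\to T(X)$ (supplied by (T1.1)) and Row 2 $=(f,g,h)$, with columns $\textup{id}_X$ and $f$: the left square commutes trivially, and the dashed arrow $0\to Z$ handed back by the lemma forces $g\circ f=0$ via the middle square.

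For the reverse inclusion on the covariant side, take $\phi:C\to Y$ with $g\phi=0$. Rotate both triangles counterclockwise via (T2) to the distinguished rows
\[
C\to 0\to T(C)\xrightarrow{-\textup{id}_{T(C)}}T(C)\qquad\text{and}\qquad Y\xrightarrow{g}Z\xrightarrow{h}T(X)\xrightarrow{-T(f)}T(Y),
\]
with column maps $\phi:C\to Y$ on the left and the unique $0\to Z$ in the middle. The left square now commutes precisely because $g\phi=0$, so Lemma \ref{tr3} yields a dashed $\xi:T(C)\to T(X)$ whose right square reads $T(f)\circ\xi=T(\phi)$. Since $T$ is an additive equivalence, hence fully faithful, I can lift $\xi=T(\psi)$ for a unique $\psi:C\to X$, and faithfulness of $T$ upgrades $T(f\psi)=T(\phi)$ to $f\psi=\phi$, so $\phi\in\textup{im}(f_*)$. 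The contravariant case is in fact cleaner and avoids $T$: given $\phi:Y\to C$ with $\phi f=0$, apply Lemma \ref{tr3} to Row 1 $=(f,g,h)$ and Row 2 $=(0\to C\xrightarrow{\textup{id}_C}C\to 0)$, the latter being distinguished as a clockwise rotation (via (T2)) of the trivial triangle on $C$. With column maps $\omega_X:X\to 0$ and $\phi:Y\to C$, the left square commutes by the hypothesis $\phi f=0$, and the dashed arrow produced by the lemma is precisely a $\psi:Z\to C$ for which the middle square $\textup{id}_C\circ\phi=\psi\circ g$ gives $\psi g=\phi$.

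The main obstacle is essentially bookkeeping: one must carefully track the signs coming from (T2) and select the correct rotations so that the hypothesis ``$g\phi=0$'' (resp.\ ``$\phi f=0$'') translates into commutativity of the left square needed by Lemma \ref{tr3}. Note the asymmetry between the two cases: on the covariant side the needed dashed arrow lives one step past the rotation and therefore lands in $\textup{Hom}(T(C),T(X))$, which is why the fully faithful nature of the translation functor $T$ must be invoked to descend the identity back to $\mathsf{C}$; the contravariant case never leaves $\mathsf{C}$ and so requires no such descent.
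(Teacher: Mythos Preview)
Your proof is correct and follows precisely the approach the paper alludes to: the paper does not spell out its own argument but simply remarks that the lemma ``is yet another exercise in applying (T1)--(T3)'' and cites Verdier. Your proposal carries out exactly this exercise, using (T1) for the trivial triangles, (T2) for the rotations that propagate exactness along the bi-infinite sequence, and Lemma~\ref{tr3} (the paper's (TR3)) to produce the required lifts; the handling of the covariant case via full faithfulness of $T$ is the standard way to descend from $T(C)\to T(X)$ back to $C\to X$.
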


\newpage

Before returning to $A_\infty$-categories, we illustrate --- on a very superficial level --- the construction of derived categories, which are particular instances of triangulated categories. Our reference is again \cite[sections 4.1, 4.2]{[Kel01]}. However, the following procedure will \textit{not} be generalized to $A_\infty$-categories; it merely serves as a build-up towards Theorem \ref{DAtriangulated} below.

On the most fundamental level, given an associative unital $\mathbb{K}$-algebra $A$, we use its category $\mathsf{Kom}(A)$ of cochain complexes to construct the associated homotopy category $\mathsf{H}(A)$ as follows: $\textup{obj}(\mathsf{H}(A)) \coloneqq \textup{obj}(\mathsf{Kom}(A))$, and morphisms sets of $\mathsf{H}(A)$ are those of $\mathsf{Kom}(A)$ after quotienting by all null-homotopic morphisms, which are the morphisms chain homotopic to the trivial one. From this, we define the derived category $\mathsf{D}(A)$ to be the ``localization'' of $\mathsf{H}(A)$ with respect to all its quasi-isomorphisms, achievable by ``formally inverting'' them.

Now, defining suspensions of complexes as we did back in Section \ref{ch1.1} (namely, given any $K\in\textup{obj}(\mathsf{D}(A))=\textup{obj}(\mathsf{Kom}(A))$, set $(SK)^p\coloneqq K^{p+1}$ and $d_{SK}\coloneqq -d_K$), one finds that any short exact sequence $\{0\}\rightarrow K\rightarrow L\rightarrow M\rightarrow \{0\}$ of complexes induces a triangle $K\rightarrow L\rightarrow M\rightarrow SK$ satisfying the axioms (T1)--(T3), and all triangles in $\mathsf{D}(A)$ are isomorphic to one such. This makes $\mathsf{D}(A)$ a triangulated category.

Let us see a concrete example involving the category $\mathsf{C}_\infty(A)$ of $A_\infty$-modules from Section \ref{ch1.2}.\footnote{Fact: if $A$ is just an associative algebra, then one can construct a faithful functor $\mathsf{Kom}(A)\rightarrow\mathsf{C}_\infty (A)$.} 

\begin{Ex}							
	Let $A$ be an $A_\infty$-algebra. Given two $A_\infty$-modules $M, N$ over $A$, recall that a morphism of $A_\infty$-modules $f: M\rightarrow N$ is a sequence of graded maps $f_n: M\otimes A^{\otimes n-1}\rightarrow N[1-n]$ for $n\geq 1$; it is a quasi-isomorphism if and only if $f_1:M\rightarrow N$ is. By Definition \ref{Ainfmodmorph}, $f$ is nullhomotopic if and only if there is a sequence of graded maps $h_n: M\otimes A^{\otimes n-1}\rightarrow N[-n]$ such that
	\[
	f_n = \mkern-6mu\sum_{s+t=n}\mkern-6mu(-1)^{st}m_{1+t}^N(h_s\otimes \textup{id}_{A}^{\otimes t}) + \mkern-12mu\sum_{r+s+t=n}\mkern-12mu(-1)^{r+st}h_{r+1+t}(\textup{id}_{A}^{\otimes r}\otimes m_s^M\otimes \textup{id}_{A}^{\otimes t})\;.
	\]  
	One checks that this defines an equivalence relation $\sim$. As explained above, the homotopy category $\mathsf{H}_\infty (A)$ of $A$ is given by setting $\textup{obj}(\mathsf{H}_\infty (A))\coloneqq \textup{obj}(\mathsf{C}_\infty (A))$ and $\textup{Hom}_{\mathsf{H}_\infty (A)}(M,N)\coloneqq \textup{Hom}_{\mathsf{C}_\infty (A)}(M,N)/_\sim$ for each $M, N\in\textup{obj}(\mathsf{C}_\infty (A))$. It turns out (see for example \cite[section 2.4.2]{[Lef03]}) that all quasi-isomorphims of $\mathsf{H}_\infty (A)$ are already invertible, so that $\mathsf{D}_\infty (A)\coloneqq \mathsf{H}_\infty (A)$ is readily the derived category of $A$.
	
	Moreover, defining the suspension functor $S:\mathsf{D}_\infty(A)\rightarrow\mathsf{D}_\infty(A)$ by $(SM)^d\coloneqq M^{d+1}$ and $(m_{SM})_n\coloneqq (-1)^n (m_{M})_n$, one can show that each short exact sequence \vspace*{-0.2cm}
	\[
	\{0\}\rightarrow K\xrightarrow{i} L\xrightarrow{p} M\rightarrow \{0\}
	\]
	(for morphisms $i, p$ with $i_n=p_n=0$ if $n>1$) yields a canonical triangle in $\mathsf{D}_\infty (A)$, and that all triangles in $\mathsf{D}_\infty (A)$ are isomorphic to one such (consult again \cite{[Lef03]}). This shows that $\mathsf{D}_\infty (A)$ is a triangulated category. \hfill $\blacklozenge$
\end{Ex}

Attempting to formalize the procedure to generic ($A_\infty$-)categories would take us too far afield. We limit ourselves to give the main steps for \textit{abelian categories}, which are roughly additive categories containing kernel and cokernel of any morphism (see \cite[section II.5]{[GM03]}; the prototype is the category of abelian groups $\mathsf{Ab}$), so that we have well-defined notions of exact sequences and cohomology. A more rigorous treatment can be found in \cite[chapter III]{[GM03]}. 

\begin{Def}\label{derivedcat}					
	Let $\mathsf{A}$ be an abelian category. Its derived category is constructed as follows:
	\begin{enumerate}[leftmargin=0.5cm]
		\item Take the \textit{category} $\mathsf{Kom(A)}$ \textit{of cochain complexes} in $\mathsf{A}$, whose objects are cochain complexes $X^\bullet$ of the form
		\[
		...\rightarrow X^{-1}\xrightarrow{d_{-1}} X^0\xrightarrow{d_0} X^1\xrightarrow{d_1}X^2\rightarrow...\;,
		\]
		for $\{X^k\}_{k\in\mathbb{Z}}\subset\textup{obj}(\mathsf{A})$ and morphisms $\{d_k\}_{k\in\mathbb{Z}}$ in $\mathsf{A}$ such that $d_{k+1}\circ d_k=0$, and whose morphisms are standard chain maps between cochain complexes;
		\item Define the associated \textit{homotopy category $\mathsf{H(A)}$ of cochain complexes} by\break $\textup{obj}(\mathsf{H(A)})\coloneqq \textup{obj}(\mathsf{Kom(A)})$ and $\textup{Hom}_{\mathsf{H(A)}}(X^\bullet,Y^\bullet)\coloneqq \textup{Hom}_{\mathsf{Kom(A)}}(X^\bullet,Y^\bullet)/_\sim$ for any two $X^\bullet,Y^\bullet\in\textup{obj}(\mathsf{H(A)})$, where two chain maps $f,g$ are related, written $f\sim g$, if and only if they are chain homotopic (in the standard sense of homological algebra);
		\item Characterize the \textbf{derived category}\index{derived category} $\mathsf{D(A)}$ of $\mathsf{A}$ as solution of a universal problem:
		\begin{enumerate}[leftmargin=0.6cm]
			\item Pick the class $\mathsf{S}$ in $\mathsf{H(A)}$ of all quasi-isomorphisms, which are those (classes) of chain maps $f:X^\bullet\rightarrow Y^\bullet$ inducing isomorphisms $H^n(X^\bullet)\cong H^n(Y^\bullet)$ in cohomology for all $n\in\mathbb{Z}$;
			\item Observe that $\mathsf{S}$ is a localizing class of morphisms of $\mathsf{H(A)}$:\\ $\textup{id}_{X^\bullet}\!\in\mathsf{S}$ for all $X^\bullet\in\textup{obj}(\mathsf{H(A)})$, $\big(s,t\in\mathsf{S}\Rightarrow s\,\circ\, t\in\mathsf{S}\big)$ (whenever$~$defined),\\ $\big(\forall\, X^\bullet\!\xrightarrow{f}\!~Y^\bullet \text{ in }\mathsf{H(A)},\,Z^\bullet\!\xrightarrow{s}\! Y^\bullet \text{ in }\mathsf{S}\;\, \exists\,\;W^\bullet\!\xrightarrow{g}\! Z^\bullet \text{ in }\mathsf{H(A)},\,W^\bullet\!\xrightarrow{t}\! X^\bullet \text{ in }\mathsf{S}$ s.t. $f\circ t=s\circ g\big)$ and $\big(\forall f,g:X^\bullet\rightarrow Y^\bullet \text{ in } \mathsf{H(A)}\!\!: \exists s\in\mathsf{S} \text{ s.t. } s\circ f=s\circ g \iff \exists t\in\mathsf{S}\, \text{ s.t. } f\circ t=g\circ t\big)$;
			\item Define a category $\mathsf{H(A)[S^{-1}]}$ with $\textup{obj}(\mathsf{H(A)[S^{-1}]})\coloneqq\textup{obj}(\mathsf{H(A)})$ and morphisms modelled on those of $\mathsf{S}$, and a functor $\mathsf{L}:\mathsf{H(A)}\rightarrow\mathsf{H(A)[S^{-1}]}$ (the ``localization by $\mathsf{S}$'') such that $\mathsf{L}|_{\mathsf{H(A)}}=\mathsf{Id_{H(A)}}$ (see \cite[subsection III.2.2]{[GM03]});
			\item Define $\mathsf{D(A)}\coloneqq \mathsf{H(A)[S^{-1}]}$, then satisfying $\big(f: X^\bullet\rightarrow Y^\bullet$ quasi-isomor-\break phism in $\mathsf{H(A)} \implies \mathsf{L}(f):X^\bullet\rightarrow Y^\bullet$ isomorphism in  $\mathsf{D(A)}\big)$ and having the universal property: for any other $\big(\mathsf{B},\,\mathsf{L}':\mathsf{H(A)}\rightarrow\mathsf{B}\big)$ with the same property, there exists a unique functor $\mathsf{F}:\mathsf{D(A)}\rightarrow\mathsf{B}$ such that $\mathsf{F}\circ\mathsf{L}=\mathsf{L}'$.  
		\end{enumerate}
		In practice, $\textup{obj}(\mathsf{D(A)})=\textup{obj}(\mathsf{Kom(A)})$ and morphisms $X^\bullet\rightarrow Y^\bullet$ in $\mathsf{D(A)}$ are (classes of) ``roofs'' $(s,f)$, that is, diagrams $\tilde{X}^\bullet\xleftarrow{s}X^\bullet\xrightarrow{f}Y^\bullet$ with ${f\in\textup{Hom}_\mathsf{H(A)}(X^\bullet,Y^\bullet)}$ and $s\in\mathsf{S}$.
	\end{enumerate}
	Considering only the bounded cochain complexes in $\mathsf{Kom(A)}$, one obtains the \textbf{bounded derived category}\index{bounded derived category} $\mathsf{D^b(A)}$ of $\mathsf{A}$. 
\end{Def}

Next, one proceeds to adapt the concept of distinguished triangles in $\mathsf{D(A)}$. The property of being exact is more complicated to reshape, because $\mathsf{D(A)}$ turns out to be just an additive category, not abelian. In contrast, corresponding definitions of cones and translations are immediate to produce, allowing one to show that $\mathsf{H(A)}$ is always triangulated (proved in \cite[Theorem IV.1.9]{[GM03]}). What we are most interested in is the following result (see \cite[Corollary IV.2.7]{[GM03]}).

\begin{Thm}\label{DAtriangulated}				
	Let $\mathsf{A}$ be an additive category, with associated triangulated homotopy category $(\mathsf{H(A)},T)$. The localizing class $\mathsf{S}$ defining its derived category $\mathsf{D(A)}=\mathsf{H(A)[S^{-1}]}$ is compatible with triangulation, that is, it satisfies $(s\in\mathsf{S}\iff T(s)\in\mathsf{S})$ and $(i,j\in\mathsf{S}$ as in Lemma \ref{tr3} $\implies k\in\mathsf{S})$.
	
	If we define distinguished triangles in $\mathsf{D(A)}$ to be those isomorphic to images of distinguished triangles in $\mathsf{H(A)}$ under localization by $\mathsf{L}:\mathsf{H(A)}\rightarrow\mathsf{H(A)[S^{-1}]}$, it follows that $(\mathsf{D(A)},T)$ is a triangulated category \textup(the same holds for $\mathsf{D^b(A)}$\textup).
\end{Thm}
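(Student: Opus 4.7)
The plan is first to verify the two compatibility properties of $\mathsf{S}$ with the triangulation on $\mathsf{H(A)}$, and then to transfer the axioms (T1)--(T3) from $\mathsf{H(A)}$ down to $\mathsf{D(A)}$ through the localization functor $\mathsf{L}$.

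For the translation compatibility, the observation is that $T$ is essentially a degree shift together with a sign change on the differential, so $H^n(T(X^\bullet)) \cong H^{n+1}(X^\bullet)$ naturally. Hence a chain map $s$ is a quasi-isomorphism if and only if $T(s)$ induces isomorphisms on cohomology; the same argument applied to $T^{-1}$ gives the reverse implication. For the completion property, I would apply the cohomology functor $H^\bullet$ to both rows of the diagram in Lemma \ref{tr3}. Each distinguished triangle in $\mathsf{H(A)}$ is exact in the sense of Definition \ref{triangfunc}, so cohomology of the two rows yields a pair of long exact sequences linked by vertical arrows $H^n(i),\,H^n(j),\,H^n(k),\,H^{n+1}(i),\ldots$. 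By hypothesis $H^n(i)$ and $H^n(j)$ are isomorphisms for every $n$, so the Five Lemma forces $H^n(k)$ to be an iso as well, giving $k\in\mathsf{S}$.

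Next I would use the first compatibility to extend $T$ to $\mathsf{D(A)}$: the functor $\mathsf{L}\circ T: \mathsf{H(A)}\to \mathsf{D(A)}$ sends all of $\mathsf{S}$ to isomorphisms, so by the universal property of the localization it factors through $\mathsf{D(A)}$, yielding a functor $T:\mathsf{D(A)}\to \mathsf{D(A)}$; the same construction applied to $T^{-1}$ provides an inverse, making $T$ an equivalence on $\mathsf{D(A)}$. With distinguished triangles defined as in the statement, (T1.1) is immediate (apply $\mathsf{L}$ to $X\xrightarrow{\textup{id}} X \to 0 \to T(X)$), while (T1.3) is tautological from the definition. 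For (T1.2), one represents an arbitrary morphism $X \to Y$ of $\mathsf{D(A)}$ as a roof $X \xleftarrow{s} \tilde{X} \xrightarrow{g} Y$ with $s\in \mathsf{S}$, completes $g$ to a distinguished triangle in $\mathsf{H(A)}$ by (T1.2) there, and then replaces $\tilde{X}$ by $X$ via the isomorphism $\mathsf{L}(s)$ using (T1.3) in $\mathsf{D(A)}$. Axiom (T2) transfers directly, since rotating a lift of a distinguished triangle in $\mathsf{H(A)}$ yields another distinguished triangle there, whose image under $\mathsf{L}$ is the corresponding rotation in $\mathsf{D(A)}$.

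The main obstacle will be axiom (T3). Here one is given three distinguished triangles in $\mathsf{D(A)}$ with $h=g\circ f$ up to isomorphism, and must produce the dashed completion. The strategy is to represent $f$ and $g$ by compatible roofs in $\mathsf{H(A)}$ with a common denominator (which the Ore conditions of a localizing class permit), lift the three triangles to genuinely distinguished ones in $\mathsf{H(A)}$ using the preceding paragraph and Lemma \ref{tr3}, whose hypothesis that $i$ and $j$ are isomorphisms is precisely what the second compatibility property supplies (producing a completing arrow $k\in\mathsf{S}$, i.e.\ an iso in $\mathsf{D(A)}$), then apply (T3) in $\mathsf{H(A)}$ to obtain the octahedral completion, and finally push it down to $\mathsf{D(A)}$ via $\mathsf{L}$. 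The crux is this Ore-type replacement argument, together with verifying that the dashed arrow obtained downstairs does not depend on the chosen roof representatives. For $\mathsf{D^b(A)}$ the same construction applies verbatim once one notes that cones, translations, and roofs of bounded complexes remain bounded, so the bounded subcategory inherits the triangulation.
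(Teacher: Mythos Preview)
The paper does not actually prove this theorem: it is stated as a background result and referred to \cite[Corollary IV.2.7]{[GM03]} for the argument. Your outline is correct and follows precisely the standard route taken in that reference --- verifying the two compatibility conditions of $\mathsf{S}$ with the triangulation (the second via the long exact sequence in cohomology and the Five Lemma), extending $T$ to $\mathsf{D(A)}$ by the universal property of localization, and then transferring (T1)--(T3) by lifting morphisms to roofs and applying the Ore conditions. One minor remark: your worry that the dashed arrow in (T3) be independent of the chosen roof representatives is unnecessary, since the axiom only asks for \emph{existence} of a completion, not uniqueness.
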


\subsection{Operations on $A_\infty$-modules}\label{ch3.2}

In the framework of $A_\infty$-categories, the constructions of mapping cones and ``shifted'' objects we will produce rely on our ability to manipulate $A_\infty$-modules through basic operations such as direct sums and tensor products. First, an important definition. 

\begin{Def}\label{quasirepresentative}					
	Let $\mathcal{A}$ be an $A_\infty$-category, $\mathsf{Q} = mod(\mathcal{A})=fun(\mathcal{A}^{opp},\mathsf{Ch})$. Suppose that for a given $A_\infty$-module $\mathcal{M}\in\textup{obj}(\mathsf{Q})$ there is some $Y\in\textup{obj}(\mathcal{A})$ such that $\mathcal{Y}=\Upsilon(Y)\in\textup{obj}(\mathsf{Q})$ is isomorphic to $\mathcal{M}$ in $H^0(\mathsf{Q})$, say via an isomorphism $\langle t\rangle\in\textup{Hom}_{H(\mathsf{Q})}^0(\mathcal{Y},\mathcal{M})$. Then we say that $(Y,\langle t\rangle)$ \textbf{quasi-represents}\index{quasi-represent} $\mathcal{M}$ (or is a \textbf{quasi-representative}\index{quasi-representative} of $\mathcal{M}$; sometimes $\langle t\rangle$ is omitted from the notation).
\end{Def}

\begin{Rem}									
	Suppose $\mathcal{M}$ is quasi-represented by both $(Y,\langle t\rangle)$ and $(Y',\langle t'\rangle)$, which yields an isomorphism $\langle t'\rangle^{-1}\circ\langle t\rangle\in\textup{Hom}_{H(\mathsf{Q})}^0(\mathcal{Y},\mathcal{Y'})$. Since, as pointed out in Section \ref{ch2.9}, the Yoneda embedding is cohomologically full and faithful, meaning that $H(\Upsilon)$ is bijective on morphisms, there is a unique isomorphism $\langle a\rangle\in\textup{Hom}_{H(\mathcal{A})}^0(Y,Y')$ such that $H(\Upsilon)\langle a\rangle = \langle \Upsilon^1(a)\rangle = \langle t'\rangle^{-1}\circ\langle t\rangle$, thus satisfying $\langle t'\rangle\circ\langle \Upsilon^1(a)\rangle = \langle t\rangle$.
\end{Rem}	

An explicit criterion for quasi-representability is the following (refer to \cite[Lemma 2.1]{[Sei08]} for a proof). 

\begin{Lem}\label{quasi-repr}			
	A pair $(Y,\langle t\rangle)$ quasi-represents $\mathcal{M}$ if and only if there is a degree $0$ cocycle $c\in\mathcal{M}(Y)^0$ such that $\langle t\rangle = \langle\lambda(c)\rangle$ and 
	\[
	t_{X}^1: \mathcal{Y}(X)\rightarrow\mathcal{M}(X),\,b\mapsto (-1)^{|b|}\mu_\mathcal{M}^2(c,b)
	\]
	is a quasi-isomorphism in $\mathsf{Ch}$ for all $X\in\textup{obj}(\mathcal{A})$. 
	Here, $\lambda = \lambda_\mathcal{M}: \mathcal{M}(Y)\rightarrow\textup{hom}_\mathsf{Q}(\mathcal{Y},\mathcal{M})$ is the map \eqref{lambda}. By choice of $c$, $\lambda(c)$ is a module homomorphism.
\end{Lem}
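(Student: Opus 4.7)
The plan is to combine the Yoneda Lemma (Lemma~\ref{Yonedalemma}) with the general fact that an $A_\infty$-module homomorphism is a quasi-isomorphism if and only if its first-order component is pointwise a quasi-isomorphism of complexes. First, observe that the map $b \mapsto (-1)^{|b|}\mu_\mathcal{M}^2(c,b)$ appearing in the statement differs from $\lambda(c)^1(b) = \mu_\mathcal{M}^2(c,b)$ only by a universal sign in each degree, so one is a quasi-isomorphism of complexes if and only if the other is. Moreover, since $c \in \mathcal{M}(Y)^0$ is a cocycle, $\lambda(c)$ is a module homomorphism (as remarked after \eqref{lambda}), so $\langle \lambda(c)\rangle \in \textup{Hom}^0_{H(\mathsf{Q})}(\mathcal{Y},\mathcal{M})$ is a bona fide morphism on which the question of invertibility makes sense.

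For the only-if direction, suppose $(Y,\langle t\rangle)$ quasi-represents $\mathcal{M}$. By the Yoneda Lemma, $\lambda_\mathcal{M}$ is a quasi-isomorphism of complexes, hence
\[
H^0(\lambda_\mathcal{M}): H^0(\mathcal{M}(Y)) \longrightarrow H^0(\textup{hom}_\mathsf{Q}(\mathcal{Y},\mathcal{M})) = \textup{Hom}^0_{H(\mathsf{Q})}(\mathcal{Y},\mathcal{M})
\]
is a bijection. Pick a degree-$0$ cocycle representative $c \in \mathcal{M}(Y)^0$ with $\langle \lambda(c)\rangle = \langle t\rangle$. Next, the functor $\mathcal{H}$ from \eqref{Hfunctor} carries the isomorphism $\langle t\rangle$ in $H(\mathsf{Q})$ to an isomorphism $H(t)$ of $H(\mathcal{A})$-modules; its components $H(t)_X: \langle b\rangle \mapsto \langle (-1)^{|b|} t^1(b)\rangle$ from Definition~\ref{cohommod} are thus bijections on graded vector spaces, which is exactly the assertion that each $t^1_X$ is a quasi-isomorphism of complexes.

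For the if direction, assume a cocycle $c$ as in the statement. By the opening remark, $\lambda(c)^1_X$ is a quasi-isomorphism for every $X$, and it suffices to invoke the general fact that a module homomorphism in $\mathsf{Q}$ whose first-order component is pointwise a quasi-isomorphism is itself a quasi-isomorphism, i.e.\ invertible in $H^0(\mathsf{Q})$. This is the main obstacle and the technical heart of the proof. The idea is to build an inverse module homomorphism $s$ recursively, starting from chain-level quasi-inverses $s^1_X$ and contracting homotopies between $s^1_X \lambda(c)^1_X$, $\lambda(c)^1_X s^1_X$ and the identities, then inductively defining higher-order components $s^d$ so that the equations derived from \eqref{tshit1} are satisfied. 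This is a homological perturbation argument in direct analogy with Proposition~\ref{perturbation}, with the module associativity equations \eqref{Ainfmodeqs} replacing the $A_\infty$-associativity relations \eqref{Ainfcatasseqs}.
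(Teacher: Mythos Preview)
The paper does not actually supply a proof of this lemma: it only refers the reader to \cite[Lemma~2.1]{[Sei08]}. So there is no in-paper argument to compare against, and your task reduces to whether your proof stands on its own.

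Your argument is correct in outline. The only-if direction is clean: the Yoneda Lemma gives surjectivity of $H^0(\lambda_\mathcal{M})$, producing the cocycle $c$, and functoriality of $\mathcal{H}$ (Definition~\ref{Nu-Fun}) pushes the isomorphism $\langle t\rangle$ down to pointwise isomorphisms $H(t)_X$, which is exactly the quasi-isomorphism condition on $t^1_X$.

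For the if direction, your perturbation sketch is valid but heavier than necessary. The paper already has the tool you need: Lemma~\ref{1.6} (applied with $\mathcal{Q}=\mathsf{Q}$, $T=\lambda(c)$, and $\mathcal{G}_3$ arbitrary) says that if each $\langle T^0_X\rangle$ acts by isomorphism on $\textup{Hom}_{H(\mathsf{Ch})}$, then composition with $\langle T\rangle$ is an isomorphism on $\textup{Hom}_{H(\mathsf{Q})}$. Since $\mathsf{Q}$ is strictly unital (equation~\eqref{modunit}), one then finds a two-sided inverse to $\langle\lambda(c)\rangle$ by the standard idempotent-plus-invertible argument used in the proof of Lemma~\ref{cunitalcompo}. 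This avoids building $s^d$ by hand and stays within the machinery the paper has already set up.
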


Let us now illustrate the promised operations on $A_\infty$-modules.

\begin{Def}												
	Given two $\mathcal{M}_0, \mathcal{M}_1\in\textup{obj}(\mathsf{Q})$, the \textbf{direct sum $A_\infty$-module}\index{aaa@$A_\infty$-module!direct sum} $\mathcal{M}_0\oplus\mathcal{M}_1\in\textup{obj}(\mathsf{Q})$ is obtained by trivially setting 
	\[
	(\mathcal{M}_0\oplus\mathcal{M}_1)(X)\coloneqq \mathcal{M}_0(X)\oplus\mathcal{M}_1(X)\quad \text{and}\quad \mu_{\mathcal{M}_0\oplus\mathcal{M}_1}^d \coloneqq  \mu_{\mathcal{M}_0}^d\oplus \mu_{\mathcal{M}_1}^d\;,
	\]
	for $X\in\textup{obj}(\mathcal{A})$ and $d\geq 1$.
	We write $Y_0\oplus Y_1\in\textup{obj}(\mathcal{A})$ for any object quasi-representing $\mathcal{Y}_0\oplus\mathcal{Y}_1\in\textup{obj}(\mathsf{Q})$.\footnote{So this must \textit{not} be misinterpreted as some sort of operation between objects! Same warning holds for the tensor product.} 
	
	By Lemma \ref{quasi-repr} and definition of $\Upsilon$, for any $X\in\textup{obj}(\mathcal{A})$ we obtain isomorphisms $\textup{Hom}_{H(\mathcal{A})}(X,Y_0\oplus Y_1)=H(\mathcal{Y}(Y_0\oplus Y_1)(X))\cong H((\mathcal{Y}_0\oplus\mathcal{Y}_1)(X))\cong \textup{Hom}_{H(\mathcal{A})}(X,Y_0)\oplus\textup{Hom}_{H(\mathcal{A})}(X,Y_1)$.
\end{Def} 

\begin{Def}										
	Let $\mathcal{M}\in\textup{obj}(\mathsf{Q})$, $Z = (Z,\partial_Z)\in\textup{obj}(\mathsf{Ch})$. Then the \textbf{tensor product $A_\infty$-module}\index{aaa@$A_\infty$-module!tensor product} $Z\otimes\mathcal{M}\in\textup{obj}(\mathsf{Q})$ is obtained by setting: 
	\begin{align*}
	&(Z\otimes\mathcal{M})(X_0) \coloneqq  Z\otimes\mathcal{M}(X_0)\qquad \text{(indeed a graded vector space),}\\
	&\mu_{Z\otimes\mathcal{M}}^1(z\otimes b)\coloneqq (-1)^{|b|-1}\partial_Z(z)\otimes b + z\otimes\mu_\mathcal{M}^1(b)\in(Z\otimes\mathcal{M})(X_0)[1]\;, \\
	&\mu_{Z\otimes\mathcal{M}}^d(z\otimes b, a_{d-1},...,a_1)\coloneqq  z\otimes\mu_\mathcal{M}^d(b,a_{d-1},...,a_1)\in(Z\otimes\mathcal{M})(X_0)[2-d]\;, 
	\end{align*}
	for $X_0\in\textup{obj}(\mathcal{A})$ and $d>1$.
	We write $Z\otimes Y\in\textup{obj}(\mathcal{A})$ for any object quasi-representing $Z\otimes\mathcal{Y}\in\textup{obj}(\mathsf{Q})$. 
	
	By Lemma \ref{quasi-repr} and definition of $\Upsilon$, for any $X\in\textup{obj}(\mathcal{A})$ we get the isomorphisms $\textup{Hom}_{H(\mathcal{A})}(X,Z\otimes Y)=H(\mathcal{Y}(Z\otimes Y)(X))\cong H((Z\otimes\mathcal{Y})(X))\cong$\break $H(Z)\otimes\textup{Hom}_{H(\mathcal{A})}(X,Y)$.
\end{Def}

\begin{Def}\label{shifts}									
	Take $Z = \mathbb{K}[\sigma]$ in last definition, for some $\sigma\in\mathbb{Z}$ (this corresponds to the one-dimensonal vector space $\mathbb{K}$ placed in degree $-\sigma$). Given $Y\in\textup{obj}(\mathcal{A})$ quasi-representing $\mathcal{M}\in\textup{obj}(\mathsf{Q})$, we write $S^\sigma Y\coloneqq \mathbb{K}[\sigma]\otimes Y$ for the quasi representative of $SS^\sigma \mathcal{M}\coloneqq  \mathbb{K}[\sigma]\otimes\mathcal{M}$, both called \textbf{$\sigma$-fold shifts}\index{sigma@$\sigma$-fold shift}, shortened $SY$ respectively  $SS\mathcal{M}$ if $\sigma = 1$.
\end{Def}

Notice that $Z\otimes\square: \mathsf{Q}\rightarrow\mathsf{Q}$ defines an $A_\infty$-functor assigning pre-module homomorphisms $t\in\textup{hom}_\mathsf{Q}(\mathcal{M}_0,\mathcal{M}_1)$ to $\textup{id}_Z\otimes t\in\textup{hom}_\mathsf{Q}(Z\otimes\mathcal{M}_0,Z\otimes\mathcal{M}_1)$ given by $(\textup{id}_Z\otimes t)^d(z\otimes b,a_{d-1},...)\coloneqq (-1)^{|z|(|t|-1)}z\otimes t^d(b,a_{d-1},...)$, and fulfilling $(Z\otimes\square)^d =  0$ if $d>1$. In particular, $SS^\sigma \coloneqq  \mathbb{K}[\sigma]\otimes\square: \mathsf{Q}\rightarrow\mathsf{Q}$ itself is an $A_\infty$-functor.

When they exist, the shifts of Definition \ref{shifts} come with canonical isomorphisms:
\begin{align}\label{Homiso}
	& \textup{Hom}_{H(\mathcal{A})}(Y_0,S^\sigma Y_1)\cong \mathbb{K}[\sigma]\otimes\textup{Hom}_{H(\mathcal{A})}(Y_0,Y_1)\cong \textup{Hom}_{H(\mathcal{A})}(Y_0,Y_1)[\sigma]\;, \nonumber \\
	& \textup{Hom}_{H(\mathcal{A})}(S^\sigma Y_0,S^\sigma Y_1)\cong\textup{Hom}_{H(\mathcal{A})}(Y_0,Y_1)\;,\\
	& \textup{Hom}_{H(\mathcal{A})}(S^\sigma Y_0,Y_1)[\sigma]\cong \textup{Hom}_{H(\mathcal{A})}(S^\sigma Y_0,S^\sigma Y_1)\cong\textup{Hom}_{H(\mathcal{A})}(Y_0,Y_1)\;. \nonumber
\end{align}
For the second one, we applied functoriality of $H(SS^\sigma)$.

\begin{Def}\label{shiftfunc}						
	Let $\mathcal{A}$ be an $A_\infty$-category such that $SY\in\textup{obj}(\mathcal{A})$ for all objects $Y$ of $\mathcal{A}$. Then we define the \textbf{$A_\infty$-shift functor}\index{aaa@$A_\infty$-functor!shift} $S:\mathcal{A}\rightarrow\mathcal{A}$ to be the unique $\big($up to isomorphism in $H^0(fun(\mathcal{A},\mathcal{A}))\big)$ $A_\infty$-functor making the diagram
	\[
	\begin{tikzcd}
	\mathcal{A}\arrow[r, "\Upsilon"]\arrow[d, "S"'] & \mathsf{Q}\arrow[d, "SS"] \\
	\mathcal{A}\arrow[r, yshift=0.5ex, "\Upsilon"] & \mathsf{Q}\arrow[l, dashed, yshift=-0.5ex, "\mathcal{K}"]
	\end{tikzcd}
	\]
	commute, up to isomorphism in $H^0(fun(\mathcal{A},\mathsf{Q}))$.
\end{Def}

In fact, we can be more explicit: let $\tilde{\mathcal{A}}\subset\mathcal{A}$ be the full $A_\infty$-subcategory of all ``shifted'' objects $SY$, $\tilde{\mathsf{Q}}\subset\mathsf{Q}$ that of $A_\infty$-modules isomorphic to the ``shifted'' $SS\mathcal{Y}$'s. Then $\Upsilon|_{\tilde{\mathcal{A}}}:\tilde{\mathcal{A}}\rightarrow\tilde{\mathsf{Q}}$ is cohomologically essentially surjective, full and faithful, thus a quasi-equivalence which can be inverted to some $\mathcal{K}$ as in the diagram above, by virtue of Theorem \ref{aboutquasiequiv}. We then set $S\coloneqq \mathcal{K}\circ SS\circ \Upsilon$.

\subsection{Mapping cones and exact triangles}\label{ch3.3}

We now mimic the construction of standard mapping cones from homological algebra. Here, $\mathcal{A}$ and $\mathsf{Q}$ are defined as usual.

\begin{Def}\label{abstractmappingcone}			
	Let $Y_0, Y_1\in\textup{obj}(\mathcal{A})$, $c\in\textup{hom}_\mathcal{A}^0(Y_0,Y_1)$ a degree 0 cocyle. The \textbf{abstract mapping cone}\index{abstract mapping cone} of $c$ is the $A_\infty$-module $\mathcal{C} = \mathcal{C}one(c)\in\textup{obj}(\mathsf{Q})$ defined by $\mathcal{C}(X) \coloneqq \textup{hom}_\mathcal{A}(X,Y_0)[1]\oplus\textup{hom}_\mathcal{A}(X,Y_1)\in\textup{obj}(\mathsf{Ch})$ for $X\in\textup{obj}(\mathcal{A})$, with graded maps
	\begin{align}
	\mu_\mathcal{C}^d\!:\;&\mathcal{C}(X_{d-1})\!\otimes\!\textup{hom}_\mathcal{A}(X_{d-2},X_{d-1})\!\otimes\!...\!\otimes\!\textup{hom}_\mathcal{A}(X_0,X_1)\rightarrow\mathcal{C}(X_0)[2-d] \\
	&\!((b_0,b_1),a_{d-1},...) \!\mapsto\! \big(\mu_\mathcal{A}^d(b_0,a_{d-1},...),\mu_\mathcal{A}^d(b_1,a_{d-1},...)\!+\!\mu_\mathcal{A}^{d+1}(c,b_0,a_{d-1},...)\big),\nonumber
	\end{align}
	for $d\geq1$ and $X_0,...,X_{d-1}\in\textup{obj}(\mathcal{A})$. In particular, $\mathcal{C}(X)$ is a cochain complex whose differential is: 
	\[
	\mu_\mathcal{C}^1 =\begin{pmatrix}\mu_\mathcal{A}^1 & 0 \\ \mu_\mathcal{A}^2(c,\cdot) & \mu_\mathcal{A}^1\end{pmatrix}\,.
	\]
	We denote by $Cone(c)\in\textup{obj}(\mathcal{A})$ any quasi-representative of $\mathcal{C}$, and call it the \textbf{mapping cone}\index{mapping cone} of $c$.
\end{Def} 

We check that $\mathcal{C}$ is indeed an object of $\mathsf{Q}$. First, the associativity equations \eqref{Ainfmodeqs} are satisfied (again, a non-trivial fact). Moreover, $\mathcal{C}$ is c-unital, by Definition \ref{mod}: given a c-unit $e_X\in\textup{hom}_\mathcal{A}^0(X,X)$, the map $(b_0,b_1)\mapsto\mu_\mathcal{C}^2((b_0,b_1),e_X) = (\mu_\mathcal{A}^2(b_0,e_X),\mu_\mathcal{A}^2(b_1,e_X)+\mu_\mathcal{A}^3(c,b_0,e_X)) = (b_0,b_1)$ clearly induces the identity in cohomology. Therefore, $\mathcal{C}\in\textup{obj}(\mathsf{Q})$.

One can show that $\mathcal{C}one(c)$, thus also $Cone(c)$, is determined just by the class $\langle c\rangle$, up to non-canonical isomorphism. Most remarkably, $Cone(0)$ quasi-represents $SS\mathcal{Y}_0\oplus\mathcal{Y}_1$, hence $\mathcal{C}one(0)\cong SS\mathcal{Y}_0\oplus\mathcal{Y}_1$, in analogy to the standard definition of mapping cones. 

\begin{Def}\label{iotaandpi}							
	Any abstract mapping cone $\mathcal{C}$ of some degree 0 cocycle $c\in\textup{hom}_\mathcal{A}^0(Y_0,Y_1)$ comes with two canonical module homomorphisms:
	\begin{itemize}[leftmargin=0.5cm]
		\renewcommand{\labelitemi}{\textendash}
		\item an ``inclusion'' into the second summand $\iota\in\textup{hom}_\mathsf{Q}^0(\mathcal{Y}_1,\mathcal{C})$, which is given by $\iota^1:\mathcal{Y}_1(X_0)\rightarrow\mathcal{C}(X_0),\; b_1\mapsto(0,(-1)^{|b_1|}b_1)$ for $X_0\in\textup{obj}(\mathcal{A})$, and $\iota^d \coloneqq 0$ if $d>1$;
		\item a ``projection'' onto the first summand $\pi\in\textup{hom}_\mathsf{Q}^1(\mathcal{C},\mathcal{Y}_0)$, which is given by  $\pi^1:\mathcal{C}(X_0)\rightarrow\mathcal{Y}_0(X_0)[1],\; (b_0,b_1)\mapsto(-1)^{|b_0|-1}b_0$ for $X_0\in\textup{obj}(\mathcal{A})$, and $\pi^d\coloneqq 0$ if $d>1$.
	\end{itemize}
	These fit into the periodic triangular diagram
	\begin{equation}\label{Qtriangle}
	\begin{tikzcd}
	\mathcal{Y}_0\arrow[rr, "\langle \Upsilon^1(c)\rangle"]& &\mathcal{Y}_1\arrow[dl, "\langle\iota\rangle"] \\
	& \mathcal{C}\arrow[ul, "{\langle\pi\rangle[1]}"]
	\end{tikzcd}
	\end{equation}
	in $H(\mathsf{Q})$. Here we adopted the same notation as in Section \ref{ch3.1} (in this case, [1] really just points out that the degree of $\pi$ is 1).
\end{Def}

\begin{Lem}\label{iandpi}							
	Let $e_{Y_0}\in\textup{hom}_\mathcal{A}^0(Y_0,Y_0)$ and $e_{Y_1}\in\textup{hom}_\mathcal{A}^0(Y_1,Y_1)$ be c-units. Then the module homomorphisms
	\begin{itemize}[leftmargin=0.5cm]
		\renewcommand{\labelitemi}{\textendash}
		\item $\tilde{\iota}\in\textup{hom}_\mathsf{Q}^0(\mathcal{Y}_1,\mathcal{C})$, $\tilde{\iota}^d(b,a_{d-1},...,a_1) \coloneqq  (0,\mu_\mathcal{A}^{d+1}(e_{Y_1},b,a_{d-1},...,a_1))\in\mathcal{C}(X_0)$ and
		\item $\tilde{\pi}\in\textup{hom}_\mathsf{Q}^1(\mathcal{C},\mathcal{Y}_0)$, $\tilde{\pi}^d((b_0,b_1),a_{d-1},...,a_1)\coloneqq -\mu_\mathcal{A}^{d+1}(e_{Y_0},b_0,a_{d-1},...,a_1)\in\mathcal{Y}_0(X_0)$, 
	\end{itemize} 
	for $d\geq 1$ and $X_0\in\textup{obj}(\mathcal{A})$, satisfy $\langle\tilde{\iota}\rangle = \langle\iota\rangle$ and $\langle\tilde{\pi}\rangle = \langle\pi\rangle$ in $H(\mathsf{Q})$.
\end{Lem}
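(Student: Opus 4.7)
The strategy is to realise both $\tilde\iota$ and $\tilde\pi$ as $\mu_\mathsf{Q}^2$-products involving $\iota$ (respectively $\pi$) with a canonical Yoneda map representing an identity in $H(\mathsf{Q})$, and then conclude by passing to cohomology. As a preliminary observation, comparing the formulae \eqref{Yonedamap} and \eqref{lambda} shows that for any $Y\in\textup{obj}(\mathcal{A})$ and $c\in\mathcal{Y}(Y)=\textup{hom}_\mathcal{A}(Y,Y)$ one has $\lambda_{\mathcal{Y}}(c)=\Upsilon^1(c)$. Applied to the c-unit $e_{Y_i}$, and invoking that $\Upsilon$ is c-unital (noted after Definition \ref{Yonedaemb}),
\[
\langle\lambda_{\mathcal{Y}_i}(e_{Y_i})\rangle \;=\; \langle\Upsilon^1(e_{Y_i})\rangle \;=\; H(\Upsilon)(\textup{id}_{Y_i}) \;=\; \textup{id}_{\mathcal{Y}_i}\in\textup{Hom}_{H(\mathsf{Q})}^0(\mathcal{Y}_i,\mathcal{Y}_i).
\]

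For $\tilde\iota$: expanding $\mu_\mathsf{Q}^2(\iota,\lambda_{\mathcal{Y}_1}(e_{Y_1}))$ via \eqref{tshit2}, only the summand $n=0$ survives since $\iota^d=0$ for $d>1$, yielding
\[
\mu_\mathsf{Q}^2(\iota,\lambda_{\mathcal{Y}_1}(e_{Y_1}))^h(b,a_{h-1},\dots,a_1) \;=\; (-1)^{\natural_0}\,\iota^1\bigl(\mu_\mathcal{A}^{h+1}(e_{Y_1},b,a_{h-1},\dots,a_1)\bigr).
\]
Writing $w$ for the argument of $\iota^1$, a short bookkeeping check gives $|w|\equiv\natural_0\pmod 2$, so that substituting $\iota^1(w)=(0,(-1)^{|w|}w)$ cancels the prefactor and produces exactly $\tilde\iota^h=(0,\mu_\mathcal{A}^{h+1}(e_{Y_1},b,a_{h-1},\dots,a_1))$. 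Hence $\tilde\iota=\mu_\mathsf{Q}^2(\iota,\lambda_{\mathcal{Y}_1}(e_{Y_1}))$ identically; in particular $\tilde\iota$ is a module homomorphism, being a $\mu_\mathsf{Q}^2$-product of two such in the dg category $\mathsf{Q}$. Passing to $H(\mathsf{Q})$ via \eqref{cohomcatcompo} with $|\lambda_{\mathcal{Y}_1}(e_{Y_1})|=0$ then gives $\langle\tilde\iota\rangle=\langle\iota\rangle\circ\textup{id}_{\mathcal{Y}_1}=\langle\iota\rangle$.

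For $\tilde\pi$: the analogous expansion of $\mu_\mathsf{Q}^2(\lambda_{\mathcal{Y}_0}(e_{Y_0}),\pi)$ via \eqref{tshit2}, using $\pi^d=0$ for $d>1$, collapses to the single summand $n=h-1$. Treating $|b_0|$ as the degree of $b_0$ in $\textup{hom}_\mathcal{A}(X_0,Y_0)$ (so $|b_0|=|b|_\mathcal{C}+1$), the resulting sign $(-1)^{\natural_{h-1}+|b_0|-1}$ collapses to $+1$, so
\[
\mu_\mathsf{Q}^2(\lambda_{\mathcal{Y}_0}(e_{Y_0}),\pi)^h\bigl((b_0,b_1),a_{h-1},\dots,a_1\bigr) \;=\; \mu_\mathcal{A}^{h+1}(e_{Y_0},b_0,a_{h-1},\dots,a_1) \;=\; -\tilde\pi^h.
\]
Hence $\tilde\pi=-\mu_\mathsf{Q}^2(\lambda_{\mathcal{Y}_0}(e_{Y_0}),\pi)$ is a module homomorphism, and applying \eqref{cohomcatcompo} with $|\pi|=1$ gives
\[
\langle\tilde\pi\rangle \;=\; -\langle\mu_\mathsf{Q}^2(\lambda_{\mathcal{Y}_0}(e_{Y_0}),\pi)\rangle \;=\; -(-1)^{|\pi|}\,\textup{id}_{\mathcal{Y}_0}\circ\langle\pi\rangle \;=\; \langle\pi\rangle.
\]

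The only real obstacle is sign bookkeeping in the two expansions: one must carefully track Seidel's convention, the degree shift in the first summand of $\mathcal{C}$, and the $(-1)^{\natural_n}$ factors of \eqref{tshit2}. Conceptually, both identities simply say that $\iota$ and $\pi$ remain unchanged up to coboundary when composed with the Yoneda image of a c-unit, which in $H(\mathsf{Q})$ is the identity of the relevant Yoneda module.
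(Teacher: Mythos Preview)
Your argument is correct and takes a genuinely different route from the paper. The paper proceeds by exhibiting an explicit chain homotopy: for $\iota$ it chooses $h\in\textup{hom}_\mathsf{Q}^{-1}(\mathcal{Y}_1,\mathcal{Y}_1)$ with $\mu_\mathsf{Q}^1(h)=e_{\mathcal{Y}_1}-\Upsilon^1(e_{Y_1})$ (available because $\Upsilon$ is c-unital and $e_{\mathcal{Y}_1}$ is the strict unit of $\mathsf{Q}$), sets $k\coloneqq(0,h)\in\textup{hom}_\mathsf{Q}^{-1}(\mathcal{Y}_1,\mathcal{C})$, and then computes directly that $\mu_\mathsf{Q}^1(k)=\iota-\tilde\iota$; the case of $\pi$ is handled symmetrically. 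Your approach instead establishes the chain-level identities $\tilde\iota=\mu_\mathsf{Q}^2(\iota,\Upsilon^1(e_{Y_1}))$ and $\tilde\pi=-\mu_\mathsf{Q}^2(\Upsilon^1(e_{Y_0}),\pi)$ and then passes to cohomology in one step, using only that $\langle\Upsilon^1(e_{Y_i})\rangle=\textup{id}_{\mathcal{Y}_i}$. The two arguments are of course related (the paper's homotopy is essentially $\mu_\mathsf{Q}^2(\iota,h)$), but yours is more economical: it never needs the strict unit $e_{\mathcal{Y}_i}$ of $\mathsf{Q}$ or an auxiliary $h$, and it gives the closedness of $\tilde\iota,\tilde\pi$ for free via the Leibniz rule in the dg category $\mathsf{Q}$. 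The paper's version has the minor advantage of producing an explicit primitive for $\iota-\tilde\iota$, which could in principle be reused elsewhere, but for the purposes of this lemma your route is cleaner.
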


\begin{proof}
	Consider first $\iota$. Let $e_{\mathcal{Y}_1}\in\textup{hom}_\mathsf{Q}^0(\mathcal{Y}_1,\mathcal{Y}_1)$ be the c-unit for $\mathcal{Y}_1\in\textup{obj}(\mathsf{Q})$. Since $\Upsilon$ is c-unital, we must have $\langle e_{\mathcal{Y}_1}\rangle = H(\Upsilon)\langle e_{Y_1}\rangle = \langle\Upsilon^1(e_{Y_1})\rangle$, so that there exists some $h\in\textup{hom}_\mathsf{Q}^{-1}(\mathcal{Y}_1,\mathcal{Y}_1)$ such that $e_{\mathcal{Y}_1}-\Upsilon^1(e_{Y_1}) = \mu_\mathsf{Q}^1(h)$. Set $k\coloneqq (0,h)\in\textup{hom}_\mathsf{Q}^{-1}(\mathcal{Y}_1,\mathcal{C})$, then we have (by equation \eqref{Yonedamap}):
	\begin{align*}
	\mu_\mathsf{Q}^1(k)^d(b,a_{d-1},...,a_1) &= (0,\mu_\mathsf{Q}^1(h)^d(b,a_{d-1},...,a_1)) \\
	&= (0,e_{\mathcal{Y}_1}^d(b,a_{d-1},...,a_1)-\Upsilon^1(e_{Y_1})^d(b,a_{d-1},...,a_1)) \\
	&= (0,0-\mu_\mathcal{A}^{d+1}(e_{Y_1},b,a_{d-1},...,a_1)) \\
	&= (\iota^d-\tilde{\iota}^d)(b,a_{d-1},...,a_1)\;,
	\end{align*}
	for $d>1$, while $\mu_\mathsf{Q}^1(k)^1(b) = (0,e_{\mathcal{Y}_1}^1(b)-\Upsilon^1(e_{Y_1})^1(b)) = (0,(-1)^{|b|}b-\mu_\mathcal{A}^2(e_{Y_1},b))$ $=\iota^1(b)-\tilde{\iota}^1(b)$ (by definition \eqref{modunit} of strict unit for $A_\infty$-modules). Therefore, $\mu_\mathsf{Q}^1(k) = \iota-\tilde{\iota}$, meaning that $\tilde{\iota}$ and $\iota$ are cohomologous.
	
	The argument for $\pi$ is similar: there exists some $h\in\textup{hom}_\mathsf{Q}^{-1}(\mathcal{Y}_0,\mathcal{Y}_0)$ such that $\mu_\mathsf{Q}^1(h)=-e_{\mathcal{Y}_0}+\Upsilon^1(e_{Y_0})$, whence
	\begin{align*}
	\mu_\mathsf{Q}^1(h)^d(b_0,a_{d-1},...,a_1) &= -e_{\mathcal{Y}_0}^d(b_0,a_{d-1},...,a_1)+\mu_\mathcal{A}^{d+1}(e_{Y_0},b_0,a_{d-1},...,a_1) \\
	&= 0 -\tilde{\pi}^d((b_0,b_1),a_{d-1},...,a_1) \\
	&= (\pi^d-\tilde{\pi}^d)((b_0,b_1),a_{d-1},...,a_1)\;,
	\end{align*}
	for $d>1$, and $\mu_\mathsf{Q}^1(h)^1(b_0) = -e_{\mathcal{Y}_0}^1(b_0)+\Upsilon^1(e_{Y_0})^1(b_0) = (-1)^{|b_0|-1}b_0 + \mu_\mathcal{A}^2(e_{Y_0},b_0)$ $=\pi^1(b_0,b_1)-\tilde{\pi}^1(b_0,b_1)$, meaning that $\tilde{\pi}$ and $\pi$ are cohomologous.
\end{proof}

\begin{Def}										
	An \textbf{exact triangle}\index{triangle!exact in the cohomological category} in $H(\mathcal{A})$ is any diagram
	\begin{equation}\label{Atriangle}
	\begin{tikzcd}
	Y_0\arrow[rr,"\langle c_1\rangle"] & & Y_1\arrow[dl, "\langle c_2\rangle"]\\
	& Y_2\arrow[ul, "{\langle c_3\rangle[1]}"]
	\end{tikzcd}
	\end{equation}
	isomorphic to the triangular diagram \eqref{Qtriangle} once applied the Yoneda embedding $\Upsilon:\mathcal{A}\rightarrow\mathsf{Q}$. Equivalently, \eqref{Atriangle} is exact if and only if there exists an isomorphism $\langle t\rangle\in\textup{Hom}_{H(\mathsf{Q})}^0(\mathcal{Y}_2,\mathcal{C}one(c_1))$ (that is, a quasi-representative $(Y_2,\langle t\rangle)$ of $\mathcal{C}one(c_1)$) making the following diagram commute in $H(\mathsf{Q})$:
	\begin{equation}\label{Qexacttriangle}
	\begin{tikzcd}
	\mathcal{Y}_0\arrow[rr, "\langle \Upsilon^1(c_1)\rangle"]& &\mathcal{Y}_1\arrow[dl, near end, "\langle\Upsilon^1(c_2)\rangle"]\arrow[dddl, near end, bend left=45, "\langle\iota\rangle"'] \\
	&\mathcal{Y}_2\arrow[ul, near start, "{\langle\Upsilon^1(c_3)\rangle[1]}"]\arrow[dd, dashed, "\wr"', "\langle t\rangle"]\\
	& \\
	&\mathcal{C}one(c_1)\arrow[uuul, near start, bend left=45, "{\langle\pi\rangle[1]}"']
	\end{tikzcd}
	\end{equation}
	(Note: the existence of $\langle t\rangle$ does not depend on the choice of representative of~$\langle c_1\rangle$.) 
\end{Def}

\begin{Lem}\label{whenexact}					
	A triangle \eqref{Atriangle} is exact if and only if there are $h_1\!\in\!\textup{hom}_\mathcal{A}^0(Y_1,Y_0)$, $h_2\in\textup{hom}_\mathcal{A}^0(Y_2,Y_1)$ and $k\in\textup{hom}_\mathcal{A}^{-1}(Y_1,Y_1)$ such that
	\begin{align}\label{hhk}
	&\mu_\mathcal{A}^1(h_1)=\mu_\mathcal{A}^2(c_3,c_2)\in\textup{hom}_\mathcal{A}^1(Y_1,Y_0)\nonumber \\
	&\mu_\mathcal{A}^1(h_2) = -\mu_\mathcal{A}^2(c_1,c_3)\in\textup{hom}_\mathcal{A}^1(Y_2,Y_1) \\
	&\mu_\mathcal{A}^1(k)=-\mu_\mathcal{A}^2(c_1,h_1)+\mu_\mathcal{A}^2(h_2,c_2)+\mu_\mathcal{A}^3(c_1,c_3,c_2)-e_{Y_1}\in\textup{hom}_\mathcal{A}^0(Y_1,Y_1)\nonumber
	\end{align}
	\textup(for $e_{Y_1}$ a c-unit\textup), and if the chain complex $\mathcal{Y}_2(X)[1]\oplus\mathcal{Y}_0(X)[1]\oplus\mathcal{Y}_1(X)$ with boundary operator
	\[
	\partial \coloneqq 
	\begin{pmatrix}
	\mu_\mathcal{A}^1 & 0 & 0 \\
	\mu_\mathcal{A}^2(c_3,\cdot) & \mu_\mathcal{A}^1 & 0 \\
	\mu_\mathcal{A}^2(h_2,\cdot) + \mu_\mathcal{A}^3(c_1,c_3,\cdot) & \mu_\mathcal{A}^2(c_1,\cdot) & \mu_\mathcal{A}^1
	\end{pmatrix}
	\]
	is acyclic for all $X\in\textup{obj}(\mathcal{A})$.
\end{Lem}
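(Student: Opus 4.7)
The plan is to characterize exactness via the quasi-representability criterion of Lemma \ref{quasi-repr} and to translate both the quasi-isomorphism condition and the commutativity of \eqref{Qexacttriangle} into the algebraic data $(h_1,h_2,k)$ and the acyclicity clause respectively. First, I would invoke Lemma \ref{quasi-repr} to realise the putative isomorphism $\langle t\rangle\in\textup{Hom}_{H(\mathsf{Q})}^0(\mathcal{Y}_2,\mathcal{C}one(c_1))$ as $\langle\lambda(c)\rangle$ for a degree $0$ cocycle $c\in\mathcal{C}(Y_2)^0=\textup{hom}_\mathcal{A}^1(Y_2,Y_0)\oplus\textup{hom}_\mathcal{A}^0(Y_2,Y_1)$. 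Commutativity of \eqref{Qexacttriangle} at the $\pi$-corner forces the first component of $c$ to lie in the cohomology class $\langle c_3\rangle$, so after adjusting $c$ by a suitable coboundary (which does not alter $\langle t\rangle$) I may take $c=(c_3,h_2)$ for some $h_2\in\textup{hom}_\mathcal{A}^0(Y_2,Y_1)$. Unpacking the cocycle equation $\mu_\mathcal{C}^1(c_3,h_2)=0$ via the block formula for $\mu_\mathcal{C}^1$ yields exactly $\mu_\mathcal{A}^1(h_2)=-\mu_\mathcal{A}^2(c_1,c_3)$, the second relation in \eqref{hhk}.

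Next, I would address the commutativity $\langle t\rangle\circ\langle\Upsilon^1(c_2)\rangle=\langle\iota\rangle$ by producing a pre-module homomorphism $s\in\textup{hom}_\mathsf{Q}^{-1}(\mathcal{Y}_1,\mathcal{C}one(c_1))$ witnessing the homotopy $\mu_\mathsf{Q}^1(s)=\mu_\mathsf{Q}^2(t,\Upsilon^1(c_2))-\tilde\iota$. It is essential to use the alternative representative $\tilde\iota$ from Lemma \ref{iandpi} so that the c-unit $e_{Y_1}$ enters the computation explicitly. Writing $s^1(e_{Y_1})=(h_1,k)\in\textup{hom}_\mathcal{A}^0(Y_1,Y_0)\oplus\textup{hom}_\mathcal{A}^{-1}(Y_1,Y_1)$ and evaluating the order-one identity at $e_{Y_1}$ via \eqref{tshit1} together with the explicit formulae for $t^1$, $\Upsilon^1(c_2)^1$ and $\tilde\iota$, the two component equations reduce — after invoking the strict/cohomological unitality of $e_{Y_1}$ and the order-$3$ $A_\infty$-associativity relation — to precisely the first and third equations of \eqref{hhk}. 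The presence of $\mu_\mathcal{A}^3(c_1,c_3,c_2)$ and $e_{Y_1}$ in the last relation traces back to the higher part of the $A_\infty$-structure encoded in $\mu_\mathcal{C}^2$ and to the $\tilde\iota$-correction, respectively.

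Third, I would identify the acyclicity clause with the quasi-isomorphism requirement of Lemma \ref{quasi-repr}. A direct expansion of $t^1_X(b)=(-1)^{|b|}\mu_\mathcal{C}^2((c_3,h_2),b)$ shows that the two components of $t^1_X:\mathcal{Y}_2(X)\to\mathcal{Y}_0(X)[1]\oplus\mathcal{Y}_1(X)=\mathcal{C}one(c_1)(X)$ are, up to shift signs, $\mu_\mathcal{A}^2(c_3,\cdot)$ and $\mu_\mathcal{A}^2(h_2,\cdot)+\mu_\mathcal{A}^3(c_1,c_3,\cdot)$. The mapping cone of $t^1_X$ is therefore precisely the three-term complex $\mathcal{Y}_2(X)[1]\oplus\mathcal{Y}_0(X)[1]\oplus\mathcal{Y}_1(X)$ equipped with the differential $\partial$ displayed in the statement. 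Hence acyclicity of this cone for every $X$ is equivalent to $t^1_X$ being a quasi-isomorphism for every $X$, which by Lemma \ref{quasi-repr} is exactly the remaining condition for $(Y_2,\langle t\rangle)$ to be a quasi-representative of $\mathcal{C}one(c_1)$.

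The converse direction reverses each step: the $h_2$-relation makes $c=(c_3,h_2)$ a cocycle, the acyclicity hypothesis makes $\lambda(c)$ a quasi-isomorphism, and the $h_1,k$-relations reassemble into a pre-module homomorphism $s$ realising the homotopy $\tilde\iota\simeq\mu_\mathsf{Q}^2(t,\Upsilon^1(c_2))$, while commutativity with $\pi$ is automatic from the choice of first component $c_3$. The main obstacle will be the second paragraph: carefully tracking $A_\infty$-signs through the expansion of $\mu_\mathsf{Q}^2(t,\Upsilon^1(c_2))^1(e_{Y_1})$ via \eqref{tshit2}, confirming that the $\mu_\mathcal{A}^3$-contribution from $\mu_\mathcal{C}^2((c_3,h_2),\cdot)$ combines correctly with the analogous higher-order term in $\tilde\iota$ to yield exactly the third equation of \eqref{hhk}, and verifying that no additional relations are hidden at higher arity $d\geq 2$ beyond those already encoded in the cocycle data for $c$ and in the acyclicity statement.
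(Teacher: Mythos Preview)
Your overall strategy matches the paper's: realise $\langle t\rangle$ as $\langle\lambda_\mathcal{C}(c)\rangle$ via Lemma~\ref{quasi-repr}, use the $\pi$-commutativity to pin down $c=(c_3,h_2)$, read the second relation of \eqref{hhk} off the cocycle condition $\mu_\mathcal{C}^1(c)=0$, and identify the acyclicity clause with the mapping cone of $t_X^1$.

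The one place where you diverge is the treatment of the $\iota$-corner. You propose to produce a pre-module homotopy $s\in\textup{hom}_\mathsf{Q}^{-1}(\mathcal{Y}_1,\mathcal{C})$ with $\mu_\mathsf{Q}^1(s)=\mu_\mathsf{Q}^2(t,\Upsilon^1(c_2))-\tilde\iota$ and then set $(h_1,k)\coloneqq s^1(e_{Y_1})$. This works in principle, but it is more laborious than necessary and creates exactly the ``higher arity $d\geq 2$'' worry you flag at the end: a homotopy in $\textup{hom}_\mathsf{Q}$ is an entire sequence $(s^1,s^2,\dots)$, and evaluating only $s^1$ at $e_{Y_1}$ leaves you to argue that no further constraints appear. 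The paper sidesteps this completely by staying in $\mathcal{C}(Y_1)$ rather than in $\textup{hom}_\mathsf{Q}(\mathcal{Y}_1,\mathcal{C})$. Using the identities \eqref{lambdaproperties} together with Lemma~\ref{iandpi}, one has
\[
\langle\iota\rangle=\langle\tilde\iota\rangle=\langle\lambda_\mathcal{C}((0,e_{Y_1}))\rangle\quad\text{and}\quad \langle t\rangle\circ\langle\Upsilon^1(c_2)\rangle=\langle\mu_\mathsf{Q}^2(\lambda_\mathcal{C}(c),\Upsilon^1(c_2))\rangle=\langle\lambda_\mathcal{C}(\mu_\mathcal{C}^2(c,c_2))\rangle\,,
\]
so the commutativity at $\iota$ is equivalent, via the quasi-isomorphism $\lambda_\mathcal{C}$ (Yoneda Lemma~\ref{Yonedalemma}), to $(0,e_{Y_1})$ and $\mu_\mathcal{C}^2((c_3,h_2),c_2)$ being cohomologous in $\mathcal{C}(Y_1)$. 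A single element $(h_1,k)\in\mathcal{C}(Y_1)^{-1}$ then witnesses the coboundary relation $\mu_\mathcal{C}^1(h_1,k)=\mu_\mathcal{C}^2((c_3,h_2),c_2)-(0,e_{Y_1})$, and expanding the two components of this identity gives precisely the first and third equations of \eqref{hhk}. No sign chase through \eqref{tshit1}--\eqref{tshit2}, no c-unit corrections from $\mu_\mathcal{A}^2(c_2,e_{Y_1})$, and no higher $s^d$ to control. I recommend replacing your second paragraph with this argument.
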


\begin{proof}
	``$\Leftarrow$'' Define $\mathcal{C} = \mathcal{C}one(c_1)$. Then $b\coloneqq  (c_3,h_2)$ is a degree 0 cocycle of $\mathcal{C}(Y_2)$ (indeed $\mu_\mathcal{C}^1(b)=(\mu_\mathcal{A}^1(c_3),\mu_\mathcal{A}^1(h_2)+\mu_\mathcal{A}^2(c_1,c_3))=0$, by the second equation in \eqref{hhk}), thus giving a module homomorphism $t\coloneqq \lambda_\mathcal{C}(b)\in\textup{hom}_\mathsf{Q}^0(\mathcal{Y}_2,\mathcal{C})$ (where $\lambda_\mathcal{C}$ is as in \eqref{lambda}). For $X\in\textup{obj}(\mathcal{A})$, $t_{X}^1$ is precisely the graded (chain) map $\mathcal{Y}_2(X)\rightarrow\mathcal{C}(X),\;a\mapsto(-1)^{|a|}\mu_\mathcal{C}^2(b,a) = (-1)^{|a|}(\mu_\mathcal{A}^2(c_3,a),\mu_\mathcal{A}^2(h_2,a)+\mu_\mathcal{A}^3(c_1,c_3,a))$ from Lemma \ref{quasi-repr}, whose mapping cone is, by definition, 
	\[
	\partial^{t_{X}^1} = \begin{pmatrix} \partial_{\mathcal{Y}_2(X)} & 0 \\ t_{X}^1 & \partial_{\mathcal{C}(X)}\end{pmatrix}.
	\] 
	The differential for $\mathcal{Y}_2(X)=\textup{hom}_\mathcal{A}(X,Y_2)$ is $\mu_\mathcal{A}^1$, that for $\mathcal{C}(X)$ was given in Definition \ref{abstractmappingcone}. Therefore, $\partial^{t_{X}^1} = \partial$, which is acyclic by assumption. Basic homological algebra tells us that this makes $t_{X}^1$ a chain equivalence, in turn implying that $\langle t\rangle$ is an isomorphism in $H^0(\mathsf{Q})$, and hence a valid vertical arrow for diagram \eqref{Qexacttriangle}.
	
	We need to show that \eqref{Qexacttriangle} commutes. We have:
	\begin{align*}
	\langle\iota\rangle &= \langle\tilde{\iota}\rangle &\text{(by Lemma \ref{iandpi})}\\
	&=\langle\lambda_\mathcal{C}((0,e_{Y_1}))\rangle &\text{(by definition of $\lambda$ and $\tilde{\iota}$)}\\
	&=\langle\lambda_\mathcal{C}(\mu_\mathcal{C}^2(b,c_2))\rangle &\text{(as $(0,e_{Y_1})$ is cohomologous to $\mu_\mathcal{C}^2(b,c_2)$, by \eqref{hhk})}\\
	&=\langle\mu_\mathsf{Q}^2(t,\Upsilon^1(c_2))\rangle &\text{(by the first equation in \eqref{lambdaproperties})}\\
	&=\langle t\rangle\circ\langle \Upsilon^1(c_2)\rangle &\text{(by definition, and since $|\Upsilon^1(c_2)| =|c_2|=0$)}	
	\end{align*}
	and
	\begin{align*}
	\langle\Upsilon^1(c_3)\rangle &= \langle\lambda_{\mathcal{Y}_0}(c_3)\rangle &\text{(by definition of $\lambda$)}\\
	&= \langle\lambda_{\mathcal{Y}_0}(\pi^1(b))\rangle &\text{(by definition of $\pi$, and since $|c_3|=1$)}\\
	&= \langle\mu_\mathsf{Q}^2(\pi,t)\rangle &\text{(by the second equation in \eqref{lambdaproperties})}\\
	&= \langle\pi\rangle\circ\langle t\rangle &\text{(by definition, and since $|t|=0$).}
	\end{align*}
	All requisites for \eqref{Atriangle} to be exact are satisfied.
	
	\noindent``$\Rightarrow$'' We argue backwards: any isomorphism $\langle t\rangle$ making \eqref{Qexacttriangle} commute is of the form $\langle\lambda_\mathcal{C}(b)\rangle$ for some cocycle $b=(b_0,h_2)\in\mathcal{C}(Y_2)^0$, by Lemma \ref{quasi-repr}. By bijectivity of $H(\Upsilon)$, $H(\Upsilon)\langle b_0\rangle = \langle\Upsilon^1(\pi^1(b))\rangle = H(\Upsilon)\langle c_3\rangle$ implies $\langle b_0\rangle =\langle c_3\rangle$ (here we used the last chain of equalities, as well as commutativity of \eqref{Qexacttriangle}). Hence, we can take $b=(c_3,h_2)$, for yet an unknown $h_2$. 
	
	On the other hand, $\langle\lambda_\mathcal{C}((0,e_Y))\rangle = \langle\iota\rangle = \langle\mu_\mathsf{Q}^2(t,\Upsilon^1(c_2))\rangle=\langle\lambda_\mathcal{C}(\mu_\mathcal{C}^2(b,c_2))\rangle$ (again, by commutativity and first equation in \eqref{lambdaproperties}), so that the inputs are cohomologous and thus provide coboundaries of some $h_1$ and $k$. By construction, the triple $(h_1,h_2,k)$ fulfills equations \eqref{hhk}. Acyclicity of $\mathcal{Y}_2(X)[1]\oplus\mathcal{Y}_0(X)[1]\oplus\mathcal{Y}_1$ can be achieved by exhibiting a contracting homotopy; we leave this out.  
\end{proof}

Although explicit, the criterion of Lemma \ref{whenexact} is very laborious. It is desirable to deduce exactness of a triangle by passing through a structurally simpler $A_\infty$-category. This is possible (see Proposition \ref{exactcriterium}), but we will be able to show it only after having developed the proper tools.

\subsection{Triangulated $A_\infty$-categories and derived categories} \label{ch3.4}

We extend the concept of triangulation studied in Section \ref{ch3.1} to $A_\infty$-categories.

\begin{Def}									
	An $A_\infty$-category $\mathcal{A}$ is \textbf{triangulated}\index{aaa@$A_\infty$-category!triangulated} if:
	\begin{itemize}[leftmargin=0.5cm]
		\item $\textup{obj}(\mathcal{A})\neq\emptyset$;
		\item for any $Y_0, Y_1\in\textup{obj}(\mathcal{A})$, every degree 0 morphism $\langle c_1\rangle\in\textup{Hom}_{H(\mathcal{A})}^0(Y_0,Y_1)$ can be completed to an exact triangle in $H(\mathcal{A})$ of the form \eqref{Atriangle};
		\item for any $Y\in\textup{obj}(\mathcal{A})$, there is some $\tilde{Y}\in\textup{obj}(\mathcal{A})$ such that $S\tilde{Y}\cong Y$ in $H^0(\mathcal{A})$, that is, every object is isomorphic to a shifted one.
	\end{itemize}
\end{Def}  

The last two conditions imply that we can construct an $A_\infty$-shift functor $S:\mathcal{A}\rightarrow\mathcal{A}$ as in Definition \ref{shiftfunc} which is a quasi-equivalence (because it is required to be cohomologically full, faithful and essentially surjective). 

Later on we will show that $\mathsf{Q}=mod(\mathcal{A})$ is always triangulated. More importantly, if $\mathcal{A}$ is a triangulated $A_\infty$-category, then its zeroth cohomological category $H^0(\mathcal{A})$ is a standard triangulated category, with shift functor $T\coloneqq H^0(S)$ and distinguished triangles corresponding to the exact ones in $H^0(\mathcal{A})\subset H(\mathcal{A})$ (see Proposition \ref{H0Atriangulated}), possible after identifying $\langle c_3\rangle$ as a degree 0 morphism in $\textup{Hom}_{H(\mathcal{A})}(Y_2,SY_0)$ ($\cong\textup{Hom}_{H(\mathcal{A})}(Y_2,Y_0)[1]$, by the first equation of \eqref{Homiso}).

\begin{Def}\label{triangenvelope}				
	Let $\mathcal{B}$ be a triangulated $A_\infty$-category, $\mathcal{A}\subset\mathcal{B}$ a full non-empty $A_\infty$-subcategory. The smallest triangulated, full $A_\infty$-subcategory $\tilde{\mathcal{B}}\subset\mathcal{B}$ which is closed under isomorphism (that is, $X_0\cong X_1$ in $H(\mathcal{B})$ and $X_0\in\textup{obj}(\tilde{\mathcal{B}})$ imply $X_1\in\textup{obj}(\tilde{\mathcal{B}})$) and contains $\mathcal{A}$ is called the \textbf{triangulated subcategory of $\mathcal{B}$ generated by}\index{triangulated subcategory generated by an $A_\infty$-category} $\mathcal{A}$. 
	
	Specifically, $\textup{obj}(\tilde{\mathcal{B}})$ can be constructed from $\textup{obj}(\mathcal{A})$ by forming all possible mapping cones and shifts, and iterating. If $\tilde{\mathcal{B}}=\mathcal{B}$, then $\mathcal{A}$ \textbf{generates}\index{generate (an $A_\infty$-category)} $\mathcal{B}$.
	
	A \textbf{triangulated envelope}\index{triangulated envelope} for an $A_\infty$-category $\mathcal{A}$ is a pair $(\mathcal{B},\mathcal{F})$ where $\mathcal{B}$ is a triangulated $A_\infty$-category and $\mathcal{F}:\mathcal{A}\rightarrow\mathcal{B}$ a cohomologically full and faithful functor whose image objects generate $\mathcal{B}$. By definition, if $\mathcal{A}$ generates $\mathcal{B}$, then $\mathcal{B}$ is a triangulated envelope for $\mathcal{A}$ (choosing $\mathcal{F}$ to be the inclusion).
\end{Def}

We will show that triangulated  envelopes $(\mathcal{B},\mathcal{F})$ always exist and are determined up to quasi-equivalence (cf. Proposition \ref{envelopeexists}), thus uniquely identifying $H^0(\mathcal{B})$ up to equivalence. 

\begin{Def}\label{Ainfderivedcat}					
	Let $\mathcal{B}$ be the triangulated envelope of an $A_\infty$-category $\mathcal{A}$. Then we call $\mathsf{D}(\mathcal{A})\coloneqq  H^0(\mathcal{B})$ the \textbf{derived category}\index{derived category!of an $A_\infty$-category} of $\mathcal{A}$.\footnote{This terminology is somewhat improper: our construction of triangulated envelopes does not involve the localization procedure highlighted in Definition \ref{derivedcat}, but a definite link can be found upon looking at twisted complexes (cf. Proposition \ref{H0TwAtriangulated}). The same remark holds for the \textit{bounded} derived category of Definition \ref{boundeddercat}.}
\end{Def}

Now, in order to simplify our discussion and prove all promised results, we study a particular case of triangulated envelope: the $A_\infty$-category of twisted complexes.

\subsection{The $A_\infty$-category of twisted complexes}\label{ch3.5}

For the sake of generality, we temporarily suspend the assumption that every $A_\infty$-category and functor is c-unital.

\begin{Def}\label{addenl}									
	Let $\mathcal{A}$ be a non-unital $A_\infty$-category. We define the \textbf{additive enlargement}\index{additive enlargement} $\varSigma\mathcal{A}$ to be the non-unital $A_\infty$-category with:
	\begin{itemize}[leftmargin=0.5cm]
		\renewcommand{\labelitemi}{\textendash}
		\item objects all formal direct sums $X \equiv (I,\{X^i\}_{i\in I},\{V^i\}_{i\in I})\coloneqq \bigoplus_{i\in I}V^i\otimes X^i$, where $I$ is a finite index set, all $X^i\in\textup{obj}(\mathcal{A})$ and all $V^i$ are finite-dimensional graded vector spaces (called \textit{multiplicity spaces}),
		\item for any pair $X_0, X_1\in\textup{obj}(\varSigma\mathcal{A})$, a graded vector space
		\begin{align}\label{addenlmorphspaces}
		\textup{hom}_{\varSigma\mathcal{A}}(X_0,X_1) &= \textup{hom}_{\varSigma\mathcal{A}}\Big(\bigoplus_{i_0\in I_0}V_{0}^{i_0}\otimes X_{0}^{i_0},\bigoplus_{i_1\in I_1}V_{1}^{i_1}\otimes X_{1}^{i_1}\Big) \\
		&\coloneqq\mkern-24mu\bigoplus_{\quad i_0\in I_0, i_1\in I_1}\mkern-24mu \textup{Hom}_\mathbb{K}(V_{0}^{i_0},V_{1}^{i_1})\otimes\textup{hom}_\mathcal{A}(X_{0}^{i_0},X_{1}^{i_1})\nonumber
		\end{align}
		(graded because each summand is), with 
		\[
		\textup{hom}_{\varSigma\mathcal{A}}^n(X_0,X_1)=\!\bigoplus_{\!\!l+m=n}\mkern-12mu\bigoplus_{\quad i_0\in I_0,i_1\in I_1}\mkern-24mu\textup{Hom}_\mathbb{K}^l(V_{0}^{i_0},V_{1}^{i_1})\otimes\textup{hom}_\mathcal{A}^m(X_{0}^{i_0},X_{1}^{i_1})\;;
		\]
		we write a morphism $a_1\in	\textup{hom}_{\varSigma\mathcal{A}}(X_0,X_1)$ in the matrix form $(a_{1}^{i_1,i_0})$ for $i_0\in I_0, i_1\in I_1$, where each entry is a finite linear combination of the form $a_{1}^{i_1,i_0} = \sum_k\phi_{1}^{k,i_1,i_0}\otimes x_{1}^{k,i_1,i_0}\in\textup{Hom}_\mathbb{K}(V_{0}^{i_0},V_{1}^{i_1})\otimes\textup{hom}_\mathcal{A}(X_{0}^{i_0},X_{1}^{i_1})$ (though, for practicality, we will implicitly assume linearity and simply write $a_{1}^{i_1,i_0} = \phi_{1}^{i_1,i_0}\otimes x_{1}^{i_1,i_0}$),
		\item composition maps $\mu_{\varSigma\mathcal{A}}^d$ as in \eqref{compomap}, where, for $a_k\in\textup{hom}_{\varSigma\mathcal{A}}(X_{k-1},X_k)$, the output morphism $\mu_{\varSigma\mathcal{A}}^d(a_d,...,a_1)\in\textup{hom}_{\varSigma\mathcal{A}}(X_d,X_0)$ is componentwise defined as
		\begin{equation}\label{compomapaddenl}
		\mu_{\varSigma\mathcal{A}}^d(a_d,...,a_1)^{i_d,i_0}\coloneqq \mkern-18mu\sum_{\substack{i_1\in I_1,...,\\i_{d-1}\in I_{d-1}}}\mkern-18mu (-1)^\star(\phi_{d}^{i_d,i_{d-1}}\circ...\circ\phi_{1}^{i_1,i_0})\otimes\mu_\mathcal{A}^d(x_{d}^{i_d,i_{d-1}},...,x_{1}^{i_1,i_0})\;,
		\end{equation}
		for $i_0\in I_0, i_d\in I_d$ and $\star \coloneqq \sum_{1\leq p<q\leq d}|\phi_{p}^{i_p,i_{p-1}}|\!\cdot\!(|x_{q}^{i_q,i_{q-1}}|-1)$.
	\end{itemize}
\end{Def}

Choosing $I\coloneqq\{\ast\}, X^*\coloneqq X$ and $V^*\coloneqq\mathbb{K}[0]=\mathbb{K}$ for all $X\in\textup{obj}(\mathcal{A})$ makes $\mathcal{A}\subset\varSigma\mathcal{A}$ a full $A_\infty$-subcategory. 

\begin{Rem}\label{alternateaddenl}			
	Before proceeding, it is worth mentioning an alternative construction for the additive enlargement, compatible only up to quasi-equivalence with the one given, but notationally friendlier (consult \cite[section 2b]{[Sei13]} for the precise formulation, \cite[section 7.6]{[Kel01]} for an even more concise version). 
	
	Namely, we just allow shifted copies of $\mathbb{K}$ as multiplicity spaces. Then objects have form $\bigoplus_iS^{-\sigma_i}X^i$ for $\sigma_i\in\mathbb{Z}$, that is, formal sums of shifted objects of $\mathcal{A}$ as in Section \ref{ch3.2}. Morphism spaces become $\bigoplus_{i_0,i_1}\textup{hom}_\mathcal{A}(X_{0}^{i_0},X_{1}^{i_1})[\sigma_{1,i_1}-\sigma_{0,i_0}]$, and compositions $\mu_{\varSigma\mathcal{A}}^d(a_d,...,a_1)^{i_d,i_0}\!\coloneqq\! (-1)^{\sigma_{0,i_0}}\!\sum_{i_1,...,i_{d-1}}\!\mu_\mathcal{A}^d(a_{d}^{i_d,i_{d-1}}\!\!,...,a_{1}^{i_1,i_0})$. 
	
	This simplifies the discussion, especially in matter of signs, but we will remain faithful to the more general exposition of our main source \cite{[Sei08]}.
\end{Rem}

\begin{Def}											
	Let $\mathcal{A}$ and $\mathcal{B}$ be non-unital $A_\infty$-categories. A non-unital $A_\infty$-functor $\mathcal{F}:\mathcal{A}\rightarrow\mathcal{B}$ induces an $A_\infty$-functor $\varSigma\mathcal{F}:\varSigma\mathcal{A}\rightarrow\varSigma\mathcal{B}$ defined by
	\begin{align}\label{addenlfunc}
	&(\varSigma\mathcal{F})(X) = (\varSigma\mathcal{F})\Big(\bigoplus_{i\in I}V^i\otimes X^i\Big) \coloneqq \bigoplus_{i\in I}V^i\otimes\mathcal{F}(X^i)\;,\\
	&(\varSigma\mathcal{F})^d(a_d,...,a_1)^{i_d,i_0} \coloneqq \mkern-18mu\sum_{\substack{i_1\in I_1,...,\\i_{d-1}\in I_{d-1}}}\mkern-18mu (-1)^\star(\phi_{d}^{i_d,i_{d-1}}\circ...\circ\phi_{1}^{i_1,i_0})\otimes\mathcal{F}^d(x_{d}^{i_d,i_{d-1}},...,x_{1}^{i_1,i_0}), \nonumber
	\end{align}
	for $X\in\textup{obj}(\varSigma\mathcal{A})$ and $a_k\in\textup{hom}_{\varSigma\mathcal{A}}(X_{k-1},X_k)$. 
	
	There is a non-unital $A_\infty$-functor $\varSigma:nu\text{-}\!fun(\mathcal{A},\mathcal{B})\rightarrow nu\text{-}\!fun(\varSigma\mathcal{A},\varSigma\mathcal{B})$ sending $T\in\textup{hom}_{nu\text{-}\!fun(\mathcal{A},\mathcal{B})}(\mathcal{F}_0,\mathcal{F}_1)$ to $\varSigma^1(T)\in\textup{hom}_{nu\text{-}\!fun(\varSigma\mathcal{A},\varSigma\mathcal{B})}(\varSigma\mathcal{F}_0,\varSigma\mathcal{F}_1)$, given by
	\begin{equation}
	\varSigma^1(T)^d(a_d,...,a_1)^{i_d,i_0} \coloneqq \mkern-18mu\sum_{\substack{i_1\in I_1,...,\\i_{d-1}\in I_{d-1}}}\mkern-18mu (-1)^\star(\phi_{d}^{i_d,i_{d-1}}\circ...\circ\phi_{1}^{i_1,i_0})\otimes T^d(x_{d}^{i_d,i_{d-1}},...,x_{1}^{i_1,i_0}),
	\end{equation}
	and with $\varSigma^d\coloneqq 0$ for $d>1$. 
\end{Def}

\begin{Def}											
	Let $\mathcal{A}$ be a non-unital $A_\infty$-category. A \textbf{pre-twisted complex}\index{pre-twisted complex} in $\mathcal{A}$ is a pair $(X,\delta_X)$ where $X=(I,\{X^i\},\{V^i\})\in\textup{obj}(\varSigma\mathcal{A})$ and $\delta_X=(\delta_{X}^{j,i})\in\textup{hom}_{\varSigma\mathcal{A}}^1(X,X)$ (so that each $\delta_{X}^{j,i}$ is linear combination of tensors whose degree is 1).
	
	A \textbf{twisted complex}\index{twisted complex} in $\mathcal{A}$ is a pre-twisted complex $(X,\delta_X)$ such that $\delta_X$ is ``strictly lower-triangular'' (that is, $\delta_{X}^{j,i} = 0$ if $j\leq i$, after suitably\footnote{One resorts to filtrations by subcomplexes; see \cite[section (3l)]{[Sei08]}.} ordering $I$) and satisfies the \textit{generalized Maurer-Cartan equation}
	\begin{equation}
	\sum_{r=1}^\infty \mu_{\varSigma\mathcal{A}}^r(\delta_X,...,\delta_X) = 0\;.
	\end{equation}
	(The last sum is actually finite due to lower triangularity, which eventually makes the compositions of $\phi$'s in \eqref{compomapaddenl} vanish.)   
\end{Def}

\begin{Def}									
	Let $\mathcal{A}$ be a non-unital $A_\infty$-category. We define the \textbf{category $Tw\mathcal{A}$ of twisted complexes}\index{aaa@$A_\infty$-category!of twisted complexes} in $\mathcal{A}$ to be the non-unital $A_\infty$-category with:
	\begin{itemize}[leftmargin=0.5cm]
		\renewcommand{\labelitemi}{\textendash}
		\item objects all twisted complexes $X=(X,\delta_X)$ in $\mathcal{A}$,
		\item for any pair $X_0, X_1\in\textup{obj}(Tw\mathcal{A})$, the morphism space 
		\begin{equation}
			\textup{hom}_{Tw\mathcal{A}}(X_0,X_1) \coloneqq \textup{hom}_{\varSigma\mathcal{A}}(X_0,X_1)\,,
		\end{equation}
		\item composition maps $\mu_{Tw\mathcal{A}}^d$ given by
		\begin{equation}\label{twistedcompo}
		\mkern-6mu\mu_{Tw\mathcal{A}}^d(a_d,...,a_1) \coloneqq \mkern-5mu\sum_{\substack{s_0,...,\\s_d\geq 0}}\mkern-5mu \mu_{\varSigma\mathcal{A}}^{d+s_0+...+s_d}(\underbrace{\delta_{X_d},...,\delta_{X_d}}_{s_d},a_d,...,a_1,\underbrace{\delta_{X_0},...,\delta_{X_0}}_{s_0}),
		\end{equation}
		for $d\geq 1$ and $a_k\in\textup{hom}_{Tw\mathcal{A}}(X_{k-1},X_k)$ (the sum is again finite). 
	\end{itemize}
\end{Def}

That $Tw\mathcal{A}$ is indeed an $A_\infty$-category is proved, for example, in \cite[section 8.6]{[Kel01]} (there the author works with the simplified definition stemming from Remark \ref{alternateaddenl}). Setting $\delta_X \coloneqq 0$ for all $X\in\textup{obj}(\varSigma\mathcal{A})$ makes $\varSigma\mathcal{A}\subset Tw\mathcal{A}$ a full $A_\infty$-subcategory. Hence, with the identifications above, $\mathcal{A}\subset Tw\mathcal{A}$ is a full $A_\infty$-subcategory as well. 

\begin{Def}										
	Let $\mathcal{A}$ and $\mathcal{B}$ be non-unital $A_\infty$-categories. A non-unital $A_\infty$-functor $\mathcal{F}:\mathcal{A}\rightarrow\mathcal{B}$ induces an $A_\infty$-functor $Tw\mathcal{F}:Tw\mathcal{A}\rightarrow Tw\mathcal{B}$ defined by
	\begin{align}
	&(Tw\mathcal{F})(X,\delta_X) \coloneqq \Big(\varSigma\mathcal{F}(X), \sum_{d\geq 1}(\varSigma\mathcal{F})^d(\delta_X,...,\delta_X)\Big)\;,\\
	&(Tw\mathcal{F})^d(a_d,...,a_1) \coloneqq \mkern-6mu\sum_{\substack{s_0,...,\\s_d\geq 0}}\mkern-6mu (\varSigma\mathcal{F})^{d+s_0+...+s_d}(\underbrace{\delta_{X_d},...,\delta_{X_d}}_{s_d},a_d,...,a_1,\underbrace{\delta_{X_0},...,\delta_{X_0}}_{s_0})\;, \nonumber
	\end{align}
	for $X=(X,\delta_X)\in\textup{obj}(Tw\mathcal{A})$ and $a_k\in\textup{hom}_{\varSigma\mathcal{A}}(X_{k-1},X_k)$. 
	
	There is a non-unital $A_\infty$-functor $Tw:nu\text{-}\!fun(\mathcal{A},\mathcal{B})\rightarrow nu\text{-}\!fun(Tw\mathcal{A},Tw\mathcal{B})$, $T\in\textup{hom}_{nu\text{-}\!fun(\mathcal{A},\mathcal{B})}(\mathcal{F}_0,\mathcal{F}_1)\mapsto Tw^1(T)\in\textup{hom}_{nu\text{-}\!fun(Tw\mathcal{A},Tw\mathcal{B})}(Tw\mathcal{F}_0,$ $Tw\mathcal{F}_1)$ given by
	\begin{equation}
	Tw^1(T)^d(a_d,...,a_1) \coloneqq \mkern-6mu\sum_{\substack{s_0,...,\\s_d\geq 0}}\mkern-6mu \varSigma^1(T)^{d+s_0+...+s_d}(\underbrace{\delta_{X_d},...,\delta_{X_d}}_{s_d},a_d,...,a_1,\underbrace{\delta_{X_0},...,\delta_{X_0}}_{s_0}), \nonumber
	\end{equation}
	and with $Tw^d\coloneqq 0$ for $d>1$.
\end{Def}

\begin{Lem}\label{Twfullfaith}					
	If $\mathcal{F}:\mathcal{A}\rightarrow\mathcal{B}$ is a cohomologically full and faithful non-unital $A_\infty$-functor, then so is $Tw\mathcal{F}:Tw\mathcal{A}\rightarrow Tw\mathcal{B}$.
\end{Lem}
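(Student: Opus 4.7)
(\textit{Sketch}) The plan is to factor the argument through the additive enlargement, so I first establish the intermediate claim that $\varSigma\mathcal{F}:\varSigma\mathcal{A}\to\varSigma\mathcal{B}$ is cohomologically full and faithful. Reading off \eqref{addenlmorphspaces} and the first-order part of \eqref{addenlfunc}, the map $(\varSigma\mathcal{F})^1$ splits as a finite direct sum, indexed by $(i_0,i_1)\in I_0\times I_1$, of maps of the form $\textup{id}_{\textup{Hom}_\mathbb{K}(V_0^{i_0},V_1^{i_1})}\otimes\mathcal{F}^1$. Each multiplicity space $\textup{Hom}_\mathbb{K}(V_0^{i_0},V_1^{i_1})$ is a finite-dimensional graded vector space carrying the zero differential, so the Künneth formula gives $H(\textup{id}\otimes\mathcal{F}^1)=\textup{id}\otimes H(\mathcal{F}^1)$, which is an isomorphism by hypothesis on $\mathcal{F}$; since cohomology commutes with the finite direct sum, $H((\varSigma\mathcal{F})^1)$ is itself an isomorphism.

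Second, I would bootstrap to $Tw\mathcal{F}$ via a spectral sequence argument. Fix $X_0=(X_0,\delta_{X_0})$ and $X_1=(X_1,\delta_{X_1})$ in $\textup{obj}(Tw\mathcal{A})$ and choose total orderings of $I_0,I_1$ witnessing the strict lower-triangularity of $\delta_{X_0}$ and $\delta_{X_1}$. The underlying graded vector space is $\textup{hom}_{Tw\mathcal{A}}(X_0,X_1)=\textup{hom}_{\varSigma\mathcal{A}}(X_0,X_1)$, and I equip it with the increasing ``diagonal'' filtration
\[
G^p\coloneqq\big\{\,a\in\textup{hom}_{\varSigma\mathcal{A}}(X_0,X_1)\;\big|\;a^{j,i}=0\text{ whenever }i-j>p\,\big\}\,.
\]
Inspecting \eqref{twistedcompo}, any summand of $\mu_{Tw\mathcal{A}}^1(a)$ with $s_0+s_1\geq 1$ contributes to the $(j,i)$-entry only through components $a^{j_0,i_{s_0}}$ arising from chains $i<i_1<\cdots<i_{s_0}$ and $j_0<j_1<\cdots<j$, forcing $i_{s_0}>i$ and $j_0<j$ (strictly), hence $i_{s_0}-j_0>i-j$. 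Thus $\mu_{Tw\mathcal{A}}^1$ preserves $G^\bullet$ and all correction terms are strictly filtration-lowering, so the associated graded differential is exactly $\mu_{\varSigma\mathcal{A}}^1$. The identical index chase applied to the formula defining $(Tw\mathcal{F})^1$ shows that $Tw\mathcal{F}$ respects $G^\bullet$ on both sides and induces $(\varSigma\mathcal{F})^1$ on associated gradeds.

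Since $I_0$ and $I_1$ are finite, $G^\bullet$ is bounded; the two associated spectral sequences therefore converge strongly to $H(\textup{hom}_{Tw\mathcal{A}}(X_0,X_1))$ and $H(\textup{hom}_{Tw\mathcal{B}}(Tw\mathcal{F}(X_0),Tw\mathcal{F}(X_1)))$. By the first step, $(Tw\mathcal{F})^1$ induces an isomorphism on the $E_1$-page, and the spectral sequence comparison theorem delivers an isomorphism on the abutments, which is exactly cohomological full-faithfulness of $Tw\mathcal{F}$. The main technical hurdle will be pinning down the strict weight-raising property of every correction term in \eqref{twistedcompo}; once this is extracted cleanly from the strict lower-triangularity of the $\delta_{X_\bullet}$, the remainder is standard spectral sequence bookkeeping.
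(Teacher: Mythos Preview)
Your proof is correct and follows essentially the same approach as the paper: a spectral sequence comparison argument reduces the claim from $Tw\mathcal{F}$ to $\varSigma\mathcal{F}$, and the latter is handled directly from the direct-sum description \eqref{addenlmorphspaces}. In fact, your version is considerably more explicit than the paper's sketch --- you write down the diagonal filtration $G^\bullet$ and verify that the $\delta$-correction terms strictly lower it, whereas the paper simply gestures at ``filtrations of the spaces $\textup{hom}_{Tw\mathcal{A}}(X_0,X_1)$'' and the spectral sequence \eqref{spectral}.
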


\begin{proof}
	A comparison argument for the spectral sequences \eqref{spectral} associated to filtrations of the spaces $\textup{hom}_{Tw\mathcal{A}}(X_0,X_1)$ and $\textup{hom}_{Tw\mathcal{B}}(Tw\mathcal{F}(X_0),Tw\mathcal{F}(X_1))$ reduces the problem to $\varSigma\mathcal{F}$. But for any two fixed objects $X,Y\in\textup{obj}(\varSigma\mathcal{A})$, $H(\varSigma\mathcal{F})_{X,Y}:\textup{Hom}_{H(\varSigma\mathcal{A})}(X,Y)\rightarrow\textup{Hom}_{H(\varSigma\mathcal{B})}((\varSigma\mathcal{F})(X),(\varSigma\mathcal{F})(Y))$ maps $\langle a\rangle$ to $\langle(\varSigma\mathcal{F})^1(a)\rangle$, which, according to equation \eqref{addenlfunc}, has $(i_1,i_0)$-th entry equal to $\phi_{1}^{i_1,i_0}\otimes H(\mathcal{F})\langle x_{1}^{i_1,i_0}\rangle$. Therefore, bijectivity of $H(\varSigma\mathcal{F})$ on morphisms can be deduced from that of $H(\mathcal{F})$. 
\end{proof}

We return to discuss unitality. If $\mathcal{A}$ is strictly unital, so that for all $Y\in\textup{obj}(\mathcal{A})$ we can find a strict unit $e_Y\in\textup{hom}_{\mathcal{A}}^0(Y,Y)$, then we define the strict unit $E_X\in\textup{hom}_{Tw\mathcal{A}}(X,X)$ of a twisted complex $X=(I,\{X^i\},\{V^i\})$ to have diagonal matrix form $E_{X}^{j,i}=\phi^{j,i}\otimes x^{j,i} \coloneqq \delta_{ji}\cdot \textup{id}_{V^i}\otimes e_{X^i}$
(thinking of \eqref{twistedcompo} as matrix multiplication, it is clear from strictly lower triangularity of the $\delta_{X_k}$'s that the requirements \eqref{unitalrequirm} for strict unitality are fulfilled). 

One can also show that a strictly unital $A_\infty$-functor $\mathcal{F}:\mathcal{A}\rightarrow\mathcal{B}$ induces a strictly unital $Tw\mathcal{F}$, and that $Tw$ itself is strictly unital on the strictly unital $A_\infty$-category $nu\text{-}\!fun(\mathcal{C},\mathcal{A})$, for any $\mathcal{C}$. What about c-unitality?

\begin{Lem}\label{TwAcunital}								
	If $\mathcal{A}$ is c-unital, then so is $Tw\mathcal{A}$. Also, a c-unital $\mathcal{F}:\mathcal{A}\rightarrow\mathcal{B}$ between c-unital $A_\infty$-categories yields a c-unital $Tw\mathcal{F}$. Finally, the $A_\infty$-functor $Tw:nu\text{-}\!fun(\mathcal{C},\mathcal{A})$ $\rightarrow nu\text{-}\!fun(Tw\mathcal{C},Tw\mathcal{A})$ is c-unital for any $\mathcal{C}$ and c-unital $\mathcal{A}$. 
\end{Lem}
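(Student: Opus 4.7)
The plan is to reduce each claim to the strictly unital case, where the analogous statements were already established in the discussion preceding the lemma. By Lemma \ref{cunitstrictunit} pick a formal diffeomorphism $\Phi:\mathcal{A}\rightarrow\tilde{\mathcal{A}}$ with $\tilde{\mathcal{A}}$ strictly unital; then $Tw\tilde{\mathcal{A}}$ is strictly unital. Being a quasi-isomorphism (Definition \ref{formaldiffeo}), $\Phi$ is cohomologically full and faithful, and therefore so is the induced $Tw\Phi:Tw\mathcal{A}\rightarrow Tw\tilde{\mathcal{A}}$ by Lemma \ref{Twfullfaith}.

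For the first claim, fix $X\in\textup{obj}(Tw\mathcal{A})$ and let $E_{Tw\Phi(X)}$ be the strict unit of its image in $Tw\tilde{\mathcal{A}}$. Bijectivity of $H(Tw\Phi)_{X,X}$ yields a unique class $\langle e_X\rangle\in\textup{Hom}_{H(Tw\mathcal{A})}^0(X,X)$ mapping to $\langle E_{Tw\Phi(X)}\rangle$. To verify that $\langle e_X\rangle$ is a c-unit, pick any $\langle a\rangle\in\textup{Hom}_{H(Tw\mathcal{A})}(X',X)$; functoriality of $H(Tw\Phi)$ together with the identity behaviour of $\langle E_{Tw\Phi(X)}\rangle$ in $H(Tw\tilde{\mathcal{A}})$ give $H(Tw\Phi)(\langle e_X\rangle\circ\langle a\rangle-\langle a\rangle)=0$, and faithfulness of $H(Tw\Phi)_{X',X}$ forces $\langle e_X\rangle\circ\langle a\rangle=\langle a\rangle$. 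The symmetric computation handles right composition, so $Tw\mathcal{A}$ is c-unital.

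For the second and third claims the same scheme applies, with additional formal diffeomorphisms on each target to reach strictly unital versions. In both cases the resulting $A_\infty$-functor maps between c-unital $A_\infty$-categories, and the question is whether it preserves identities at the cohomological level. Following the template of Lemma \ref{cunitalcompo}, the image of a c-unit under $H(Tw\mathcal{F})$ (respectively $H(Tw)$) is automatically idempotent by functoriality; one then invokes Lemma \ref{1.6} with a suitably chosen natural transformation to upgrade it to an invertible endomorphism, after which idempotency plus invertibility force it to be the identity. The main obstacle here is precisely this invertibility step: identifying the correct natural transformation in $nu\text{-}\!fun(Tw\mathcal{A},Tw\mathcal{B})$ or $nu\text{-}\!fun(Tw\mathcal{C},Tw\mathcal{A})$ requires careful bookkeeping of how the $Tw$ construction interacts with left and right composition, though in spirit this is the same argument as in the proof of Lemma \ref{cunitalcompo}, merely transcribed over twisted complexes.
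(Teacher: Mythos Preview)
Your argument for the first claim is correct and matches the paper's approach exactly: reduce to the strictly unital case via Lemma~\ref{cunitstrictunit}, then use Lemma~\ref{Twfullfaith} to transport c-units back through the cohomologically full and faithful $Tw\Phi$. (The paper is slightly terser, but the content is the same.)

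For the second claim, neither you nor the paper actually gives a proof: the paper explicitly defers to \cite[Lemma~3.24]{[Sei08]}, while your sketch via the Lemma~\ref{cunitalcompo} template stops at the point you yourself flag as the obstacle. So there is no discrepancy here, though your proposal is not a self-contained argument.

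For the third claim, however, the paper takes a genuinely different and cleaner route than yours. Rather than attempting to verify idempotency-plus-invertibility directly in $nu\text{-}\!fun(Tw\mathcal{C},Tw\mathcal{A})$, the paper observes that $Tw$ is compatible with left composition, yielding the factorization
\[
Tw^1=\mathcal{L}_{Tw\Phi^{-1}}^1\circ Tw^1\circ\mathcal{L}_{\Phi}^1
\]
on morphisms, where the middle $Tw^1$ lands in $nu\text{-}\!fun(Tw\mathcal{C},Tw\tilde{\mathcal{A}})$ and is strictly unital (since $\tilde{\mathcal{A}}$ is). The flanking $\mathcal{L}$-functors are c-unital by Lemma~\ref{cunitalcompo} together with the second claim, so c-unitality of $Tw$ follows immediately from the composition. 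This sidesteps entirely the ``careful bookkeeping'' you anticipate: no natural transformation needs to be identified, and Lemma~\ref{1.6} is not invoked. Your approach could in principle be completed, but the factorization argument is both shorter and avoids the step you correctly identify as the main difficulty.
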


\begin{proof}
	We adopt a strategy similar to the proof of Proposition \ref{funccatunitality}.ii, reducing to the strictly unital case $\tilde{\mathcal{A}}$ by means of a formal diffeomorphism $\Phi:\mathcal{A}\rightarrow\tilde{\mathcal{A}}$ with $\Phi^1=\textup{id}$, cohomologically full and faithful (this is provided by Lemma \ref{cunitstrictunit}). Then Lemma \ref{Twfullfaith} applies, making $Tw\Phi:Tw\mathcal{A}\rightarrow Tw\tilde{\mathcal{A}}$ cohomologically full and faithful, with target the strictly unital $Tw\tilde{\mathcal{A}}$ (by last paragraph). Consequently, $Tw\mathcal{A}$ must be c-unital as well.
	
	The argument for c-unitality of $Tw\mathcal{F}$ is more involved; we take it on faith and refer to \cite[Lemma 3.24]{[Sei08]} for a proof.
	
	Concerning $Tw$, start from a non-unital $A_\infty$-functor $\mathcal{G}:\mathcal{C}\rightarrow\mathcal{A}$ and define $\tilde{\mathcal{G}}\coloneqq\Phi\circ\mathcal{G}:\mathcal{C}\rightarrow\tilde{\mathcal{A}}$. A straight computation shows that $Tw$ is well-behaved with respect to left composition $\mathcal{L}_{\Phi}:nu\text{-}\!fun(\mathcal{C},\mathcal{A})\rightarrow nu\text{-}\!fun(\mathcal{C},\tilde{\mathcal{A}})$, which is a c-unital $A_\infty$-functor by Lemma \ref{cunitalcompo} (and so is $\mathcal{L}_{Tw\Phi^{-1}}$, since $Tw\Phi^{-1}$ is c-unital by the previous point we skipped). This means that, on morphisms, $Tw^1=\mathcal{L}_{Tw\Phi^{-1}}^1\circ Tw^1\circ\mathcal{L}_{\Phi}^1:\textup{hom}_{nu\text{-}\!fun(\mathcal{C},\mathcal{A})}(\mathcal{G},\mathcal{G})\rightarrow\textup{hom}_{nu\text{-}\!fun(Tw\mathcal{C},Tw\mathcal{A})}(Tw\mathcal{G},Tw\mathcal{G})$, where $Tw^1:\textup{hom}_{nu\text{-}\!fun(\mathcal{C},\tilde{\mathcal{A}})}(\tilde{\mathcal{G}},\tilde{\mathcal{G}})\rightarrow\textup{hom}_{nu\text{-}\!fun(Tw\mathcal{C},Tw\tilde{\mathcal{A}})}(Tw\tilde{\mathcal{G}},Tw\tilde{\mathcal{G}})$ on the right-hand side is strictly unital by last paragraph. All together, $Tw$ must be c-unital, as claimed.
\end{proof}

Combining c-unitality of $Tw\mathcal{F}$ with Theorem \ref{aboutquasiequiv} and Lemma \ref{Twfullfaith}, we also obtain: 

\begin{Cor}\label{TwFquasiequiv}				
	Let $\mathcal{A}$ and $\mathcal{B}$ be c-unital $A_\infty$-categories, $\mathcal{F}:\mathcal{A}\rightarrow\mathcal{B}$ a quasi-equivalence. Then $Tw\mathcal{F}:Tw\mathcal{A}\rightarrow Tw\mathcal{B}$ is a quasi-equivalence too.
\end{Cor}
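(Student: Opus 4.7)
The plan is to establish cohomological essential surjectivity of $Tw\mathcal{F}$, as the two remaining ingredients of being a quasi-equivalence are already in hand: Lemma \ref{Twfullfaith} gives that $Tw\mathcal{F}$ is cohomologically full and faithful (since $\mathcal{F}$ is so, being a quasi-equivalence), and Lemma \ref{TwAcunital} provides c-unitality of $Tw\mathcal{F}$.

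For essential surjectivity, the strategy is to produce, after a single application of $Tw$, a genuine weak inverse at the cohomological level. Concretely, by Theorem \ref{aboutquasiequiv}, the quasi-equivalence $\mathcal{F}:\mathcal{A}\rightarrow\mathcal{B}$ admits a quasi-equivalence $\mathcal{G}:\mathcal{B}\rightarrow\mathcal{A}$ such that $\mathcal{F}\circ\mathcal{G}\cong\mathcal{I}d_\mathcal{B}$ in $H^0(fun(\mathcal{B},\mathcal{B}))$. Now consider the $A_\infty$-functor
\[
Tw: fun(\mathcal{B},\mathcal{B})\rightarrow fun(Tw\mathcal{B},Tw\mathcal{B}),
\]
which is c-unital by the last statement of Lemma \ref{TwAcunital}, and hence preserves isomorphisms in the zeroth cohomological category. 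Observing from the definitions that $Tw$ respects composition and identities of $A_\infty$-functors strictly, applying $Tw$ to the isomorphism above yields
\[
Tw\mathcal{F}\circ Tw\mathcal{G} \;=\; Tw(\mathcal{F}\circ\mathcal{G}) \;\cong\; Tw(\mathcal{I}d_\mathcal{B}) \;=\; \mathcal{I}d_{Tw\mathcal{B}}
\]
in $H^0(fun(Tw\mathcal{B},Tw\mathcal{B}))$.

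To extract essential surjectivity from this, pick any $Y\in\textup{obj}(Tw\mathcal{B})$ and evaluate the above natural isomorphism at $Y$: this produces an isomorphism $Tw\mathcal{F}(Tw\mathcal{G}(Y))\cong Y$ in $H^0(Tw\mathcal{B})$, courtesy of Proposition \ref{funccatunitality} (whose last statement tells us that the object-wise components of a c-unit in a functor category are c-units in the target). Hence every object of $Tw\mathcal{B}$ is, up to isomorphism in $H^0(Tw\mathcal{B})$, in the image of $Tw\mathcal{F}$.

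Combining c-unitality, cohomological fullness and faithfulness, and cohomological essential surjectivity, we conclude that $Tw\mathcal{F}$ is a quasi-equivalence. The main subtle point — which is the one place the argument could slip — is the strict compatibility of $Tw$ with composition and identities of $A_\infty$-functors; once this is extracted from the explicit formulae defining $Tw\mathcal{F}$ and $Tw$, the rest is a mechanical application of Theorem \ref{aboutquasiequiv} together with the preceding two lemmas.
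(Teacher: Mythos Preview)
Your proof is correct and follows precisely the approach the paper indicates: the corollary is stated without a proof environment, preceded only by the sentence ``Combining c-unitality of $Tw\mathcal{F}$ with Theorem \ref{aboutquasiequiv} and Lemma \ref{Twfullfaith}, we also obtain,'' and your argument is a faithful expansion of exactly that combination. The one point you flag as delicate --- that $Tw$ respects composition and identities of $A_\infty$-functors strictly --- is indeed the only thing not made explicit in the paper, and it does follow from the defining formulae for $Tw\mathcal{F}$.
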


Now, we study the constructions of Sections \ref{ch3.2} and \ref{ch3.3} for $Tw\mathcal{A}$, where \textit{$\mathcal{A}$ resumes to be c-unital along with all associated $A_\infty$-functors.}

\begin{Def}\label{TwAops}								
	Let $\mathcal{A}$ be a (c-unital) $A_\infty$-category.
	\begin{itemize}[leftmargin=0.5cm]
		\item For $Y_0=(I_0,\{Y_{0}^{i_0}\},\{V_{0}^{i_0}\}),Y_1=(I_1,\{Y_{1}^{i_1}\},\{V_{1}^{i_1}\})\in\textup{obj}(\varSigma\mathcal{A})$, we define $Y_0\oplus Y_1\in\textup{obj}(\varSigma\mathcal{A})$ as the obvious direct sum, with indices now running over the set $I_0\times I_1$. \\		
		Interpreting $Y_0\oplus Y_1$ as an object of $Tw\mathcal{A}$ (its definition is identical), so $\Upsilon(Y_0\oplus Y_1) = \mathcal{Y}_0\oplus\mathcal{Y}_1\in\textup{obj}(mod(Tw\mathcal{A}))$ (indeed for any $X\in\textup{obj}(Tw\mathcal{A})$ holds $\Upsilon(Y_0\oplus Y_1)(X)=\textup{hom}_{Tw\mathcal{A}}(X,Y_0\oplus Y_1) = \textup{hom}_{Tw\mathcal{A}}(X,Y_0)\oplus\textup{hom}_{Tw\mathcal{A}}(X,Y_1) = \mathcal{Y}_0(X)\oplus\mathcal{Y}_1(X)$).
		\item For $Z$ a finite-dimensional graded vector space, we define the non-unital $A_\infty$-functor $Z\otimes\square:\varSigma\mathcal{A}\rightarrow\varSigma\mathcal{A}$ sending objects $Y=(I,\{Y^i\},\{V^i\})$ to $Z\otimes Y\coloneqq (I,\{Y^i\},\{Z\otimes V^i\})$ and morphisms $a=(\phi^{j,i}\otimes x^{j,i})$ to $\textup{id}_Z\otimes a\coloneqq (\textup{id}_Z\otimes\phi^{j,i}\otimes x^{j,i})$, with $(Z\otimes\square)^d \coloneqq 0$ if $d>1$. \\
		Similarly, there is a non-unital $A_\infty$-functor $Z\otimes\square:Tw\mathcal{A}\rightarrow Tw\mathcal{A}$ assigning $(Y,\delta_Y)\mapsto (Z\otimes Y,\textup{id}_Z\otimes\delta_Y)$, on $mod(Tw\mathcal{A})$ precisely corresponding to the tensoring functor of Section \ref{ch3.2} (once applied $\Upsilon$).
		\item Choosing $Z\coloneqq\mathbb{K}[\sigma]$ (with $\partial_Z\coloneqq0$), we set again $S^\sigma Y \coloneqq  \mathbb{K}[\sigma]\otimes Y \cong\bigoplus_{i\in I}V^i[\sigma]\otimes Y^i\in\textup{obj}(\varSigma\mathcal{A})$ to indicate $\sigma$-shifted objects having morphism $\delta_{S^\sigma Y}\coloneqq S^\sigma(\delta_Y)=S^\sigma((\phi^{j,i}\otimes x^{j,i})) = ((-1)^{\sigma\cdot|\phi^{j,i}|}\phi^{j,i}\otimes x^{j,i})$. Then we obtain a non-unital $A_\infty$-functor $S^\sigma:Tw\mathcal{A}\rightarrow Tw\mathcal{A}$, coinciding with that given in Definition \ref{shiftfunc} and representing $SS^\sigma$ on objects. \\
		The isomorphisms \eqref{Homiso} are still valid, on the non-cohomological level too. Most importantly, $S:Tw\mathcal{A}\rightarrow Tw\mathcal{A}$ is a quasi-equivalence, by construction (cf. Remark \ref{alternatecone} below).
	\end{itemize}
\end{Def}

\begin{Def}\label{mappingcone}						
	Let $c\in\textup{hom}_{Tw\mathcal{A}}^0(Y_0,Y_1)$ be a cocycle. The \textbf{twisted mapping cone}\index{twisted mapping cone} $Cone(c)=(C,\delta_C)\in\textup{obj}(Tw\mathcal{A})$ is given by
	\begin{equation}
	Cone(c)\coloneqq\Big(SY_0\oplus Y_1, \begin{pmatrix} S(\delta_{Y_0}) & 0 \\ -S(c) & \delta_{Y_1} \end{pmatrix}\Big)\;,
	\end{equation}
	where $\delta_C\in\textup{hom}_{Tw\mathcal{A}}^1(C,C)$ is well defined since $S(\delta_{Y_0})\in\textup{hom}_{Tw\mathcal{A}}^1(SY_0,SY_0)$ and $S(c)\in\textup{hom}_{Tw\mathcal{A}}^0(SY_0,SY_1)$ $\cong\textup{hom}_{Tw\mathcal{A}}^0(Y_0,Y_1)\cong\textup{hom}_{Tw\mathcal{A}}^1(SY_0,Y_1)$ (due to the non-cohomological counterparts of \eqref{Homiso}). Then $C$ fulfills:
	\begin{align}\label{coneidentification}
	\Upsilon(C)(X)=\textup{hom}_{Tw\mathcal{A}}(X,C) &= \textup{hom}_{Tw\mathcal{A}}(X,SY_0)\oplus\textup{hom}_{Tw\mathcal{A}}(X,Y_1)\nonumber \\
	&\cong \textup{hom}_{Tw\mathcal{A}}(X,Y_0)[1]\oplus\textup{hom}_{Tw\mathcal{A}}(X,Y_1)\nonumber \\
	&=\mathcal{C}(X)\;,
	\end{align}
	that is, we can identify $\Upsilon(C)$ with the abstract mapping cone $\mathcal{C}=\mathcal{C}one(c)$ from Definition \ref{abstractmappingcone}.
\end{Def}

\begin{Rem}\label{alternatecone}				
	Recall the alternative construction for $\varSigma\mathcal{A}$ from Remark \ref{alternateaddenl}, which easily extends to produce a similar version of $Tw\mathcal{A}$. The respective twisted mapping cone $C=Cone(c)$ is shown to have
	\[
	\delta_C = \begin{pmatrix} \delta_{Y_0} & 0 \\ c & \delta_{Y_1}\end{pmatrix}\;.
	\]
	As already seen, signs are conveniently absent. This is because here $S$ acts as $a\in\textup{hom}_{Tw\mathcal{A}}^k(X_0,X_1)\mapsto (-1)^{k-1}a\in\textup{hom}_{Tw\mathcal{A}}^k(X_0,X_1)$ (whence we clearly deduce its cohomological fullness and faithfulness). 
\end{Rem}

\begin{Def}										
	Let $\mathcal{A}$ be a c-unital $A_\infty$-category. Given $c\in\textup{hom}_{Tw\mathcal{A}}^0(Y_0,Y_1)$, the twisted mapping cone $C=Cone(c)=SY_0\oplus Y_1\in\textup{obj}(Tw\mathcal{A})$ comes with two canonical morphisms ($e_{Y_k}$ are c-units):
	\begin{itemize}[leftmargin=0.5cm]
		\renewcommand{\labelitemi}{\textendash}
		\item the inclusion into the second summand $i\coloneqq\begin{pmatrix}0\\e_{Y_1}\end{pmatrix}\in\textup{hom}_{Tw\mathcal{A}}^0(Y_1,C)\cong\mathcal{C}(Y_1)$; since
		\vspace*{-0.2cm}
		\begin{align*}
		\Upsilon^1(i)(b,a_{d-1},...) &= \mu_{Tw\mathcal{A}}^{d+1}(i,b,a_{d-1},...) = \begin{pmatrix} 0\\\mu_{Tw\mathcal{A}}^{d+1}(e_{Y_1},b,a_{d-1},...)\end{pmatrix} \\
		&= \tilde{\iota}^d(b,a_{d-1},...)\;,
		\end{align*}
		we have $H(\Upsilon)\langle i\rangle = \langle\tilde{\iota}\rangle=\langle\iota\rangle$ (see Definition \ref{iotaandpi} and Lemma \ref{iandpi}).
		\item the projection onto the first summand $p\coloneqq(S(e_{Y_0}),0)\in\textup{hom}_{Tw\mathcal{A}}^1(C,Y_0)\cong\mathcal{Y}_0(C)$; since
		\begin{align*}
		\Upsilon^1(p)^d((b_0,b_1),a_{d-1},...)&=\mu_{Tw\mathcal{A}}^{d+1}(p,(b_0,b_1),a_{d-1},...)\\
		&=\mu_{Tw\mathcal{A}}^{d+1}(S(e_{Y_0}),b_0,a_{d-1},...)+0\\
		&=\mu_{Tw\mathcal{A}}^{d+1}(-e_{Y_0},b_0,a_{d-1},...) = \tilde{\pi}^d((b_0,b_1),a_{d-1},...)\;,
		\end{align*}
		we have $H(\Upsilon)\langle p\rangle =\langle\tilde{\pi}\rangle=\langle\pi\rangle$ (in the second-to-last equation we used the action of $S$ explained in Remark \ref{alternatecone}).
	\end{itemize}
\end{Def}

\begin{Lem}\label{exactinTwA}						
	A triangle $(\langle c_1\rangle,\langle c_2\rangle,\langle c_3\rangle)$ in $H^0(Tw\mathcal{A})$ like \eqref{Atriangle} is exact if and only if there is an isomorphism $\langle b\rangle\in\textup{Hom}_{H(Tw\mathcal{A})}^0(Y_2,Cone(c_1))$ making the following diagram commute in $H^0(Tw\mathcal{A})$:
	\begin{equation}\label{Aexacttriangle}
	\begin{tikzcd}
	Y_0\arrow[rr, "\langle c_1\rangle"]& & Y_1\arrow[dl, near end, "\langle c_2\rangle"]\arrow[ddl, bend left=20, "\langle i\rangle"] \\
	&Y_2\arrow[ul, near start, "{\langle c_3\rangle[1]}"]\arrow[d, dashed, "\wr"', "\langle b\rangle"]\\
	&Cone(c_1)\arrow[uul, bend left=20, "{\langle p\rangle[1]}"]
	\end{tikzcd}
	\end{equation}
\end{Lem}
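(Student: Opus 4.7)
The plan is to reduce the statement to the definition of exactness at the level of $A_\infty$-modules by invoking the Yoneda embedding $\Upsilon : Tw\mathcal{A} \to mod(Tw\mathcal{A})$ together with the identification \eqref{coneidentification}. Concretely, unravelling the definition that precedes Lemma \ref{whenexact}, the triangle $(\langle c_1\rangle,\langle c_2\rangle,\langle c_3\rangle)$ is exact in $H(Tw\mathcal{A})$ if and only if there exists an isomorphism $\langle t\rangle\in\textup{Hom}_{H(mod(Tw\mathcal{A}))}^0(\mathcal{Y}_2,\mathcal{C}one(c_1))$ making the diagram \eqref{Qexacttriangle} commute, where the outer cone-maps are $\langle \iota\rangle$ and $\langle \pi\rangle$.

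First, I would exploit the central fact \eqref{coneidentification}, which says that the abstract mapping cone $\mathcal{C}one(c_1)$ is naturally the Yoneda module $\Upsilon(Cone(c_1))$ of the twisted mapping cone. Under this identification, $\langle t\rangle$ becomes an isomorphism of Yoneda modules $\Upsilon(Y_2)\cong\Upsilon(Cone(c_1))$ in $H^0(mod(Tw\mathcal{A}))$. Since $Tw\mathcal{A}$ is c-unital (Lemma \ref{TwAcunital}), the Yoneda embedding $\Upsilon$ is cohomologically full and faithful (the statement right after Definition \ref{Yonedaemb}), so $H(\Upsilon)$ restricts to a bijection on the relevant morphism spaces. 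Hence there exists a unique $\langle b\rangle\in\textup{Hom}_{H(Tw\mathcal{A})}^0(Y_2,Cone(c_1))$ with $H(\Upsilon)\langle b\rangle=\langle t\rangle$, and $\langle b\rangle$ is automatically an isomorphism in $H^0(Tw\mathcal{A})$ by functoriality (its inverse is the $\langle b'\rangle$ produced in the same way from $\langle t\rangle^{-1}$).

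It remains to match the commutativity condition of \eqref{Qexacttriangle} with that of \eqref{Aexacttriangle}. Here the computation performed immediately before the lemma is decisive: we have shown $H(\Upsilon)\langle i\rangle=\langle\tilde\iota\rangle=\langle\iota\rangle$ and $H(\Upsilon)\langle p\rangle=\langle\tilde\pi\rangle=\langle\pi\rangle$. Therefore applying $H(\Upsilon)$ to the triangle \eqref{Aexacttriangle} recovers exactly \eqref{Qexacttriangle} (also using $H(\Upsilon)\langle c_k\rangle=\langle\Upsilon^1(c_k)\rangle$ on the top edges), and conversely faithfulness of $H(\Upsilon)$ lets us descend commutativity of \eqref{Qexacttriangle} to commutativity of \eqref{Aexacttriangle}. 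This gives both directions of the ``if and only if''.

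The main obstacle I anticipate is purely bookkeeping: making sure that the two triangles referred to in the diagrams really correspond under Yoneda on every edge, including that the class $\langle c_3\rangle$ lives in $\textup{Hom}_{H(Tw\mathcal{A})}(Y_2,SY_0)\cong\textup{Hom}_{H(Tw\mathcal{A})}(Y_2,Y_0)[1]$ and so the shift $[1]$ appearing in \eqref{Aexacttriangle} is indeed compatible with the shift in \eqref{Qexacttriangle} after applying $\Upsilon$. No genuinely new $A_\infty$-calculation is needed; the lemma is essentially the statement that, inside $Tw\mathcal{A}$, the abstract mapping cone construction on modules is represented (in the Yoneda sense) by the twisted mapping cone of Definition \ref{mappingcone}, and this was already encoded in \eqref{coneidentification} together with the identifications of $\langle i\rangle$ and $\langle p\rangle$.
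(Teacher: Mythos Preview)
Your proposal is correct and follows essentially the same route as the paper: both arguments identify $\mathcal{C}one(c_1)$ with $\Upsilon(Cone(c_1))$ via \eqref{coneidentification}, use the equalities $H(\Upsilon)\langle i\rangle=\langle\iota\rangle$ and $H(\Upsilon)\langle p\rangle=\langle\pi\rangle$, and then transport the defining commutative diagram \eqref{Qexacttriangle} back and forth through the cohomologically full and faithful Yoneda embedding. The paper phrases the extraction of $\langle b\rangle$ from $\langle t\rangle$ via Lemma~\ref{quasi-repr} (writing $\langle t\rangle=\langle\lambda_\mathcal{C}(b)\rangle=H(\Upsilon)\langle b\rangle$), whereas you invoke full faithfulness of $H(\Upsilon)$ directly, but this is the same step.
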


\begin{proof}
	As usual, we write $\mathsf{Q}=mod(Tw\mathcal{A})$ and $C=Cone(c_1)$, and identify $\mathcal{C}=\Upsilon(C)$ (via the isomorphism \eqref{coneidentification}). Commutativity of diagram \eqref{Aexacttriangle} means that $\langle p\rangle\circ\langle b\rangle = \langle\mu_{Tw\mathcal{A}}^2(p,b)\rangle = \langle c_3\rangle$ and $\langle b\rangle\circ\langle c_2\rangle =\langle \mu_{Tw\mathcal{A}}(b,c_2)\rangle = \langle i\rangle$ (note both $b$ and $c_2$ have degree 0). 
	
	That the triangle \eqref{Atriangle} is exact in $H^0(Tw\mathcal{A})$ amounts to say there is an isomorphism $\langle t\rangle\in\textup{Hom}_{H(\mathsf{Q})}^0(\mathcal{Y}_2,\mathcal{C})$ making the diagram \eqref{Qexacttriangle} commute in $H(\mathsf{Q})$. That is, by Lemma \ref{quasi-repr} we have a quasi-representative $\langle t\rangle =\langle \lambda_\mathcal{C}(b)\rangle = H(\Upsilon)\langle b\rangle$ for a uniquely defined isomorphism $\langle b\rangle\in H(\mathcal{C}(Y_2))^0=\textup{Hom}_{H(Tw\mathcal{A})}^0(Y_2,C)$ (using again \eqref{coneidentification}), such that
	\begin{align*}
	&H(\Upsilon)\langle p\rangle\circ H(\Upsilon)\langle b\rangle = \langle\pi\rangle\circ\langle t\rangle = H(\Upsilon)\langle c_3\rangle \qquad\qquad\qquad\text{and} \\
	&H(\Upsilon)\langle b\rangle\circ H(\Upsilon)\langle c_2\rangle = \langle t \rangle\circ H(\Upsilon)\langle c_2\rangle = \langle\iota\rangle = H(\Upsilon)\langle i\rangle\;.
	\end{align*}
	Bijectivity on morphisms of $H(\Upsilon)$ concludes the proof.  
\end{proof}

\begin{Cor}\label{TwAtriang}					
	Let $\mathcal{A}$ be a c-unital $A_\infty$-category. Then the $A_\infty$-category $Tw\mathcal{A}$ is triangulated.
\end{Cor}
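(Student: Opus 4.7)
The plan is to verify directly the three defining conditions of a triangulated $A_\infty$-category for $\mathcal{B}\coloneqq Tw\mathcal{A}$, using the mapping cone construction of Definition \ref{mappingcone} together with the criterion of Lemma \ref{exactinTwA}.

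First, $\textup{obj}(Tw\mathcal{A})$ is non-empty: the full embedding $\mathcal{A}\subset Tw\mathcal{A}$ (by taking $I=\{\ast\}$, $V^\ast=\mathbb{K}$ and $\delta_X=0$) transfers any object of $\mathcal{A}$ to $Tw\mathcal{A}$. Since $\mathcal{A}$ was assumed non-empty in our standing conventions, so is $Tw\mathcal{A}$.

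Second, the shift condition is immediate from Definition \ref{TwAops}: the functor $S^\sigma:Tw\mathcal{A}\rightarrow Tw\mathcal{A}$ is defined for every $\sigma\in\mathbb{Z}$, and satisfies $S\circ S^{-1}\cong \mathcal{I}d_{Tw\mathcal{A}}$ (up to canonical natural isomorphism coming from the non-cohomological analogues of the isomorphisms \eqref{Homiso}). Hence for any $Y\in\textup{obj}(Tw\mathcal{A})$ the choice $\tilde{Y}\coloneqq S^{-1}Y$ yields $S\tilde{Y}\cong Y$ in $H^0(Tw\mathcal{A})$.

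Third, and the only mildly substantive step, is completion of an arbitrary degree $0$ cohomology class $\langle c_1\rangle\in\textup{Hom}_{H(Tw\mathcal{A})}^0(Y_0,Y_1)$ to an exact triangle. The plan is to choose a representing cocycle $c_1\in\textup{hom}_{Tw\mathcal{A}}^0(Y_0,Y_1)$, form $Y_2\coloneqq Cone(c_1)$ as in Definition \ref{mappingcone}, and take $c_2\coloneqq i\in\textup{hom}_{Tw\mathcal{A}}^0(Y_1,Cone(c_1))$ together with $c_3\coloneqq p\in\textup{hom}_{Tw\mathcal{A}}^1(Cone(c_1),Y_0)$ as the canonical inclusion and projection associated to the twisted mapping cone. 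Since $Tw\mathcal{A}$ is c-unital by Lemma \ref{TwAcunital}, the c-unit $E_{Cone(c_1)}$ exists and represents $\textup{id}_{Cone(c_1)}\in\textup{Hom}_{H^0(Tw\mathcal{A})}(Cone(c_1),Cone(c_1))$, which is tautologically an isomorphism. Applying Lemma \ref{exactinTwA} with $\langle b\rangle\coloneqq\langle E_{Cone(c_1)}\rangle$ makes the diagram \eqref{Aexacttriangle} commute trivially: the two conditions reduce to $\langle p\rangle\circ\textup{id}=\langle p\rangle$ and $\textup{id}\circ\langle i\rangle=\langle i\rangle$, both automatic.

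The main (and essentially only) obstacle is to make sure that the candidate class $\langle c_1\rangle$ does admit a cocycle lift $c_1$, but this is immediate since the class is by definition of cohomology represented by a cocycle; the rest of the argument reduces to invoking Lemma \ref{exactinTwA}, whose hypotheses on $Cone(c_1)$ are built in. This concludes verification of all three axioms, so $Tw\mathcal{A}$ is triangulated.
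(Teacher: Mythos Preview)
Your proof is correct and follows essentially the same approach as the paper: both verify non-emptiness via the embedding $\mathcal{A}\subset Tw\mathcal{A}$, complete any $\langle c_1\rangle$ to an exact triangle by setting $Y_2\coloneqq Cone(c_1)$, $c_2\coloneqq i$, $c_3\coloneqq p$ and applying Lemma~\ref{exactinTwA} with $\langle b\rangle$ the c-unit of $Cone(c_1)$, and handle the shift condition via the invertibility of $S$ on $Tw\mathcal{A}$ established in Definition~\ref{TwAops}. Your additional remark about lifting $\langle c_1\rangle$ to a cocycle representative is a harmless clarification.
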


\begin{proof}
	We always (implicitly) assumed $\mathcal{A}$ to be a non-trivial $A_\infty$-category, hence, by construction, so is $Tw\mathcal{A}$: $\textup{obj}(Tw\mathcal{A})\neq\emptyset$. Moreover, any degree 0 morphism $\langle c_1\rangle\in\textup{Hom}_{H(Tw\mathcal{A})}^0(Y_0,Y_1)$ is part of the exact triangle \eqref{Atriangle} in $H^0(Tw\mathcal{A})$ where $Y_2\coloneqq Cone(c_1)$, $c_2\coloneqq i$ and $c_3\coloneqq  p$ (in Lemma \ref{exactinTwA}, take $b$ equal the c-unit $e_{Y_2}$ in $Tw\mathcal{A}$). Finally, we observed in Definition \ref{TwAops} that $S:Tw\mathcal{A}\rightarrow Tw\mathcal{A}$ is a quasi-equivalence, so that every twisted complex $Y$ of $Tw\mathcal{A}$ is isomorphic to some shift $S\tilde{Y}\in\textup{obj}(Tw\mathcal{A})$ in $H^0(Tw\mathcal{A})$.
\end{proof}

\begin{Pro}\label{H0TwAtriangulated}				
	Let $\mathcal{A}$ be a c-unital $A_\infty$-category. The zeroth cohomological category $H^0(Tw\mathcal{A})$, if equipped with $T\coloneqq H^0(S):H^0(Tw\mathcal{A})\rightarrow H^0(Tw\mathcal{A})$ as translation functor and exact triangles as the distinguished ones, is a classical triangulated category. 
	
	Moreover, the functor $H^0(Tw\mathcal{F}):H^0(Tw\mathcal{A})\rightarrow H^0(Tw\mathcal{B})$ induced by any c-unital $A_\infty$-functor $\mathcal{F}:\mathcal{A}\rightarrow\mathcal{B}$ is triangulated \textup(cf. Definition \ref{triangfunc}\textup).
\end{Pro}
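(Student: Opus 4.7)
The plan is to verify in turn that $H^0(Tw\mathcal{A})$ is additive, that $T = H^0(S)$ is an additive auto-equivalence, and that the three Verdier axioms (T1)--(T3) hold for the class of exact triangles; the final assertion about $H^0(Tw\mathcal{F})$ will then follow by naturality. Additivity of $H^0(Tw\mathcal{A})$ is immediate from the additive enlargement $\varSigma\mathcal{A}\subset Tw\mathcal{A}$: the empty direct sum gives a zero object, and for any $Y_0,Y_1\in\textup{obj}(Tw\mathcal{A})$ the object $Y_0\oplus Y_1$ of Definition \ref{TwAops} satisfies the universal property of a biproduct in $H^0(Tw\mathcal{A})$, as can be read off from the block-diagonal form of $\delta_{Y_0\oplus Y_1}$ and the matrix form \eqref{addenlmorphspaces} of morphism spaces. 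That $T=H^0(S)$ is an additive equivalence was already recorded in Definition \ref{TwAops}: $S:Tw\mathcal{A}\to Tw\mathcal{A}$ is a quasi-equivalence with obvious quasi-inverse $S^{-1}$ (the shift by $-1$), and it is additive because it acts on each multiplicity space by a degree shift.

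For (T1), parts (1.ii) (existence of a cone extension) and (1.iii) (closure under isomorphism of triangles) are essentially built into our framework: (1.ii) is exactly Corollary \ref{TwAtriang}, using that the triangle $Y_0\xrightarrow{c_1} Y_1\xrightarrow{i} Cone(c_1)\xrightarrow{p} SY_0$ is exact with $b=e_{Y_2}$ in Lemma \ref{exactinTwA}, while (1.iii) follows from Lemma \ref{exactinTwA} by transport of the quasi-representative $\langle b\rangle$ along the three given isomorphisms $u,v,w$. For (1.i), I would build $Cone(e_X)$ explicitly and produce an explicit contracting homotopy showing $\textup{id}_{Cone(e_X)}\in\textup{hom}_{Tw\mathcal{A}}^0(Cone(e_X),Cone(e_X))$ is cohomologous to zero; together with (1.iii) this identifies the exact triangle $(e_X,i,p)$ with $X\xrightarrow{\textup{id}_X}X\to 0\to SX$. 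The first part of (T2), exactness of $(c_2,c_3,-Sc_1)$, is the key rotation statement. Replacing $Y_2$ by $Cone(c_1)$ (legal by (1.iii)), one must show that $Cone(i)$ is canonically isomorphic to $SY_0$ in $H^0(Tw\mathcal{A})$, with the structural maps matching $c_3$ and $-Sc_1$; this is a direct twisted-complex computation, writing out $Cone(i)=S Y_1\oplus SY_0\oplus Y_1$ and exhibiting the split-contraction of the $SY_1\oplus Y_1$ summand via a homotopy built from $e_{Y_1}$. The inverse rotation then follows formally by applying $S^{-1}$.

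The main obstacle, as in the classical case, is axiom (T3). Given the three exact triangles with $h=g\circ f$ and $j''=T(f')\circ g''$, the plan is to assume (by (T1.iii) and what we have just proved) that $Z=Cone(f)$ with $f'=i_f$, $f''=p_f$; $V=Cone(g)$ with $g'=i_g$, $g''=p_g$; $W=Cone(h)$ with $h'=i_h$, $h''=p_h$. One then writes down in $Tw\mathcal{A}$ the explicit morphisms
\[
j\coloneqq \begin{pmatrix} S(e_X) & 0 \\ 0 & g\end{pmatrix}\in\textup{hom}^0(Z,W),\qquad j'\coloneqq \begin{pmatrix} f' & 0 \\ 0 & e_V\end{pmatrix}\in\textup{hom}^0(W,V),
\]
(up to the signs dictated by the block form of $\delta_Z,\delta_W,\delta_V$), verifies that they are cocycles by direct use of the defining cocycle relations for $f,g,h$, and checks commutativity of the braid on cohomology from the same block identities. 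The nontrivial point is to prove $(j,j',j'')$ is exact. I would do this by constructing an explicit quasi-isomorphism $\langle b\rangle:V\to Cone(j)$ in $H^0(Tw\mathcal{A})$ — concretely, the obvious inclusion of $Y\oplus Z[-\cdot]$ into $Cone(j)=SZ\oplus W$ — and applying Lemma \ref{exactinTwA}. Checking that this $b$ really is a quasi-isomorphism reduces, via the filtration by the strictly lower-triangular structure on all the cones involved, to checking acyclicity of a two-step mapping cone built from $f$ and $g$, which collapses because the top piece is $Cone(e_X)$, already known to be contractible from (1.i).

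Finally, for the assertion on $H^0(Tw\mathcal{F})$: by Lemma \ref{TwAcunital}, $Tw\mathcal{F}$ is c-unital, so $H^0(Tw\mathcal{F})$ is an additive functor; and by inspection of Definition \ref{TwAops} the shift $S$ commutes strictly with $Tw\mathcal{F}$, so $H^0(Tw\mathcal{F})\circ T_{Tw\mathcal{A}}=T_{Tw\mathcal{B}}\circ H^0(Tw\mathcal{F})$. Moreover, $Tw\mathcal{F}$ sends the canonical exact triangle $(c_1,i_{c_1},p_{c_1})$ over $c_1$ to the canonical exact triangle over $(Tw\mathcal{F})^1(c_1)$ — this is immediate from the block form of $\delta_{Cone(c_1)}$ and the definition of $Tw\mathcal{F}$ on objects. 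Combined with (T1.iii), this shows $H^0(Tw\mathcal{F})$ carries distinguished triangles to distinguished triangles, completing the proof.
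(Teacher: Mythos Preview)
Your approach is exactly what the paper's sketch calls for: the paper says only that ``twisted mapping cones behave like the classical mapping cones of cochain complexes, which allows us to prove the proposition by mimicking the proof of Theorem \ref{DAtriangulated}'', and you carry out precisely that classical verification of (T1)--(T3) via explicit cone computations. Your treatment of $H^0(Tw\mathcal{F})$ is likewise identical to the paper's, which argues via $Tw\mathcal{F}(Cone(c))\cong Cone((Tw\mathcal{F})^1(c))$ compatibly with $i,p$.

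One caution on bookkeeping in your (T3) paragraph: in the paper's braid the $(h,h',h'')$ triangle has third vertex $V$ (so $V\cong Cone(gf)$) and the $(g,g',g'')$ triangle has third vertex $W$ (so $W\cong Cone(g)$), opposite to your assignment; correspondingly your $j,j'$ should be morphisms $U\to V\to W$, and the block entries should read $S(f)$ and $e_Z$ rather than $f'$ and $e_V$. These are label slips, not mathematical errors --- the underlying block-matrix argument and the filtration reduction to the contractibility of $Cone(e_X)$ are correct --- but they should be straightened out before the computation is written up.
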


\begin{proof}
	(\textit{Sketch}) By Corollary \ref{TwAtriang}, $Tw\mathcal{A}$ is a triangulated $A_\infty$-category. Its twisted mapping cones behave like the classical mapping cones of cochain complexes, which allows us to prove the proposition by mimicking the proof of Theorem \ref{DAtriangulated}, where $\mathcal{A}$ ``plays the role'' of the category $\mathsf{A}$ and $Tw\mathcal{A}$ that of the associated homotopy category $\mathsf{H(A)}$. Consequently, $\mathsf{D}(\mathcal{A})\coloneqq H^0(Tw\mathcal{A})$ is a classical triangulated category.\footnote{This notation for $A_\infty$-categories was introduced in Definition \ref{Ainfderivedcat}, and will become legitimate in this setting when we prove that indeed $Tw\mathcal{A}$ is a triangulated envelope for $\mathcal{A}$ (see Proposition \ref{envelopeexists}).}
	
	Now, one can show that the c-unital $A_\infty$-functor $Tw\mathcal{F}:Tw\mathcal{A}\rightarrow Tw\mathcal{B}$ commutes with $S$ and allows the identification $Tw\mathcal{F}(Cone(c))\cong Cone((Tw\mathcal{F})^1(c))$ between cones which is compatible with their canonical morphisms $i$ and $p$. Therefore, $H^0(Tw\mathcal{F}):H^0(Tw\mathcal{A})\rightarrow H^0(Tw\mathcal{B})$ maps exact triangles to exact triangles, meaning that it is triangulated. 
\end{proof}

\subsection{The bounded derived category}\label{ch3.6}

In this section we apply the results obtained for $Tw\mathcal{A}$ to prove that $H^0(\mathcal{A})$ is triangulated as soon as $\mathcal{A}$ is. Furthermore, we show that $Tw\mathcal{A}$ is a valid choice of triangulated envelope, yielding the definition of bounded derived category.

\begin{Lem}\label{exactinclpreserved}			
	A triangle \eqref{Atriangle} in $H(\mathcal{A})$ is exact if and only if its image under the embedding $H(\mathcal{A})\hookrightarrow H(Tw\mathcal{A})$ is exact in $H(Tw\mathcal{A})$.
\end{Lem}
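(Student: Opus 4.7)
The strategy is to apply the explicit criterion of Lemma \ref{whenexact} in both $\mathcal{A}$ and $Tw\mathcal{A}$, and to show that the two resulting conditions are equivalent. Since $\mathcal{A}$ is realised as a full $A_\infty$-subcategory of $Tw\mathcal{A}$, the morphism spaces $\textup{hom}_\mathcal{A}(Y_i,Y_j)$ and the composition maps $\mu_\mathcal{A}^d$ coincide with their $Tw\mathcal{A}$-counterparts on all inputs drawn from $\mathcal{A}$. The relations \eqref{hhk} involve only such data, so the existence of the triple $(h_1,h_2,k)$ is literally the same requirement in the two settings, and the question reduces to comparing the acyclicity of the chain complex $\mathcal{Y}_2(X)[1]\oplus\mathcal{Y}_0(X)[1]\oplus\mathcal{Y}_1(X)$ with its differential $\partial$ as $X$ ranges over $\textup{obj}(\mathcal{A})$ versus $\textup{obj}(Tw\mathcal{A})$.

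One direction is then immediate: acyclicity for every $X\in\textup{obj}(Tw\mathcal{A})$ forces in particular acyclicity for every $X\in\textup{obj}(\mathcal{A})\subset\textup{obj}(Tw\mathcal{A})$, yielding ``exact in $H(Tw\mathcal{A})\Rightarrow$ exact in $H(\mathcal{A})$''. For the converse, I would take an arbitrary twisted complex $X=(X,\delta_X)\in\textup{obj}(Tw\mathcal{A})$ with $X=\bigoplus_{i\in I}V^i\otimes X^i$ and $\delta_X$ strictly lower-triangular in an ordering of $I$, and introduce the bounded decreasing filtration $F^k X\coloneqq\bigoplus_{i\geq k}V^i\otimes X^i$. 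Lower-triangularity guarantees that each $F^k X$ carries an induced twisted-complex structure and that the inclusion $F^k X\hookrightarrow X$ is a cocycle in $Tw\mathcal{A}$, so the filtration descends to a bounded filtration of $\textup{hom}_{Tw\mathcal{A}}(X,Y_j)$ and hence of the complex $\mathcal{Y}_2(X)[1]\oplus\mathcal{Y}_0(X)[1]\oplus\mathcal{Y}_1(X)$. Expanding each $\mu^d_{Tw\mathcal{A}}$ appearing in $\partial$ via \eqref{twistedcompo}, every term with at least one insertion of $\delta_X$ strictly raises the filtration degree, so on the associated graded the differential collapses to $\bigoplus_{i\in I}\textup{id}_{(V^i)^*}\otimes\partial^{X^i}$, where $\partial^{X^i}$ is exactly the $\mathcal{A}$-differential on $\mathcal{Y}_2(X^i)[1]\oplus\mathcal{Y}_0(X^i)[1]\oplus\mathcal{Y}_1(X^i)$.

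By hypothesis and Lemma \ref{whenexact} applied in $\mathcal{A}$, each $\partial^{X^i}$ is acyclic, so the $E_1$-page of the associated spectral sequence vanishes; since the filtration is bounded the spectral sequence converges, giving acyclicity of the total complex. Feeding this back through Lemma \ref{whenexact} applied in $Tw\mathcal{A}$ yields exactness in $H(Tw\mathcal{A})$. The principal obstacle I anticipate is the precise sign-and-bookkeeping verification that $F^\bullet$ is preserved by the \emph{full} differential $\partial$ (and not merely by $\mu_{Tw\mathcal{A}}^1$), together with the claim that all $\delta_X$-insertions in \eqref{twistedcompo} genuinely raise the filtration degree; both points are structural consequences of the strict lower-triangularity of $\delta_X$ and of $\delta_{Y_j}=0$ for $Y_j\in\textup{obj}(\mathcal{A})$, but they require a careful unfolding of the composition formula.
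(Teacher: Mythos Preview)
Your proposal is correct and follows essentially the same route as the paper: both reduce the question to Lemma~\ref{whenexact}, observe that the relations~\eqref{hhk} are identical in $\mathcal{A}$ and $Tw\mathcal{A}$ by fullness of the embedding, and then handle the acyclicity of $K(X)$ for $X\in\textup{obj}(Tw\mathcal{A})$ via the filtration coming from the lower-triangular structure of $\delta_X$, whose associated graded splits into shifted copies of the acyclic complexes $K(X^i)$ for $X^i\in\textup{obj}(\mathcal{A})$. Your write-up is in fact more explicit than the paper's (which dispatches the filtration step in a single sentence), and the bookkeeping obstacle you flag is real but routine, exactly as you diagnose.
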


\begin{proof}
	We specified in Section \ref{ch3.5} how to construct a cohomologically full and faithful embedding $\imath:\mathcal{A}\hookrightarrow Tw\mathcal{A}$: $\imath(X)\coloneqq(X,0)\equiv\big((\{\ast\},\{X\},\{\mathbb{K}\}),0\big)$ on objects, and it preserves morphisms. By Lemma \ref{whenexact}, if \eqref{Atriangle} is exact in $H(\mathcal{A})$, we can find morphisms $h_1$, $h_2$ and $k$ in $H(\mathcal{A})$ fulfilling the equations \eqref{hhk}. Due to bijectivity of $H(\imath)$, there exist counterparts in $H(Tw\mathcal{A})$ with the same property, and vice versa, so that it really makes no difference whether we consider \eqref{Atriangle} as sitting in $H(\mathcal{A})$ or in $H(Tw\mathcal{A})$. 
	
	We still need to prove acyclicity of $K(X)\coloneqq \mathcal{Y}_2(X)[1]\oplus\mathcal{Y}_0(X)[1]\oplus\mathcal{Y}_1(X)$ for all objects $X$. One direction is clear. For the converse, we can filtrate any $X\in\textup{obj}(Tw\mathcal{A})$ so that in the corresponding ascending filtration of $K(X)$ all subcomplexes are acyclic, being finite direct sums of shifted, acyclic copies $K(\tilde{X})$ with $\tilde{X}\in\textup{obj}(\mathcal{A})$.
\end{proof}  

\begin{Cor}\label{HFexact}							
	Let $\mathcal{F}:\mathcal{A}\rightarrow\mathcal{B}$ be any $A_\infty$-functor. Then $H(\mathcal{F})$ maps exact triangles in $H(\mathcal{A})$ to exact triangles in $H(\mathcal{B})$.
\end{Cor}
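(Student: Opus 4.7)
The plan is to lift the problem to the $A_\infty$-category of twisted complexes, where we already know everything we need. Specifically, I would use the commutative square
\[
\begin{tikzcd}
\mathcal{A}\arrow[r, hook, "\imath_\mathcal{A}"]\arrow[d, "\mathcal{F}"'] & Tw\mathcal{A}\arrow[d, "Tw\mathcal{F}"] \\
\mathcal{B}\arrow[r, hook, "\imath_\mathcal{B}"'] & Tw\mathcal{B}
\end{tikzcd}
\]
of $A_\infty$-functors. Commutativity here is essentially by construction: both composites send $X\in\textup{obj}(\mathcal{A})$ to $(\mathcal{F}(X),0)\in\textup{obj}(Tw\mathcal{B})$ and both act on morphism spaces via $\mathcal{F}$ itself (since the embeddings $\imath_\mathcal{A},\imath_\mathcal{B}$ are the identity on morphism components and $Tw\mathcal{F}$ reduces to $\mathcal{F}$ on the subcategory of twisted complexes with trivial differential). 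Passing to cohomology, we get $H(\imath_\mathcal{B})\circ H(\mathcal{F})=H(Tw\mathcal{F})\circ H(\imath_\mathcal{A})$.

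Now, given an exact triangle $(\langle c_1\rangle,\langle c_2\rangle,\langle c_3\rangle)$ in $H(\mathcal{A})$, I would first apply Lemma \ref{exactinclpreserved} to conclude that its image under $H(\imath_\mathcal{A})$ is an exact triangle in $H(Tw\mathcal{A})$. By Proposition \ref{H0TwAtriangulated}, $H^0(Tw\mathcal{F})$ is a triangulated functor between classical triangulated categories, hence it carries exact triangles in $H^0(Tw\mathcal{A})$ to exact triangles in $H^0(Tw\mathcal{B})$ (the degree shifts appearing in the triangles are absorbed by the identification $\langle c_3\rangle\in\textup{Hom}_{H(\mathcal{A})}(Y_2,SY_0)$, which is compatible with $Tw\mathcal{F}$ since $Tw\mathcal{F}$ commutes with $S$ as noted in the proof of Proposition \ref{H0TwAtriangulated}). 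Therefore $H(Tw\mathcal{F})\circ H(\imath_\mathcal{A})$ maps our original triangle to an exact triangle in $H(Tw\mathcal{B})$.

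By the commutative square, this image triangle equals $H(\imath_\mathcal{B})\circ H(\mathcal{F})$ applied to $(\langle c_1\rangle,\langle c_2\rangle,\langle c_3\rangle)$, i.e.\ it is precisely the image under $H(\imath_\mathcal{B})$ of the triangle $(H(\mathcal{F})\langle c_1\rangle,H(\mathcal{F})\langle c_2\rangle,H(\mathcal{F})\langle c_3\rangle)$ in $H(\mathcal{B})$. Since this image is exact in $H(Tw\mathcal{B})$, a second application of Lemma \ref{exactinclpreserved} (in the ``if'' direction) concludes that the $H(\mathcal{F})$-image triangle was already exact in $H(\mathcal{B})$, as required.

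The only potentially subtle point is checking that the commutative square really does pass to cohomology in a way that matches exact triangles --- that is, verifying that $Tw\mathcal{F}$ on objects of the form $(Y,0)$ agrees with $\mathcal{F}$, and that the identification of exact triangles across the embedding $\imath$ is preserved by $Tw\mathcal{F}$ (including the shift component $\langle c_3\rangle$). This is a straightforward unwinding of the definitions of $Tw\mathcal{F}$ and $\varSigma\mathcal{F}$ from Section \ref{ch3.5}, but it is the only place where any real bookkeeping is required; everything else is a direct citation of the lemmas already established.
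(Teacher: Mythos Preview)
Your proof is correct and follows essentially the same approach as the paper: lift the triangle to $H(Tw\mathcal{A})$ via Lemma~\ref{exactinclpreserved}, apply the triangulated functor $H^0(Tw\mathcal{F})$ from Proposition~\ref{H0TwAtriangulated}, then descend to $H(\mathcal{B})$ by a second application of Lemma~\ref{exactinclpreserved}. The paper phrases the commutativity step more briefly as ``$H(Tw\mathcal{F})|_{H(\mathcal{A})}$ coincides with $H(\mathcal{F})$'' rather than drawing the square explicitly, but the content is identical.
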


\begin{proof}
	Consider our usual exact triangle \eqref{Atriangle} in $H(\mathcal{A})$, then exact also in $H(Tw\mathcal{A})\supset H(\mathcal{A})$, by Lemma \ref{exactinclpreserved}. Its image under $H(Tw\mathcal{F})$ (coinciding with that of $H^0(Tw\mathcal{F})$ after identifying $\langle c_3\rangle$ as a degree 0 morphism) is still exact in $H(Tw\mathcal{B})$, by Proposition \ref{H0TwAtriangulated}. Clearly, $H(Tw\mathcal{F})|_{H(\mathcal{A})}: H(\mathcal{A})\rightarrow H(\mathcal{B})$ coincides with $H(\mathcal{F})$, and thus maps \eqref{Atriangle} to an exact triangle in $H(\mathcal{B})\subset H(Tw\mathcal{B})$. Applying Lemma \ref{exactinclpreserved} again, the latter is equivalent to exactness in the category $H(\mathcal{B})$ itself.   
\end{proof}

We are finally ready to state and prove a more efficient criterion for exactness of triangles in $H(\mathcal{A})$. First, we give an apparently arbitrary definition.

\begin{Def}												
	We define $\mathcal{D}$ to be the strictly-unital $A_\infty$-category  with $\textup{obj}(\mathcal{D})$ $\coloneqq\{Z_0,Z_1,Z_2\}$ and 
	\vspace*{-0.1cm}
	\begin{itemize}[leftmargin=0.5cm]
		\renewcommand{\labelitemi}{\textendash}
		\item only non-trivial morphism spaces:
		\begin{align*}
		&\textup{hom}_\mathcal{D}(Z_k,Z_k) \coloneqq \mathbb{K}\cdot e_{Z_k}\equiv\{k\cdot e_{Z_k}\mid k\in\mathbb{K}\},\qquad &\textup{hom}_\mathcal{D}(Z_0,Z_1)\coloneqq\mathbb{K}\cdot x_1, \\
		&\textup{hom}_\mathcal{D}(Z_1,Z_2)\coloneqq\mathbb{K}\cdot x_2,
		&\textup{hom}_\mathcal{D}(Z_2,Z_0)\coloneqq\mathbb{K}\cdot x_3,
		\end{align*}
		for some fixed (forcedly non-invertible) morphisms $x_i$ with $|x_1|=|x_2|=0$ and $|x_3|=1$;
		\item only non-trivial compositions $\mu_\mathcal{D}^3(x_3,x_2,x_1)\coloneqq e_{Z_0}$, $\mu_\mathcal{D}^3(x_1,x_3,x_2)\coloneqq e_{Z_1}$ and $\mu_\mathcal{D}^3(x_2,x_1,x_3)\coloneqq e_{Z_2}$. 
	\end{itemize}
\end{Def}

\begin{Pro}\label{exactcriterium}				
	Let $\mathcal{A}$ be a c-unital $A_\infty$-category, $\mathcal{D}$ as above. A triangle \eqref{Atriangle} in $H(\mathcal{A})$ is exact if and only if there exists an $A_\infty$-functor $\mathcal{F}:\mathcal{D}\rightarrow\mathcal{A}$ such that $\mathcal{F}(Z_k) = Y_k$ \textup(for $k=0,1,2$\textup) and $H(\mathcal{F})\langle x_k\rangle=\langle c_k\rangle$ \textup(for $k=1,2,3$\textup).
\end{Pro}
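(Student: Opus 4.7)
The plan is to handle the two directions separately: ``$\Leftarrow$'' reduces to Corollary \ref{HFexact} after verifying a universal model, while ``$\Rightarrow$'' requires an inductive construction of $\mathcal{F}$.

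For ``$\Leftarrow$'', the observation to make is that the triangle $Z_0 \xrightarrow{\langle x_1\rangle} Z_1 \xrightarrow{\langle x_2\rangle} Z_2 \xrightarrow{\langle x_3\rangle[1]} Z_0$ is already exact in $H(\mathcal{D})$. I would verify the criterion of Lemma \ref{whenexact} directly: since $\textup{hom}_\mathcal{D}(Z_1,Z_0) = 0 = \textup{hom}_\mathcal{D}(Z_2,Z_1)$, the candidates $h_1 = h_2 = 0$ satisfy the first two relations of \eqref{hhk} (the right-hand sides being zero because $\mu_\mathcal{D}^2$ vanishes on every pair of generators), while the third collapses to $0 = \mu_\mathcal{D}^3(x_1,x_3,x_2) - e_{Z_1} = 0$ by definition of $\mathcal{D}$, so $k = 0$ is permissible. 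Acyclicity of the associated three-term complex at each of $Z_0, Z_1, Z_2$ is a two-line verification: the complex has one-dimensional terms concentrated in two adjacent degrees, and the lone non-trivial boundary is either a $\mu_\mathcal{D}^2$-multiplication by one of the $x_i$'s or the identity produced by $\mu_\mathcal{D}^3(x_1,x_3,\cdot)$, both of which are isomorphisms on the generators. Corollary \ref{HFexact} then sends this exact triangle through $H(\mathcal{F})$ to an exact triangle in $H(\mathcal{A})$ which, by the hypotheses $\mathcal{F}(Z_k) = Y_k$ and $H(\mathcal{F})\langle x_k\rangle = \langle c_k\rangle$, coincides with \eqref{Atriangle}.

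For ``$\Rightarrow$'', I would construct $\mathcal{F}$ recursively by solving \eqref{Ainffunceqs} on composable chains of generators (extending strictly unitally on insertions of $e_{Z_k}$, as permitted by Lemma \ref{cunitstrictunit}). Start by setting $\mathcal{F}(Z_k) := Y_k$ and $\mathcal{F}^1(x_k) := c_k$, so the $d = 1$ equation holds since each $c_k$ is a cocycle. Using the data $h_1, h_2, k$ supplied by Lemma \ref{whenexact}, set $\mathcal{F}^2(x_3,x_2) := -h_1$ and $\mathcal{F}^2(x_1,x_3) := h_2$ to solve the $d = 2$ equations on these inputs; for $\mathcal{F}^2(x_2,x_1)$ one needs a primitive of $-\mu_\mathcal{A}^2(c_2,c_1)$, which exists because (via Lemma \ref{exactinclpreserved} and Proposition \ref{H0TwAtriangulated}) the exact triangle induces a long exact sequence in $\textup{Hom}_{H(\mathcal{A})}(Y_0,-)$ forcing $\langle c_2\rangle\circ\langle c_1\rangle = 0$. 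Set $\mathcal{F}^3(x_1,x_3,x_2)$ equal to a sign-corrected version of $k$ to solve the $d = 3$ equation on the unit-producing cycle. Unit-free chains of higher length are forced to be cyclic rotations of iterations of the word $(x_3,x_2,x_1)$, and the remaining functor equations for these are handled in the same recursive fashion.

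The main obstacle will be the inductive step for $d \geq 4$: on each cyclic chain, the $A_\infty$-functor equation expresses $\mu_\mathcal{A}^1\mathcal{F}^d(\ldots)$ as a cocycle assembled from strictly lower-order data, and one must exhibit a cochain primitive. I expect this to follow from the acyclicity asserted in Lemma \ref{whenexact}: the complex with boundary $\partial$ serves as a universal target into which all obstruction cocycles feed after filtering by chain length, and its acyclicity at every object of $\mathcal{A}$ uniformly supplies the required primitives. The technical cost lies in organizing this bookkeeping, classifying the cyclic chains, and matching Seidel's sign conventions — laborious but standard — without needing any new conceptual input beyond Lemma \ref{whenexact}.
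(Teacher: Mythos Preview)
Your ``$\Leftarrow$'' direction matches the paper's argument exactly: verify Lemma~\ref{whenexact} in $\mathcal{D}$ with $h_1=h_2=k=0$, then push forward via Corollary~\ref{HFexact}.

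Your ``$\Rightarrow$'' direction takes a genuinely different route, and its inductive step is under-justified. You propose to build $\mathcal{F}$ by directly solving the functor equations~\eqref{Ainffunceqs} order by order, appealing to the acyclic complex of Lemma~\ref{whenexact} to kill all higher obstructions. But that complex tests morphisms \emph{into} the triple $(Y_2,Y_0,Y_1)$ from a probe object $X$; the obstruction to extending $\mathcal{F}$ at stage $d$ on a given cyclic chain lives instead in a single $\textup{hom}_\mathcal{A}(Y_i,Y_j)$ (in increasingly negative degree as $d$ grows) and must be shown to be a $\mu_\mathcal{A}^1$-coboundary there. You give no mechanism linking the two. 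The difficulty is visible already at $d=3$: there are three cyclic chains producing units, and Lemma~\ref{whenexact} supplies the primitive $k$ only for $(x_1,x_3,x_2)$; the analogous primitives for $(x_3,x_2,x_1)$ and $(x_2,x_1,x_3)$ must be produced separately and made compatible with your earlier choice of $\mathcal{F}^2(x_2,x_1)$, which itself came from an independent long-exact-sequence argument rather than from the data of Lemma~\ref{whenexact}.

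The paper avoids this recursion entirely. After reducing to strictly unital $\mathcal{A}$, it writes down by hand a small $A_\infty$-subcategory $\tilde{\mathcal{D}}\subset Tw\mathcal{A}$ on the objects $Y_0,Y_1,Cone(c_1)$, with finite-dimensional hom-spaces built only from units, $c_1$, and the canonical $i,p$ of the cone. An explicit degree-$(-1)$ endomorphism $T^1$ on these hom-spaces sets up the Homological Perturbation Lemma, whose output minimal model one checks \emph{is} $\mathcal{D}$; this yields a quasi-isomorphism $\mathcal{G}:\mathcal{D}\to\tilde{\mathcal{D}}\subset Tw\mathcal{A}$ for free, with all higher $\mathcal{G}^d$ supplied by the HPL formulae rather than by solving obstructions. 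Exactness of the original triangle gives $Y_2\cong Cone(c_1)$ in $H^0(Tw\mathcal{A})$, so the inclusion of the full subcategory on $\{Y_0,Y_1,Y_2\}$ into that on $\{Y_0,Y_1,Y_2,Cone(c_1)\}$ is a quasi-equivalence; Theorem~\ref{aboutquasiequiv} furnishes a quasi-inverse $\mathcal{K}$ landing in $\mathcal{A}$, and $\mathcal{F}:=\mathcal{K}\circ\mathcal{G}$ is the desired functor. The trade-off: the paper's route uses more machinery (HPL, $Tw\mathcal{A}$, quasi-inverses of quasi-equivalences) but never has to confront an individual obstruction cocycle; your route is more elementary in spirit but would require a genuine obstruction-theoretic argument that you have not supplied.
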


\begin{proof}
	``$\Leftarrow$'' The objects $Z_0,Z_1,Z_2$ form a triangle $(\langle x_1\rangle,\langle x_2\rangle,\langle x_3\rangle)$ in $H(\mathcal{D})$ which is exact, since the choice $h_1=h_2=k=0$  satisfies all the requisites of Lemma \ref{whenexact} (indeed: 2-compositions vanish, $\mu_\mathcal{D}^3(x_1,x_3,x_2) = e_{Z_1}$ and the complex $\textup{hom}_\mathcal{D}(Z,Z_2)[1]\oplus\textup{hom}_\mathcal{D}(Z,Z_0)[1]\oplus\textup{hom}_\mathcal{D}(Z,Z_1)=\{0\}$ is acyclic for every $Z\in\textup{obj}(\mathcal{D})$). By Corollary \ref{HFexact}, the objects $Y_k\coloneqq\mathcal{F}(Z_k)\in\textup{obj}(\mathcal{A})$ and morphisms $\langle c_k\rangle\coloneqq H(\mathcal{F})\langle x_k\rangle$ form an exact triangle $(\langle c_1\rangle,\langle c_2\rangle,\langle c_3\rangle)$ in $H(\mathcal{A})$.
	
	``$\Rightarrow$'' Consider first the case where $\mathcal{A}$ is strictly unital (hence $Tw\mathcal{A}$ is, with strict units as specified in Section \ref{ch3.5}). Fixed some cocycle $c\in\textup{hom}_\mathcal{A}^0(Y_0,Y_1)$, whose cone has canonical morphisms $i$ and $p$, define the $A_\infty$-subcategory $\tilde{\mathcal{D}}\subset Tw\mathcal{A}$ to have objects $\tilde{Z}_0\coloneqq Y_0$, $\tilde{Z}_1\coloneqq Y_1$, $\tilde{Z}_2\coloneqq Cone(c)=SY_0\oplus Y_1$ and morphism spaces:
	\begin{alignat*}{2}
	&\textup{hom}_{\tilde{\mathcal{D}}}(\tilde{Z}_0,\tilde{Z}_0)\coloneqq\mathbb{K}\cdot e_{Y_0}\,, &\;\; &\textup{hom}_{\tilde{\mathcal{D}}}(\tilde{Z}_1,\tilde{Z}_1)\coloneqq\mathbb{K}\cdot e_{Y_1}\,, \\ 
	&\textup{hom}_{\tilde{\mathcal{D}}}(\tilde{Z}_2,\tilde{Z}_2)\coloneqq
	\bigg\{\!\!\!\begin{pmatrix}\lambda_{00}\!\cdot\!S(e_{Y_0}) & 0 \\ \lambda_{10}\!\cdot\!S(c) & \lambda_{11}\!\cdot\! e_{Y_1}\end{pmatrix}\!\!\!\bigg\}, & 
	& \hfill	\\
	&\textup{hom}_{\tilde{\mathcal{D}}}(\tilde{Z}_1,\tilde{Z}_0)\coloneqq0\,, &
	&\textup{hom}_{\tilde{\mathcal{D}}}(\tilde{Z}_0,\tilde{Z}_1)\coloneqq\mathbb{K}\cdot c\,, \\
	&\textup{hom}_{\tilde{\mathcal{D}}}(\tilde{Z}_2,\tilde{Z}_1)\coloneqq\{(\nu_{10}\cdot S(c), \nu_{11}\cdot e_{Y_1})\}\,, &
	&\textup{hom}_{\tilde{\mathcal{D}}}(\tilde{Z}_1,\tilde{Z}_2)\coloneqq\mathbb{K}\cdot i=\mathbb{K}\!\cdot\!\begin{pmatrix} 0 \\ e_{Y_1}\end{pmatrix}\!\!, \\
	&\textup{hom}_{\tilde{\mathcal{D}}}(\tilde{Z}_2,\tilde{Z}_0)\coloneqq\mathbb{K}\cdot p=\mathbb{K}\cdot(S(e_{Y_0}),0)\,, &
	&\textup{hom}_{\tilde{\mathcal{D}}}(\tilde{Z}_0,\tilde{Z}_2)\coloneqq\bigg\{\!\!\!\begin{pmatrix} \sigma_{00}\cdot e_{Y_0} \\ \sigma_{10}\cdot c\end{pmatrix}\!\!\!\bigg\}\,,
	\end{alignat*}
	where all the $\lambda_{ij}$'s, $\nu_{ij}$'s and $\sigma_{ij}$'s range over $\mathbb{K}$. 
	
	Our goal is to construct $\mathcal{D}$ using the Homological Perturbation Lemma like we did in the proof of Lemma \ref{homotopyinverse}. Define $T^1\in\text{End}(\textup{hom}_{\tilde{\mathcal{D}}}(\tilde{Z}_i,\tilde{Z}_j))$ such that $T^1(\lambda_{00},\lambda_{10},\lambda_{11})=(0,0,-\lambda_{10})$ on $\textup{hom}_{\tilde{\mathcal{D}}}(\tilde{Z}_2,\tilde{Z}_2)$, $T^1(\nu_{10},\nu_{11})=(0,-\nu_{10})$ on $\textup{hom}_{\tilde{\mathcal{D}}}(\tilde{Z}_2,\tilde{Z}_1)$ and $T^1(\sigma_{00},\sigma_{10})=(-\sigma_{10},0)$ on $\textup{hom}_{\tilde{\mathcal{D}}}(\tilde{Z}_0,\tilde{Z}_2)$, while $T^1=0$ otherwise. By construction, $\mathcal{E}^1\coloneqq \textup{id}+\mu_{\tilde{\mathcal{D}}}^1\circ T^1+T^1\circ\mu_{\tilde{\mathcal{D}}}^1$ is a strictly unital projection of $\textup{hom}_{\tilde{\mathcal{D}}}(\tilde{Z}_i,\tilde{Z}_j)$ onto a subcomplex $\textup{hom}_{\mathcal{B}}(\tilde{Z}_i,\tilde{Z}_j)$ with vanishing differential. Taking $\mathcal{G}^1$ to be the associated inclusion, diagram \eqref{perturbdiagram} reads:
	\vspace*{-0.3cm}
	\[
	\begin{tikzcd}[cells={nodes={}}]
	{\textup{hom}_\mathcal{B}(\tilde{Z}_i,\tilde{Z}_j)}\arrow[r, yshift=1ex, "{\mathcal{G}^1}"]
	& {\textup{hom}_{\tilde{\mathcal{D}}}(\tilde{Z}_i,\tilde{Z}_j)}\arrow[l, yshift=-1ex, "{\mathcal{E}^1}"]\arrow[loop right, distance=3em, start anchor={[yshift=1ex]east}, end anchor={[yshift=-1ex]east}]{}{T^1}
	\end{tikzcd}
	\vspace*{-0.3cm}
	\]
	Proposition \ref{perturbation} constructs an $A_\infty$-category $\mathcal{B}$ quasi-isomorphic to $\tilde{\mathcal{D}}$. But a closer inspection reveals that $\mathcal{B}=\mathcal{D}$. Indeed, defining $Z_k\coloneqq \mathcal{E}(\tilde{Z}_k)$, $x_1\coloneqq c$, $x_2\coloneqq i$ and $x_3\coloneqq p$, we obtain:
	\begin{align*}
	&\textup{hom}_\mathcal{B}(Z_k,Z_k)=\mathcal{E}^1(\textup{hom}_{\tilde{\mathcal{D}}}(\tilde{Z}_k,\tilde{Z}_k)) = \mathbb{K}\cdot e_{\mathcal{E}(Y_k)} = \mathbb{K}\cdot e_{Z_k}\;(\text{for }k=0,1)\,,\\
	&\textup{hom}_\mathcal{B}(Z_0,Z_1)=\mathcal{E}^1(\textup{hom}_{\tilde{\mathcal{D}}}(\tilde{Z}_0,\tilde{Z}_1)) =\mathbb{K}\cdot(c+\mu_{\tilde{\mathcal{D}}}^1(0)+T^1(0)) = \mathbb{K}\cdot x_1\,, \\
	&\textup{hom}_\mathcal{B}(Z_1,Z_2)=\mathcal{E}^1(\textup{hom}_{\tilde{\mathcal{D}}}(\tilde{Z}_1,\tilde{Z}_2)) = \mathbb{K}\cdot(i+\mu_{\tilde{\mathcal{D}}}^1(0)+\cancel{T^1(\mu_{\tilde{\mathcal{D}}}^1(i)})) = \mathbb{K}\cdot x_2\,,\\
	&\textup{hom}_\mathcal{B}(Z_2,Z_0)=\mathcal{E}^1(\textup{hom}_{\tilde{\mathcal{D}}}(\tilde{Z}_2,\tilde{Z}_0)) = \mathbb{K}\cdot(p+\mu_{\tilde{\mathcal{D}}}^1(0)+\cancel{T^1(\mu_{\tilde{\mathcal{D}}}^1(p)})) = \mathbb{K}\cdot x_3\,,
	\end{align*}
	while the other morphism spaces can be shown to map to $\{0\}$, except for $\textup{hom}_\mathcal{B}(Z_2,Z_2)=\mathcal{E}^1(\textup{hom}_{\tilde{\mathcal{D}}}(\tilde{Z}_k,\tilde{Z}_k)) = \mathbb{K}\cdot\ e_{Z_2}$ (the computations are just messier because this time $T^1$ acts non-trivially). Therefore, we have a quasi-isomorphism $\mathcal{G}:\mathcal{D}\rightarrow\tilde{\mathcal{D}}\subset Tw\mathcal{A}$ fulfilling $\mathcal{G}(Z_k)=\tilde{Z}_k$, $H(\mathcal{G})\langle x_1\rangle = \langle c\rangle$, $H(\mathcal{G})\langle x_2\rangle = \langle i\rangle$ and $H(\mathcal{G})\langle x_3\rangle = \langle p\rangle$.
	
	Now, take $c=c_1$ as in the exact triangle \eqref{Atriangle} in $H(\mathcal{A})$ and define the full $A_\infty$-subcategories $\mathcal{C}\subset\mathcal{A}$ with $\textup{obj}(\mathcal{C})\coloneqq\{Y_0,Y_1,Y_2\}$, respectively $\tilde{\mathcal{C}}\subset Tw\mathcal{A}$ with $\textup{obj}(\tilde{\mathcal{C}})\coloneqq\{\imath(Y_0),\imath(Y_1),\imath(Y_2),Cone(c_1)\}$, where $\imath:\mathcal{A}\hookrightarrow Tw\mathcal{A}$ is the embedding from Lemma \ref{exactinclpreserved}. The latter says that \eqref{Atriangle} has exact image in $H(Tw\mathcal{A})$, hence $Y_2\cong Cone(c_1)$ (by Lemma \ref{exactinTwA}), making the trivial embedding $\imath|_\mathcal{C}:\mathcal{C}\rightarrow\tilde{\mathcal{C}}$ a quasi-equivalence. Theorem \ref{aboutquasiequiv} yields a quasi-inverse $\mathcal{K}:\tilde{\mathcal{C}}\rightarrow\mathcal{C}$ such that $\mathcal{K}(Y_k)=Y_k$ for $k=0,1$, $\mathcal{K}(Cone(c_1))\cong\mathcal{K}(Y_2)=Y_2$, $H(\mathcal{K})\langle c_1\rangle = \langle c_1\rangle$, $H(\mathcal{K})\langle i\rangle = \langle c_2\rangle$ and $H(\mathcal{K})\langle p\rangle = \langle c_3\rangle$. Together with the construction of previous paragraph, it is clear that the $A_\infty$-functor $\mathcal{F}\coloneqq \mathcal{K}\circ\mathcal{G}:\mathcal{D}\rightarrow\mathcal{C}\subset\mathcal{A}$ has the desired properties.
	
	In order to generalize to merely c-unital $A_\infty$-categories $\mathcal{A}$, we take our usual formal diffeomorphism $\Phi:\mathcal{A}\rightarrow\tilde{\mathcal{A}}$ from Lemma \ref{cunitstrictunit}, which preserves exactness of \eqref{Atriangle} by Corollary \ref{HFexact}. Repeating the construction above for the strictly unital $\tilde{\mathcal{A}}$ in lieu of $\mathcal{A}$, we get an $\tilde{\mathcal{F}}:\mathcal{D}\rightarrow\tilde{\mathcal{A}}$ as in the statement. Finally, the $A_\infty$-functor $\mathcal{F}\coloneqq \Phi^{-1}\circ\tilde{\mathcal{F}}:\mathcal{D}\rightarrow\mathcal{A}$ fulfills all requisites.   
\end{proof}

\begin{Cor}\label{preimagetriangle}				
	Let $\mathcal{F}:\mathcal{A}\rightarrow\mathcal{B}$ be a cohomologically full and faithful $A_\infty$-functor, and suppose there is an exact triangle in $H(\mathcal{B})$ formed by objects in the image of $\mathcal{F}$. Then the preimage of this triangle is an exact triangle in $H(\mathcal{A})$.
\end{Cor}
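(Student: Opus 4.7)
The plan is to reduce to Proposition~\ref{exactcriterium}. Denote the morphisms of the given exact triangle in $H(\mathcal{B})$ by $\langle d_k\rangle$, for $k=1,2,3$, with $\langle d_k\rangle\in\textup{Hom}_{H(\mathcal{B})}(\mathcal{F}(Y_{k-1}),\mathcal{F}(Y_k))$ (indices modulo~3, degree of $d_3$ equal to~$1$). Since $\mathcal{F}$ is cohomologically full and faithful, there are unique classes $\langle c_k\rangle\in\textup{Hom}_{H(\mathcal{A})}(Y_{k-1},Y_k)$ such that $H(\mathcal{F})\langle c_k\rangle=\langle d_k\rangle$; these constitute the candidate preimage triangle whose exactness we must establish. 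Applying Proposition~\ref{exactcriterium} to the image triangle produces an $A_\infty$-functor $\tilde{\mathcal{F}}:\mathcal{D}\rightarrow\mathcal{B}$ with $\tilde{\mathcal{F}}(Z_k)=\mathcal{F}(Y_k)$ and $H(\tilde{\mathcal{F}})\langle x_k\rangle=\langle d_k\rangle$.

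The next step is to lift $\tilde{\mathcal{F}}$ along $\mathcal{F}$. Let $\mathcal{C}\subset\mathcal{A}$ and $\tilde{\mathcal{C}}\subset\mathcal{B}$ denote the full $A_\infty$-subcategories on $\{Y_0,Y_1,Y_2\}$ respectively $\{\mathcal{F}(Y_0),\mathcal{F}(Y_1),\mathcal{F}(Y_2)\}$. Then $\mathcal{F}|_\mathcal{C}:\mathcal{C}\rightarrow\tilde{\mathcal{C}}$ inherits cohomological fullness and faithfulness from~$\mathcal{F}$ and is essentially surjective by construction, hence a quasi-equivalence; Theorem~\ref{aboutquasiequiv} then provides a quasi-equivalence $\mathcal{K}:\tilde{\mathcal{C}}\rightarrow\mathcal{C}$ together with an isomorphism $\mathcal{K}\circ\mathcal{F}|_\mathcal{C}\cong\mathcal{I}d_\mathcal{C}$ in $H^0(fun(\mathcal{C},\mathcal{C}))$. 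Since $\tilde{\mathcal{F}}$ has image in~$\tilde{\mathcal{C}}$, the composition $\mathcal{G}\coloneqq\mathcal{K}\circ\tilde{\mathcal{F}}:\mathcal{D}\rightarrow\mathcal{A}$ is well-defined, sending $Z_k$ to $\mathcal{K}(\mathcal{F}(Y_k))$ and $\langle x_k\rangle$ to $H(\mathcal{K})H(\mathcal{F})\langle c_k\rangle$.

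By Corollary~\ref{HFexact}, the image under $H(\mathcal{G})$ of the canonical exact triangle $(\langle x_1\rangle,\langle x_2\rangle,\langle x_3\rangle)$ in $H(\mathcal{D})$ --- whose exactness was verified inside the proof of Proposition~\ref{exactcriterium} --- is exact in $H(\mathcal{A})$. It remains to transfer this exactness from the triple $(\mathcal{G}(Z_k),H(\mathcal{G})\langle x_k\rangle)$ to $(Y_k,\langle c_k\rangle)$. The isomorphism $\mathcal{K}\circ\mathcal{F}|_\mathcal{C}\cong\mathcal{I}d_\mathcal{C}$ supplies, via the functor $\mathcal{H}$ of~\eqref{Hfunctor}, isomorphisms $\phi_k:\mathcal{K}(\mathcal{F}(Y_k))\xrightarrow{\sim}Y_k$ in $H^0(\mathcal{A})$ obeying the naturality $\phi_k\circ H(\mathcal{K})H(\mathcal{F})\langle c\rangle=\langle c\rangle\circ\phi_{k-1}$ (sign-free, because $|\phi_k|=0$) for every morphism $\langle c\rangle$ of $H(\mathcal{C})$ of any degree; specializing to $\langle c_k\rangle$ exhibits the family $\{\phi_0,\phi_1,\phi_2\}$ as an isomorphism of triangles.

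The main technical obstacle is to confirm that exactness is preserved under such isomorphisms of triangles. This follows directly from the definition via diagram~\eqref{Qexacttriangle}: since the Yoneda embedding $\Upsilon$ carries a triangle isomorphism to an isomorphism of triangles in $H(\mathsf{Q})$, being isomorphic to a mapping cone diagram is invariant under triangle isomorphism. Equivalently, the witnesses $h_1,h_2,k$ of Lemma~\ref{whenexact} for $(\mathcal{G}(Z_k),H(\mathcal{G})\langle x_k\rangle)$ may be transported along the $\phi_k$ to yield witnesses for $(Y_k,\langle c_k\rangle)$, while acyclicity of the associated complex survives under the induced chain quasi-isomorphism. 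In either view, $(Y_0,Y_1,Y_2,\langle c_1\rangle,\langle c_2\rangle,\langle c_3\rangle)$ is exact in $H(\mathcal{A})$, as required.
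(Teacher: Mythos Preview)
Your proof is correct and follows the route implicitly intended by the paper, which states the result as an immediate corollary of Proposition~\ref{exactcriterium} without further argument: produce the functor $\tilde{\mathcal{F}}:\mathcal{D}\rightarrow\mathcal{B}$ from the exact triangle in $H(\mathcal{B})$, then lift it through $\mathcal{F}$ by inverting the restriction $\mathcal{F}|_\mathcal{C}$.

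One simplification is worth noting. In the generic situation where the three objects $\mathcal{F}(Y_0),\mathcal{F}(Y_1),\mathcal{F}(Y_2)$ are pairwise distinct, the restriction $\mathcal{F}|_\mathcal{C}:\mathcal{C}\rightarrow\tilde{\mathcal{C}}$ is bijective on objects and hence a quasi-\emph{isomorphism}, not merely a quasi-equivalence. Invoking Lemma~\ref{homotopyinverse} rather than Theorem~\ref{aboutquasiequiv} then yields a \emph{homotopy} inverse $\mathcal{K}$ with $\mathcal{K}\circ\mathcal{F}|_\mathcal{C}\simeq\mathcal{I}d_\mathcal{C}$; since homotopic $A_\infty$-functors agree on objects and induce the same map in cohomology, one obtains $\mathcal{G}(Z_k)=Y_k$ and $H(\mathcal{G})\langle x_k\rangle=\langle c_k\rangle$ on the nose. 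Proposition~\ref{exactcriterium} then applies directly, and the entire final paragraph---the transfer of exactness along the isomorphisms $\phi_k$---becomes unnecessary. This is exactly the manoeuvre the paper itself uses inside the proof of Proposition~\ref{exactcriterium} (the construction of $\mathcal{K}:\tilde{\mathcal{C}}\rightarrow\mathcal{C}$ there). Your version, routing through Theorem~\ref{aboutquasiequiv} and then arguing invariance under triangle isomorphism, has the compensating advantage of covering the degenerate case where some of the $\mathcal{F}(Y_k)$ coincide, so it is not wasted effort.
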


Although cumbersome to prove, Proposition \ref{exactcriterium} makes good use of the machinery we developed in both this and last chapter. More importantly, it highly simplifies our study of exact triangles, moving the focus to a ``sleeker'' $A_\infty$-category such as $\mathcal{D}$. Here is another fundamental result.

\begin{Lem}\label{TwAgeneratedbyA}						
	$Tw\mathcal{A}$ is generated by its full subcategory $\mathcal{A}$. 
\end{Lem}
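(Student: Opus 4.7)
The goal is to prove that every twisted complex $(X,\delta_X)\in\textup{obj}(Tw\mathcal{A})$ lies, up to isomorphism in $H^0(Tw\mathcal{A})$, in the triangulated subcategory generated by $\mathcal{A}\subset Tw\mathcal{A}$. The plan is an induction on the cardinality of the indexing set $|I|$ of $X=(I,\{X^i\},\{V^i\},\delta_X)$, combined with a preliminary reduction that breaks each multiplicity space $V^i$ into shifted copies of $\mathbb{K}$. Concretely, decomposing $V^i\cong\bigoplus_k\mathbb{K}[\sigma_{i,k}]$ allows us to replace $X$ by an isomorphic twisted complex whose summands are of the form $S^{\sigma_{i,k}}X^i$ (using the variant description of Remark~\ref{alternateaddenl}, which is quasi-equivalent to the original and so produces an object in the same isomorphism class). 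Since the generated subcategory is closed under isomorphism, we may assume all multiplicity spaces have this simple form.

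For the base case $|I|=1$: strict lower-triangularity forces $\delta_X=0$, so $X\cong S^\sigma X^1$ for some $X^1\in\textup{obj}(\mathcal{A})$, which lies in the generated subcategory because the latter is closed under shifts.

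For the inductive step, suppose $|I|=n>1$, choose the maximal index $n_0\in I$ with respect to the ordering making $\delta_X$ strictly lower-triangular, and set
\[
X'\coloneqq \bigl(I\setminus\{n_0\},\{X^i\}_{i\neq n_0},\{V^i\}_{i\neq n_0},\delta_{X'}\bigr),\qquad W\coloneqq V^{n_0}\otimes X^{n_0},
\]
where $\delta_{X'}$ is the restriction of $\delta_X$ to indices $<n_0$. The matrix of $\delta_X$ then has block form
\[
\delta_X=\begin{pmatrix}\delta_{X'} & 0 \\ c & 0\end{pmatrix},
\]
the lower-right zero coming from $\delta_X^{n_0,n_0}=0$. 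The block $(X',X')$ of the generalized Maurer--Cartan equation for $X$ is exactly the Maurer--Cartan equation for $X'$, so $(X',\delta_{X'})\in\textup{obj}(Tw\mathcal{A})$. The block $(W,X')$ of the same equation reads
\[
\sum_{r\geq 1}\mu_{\varSigma\mathcal{A}}^r(c,\delta_{X'},\ldots,\delta_{X'})=0,
\]
since any contribution to this block must use $c$ exactly once (as the leftmost factor) and $\delta_{X'}$ for the rest. Because $\delta_W=0$, this is precisely $\mu_{Tw\mathcal{A}}^1(c)=0$ by the definition \eqref{twistedcompo}, so $c$ is a cocycle in $\textup{hom}_{Tw\mathcal{A}}^1(X',W)\cong\textup{hom}_{Tw\mathcal{A}}^0(S^{-1}X',W)$.

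Comparing block matrices with Remark~\ref{alternatecone}, we identify $X\cong Cone(c)$ with $c:S^{-1}X'\to W$. Since $|I\setminus\{n_0\}|<n$, the induction hypothesis puts $X'$, and hence $S^{-1}X'$, in the subcategory generated by $\mathcal{A}$. Likewise $W\cong\bigoplus_k S^{\sigma_k}X^{n_0}$ is obtained from $X^{n_0}\in\textup{obj}(\mathcal{A})$ by iterated shifts and mapping cones of zero morphisms (using $Cone(0)\cong SY_0\oplus Y_1$), so $W$ lies in the generated subcategory as well. Closure under mapping cones then yields $X\in\textup{obj}(\tilde{Tw\mathcal{A}})$, completing the induction.

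The delicate point is the identification of $X$ as a mapping cone: one must verify that the shift conventions of Definition~\ref{shifts} and Remark~\ref{alternatecone} match the block-matrix presentation of $\delta_X$ without spurious signs, and that the Maurer--Cartan equation, when decomposed into blocks, produces both the Maurer--Cartan relation for $X'$ and the cocycle condition on $c$ in $Tw\mathcal{A}$ rather than in $\varSigma\mathcal{A}$. Modulo this bookkeeping, the proof is a straightforward induction.
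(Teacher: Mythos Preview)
Your proposal is correct and follows essentially the same approach as the paper's sketch: the paper describes building exact triangles from subcomplexes obtained by filtration of twisted complexes, and your induction on $|I|$ (split off the maximal summand, verify the remaining block satisfies Maurer--Cartan, identify the original object as a cone) is precisely that filtration argument written out in detail. Your explicit verification that the $(W,X')$-block of the Maurer--Cartan equation becomes $\mu_{Tw\mathcal{A}}^1(c)=0$ via \eqref{twistedcompo}, and your hedge about the sign/shift bookkeeping in matching $X$ with $Cone(c)$, are both appropriate --- this is exactly the content of \cite[Lemma~3.32]{[Sei08]} to which the paper defers.
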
 

The proof essentially builds exact triangles from subcomplexes obtained by filtration of twisted complexes in $Tw\mathcal{A}$, showing that the latters just originate from objects of $\mathcal{A}$ after applying shifts and cones (see \cite[Lemma 3.32]{[Sei08]}). By definition, this means that $\mathcal{A}$ generates $Tw\mathcal{A}$ (however, in contrast to Definition \ref{triangenvelope}, here we are making no assumptions on triangulation yet --- $\mathcal{A}$ need not be triangulated!).

\begin{Lem}\label{ifembquasiequiv}			
	A c-unital $A_\infty$-category $\mathcal{A}$ is triangulated if and only if the embedding $\imath:\mathcal{A}\hookrightarrow Tw\mathcal{A}$ is a quasi-equivalence.
\end{Lem}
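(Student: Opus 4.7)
The idea is that $Tw\mathcal{A}$ is automatically triangulated (Corollary \ref{TwAtriang}), so the embedding $\imath$ being a quasi-equivalence lets us transport triangulation to $\mathcal{A}$ and vice versa. The two directions are handled independently.

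\textbf{($\Rightarrow$)} Assume $\mathcal{A}$ is triangulated. Since $\imath:\mathcal{A}\hookrightarrow Tw\mathcal{A}$ is the inclusion of a full $A_\infty$-subcategory, $H(\imath)$ is automatically full and faithful on morphisms, so only cohomological essential surjectivity needs to be checked. For this, I would invoke Lemma \ref{TwAgeneratedbyA}, which says that $\mathcal{A}$ generates $Tw\mathcal{A}$, i.e.\ every object of $Tw\mathcal{A}$ can be produced from objects of $\mathcal{A}$ by iteratively taking shifts and mapping cones. By an induction on the number of such steps, it suffices to show that the essential image of $\imath$ in $H^0(Tw\mathcal{A})$ is closed under isomorphism (immediate), under shifts, and under twisted mapping cones. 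Closure under shifts is exactly condition (iii) of triangulation for $\mathcal{A}$: for any $Y\in\textup{obj}(\mathcal{A})$ we have $S\tilde{Y}\cong Y$ in $H^0(\mathcal{A})\subset H^0(Tw\mathcal{A})$ for some $\tilde{Y}\in\textup{obj}(\mathcal{A})$, which, applied to appropriate objects, lets us exhibit shifts of objects of $\mathcal{A}$ as (isomorphic to) objects of $\mathcal{A}$. For closure under cones, given a cocycle $c_1\in\textup{hom}_{\mathcal{A}}^0(Y_0,Y_1)$ we use triangulation of $\mathcal{A}$ to produce an exact triangle $(\langle c_1\rangle,\langle c_2\rangle,\langle c_3\rangle)$ in $H(\mathcal{A})$; Lemma \ref{exactinclpreserved} says the image is exact in $H(Tw\mathcal{A})$, and Lemma \ref{exactinTwA} then identifies $Y_2\cong Cone(c_1)$ in $H^0(Tw\mathcal{A})$, so $Cone(c_1)$ lies in the essential image of $\imath$.

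\textbf{($\Leftarrow$)} Assume $\imath$ is a quasi-equivalence. The non-emptiness $\textup{obj}(\mathcal{A})\neq\emptyset$ follows from our standing assumption on $A_\infty$-categories (equivalently, from essential surjectivity applied to any object of $Tw\mathcal{A}$). For condition (ii), given $\langle c_1\rangle\in\textup{Hom}^0_{H(\mathcal{A})}(Y_0,Y_1)$, form $Cone(c_1)\in\textup{obj}(Tw\mathcal{A})$; by essential surjectivity of $H(\imath)$ there is $Y_2\in\textup{obj}(\mathcal{A})$ with an isomorphism $\alpha:\imath(Y_2)\xrightarrow{\cong} Cone(c_1)$ in $H^0(Tw\mathcal{A})$. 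Using the canonical morphisms $\langle i\rangle,\langle p\rangle$ from Definition \ref{mappingcone}, set (in $H(Tw\mathcal{A})$) $\langle\tilde{c}_2\rangle\coloneqq\alpha^{-1}\circ\langle i\rangle$ and $\langle\tilde{c}_3\rangle\coloneqq\langle p\rangle\circ\alpha$; by full-faithfulness of $H(\imath)$ these lift uniquely to morphisms $\langle c_2\rangle,\langle c_3\rangle$ in $H(\mathcal{A})$, giving a triangle in $H(\mathcal{A})$ whose image in $H(Tw\mathcal{A})$ is, by Lemma \ref{exactinTwA}, exact; Corollary \ref{preimagetriangle} then yields exactness in $H(\mathcal{A})$. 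For condition (iii), recall that $S:Tw\mathcal{A}\to Tw\mathcal{A}$ is a quasi-equivalence (Definition \ref{TwAops}), hence cohomologically essentially surjective, so given $Y\in\textup{obj}(\mathcal{A})$ there is $W\in\textup{obj}(Tw\mathcal{A})$ with $SW\cong\imath(Y)$; applying essential surjectivity of $\imath$ again we get $\tilde{Y}\in\textup{obj}(\mathcal{A})$ with $\imath(\tilde{Y})\cong W$, and then the composed isomorphism in $H^0(Tw\mathcal{A})$ descends (by full-faithfulness of $H(\imath)$, together with compatibility of the shift $A_\infty$-functors on $\mathcal{A}$ and $Tw\mathcal{A}$ via Definition \ref{shiftfunc}) to the required isomorphism $S\tilde{Y}\cong Y$ in $H^0(\mathcal{A})$.

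The main obstacle is the bookkeeping in the ($\Leftarrow$) direction: the shift $S$ and the cone construction naturally live in $Tw\mathcal{A}$, and one must verify that the isomorphisms produced by essential surjectivity of $\imath$ descend coherently to $H^0(\mathcal{A})$, invoking full-faithfulness of $H(\imath)$ at the right moments and relying on Corollary \ref{preimagetriangle} so that exactness is genuinely inherited (not merely asserted).
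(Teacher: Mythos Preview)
Your proposal is correct and follows essentially the same strategy as the paper: both directions hinge on Lemma~\ref{TwAgeneratedbyA} (that $\mathcal{A}$ generates $Tw\mathcal{A}$) together with the fact that $Tw\mathcal{A}$ is triangulated, with essential surjectivity of $H(\imath)$ being the only nontrivial check in ($\Rightarrow$) and the transport of cones/shifts back to $\mathcal{A}$ being the content of ($\Leftarrow$). The paper phrases the ($\Leftarrow$) direction slightly more tersely in terms of quasi-representatives of $\mathcal{C}one(c)$ and $SS\mathcal{Y}$ rather than explicitly building the triangle and invoking Corollary~\ref{preimagetriangle}, but the underlying argument is the same.
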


\begin{proof}
	Suppose $\imath$ is a quasi-equivalence. For any cocycle $c\in\textup{hom}_{Tw\mathcal{A}}^0(Y_0,Y_1)$, the abstract mapping cone $\mathcal{C}one(c)\in\textup{obj}(mod(Tw\mathcal{A}))$ is quasi-represented by $Cone(c)\in\textup{obj}(Tw\mathcal{A})$, via \eqref{coneidentification}. Since $Tw\mathcal{A}$ is generated by $\mathcal{A}$,  $\mathcal{C}one(c)$ is also quasi-represented by an object of $\mathcal{A}$ (here bijectivity of $H(\imath)$ is crucial). The same holds for upwards and downwards shifts. Therefore, the properties for a triangulated $A_\infty$-category are inferred from those of $Tw\mathcal{A}$, which we know to be triangulated (by Corollary \ref{TwAtriang}).
	
	Conversely, suppose $\mathcal{A}$ is triangulated, thus closed under taking mapping cones and shifts. As $\mathcal{A}$ generates $Tw\mathcal{A}$, any object of $H^0(Tw\mathcal{A})$ can be constructed from one in $H^0(\mathcal{A})$ by repeated application of cones and shifts still belonging to $H^0(\mathcal{A})$, by assumption. Therefore, $H^0(\imath):H^0(\mathcal{A})\rightarrow H^0(Tw\mathcal{A})$ must be an equivalence of categories, and hence $\imath$ a quasi-equivalence. 
\end{proof}

We are ready to prove the main result of this chapter:

\begin{Pro}\label{H0Atriangulated}					
	If $\mathcal{A}$ is a triangulated c-unital $A_\infty$-category, then $H^0(\mathcal{A})$ is a classical triangulated category. Moreover, if $\mathcal{A}$, $\mathcal{B}$ are triangulated c-unital $A_\infty$-categories and $\mathcal{F}:\mathcal{A}\rightarrow\mathcal{B}$ any c-unital $A_\infty$-functor, then the induced functor $H^0(\mathcal{F}):H^0(\mathcal{A})\rightarrow H^0(\mathcal{B})$ is exact. 
\end{Pro}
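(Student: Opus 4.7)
My plan is to bootstrap the triangulated structure on $H^0(\mathcal{A})$ from the already-established one on $H^0(Tw\mathcal{A})$, using that for triangulated $\mathcal{A}$ the inclusion $\imath : \mathcal{A} \hookrightarrow Tw\mathcal{A}$ is a quasi-equivalence. Concretely, since $\mathcal{A}$ is triangulated, Lemma \ref{ifembquasiequiv} makes $\imath$ a quasi-equivalence, so $H^0(\imath) : H^0(\mathcal{A}) \to H^0(Tw\mathcal{A})$ is an equivalence of ordinary linear graded categories, and in particular additive (any equivalence between additive categories is automatically additive once one checks it preserves the zero object and commutes with finite biproducts, which here is clear from Definition \ref{TwAops}).

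Next I would define the candidate triangulation on $H^0(\mathcal{A})$: the translation functor is $T \coloneqq H^0(S) : H^0(\mathcal{A}) \to H^0(\mathcal{A})$, which is well defined since $\mathcal{A}$ is assumed closed under shifts and $S : \mathcal{A} \to \mathcal{A}$ is a quasi-equivalence by the remark after the definition of triangulated $A_\infty$-categories; distinguished triangles are declared to be the diagrams obtained from exact triangles \eqref{Atriangle} after reinterpreting the degree-$1$ morphism $\langle c_3\rangle \in \textup{Hom}_{H(\mathcal{A})}^1(Y_2,Y_0)$ as a degree-$0$ morphism $Y_2 \to T(Y_0)$ via the canonical isomorphism $\textup{Hom}_{H(\mathcal{A})}(Y_2,SY_0) \cong \textup{Hom}_{H(\mathcal{A})}(Y_2,Y_0)[1]$ of \eqref{Homiso}. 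Lemma \ref{exactinclpreserved} says that a triangle in $H(\mathcal{A})$ is exact iff its image under $H(\imath)$ is exact in $H(Tw\mathcal{A})$, and Proposition \ref{H0TwAtriangulated} proves $H^0(\imath)$ sends these exact triangles to the distinguished ones of the classical triangulation on $H^0(Tw\mathcal{A})$. Hence the triangulated structure of $H^0(Tw\mathcal{A})$ transports back through the equivalence $H^0(\imath)$: axioms (T1)--(T3) from Definition \ref{Verdier} (including (TR3) via Lemma \ref{tr3}) hold in $H^0(\mathcal{A})$ because they hold in the equivalent category $H^0(Tw\mathcal{A})$ and exactness is preserved in both directions by $H^0(\imath)$; that $H^0(\imath)$ intertwines the two candidate translation functors follows because $S : \mathcal{A} \to \mathcal{A}$ is, by Definition \ref{shiftfunc}, the unique (up to isomorphism in $H^0$) $A_\infty$-functor implementing $SS$ on Yoneda modules, so $\imath \circ S \cong S_{Tw\mathcal{A}} \circ \imath$ in $H^0(fun(\mathcal{A},Tw\mathcal{A}))$.

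For the second statement, I would combine Corollary \ref{HFexact} with the preservation of shifts. Corollary \ref{HFexact} already gives that $H(\mathcal{F})$ carries exact triangles in $H(\mathcal{A})$ to exact triangles in $H(\mathcal{B})$; restricting to degree-$0$ morphisms yields the same statement for $H^0(\mathcal{F})$, so distinguished triangles map to distinguished triangles. It remains to check that $H^0(\mathcal{F}) \circ T_{\mathcal{A}} \cong T_{\mathcal{B}} \circ H^0(\mathcal{F})$ naturally; this is the analogue of the commuting square for $Tw\mathcal{F}$ established inside the sketch of Proposition \ref{H0TwAtriangulated}, and at the level of $\mathcal{A}$ it follows from the defining property of the shift functor applied to $\mathcal{F}^* \circ \Upsilon_\mathcal{B}\circ \mathcal{F}$ and $\Upsilon_\mathcal{A}$ as in Remark \ref{commutingYoneda}, together with the fact that the pullback and Yoneda shift functors commute (because $\mathcal{F}^*$ is strictly unital dg and $SS = \mathbb{K}[1] \otimes \square$ is defined objectwise).

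The main obstacle, and the step I would spend the most care on, is the compatibility of translations under $H^0(\mathcal{F})$: the shift functor $S : \mathcal{A} \to \mathcal{A}$ is only characterised up to isomorphism in $H^0(fun(\mathcal{A},\mathcal{A}))$, so the equality $H^0(\mathcal{F}) \circ T_\mathcal{A} = T_\mathcal{B} \circ H^0(\mathcal{F})$ must be replaced by a natural isomorphism that is coherent with the mapping-cone construction; one cannot just unwind definitions, but must invoke quasi-representability (Lemma \ref{quasi-repr}) together with the Yoneda Lemma \ref{Yonedalemma} to pin down the comparison morphism uniquely up to isomorphism in the derived sense. Once this compatibility is secured, the exactness of $H^0(\mathcal{F})$ in the sense of Definition \ref{triangfunc} is immediate.
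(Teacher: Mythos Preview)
Your argument for the first statement is essentially the paper's: use Lemma~\ref{ifembquasiequiv} to make $\imath:\mathcal{A}\hookrightarrow Tw\mathcal{A}$ a quasi-equivalence, then transport the triangulation of $H^0(Tw\mathcal{A})$ (Proposition~\ref{H0TwAtriangulated}) back along the equivalence $H^0(\imath)$, with Lemma~\ref{exactinclpreserved} ensuring the distinguished triangles match up.

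For the second statement you diverge slightly. The paper stays inside the twisted world: it invokes the exactness of $H^0(Tw\mathcal{F})$ from Proposition~\ref{H0TwAtriangulated} and then restricts along the embeddings $\imath_\mathcal{A}$, $\imath_\mathcal{B}$. This handles the shift compatibility for free, because in $Tw$ the shift $S$ is a concrete functor and $Tw\mathcal{F}$ commutes with it on the nose (as noted in the sketch of Proposition~\ref{H0TwAtriangulated}); the intrinsic shift on $\mathcal{A}$ is then just the restriction of that on $Tw\mathcal{A}$. Your route via Corollary~\ref{HFexact} is correct for the preservation of distinguished triangles, but you then overcomplicate the translation compatibility by going through Yoneda modules, pullbacks, and quasi-representability. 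None of that is wrong, but it is unnecessary: once you know $\imath_\mathcal{B}\circ\mathcal{F}\cong Tw\mathcal{F}\circ\imath_\mathcal{A}$ and that $Tw\mathcal{F}$ commutes with $S$, the natural isomorphism $H^0(\mathcal{F})\circ T_\mathcal{A}\cong T_\mathcal{B}\circ H^0(\mathcal{F})$ drops out immediately from the equivalences $H^0(\imath_\mathcal{A})$, $H^0(\imath_\mathcal{B})$.
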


\begin{proof}
	By Proposition \ref{H0TwAtriangulated}, we already know that $H^0(Tw\mathcal{A})$ is a triangulated category, and by Lemma \ref{ifembquasiequiv}, $\imath:\mathcal{A}\hookrightarrow Tw\mathcal{A}$ is a quasi-equivalence. Therefore, the axioms (T1)--(T3) for a classical triangulated category can be inferred from those for $H^0(Tw\mathcal{A})$, exploiting fullness, faithfulness and essential surjectivity of $H(\imath)$. In particular, the full subcategory $H^0(\mathcal{A})\subset H^0(Tw\mathcal{A})$ has same translation functor $T\coloneqq H^0(S):H^0(\mathcal{A})\rightarrow H^0(\mathcal{A})$ as $H^0(Tw\mathcal{A})$, and its distinguished triangles are those which are distinguished in $H^0(Tw\mathcal{A})$ (the exact ones, by choice).
	
	Moreover, given any two such triangulated categories $H^0(\mathcal{A})$, $H^0(\mathcal{B})$, the exactness of  $H^0(\mathcal{F}):H^0(\mathcal{A})\rightarrow H^0(\mathcal{B})$ can be deduced from that of $H^0(Tw\mathcal{F})$ (cf. again Proposition \ref{H0TwAtriangulated}) and upon using the respective cohomolgical embedding functors, which we just saw to preserve distinguished triangles. 
\end{proof}

\begin{Rem}\label{t2rem}								
	For example, let us spell out axiom (T2). Let $\mathcal{A}$ be a triangulated $A_\infty$-category. Then, if \eqref{Atriangle} is a distinguished triangle in $H^0(\mathcal{A})$, by our convention denoted $(\langle c_1\rangle,\langle c_2\rangle,\langle c_3\rangle)$, then so are the ``rotated'' triangles $(\langle c_2\rangle,\langle c_3\rangle,$ $-\langle S^1(c_1)\rangle)$ and $(-\langle (S^{-1})^1(c_3)\rangle,\langle c_1\rangle,\langle c_2\rangle)$ in $H^0(\mathcal{A})$, corresponding to diagrams:
	\begin{equation}\label{rotatedtriang}
	\begin{tikzcd}
	Y_1\arrow[rr, "\langle c_2\rangle"] & & Y_2\arrow[dl, "\langle c_3\rangle"]\\
	& SY_0\arrow[ul, "{-\langle S^1(c_1)\rangle[1]}"]
	\end{tikzcd}\quad\text{and}\quad
	\begin{tikzcd}
	S^{-1}Y_2\arrow[rr, "-\langle (S^{-1})^1(c_3)\rangle"] & & Y_0\arrow[dl, "\langle c_1\rangle"]\\
	& Y_1\arrow[ul, "{\langle c_2\rangle[1]}"]
	\end{tikzcd},
	\end{equation}
	where $S^{-1}$ induces the quasi-inverse translation $T^{-1}=H^0(S^{-1})$ in $H^0(\mathcal{A})$. With help of the isomorphisms \eqref{Homiso}, we identified $\langle c_3\rangle\!\in\!\textup{Hom}_{H^0(\mathcal{A})}(Y_2,Y_0)[1]$ $\cong\textup{Hom}_{H^0(\mathcal{A})}(Y_2,SY_0)$ and $\langle S^1(c_1)\rangle\in\textup{Hom}_{H^0(\mathcal{A})}(SY_0,SY_1)$ $\cong\textup{Hom}_{H^0(\mathcal{A})}(Y_0,Y_1)\cong\textup{Hom}_{H^0(\mathcal{A})}(SY_0,Y_1)[1]$, respectively $\langle (S^{-1})^1(c_3)\rangle\in$ $\textup{Hom}_{H^0(\mathcal{A})}(S^{-1}Y_2,S^{-1}Y_0)[1]$ $\cong\textup{Hom}_{H^0(\mathcal{A})}(Y_2,Y_0)[1]\cong \textup{Hom}_{H^0(\mathcal{A})}(S^{-1}Y_2,Y_0)$ and $\langle c_2\rangle\in\textup{Hom}_{H^0(\mathcal{A})}(Y_1,Y_2)\cong\textup{Hom}_{H^0(\mathcal{A})}(Y_1,S^{-1}Y_2)[1]$. 
	
	As we mentioned right after Definition \ref{abstractmappingcone}, the vanishing of $c=c_1$ in \eqref{Qtriangle} implies that $\mathcal{Y}_2\cong\mathcal{C}one(c_1)\cong SS\mathcal{Y}_0\oplus\mathcal{Y}_1$ (in $H(\mathsf{Q})$). Using that $S$ quasi-represents $SS$, we get the following interesting implications (the last two can be deduced analogously from the diagrams \eqref{rotatedtriang} and \eqref{Atriangle}):
	\[
	\langle c_1\rangle = 0 \implies Y_2\cong SY_0\oplus Y_1\;,\qquad\qquad
	\langle c_2\rangle = 0 \implies Y_0\cong Y_1\oplus S^{-1}Y_2\;,
	\]
	\[
	\langle c_3\rangle = 0 \implies Y_1\cong Y_2\oplus Y_0\;.
	\] 
\end{Rem}

Finally, we return to the concepts of triangulated envelope and derived category of an $A_\infty$-category, proving their existence and uniqueness up to equivalence.

\begin{Lem}\label{uniqueenvelope}				
	Let $\mathcal{F}:\mathcal{A}\rightarrow\mathcal{B}$ be a cohomologically full and faithful $A_\infty$-functor, $\mathcal{B}$ a triangulated c-unital $A_\infty$-category generated by the image objects of $\mathcal{F}$. Then there exists a quasi-equivalence $\tilde{\mathcal{F}}:Tw\mathcal{A}\rightarrow\mathcal{B}$ such that $\tilde{\mathcal{F}}|_\mathcal{A}\cong \mathcal{F}$ in $H^0(fun(\mathcal{A},\mathcal{B}))$.  
\end{Lem}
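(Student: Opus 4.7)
The plan is to build $\tilde{\mathcal{F}}$ as the composition of $Tw\mathcal{F}$ with a quasi-inverse to the embedding $\imath_\mathcal{B}:\mathcal{B}\hookrightarrow Tw\mathcal{B}$. First I would apply $Tw$ to $\mathcal{F}$, obtaining a c-unital $A_\infty$-functor $Tw\mathcal{F}:Tw\mathcal{A}\rightarrow Tw\mathcal{B}$ which is cohomologically full and faithful by Lemma \ref{Twfullfaith} (combined with Lemma \ref{TwAcunital} for c-unitality). Since $\mathcal{B}$ is triangulated, Lemma \ref{ifembquasiequiv} guarantees that $\imath_\mathcal{B}$ is a quasi-equivalence, so by Theorem \ref{aboutquasiequiv} there exists a quasi-equivalence $\mathcal{P}_\mathcal{B}:Tw\mathcal{B}\rightarrow\mathcal{B}$ with $\mathcal{P}_\mathcal{B}\circ\imath_\mathcal{B}\cong\mathcal{I}d_\mathcal{B}$ and $\imath_\mathcal{B}\circ\mathcal{P}_\mathcal{B}\cong\mathcal{I}d_{Tw\mathcal{B}}$ in the respective zeroth cohomological functor categories. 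I would then define $\tilde{\mathcal{F}}\coloneqq\mathcal{P}_\mathcal{B}\circ Tw\mathcal{F}:Tw\mathcal{A}\rightarrow\mathcal{B}$.

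Next I would verify that $\tilde{\mathcal{F}}$ is cohomologically full and faithful: $Tw\mathcal{F}$ is cohomologically full and faithful by the previous step, and $\mathcal{P}_\mathcal{B}$ inherits this property from being a quasi-equivalence (via Lemma \ref{aboutfunctors}), so their composition is too. For the compatibility with $\mathcal{F}$, note that on the full subcategory $\mathcal{A}\subset Tw\mathcal{A}$ the functor $Tw\mathcal{F}$ restricts to $\imath_\mathcal{B}\circ\mathcal{F}$ by the very definition of $Tw$ on objects and morphisms (a twisted complex of the form $(X,0)$ is sent to $(\mathcal{F}(X),0)$). Hence $\tilde{\mathcal{F}}|_\mathcal{A}=\mathcal{P}_\mathcal{B}\circ\imath_\mathcal{B}\circ\mathcal{F}\cong\mathcal{F}$ in $H^0(fun(\mathcal{A},\mathcal{B}))$, using the isomorphism $\mathcal{P}_\mathcal{B}\circ\imath_\mathcal{B}\cong\mathcal{I}d_\mathcal{B}$ together with Lemma \ref{cunitalcompo} to left-compose with $\mathcal{F}$.

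The step I expect to be the main obstacle is cohomological essential surjectivity of $\tilde{\mathcal{F}}$, because it is where the hypothesis that $\mathcal{F}(\mathcal{A})$ generates $\mathcal{B}$ enters substantively. Here is how I would argue it. By Lemma \ref{TwAgeneratedbyA}, $Tw\mathcal{A}$ is generated by $\mathcal{A}$, so every object of $H^0(Tw\mathcal{A})$ is built from objects of $\mathcal{A}$ by iterated shifts and mapping cones. Since $H^0(Tw\mathcal{F})$ is an exact functor (Proposition \ref{H0Atriangulated}), its essential image is a triangulated subcategory of $H^0(Tw\mathcal{B})$ closed under isomorphism and containing $\mathcal{F}(\mathcal{A})$; consequently it contains the triangulated subcategory generated by $\mathcal{F}(\mathcal{A})$. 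By hypothesis this subcategory already accounts for every object of $\mathcal{B}$ up to isomorphism, and because $\imath_\mathcal{B}$ is cohomologically full and faithful the same conclusion transports to $\imath_\mathcal{B}(\mathcal{B})\subset H^0(Tw\mathcal{B})$. Thus for each $Y\in\textup{obj}(\mathcal{B})$ there exists $X\in\textup{obj}(Tw\mathcal{A})$ with $Tw\mathcal{F}(X)\cong\imath_\mathcal{B}(Y)$ in $H^0(Tw\mathcal{B})$, whence $\tilde{\mathcal{F}}(X)=\mathcal{P}_\mathcal{B}(Tw\mathcal{F}(X))\cong\mathcal{P}_\mathcal{B}(\imath_\mathcal{B}(Y))\cong Y$ in $H^0(\mathcal{B})$. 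Together with fullness and faithfulness, Lemma \ref{aboutfunctors} concludes that $\tilde{\mathcal{F}}$ is a quasi-equivalence.
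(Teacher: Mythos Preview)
Your proposal is correct and follows essentially the same route as the paper: define $\tilde{\mathcal{F}}$ as the composition of $Tw\mathcal{F}$ with a quasi-inverse to $\imath_\mathcal{B}$ (the paper calls it $\mathcal{K}$, you call it $\mathcal{P}_\mathcal{B}$), then verify full faithfulness via Lemma~\ref{Twfullfaith} and essential surjectivity via the generation hypothesis together with preservation of exact triangles. Your treatment of the restriction $\tilde{\mathcal{F}}|_\mathcal{A}$ and of essential surjectivity is in fact slightly more explicit than the paper's.
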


\begin{proof}
	By Lemma \ref{ifembquasiequiv}, $\mathcal{B}\hookrightarrow Tw\mathcal{B}$ is a quasi-equivalence, for which we can then find a quasi-inverse $\mathcal{K}:Tw\mathcal{B}\rightarrow\mathcal{B}$ (as in Theorem \ref{aboutquasiequiv}). Define the $A_\infty$-functor $\tilde{\mathcal{F}}\coloneqq \mathcal{K}\circ Tw\mathcal{F}:Tw\mathcal{A}\rightarrow\mathcal{B}$, clearly satisfying $\tilde{\mathcal{F}}|_\mathcal{A}\cong\mathcal{K}\circ\mathcal{F}=\mathcal{F}$. By Lemma \ref{Twfullfaith}, $Tw\mathcal{F}$ is cohomologically full and faithful, and hence so is $\tilde{\mathcal{F}}$. 
	
	As to cohomological essential surjectivity: by assumption, any $Y\in\textup{obj}(\mathcal{B})$ is isomorphic in $\textup{obj}(H^0(\mathcal{B}))$ to an image object $\mathcal{F}(X)=\tilde{\mathcal{F}}(X)$ for some $X\in\textup{obj}(\mathcal{A})$, possibly after applying mapping cones and shifts, which remain in the image of $\tilde{\mathcal{F}}$ because $H(Tw\mathcal{F})$ preserves exact triangles (see Corollary \ref{HFexact}). This amounts to say that $\tilde{\mathcal{F}}$ is cohomologically essentially surjective. Therefore, it is a quasi-equivalence, restricting to $\mathcal{F}$ as an object of $H^0(fun(\mathcal{A},\mathcal{B}))$.
\end{proof}

\begin{Pro}\label{envelopeexists}	
	Every \textup(non-empty\textup) c-unital $A_\infty$-category $\mathcal{A}$ has a triangulated envelope $(\tilde{\mathcal{B}},\mathcal{G})$. Given another such, say $(\mathcal{B},\mathcal{F})$, there is a quasi-equivalence $\tilde{\mathcal{F}}:\tilde{\mathcal{B}}\rightarrow\mathcal{B}$ such that $\tilde{\mathcal{F}}\circ\mathcal{G}\cong\mathcal{F}\in\textup{obj}\big(H^0(fun(\mathcal{A},\mathcal{B}))\big)$.
\end{Pro}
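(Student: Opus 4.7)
The proof essentially assembles the machinery already developed around $Tw\mathcal{A}$. My plan is to exhibit $(\tilde{\mathcal{B}},\mathcal{G}) \coloneqq (Tw\mathcal{A}, \imath)$ as the canonical choice of triangulated envelope, where $\imath:\mathcal{A}\hookrightarrow Tw\mathcal{A}$ is the full inclusion from Section \ref{ch3.5}, and then reduce the uniqueness statement to Lemma \ref{uniqueenvelope}.

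For existence: I would first note that $Tw\mathcal{A}$ is c-unital by Lemma \ref{TwAcunital} and triangulated by Corollary \ref{TwAtriang}, so it is a legitimate candidate for the ambient category. The embedding $\imath:\mathcal{A}\hookrightarrow Tw\mathcal{A}$ is the inclusion of a full $A_\infty$-subcategory, so it is tautologically cohomologically full and faithful (its morphism-level components are identities, and $H(\imath)$ therefore acts as the identity on each $\textup{Hom}$-space). Finally, Lemma \ref{TwAgeneratedbyA} ensures that the image of $\imath$ generates $Tw\mathcal{A}$ in the sense of Definition \ref{triangenvelope}, since every twisted complex is reached from objects of $\mathcal{A}$ by iterated shifts and mapping cones. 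Packaging these three facts, $(Tw\mathcal{A}, \imath)$ is a triangulated envelope for $\mathcal{A}$.

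For uniqueness: Suppose $(\mathcal{B},\mathcal{F})$ is any other triangulated envelope, so $\mathcal{F}:\mathcal{A}\rightarrow\mathcal{B}$ is a cohomologically full and faithful c-unital $A_\infty$-functor whose image generates the triangulated c-unital $A_\infty$-category $\mathcal{B}$. This is precisely the hypothesis of Lemma \ref{uniqueenvelope}, which provides a quasi-equivalence $\tilde{\mathcal{F}}:Tw\mathcal{A}\rightarrow\mathcal{B}$ with $\tilde{\mathcal{F}}|_{\mathcal{A}}\cong\mathcal{F}$ in $H^0(fun(\mathcal{A},\mathcal{B}))$. Rewriting $\tilde{\mathcal{F}}|_{\mathcal{A}} = \tilde{\mathcal{F}}\circ\imath = \tilde{\mathcal{F}}\circ\mathcal{G}$ gives the required isomorphism $\tilde{\mathcal{F}}\circ\mathcal{G}\cong\mathcal{F}$ in $H^0(fun(\mathcal{A},\mathcal{B}))$, completing the argument.

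The only delicate point is really the invocation of Lemma \ref{uniqueenvelope}: its proof relied on the quasi-inverse $\mathcal{K}:Tw\mathcal{B}\rightarrow\mathcal{B}$ produced by Theorem \ref{aboutquasiequiv} together with cohomological essential surjectivity being propagated along $Tw\mathcal{F}$ via preservation of exact triangles by $H(Tw\mathcal{F})$ (Corollary \ref{HFexact}). Since all of this is already in place, there is no serious obstacle left; the proposition is essentially a bookkeeping statement that assembles Corollary \ref{TwAtriang}, Lemma \ref{TwAgeneratedbyA}, and Lemma \ref{uniqueenvelope} into a single existence-and-uniqueness package, legitimizing Definition \ref{Ainfderivedcat} and justifying the designation $\mathsf{D^b}(\mathcal{A}) = H^0(Tw\mathcal{A})$ as the \emph{bounded derived category} of $\mathcal{A}$.
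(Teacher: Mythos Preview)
Your proof is correct and follows essentially the same route as the paper: existence via $(Tw\mathcal{A},\imath)$ using Corollary~\ref{TwAtriang}, Lemma~\ref{TwAgeneratedbyA}, and the tautological full faithfulness of $\imath$, and uniqueness by direct appeal to Lemma~\ref{uniqueenvelope}. If anything, your version is slightly more explicit, spelling out the c-unitality of $Tw\mathcal{A}$ via Lemma~\ref{TwAcunital} and unpacking why $\tilde{\mathcal{F}}|_\mathcal{A}=\tilde{\mathcal{F}}\circ\imath$.
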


\begin{proof}
	The pair $(\tilde{\mathcal{B}},\mathcal{G})\coloneqq (Tw\mathcal{A},\imath)$ is a valid choice of triangulated envelope for $\mathcal{A}$: $Tw\mathcal{A}$ is triangulated, $\mathcal{A}\equiv\imath(\mathcal{A})$ generates it and the embedding $\imath:\mathcal{A}\hookrightarrow Tw\mathcal{A}$ is cohomologically full and faithful (by construction). 
	
	Moreover, given another envelope $(\mathcal{B},\mathcal{F})$, by Lemma \ref{uniqueenvelope} there exists a quasi-equivalence $\tilde{\mathcal{F}}:Tw\mathcal{A}\rightarrow\mathcal{B}$ such that indeed $\tilde{\mathcal{F}}|_\mathcal{A}=\tilde{\mathcal{F}}\circ\imath:\mathcal{A}\rightarrow\mathcal{B}$ is isomorphic to $\mathcal{F}$ in $H^0(fun(\mathcal{A},\mathcal{B}))$.
\end{proof}

\begin{Def}\label{boundeddercat}			
	Let $\mathcal{A}$ be a c-unital $A_\infty$-category. We call the derived category $\mathsf{D^b}(\mathcal{A})\coloneqq H^0(Tw\mathcal{A})$ the \textbf{bounded derived category}\index{bounded derived category!of an $A_\infty$-category} of $\mathcal{A}$.\footnote{This notation and terminology is purely strategical: as mentioned in Definition \ref{derivedcat}, ``bounded'' just refers to the fact that we started with bounded cochain complexes in the localization construction, here sidestepped!}
\end{Def}

Putting together Corollary \ref{TwFquasiequiv} and Proposition \ref{H0TwAtriangulated}, one easily deduces (see also \cite[Lemma~2.4]{[Sei13]}):

\begin{Cor}\label{equivboundercat}	
	Let $\mathcal{A}$ and $\mathcal{B}$ be c-unital $A_\infty$-categories which are quasi-equivalent. Then $\mathsf{D^b}(\mathcal{A})$ and $\mathsf{D^b}(\mathcal{B})$ are equivalent triangulated categories.
\end{Cor}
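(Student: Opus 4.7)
The plan is to lift a quasi-equivalence at the level of $\mathcal{A}$ and $\mathcal{B}$ first to the level of twisted complexes, and then pass to zeroth cohomology, checking at each stage that the relevant structure (triangulation in particular) is preserved. Concretely, pick a quasi-equivalence $\mathcal{F}:\mathcal{A}\to\mathcal{B}$, whose existence is guaranteed by the assumption of quasi-equivalence together with Theorem \ref{aboutquasiequiv}. By Corollary \ref{TwFquasiequiv}, the induced $A_\infty$-functor $Tw\mathcal{F}:Tw\mathcal{A}\to Tw\mathcal{B}$ is again a quasi-equivalence between c-unital $A_\infty$-categories (c-unitality of $Tw\mathcal{A}$ and $Tw\mathcal{B}$ being Lemma \ref{TwAcunital}).

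Now I would descend to cohomology. Since $Tw\mathcal{F}$ is a quasi-equivalence, $H(Tw\mathcal{F}):H(Tw\mathcal{A})\to H(Tw\mathcal{B})$ is an equivalence of cohomological categories, in particular full, faithful, and essentially surjective. Restricting to the zeroth-degree component (which is well-defined because the equivalence is compatible with the grading), one obtains that $H^0(Tw\mathcal{F}):H^0(Tw\mathcal{A})\to H^0(Tw\mathcal{B})$ is an equivalence of linear categories in the classical sense, i.e.\ a quasi-inverse is obtained by appealing again to Theorem \ref{aboutquasiequiv} together with Lemma \ref{aboutfunctors}.

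Finally, I invoke Proposition \ref{H0TwAtriangulated}: both $H^0(Tw\mathcal{A})$ and $H^0(Tw\mathcal{B})$ are triangulated categories with translation functors $H^0(S_\mathcal{A})$ and $H^0(S_\mathcal{B})$ and with exact triangles as the distinguished ones, and the functor $H^0(Tw\mathcal{F})$ is triangulated (since $Tw\mathcal{F}$ commutes with shifts and sends twisted mapping cones to twisted mapping cones compatibly with the canonical morphisms $i$ and $p$). Thus $H^0(Tw\mathcal{F})$ is an equivalence of triangulated categories between $\mathsf{D^b}(\mathcal{A})$ and $\mathsf{D^b}(\mathcal{B})$, as required.

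There is essentially no obstacle here: every bullet point is a direct application of a result already established in Section \ref{ch3.5}. The only minor subtlety worth flagging explicitly is ensuring that the quasi-inverse of $H^0(Tw\mathcal{F})$ is itself triangulated, but this is automatic because one can realize it as $H^0(Tw\mathcal{G})$ for a quasi-equivalence $\mathcal{G}:\mathcal{B}\to\mathcal{A}$ produced by Theorem \ref{aboutquasiequiv}, and then Proposition \ref{H0TwAtriangulated} applies again to $\mathcal{G}$ in place of $\mathcal{F}$.
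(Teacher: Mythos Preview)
Your proposal is correct and follows precisely the route the paper indicates: combine Corollary~\ref{TwFquasiequiv} (so that $Tw\mathcal{F}$ is a quasi-equivalence) with Proposition~\ref{H0TwAtriangulated} (so that $H^0(Tw\mathcal{F})$ is a triangulated equivalence). The paper states this only as a one-line remark, so your write-up simply spells out the details that the author left implicit.
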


\subsection{Example of triangulated $A_\infty$-category: $mod(\mathcal{A})$} \label{ch3.7}

One prominent example of triangulated $A_\infty$-category is that of c-unital $A_\infty$-modules over a c-unital $A_\infty$-category: $\mathsf{Q}=mod(\mathcal{A})$. Let us briefly justify this, first by generalizing the definition of abstract mapping cone.

\begin{Def}\label{abstractconemod}			
	Let $\mathcal{M}_0,\mathcal{M}_1\in\textup{obj}(\mathsf{Q})$, and let $t\in\textup{hom}_\mathsf{Q}^0(\mathcal{M}_0,\mathcal{M}_1)$ be a degree 0 cocycle (that is, a module homomorphism). Its abstract mapping cone $\mathcal{C}=\mathcal{C}one(t)\in\textup{obj}(\mathsf{Q})$ is given by $\mathcal{C}(X)\coloneqq\mathcal{M}_0(X)[1]\oplus\mathcal{M}_1(X)\in\textup{obj}(\mathsf{Ch})$ for $X\in\textup{obj}(\mathcal{A})$, with graded maps
	\begin{align*}
	\mu_\mathcal{C}^d:\;&\mathcal{C}(X_{d-1})\!\otimes\!\textup{hom}_\mathcal{A}(X_{d-2},X_{d-1})\!\otimes\!...\!\otimes\!\textup{hom}_\mathcal{A}(X_0,X_1)\rightarrow\mathcal{C}(X_0)[2-d] \\
	&((b_0,b_1),a_{d-1},...) \!\mapsto\! (\mu_{\mathcal{M}_{0}}^d(b_0,a_{d-1},...),\mu_{\mathcal{M}_{1}}^d(b_1,a_{d-1},...)\!+\!t^d(b_0,a_{d-1},...)),\nonumber
	\end{align*}
	for $d\geq1$ and $X_0,...,X_{d-1}\in\textup{obj}(\mathcal{A})$.
\end{Def}

Upon setting $\mathcal{M}_k\coloneqq\Upsilon(Y_k)=\mathcal{Y}_k$ and $t\coloneqq\Upsilon^1(c)\in\textup{hom}_\mathsf{Q}^0(\mathcal{Y}_0,\mathcal{Y}_1)$, we recover Definition \ref{abstractmappingcone}. Similarly, an adaptation of what we saw for $\mathcal{A}$ and $Tw\mathcal{A}$ shows (cf. \cite[Lemma 3.35]{[Sei08]}):

\begin{Lem}											
	Given a module homomorphism $t\in\textup{hom}_\mathsf{Q}^0(\mathcal{M}_0,\mathcal{M}_1)$, the corresponding abstract mapping cone $\mathcal{C}=\mathcal{C}one(t)$ comes with the canonical inclusion $\iota\in\textup{hom}_\mathsf{Q}^0(\mathcal{M}_1,\mathcal{C})$ and projection $\pi\in\textup{hom}_\mathsf{Q}^1(\mathcal{C},\mathcal{M}_0)$ making the triangle
	\[
	\begin{tikzcd}
	\mathcal{M}_0\arrow[rr, "\langle t\rangle"] & & \mathcal{M}_1\arrow[dl, "\langle \iota\rangle"] \\
	& \mathcal{C}\arrow[ul, "{\langle\pi\rangle[1]}"]
	\end{tikzcd}
	\]
	exact in $H^0(\mathsf{Q})$. In particular, any morphism $\langle t\rangle$ in $H^0(\mathsf{Q})$ can be completed to an exact triangle as above.
\end{Lem}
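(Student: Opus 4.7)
The plan is to mimic the proof of Lemma \ref{exactinTwA} in the setting of $\mathsf{Q}=mod(\mathcal{A})$, exploiting that $\mathsf{Q}$ is a dg category (so $\mu_\mathsf{Q}^d=0$ for $d>2$), which makes all sign bookkeeping and associativity equations considerably simpler than in $\mathcal{A}$ itself. First I would define $\iota\in\textup{hom}_\mathsf{Q}^0(\mathcal{M}_1,\mathcal{C})$ and $\pi\in\textup{hom}_\mathsf{Q}^1(\mathcal{C},\mathcal{M}_0)$ in direct analogy with Definition \ref{iotaandpi}, by setting
\[
\iota^1(b_1)\coloneqq (0,(-1)^{|b_1|}b_1),\qquad \pi^1((b_0,b_1))\coloneqq (-1)^{|b_0|-1}b_0,
\]
and $\iota^d=\pi^d=0$ for $d>1$. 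A direct substitution into \eqref{tshit1} (using the form of $\mu_\mathcal{C}^1$ spelled out in Definition \ref{abstractconemod}) yields $\mu_\mathsf{Q}^1(\iota)=0$ and $\mu_\mathsf{Q}^1(\pi)=0$, so both are genuine module homomorphisms and thus represent classes in $H^0(\mathsf{Q})$, respectively $H^1(\mathsf{Q})$.

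Next, by the very definition of exact triangle, I need to produce an isomorphism in $H^0(mod(\mathsf{Q}))$ between $\Upsilon_\mathsf{Q}(\mathcal{C})$ and the abstract cone $\mathcal{C}one(\Upsilon_\mathsf{Q}^1(t))$ fitting into the diagram \eqref{Qexacttriangle} (with $\mathcal{A}=\mathsf{Q}$). By Lemma \ref{quasi-repr}, it suffices to exhibit a degree $0$ cocycle in
\[
\mathcal{C}one(\Upsilon_\mathsf{Q}^1(t))(\mathcal{C})=\textup{hom}_\mathsf{Q}(\mathcal{C},\mathcal{M}_0)[1]\oplus\textup{hom}_\mathsf{Q}(\mathcal{C},\mathcal{M}_1)
\]
such that the induced module homomorphism $T=\lambda(\cdot)$ (cf. equation \eqref{lambda}) restricts at every $\mathcal{M}\in\textup{obj}(\mathsf{Q})$ to a quasi-isomorphism of complexes. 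The natural candidate is $c\coloneqq(\pi,\iota)$: its degree is $0$, and the cocycle condition reduces, via the formula for $\mu_{\mathcal{C}one(\Upsilon_\mathsf{Q}^1(t))}^1$, to the two identities $\mu_\mathsf{Q}^1(\pi)=0$ and $\mu_\mathsf{Q}^1(\iota)+\mu_\mathsf{Q}^2(t,\pi)=0$, both of which are routine verifications from the definitions.

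The main obstacle I expect is the homological step: verifying that for each $\mathcal{M}\in\textup{obj}(\mathsf{Q})$, the first-order component $T_{\mathcal{M}}^1\colon\textup{hom}_\mathsf{Q}(\mathcal{M},\mathcal{C})\rightarrow\textup{hom}_\mathsf{Q}(\mathcal{M},\mathcal{M}_0)[1]\oplus\textup{hom}_\mathsf{Q}(\mathcal{M},\mathcal{M}_1)$, which sends $b\mapsto(\pm\mu_\mathsf{Q}^2(\pi,b),\pm\mu_\mathsf{Q}^2(\iota,b))$, is a quasi-isomorphism. This reduces, thanks to the explicit form of $\mu_\mathcal{C}^1$, to the classical homological fact that the short exact sequence of complexes $0\rightarrow\textup{hom}_\mathsf{Q}(\mathcal{M},\mathcal{M}_1)\rightarrow\textup{hom}_\mathsf{Q}(\mathcal{M},\mathcal{C})\rightarrow\textup{hom}_\mathsf{Q}(\mathcal{M},\mathcal{M}_0)[1]\rightarrow 0$ induced by the cone construction yields the standard long exact sequence in cohomology; the total complex associated to $T_\mathcal{M}^1$ is then acyclic by an explicit contracting homotopy built from the splittings in the underlying vector spaces. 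Commutativity of the analog of diagram \eqref{Qexacttriangle} then follows by repeating the chain of identities used in the proof of Lemma \ref{exactinTwA}, substituting the properties \eqref{lambdaproperties} of $\lambda$ and Lemma \ref{iandpi} (whose module-theoretic analog holds by the same argument). The final assertion is then immediate: any morphism $\langle t\rangle\in\textup{Hom}_{H(\mathsf{Q})}^0(\mathcal{M}_0,\mathcal{M}_1)$ admits a representative cocycle $t$, whose cone $\mathcal{C}one(t)$ together with $\iota,\pi$ completes $\langle t\rangle$ to an exact triangle by the above.
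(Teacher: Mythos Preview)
Your overall strategy matches what the paper intends: it does not give a proof here but simply says the result follows by ``an adaptation of what we saw for $\mathcal{A}$ and $Tw\mathcal{A}$'' and cites \cite[Lemma~3.35]{[Sei08]}. So producing the quasi-representative of $\mathcal{C}one(\Upsilon_\mathsf{Q}^1(t))$ and then imitating Lemma~\ref{exactinTwA} is exactly the right idea.

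There is, however, a genuine slip in your candidate cocycle. You write $c=(\pi,\iota)\in\textup{hom}_\mathsf{Q}(\mathcal{C},\mathcal{M}_0)[1]\oplus\textup{hom}_\mathsf{Q}(\mathcal{C},\mathcal{M}_1)$, but $\iota\in\textup{hom}_\mathsf{Q}(\mathcal{M}_1,\mathcal{C})$ lives in the wrong space; consequently the ``identity'' $\mu_\mathsf{Q}^1(\iota)+\mu_\mathsf{Q}^2(t,\pi)=0$ is not even type-correct. The second component must be a pre-module morphism $\mathcal{C}\to\mathcal{M}_1$, namely (up to sign) the projection onto the second summand of $\mathcal{C}(X)=\mathcal{M}_0(X)[1]\oplus\mathcal{M}_1(X)$, which is \emph{not} a module homomorphism on its own --- only the pair $(\pi,\text{pr}_2)$ is a cocycle for $\mu^1_{\mathcal{C}one(\Upsilon_\mathsf{Q}^1(t))}$.

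In fact the paper's intended adaptation is more direct than your Lemma~\ref{quasi-repr} route: since any pre-module homomorphism $s\colon\mathcal{M}\to\mathcal{C}$ splits componentwise as $s=(s_0,s_1)$ with $s_i\colon\mathcal{M}\to\mathcal{M}_i$, one has a \emph{tautological} identification $\Upsilon_\mathsf{Q}(\mathcal{C})(\mathcal{M})=\textup{hom}_\mathsf{Q}(\mathcal{M},\mathcal{C})\cong\textup{hom}_\mathsf{Q}(\mathcal{M},\mathcal{M}_0)[1]\oplus\textup{hom}_\mathsf{Q}(\mathcal{M},\mathcal{M}_1)$, the exact analog of \eqref{coneidentification}. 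This makes $\Upsilon_\mathsf{Q}(\mathcal{C})\cong\mathcal{C}one(\Upsilon_\mathsf{Q}^1(t))$ on the nose, and then the argument of Corollary~\ref{TwAtriang} runs through verbatim with $b=e_\mathcal{C}$ (the strict unit, available since $\mathsf{Q}$ is strictly unital). This bypasses the contracting-homotopy step you anticipate as the ``main obstacle''.
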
 

Existence of a shift functor $SS:\mathsf{Q}\rightarrow\mathsf{Q}$ which is in particular a quasi-equivalence was already proved in Section \ref{ch3.2}. Together with last lemma, this shows that $\mathsf{Q}=mod(\mathcal{A})$ is a triangulated $A_\infty$-category.

\begin{Rem}\label{alternenv}					
	Considering $\mathcal{F}=\Upsilon_\mathcal{A}:\mathcal{A}\rightarrow\tilde{\mathsf{Q}}$ (cohomologically full and faithful), where $\tilde{\mathsf{Q}}\subset\mathsf{Q}$ is the triangulated full $A_\infty$-subcategory generated by the image objects of $\Upsilon_\mathcal{A}$, we obtain another canonical triangulated envelope for $\mathcal{A}$. Indeed, by Proposition \ref{envelopeexists}, there exists a quasi-equivalence $\tilde{\Upsilon}: Tw\mathcal{A}\rightarrow\tilde{\mathsf{Q}}$ such that $\tilde{\Upsilon}\circ\imath\cong\Upsilon_\mathcal{A}$ in $H^0(fun(\mathcal{A},\tilde{\mathsf{Q}}))$. One can prove it to be exactly $\tilde{\Upsilon}=\imath^*\circ\Upsilon_{Tw\mathcal{A}}$ (with pullback $\imath^*:mod(Tw\mathcal{A})\rightarrow mod(\mathcal{A})$ as in Definition \ref{pullback}). We obtain the commutative (up to isomorphism) diagram
	\[
	\begin{tikzcd}
	\mathcal{A}\arrow[r, hook, "\imath"]\arrow[d, "\Upsilon_\mathcal{A}"'] & Tw\mathcal{A}\arrow[d, "\Upsilon_{Tw\mathcal{A}}"]\arrow[dl, "\tilde{\Upsilon}"']\\
	\tilde{\mathsf{Q}} & {mod(Tw\mathcal{A})}\arrow[l, "\imath^*"]
	\end{tikzcd}\quad,
	\]
	which is precisely the one given in Remark \ref{commutingYoneda}. (See \cite[section 7.5]{[Kel01]}, where the author suggests this can be used to characterize $Tw\mathcal{A}$ by a universal property.)  
\end{Rem}

\newpage

\section{Split-generation and twisting}
\thispagestyle{plain}

\subsection{Category theoretic preliminaries}\label{ch4.1}

\begin{Def}\label{imageofidem}							
	Let $\mathsf{C}$ be a linear category over $\mathbb{K}$. Consider an \textbf{idempotent endomorphism}\index{idempotent endomorphism} $p$ of some $Y\in\textup{obj}(\mathsf{C})$, that is, a morphism $p\in\textup{Hom}_\mathsf{C}(Y,Y)$ such that $p\circ p=p$. A triple $(Z,r,k)$, where $Z\in\textup{obj}(\mathsf{C})$, $r\in\textup{Hom}_\mathsf{C}(Y,Z)$ (retraction) and $k\in\textup{Hom}_\mathsf{C}(Z,Y)$ (inclusion), is called the \textbf{image}\index{image} of $p$ if fulfilling 
	\[
	r\circ k=\textup{id}_Z\qquad\text{and}\qquad k\circ r=p\,.
	\]
	Accordingly, we say that $p$ \textbf{splits}\index{split (idempotent endomorphism)} if such a triple exists. We encode the data in the following diagram:
	\vspace*{-0.2cm}
	\begin{equation}\label{psplit}
	\begin{tikzcd}
	Z\arrow[r, yshift=1ex, "k"]\arrow[loop left, distance=1.5em, start anchor={[yshift=1ex]west}, out=180, end anchor={[yshift=-0.5ex]west}, in=180, "\textup{id}_Z"']
	& Y\arrow[l, yshift=-0.5ex, "r"]\!\arrow[loop right, distance=1.5em, start anchor={[yshift=-0.5ex]east}, out=0, end anchor={[yshift=1ex]east}, in=0, "p"']
	\end{tikzcd}
	\end{equation}
	The category $\mathsf{C}$ is \textbf{split-closed}\index{category!split-closed} if all its idempotent endomorphisms split.
\end{Def}

Note that \textit{the} image of an idempotent $p$, if it exists, is unique: given another suitable triple $(Z',r',k')$, $r'\circ k: Z\rightarrow Z'$ is an isomorphism of objects with inverse $r\circ k'$. 

Observe also that if $\mathsf{C}$ is additive (for example, triangulated) and split-closed, given an idempotent $p\in\textup{Hom}_\mathsf{C}(Y,Y)$, we may consider the pair $Z\coloneqq \text{im}(p)$ and $Z^\perp\coloneqq \text{im}(\textup{id}_Y-p)$, yielding the decomposition $Y=Z\oplus Z^\perp$.

\begin{Def}										
	A \textbf{split-closure}\index{split-closure} of a category $\mathsf{C}$ is a pair $(\mathsf{D},\mathsf{F})$ where $\mathsf{D}$ is a split-closed category and $\mathsf{F}:\mathsf{C}\rightarrow\mathsf{D}$ a fully faithful embedding such that any $Z\in\textup{obj}(\mathsf{D})$ is the image of an idempotent endomorphism $p\in\textup{Hom}_\mathsf{D}(\mathsf{F}(X),\mathsf{F}(X))$ for some $X\in\textup{obj}(\mathsf{C})$.
\end{Def}

\begin{Rem}									
	Any category $\mathsf{C}$ has a tautological split-closure $\bar{\mathsf{C}}$, called \textit{Karoubi completion} (or \textit{idempotent completion}). It is defined as follows: objects are pairs $(Y,p)$ for $Y\in\textup{obj}(\mathsf{C})$ and $p\in\textup{Hom}_\mathsf{C}(Y,Y)$ an idempotent endomorphism (every object of $\mathsf{C}$ has at least one such, namely $\textup{id}_Y$), and morphisms in $\textup{Hom}_{\bar{\mathsf{C}}}((Y_0,p_0),(Y_1,p_1))$ $\coloneqq  p_1\circ\textup{Hom}_\mathsf{C}(Y_0,Y_1)\circ p_0$ are triples $(p_0,f,p_1)\coloneqq p_1\circ f\circ p_0$ for $f\in\textup{Hom}_\mathsf{C}(Y_0,Y_1)$ fulfilling $f=p_1\circ f\circ p_0$. Compositions is the obvious one induced by $\mathsf{C}$, and identity morphisms are all $\textup{id}_{(Y,p)}\coloneqq (p,p,p)=p$. The full and faithful embedding is $\mathsf{E}:\mathsf{C}\rightarrow\bar{\mathsf{C}}$, $Y\mapsto(Y,\textup{id}_Y)$ and $p\mapsto(\textup{id}_Y,p,\textup{id}_Y)$.
	
	Indeed, $\bar{\mathsf{C}}$ is split-closed: for any idempotent $\bar{p}=(p,s,p)\!\in\!\textup{Hom}_{\bar{\mathsf{C}}}((Y,p),(Y,p))$ (where $s\in\textup{Hom}_\mathsf{C}(Y,Y)$ necessarily fulfills $s=p\circ s\circ p$ and $s=s\circ p\circ s$), $Z\coloneqq (Y,\textup{id}_Y)$, $r\coloneqq (p,s,\textup{id}_Y):(Y,p)\rightarrow Z$ and $k\coloneqq (\textup{id}_Y,s,p):Z\rightarrow(Y,p)$ fulfill $r\circ k=\textup{id}_Y\circ s\circ \textup{id}_Y = \mathsf{E}(s)$ and $k\circ r=p\circ s\circ p = \bar{p}$. 
\end{Rem}

In fact, one can prove that any split-closure $(\mathsf{D},\mathsf{F})$ of $\mathsf{C}$ is equivalent to the Karoubi completion $(\bar{\mathsf{C}},\mathsf{E})$. Moreover, if $\mathsf{C}$ is triangulated, so is $\bar{\mathsf{C}}$, along with the embedding $\mathsf{E}$. Here is a more specific result.

\begin{Lem}\label{Atildetriang}						
	Let $\mathsf{B}=(\mathsf{B},T)$ be a split-closed triangulated category, $\mathsf{A}\subset\mathsf{B}$ a triangulated full subcategory \textup(thus closed under translations and exact triangles\textup). Let $\tilde{\mathsf{A}}\subset\mathsf{B}$ be the full subcategory of all objects isomorphic to a direct summand of an object in $\mathsf{A}$ \textup(particularly, $\tilde{\mathsf{A}}\cong\bar{\mathsf{A}}$ because the latter is split-closed\textup). Then $\tilde{\mathsf{A}}$ is triangulated.
\end{Lem}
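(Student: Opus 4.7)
The plan is to equip $\tilde{\mathsf{A}}$ with the restriction of the translation functor $T$ and declare its distinguished triangles to be those exact triangles of $\mathsf{B}$ all of whose objects lie in $\tilde{\mathsf{A}}$. With this convention, axioms (T1)(1), (T1)(3), (T2), and (T3) come for free from $\mathsf{B}$, provided only that we can verify two closure properties: $\tilde{\mathsf{A}}$ is stable under $T^{\pm 1}$, and every morphism in $\tilde{\mathsf{A}}$ can be completed to a distinguished triangle inside $\tilde{\mathsf{A}}$ (i.e.\ the cone of such a morphism again lies in $\tilde{\mathsf{A}}$). Only the latter is non-trivial; the former follows because for $Y\in\textup{obj}(\tilde{\mathsf{A}})$ with $Y\oplus Y'\cong X\in\textup{obj}(\mathsf{A})$, additivity of $T$ together with the fact that $\mathsf{A}$ is closed under translations gives $T(Y)\oplus T(Y')\cong T(X)\in\textup{obj}(\mathsf{A})$.

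The key step is closure under cones. Given a morphism $f: Y_0\to Y_1$ in $\tilde{\mathsf{A}}$, choose objects $Y_0',Y_1'$ and isomorphisms $Y_0\oplus Y_0'\cong X_0$, $Y_1\oplus Y_1'\cong X_1$ with $X_0,X_1\in\textup{obj}(\mathsf{A})$. Form the morphism
\[
\tilde f \coloneqq \begin{pmatrix} f & 0 \\ 0 & 0 \end{pmatrix}\colon X_0\longrightarrow X_1\,.
\]
Since $\mathsf{A}$ is triangulated inside $\mathsf{B}$, we may complete $\tilde f$ to a distinguished triangle $X_0\to X_1\to Z\to T(X_0)$ with $Z\in\textup{obj}(\mathsf{A})$. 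Now $\tilde f$ is the direct sum of $f:Y_0\to Y_1$ with the zero morphism $0_{Y_0'\to Y_1'}:Y_0'\to Y_1'$. By the remark following Proposition~\ref{H0Atriangulated} (see in particular the ``$\langle c_1\rangle=0$'' case of Remark~\ref{t2rem}, applied in the classical triangulated category $\mathsf{B}$), the cone of a zero morphism splits: $\mathrm{Cone}(0_{Y_0'\to Y_1'})\cong T(Y_0')\oplus Y_1'$. Additivity of the cone construction for direct sums of morphisms yields
\[
Z \;\cong\; \mathrm{Cone}(f)\oplus T(Y_0')\oplus Y_1'\,,
\]
so that $\mathrm{Cone}(f)$ is a direct summand of $Z\in\textup{obj}(\mathsf{A})$. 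Here we exploit that $\mathsf{B}$ is split-closed: the idempotent in $\textup{End}_\mathsf{B}(Z)$ projecting onto the summand $\mathrm{Cone}(f)$ splits, producing the desired object. Hence $\mathrm{Cone}(f)\in\textup{obj}(\tilde{\mathsf{A}})$ and the exact triangle $Y_0\to Y_1\to\mathrm{Cone}(f)\to T(Y_0)$ lives entirely in $\tilde{\mathsf{A}}$.

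With both closure properties in hand, the axioms follow easily. (T1)(1) is immediate since $0\in\textup{obj}(\tilde{\mathsf{A}})$ (it is a summand of any object). (T1)(2) is the content of the previous paragraph. (T1)(3) and (T2) hold because isomorphism closure and rotation are properties inherited from $\mathsf{B}$, and all intervening objects $T(X),T^{-1}(Z)$ remain in $\tilde{\mathsf{A}}$. Verdier's axiom (T3) is the most delicate point: starting with three distinguished triangles in $\tilde{\mathsf{A}}$, the dashed completion is produced in $\mathsf{B}$, and its third vertex is a cone, which by the argument above lies in $\tilde{\mathsf{A}}$. The main obstacle I anticipated was exactly the cone-closure step; once that is handled via the splitting trick above, everything else is formal.
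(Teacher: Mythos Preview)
Your proof is correct and follows essentially the same route as the paper's: both extend $f$ to the block-diagonal morphism $k_1\circ f\circ r_0$ between the ambient objects of $\mathsf{A}$, take its cone there, and exhibit $\mathrm{Cone}(f)$ as a direct summand of that cone. The only difference is that the paper carries out this last step via two explicit applications of Lemma~\ref{tr3} (producing maps $k_2,r_2$ with $r_2\circ k_2$ an isomorphism) rather than invoking additivity of cones as a black box; note also that your appeal to split-closure of $\mathsf{B}$ at that point is superfluous, since the direct-sum decomposition of $Z$ already exhibits the summand.
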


\begin{proof}
	Firstly, note that we are working with additive categories, so that it makes sense to talk about direct sum decompositions (cf. above). By assumption, any $\tilde{Y}_i\in\textup{obj}(\tilde{\mathsf{A}})$ appears in some $Y\cong\tilde{Y}_i\oplus\tilde{Y}_{i}^\perp\in\textup{obj}(\mathsf{A})$. Hence, we can define $r_i, k_i$ to be the corresponding retraction and inclusion, so that $r_i\circ k_j=\textup{id}_{\tilde{Y}_i}$. 
	
	By axiom (T1) for triangulated categories (see Definition \ref{Verdier}), we can complete the morphisms $\tilde{c}:\tilde{Y}_0\rightarrow \tilde{Y}_1$ and $c\coloneqq k_1\circ\tilde{c}\circ r_0:Y_0\rightarrow Y_1$ to exact triangles, involving some third objects $\tilde{Y}_2$ respectively $Y_2$. These form two diagrams as in Lemma \ref{tr3} (one with inclusions $k_0, k_1$ as vertical arrows, the other with retractions $r_0, r_1$), whose leftmost squares commute, by definition of $c$. Therefore, we can find complementing morphisms $k_2:\tilde{Y}_2\rightarrow Y_2$ and $r_2:Y_2\rightarrow\tilde{Y}_2$ making the whole diagrams commute. Composing them, we obtain:
	\[
	\begin{tikzcd}
	\tilde{Y}_0\arrow[d, "r_0\circ k_0"']\arrow[r, "\tilde{c}"] & \tilde{Y}_1\arrow[d, "r_1\circ k_1"']\arrow[r] & \tilde{Y}_2\arrow[d, dashed, "r_2\circ k_2"]\arrow[r] & T(\tilde{Y}_0)\arrow[d, "T(r_0\circ k_0)"] \\
	\tilde{Y}_0\arrow[r, "\tilde{c}"'] & \tilde{Y}_1\arrow[r] & \tilde{Y}_2\arrow[r] & T(\tilde{Y}_0) 
	\end{tikzcd}
	\]
	Since the first two vertical arrows are isomorphisms (being identity morphisms), a second application of Lemma \ref{tr3} tells us that $r_2\circ k_2$ is an isomorphim as well, thus invertible, and moreover commutativity of the diagram forces it to be the identity $\textup{id}_{\tilde{Y}_2}$. Hence, taking as endomorphism $p\coloneqq k_2\circ r_2\in\textup{Hom}_\mathsf{A}(Y_2,Y_2)$, which is clearly idempotent, we see that it splits via $(\tilde{Y}_2,r_2,k_2)$, making $\tilde{Y}_2$ isomorphic to a direct summand of $Y_2$. By definition, this means that $\tilde{Y}_2\in\textup{obj}(\tilde{\mathsf{A}})$, so that $\tilde{\mathsf{A}}$ wholly contains the exact triangle pictured above. Therefore, $\tilde{\mathsf{A}}$ is a triangulated category.
\end{proof}

\subsection{Abstract images}\label{ch4.2}

Let us translate the theory we have just seen in the language of $A_\infty$-modules. Thereto, identify our ground field $\mathbb{K}$ as the trivial $A_\infty$-category with structure $\textup{obj}(\mathbb{K}) \coloneqq \{\ast\}$ and $\textup{hom}_\mathbb{K}(\ast,\ast)$ $\coloneqq \{\textup{id}_\ast\}$, thus coinciding with its cohomological category, $H(\mathbb{K})\cong\mathbb{K}$.

\begin{Def}									
	Let $\mathcal{A}$ be an $A_\infty$-category, $\mathsf{Q}=mod(\mathcal{A})$. An \textbf{idempotent up to homotopy}\index{idempotent up to homotopy} for $Y\in\textup{obj}(\mathcal{A})$ is a non-unital $A_\infty$-functor $\wp:\mathbb{K}\rightarrow\mathcal{A}$ such that $\wp(\ast)=Y$ and $\wp^d\equiv\wp^d(\textup{id}_\ast,...,\textup{id}_\ast)\in\textup{hom}_\mathcal{A}^{1-d}(Y,Y)$ for $d\geq 1$, fulfilling the corresponding equations \eqref{Ainffunceqs}:
	\begin{equation*}
	\sum_{\substack{1\leq r\leq d \\ s_1+...+s_r = d}}\mkern-18mu\mu_{\mathcal{A}}^r(\wp^{s_r},...,\wp^{s_1}) = 
	\begin{cases}
	\wp^{d-1} & \text{if } d \text{ is even} \\ 
	0 & \text{if } d \text{ is odd}
	\end{cases},
	\end{equation*}
	in $\textup{hom}_\mathcal{A}^{2-d}(Y,Y)$.
\end{Def}

One can show the following relation between idempotent endomorphisms and idempotents up to homotopy (see \cite[Lemma 4.2]{[Sei08]}).

\begin{Lem}\label{forwpp}								
	Let $p\in\textup{Hom}_{H(\mathcal{A})}^0(Y,Y)$ be idempotent, $Y\in\textup{obj}(\mathcal{A})$. Then there is an idempotent up to homotopy $\wp:\mathbb{K}\rightarrow\mathcal{A}$ for $Y$ such that $\langle\wp^1\rangle = p$.
\end{Lem}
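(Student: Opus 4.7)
The plan is to construct the maps $\wp^d \in \textup{hom}_\mathcal{A}^{1-d}(Y,Y)$ recursively on $d \geq 1$, verifying at each stage the equation prescribed in the definition of idempotent up to homotopy.

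First I would pick $\wp^1$ to be any cocycle in $\textup{hom}_\mathcal{A}^0(Y,Y)$ representing the class $p$ (possible because $Z^0(\textup{hom}_\mathcal{A}(Y,Y)) \to H^0(\textup{hom}_\mathcal{A}(Y,Y))$ is surjective). The $d=1$ equation $\mu_\mathcal{A}^1(\wp^1) = 0$ is then automatic. For $d=2$, the equation rearranges to
\[
\mu_\mathcal{A}^1(\wp^2) = \wp^1 - \mu_\mathcal{A}^2(\wp^1, \wp^1).
\]
Passing to $H^0$ and using formula~\eqref{cohomcatcompo} (with $|\wp^1|=0$, hence trivial sign), the right-hand side represents $p - p\circ p$, which vanishes by idempotency of $p$. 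Thus the right-hand side is a coboundary and $\wp^2$ can be chosen as any primitive.

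In general, suppose $\wp^1,\ldots,\wp^{d-1}$ have been constructed so as to satisfy their respective equations. Set
\[
O_d := \sum_{\substack{2 \leq r \leq d \\ s_1+\cdots+s_r = d}} \mu_\mathcal{A}^r(\wp^{s_r},\ldots,\wp^{s_1}) - \epsilon_d\,\wp^{d-1},
\]
with $\epsilon_d = 1$ if $d$ is even and $\epsilon_d = 0$ if $d$ is odd. The equation to be solved is $\mu_\mathcal{A}^1(\wp^d) = -O_d$ in $\textup{hom}_\mathcal{A}^{2-d}(Y,Y)$, so the recursion succeeds provided $O_d$ is a coboundary. The plan is then to verify (i) $O_d$ is a cocycle, and (ii) its cohomology class vanishes.

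For (i), I would apply $\mu_\mathcal{A}^1$ to $O_d$ and expand each $\mu_\mathcal{A}^1\mu_\mathcal{A}^r(\wp^{s_r},\ldots,\wp^{s_1})$ using the $A_\infty$-associativity equations~\eqref{Ainfcatasseqs} for $\mathcal{A}$; the inner differential $\mu_\mathcal{A}^1$ is thereby traded for a sum of compositions $\mu_\mathcal{A}^{r'}$ containing a nested $\mu_\mathcal{A}^m$ applied to some run of $\wp^{s_j}$ inputs. The inductive hypothesis allows each such nested block to be replaced by the corresponding $\epsilon \wp^{\bullet}$ term plus \emph{lower} $\mu_\mathcal{A}^1(\wp^{\bullet})$ pieces, and after careful bookkeeping of the Koszul signs all contributions cancel in pairs, yielding $\mu_\mathcal{A}^1(O_d) = 0$.

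For (ii), the main obstacle, one argues that the cocycle $O_d$ is cohomologically trivial. The clean way is to view $\wp$ as a Maurer--Cartan-type element in the Hochschild-flavoured complex built from the $A_\infty$-algebra $B := \textup{hom}_\mathcal{A}(Y,Y)$ (in the spirit of Subsection~\ref{ch2.4}): the sole leading-order obstruction to such an element is precisely $p^2 - p \in H^0(B)$, which vanishes by hypothesis, and all higher-order obstructions are formally killable by successively correcting $\wp^{d-1}$ (and, if needed, the lower $\wp^k$) by exact terms --- this is the standard deformation-theoretic fact underlying the existence of idempotent splittings in the $A_\infty$-setting. The hard part will be organizing the combinatorics of the $A_\infty$-equations and the signs so that the obstructions really do fit into such a deformation complex; this is a genuinely delicate but routine exercise of the same flavour as the extension arguments used elsewhere in Seidel's treatment.
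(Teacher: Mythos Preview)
The paper does not give its own proof of this lemma; it defers entirely to \cite[Lemma~4.2]{[Sei08]}. Your recursive obstruction-theoretic setup is the right framework and matches Seidel's approach in spirit: one does build $\wp^d$ order by order, and part~(i) --- that the obstruction $O_d$ is a $\mu_\mathcal{A}^1$-cocycle --- follows from the $A_\infty$-relations together with the inductive hypothesis, exactly as you indicate.

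The gap is in part~(ii). The assertion that ``all higher-order obstructions are formally killable by successively correcting $\wp^{d-1}$'' is not a proof: in general obstruction theory the vanishing of the leading term does \emph{not} force higher obstructions to vanish, even after adjusting earlier choices. Concretely, the freedom at stage $d$ is to modify each $\wp^k$ (for $2\le k\le d-1$) by a $\mu_\mathcal{A}^1$-\emph{cocycle} --- not merely an exact term, as you write --- and one must then show that the induced map from these cocycle-choices onto $H^{2-d}(\textup{hom}_\mathcal{A}(Y,Y))$ actually hits the class $[O_d]$. Seidel organizes this via the length filtration on pre-functors $\mathbb{K}\to\mathcal{A}$ (cf.\ the spectral sequence~\eqref{spectral} and the surrounding discussion in Section~\ref{ch2.4}): the genuine obstruction lives in a bigraded piece $E_2^{r,s}$ with $r\ge 2$, and the substance of the lemma is the computation that these pieces vanish, which exploits the very simple shape of the Hochschild differential when the source category is $\mathbb{K}$ and the bimodule structure on $\textup{Hom}_{H(\mathcal{A})}(Y,Y)$ is the one induced by the idempotent $p$. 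You have correctly located where the difficulty lies, but calling it a ``routine exercise'' undersells it: this vanishing is the entire content of the lemma, and without it the recursion could already stall at $d=3$.
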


We work towards the description of $\wp$ as image of some $A_\infty$-module, similarly to what we did in previous section for idempotents $p$.

\begin{Def}											
	Let $\wp:\mathbb{K}\rightarrow\mathcal{A}$ be an idempotent up to homotopy for some $Y\in\textup{obj}(\mathcal{A})$. Then we define the \textbf{abstract image}\index{abstract image} of $\wp$ to be the $A_\infty$-module $\mathcal{Z}\in\textup{obj}(\mathsf{Q})$ which to any $X\in\textup{obj}(\mathcal{A})$ associates $\mathcal{Z}(X)\coloneqq\textup{hom}_\mathcal{A}(X,Y)[q]$, a graded vector space of formal polynomials in one variable $q$ of degree $-1$. Writing $\delta_q$ for the operator of formal differentiation, so that $\delta_q(q^0)=0$ and $\delta_q(q^k)=q^{k-1}$, the composition maps are
	\[
	\mu_\mathcal{Z}^1(b(q)) \coloneqq \mkern-18mu\sum_{\substack{r\geq 2 \\ s_2,...,s_r \geq 1}}\mkern-18mu\delta_{q}^{s_2+...+s_r}\mu_\mathcal{A}^r(\wp^{s_r},...,\wp^{s_2},b(q)) + (-1)^{|b|}\frac{b(q)\!-\! b(-q)}{2q}\in\mathcal{Z}(X_0)[1], 
	\]\vspace*{-0.3cm}
	\begin{align*}
	\mu_\mathcal{Z}^d:\; &\mathcal{Z}(X_{d-1})\otimes\textup{hom}_\mathcal{A}(X_{d-2},X_{d-1})\otimes...\otimes\textup{hom}_\mathcal{A}(X_0,X_1)\rightarrow\mathcal{Z}(X_0)[2-d],\\
	& (b(q),a_{d-1},...,a_1)\mapsto\mkern-24mu\sum_{\substack{r\geq d+1 \\ s_{d+1},...,s_r \geq 1}} \mkern-21mu\delta_{q}^{s_{d+1}+...+s_r}\mu_\mathcal{A}^r(\wp^{s_r},...,\wp^{s_{d+1}},b(q),a_{d-1},...,a_1)\,,
	\end{align*}
	for $d>1$.
\end{Def}

Recurring to filtrations of $\mathcal{Z}(X)$ and studying the associated spectral sequences, one eventually obtains the following characterizations on the level of cohomological modules (recall Definition \ref{cohommod}).

\begin{Lem}\label{aboutcohomZ}		
	Consider the abstract image $\mathcal{Z}$ of an idempotent up to homotopy $\wp$ for some $Y\in\textup{obj}(\mathcal{A})$, and choose $p\in\textup{Hom}_{H(\mathcal{A})}^0(Y,Y)$ such that $\langle\wp^1\rangle = p$ is idempotent \textup(possible by Lemma \ref{forwpp}\textup).
	\renewcommand{\theenumi}{\roman{enumi}}
	\begin{enumerate}[leftmargin=0.5cm]
		\item The cohomological module $H(\mathcal{Z})\in\textup{obj}\big(\mathsf{Mod}^0(H(\mathcal{A}))\big)$\footnote{Since we are implicitly assuming c-unitality of $\mathcal{A}$, indeed $H(\mathcal{Z}):H(\mathcal{A})^{opp}\rightarrow\mathsf{GradVect}$ is a contravariant \textit{unital} functor, that is, an object of $\mathsf{Mod}^0(H(\mathcal{A}))$ (cf. Definition \ref{cohommod}).} is described as follows: $H(\mathcal{Z})(X)=H(\mathcal{Z}(X))\cong p\circ\textup{Hom}_{H(\mathcal{A})}(X,Y)\in\textup{obj}(\mathsf{GradVect})$ for any $X\in\textup{obj}(\mathcal{A})$ \textup{\big(}in particular, $H(\mathcal{Z})$ is a direct summand of $H(\Upsilon(Y))$\textup{\big)}, with module multiplication
		\[
		H(\mathcal{Z})(X_1)\otimes\textup{Hom}_{H(\mathcal{A})}(X_0,X_1)\rightarrow H(\mathcal{Z})(X_0),\, (p\circ \langle a_1\rangle, \langle a_0\rangle)\mapsto p\circ(\langle a_1\rangle\circ \langle a_0\rangle).
		\]
		\item There is an isomorphism $\textup{Hom}_{H(\mathsf{Q})}(\mathcal{Z},\mathcal{M})\cong H(\mathcal{M})(Y)\circ p$ as graded groups for any $\mathcal{M}\in\textup{obj}(\mathsf{Q})$ \textup(which is natural with respect to left composition with module homomorphisms\textup). Together with the previous point, it follows that $\textup{Hom}_{H(\mathsf{Q})}(\mathcal{Z},\mathcal{Z})\cong p\circ\textup{Hom}_{H(\mathcal{A})}(Y,Y)\circ p$. 
	\end{enumerate}
\end{Lem}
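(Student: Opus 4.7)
The plan is to compute $H(\mathcal{Z})(X)$ via the spectral sequence of the natural $q$-filtration on $\mathcal{Z}(X) = \textup{hom}_\mathcal{A}(X,Y)[q]$. Let $F_k\mathcal{Z}(X)$ be the increasing subcomplex filtration by polynomial $q$-degree $\leq k$; inspection of $\mu_\mathcal{Z}^1$ shows that every contribution from the $\wp$-sum (via $\delta_q^{s_2+\cdots+s_r}$ with $s_2+\cdots+s_r \geq 1$), from $(b(q)-b(-q))/(2q)$, and from the coefficient-wise action of $\mu_\mathcal{A}^1$ either preserves or strictly decreases $q$-degree, so $F_\bullet$ is a filtration by subcomplexes. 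The associated $E_0$ page identifies with $\bigl(q^k\textup{hom}_\mathcal{A}(X,Y), \mu_\mathcal{A}^1\bigr)$, yielding $E_1 \cong \textup{Hom}_{H(\mathcal{A})}(X,Y)[q]$.

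Next I would compute $d_1$ by isolating the pieces of $\mu_\mathcal{Z}^1$ that drop $q$-degree by exactly one: the $r=2$, $s_2=1$ term of the $\wp$-sum, which (modulo sign) gives right composition with $p = \langle \wp^1\rangle$, plus the $(b(q)-b(-q))/(2q)$-term, which contributes the identity on odd $q$-powers and vanishes on even ones. Using the idempotent decomposition $\textup{Hom}_{H(\mathcal{A})}(X,Y) = p\circ\textup{Hom}_{H(\mathcal{A})}(X,Y) \oplus \ker(p)$ (valid because $\mathbb{K}$ must have $\text{char} \neq 2$, as already presupposed by the $1/(2q)$ denominator), a short direct calculation shows that $(E_1, d_1)$ is acyclic outside $q$-degree $0$, where its cohomology equals $p\circ\textup{Hom}_{H(\mathcal{A})}(X,Y)$: the alternating parity mechanism collapses both the $\ker(p)$ strand at every level and the transient $q^{\geq 1}$ modes on $\textup{im}(p)$. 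Since the $E_2$ page is concentrated in a single filtration degree, the spectral sequence degenerates, yielding the first isomorphism in (i). The summand embedding into $H(\Upsilon(Y))(X) = \textup{Hom}_{H(\mathcal{A})}(X,Y)$ and the $H(\mathcal{A})$-module structure both follow by naturality of the filtration under right multiplication by $\langle a_0 \rangle$.

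For (ii) I would run the analogous argument on the complex $\textup{hom}_\mathsf{Q}(\mathcal{Z}, \mathcal{M})$, filtered by the order at which pre-module homomorphisms vanish on the $q$-filtration of $\mathcal{Z}$. On the $E_0$ page, restricting a pre-module map to the shifted-Yoneda ``constant'' piece $q^k \cdot \Upsilon(Y)$ identifies each graded slice with $\mathcal{M}(Y)[k]$ via Lemma \ref{Yonedalemma}, giving $E_1 \cong H(\mathcal{M})(Y)[q]$. The same combinatorial mechanism --- now with right composition by $p$ replaced by the $\mathcal{M}$-module action of $p$ from the right --- isolates the subspace $H(\mathcal{M})(Y)\circ p$ on the $E_\infty$ page, and naturality in $\mathcal{M}$ is built into the construction. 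Specializing $\mathcal{M} = \mathcal{Z}$ and combining with (i) then yields $\textup{Hom}_{H(\mathsf{Q})}(\mathcal{Z},\mathcal{Z}) \cong p\circ\textup{Hom}_{H(\mathcal{A})}(Y,Y)\circ p$. The main obstacle will be sign bookkeeping in the $d_1$ calculation: one must confirm that the Koszul and $\delta_q$-signs align so that the $p$-multiplication and identity contributions really combine to a projection onto $\textup{im}(p)$ rather than $\ker(p)$; a secondary concern is verifying convergence of the hom-complex spectral sequence in (ii), which is ensured by the fact that a pre-module homomorphism is determined componentwise by finitely many tensor inputs per arity.
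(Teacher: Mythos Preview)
Your proposal is correct and follows exactly the approach the paper indicates: the paper does not give a proof but only the one-line hint ``Recurring to filtrations of $\mathcal{Z}(X)$ and studying the associated spectral sequences, one eventually obtains the following characterizations'', deferring to \cite{[Sei08]} for details, and your $q$-degree filtration with its $E_1$/$d_1$ analysis is precisely that argument made explicit. Your closing caveats about sign bookkeeping and convergence are apt but routine to resolve along the lines you describe.
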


\begin{Cor}\label{Zisimage}								
	Consider the abstract image $\mathcal{Z}$ of an idempotent up to homotopy $\wp$ for some $Y\in\textup{obj}(\mathcal{A})$, and choose $p\in\textup{Hom}_{H(\mathcal{A})}^0(Y,Y)$ such that $\langle\wp^1\rangle = p$\break is idempotent. Then $\mathcal{Z}$ is the image of $\langle\Upsilon^1(\wp^1)\rangle$ in $H^0(\mathsf{Q})$, in the sense of Definition \ref{imageofidem}. In particular, $\mathcal{Z}$ only depends on the class $p$ \textup(and not its representative $\wp^1$\textup), up to isomorphism in $H^0(\mathsf{Q})$. 
\end{Cor}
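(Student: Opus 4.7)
The strategy is to exhibit a retraction $r\colon\mathcal{Y}\to\mathcal{Z}$ and an inclusion $k\colon\mathcal{Z}\to\mathcal{Y}$ (where $\mathcal{Y}=\Upsilon(Y)$) whose cohomology classes verify $\langle r\rangle\circ\langle k\rangle=\textup{id}_\mathcal{Z}$ and $\langle k\rangle\circ\langle r\rangle=\langle\Upsilon^1(\wp^1)\rangle$, matching the diagram \eqref{psplit} in Definition \ref{imageofidem}. Rather than writing $k$ and $r$ out by hand from the definition of $\mathcal{Z}$, the plan is to read them off the canonical identifications already packaged in Lemma \ref{aboutcohomZ} and the Yoneda Lemma \ref{Yonedalemma}, and to use naturality of these identifications to reduce the two compositions to multiplications in the algebra $\textup{Hom}_{H(\mathcal{A})}^0(Y,Y)$.

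First I would define the two classes. The Yoneda Lemma applied to $\mathcal{M}=\mathcal{Z}$, composed with the description in Lemma \ref{aboutcohomZ}(i), gives $\textup{Hom}_{H(\mathsf{Q})}^0(\mathcal{Y},\mathcal{Z})\cong H^0(\mathcal{Z}(Y))\cong p\circ\textup{Hom}_{H(\mathcal{A})}^0(Y,Y)$; take $\langle r\rangle$ to correspond to $p\circ\textup{id}_Y=p$. Dually, Lemma \ref{aboutcohomZ}(ii) with $\mathcal{M}=\mathcal{Y}$ gives $\textup{Hom}_{H(\mathsf{Q})}^0(\mathcal{Z},\mathcal{Y})\cong \textup{Hom}_{H(\mathcal{A})}^0(Y,Y)\circ p$; take $\langle k\rangle$ to correspond to $\textup{id}_Y\circ p=p$. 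For the first composition, the Yoneda Lemma gives $\textup{Hom}_{H(\mathsf{Q})}^0(\mathcal{Y},\mathcal{Y})\cong H^0(\mathcal{Y}(Y))=\textup{Hom}_{H(\mathcal{A})}^0(Y,Y)$, and naturality with respect to left composition by module homomorphisms, recorded in equation \eqref{lambdaproperties}, sends $\langle k\rangle\circ\langle r\rangle$ to the product $p\cdot p=p$; hence $\langle k\rangle\circ\langle r\rangle=H(\Upsilon)(p)=\langle\Upsilon^1(\wp^1)\rangle$. For the second composition, Lemma \ref{aboutcohomZ}(ii) applied this time to $\mathcal{M}=\mathcal{Z}$ provides $\textup{Hom}_{H(\mathsf{Q})}^0(\mathcal{Z},\mathcal{Z})\cong p\circ\textup{Hom}_{H(\mathcal{A})}^0(Y,Y)\circ p$, and the same naturality turns $\langle r\rangle\circ\langle k\rangle$ into $p\circ p\circ p=p$. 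Since the unit $\textup{id}_Y$ of $H(\mathcal{A})$ descends to the distinguished element $p=p\circ\textup{id}_Y\circ p$ on the right-hand side, that element is precisely the image of $\textup{id}_\mathcal{Z}$ under the isomorphism, so $\langle r\rangle\circ\langle k\rangle=\textup{id}_\mathcal{Z}$. This verifies both conditions in Definition \ref{imageofidem}.

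The second clause of the corollary is then automatic: images of idempotents are unique up to canonical isomorphism (as observed right after Definition \ref{imageofidem}), and the class $\langle\Upsilon^1(\wp^1)\rangle=H(\Upsilon)(p)$ depends only on $p=\langle\wp^1\rangle$, not on the chain-level representative $\wp^1$; so any two abstract images attached to different idempotents up to homotopy lifting the same $p$ are canonically isomorphic in $H^0(\mathsf{Q})$. The main obstacle I expect is not the construction of $k$ and $r$, which is purely formal, but the bookkeeping in the naturality argument: one must check that the isomorphisms provided by Lemma \ref{aboutcohomZ} are natural with respect to both left and right composition of module homomorphisms, so that the multiplicative structure of $\textup{Hom}_{H(\mathcal{A})}^0(Y,Y)$ genuinely computes compositions in $H^0(\mathsf{Q})$. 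This boils down to unpacking \eqref{lambdaproperties} together with the spectral-sequence descriptions used to prove Lemma \ref{aboutcohomZ}, where signs and degree conventions are the only real source of friction.
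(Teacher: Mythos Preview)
Your proposal is correct and in fact takes a cleaner route than the paper's. The paper's proof, as written, begins by applying $H(\Upsilon)$ to a splitting triple $(Z,r,k)$ for $p$ in $H^0(\mathcal{A})$, obtains the diagram \eqref{wpsplit} with $\Upsilon(Z)$, $\rho=H(\Upsilon)(r)$, $\kappa=H(\Upsilon)(k)$, and then uses Lemma~\ref{aboutcohomZ}(ii) to exhibit an isomorphism $\mathcal{Z}\cong\Upsilon(Z)$ in $H^0(\mathsf{Q})$. This presupposes that $p$ already splits in $H^0(\mathcal{A})$, which is not part of the hypotheses and is exactly what one is trying to circumvent (the corollary is later invoked to conclude that $H^0(\mathsf{Q})$ is split-closed regardless of whether $H^0(\mathcal{A})$ is). Your argument avoids this circularity: you manufacture $\langle r\rangle\in\textup{Hom}_{H(\mathsf{Q})}^0(\mathcal{Y},\mathcal{Z})$ and $\langle k\rangle\in\textup{Hom}_{H(\mathsf{Q})}^0(\mathcal{Z},\mathcal{Y})$ directly from the identifications in Lemma~\ref{aboutcohomZ} and the Yoneda Lemma, and reduce both composites to the single relation $p^2=p$ in $\textup{Hom}_{H(\mathcal{A})}^0(Y,Y)$. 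This is the argument one actually wants.

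Your caveat about bookkeeping is well placed. The step $\langle k\rangle\circ\langle r\rangle=H(\Upsilon)(p)$ needs only the Yoneda-side naturality recorded in \eqref{lambdaproperties}. The step $\langle r\rangle\circ\langle k\rangle=\textup{id}_\mathcal{Z}$ needs the naturality of the isomorphism in Lemma~\ref{aboutcohomZ}(ii) with respect to \emph{left} composition by module homomorphisms (stated there) together with the identification of $\textup{id}_\mathcal{Z}$ with the element $p\in p\circ\textup{Hom}_{H(\mathcal{A})}^0(Y,Y)\circ p$; both are routine once one unwinds the spectral-sequence description of $\mathcal{Z}$. No new idea is required beyond what you outlined.
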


\begin{proof}
	Writing $\mathcal{Y}=\Upsilon(Y)\in\textup{obj}(\mathsf{Q})$ for the Yoneda module, observe that $p\in H(\mathcal{Y}(Y))=H(\mathcal{Y})(Y)$. We check that the class $\langle\Upsilon^1(\wp^1)\rangle = H(\Upsilon)\langle\wp^1\rangle=H(\Upsilon)(p)\in\textup{Hom}_{H(\mathsf{Q})}^0(\mathcal{Y},\mathcal{Y})$ is an idempotent endomorphism of $\mathcal{Y}$ in $H^0(\mathsf{Q})$. This is actually immediate from functoriality of $H(\Upsilon)$ and idempotency of $p$: $H(\Upsilon)(p)\circ H(\Upsilon)(p)= H(\Upsilon)(p\circ p) = H(\Upsilon)(p)$. Applying $H(\Upsilon)$ to the diagram \eqref{psplit}, we obtain:
	\begin{equation}\label{wpsplit}
	\begin{tikzcd}
	\Upsilon(Z)\arrow[r, yshift=1ex, "\kappa"]\arrow[loop left, distance=1.5em, start anchor={[yshift=1ex]west}, out=180, end anchor={[yshift=-0.5ex]west}, in=180, "\textup{id}_{\Upsilon(Z)}"']
	& \mathcal{Y}\arrow[l, yshift=-0.5ex, "\rho"]\!\arrow[loop right, distance=1.5em, start anchor={[yshift=-0.5ex]east}, out=0, end anchor={[yshift=1ex]east}, in=0, "\langle\Upsilon^1(\wp^1)\rangle"']
	\end{tikzcd}
	\end{equation}
	in $H^0(\mathsf{Q})$, where we set $\rho\coloneqq H(\Upsilon)(r)$ and $\kappa\coloneqq H(\Upsilon)(k)$. Recall that $\textup{id}_{\Upsilon(Z)}$ is induced by the strict unit $e_{\Upsilon(Z)}\in\textup{hom}_\mathsf{Q}(\Upsilon(Z),\Upsilon(Z))$ as described in Section \ref{ch2.8}. The two characterizing relations for images are trivially fulfilled, again by functoriality of $H(\Upsilon)$. 
	
	It remains to show that $\mathcal{Z}\cong\Upsilon(Z)$ in $H^0(\mathsf{Q})$. By Lemma \ref{aboutcohomZ}, we have $\textup{Hom}_{H(\mathsf{Q})}(\mathcal{Z},\Upsilon(Z))\cong H(\Upsilon(Z)(Y))\circ p = \textup{Hom}_{H(\mathcal{A})}^0(Y,Z)\circ p$. Thus, an isomorphism in the right-hand side suffices: one such is given by $r\circ p$, with inverse $k\in\text{Hom}_{H(\mathcal{A})}^0(Z,Y)$. 
\end{proof}

As a consequence, $H^0(\mathsf{Q})$ is a split-closed ordinary category: any idempotent endomorphism in $H^0(\mathsf{Q})$ identifies an idempotent in $H^0(\mathcal{A})$ (again, by bijectivity and functoriality of $H(\Upsilon)$), whose image in $H^0(\mathsf{Q})$ Corollary \ref{Zisimage} just proved to split.

\subsection{The split-closed derived category}\label{ch4.3}

\begin{Def}								
	An $A_\infty$-category $\mathcal{A}$ is \textbf{split-closed}\index{aaa@$A_\infty$-category!split-closed} if the abstract image $\mathcal{Z}$ of any idempotent up to homotopy is quasi-represented by some $Y\in\textup{obj}(\mathcal{A})$, that is, if we can find an isomorphism $\langle t\rangle:\Upsilon(Y)=\mathcal{Y}\xrightarrow{_\sim}\mathcal{Z}$ in $H^0(\mathsf{Q})$. 
	
	A \textbf{split-closure}\index{split-closure!of an $A_\infty$-category} of $\mathcal{A}$ is a pair $(\mathcal{B},\mathcal{F})$ where $\mathcal{B}$ is a split-closed $A_\infty$-category and $\mathcal{F}:\mathcal{A}\rightarrow\mathcal{B}$ a cohomologically full and faithful $A_\infty$-functor such that for all $Z\in\textup{obj}(\mathcal{B})$ there exists a $Y\in\textup{obj}(\mathcal{A})$ making $Z$ the image of some idempotent $p\in\textup{Hom}_{H(\mathcal{B})}(\mathcal{F}(Y),\mathcal{F}(Y))$ in $H^0(\mathcal{B})$ (in the classical sense). 
\end{Def}

\begin{Lem}\label{splitclosediff}					
	An $A_\infty$-category $\mathcal{A}$ is split-closed if and only if $H^0(\mathcal{A})$ is \textup(in the classical sense\textup).
\end{Lem}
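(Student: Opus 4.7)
The plan is to relate split-closedness of $\mathcal{A}$ to that of $H^0(\mathcal{A})$ by transporting the picture along the Yoneda embedding $\Upsilon:\mathcal{A}\rightarrow\mathsf{Q}=mod(\mathcal{A})$, using Lemma \ref{forwpp} and Corollary \ref{Zisimage} as the bridge between classical idempotents and idempotents up to homotopy.

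For ``$\Rightarrow$'', assume $\mathcal{A}$ is split-closed and let $p\in\textup{Hom}_{H^0(\mathcal{A})}(Y,Y)$ be an idempotent. First I would invoke Lemma \ref{forwpp} to promote $p$ to an idempotent up to homotopy $\wp:\mathbb{K}\rightarrow\mathcal{A}$ for $Y$ with $\langle\wp^1\rangle=p$. By hypothesis, its abstract image $\mathcal{Z}$ is quasi-represented by some $Z\in\textup{obj}(\mathcal{A})$, i.e.\ $\Upsilon(Z)\cong\mathcal{Z}$ in $H^0(\mathsf{Q})$. Corollary \ref{Zisimage} identifies $\mathcal{Z}$ as the classical image of $H(\Upsilon)(p)=\langle\Upsilon^1(\wp^1)\rangle$ in $H^0(\mathsf{Q})$, so the diagram \eqref{wpsplit} exhibits $\Upsilon(Z)$ as the splitting of $H(\Upsilon)(p)$ via retraction $\rho$ and inclusion $\kappa$. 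Since $\Upsilon$ is cohomologically full and faithful (as recorded in Section \ref{ch2.9}), $H(\Upsilon)$ is bijective on Hom-sets, so I can choose lifts $r\in\textup{Hom}_{H^0(\mathcal{A})}(Y,Z)$ of $\rho$ and $k\in\textup{Hom}_{H^0(\mathcal{A})}(Z,Y)$ of $\kappa$; functoriality of $H(\Upsilon)$ then forces $r\circ k=\textup{id}_Z$ and $k\circ r=p$, showing $p$ splits.

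For ``$\Leftarrow$'', assume $H^0(\mathcal{A})$ is split-closed and let $\wp$ be an idempotent up to homotopy for some $Y\in\textup{obj}(\mathcal{A})$. Then $p\coloneqq\langle\wp^1\rangle$ is an idempotent in $H^0(\mathcal{A})$, so by hypothesis it splits as $(Z,r,k)$ for some $Z\in\textup{obj}(\mathcal{A})$. Applying $H(\Upsilon)$ transports this splitting into $H^0(\mathsf{Q})$: the pair $(\Upsilon(Z),H(\Upsilon)(r),H(\Upsilon)(k))$ splits $H(\Upsilon)(p)=\langle\Upsilon^1(\wp^1)\rangle$. Now Corollary \ref{Zisimage} says that $\mathcal{Z}$ is also an image of $\langle\Upsilon^1(\wp^1)\rangle$ in $H^0(\mathsf{Q})$, and by uniqueness of images up to canonical isomorphism (noted right after Definition \ref{imageofidem}) we obtain $\Upsilon(Z)\cong\mathcal{Z}$ in $H^0(\mathsf{Q})$. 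Hence $Z$ quasi-represents $\mathcal{Z}$, and $\mathcal{A}$ is split-closed.

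No real obstacle arises; the only delicate point is to remember that the Yoneda embedding being cohomologically full and faithful is precisely what lets Hom-set bijections carry splittings back and forth between $H^0(\mathcal{A})$ and $H^0(\mathsf{Q})$, and that Corollary \ref{Zisimage} is what guarantees $\mathcal{Z}$ and $\Upsilon(Z)$ split the \emph{same} idempotent and are therefore canonically isomorphic.
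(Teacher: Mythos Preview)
Your proof is correct and follows essentially the same strategy as the paper: transport the problem along the cohomologically full and faithful Yoneda embedding, use Lemma~\ref{forwpp} to pass between classical idempotents and idempotents up to homotopy, and invoke Corollary~\ref{Zisimage} together with uniqueness of images. The only minor difference is in the ``$\Rightarrow$'' direction: the paper extracts the retraction and inclusion in $H^0(\mathcal{A})$ by directly appealing to the isomorphism $\textup{Hom}_{H(\mathsf{Q})}^0(\mathcal{Z},\mathcal{X})\cong\textup{Hom}_{H(\mathcal{A})}^0(Y,X)\circ p$ from Lemma~\ref{aboutcohomZ}, whereas you obtain them by lifting $\rho,\kappa$ back through the bijection $H(\Upsilon)$ on Hom-sets --- your route is arguably cleaner and avoids the somewhat opaque ``take $Z\coloneqq\text{im}(r)$'' step in the paper.
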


\begin{proof}
	If $H^0(\mathcal{A})$ is split-closed and we are given an idempotent up to homotopy $\wp:\mathbb{K}\rightarrow\mathcal{A}$ with $\wp(\ast)=Y\in\textup{obj}(\mathcal{A})$, we first apply Lemma \ref{forwpp} to find an idempotent $p=\langle\wp^1\rangle\in\textup{Hom}_{H(\mathcal{A})}^0(Y,Y)$, which splits, by assumption. Applying $H(\Upsilon)$ to the corresponding diagram \eqref{psplit}, we know that $Z$ is a valid quasi-representative for the abstract image $\mathcal{Z}$ of $\wp$ (the necessary isomorphism $\langle t\rangle\in\textup{Hom}_{H(\mathsf{Q})}(\Upsilon(Z),\mathcal{Z})$ is the one constructed in Corollary \ref{Zisimage}). Therefore, $\mathcal{A}$ is split-closed.
	
	Conversely, assume that $\mathcal{A}$ is split-closed and consider an idempotent $p=\langle\wp^1\rangle\in\textup{Hom}_{H(\mathcal{A})}(Y,Y)$ (using again Lemma \ref{forwpp}). By assumption, there exists some $X\in\textup{obj}(\mathcal{A})$ whose Yoneda module $\mathcal{X}\in\textup{obj}(\mathsf{Q})$ quasi-represents $\mathcal{Z}$ of $\wp$. By Lemma \ref{aboutcohomZ}, $\textup{Hom}_{H(\mathsf{Q})}^0(\mathcal{Z},\mathcal{X})\cong H^0(\mathcal{X})(Y)\circ p = \textup{Hom}_{H(\mathcal{A})}^0(Y,X)\circ p$, hence we can find an isomorphism $r=\tilde{r}\circ p:Y\rightarrow X$ in $H^0(\mathcal{A})$. Take $Z\coloneqq \text{im}(r)$ and $k\coloneqq p\circ r^{-1}|_Z$, then we have $k\circ r= p$ and $r\circ k=\tilde{r}\circ p\circ r^{-1}|_Z = \textup{id}_Z$. Therefore, any $p$ splits via such a triple $(Z,r,k)$. This makes $H^0(\mathcal{A})$ split-closed. 
\end{proof}

\begin{Cor}\label{splitclosureiff}					
	Let $\mathcal{A}$ and $\mathcal{B}$ be $A_\infty$-categories, $\mathcal{F}:\mathcal{A}\rightarrow\mathcal{B}$ an $A_\infty$-functor. The pair $(\mathcal{B},\mathcal{F})$ is a split-closure of $\mathcal{A}$ if and only if $(H^0(\mathcal{B}),H^0(\mathcal{F}))$ is a split-closure of $H^0(\mathcal{A})$.
\end{Cor}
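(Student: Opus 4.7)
The plan is to observe that Definition of split-closure (for categories and for $A_\infty$-categories) splits into three separately verifiable conditions, and to check that each of the three matches under the passage $\mathcal{A}\rightsquigarrow H^0(\mathcal{A})$, $\mathcal{B}\rightsquigarrow H^0(\mathcal{B})$, $\mathcal{F}\rightsquigarrow H^0(\mathcal{F})$. Since $\textup{obj}(\mathcal{C})=\textup{obj}(H^0(\mathcal{C}))$ for any $A_\infty$-category $\mathcal{C}$, and $H^0(\mathcal{F})$ acts on objects as $\mathcal{F}$ does, no translation of object data is needed; only the three structural conditions need be matched.

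First I would dispose of split-closedness of $\mathcal{B}$: by Lemma \ref{splitclosediff} applied to $\mathcal{B}$, $\mathcal{B}$ is split-closed (as $A_\infty$-category) if and only if $H^0(\mathcal{B})$ is split-closed (as an ordinary category). Next, for the fully-faithful/cohomologically full-and-faithful condition, I would use Definition \ref{cohomfunctor}: $\mathcal{F}$ being cohomologically full and faithful means precisely that $H(\mathcal{F})$ is full and faithful as a graded functor $H(\mathcal{A})\rightarrow H(\mathcal{B})$, and recall that by the last remark of Definition \ref{cohomfunctor} this restricts to $H^0(\mathcal{F}):H^0(\mathcal{A})\rightarrow H^0(\mathcal{B})$, giving full-faithfulness at degree $0$ as well; conversely, when working with c-unital $A_\infty$-categories, the bijections induced by $H^0(\mathcal{F})$ on degree-$0$ morphism spaces extend canonically through the graded morphism spaces, which is what makes the equivalence a true ``iff'' in this setting. (Injectivity on objects, which is part of the word ``embedding'' in Definition 4.2, transfers trivially since objects are unchanged.)

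Finally, the image-of-idempotent condition is \emph{literally the same} on both sides: both definitions demand that every $Z\in\textup{obj}(\mathcal{B})=\textup{obj}(H^0(\mathcal{B}))$ arise as the image, in the ordinary category $H^0(\mathcal{B})$, of an idempotent endomorphism $p\in\textup{Hom}_{H(\mathcal{B})}^0(\mathcal{F}(Y),\mathcal{F}(Y))=\textup{Hom}_{H^0(\mathcal{B})}(H^0(\mathcal{F})(Y),H^0(\mathcal{F})(Y))$ for some $Y\in\textup{obj}(\mathcal{A})=\textup{obj}(H^0(\mathcal{A}))$. Thus this third clause transfers tautologically in both directions.

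Putting the three matches together yields the biconditional. The only step requiring nontrivial content is the first (split-closedness), which is precisely Lemma \ref{splitclosediff}; the main conceptual obstacle is therefore already resolved by that lemma, so all that is left here is the bookkeeping of transporting conditions (ii) and (iii) across $H^0$, which is routine.
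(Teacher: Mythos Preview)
Your decomposition into three conditions and your appeal to Lemma \ref{splitclosediff} for the first one is exactly what the paper intends: the corollary is stated without proof precisely because conditions (i) and (iii) transfer tautologically and (i) is the content of the preceding lemma. Your treatment of (i) and (iii) is correct.

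There is, however, a genuine gap in your handling of condition (ii). You correctly observe that ``cohomologically full and faithful'' means that $H(\mathcal{F})$ is full and faithful on the \emph{entire graded} Hom-spaces $\textup{Hom}_{H(\mathcal{A})}(X_0,X_1)=\bigoplus_p H^p(\textup{hom}_\mathcal{A}(X_0,X_1))$, whereas full-faithfulness of $H^0(\mathcal{F})$ concerns only degree $0$. One direction is immediate; the converse is not, and your sentence that ``the bijections induced by $H^0(\mathcal{F})$ on degree-$0$ morphism spaces extend canonically through the graded morphism spaces'' is unjustified and in fact false in general. For a counterexample, take two one-object c-unital $A_\infty$-categories (unital $A_\infty$-algebras) and an $A_\infty$-morphism inducing an isomorphism on $H^0$ but not on $H^1$; c-unitality alone provides no mechanism to propagate degree-$0$ bijectivity to higher degrees. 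So the ``if'' direction of the biconditional, as you have argued it, does not go through.

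This is a real subtlety rather than a cosmetic one. The paper does not address it, and in the only place the corollary is actually invoked (Proposition \ref{uniqueclosure}), the functor in question is $\Upsilon$, which is \emph{already known independently} to be cohomologically full and faithful --- so the issue never bites in practice. You should either flag the discrepancy explicitly, or weaken the ``if'' direction to assume that $\mathcal{F}$ is cohomologically full and faithful from the outset (which then makes condition (ii) trivially match).
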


Since $H^0(\mathsf{Q})$ is split-closed, we immediately deduce that $\mathsf{Q}$ is a split-closed $A_\infty$-category. Therein lies the unique split-closure of $\mathcal{A}$:

\begin{Pro}\label{uniqueclosure}			 	
	Every $A_\infty$-category $\mathcal{A}$ has a split-closure $(\tilde{\mathcal{B}},\tilde{\mathcal{F}})$. Given another such, say $(\mathcal{B},\mathcal{F})$, there is a quasi-equivalence $\mathcal{G}:\mathcal{B}\rightarrow\tilde{\mathcal{B}}$ such that $\mathcal{G}\circ\mathcal{F}\cong\tilde{\mathcal{F}}\in\textup{obj}\big(H^0(fun(\mathcal{A},\tilde{\mathcal{B}}))\big)$.
\end{Pro}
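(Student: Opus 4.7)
The plan is to produce a canonical split-closure of $\mathcal{A}$ sitting inside $\mathsf{Q}=mod(\mathcal{A})$, and then upgrade the classical uniqueness of split-closures to the desired $A_\infty$-quasi-equivalence. For the existence, I would set $\tilde{\mathcal{B}}\subset\mathsf{Q}$ to be the full $A_\infty$-subcategory whose objects are the $A_\infty$-modules isomorphic in $H^0(\mathsf{Q})$ to some abstract image $\mathcal{Z}$ of an idempotent up to homotopy $\wp:\mathbb{K}\rightarrow\mathcal{A}$; equivalently, by Corollary \ref{Zisimage}, the direct summands in $H^0(\mathsf{Q})$ of Yoneda modules $\mathcal{Y}=\Upsilon_\mathcal{A}(Y)$, for $Y\in\textup{obj}(\mathcal{A})$. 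Take $\tilde{\mathcal{F}}\coloneqq\Upsilon_\mathcal{A}$, which factors through $\tilde{\mathcal{B}}$ since $\mathcal{Y}$ is itself the image of $\textup{id}_\mathcal{Y}$ coming from the idempotent up to homotopy with $\wp^1=e_Y$. The Yoneda Lemma (Lemma \ref{Yonedalemma}) makes $\tilde{\mathcal{F}}$ cohomologically full and faithful, and by construction every object of $\tilde{\mathcal{B}}$ is the image in $H^0(\tilde{\mathcal{B}})$ of an idempotent endomorphism on some $\tilde{\mathcal{F}}(Y)$.

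The essential existence step is then to verify that $\tilde{\mathcal{B}}$ is split-closed. Given an idempotent $\bar{q}\in\textup{Hom}_{H(\tilde{\mathcal{B}})}^0(\mathcal{Z},\mathcal{Z})$, pick an inclusion-retraction pair $\kappa:\mathcal{Z}\rightarrow\mathcal{Y}$, $\rho:\mathcal{Y}\rightarrow\mathcal{Z}$ witnessing $\mathcal{Z}$ as a direct summand of $\mathcal{Y}=\Upsilon(Y)$; then $\tilde{q}\coloneqq\kappa\circ\bar{q}\circ\rho\in\textup{Hom}_{H(\mathsf{Q})}^0(\mathcal{Y},\mathcal{Y})$ is again idempotent, and its image in $H^0(\mathsf{Q})$ is isomorphic to that of $\bar{q}$ by the splitting-triple argument of Section \ref{ch4.1}. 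By cohomological full-faithfulness of $\Upsilon$, $\langle\tilde{q}\rangle=H(\Upsilon)(q')$ for a unique idempotent $q'\in\textup{Hom}_{H(\mathcal{A})}^0(Y,Y)$, which by Lemma \ref{forwpp} equals $\langle\wp'^1\rangle$ for some idempotent up to homotopy $\wp'$. Corollary \ref{Zisimage} then identifies the image of $\tilde{q}$ (hence of $\bar{q}$) with the abstract image of $\wp'$, an object of $\tilde{\mathcal{B}}$ by definition, yielding the required splitting of $\bar{q}$ inside $\tilde{\mathcal{B}}$.

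For the uniqueness, given another split-closure $(\mathcal{B},\mathcal{F})$, the candidate is $\mathcal{G}\coloneqq\mathcal{F}^*\circ\Upsilon_\mathcal{B}:\mathcal{B}\rightarrow\tilde{\mathcal{B}}$, with $\mathcal{F}^*:mod(\mathcal{B})\rightarrow mod(\mathcal{A})$ the pullback of Definition \ref{pullback}. Remark \ref{commutingYoneda} applied to the cohomologically full and faithful $\mathcal{F}$ gives $\mathcal{G}\circ\mathcal{F}=\mathcal{F}^*\circ\Upsilon_\mathcal{B}\circ\mathcal{F}\cong\Upsilon_\mathcal{A}=\tilde{\mathcal{F}}$ in $H^0(fun(\mathcal{A},\tilde{\mathcal{B}}))$. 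That $\mathcal{G}$ actually lands in $\tilde{\mathcal{B}}$ follows because any $Y'\in\textup{obj}(\mathcal{B})$ is, by assumption, the image of an idempotent on some $\mathcal{F}(X)$, so $\mathcal{G}(Y')$ is a direct summand in $H^0(\mathsf{Q})$ of $\mathcal{G}(\mathcal{F}(X))\cong\Upsilon_\mathcal{A}(X)$, hence an object of $\tilde{\mathcal{B}}$. To conclude that $\mathcal{G}$ is a quasi-equivalence, I would match morphism spaces: for $Y_0', Y_1'\in\textup{obj}(\mathcal{B})$ arising as images of idempotents $p_0, p_1$ on $\mathcal{F}(X_0), \mathcal{F}(X_1)$, both $\textup{Hom}_{H(\mathcal{B})}(Y_0', Y_1')$ and $\textup{Hom}_{H(\tilde{\mathcal{B}})}(\mathcal{G}(Y_0'), \mathcal{G}(Y_1'))$ (using Lemma \ref{aboutcohomZ} on the target side) identify with $p_1\circ\textup{Hom}_{H(\mathcal{B})}(\mathcal{F}(X_0), \mathcal{F}(X_1))\circ p_0$, so cohomological full-faithfulness of $\mathcal{F}$ transfers to $\mathcal{G}$; essential surjectivity follows since any $\mathcal{Z}\in\textup{obj}(\tilde{\mathcal{B}})$ is a direct summand of $\Upsilon_\mathcal{A}(Y)\cong\mathcal{G}(\mathcal{F}(Y))$ via an idempotent that, by cohomological fullness of $\mathcal{G}$, lifts to an idempotent on $\mathcal{F}(Y)$ in $H^0(\mathcal{B})$ and splits in the split-closed $\mathcal{B}$, producing a preimage $Y'$ with $\mathcal{G}(Y')\cong\mathcal{Z}$.

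I expect the main obstacle to be the split-closedness verification for $\tilde{\mathcal{B}}$ in the second paragraph: one must juggle idempotents across three layers -- $\tilde{\mathcal{B}}\subset\mathsf{Q}$, the ambient $\mathsf{Q}$, and the idempotent-up-to-homotopy description on $\mathcal{A}$ -- and confirm that the image of the originally chosen $\bar{q}$ (not merely of the auxiliary $\tilde{q}=\kappa\bar{q}\rho$) ends up represented inside $\tilde{\mathcal{B}}$. The naturality of how inclusion/retraction triples transfer between $\bar{q}$ and $\tilde{q}$ supplies this, but is where the bulk of the bookkeeping sits.
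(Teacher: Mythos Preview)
Your proposal is correct and follows essentially the same approach as the paper: the canonical split-closure is the full subcategory $\Pi\mathcal{A}\subset\mathsf{Q}$ of modules isomorphic to abstract images, with $\tilde{\mathcal{F}}=\Upsilon_\mathcal{A}$, and the comparison functor for uniqueness is $\mathcal{G}=\mathcal{F}^*\circ\Upsilon_\mathcal{B}$ together with Remark~\ref{commutingYoneda}. The only differences are packaging: the paper invokes Corollary~\ref{splitclosureiff} to reduce split-closedness of $\tilde{\mathcal{B}}$ to the $H^0$-level (where you argue it directly via $\tilde{q}=\kappa\bar{q}\rho$), and for full-faithfulness of $\mathcal{G}$ the paper observes more briefly that it holds on image objects of $\mathcal{F}$ (since $\Upsilon_\mathcal{A}$ is) and hence on all of $\mathcal{B}$ by the split-closure property, rather than matching Hom-spaces through Lemma~\ref{aboutcohomZ}.
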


\begin{proof}
	Let $\Pi\mathcal{A}\subset\mathsf{Q}$ be the full $A_\infty$-subcategory whose objects are all those $A_\infty$-modules isomorphic to abstract images of some idempotent up to homotopy: 
	\[
	\textup{obj}(\Pi\mathcal{A})\coloneqq\{\mathcal{M}\in\textup{obj}(\mathsf{Q})\mid \exists\wp\text{ with }\mathcal{Z}\cong\mathcal{M}\text{ in }H^0(\mathsf{Q})\}\,.
	\]
	Notice that every Yoneda module $\mathcal{Y}=\Upsilon(Y)\in\textup{obj}(\mathsf{Q})$ lies in $\Pi\mathcal{A}$: by Lemma \ref{forwpp}, there exists some $\wp$ for the idempotent endomorphism $\textup{id}_Y$ such that $\textup{id}_Y=\langle\wp^1\rangle\in\textup{Hom}_{H(\mathcal{A})}^0(Y,Y)$, and whose abstract image $\mathcal{Z}$ fulfills $\textup{Hom}_{H(\mathcal{A})}^0(Y,Y)= H^0(\mathcal{Y})(Y)\circ \textup{id}_Y\cong\textup{Hom}_{H(\mathsf{Q})}^0(\mathcal{Z},\mathcal{Y})$ (by Lemma \ref{aboutcohomZ}); under this isomorphism, $\textup{id}_Y$ is necessarily mapped to an isomorphism $\mathcal{Z}\xrightarrow{\sim}\mathcal{Y}$ in $H^0(\mathsf{Q})$, so that $\mathcal{Y}\in\textup{obj}(\Pi\mathcal{A})$, as claimed. 
	
	By Corollary \ref{splitclosureiff}, $(\tilde{\mathcal{B}},\tilde{\mathcal{F}})\coloneqq (\Pi\mathcal{A},\Upsilon)$ is a valid split-closure of $\mathcal{A}$ because $(H^0(\Pi\mathcal{A}),H^0(\Upsilon))$ is for $H^0(\mathcal{A})$: any idempotent endomorphism in $H^0(\Pi\mathcal{A})$ splits (argue as in Lemma \ref{splitclosediff}) and $H^0(\Upsilon)$ is a full and faithful embedding such that any $\mathcal{M}\cong\mathcal{Z}\in\textup{obj}(H^0(\Pi\mathcal{A}))$ is the image of an idempotent endomorphism of the form $\langle\Upsilon^1(\wp^1)\rangle$ (as in diagram \eqref{wpsplit}).
	
	Consider now another split-closure $(\mathcal{B},\mathcal{F})$ of $\mathcal{A}$. By assumption, $\mathcal{F}$ is c-unital, thus admits a (strictly unital) pullback functor $\mathcal{F}^*:mod(\mathcal{B})\rightarrow\mathsf{Q}$ as in Definition \ref{pullback}. Let $\mathcal{G}\coloneqq \mathcal{F}^*\circ\Upsilon_\mathcal{B}:\mathcal{B}\rightarrow\Pi\mathcal{A}\subset\mathsf{Q}$. Looking at the diagram of Remark \ref{commutingYoneda}, which commutes up to isomorphism, we deduce that $\mathcal{G}\circ\mathcal{F}\cong\Upsilon_\mathcal{A}$ in $H^0(fun(\mathcal{A},\Pi\mathcal{A}))$. In turn, this implies that $\mathcal{G}$ is cohomologically full and faithful if restricted to the image objects of $\mathcal{F}$ (since $\Upsilon_\mathcal{A}$ is), and thus on the whole of $\mathcal{B}$ as well, by definition of split-closure. Just as we argued above, we can prove that any object of $\Pi\mathcal{A}$ is isomorphic to some image object of $\mathcal{G}$, providing essential surjectivity. This makes $\mathcal{G}$ a quasi-equivalence, invertible by Theorem \ref{homotopyinverse}, and hence identifying any two split-closures of $\mathcal{A}$ up to quasi-equivalence. 
\end{proof}

\begin{Lem}\label{splitclosuretriang}	
	The split-closure of a triangulated $A_\infty$-category $\mathcal{A}$ is triangulated.
\end{Lem}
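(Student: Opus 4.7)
The plan is to realize the split-closure concretely as $\Pi\mathcal{A}\subset\mathsf{Q}=mod(\mathcal{A})$, since Proposition \ref{uniqueclosure} tells us every split-closure is quasi-equivalent to this one and being triangulated is preserved under quasi-equivalence (by the proof strategy of Corollary \ref{equivboundercat}). Because $\mathsf{Q}$ is itself a triangulated $A_\infty$-category (Section \ref{ch3.7}), the exact triangles and shifts we need are available in the ambient, and the task reduces to confining them to $\Pi\mathcal{A}$ with help of the classical Lemma \ref{Atildetriang}.

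First I would set up the classical picture in $H^0(\mathsf{Q})$: by Proposition \ref{H0Atriangulated} it is a classical triangulated category, and it is split-closed as noted after Corollary \ref{Zisimage}. The image $H^0(\Upsilon(\mathcal{A}))\subset H^0(\mathsf{Q})$ is a full triangulated subcategory, since $H^0(\Upsilon)$ is fully faithful (cohomological full and faithful of $\Upsilon$), commutes with the translation $T=H^0(SS)$ up to natural isomorphism by Definition \ref{shiftfunc}, and preserves exact triangles by Corollary \ref{HFexact}. Under the identifications of Corollary \ref{Zisimage} and Lemma \ref{splitclosediff}, the objects of $\Pi\mathcal{A}$ are precisely the direct summands in $H^0(\mathsf{Q})$ of objects of $H^0(\Upsilon(\mathcal{A}))$, so Lemma \ref{Atildetriang} applies and shows that $H^0(\Pi\mathcal{A})$ is classically triangulated, closed under extensions inside $H^0(\mathsf{Q})$.

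Next I would verify the three axioms that make $\Pi\mathcal{A}$ a triangulated $A_\infty$-category. Non-emptiness is immediate. For the exact triangle axiom, given a degree zero cocycle $c:\mathcal{M}_0\to\mathcal{M}_1$ with $\mathcal{M}_0,\mathcal{M}_1\in\textup{obj}(\Pi\mathcal{A})$, complete it to an exact triangle in $H(\mathsf{Q})$ via the abstract mapping cone $\mathcal{C}one(c)$ of Definition \ref{abstractconemod}; by the previous paragraph this cone is isomorphic in $H^0(\mathsf{Q})$ to some $\mathcal{M}_2\in\textup{obj}(\Pi\mathcal{A})$, and Corollary \ref{preimagetriangle} applied to the cohomologically full and faithful inclusion $\Pi\mathcal{A}\hookrightarrow\mathsf{Q}$ transfers the resulting exact triangle back into $H(\Pi\mathcal{A})$. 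For the shift axiom, given $\mathcal{M}\in\textup{obj}(\Pi\mathcal{A})$ realized as a summand of $\Upsilon(Y)$ for some $Y\in\textup{obj}(\mathcal{A})$, triangulation of $\mathcal{A}$ furnishes $\tilde Y\in\textup{obj}(\mathcal{A})$ with $Y\cong S\tilde Y$ in $H^0(\mathcal{A})$, so $\Upsilon(Y)\cong SS\Upsilon(\tilde Y)$ in $H^0(\mathsf{Q})$; transporting the splitting idempotent for $\mathcal{M}$ across this isomorphism yields a summand $\tilde{\mathcal{M}}$ of $\Upsilon(\tilde Y)$, hence in $\Pi\mathcal{A}$, with $SS\tilde{\mathcal{M}}\cong\mathcal{M}$.

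The main obstacle is reconciling the two competing descriptions of $\Pi\mathcal{A}$ — as abstract images of idempotents up to homotopy on the one hand, and as direct summands of Yoneda modules in $H^0(\mathsf{Q})$ on the other — so that the classical statement of Lemma \ref{Atildetriang} can be invoked unambiguously. This reconciliation is precisely provided by Corollary \ref{Zisimage} combined with Lemma \ref{splitclosediff}; once it is in place, the remainder of the argument is a routine translation between classical and $A_\infty$-categorical triangulation through Corollaries \ref{HFexact} and \ref{preimagetriangle}.
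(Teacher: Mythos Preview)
Your proposal is correct and follows essentially the same route as the paper: realize the split-closure as $\Pi\mathcal{A}\subset\mathsf{Q}$, note that $H^0(\mathsf{Q})$ is classically triangulated and split-closed, identify $H^0(\Pi\mathcal{A})$ with the direct summands of Yoneda modules so that Lemma~\ref{Atildetriang} applies, and then lift the $A_\infty$-triangulation axioms from the classical statement via Corollary~\ref{preimagetriangle}. The only cosmetic difference is that the paper invokes Remark~\ref{alternenv} to identify $\mathcal{A}$ with the triangulated subcategory $\tilde{\mathsf{Q}}\subset\mathsf{Q}$ generated by Yoneda modules (automatically isomorphism-closed), whereas you work directly with the essential image $H^0(\Upsilon(\mathcal{A}))$; since $\mathcal{A}$ is assumed triangulated these agree up to isomorphism, and your explicit verification of the shift axiom is in fact slightly more careful than the paper's.
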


\begin{proof}
	In Section \ref{ch3.7} we argued that $\mathsf{Q}$ is a triangulated $A_\infty$-category, hence so is $H^0(\mathsf{Q})$ as an ordinary category (by Proposition \ref{H0Atriangulated}), also split-closed by Corollary \ref{Zisimage}. In Remark \ref{alternenv} we identified $\mathcal{A}$ with $\tilde{\mathsf{Q}}\subset\mathsf{Q}$, the triangulated full $A_\infty$-subcategory generated by all Yoneda modules $\mathcal{Y}=\Upsilon(Y)$, then also objects of the triangulated full subcategory $H^0(\mathcal{A})\subset H^0(\mathsf{Q})$. Moreover, by definition, $H^0(\Pi\mathcal{A})\subset H^0(\mathsf{Q})$ is the full subcategory of all objects isomorphic to a direct summand of some $\mathcal{Y}\in\textup{obj}(H^0(\mathcal{A}))$ (by Lemma \ref{aboutcohomZ}, since the cohomological module $H(\mathcal{Z})$ of any abstract image appears in the decomposition of some $H(\mathcal{Y})$). Applying Lemma \ref{Atildetriang}, it follows that $H^0(\Pi\mathcal{A})$ is a triangulated category as well.
	
	But then we easily deduce that $\Pi\mathcal{A}$ itself is a triangulated $A_\infty$-category: it is of course non-empty, any object is isomorphic to a shifted one in $H^0(\Pi\mathcal{A})$ (because it is in $H^0(\mathsf{Q})$, which contains $H^0(\Pi\mathcal{A})$ as \textit{full} subcategory) and any morphism in $H^0(\Pi\mathcal{A})$ can be completed to an exact triangle (it can in $H^0(\mathsf{Q})$, and all its constituents must then belong to the triangulated $H^0(\Pi\mathcal{A})$; consequently, the preimage under $\Pi\mathcal{A}\hookrightarrow\mathsf{Q}$ of this exact triangle is still exact in $H^0(\Pi\mathcal{A})$, by Corollary \ref{preimagetriangle}).    
\end{proof}

\begin{Def}								
	Let $\mathcal{B}$ be a split-closed triangulated $A_\infty$-category, $\mathcal{A}\subset\mathcal{B}$ a full non-empty $A_\infty$-subcategory. The smallest split-closed triangulated full $A_\infty$-subcategory $\tilde{\mathcal{B}}\subset\mathcal{B}$ which is closed under isomorphism and contains $\mathcal{A}$ is called the (triangulated) \textbf{subcategory of $\mathcal{B}$ split-generated by}\index{subcategory split-generated by an $A_\infty$-subcategory} $\mathcal{A}$. 
	
	Specifically, $\textup{obj}(\tilde{\mathcal{B}})$ can be constructed from $\textup{obj}(\mathcal{A})$ by forming all possible mapping cones and shifts, iterating and finally taking images of all possible idempotent endomorphisms. 
	
	$\mathcal{A}$ \textbf{split-generates}\index{split-generate (an $A_\infty$-category)} $\mathcal{B}$ if $\tilde{\mathcal{B}}=\mathcal{B}$ (then $\mathcal{B}$ is a split-closure of $\mathcal{A}$; moreover, $\mathcal{A}$ also \textit{generates} $\mathcal{B}$ in the sense of Definition \ref{triangenvelope}).
\end{Def}

\begin{Def}									
	Let $\mathcal{A}$ be an $A_\infty$-category and consider the split-closure $(\Pi(Tw\mathcal{A}),\Upsilon_{Tw\mathcal{A}})$ of the standard triangulated envelope $(Tw\mathcal{A},\imath)$ of $\mathcal{A}$ (by construction, $\Pi(Tw\mathcal{A})$ is split-generated by $\mathcal{A}$). We call $\mathsf{D^\pi}(\mathcal{A})\coloneqq H^0(\Pi(Tw\mathcal{A}))$ the \textbf{split-closed derived category}\index{split-closed derived category} of $\mathcal{A}$.
\end{Def}

\begin{Cor}									
	Let $\mathcal{B}$ be a split-closed triangulated $A_\infty$-category which is split-generated by a full $A_\infty$-subcategory $\mathcal{A}\subset\mathcal{B}$. Then there is a quasi-equivalence $\Pi(Tw\mathcal{A})\rightarrow\mathcal{B}$, so that $H^0(\mathcal{B})$ is equivalent to $\mathsf{D^\pi}(\mathcal{A})$ as a split-closed triangulated category.
\end{Cor}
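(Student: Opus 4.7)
The plan is to reduce the corollary to uniqueness of split-closures (Proposition \ref{uniqueclosure}) after identifying an auxiliary triangulated envelope of $\mathcal{A}$ sitting inside $\mathcal{B}$, and then transporting the canonical split-closure $(\Pi(Tw\mathcal{A}),\Upsilon_{Tw\mathcal{A}})$ across a quasi-equivalence.

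First, I would let $\mathcal{B}_0\subset\mathcal{B}$ denote the triangulated full $A_\infty$-subcategory generated by $\mathcal{A}$ in the sense of Definition \ref{triangenvelope}. The inclusion $\mathcal{A}\hookrightarrow\mathcal{B}_0$ is cohomologically full and faithful with image objects generating $\mathcal{B}_0$ by construction, so $(\mathcal{B}_0,\textup{incl})$ is a triangulated envelope of $\mathcal{A}$. Lemma \ref{uniqueenvelope} (equivalently, Proposition \ref{envelopeexists}) would then supply a quasi-equivalence $\tilde{\mathcal{F}}:Tw\mathcal{A}\to\mathcal{B}_0$ restricting to the inclusion on $\mathcal{A}$.

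Next, I would check that $(\mathcal{B},\textup{incl})$ is itself a split-closure of $\mathcal{B}_0$. The embedding $\mathcal{B}_0\hookrightarrow\mathcal{B}$ is cohomologically full and faithful. Applying Lemma \ref{Atildetriang} to $H^0(\mathcal{B}_0)\subset H^0(\mathcal{B})$, the full subcategory $\tilde{\mathcal{B}}_0\subseteq H^0(\mathcal{B})$ of direct summands of $H^0(\mathcal{B}_0)$-objects is triangulated; it is also split-closed (being equivalent to the Karoubi completion of $H^0(\mathcal{B}_0)$) and contains $H^0(\mathcal{A})$. By the split-generation hypothesis, the smallest split-closed triangulated subcategory of $H^0(\mathcal{B})$ containing $H^0(\mathcal{A})$ is all of $H^0(\mathcal{B})$, so $\tilde{\mathcal{B}}_0=H^0(\mathcal{B})$. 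Hence every object of $\mathcal{B}$ is the image of an idempotent endomorphism of some $\mathcal{B}_0$-object, and the claim follows from Corollary \ref{splitclosureiff}.

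Finally, I would transport $\Pi(Tw\mathcal{A})$ into a second split-closure of $\mathcal{B}_0$. Let $\tilde{\mathcal{G}}:\mathcal{B}_0\to Tw\mathcal{A}$ be a quasi-inverse of $\tilde{\mathcal{F}}$ provided by Theorem \ref{aboutquasiequiv}; the composition $\Upsilon_{Tw\mathcal{A}}\circ\tilde{\mathcal{G}}:\mathcal{B}_0\to\Pi(Tw\mathcal{A})$ is cohomologically full and faithful, and cohomological essential surjectivity of $\tilde{\mathcal{G}}$ implies that every object of $\Pi(Tw\mathcal{A})$ is the image of an idempotent on an object in the image of this composition. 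So $(\Pi(Tw\mathcal{A}),\Upsilon_{Tw\mathcal{A}}\circ\tilde{\mathcal{G}})$ is a split-closure of $\mathcal{B}_0$. Proposition \ref{uniqueclosure} would then yield a quasi-equivalence $\mathcal{B}\to\Pi(Tw\mathcal{A})$, whose quasi-inverse (Theorem \ref{aboutquasiequiv} again) is the desired quasi-equivalence $\Pi(Tw\mathcal{A})\to\mathcal{B}$. Passing to the zeroth cohomological categories gives the equivalence of triangulated categories via Proposition \ref{H0Atriangulated}; it automatically respects the split-closed structure since equivalences preserve images of idempotent endomorphisms. The principal obstacle is the middle step, where one must recognize that the iterative construction of the split-closed triangulated subcategory generated by $\mathcal{A}$ stabilizes after a single round of Karoubi completion applied to $\mathcal{B}_0$---precisely what Lemma \ref{Atildetriang} encodes.
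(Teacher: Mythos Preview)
Your argument is correct and follows the same strategy as the paper: both rely on the uniqueness statements for triangulated envelopes (Proposition \ref{envelopeexists}) and split-closures (Proposition \ref{uniqueclosure}). The paper's proof is two sentences, simply asserting that $(\mathcal{B},\textup{incl})$ serves simultaneously as triangulated envelope and split-closure of $\mathcal{A}$ and then invoking the two uniqueness results; you have unpacked this by introducing the intermediate $\mathcal{B}_0$ and invoking Lemma \ref{Atildetriang} to justify that one round of Karoubi completion of $\mathcal{B}_0$ already exhausts $\mathcal{B}$, which is exactly the content the paper leaves implicit in its parenthetical remark on split-generation.
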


\begin{proof}
	Together with the inclusion $\mathcal{A}\hookrightarrow\mathcal{B}$, $\mathcal{B}$ is a valid triangulated envelope and split-closure of $\mathcal{A}$. The corresponding uniqueness statements (Propositions \ref{envelopeexists} and \ref{uniqueclosure}) yield the result.
\end{proof}

As a direct consequence of Corollary \ref{equivboundercat}, one can show:

\begin{Cor}						 
	Let $\mathcal{A}$ and $\mathcal{B}$ be c-unital $A_\infty$-categories which are quasi-equivalent. Then $\mathsf{D^\pi}(\mathcal{A})$ and $\mathsf{D^\pi}(\mathcal{B})$ are equivalent split-closed triangulated categories.
\end{Cor}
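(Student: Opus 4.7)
The plan is to promote the triangulated equivalence $\mathsf{D^b}(\mathcal{A})\cong\mathsf{D^b}(\mathcal{B})$ supplied by Corollary \ref{equivboundercat} to one between the associated split-closures, working at the $A_\infty$-level and descending to $H^0$ only at the very end. Starting from a quasi-equivalence $\mathcal{F}:\mathcal{A}\rightarrow\mathcal{B}$, we first apply Corollary \ref{TwFquasiequiv} to produce a quasi-equivalence $Tw\mathcal{F}:Tw\mathcal{A}\rightarrow Tw\mathcal{B}$, and then post-compose with the Yoneda embedding of $Tw\mathcal{B}$ to obtain the c-unital $A_\infty$-functor $\mathcal{G}\coloneqq\Upsilon_{Tw\mathcal{B}}\circ Tw\mathcal{F}:Tw\mathcal{A}\rightarrow\Pi(Tw\mathcal{B})$. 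Being the composition of a quasi-equivalence with a cohomologically full and faithful functor, $\mathcal{G}$ is itself cohomologically full and faithful.

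The decisive step is to verify that $(\Pi(Tw\mathcal{B}),\mathcal{G})$ is a split-closure of $Tw\mathcal{A}$. Split-closedness of the target holds by construction, and cohomological fullness and faithfulness of $\mathcal{G}$ was just noted; what remains is the generation condition, namely that every object of $\Pi(Tw\mathcal{B})$ is the image of an idempotent on $\mathcal{G}(Y)$ for some $Y\in\textup{obj}(Tw\mathcal{A})$. By the very construction of $\Pi(Tw\mathcal{B})$ as the split-closure of $Tw\mathcal{B}$, any such object arises as the image of an idempotent endomorphism of $\Upsilon_{Tw\mathcal{B}}(W)$ for some $W\in\textup{obj}(Tw\mathcal{B})$; cohomological essential surjectivity of $Tw\mathcal{F}$ furnishes $Y\in\textup{obj}(Tw\mathcal{A})$ with $Tw\mathcal{F}(Y)\cong W$ in $H^0(Tw\mathcal{B})$, and the chosen idempotent then transports along the induced isomorphism $\mathcal{G}(Y)\cong\Upsilon_{Tw\mathcal{B}}(W)$ in $H^0(\Pi(Tw\mathcal{B}))$ to an idempotent on $\mathcal{G}(Y)$ with unchanged image. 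Proposition \ref{uniqueclosure} then identifies $\Pi(Tw\mathcal{B})$ with the canonical split-closure $\Pi(Tw\mathcal{A})$ up to quasi-equivalence.

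To conclude, we descend to zeroth cohomology. Lemma \ref{splitclosuretriang} guarantees that both $\Pi(Tw\mathcal{A})$ and $\Pi(Tw\mathcal{B})$ are (split-closed) triangulated $A_\infty$-categories, so Proposition \ref{H0Atriangulated} turns the quasi-equivalence just produced into an exact equivalence of classical triangulated categories $\mathsf{D^\pi}(\mathcal{A})\cong\mathsf{D^\pi}(\mathcal{B})$, while split-closedness of both sides follows from Lemma \ref{splitclosediff}. The main subtlety we expect to encounter lies in the middle paragraph: specifically, the ``transport of idempotents'' step requires invoking bijectivity of $H(\mathcal{G})$ on morphism spaces to lift the given idempotent $p$ on $\Upsilon_{Tw\mathcal{B}}(W)$ to an honest idempotent on $\mathcal{G}(Y)$ whose image matches the prescribed $Z$ in $H^0(\Pi(Tw\mathcal{B}))$. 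This is routine but notationally heavy, and is precisely the kind of verification that the abstract uniqueness statement encoded in Proposition \ref{uniqueclosure} is designed to streamline.
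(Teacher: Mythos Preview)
Your argument is correct and is essentially a detailed unpacking of what the paper leaves implicit: the paper records this corollary as ``a direct consequence of Corollary~\ref{equivboundercat}'' without supplying a proof, and your route---lift to a quasi-equivalence $Tw\mathcal{F}$, exhibit $(\Pi(Tw\mathcal{B}),\Upsilon_{Tw\mathcal{B}}\circ Tw\mathcal{F})$ as a split-closure of $Tw\mathcal{A}$, invoke the uniqueness statement of Proposition~\ref{uniqueclosure}, and descend via Proposition~\ref{H0Atriangulated}---is precisely the argument that one-line remark is pointing at. The idempotent-transport step you flag as delicate is indeed routine (conjugation by an isomorphism in $H^0$), so there is no gap.
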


Combining last two corollaries, we obtain:

\begin{Cor}\label{splitequivgen}  
	Let $\mathcal{A}$ and $\mathcal{B}$ be c-unital $A_\infty$-categories split-generated by full $A_\infty$-subcategories $\tilde{\mathcal{A}}\subset\mathcal{A}$ respectively $\tilde{\mathcal{B}}\subset\mathcal{B}$ which are quasi-equivalent. Then $\mathsf{D^\pi}(\mathcal{A})$ and $\mathsf{D^\pi}(\mathcal{B})$ are equivalent split-closed triangulated categories.
\end{Cor}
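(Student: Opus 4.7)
The plan is to chain together the two corollaries immediately preceding the statement. First I would observe that since the notion of ``split-generation'' requires a split-closed triangulated ambient $A_\infty$-category, both $\mathcal{A}$ and $\mathcal{B}$ are themselves split-closed and triangulated. In particular each trivially split-generates itself, so the penultimate corollary applied to the pair $(\mathcal{A},\mathcal{A})$ (viewing $\mathcal{A}$ as a full $A_\infty$-subcategory of itself) yields a quasi-equivalence $\Pi(Tw\mathcal{A})\rightarrow\mathcal{A}$, hence an equivalence of split-closed triangulated categories
\[
\mathsf{D}^\pi(\mathcal{A}) = H^0(\Pi(Tw\mathcal{A}))\cong H^0(\mathcal{A})\,,
\]
and symmetrically $\mathsf{D}^\pi(\mathcal{B})\cong H^0(\mathcal{B})$.

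Next I would apply the same penultimate corollary to the genuine split-generating pairs $\tilde{\mathcal{A}}\subset\mathcal{A}$ and $\tilde{\mathcal{B}}\subset\mathcal{B}$, producing equivalences of split-closed triangulated categories
\[
H^0(\mathcal{A})\cong\mathsf{D}^\pi(\tilde{\mathcal{A}})\qquad\text{and}\qquad H^0(\mathcal{B})\cong\mathsf{D}^\pi(\tilde{\mathcal{B}})\,.
\]
Finally, invoking the last corollary on the pair of quasi-equivalent c-unital $A_\infty$-categories $\tilde{\mathcal{A}}$ and $\tilde{\mathcal{B}}$ furnishes the equivalence $\mathsf{D}^\pi(\tilde{\mathcal{A}})\cong\mathsf{D}^\pi(\tilde{\mathcal{B}})$.

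Composing the chain
\[
\mathsf{D}^\pi(\mathcal{A})\cong H^0(\mathcal{A})\cong\mathsf{D}^\pi(\tilde{\mathcal{A}})\cong\mathsf{D}^\pi(\tilde{\mathcal{B}})\cong H^0(\mathcal{B})\cong\mathsf{D}^\pi(\mathcal{B})
\]
gives the desired equivalence of split-closed triangulated categories. The only subtlety, which will be the main point to verify, is that each link in this chain preserves both the triangulation and the splitting of idempotents; but this is already built into the cited corollaries, so the argument reduces to invoking them in the correct order. No new construction is required.
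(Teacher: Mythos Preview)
Your proposal is correct and follows precisely the route the paper intends: the paper's entire proof is the one-line remark ``Combining last two corollaries, we obtain'', referring exactly to the two results you invoke. Your extra care in first applying the penultimate corollary to the pair $(\mathcal{A},\mathcal{A})$ to identify $\mathsf{D}^\pi(\mathcal{A})\cong H^0(\mathcal{A})$ (using that split-generation forces $\mathcal{A}$ to be split-closed triangulated) is a detail the paper leaves implicit, but it is the right way to bridge the statement's $\mathsf{D}^\pi(\mathcal{A})$ with the $H^0(\mathcal{A})$ appearing in that corollary's conclusion.
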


\subsection{Twisting}\label{ch4.4}

To round-off our discussion of triangulation and give a little more grounding to the theory of split-generation --- which we will eventually exploit in the context of Fukaya categories --- we take a step back to abstract mapping cones for $A_\infty$-modules. However, we opt for a less detailed exposition. More about twisting can be found in \cite[chapter 5]{[Sei08]}.

\begin{Def}	\label{abstracttwistfunc}			
	Let $\mathcal{A}$ be an $A_\infty$-category and $Y\in\textup{obj}(\mathcal{A})$. We define the \textbf{abstract twist functor}\index{abstract twist functor} $\mathcal{T}_Y:\mathsf{Q}\rightarrow\mathsf{Q}$ along $Y$ to be the $A_\infty$-functor mapping:
	\begin{itemize}[leftmargin=0.5cm]
		\renewcommand{\labelitemi}{\textendash}
		\item $\mathcal{M}\in\textup{obj}(\mathsf{Q})$ to $\mathcal{C}=\mathcal{T}_Y(\mathcal{M})\in\textup{obj}(\mathsf{Q})$, the $A_\infty$-module sending $X\in\textup{obj}(\mathcal{A})$ to $\mathcal{C}(X)\coloneqq(\mathcal{M}(Y)\otimes\textup{hom}_\mathcal{A}(X,Y))[1]\oplus\mathcal{M}(X)\in\textup{obj}(\mathsf{GradVect})$, with graded maps
		\[
		\mu_{\mathcal{C}}^1(c_0\otimes b,c_1) \coloneqq \big((-1)^{|b|-1}\mu_{\mathcal{M}}^1(c_0)\otimes b + c_0\otimes\mu_{\mathcal{A}}^1(b),\mu_{\mathcal{M}}^1(c_1)+\mu_{\mathcal{M}}^2(c_0,b)\big)\;,
		\]
		\vspace*{-0.7cm}
		\begin{align*}
		\mkern-36mu\mu_\mathcal{C}^d((c_0\otimes b,c_1),a_{d-1},...)\coloneqq \big(c_0\otimes\mu_\mathcal{A}^d(b,a_{d-1},...),\,  &\mu_\mathcal{M}^d(c_1,a_{d-1},...)\,+ \\
		&\mu_\mathcal{M}^{d+1}(c_0,b,a_{d-1},...)\big)\;,
		\end{align*}
		for $d>1$.
		\item $t\in\textup{hom}_\mathsf{Q}(\mathcal{M}_0,\mathcal{M}_1)$ to $\tilde{t}=\mathcal{T}_Y^1(t)\in\textup{hom}_\mathsf{Q}(\mathcal{T}_Y(\mathcal{M}_0),\mathcal{T}_Y(\mathcal{M}_1))$ given by
		\begin{align*}
		& \tilde{t}^1(c_0\otimes b,c_1) \coloneqq \big((-1)^{|b|-1}t^1(c_0)\otimes b, t^1(c_1)+t^2(c_0,b)\big)\;,\\
		& \tilde{t}^d((c_0\otimes b,c_1),a_{d-1},...)\coloneqq\big(0,t^d(c_1,a_{d-1},...)+t^{d+1}(c_0,b,a_{d-1},...)\big)\;,
		\end{align*}
		for $d>1$, while $\mathcal{T}_Y^d \coloneqq 0$ for $d>1$.
	\end{itemize}
	Given two $Y_0, Y_1\in\textup{obj}(\mathcal{A})$, we write $T_{Y_0}(Y_1)\in\textup{obj}(\mathcal{A})$ for any quasi-represent- ative of $\mathcal{T}_{Y_0}(\mathcal{Y}_1)$ and call it the \textbf{twist}\index{twist (along an object)} of $Y_1$ along $Y_0$.
\end{Def}

\begin{Def}\label{evalmorph}				
	Let $\mathcal{M},\,\mathcal{Y}=\Upsilon(Y)\in\textup{obj}(\mathsf{Q})$, for $Y\in\textup{obj}(\mathsf{Q})$. We define the\break \textbf{abstract evaluation morphism}\index{abstract evaluation morphism} $\epsilon\in\textup{hom}_\mathsf{Q}^0(\mathcal{M}(Y)\otimes\mathcal{Y},\mathcal{M})$ as the module homomorphism given by $\epsilon^d(c_0\otimes b,a_{d-1},...,a_1)\coloneqq\mu_\mathcal{M}^{d+1}(c_0,b,a_{d-1},...,a_1)$. Then $\mathcal{T}_Y(\mathcal{M}) = \mathcal{C}one(\epsilon)$, as in Definition \ref{abstractconemod}, fitting into the exact triangle
	\[
	\begin{tikzcd}
	\mathcal{M}(Y)\otimes\mathcal{Y}\arrow[rr, "\langle\epsilon\rangle"] & & \mathcal{M}\arrow[dl, "\langle\iota\rangle"] \\
	& \mathcal{T}_Y\mathcal{M}\arrow[ul, "{\langle\pi\rangle[1]}"]
	\end{tikzcd}
	\]
	in $H^0(\mathsf{Q})$. Assuming $H(\mathcal{A})$ to be closed under finite direct sums and shifts, and given two $Y_0,Y_1\in\textup{obj}(\mathcal{A})$ such that $\textup{Hom}_{H(\mathcal{A})}(Y_0,Y_1)$ is finite-dimensional (so that we may tensor by it as in Section \ref{ch3.2}), there is an obvious evaluation morphism $\langle \textup{ev}\rangle:\textup{Hom}_{H(\mathcal{A})}(Y_0,Y_1)\otimes Y_0\rightarrow Y_1$ such that $H(\Upsilon)\langle \textup{ev}\rangle = \langle\epsilon\rangle$. If the twisting $T_{Y_0}(Y_1)$ exists, then the triangle
	\[
	\begin{tikzcd}
	\textup{Hom}_{H(\mathcal{A})}(Y_0,Y_1)\otimes Y_0\arrow[rr, "\langle \textup{ev}\rangle"] & & Y_1\arrow[dl, "\langle i\rangle"] \\
	& T_{Y_0}(Y_1)\arrow[ul, "{\langle p\rangle[1]}"]
	\end{tikzcd}
	\]
	maps to the preceding one under $H(\Upsilon)$ for the choice $\mathcal{M}=\mathcal{Y}_1$, and is therefore exact (since preimage of an exact triangle; see Corollary \ref{preimagetriangle}). 
\end{Def}

\begin{Rem}\label{abouttwists}			
	\ \vspace*{0.0cm}
	\begin{itemize}[leftmargin=0.5cm]
	\item If $\mathcal{A}$ is triangulated and $H(\mathcal{A})$ has finite-dimensional Hom-spaces, $\langle \textup{ev}\rangle$ has standard mapping cone $S(H(\mathcal{Y}_1)(Y_0)\otimes Y_0)\oplus Y_1$. Using Lemma \ref{tr3}, with vertical arrows equal to $H(\Upsilon)$, this cone maps to $\mathcal{T}_{Y_0}(\mathcal{Y}_1)$, always existing. Hence, $T_{Y_0}(Y_1)\coloneqq S(H(\mathcal{Y}_1(Y_0))\otimes Y_0)\oplus Y_1$ is well defined for all $Y_0,Y_1\in\textup{obj}(\mathcal{A})$, allowing us to determine the \textbf{twist functor}\index{twist functor} $T_Y:\mathcal{A}\rightarrow\mathcal{A}$ as the $A_\infty$-functor making the diagram
	\[
	\begin{tikzcd}
	\mathcal{A}\arrow[r, "T_Y"]\arrow[d, "\Upsilon"'] & \mathcal{A}\arrow[d, "\Upsilon"] \\
	\mathsf{Q}\arrow[r, "\mathcal{T}_Y"'] & \mathsf{Q}
	\end{tikzcd}
	\]
	commute \big(up to isomorphism in $H^0(fun(\mathcal{A},\mathsf{Q}))$\big).
	\item Recalling the map $\tilde{\Upsilon}=\imath^*\circ\Upsilon_{Tw\mathcal{A}}:Tw\mathcal{A}\rightarrow\mathsf{Q}$ from Remark \ref{alternenv}, the last diagram tells us --- after operating the substitutions $\mathcal{A}$\textleftarrow $Tw\mathcal{A}$ and $\Upsilon$\textleftarrow$\tilde{\Upsilon}$ --- that $\tilde{\Upsilon}(T_{Y_0}(Y_1))\cong\mathcal{T}_{Y_0}(\tilde{\Upsilon}(Y_1))\in\textup{obj}(H^0(\mathsf{Q}))$, for any $Y_0\in\textup{obj}(\mathcal{A})$ and $Y_1\in\textup{obj}(Tw\mathcal{A})$.
	\item For $\mathcal{F}:\mathcal{A}\rightarrow\mathcal{B}$ a cohomologically full and faithful $A_\infty$-functor, and $Y_0,Y_1\in\textup{obj}(\mathcal{A})$ such that both $T_{Y_0}(Y_1)$ and $T_{\mathcal{F}(Y_0)}(\mathcal{F}(Y_1))$ exist, there is a canonical isomorphism $\mathcal{F}(T_{Y_0}(Y_1))\cong T_{\mathcal{F}(Y_0)}(\mathcal{F}(Y_1))$ in $H^0(\mathcal{B})$, actually extendible to $\mathcal{F}\circ T_{Y_0}\cong T_{\mathcal{F}(Y_0)}\circ\mathcal{F}\in\textup{obj}\big(H^0(fun(\mathcal{A},\mathcal{B}))\big)$.
	\end{itemize}
\end{Rem}

Here is a criterion which detects objects of $\mathcal{A}$ quasi-representing a given abstract twisted module of $\mathsf{Q}$.

\begin{Lem}\label{detecttwist}
	Let $\mathcal{A}$ be an $A_\infty$-category, $Y_0, Y_1, Y_2\in\textup{obj}(\mathcal{A})$. Assume there are a cocycle $c\in\textup{hom}_\mathcal{A}(Y_1,Y_2)$ and a map $k:\textup{hom}_\mathcal{A}(Y_0,Y_1)\rightarrow\textup{hom}_\mathcal{A}(Y_0,Y_2)[-1]$ which satisfy $\mu_\mathcal{A}^1(k(c_0))-k(\mu_\mathcal{A}^1(c_0))+\mu_\mathcal{A}^2(c,c_0)=0$ for all $c_0\in\textup{hom}_\mathcal{A}(Y_0,Y_1)$. Then the module homomorphism $t:\mathcal{T}_{Y_0}(\mathcal{Y}_1)\rightarrow\mathcal{Y}_2$ given by
	\begin{align*}
	t^d((c_0\otimes b,c_1),a_{d-1},...,a_1)\coloneqq\; &\mu_\mathcal{A}^{d+1}(c,c_1,a_{d-1},...)+\mu_\mathcal{A}^{d+1}(k(c_0),b,a_{d-1},...) \\
	&\!+\mu_\mathcal{A}^{d+2}(c,c_0,b,a_{d-1},...)
	\end{align*}
	is well defined, fulfills $\langle\mu_\mathsf{Q}^2(t,\iota)\rangle=\langle\Upsilon^1(c)\rangle\in\textup{Hom}_{H(\mathsf{Q})}(\mathcal{Y}_1,\mathcal{Y}_2)$, and --- given suitable additional data \textup{(\textit{see} \cite[Lemma 5.3]{[Sei08]})} --- induces an isomorphism $\mathcal{T}_{Y_0}(\mathcal{Y}_1)$ $\cong\mathcal{Y}_2$ in $H^0(\mathsf{Q})$.
\end{Lem}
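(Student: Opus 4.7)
The plan is to establish the three assertions in sequence: that $t$ is a well-defined module homomorphism, that $\langle\mu_{\mathsf{Q}}^2(t,\iota)\rangle=\langle\Upsilon^1(c)\rangle$, and finally that (under the suitable additional data) $\langle t\rangle$ is an isomorphism in $H^0(\mathsf{Q})$. Only the last step invokes machinery beyond direct computation; the other two are bookkeeping governed by the structural relations available to us.

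For well-definedness, I would expand $\mu_{\mathsf{Q}}^1(t)^d$ using \eqref{tshit1}, which produces three classes of contributions: post-composition with $\mu_{\mathcal{Y}_2}^{\bullet}=\mu_\mathcal{A}^{\bullet}$, pre-composition with $\mu_{\mathcal{T}_{Y_0}(\mathcal{Y}_1)}^{\bullet}$, and insertions of $\mu_\mathcal{A}^m$ into the trailing $a_i$-slots. Unfolding $\mu_{\mathcal{T}_{Y_0}(\mathcal{Y}_1)}^{\bullet}$ from Definition \ref{abstracttwistfunc} splits every term according to the two summands $\mathcal{M}(Y_0)\otimes\textup{hom}_\mathcal{A}(X,Y_0)[1]$ and $\mathcal{M}(X)$. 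Contributions depending only on $c_1$ cancel in pairs by the $A_\infty$-associativity equations \eqref{Ainfcatasseqs} for $\mathcal{A}$ together with the cocycle condition $\mu_\mathcal{A}^1(c)=0$. The remaining contributions, involving $c_0\otimes b$, collect into expressions of the shape $\mu_\mathcal{A}(\cdots,\mu_\mathcal{A}(c,c_0,b,\cdots),\cdots)$ and $\mu_\mathcal{A}(\cdots,\mu_\mathcal{A}(k(c_0),b,\cdots),\cdots)$; re-associating them once more reveals a residual factor that vanishes precisely by the hypothesis $\mu_\mathcal{A}^1(k(c_0))-k(\mu_\mathcal{A}^1(c_0))+\mu_\mathcal{A}^2(c,c_0)=0$. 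Sign tracking is done with the Koszul rule applied to the $\dagger_n$ and $\natural_n$ exponents, but each cancellation is forced by one of the three structural relations, so no conceptual subtlety arises.

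For the identity with $\iota$, since $\iota^1(c_1)=(0,(-1)^{|c_1|}c_1)$ and $\iota^d=0$ for $d>1$, the sum \eqref{tshit2} collapses to the single term $n=h-1$, and substituting $(0,(-1)^{|c_1|}c_1)$ into $t^h$ annihilates the two summands involving $c_0\otimes b$. What remains is a signed multiple of $\mu_\mathcal{A}^{h+1}(c,c_1,a_{h-1},\ldots,a_1)$, i.e.\ a signed representative of $\Upsilon^1(c)^h$. The overall sign can be absorbed into a coboundary by the same argument used in Lemma \ref{iandpi}, using the c-unit to represent the identity natural transformation; hence the required equality holds in $H(\mathsf{Q})$.

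For the isomorphism statement, I would invoke the exact triangle of Definition \ref{evalmorph}: $\mathcal{T}_{Y_0}(\mathcal{Y}_1)=\mathcal{C}one(\epsilon)$ fits into the exact triangle $\mathcal{Y}_1(Y_0)\otimes\mathcal{Y}_0\xrightarrow{\epsilon}\mathcal{Y}_1\xrightarrow{\iota}\mathcal{T}_{Y_0}(\mathcal{Y}_1)$ in $H^0(\mathsf{Q})$. The relation $\langle\mu_{\mathsf{Q}}^2(t,\iota)\rangle=\langle\Upsilon^1(c)\rangle$ already places $t$ into a commutative square comparing this triangle with one ending at $\mathcal{Y}_2$; the role of the suitable additional data in \cite[Lemma 5.3]{[Sei08]} is to supply the compatible map $\mathcal{Y}_1(Y_0)\otimes\mathcal{Y}_0\to\mathcal{Y}_2$ (built from $c$ and $k$) completing this to a morphism of exact triangles whose outer two components are quasi-isomorphisms. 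Applying Lemma \ref{tr3} in the triangulated category $H^0(\mathsf{Q})$ then forces $\langle t\rangle$ itself to be an isomorphism. The main obstacle will be the first step: the computation of $\mu_{\mathsf{Q}}^1(t)=0$ is not deep but is the longest piece of sign bookkeeping in the proof, and checking that every leftover term is matched by exactly one of the structural relations requires careful case analysis on how the insertions of $c$, $c_0$, $b$, $k(c_0)$ interact with the trailing $a_i$'s.
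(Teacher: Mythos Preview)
The paper does not prove this lemma; it is stated without argument and defers both the proof and the ``suitable additional data'' to \cite[Lemma 5.3]{[Sei08]}. So there is no paper proof to compare against, and I assess your outline on its own.

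Your plan for the first two claims is correct. In fact the second computation is cleaner than you suggest: since $\iota^d=0$ for $d>1$, the sum \eqref{tshit2} collapses to $n=h-1$, and $\natural_{h-1}=|c_1|$ cancels exactly against the sign in $\iota^1(c_1)=(0,(-1)^{|c_1|}c_1)$, giving $\mu_\mathsf{Q}^2(t,\iota)^h=\mu_\mathcal{A}^{h+1}(c,c_1,a_{h-1},\ldots)=\Upsilon^1(c)^h$ on the nose. No coboundary correction or appeal to Lemma~\ref{iandpi} is needed.

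Your argument for the isomorphism claim, however, has a gap. To run Lemma~\ref{tr3} you need a second exact triangle with $\mathcal{Y}_2$ as a vertex and two of the three comparison maps already known to be isomorphisms. The natural candidate is the cone triangle on $\Upsilon^1(c):\mathcal{Y}_1\to\mathcal{Y}_2$, compared against the rotated triangle $\mathcal{Y}_1\xrightarrow{\iota}\mathcal{T}_{Y_0}(\mathcal{Y}_1)\xrightarrow{\pi}(\mathcal{Y}_1(Y_0)\otimes\mathcal{Y}_0)[1]$: the identity on $\mathcal{Y}_1$ and $t$ make the first square commute, but the induced third map $(\mathcal{Y}_1(Y_0)\otimes\mathcal{Y}_0)[1]\to\mathcal{C}one(\Upsilon^1(c))$ is not known to be an isomorphism --- establishing that is equivalent to what you are trying to prove. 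Seidel's ``additional data'' is not a triangle-completion gadget; it consists (roughly) of a degree~$1$ cocycle in $\textup{hom}_\mathcal{A}(Y_2,Y_0)$ together with further chain-level homotopies, from which one verifies directly that each $t^1_X$ is a quasi-isomorphism of complexes (equivalently, that the mapping cone of $t$ is acyclic on every object). So the last third of your outline does not match the actual mechanism and would need to be replaced.
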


Of course, the twist operation can be applied iteratively. Consider a triangulated $A_\infty$-category $\mathcal{A}$ whose Hom-spaces are all finite-dimensional. Choose a set $\{Y_1,...,Y_m\}\subset\textup{obj}(\mathcal{A})$ and, starting from $X\coloneqq X_m\in\textup{obj}(\mathcal{A})$, define subsequently $X_{k-1}\coloneqq T_{Y_k}(X_k)\in\textup{obj}(\mathcal{A})$, which sit into exact triangles like that of Definition \ref{evalmorph} each. Using axiom (T3) for triangulated categories, these can be composed to produce another exact triangle of the form
\[
\begin{tikzcd}
Y\arrow[rr] & & X\arrow[dl, "\langle i_1\rangle\circ...\circ\langle i_m\rangle"] \\
& (T_{Y_1}\circ...\circ T_{Y_m})(X)\arrow[ul, "{[1]}"]
\end{tikzcd}\quad,
\]
where $Y\in\text{A}$ is suitably constructed from the spaces $\textup{Hom}_{H(\mathcal{A})}(Y_k,X_k)\otimes Y_k$, while $\langle i_k\rangle:X_k\rightarrow X_{k-1}$ are canonical inclusions. As we observed in Remark \ref{t2rem}, the vanishing of $\langle i_1\rangle\circ...\circ\langle i_m\rangle$ implies $X\oplus (T_{Y_1}\circ...\circ T_{Y_m})(X)[-1]\cong Y$. One builds upon this to prove the following result.

\begin{Cor}									
	Let $\mathcal{A}$ be a triangulated $A_\infty$-category whose \textup{Hom}-spaces are all finite-dimensional.
	\renewcommand{\theenumi}{\roman{enumi}}
	\begin{enumerate}[leftmargin=0.5cm]
		\item Suppose there are $Y_1,...,Y_m\in\textup{obj}(\mathcal{A})$ such that $(T_{Y_1}\circ...\circ T_{Y_m})(X)\cong 0\in\textup{obj}(H^0(\mathcal{A}))$ \textup(the zero object of $H^0(\mathcal{A})$\textup) for all $X\in\textup{obj}(\mathcal{A})$. Then $\{Y_1,...,Y_m\}$ generates $\mathcal{A}$, which is actually split-closed.
		\item Suppose $\mathcal{A}$ is readily split-closed and there are $Y_1,...,Y_m\in\textup{obj}(\mathcal{A})$, $\sigma\neq 0$ such that $(T_{Y_1}\circ...\circ T_{Y_m})(X)\cong S^\sigma X\in\textup{obj}(H^0(\mathcal{A}))$ for all $X\in\textup{obj}(\mathcal{A})$. Then $\{Y_1,...,Y_m\}$ split-generates $\mathcal{A}$.
	\end{enumerate}
\end{Cor}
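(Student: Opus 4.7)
The plan is to exploit the exact triangle displayed right before the corollary, namely
\[ Y_X \xrightarrow{c_1} X \xrightarrow{c_2} T(X) \xrightarrow{c_3[1]} SY_X \]
in $H^0(\mathcal{A})$, where I abbreviate $T \coloneqq T_{Y_1}\circ\cdots\circ T_{Y_m}$, and $Y_X$ is built from $\{Y_1,\ldots,Y_m\}$ by iterated cones and shifts, hence lies in the triangulated subcategory they generate. By Remark \ref{t2rem}, the vanishing of $c_2$ forces $Y_X \cong X \oplus S^{-1}T(X)$, and this is the identity both parts of the corollary will exploit.

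For part (i), $T(X) \cong 0$ makes $c_2$ automatically vanish (no nonzero morphism into the zero object), so $Y_X \cong X$; hence $X$ lies in the triangulated subcategory generated by $\{Y_1,\ldots,Y_m\}$, proving generation. For split-closedness, I would invoke Lemma \ref{splitclosediff} and argue inside $\mathsf{D}^\pi(\mathcal{A})$: given an idempotent $p : X \to X$ in $H^0(\mathcal{A})$, split it there as $X \cong Z \oplus Z^\perp$. Since the abstract twist functors are defined at the module level in Definition \ref{abstracttwistfunc} and induce exact, hence additive, endofunctors of $H^0(\mathsf{Q})$, the identity $T(X) \cong T(Z) \oplus T(Z^\perp) \cong 0$ forces $T(Z) \cong 0$. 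Rerunning the generation argument for $Z$ inside $\mathsf{D}^\pi(\mathcal{A})$ produces a $\tilde{Y}$, built from $\{Y_k\}$ by cones and shifts, with $Z \cong \tilde{Y}$. Since $\tilde{Y}$ already belongs to the triangulated $\mathcal{A}$, $p$ splits within $\mathcal{A}$.

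For part (ii), $c_2$ need not vanish a priori, so I would iterate. The abstract twist functors commute with shifts up to natural isomorphism (both are assembled out of mapping cones and tensor products, which commute with one another), so induction yields $T^n(X) \cong S^{n\sigma}X$; composing the $n$ consecutive twist triangles via the octahedral axiom (T3) then produces exact triangles
\[ Y^{(n)} \to X \xrightarrow{c_2^{(n)}} S^{n\sigma}X \to SY^{(n)} \]
with $Y^{(n)}$ still in the triangulated subcategory generated by $\{Y_1,\ldots,Y_m\}$. Now $c_2^{(n)} \in \textup{Hom}^0_{H(\mathcal{A})}(X, S^{n\sigma}X) \cong \textup{Hom}^{n\sigma}_{H(\mathcal{A})}(X,X)$ by the isomorphisms \eqref{Homiso}. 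Finite-dimensionality of $\textup{Hom}_{H(\mathcal{A})}(X,X)$ forces concentration in finitely many degrees, so $c_2^{(n)} = 0$ for $|n|$ large enough (with the sign of $n$ opposite to that of $\sigma$). Then Remark \ref{t2rem} gives $Y^{(n)} \cong X \oplus S^{n\sigma-1}X$, exhibiting $X$ as a direct summand of $Y^{(n)}$. Since $\mathcal{A}$ is split-closed, $X$ lies in the subcategory split-generated by $\{Y_1,\ldots,Y_m\}$, completing the proof.

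The main obstacle will be the iteration step in part (ii), specifically the bookkeeping required to (a) verify that $T$ and $S$ commute as $A_\infty$-functors up to a natural isomorphism fit for induction, and (b) assemble the successive octahedra so that every intermediate ``error term'' actually lies in the triangulated subcategory generated by $\{Y_k\}$. Both steps test the abstract machinery of twist functors developed in Section \ref{ch4.4}, but they are largely bookkeeping once the commuting isomorphism is in hand; the conceptual content is entirely packaged in the vanishing-of-$c_2$ mechanism combined with finite-dimensionality of the Hom-spaces.
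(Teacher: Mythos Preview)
Your approach is correct and matches the paper's intent: the paper only states ``One builds upon this to prove the following result'' after setting up the triangle $Y \to X \to (T_{Y_1}\circ\cdots\circ T_{Y_m})(X)$ and observing that vanishing of the map $X \to T(X)$ gives $X \oplus T(X)[-1] \cong Y$, which is exactly the mechanism you exploit.

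Your two self-identified obstacles in part~(ii) are, however, illusory. For~(a), you do not need $T$ and $S$ to commute: the hypothesis says $T(X')\cong S^\sigma X'$ for \emph{every} $X'\in\textup{obj}(\mathcal{A})$, so applying it to $X'=X$, then to $X'=S^\sigma X$, and so on, gives $T^n(X)\cong S^{n\sigma}X$ by straight induction. For~(b), no fresh octahedral bookkeeping is required either: simply feed the length-$nm$ sequence $Y_1,\ldots,Y_m,Y_1,\ldots,Y_m,\ldots$ into the very construction displayed just before the corollary, which already packages the octahedra for an arbitrary finite string of twists and outputs a single triangle $Y^{(n)}\to X\to T^n(X)$ with $Y^{(n)}$ in the triangulated subcategory generated by $\{Y_k\}$. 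With these simplifications the argument becomes a two-line application of the paper's setup plus the finite-dimensionality of $\textup{Hom}_{H(\mathcal{A})}(X,X)$.
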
 

\begin{Rem}										
	We can actually improve the description of twist functors upon considering twisted complexes in $Tw\mathcal{A}$, where $\mathcal{A}$ is chosen strictly unital. 
	
	Pick $Y_0,Y_1\in\textup{obj}(Tw\mathcal{A})$ and consider $(\textup{hom}_{Tw\mathcal{A}}(Y_0,Y_1),\partial\!\coloneqq \!\mu_{Tw\mathcal{A}}^1)$, whose dual complex is $\textup{hom}_{Tw\mathcal{A}}(Y_0,Y_1)^\vee\coloneqq\textup{hom}_{Tw\mathcal{A}}(\textup{hom}_{Tw\mathcal{A}}(Y_0,Y_1),Y_1)$ with differential $\partial^\vee$ given by $(\partial^\vee(a^\vee))(b) \coloneqq (-1)^{|a^\vee|+1}a^\vee(\partial(b))$, for $a^\vee\in\textup{hom}_{Tw\mathcal{A}}(Y_0,Y_1)^\vee$ and $b\in\textup{hom}_{Tw\mathcal{A}}(Y_0,Y_1)$. We can take the tensor product as in Definition \ref{TwAops}: 
	\begin{align*}
	\textup{hom}_{Tw\mathcal{A}}(\textup{hom}_{Tw\mathcal{A}}(Y_0,Y_1)\otimes Y_0,Y_1) &= \textup{hom}_{Tw\mathcal{A}}(Y_0,Y_1)^\vee\otimes\textup{hom}_{Tw\mathcal{A}}(Y_0,Y_1)\\
	&\cong \text{End}_\mathbb{K}(\textup{hom}_{Tw\mathcal{A}}(Y_0,Y_1))\;,
	\end{align*}
	which has differential $D: a^\vee\otimes b\mapsto a^\vee(\cdot)b$, and $\textup{ev} \cong \textup{id}\in\text{End}_\mathbb{K}(\textup{hom}_{Tw\mathcal{A}}(Y_0,Y_1))$ is a cocycle with respect to it. Therefore, we can look at the twisted mapping cone $Cone(\textup{ev})$, turning under $H(\Upsilon)$ into $\mathcal{C}one(\epsilon)=\mathcal{T}_{Y_0}(\mathcal{Y}_1)$ (cf. Definition \ref{evalmorph}). This motivates us to set 
	\[
	T_{Y_0}(Y_1)\coloneqq Cone(\textup{ev})=\bigoplus_{i\in I}S^{-|b^i|+1}Y_1\oplus Y_0\;,
	\]
	where $\{b^i\}_{i\in I}$ is a basis of $\textup{hom}_\mathcal{A}(Y_0,Y_1)$.
\end{Rem}

Finally, we mention the adjoint notion of the twist operation.

\begin{Def}									
	Let $\mathcal{A}$ be an $A_\infty$-category, $Y_0,Y_1\in\textup{obj}(\mathcal{A})$. The $A_\infty$-module $\mathcal{C}^\vee\in\textup{obj}(\mathsf{Q})$ is given by 
	\begin{align*}
	&\mathcal{C}^\vee(X)\coloneqq\textup{hom}_\mathcal{A}(X,Y_1)\oplus\textup{Hom}_\mathbb{K}(\textup{hom}_\mathcal{A}(Y_1,Y_0),\textup{hom}_\mathcal{A}(X,Y_0))[-1]\;,\\
	&\mu_{\mathcal{C}^\vee}^1(c,\beta)\coloneqq(\mu_\mathcal{A}^1(c),\mu_\mathcal{A}^1\circ\beta+(-1)^{|\beta|-1}\beta\circ\mu_\mathcal{A}^1+\mu_\mathcal{A}^2(\cdot,c))\;,\\
	&\mu_{\mathcal{C}^\vee}^d((c,\beta),a_{d-1},...) \coloneqq (\mu_\mathcal{A}^d(c,a_{d-1},...),\mu_\mathcal{A}^d(\beta(\cdot),a_{d-1},...)+\mu_\mathcal{A}^{d+1}(\cdot,c,a_{d-1},...))\;.
	\end{align*}
	for $d>1, X\in\textup{obj}(\mathcal{A})$. 
	
	We write $T_{Y_0}^\vee (Y_1)$ for any quasi-representative of $\mathcal{C}^\vee$ and call it the \textbf{adjoint twist}\index{adjoint twist (along an object)} of $Y_1$ along $Y_0$. One can show that $T_{Y_0}(T_{Y_0}^\vee(Y))\cong Y$ in $H^0(\mathcal{A})$ for all $Y\in\textup{obj}(\mathcal{A})$ (indeed, $T_{Y_0}^\vee$ is actually an inverse of $T_{Y_0}$ in $H^0(fun(\mathcal{A},\mathcal{A}))$, up to isomorphism).
\end{Def}

As was the case for the normal twist, taking $\mathcal{A}$ to be triangulated with finite-dimensional Hom-spaces makes $T_{Y_0}^\vee (Y_1)$ always well-defined, thus fitting into the exact triangle which results after ``dualization'':
\[
\begin{tikzcd}
\textup{Hom}_{H(\mathcal{A})}(Y_1,Y_0)^\vee\otimes Y_0\arrow[dr, "{\langle p^\vee\rangle[1]}"'] & & Y_1\arrow[ll, "\langle \textup{ev}^\vee\rangle"'] \\
& T_{Y_0}^\vee(Y_1)\arrow[ur, "\langle i^\vee\rangle"']
\end{tikzcd}
\]
Taking again $\mathcal{A}$ to be strictly unital, $T_{Y_0}^\vee (Y_1)$ can be identified in $Tw\mathcal{A}$ as the mapping cone of the ``dual'' evaluation morphism $\langle \textup{ev}^\vee\rangle$ (in parallel with last remark).

\newpage

\part{Fukaya Category Theory}
\markboth{II\quad FUKAYA CATEGORY THEORY}{II\quad FUKAYA CATEGORY THEORY}
\thispagestyle{plain}

\vspace*{2cm}

Fukaya categories were first studied by Kenji Fukaya in his \textit{Morse homotopy, $A_\infty$-category and Floer homologies} \cite{[Fuk93]} from 1993, one year prior to Kontsevich's exploit (\cite{[Kon94]}), building on the original work \textit{Morse theory for Lagrangian intersections} \cite{[Flo88]} by Andreas Floer. Fukaya successfully packed a lot of geometric information within an algebraic structure, preparing the ground for a richer interpretation of mirror symmetry. 

Indeed, a Fukaya category $\mathscr{F}(M)$ can be associated to any (specific enough) symplectic manifold $M$, its objects being suitably ``decorated'' compact Lagrangian submanifolds, whose modes of intersection are dictated by the $A_\infty$-structure. Specifically, the definition makes use of a particular version of Floer theory, the Lagrangian (intersection) Floer homology, which studies solutions to Cauchy--Riemann problems with boundary conditions given by configurations of intersecting Lagrangian submanifolds.  

The underlying language is that of symplectic and complex geometry. Moreover, as is true of the other Floer-flavoured theories, a lot of functional analysis is involved. And this is our major compromise as well: we will not insist on the analytical subtleties which would unavoidably take us on a tangent.      

Before moving on, we should also mention that the classical analytical approach from above originally adopted by Fukaya has been substantially revised in the meantime, ultimately replaced by the groundbreaking \textit{Lagrangian intersection Floer theory: anomaly and obstruction} \cite{[FOOO09]} by Fukaya, Oh, Ohta and Ono (last updated in 2009), which explains how to define Fukaya categories in full generality. Though tempting, this is unfortunately a luxury we cannot afford, due to the even more advanced analysis and algebra required. 

These words of warning set aside, our complete agenda for this part reads as follows:
\begin{itemize}[leftmargin=0.5cm]
	\item In Chapter 5, we give an overview of Lagrangian Floer homology. After introducing the basic notions from symplectic and complex geometry, we anticipate the definition of Fukaya category of a symplectic manifold so to motivate each decoration required for its objects; we immediately tackle also a couple aspects of the general framework. 
	\newline Then we introduce Floer complexes and explain how their boundary operator is defined through the count of $J$-holomorphic strips solving some specific Cauchy--Riemann equations with Lagrangian boundary conditions. Next, we discuss Maslov indices and how to give complexes and Lagrangian submanifolds an integer grading. Then we address the first of two major obstructions to the well-definiteness of the boundary operator, namely that of transversality of the moduli spaces formed by the $J$-holomorphic strips, which is solved by applying a Hamiltonian perturbation to the Cauchy--Riemann problem. Moreover, we rectify the issue of compactness of the moduli spaces by imposing a topological constraint on the Lagrangian submanifolds. 
	\newline All together, these assumptions enable us to properly define the Floer differential and its resulting cohomology, and prove the latter to be independent of the perturbative data. Lastly, we observe the advantage of dealing with exact Lagrangian submanifolds, which are a priori unobstructed and thus get us rid of most technical burden.
	
	\item In Chapter 6, we construct the $A_\infty$-structure of Fukaya categories and highlight the geometrical peculiarities making them very interesting cases of $A_\infty$-categories. We start by adapting the Cauchy--Riemann problem to fit an additional Lagrangian submanifold, and see how this defines the Floer product, which has the desirable properties of a second order $A_\infty$-composition map. Then we generalize the procedure for any given number of Lagrangian submanifolds, fine-tuning the perturbation data, so to produce higher order composition maps fulfilling the $A_\infty$-associativity equations. At last, we give the formal definition of Fukaya category (and a few warning remarks), prove it to be c-unital and summarize what we already know from Part I.
	\newline Next, we describe two geometric constructions, Dehn twists and Lagrangian connected sums, which algebraically are just suitable mapping cones in the $A_\infty$-category of twisted complexes --- a highlight of Fukaya categories --- and later justify the necessity of taking split-closures by looking at the example of the 2-torus. We conclude with a glance at the wrapped version of Fukaya category one must resort to when dealing with exact not necessarily compact Lagrangian submanifolds.
	
	\item In Chapter 7, we inject what learned about Fukaya categories into the context of Homological Mirror Symmetry, paying regard to the historical path which led to it. Thereto, we start by giving a very superficial description of what string theory is and how some physical dualities observed there inspired mathematical research to solve longstanding questions, eventually giving birth to the new branch of Mirror Symmetry. The latter we discuss next, importing all the language from complex algebraic topology and differential geometry (Dolbeault cohomology, Hodge theory, Calabi--Yau manifolds) necessary to state the main mirror symmetry conjecture.
	\newline Finally, we collect all relevant notions from algebraic geometry (coherent sheaves) and homological algebra (Ext-groups), and suitably extend the definition of Fukaya category in order to state Kontsevich's homological conjecture, backed by some specific cases we list. A quick look at the geometrical SYZ conjecture, hinting how to ``explicitly'' fabricate mirror pairs of Calabi--Yau manifolds, completes our journey.  
\end{itemize}

The main reference for Chapters 5 and 6 is Denis Auroux's introductory course in Fukaya categories, summarized in \cite{[Aur13]}. Basic material entering the frame via symplectic geometry and Floer theory is distributed among many more sources, most prominently \cite{[Sil12]}, \cite{[McS94]}, \cite{[Mer14]} and \cite{[AS09]} (always with an eye on \cite{[Sei08]}). Chapter 7 instead borrows, among others, from \cite{[Aur09]}, \cite{[Fuk02a]} and \cite{[GHJ03]}. Once again, preliminary aspects of the theory are embedded within the flow of our discussion, but can be safely skipped by the expert reader.

\newpage
\pagestyle{fancy}

\section{Lagrangian Floer (co)homology}
\thispagestyle{plain}

\subsection{Preliminaries from symplectic and complex geometry}\label{ch5.1}

In this section, we introduce in a brief if somewhat dense manner all geometric notions underlying the theory of Fukaya categories. Let us start from the branch of symplectic geometry (the reader is referred to \cite{[Sil12]} for more details).

\begin{Def}									
	A \textbf{symplectic manifold}\index{symplectic manifold} $(M,\omega)$ of dimension $2n$ is a smooth manifold $M^{2n}$ equipped with a \textbf{symplectic form}\index{symplectic form} $\omega\in\Omega^2(M)=\Gamma(\Lambda^2(T^*M))$, that is, a closed non-degenerate 2-form (thus canonically identifying, for each $x\in M$, antisymmetric bilinear maps $\omega_x:T_xM\times T_xM\rightarrow \mathbb{R}$ such that $\omega_x(v,w)=0$ for all $w\in T_xM$ implies $v=0\in T_xM$). Closedness means that $d\omega=0$, with respect to the exterior differential operator $d:\Omega^\bullet(M)\rightarrow\Omega^{\bullet+1}(M)$.
	
	$M$ is \textbf{exact}\index{symplectic manifold!exact} if $\omega$ is an exact 2-form, that is, if there exists some $\nu\in\Omega^1(M)$ such that $\omega=d\nu$. In case $M$ is compact, one can prove that $\omega$ is never exact, meaning that $\langle\omega\rangle\in H_{dR}^2(M)$ is a non-trivial class in de Rham cohomology. 
\end{Def}

We observe that the very definition of $\omega$ forces $M$ to be even-dimensional and oriented.

\begin{Def}\label{Lagrangian}			
	Let $(M,\omega)$ be a $2n$-dimensional symplectic manifold. A \textbf{Lagrangian submanifold}\index{Lagrangian submanifold} $L\subset M$ is a maximal isotropic submanifold of $M$, that is, one of dimension $\frac{1}{2}$dim$(M)=n$ fulfilling $\omega|_L=0$ (or equivalently, $\iota^*\omega=0$ for the inclusion $\iota:L\hookrightarrow M$).
	
	Assuming $M$ is exact with $\omega=d\nu$ for some $\nu\in\Omega^1(M)$, $L$ is itself \textbf{exact}\index{Lagrangian submanifold!exact} if the restriction $\nu|_L$ (closed as $d(\nu|_L)=(d\nu)|_L=0$) is an exact 1-form, that is, if there exists some smooth function $f\in C^\infty(L)$ such that $\nu|_L=df$.  
\end{Def}

\begin{Rem}\label{LGr}					
	As a particular case, a vector space $V$ is called \textit{symplectic} when endowed with an antisymmetric non-degenerate map $\omega:V\times V\rightarrow\mathbb{R}$ (called \textit{symplectic structure}). Then a linear subspace $W\subset V$ is \textit{Lagrangian} if $W=W^\perp\coloneqq \{v\in V\mid \omega(v,w)=0\;\forall w\in W\}$. This is an element of the \textbf{Lagrangian Grassmannian}\index{Lagrangian Grassmannian} $LGr(V)\coloneqq \{W\subset V\mid W\text{ linear Lagrangian subspace}\}$, a smooth manifold (diffeomorphically) realizable as the homogeneous space $\text{U}(n)/\text{O}(n)$ (that of unitary matrices modulo the orthogonal ones).
	
	For example, in $V\coloneqq\mathbb{R}^{2n}$ endowed with $\omega_0\!\in\!\Omega^2(\mathbb{R}^{2n})$ represented by $\big(\begin{smallmatrix}
	0 & \mathds{1}_n\\
	-\mathds{1}_n & 0
	\end{smallmatrix}\big)$, a subspace $W\subset\mathbb{R}^{2n}$ is Lagrangian if and only if it is $n$-dimensional and $\omega_0((x_1,y_1),(x_2,y_2))\coloneqq  x_1\cdot y_2 - x_2\cdot y_1 = 0$ for all $(x_1,y_1),(x_2,y_2)\in W$. In this case, we simply write $LGr(n)$ instead of $LGr(\mathbb{R}^{2n})$. Recall also the symplectic group $\text{Sp}(2n,\mathbb{R})\!=\!\{B\!\in\!\text{GL}(2n,\mathbb{R})\mid \omega_0(B(v),B(w))=\omega_0(v,w)\;\;\forall v,w\!\in\!\mathbb{R}^{2n}\}\equiv$ $\{B\in\mathbb{R}^{2n\times 2n}\mid B^{\text{T}}\omega_0B=\omega_0^\text{T}\}$.
	
	Clearly, if $(M^{2n},\omega)$ is a symplectic manifold, then $(T_xM,\omega_x)$ is a symplectic vector space with $LGr(T_xM)\cong LGr(n)$ for every $x\in M$ (we say that the vector bundle $TM\rightarrow M$ is \textit{symplectic}). For any point $x\in L$ of a Lagrangian submanifold $L^n$ holds $T_xL\in LGr(T_xM)$, by definition.
\end{Rem}

We also recollect the fundamentals of Hamiltonian dynamics.

\begin{Def}	\label{Hamiltonian}					
	Let $(M,\omega)$ be a symplectic manifold. To each smooth function $H\in C^\infty(M)$ we can assign a vector field $X_H\in\mathfrak{X}(M)$ satisfying $i_{X\!_H}(\omega)=\omega(\,\cdot\,,X_H)=dH:\mathfrak{X}(M)\rightarrow C^\infty(M)$. $X_H$ is called the \textbf{Hamiltonian vector field}\index{Hamiltonian vector field} associated to the \textbf{Hamiltonian function}\index{Hamiltonian function} $H$.
	
	If the Hamiltonian is time-dependent, $H\in C^\infty(I\times M)$ with $H(t,\,\cdot\,)\equiv H_t(\cdot)\in C^\infty(M)$ and $I=[0,1]$ (or $\mathbb{R}$), then it yields a family of Hamiltonian vector fields $\{X_{H_t}\}_{t\in I}\subset\mathfrak{X}(M)$, and we can associate to $X_H$ (assumed complete) its \textbf{Hamiltonian ``flow''}\index{Hamiltonian ``flow''} $\phi_H: I\!\times\! M\rightarrow M$, which satisfies $\frac{d}{dt}|_t(\phi_H^t)=X_{H_t}\!\circ\!\phi_H^t$\break and $\phi_H^0=\textup{id}$ (we write $\phi_H(t,\,\cdot\,)=\phi_H^t(\cdot)$). In particular, we call $\phi_H^1$ the \textbf{Hamiltonian diffeomorphism}\index{Hamiltonian diffeomorphism} generated by $H$. This is an element of the group 
	\[
	\text{Ham}(M,\omega)\!\coloneqq \!\{\phi\!\in\! \text{Diff}(M)\mid \phi=\phi_H^1 \text{ Ham.\,diffeo.\;of\;some }H\!\in\! C^\infty(I\times M)\}\,.
	\]
	Denoting by $\text{Symp}(M,\omega)\coloneqq \{\phi\in\text{Diff}(M)\mid \phi^*\omega=\omega\}$ the group of symplectomorphisms, we have the following inclusions of subgroups:
	\[
	\text{Ham}(M,\omega)\leq\text{Symp}(M,\omega)\leq\text{Diff}(M).
	\] 
\end{Def}

\begin{Def}\label{isotopy}							
	Let $M$ be a smooth manifold, $L_0,L_1\subset M$ compact submanifolds. 
	\begin{itemize}[leftmargin=0.5cm]
		\item We say that $L_0$ and $L_1$ \textbf{intersect transversely}\index{intersect transversely} (or are \textbf{transverse}\index{submanifolds!transverse pair}) if $T_xL_0\pitchfork T_xL_1$, that is, $T_xL_0\oplus T_xL_1 = T_xM$ as vector spaces, for all $x\in L_0\cap L_1\subset M$. Then we write $L_0\pitchfork L_1$. 
		\item A \textbf{(smooth) isotopy}\index{isotopy} between $L_0$ and $L_1$ is a compact submanifold $\tilde{L}\subset M\times[0,1]$ such that $\tilde{L}\cap(M\times\{i\})=L_i\times\{i\}$ for $i=0,1$ and $\tilde{L}\pitchfork(M\times\{t\})$ for all $t\in[0,1]$.\newline 
		Then $L_0$ and $L_1$ are also \textbf{ambient isotopic}\index{submanifolds!ambient isotopic pair}, that is, there exists a path $\phi:t\in[0,1]\mapsto \phi_t\in\text{Diff}(M)$ such that $\phi_0=\textup{id}_M$ and $\phi_1(L_0)=L_1$.
		\item Assume now that $M=(M,\omega)$ is also symplectic and $L_0,L_1$ Lagrangian. The isotopy $\tilde{L}$ from above is a \textbf{Lagrangian isotopy}\index{isotopy!Lagrangian} if additionally each $\tilde{L}\;\cap\;(M\times\{t\})\subset M\times\{t\}$ is a Lagrangian submanifold, whence it follows that $(\pi_1^*\omega)|_{\tilde{L}}=\theta\wedge dt\in\Omega^2(\tilde{L})$ for $\pi_1:M\times[0,1]\rightarrow M$ and some $\theta\in\Omega^1(\tilde{L})$. \newline
		Then $L_0$ and $L_1$ are also \textbf{symplectic isotopic}\index{Lagrangian submanifold!symplectic isotopic pair}, that is, there exists a path $\phi:t\in[0,1]\mapsto \phi_t\in\text{Symp}(M,\omega)$ such that $\phi_0=\textup{id}_M$ and $\phi_1(L_0)=L_1$. 
		\item The Lagrangian isotopy $\tilde{L}$ is in particular a \textbf{Hamiltonian isotopy}\index{isotopy!Hamiltonian} if actually $\theta\wedge dt=d(Hdt)=dH\wedge dt$ for some Hamiltonian function $H\in C^\infty(\tilde{L})$. Choosing above\footnote{We can be slightly more precise: let $\text{Symp}_0(M,\omega)\coloneqq \{\varphi\in\text{Symp}(M,\omega)\mid \exists\phi:[0,1]\rightarrow\text{Symp}(M,\omega)\text{ s.t. } \phi_0=\textup{id}_M,\,\phi_1=\varphi\}$. Then $\phi_H:[0,1]\rightarrow\text{Symp}_0(M,\omega)$ (see \cite[Lemma 2.7]{[Mer14]} for a proof).} $\phi\coloneqq\phi_H:[0,1]\rightarrow \text{Symp}(M,\omega)$, we obtain that $L_0$ and $L_1$ are \textbf{Hamiltonian isotopic}\index{Lagrangian submanifold!Hamiltonian isotopic pair}. 
	\end{itemize}
\end{Def}
\vspace*{0.35cm}

\noindent The study of Lagrangian Floer homology requires some knowledge of complex geometry as well.  

\begin{Def}\label{almostcomplex}		
	Let $M$ be a smooth manifold. An \textbf{almost complex structure}\index{almost complex structure} on $M$ is a tensor field $J\in\mathcal{T}^{1,1}(M)\cong\Gamma(\text{End}(TM))$ such that $J_x\circ J_x = -\textup{id}_{T_xM}$ for all $x\in M$. Then $M=(M,J)$ is an \textbf{almost complex manifold}\index{almost complex manifold} and $\pi:(TM,J)\rightarrow M$ a \textit{complex vector bundle}.
	
	If moreover $M\!=\!(M,\omega)$ is a symplectic manifold, $J$ is said to be \textbf{$\omega$-compatible}\index{almost complex structure!$\omega$-compatible} if $g_J(\,\cdot\,,\,\cdot\,)\coloneqq \omega(J\cdot\,,\,\cdot\,)$ defines\footnote{Warning: literature mostly adopts the alternative definition, namely $g_J(\,\cdot\,,\,\cdot\,)\coloneqq \omega(\,\cdot\,,J\cdot\,)$! Observe that if $M$ is endowed with a Riemannian metric $g$ (always possible) such that $g(u,v)=\omega(u, Jv)$, then this relation can be used to \textit{define} a $\omega$-compatible $J$ on $M$. In particular, symplectic manifolds can always be regarded as almost complex manifolds.} a Riemannian metric on $M$ (that is, for each $x\in M$ the map $g_{J_x}(\,\cdot\,,\,\cdot\,)\coloneqq \omega_x(J_x\cdot\,,\,\cdot\,):T_xM\times T_xM\rightarrow \mathbb{R}$ is a positive definite inner product, whence $\omega_x(J_x(v),J_x(w))=\omega_x(v,w)$ for all $v,w\in T_xM$).
	
	We denote by $\mathfrak{J}(M,\omega)$ the (nonempty) set of all $\omega$-compatible almost complex structures on $M$ (it is path-connected, cf. \cite[Corollary 12.9]{[Sil12]}).
\end{Def}

In Chapter 7, when discussing homological mirror symmetry, we will deal with \textit{complex manifolds}, which are analytic $n$-dimensional (over $\mathbb{C}$) manifolds locally homeomorphic to $\mathbb{C}^n$ through charts whose transition functions are holomorphic. We can interpret them as real smooth $2n$-dimensional manifolds. They are naturally equipped with an almost complex structure, that is, they are automatically almost complex manifolds (see \cite[Proposition 15.2]{[Sil12]}), though the converse is not true a priori (cf. Lemma \ref{maybecomplex}).

The maps we are most interested in when discussing Floer theory are\break J-holomorphic (or pseudoholomorphic) curves. A thorough description of them is provided by \cite[sections 2.1--2.2]{[Wen14]}.

\begin{Def}\label{Jholocurve}			
	Let $M=(M,J)$ be a smooth manifold and $\Sigma=(\Sigma,j)$ a Riemann surface (that is, a complex analytic 1-dimensional manifold), where $J$ and $j$ are fixed almost complex structures. Then a \textbf{$J$-holomorphic curve}\index{jholomorphic@$J$-holomorphic curve} is a smooth map $u:\Sigma\rightarrow M$ fulfilling $(du)_x\circ j_x=J_x\circ (du)_x:T_x\Sigma\rightarrow T_{u(x)}M$ for all $x\in\Sigma$. 
	
	Using that $J$ squares to $-\textup{id}$, this condition is equivalent to the \textit{Cauchy--Riemann equation} $\bar{\partial}_{j,J}(u)\coloneqq \frac{1}{2}(du+J\circ du\circ j)=0$ (which reduces to the homonymous pair of equations from basic complex analysis after choosing $M=\Sigma\coloneqq \mathbb{C}$ and $J=j\coloneqq \big(\begin{smallmatrix} 0 & 1 \\ -1 & 0\end{smallmatrix}\big)$). In case $\bar{\partial}_{j,J}(u)=v$ for some perturbative bundle morphism $0\neq v:T\Sigma\rightarrow TM$, we talk about \textit{$(j,J,v)$-holomorphic curve} satisfying a \textit{perturbed Cauchy--Riemann equation}. In particular, if:
	\begin{itemize}[leftmargin=0.5cm] 			
		\item $\Sigma=\mathbb{R}\times[0,1]\subset\mathbb{R}^2\cong\mathbb{C}$, we call $u$ a \textbf{$J$-holomorphic strip}\index{jholomorphic@$J$-holomorphic curve!strip};
		\item $\Sigma=\mathbb{D}^2\coloneqq \{z\in\mathbb{C}\mid |z|\leq 1\}\subset\mathbb{C}$, $u$ is a \textbf{$J$-holomorphic disk}\index{jholomorphic@$J$-holomorphic curve!disk}; 
		\item $\Sigma=\mathbb{C}P^1\equiv\mathbb{C}\sqcup\{\infty\}(\simeq\mathbb{S}^2\subset\mathbb{R}^3)$, then $u$ is a \textbf{$J$-holomorphic sphere}\index{jholomorphic@$J$-holomorphic curve!sphere}.
	\end{itemize} 
	Since $\mathbb{R}\times[0,1]$ is biholomorphic to $\mathbb{D}^2\setminus\{\pm1\}$ by the Riemann Mapping Theorem, strips solving the (unperturbed!) Cauchy--Riemann equation can be extended to disks.
\end{Def} 

\begin{Def}								
	Let $(M,\omega,J)$ be a symplectic almost complex manifold, $(\Sigma,j)$ a Riemann surface. Then we can assign to any $J$-holomorphic curve $u:\Sigma\rightarrow M$ its \textbf{symplectic area}\index{symplectic area} (or energy):
	\begin{equation}\label{symparea}
	\text{Area}_{g_J}(u)\equiv E(u)\coloneqq \int_\Sigma u^*\omega =\int_\Sigma|du|^2\geq 0\;,
	\end{equation}
	where $|du|^2\equiv |du|_{g_J}^2=g_J(du,du)$. In particular, $E(u)>0$ whenever $u$ is non-constant.
\end{Def}

For a $J$-holomorphic strip $u$ parametrized by (orthogonal) coordinates $(s,t)\equiv s+it\in\mathbb{D}^2\setminus\{\pm 1\}$, we can clearly see the geometric interpretation of symplectic area: the Cauchy--Riemann equation becomes $J\partial_su - \partial_tu = 0$, and hence the area spanned by the orthogonal vectors $\partial_su$ and $\partial_tu$ is $|\partial_su|\cdot|\partial_tu| = |\partial_su|^2 = g_J(\partial_su,\partial_su) = \omega(J\partial_su,\partial_su) = \omega(\partial_tu,\partial_su)$; integration over all $z=s+it\in\mathbb{D}^2\setminus\{\pm 1\}$ yields exactly \eqref{symparea}. The same reasoning applies to any Riemann surface, by choosing holomorphic local coordinates $(s,t)$ as above.

\subsection{The Fukaya category: a sneak peek}\label{ch5.2}

There are several definitions of Fukaya category, which differ by the set of data and constraints one wishes to attach to its objects, starting with the underlying manifold itself. 

We work in one of the most simplified such settings, namely a symplectic manifold $M^{2n}=(M,\omega)$ with 2-torsion first Chern class, $2c_1(TM)=0$. Before going into the details, let us give a brief overview of all the necessary ingredients which will guide our construction; a thorough exposition is provided in the ensuing sections. 

Objects of the (compact) Fukaya category $\mathscr{F}(M)$ of $M$ are compact Lagrangian submanifolds $L^n\subset M$ decorated with the following data:
\begin{itemize}[leftmargin=0.5cm]
	\item a vanishing Maslov class $H^1(L;\mathbb{Z})$ --- which we can talk about only by assuming that $2c_1(TM)=0$ in the first place --- thus enabling us to equip each $L$ with a ``grading'' and consequently make Floer complexes graded;
	\item when char$(\mathbb{K})\neq 2$, a choice of orientation and spin structure for each $L$, eventually relative to some fixed $\langle b\rangle\in H^2(M;\mathbb{Z}_2)$;
	\item $\langle\omega\rangle\cdot\pi_2(M,L)=0$ for each $L$, which gets us rid of ``transversality issues'' preventing Floer cohomology from being well-defined;
	\item for every pair of Lagrangian submanifolds $(L,L')$, as well as for all tuples $(L_0,...,L_k)$, the choice of a time-dependent almost complex structure $J\in C^\infty([0,1],\mathfrak{J}(M,\omega))$ and Hamiltonian $H_{L,L'}\in C^\infty([0,1]\times M,\mathbb{R})$ which is consistent with all perturbation data, in order to once again take care of transversality and thus obtain well-defined composition maps.  
\end{itemize}

We also require that Hamiltonian isotopic Lagrangian submanifolds (cf. Definition \ref{isotopy}) yield isomorphic objects in the Fukaya category (or rather, in its associated cohomological category). 

Morphism spaces are Floer complexes, $\textup{hom}_{\mathscr{F}(M)}(L,L')\coloneqq CF^\bullet(L,L')$, and the choice of suitable perturbation data as mentioned above allows us to produce composition maps between these spaces.

At this point, the reader may have already guessed what we are aiming for: Fukaya categories have the structure of $\mathbb{Z}$-graded $A_\infty$-categories! However, unless we are working in the exact symplectic framework --- which we will eventually describe in Section \ref{ch5.8} --- the choice of ground field is no longer arbitrary: for the morphisms in $\mathscr{F}(M)$ to be well-defined, we must work over the Novikov field.

\begin{Def}											
	Let $\mathbb{K}$ be any field. The \textit{Novikov ring} over $\mathbb{K}$ is
	\[
	\Lambda_\mathbb{K}'\coloneqq  \Bigg\{ \sum_{i=0}^\infty a_iT^{\lambda_i}\,\bigg|\,a_i\in\mathbb{K},\,\lambda_i\in\mathbb{R}_{\geq 0}\text{ such that }\lim_{i\to\infty}\lambda_i = \infty\Bigg\}\,.
	\]
	Then the \textbf{Novikov field}\index{Novikov field} is simply its fraction field:
	\begin{equation}\label{Novikov}
	\Lambda_\mathbb{K}\coloneqq  \text{Frac}(\Lambda_\mathbb{K}')=\Bigg\{ \sum_{i=0}^\infty a_iT^{\lambda_i}\,\bigg|\,a_i\in\mathbb{K},\,\lambda_i\in\mathbb{R}\text{ such that }\lim_{i\to\infty}\lambda_i = \infty\Bigg\}\,.
	\end{equation}
\end{Def}    

Indeed, the higher-order composition maps $\mu_{\mathscr{F}(M)}^d$ ``count'' certain pseudo-holomorphic polygons, encoding them as coefficients of power series which generally do not converge. To remedy this issue, we are forced to work over a field of formal power series such as $\Lambda_\mathbb{K}$. As already hinted, assuming $M$ and all Lagrangian submanifolds to be exact, a suitable rescaling allows us to eliminate the formal variable $T$, so that we can directly consider coefficients in $\mathbb{K}$.

An aspect worth addressing immediately is our standing assumption that $2c_1(TM)=0$. We just provide a formal definition, encouraging the reader to look up in the literature for more details (see for example \cite{[MS74]}).

\begin{Def}												
	Let $M^{2n}=(M,\omega,J)$ be an almost complex symplectic manifold, and hence $\pi:TM\rightarrow M$ a complex vector bundle of rank $2n$. The \textit{Chern classes} $c_k(TM)\in H^{2k}(M;\mathbb{Z})$ of $M$ are implicitly defined to be the coefficients in the expression
	\[
	\text{det}\Bigg(\frac{it\Omega}{2\pi}+\mathds{1}_n\Bigg)=\sum_{k\geq 0}c_k(TM)t^k\;.
	\]
	Here $t$ is a formal variable and $\Omega=d\varpi+\frac{1}{2}[\varpi,\varpi]\in\Omega^2(P,\mathbb{K}^{2n\times 2n})$ is the curvature form of the principal $\text{GL}(2n)$-bundle $P$ associated\footnote{The principal bundle $P$ is the frame bundle $\text{Fr}(TM)$ of $TM$, whose fibers are given by $\text{Fr}(TM)_x\coloneqq \{f:\mathbb{K}^{2n}\rightarrow T_xM\mid f \text{ linear isomorphism}\}$.} to $TM$, identifying a $C^\infty(P)$-bilinear alternating map $\mathfrak{X}(P)\times\mathfrak{X}(P)\rightarrow C^\infty(P,\mathbb{K}^{2n\times 2n})$ of which we can indeed consider the standard determinant.
	
	Note that, given another almost complex structure $\tilde{J}\in\mathfrak{J}(M,\omega)$, one can show that $c_1(TM,J)=c_1(TM,\tilde{J})$ (similarly for the other $c_k$'s), and therefore unambiguously write $c_1(TM)\in H^2(M;\mathbb{Z})$ for the \textbf{first Chern class}\index{first Chern class} of $M$. 
\end{Def}  
\vspace*{0.2cm}

\noindent\minibox[frame]{Henceforth, we assume that the symplectic manifold $M$ fulfills $2c_1(TM)=0$, \\ and consider only compact Lagrangian submanifolds $L\subset M$.}
\vspace*{0.2cm}

\subsection{The Floer complex}\label{ch5.3}

As apparent from the previous section, to understand Fukaya categories we must first understand Lagrangian Floer theory. The rest of Chapter 5 is devoted to gain an essential grasp on this subject. However, the reader should be aware that the following exposition deliberately takes the many underlying analytical results and constructions for granted; whenever sensitive, we provide technical comments so to orientate the more expert eyes. Detail-filled accounts on (general) Floer theory can easily be found throughout literature (see for example \cite{[Sal97]} or \cite{[Mer14]}); here we limit ourselves to the concise version of \cite[section 1]{[Aur13]}.

Firstly, we should mention that Lagrangian Floer homology can be seen as an infinite-dimensional analogue of Morse homology\footnote{For an overview on Morse theory consult for example \cite[chapter 1]{[Mer14]}.} for the action functional on the universal cover of $\mathcal{P}(L_0,L_1)\coloneqq\{\gamma:[0,1]\rightarrow M\mid \gamma(0)\in L_0,\,\gamma(1)\in L_1\}$, given any two Lagrangian submanifolds $L_0, L_1\subset M$ (see \cite[section 1.1]{[Aur13]}). We will return to this in Section \ref{ch5.5}. 

However, the infinite-dimensional framework makes the analytical problems too hard to tackle, so that one actually resorts to an alternative point of view: moduli spaces of pseudo-holomorphic strips.

\begin{Def}\label{Floercomp}						
	Let $M=(M,\omega)$ be a symplectic manifold equipped with some $J\in\mathfrak{J}(M,\omega)$, and let $L_0, L_1\subset M$ be two compact Lagrangian submanifolds such that $L_0\pitchfork L_1$ (whence $|L_0\cap L_1|<\infty$, because $L_0\cap L_1$ is a 0-dimensional compact set, by definition). Then the \textbf{Floer complex}\index{Floer complex} of $(L_0,L_1)$ is the free $\Lambda_\mathbb{K}$-module generated by $\mathcal{X}(L_0,L_1)\coloneqq L_0\cap L_1$,
	\begin{equation}\label{Floercmplx}
	CF(L_0,L_1)\coloneqq \mkern-18mu\bigoplus_{p\in\mathcal{X}(L_0,L_1)}\mkern-18mu \Lambda_\mathbb{K}\cdot p\;.
	\end{equation} 
\end{Def}

Of course, Definition \ref{Floercomp} lacks information about the corresponding boundary operator $\partial:CF(L_0,L_1)\rightarrow CF(L_0,L_1)$. This is given by ``counting'' $J$-holomorphic strips in $M$ with boundary on $L_0$ and $L_1$. More precisely, the coefficient of $q$ in $\partial(p)$ depends on the space of solutions $u:\mathbb{R}\times[0,1]\simeq\mathbb{D}^2\setminus \{\pm 1\}\subset\mathbb{C}\rightarrow M, (s,t)\equiv(s+it)\mapsto u(s,t)$ of the \hypertarget{certainprob}{\textbf{Cauchy--Riemann equation}}\index{Cauchy--Riemann equation}
\begin{equation}\label{CRequation}
\bar{\partial}_J(u) \coloneqq  \partial_su+J(u)\partial_tu = 0      
\end{equation}
(where $J(u)\equiv J_{u(\cdot,\cdot)}$) with boundary conditions
\begin{equation}\label{CRboundarycond}
\left\{\begin{array}{llll}
	u(s,0)\in L_0 & \text{ and } & u(s,1)\in L_1 & \forall s\in\mathbb{R} \\ 
	\!\!\!\displaystyle\lim_{s\to-\infty}\!\!u(s,t)=q & \text{ and } & \!\!\!\displaystyle\lim_{s\to+\infty}\!\!u(s,t)=p & \forall t\in[0,1]
\end{array}\right.\,,
\end{equation}
and the finite \textbf{energy constraint}\index{energy constraint}\footnote{When treating the \textit{perturbed} Cauchy--Riemann equation \eqref{perturbedCR}, one can in fact prove that $E(u)<\infty$ is equivalent to the asymptotic conditions $\lim_{s\to+\infty}u(s,t)=p$ and $\lim_{s\to-\infty}u(s,t)=q$ from \eqref{CRboundarycond}; see \cite[Proposition 1.21]{[Sal97]}.}  
\begin{equation}\label{CRenergy}
E(u)=\int_{\mathbb{R}\times[0,1]}\mkern-18mu u^*\omega = \frac{1}{2}\int_{-\infty}^{+\infty}\int_{0}^{1}|\partial_su|^2 ds dt\overset{!}{<}\infty\;.
\end{equation}
(equivalently a bound on the symplectic area). Any solution $u$ is indeed a $J$-holomorphic map (compare to Definition \ref{Jholocurve}), as depicted in Figure \ref{holomorphicstrip2}.

\begin{figure}[htp]
	\centering
	\includegraphics[width=0.9\textwidth]{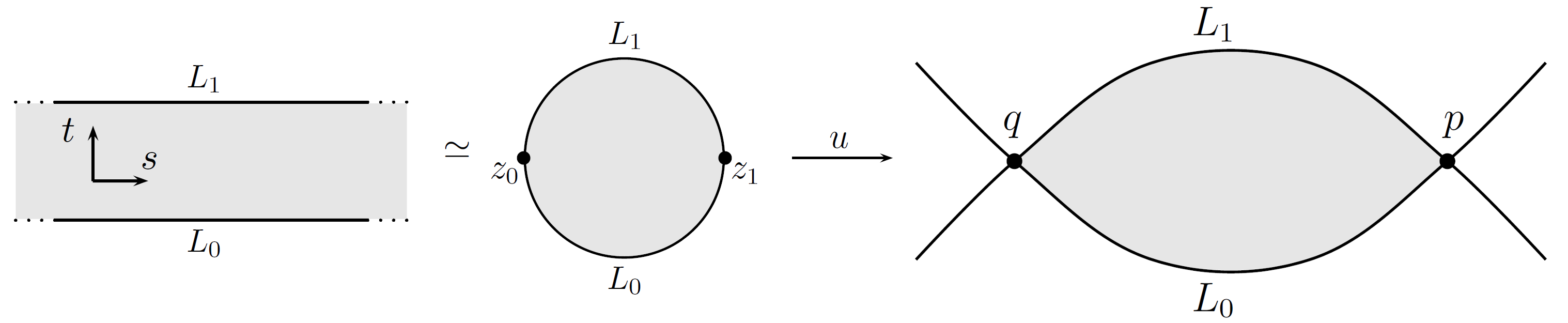}	
	\caption{A $J$-holomorphic strip $u$ belonging to $\widehat{\mathcal{M}}(p,q;[u],J)$. [Source:\cite{[Aur13]}]}
	\label{holomorphicstrip2}	
\end{figure}

\begin{Def}\label{modulispace}							
	Let $M=(M,\omega,J)$ be an almost complex symplectic manifold and $L_0,L_1\subset M$ compact Lagrangian submanifolds such that $L_0\pitchfork L_1$. Consider a solution $u:\mathbb{R}\times[0,1]\rightarrow M$ of the Cauchy--Riemann problem \eqref{CRequation} with constraints \eqref{CRboundarycond} and \eqref{CRenergy}, thus yielding a homotopy class $[u]\!\in\!\pi_2(M,L_0\cup L_1)$ \big($=\textup{Hom}_\mathsf{hTop}((B^2,\mathbb{S}^1),(M,L_0\cup L_1))$, for pointed pairs of topological spaces $(B^2,\mathbb{S}^1)\simeq(\mathbb{D}^2\setminus\{\pm1\},\mathbb{R}\times\{0\}\cup\mathbb{R}\times\{1\})$ and $(M,L_0\cup L_1)$\big). 
	
	For fixed $p,q\in\mathcal{X}(L_0,L_1)$, we denote by $\widehat{\mathcal{M}}(p,q;[u],J)$ the \textbf{moduli space}\index{moduli space} of solutions to \eqref{CRequation}--\eqref{CRenergy} representing $[u]$ and set $\mathcal{M}(p,q;[u],J)\!\coloneqq \!\widehat{\mathcal{M}}(p,q;[u],J)/\mathbb{R}$.
\end{Def}

We observe that $\mathbb{R}$ acts freely on $\widehat{\mathcal{M}}(p,q;[u],J)$ by translation in the $s$-coordinate: if $u(s,t)\in\widehat{\mathcal{M}}(p,q;[u],J)$, then $u_{\tilde{s}}(s,t)\coloneqq u(s-\tilde{s},t)\in\widehat{\mathcal{M}}(p,q;[u],J)$ for all $\tilde{s}\in\mathbb{R}$ as well. This is taken into account in the definition of $\mathcal{M}(p,q;[u],J)$.

There are now three issues hindering our attempt to define a boundary operator for $CF(L_0,L_1)$. Namely, for $\widehat{\mathcal{M}}(p,q;[u],J)$ to be a ``manageable'' space, we need it to have the following properties:
\begin{itemize}[leftmargin=0.5cm]
	\item \textit{Transversality}, which ensures that $\widehat{\mathcal{M}}(p,q;[u],J)$ can be given the structure of a finite-dimensional smooth manifold, whose dimension is given by the Maslov index of $[u]$.
	\item \textit{Compactness}, which is achieved only in absence of ``energy build-ups'' making $\widehat{\mathcal{M}}(p,q;[u],J)$ ill-defined.
	\item \textit{Orientability}, which should be consistent with the choice of orientation of the underlying Lagrangian submanifolds.
\end{itemize}
However, none of these requisites are a priori met, and hence we need additional assumptions. 

Let us start by briefly addressing orientability. The following definition is kept intentionally vague, since part of the aforementioned technicalities we would rather avoid. The pragmatic reader may skip it.

\begin{Def}										
	Let $L^n\subset M^{2n}$ be a Lagrangian submanifold. Assume $L$ is oriented, that is, endowed with (the class of) a nowhere-vanishing $n$-form $\theta_L\in\Omega^n(L)$, a volume form. Observe that, since smooth manifolds are in particular cell complexes, we can consider skeleton filtrations thereof. Then $L$ is:
	\begin{itemize}[leftmargin=0.5cm]
		\item a \textbf{spin manifold}\index{spin manifold} if we can trivialize the tangent bundle $TL$ over the 1-skeleton $L^{(1)}$ in a way extendable to the 2-skeleton $L^{(2)}$. The homotopy class of such trivialization is called a \textit{spin structure} for $L$, in fact existing if and only if the second \textit{Stiefel--Whitney class} $w_2(TL)\in H^2(L;\mathbb{Z}_2)$ --- an invariant of $TL$ --- vanishes.
		\item a \textbf{relatively spin manifold}\index{relatively spin manifold} relative to some fixed $\langle b\rangle\in H^2(M;\mathbb{Z}_2)$ if $w_2(TL)=\langle b\rangle|_L$.
	\end{itemize}
\end{Def}

Assuming $L_0$ and $L_1$ are oriented and spin, one can show that their spin structures yield a canonical orientation for $\widehat{\mathcal{M}}(p,q;[u],J)$ (this is proved in \cite[chapter 12]{[Sei08]}). However, these assumptions are unnecessary if $\text{char}(\mathbb{K})=2$.\\

\noindent\minibox[frame]{Henceforth, we will always assume that all Lagrangian submanifolds come \;\;\;\, \\ with an (implicit) choice of orientation and spin structure.}   
\vspace*{0.2cm}

\subsection{The Maslov index}\label{ch5.4}

The Maslov index of the homotopy class $[u]\in\pi_2(M,L_0\cup L_1)$, where $u$ solves \eqref{CRequation}--\eqref{CRenergy}, determines the dimension of $\widehat{\mathcal{M}}(p,q;[u],J)$. Actually, the situation is more subtle (the unfamiliar reader should skip the following remark):

\begin{Rem}\label{technically}								
	The Cauchy--Riemann problem \eqref{CRequation}--\eqref{CRenergy} is actually a \textit{Fredholm problem}, meaning that the linearization $D_{\bar{\partial}_J,u}$ of $\bar{\partial}_J$ at any zero $u$ (that is, solution) is a \textit{Fredholm operator}. We can assign to $[u]$ its \textit{Fredholm index} $\text{ind}([u]) \coloneqq  \text{dim}(\text{ker}(D_{\bar{\partial}_J,u})) - \text{dim}(\text{coker}(D_{\bar{\partial}_J,u}))$, which in fact coincides with the Maslov index we will compute below. We remark that \textit{Hamiltonian} Floer theory is actually discussed in terms of another quantity, the \textit{Conley--Zehnder index}.
\end{Rem}	

Recall from Remark \ref{LGr} the definition of Lagrangian Grassmannian $LGr(n)$, diffeomorphic to $\text{U}(n)/\text{O}(n)$. It can be shown that the square of the determinant map $\text{det}:\text{U}(n)/\text{O}(n)\rightarrow\mathbb{S}^1$ induces an isomorphism $\pi_1(LGr(n))\cong\pi_1(\mathbb{S}^1)=\mathbb{Z}$.

\begin{Def}\label{Maslovforpaths}				
	The \textbf{Maslov index of a loop}\index{Maslov index!of a loop} $\gamma:[0,1]\rightarrow LGr(n)$ based at some $W\in LGr(n)$ is given by the winding number $\text{ind}(\gamma)\coloneqq w(\gamma,W)\in\mathbb{Z}$ of $\text{im}(\text{det}^2\circ\gamma)\subset\mathbb{S}^1$ (identifying $LGr(n)\equiv\text{U}(n)/\text{O}(n)$).
	
	On the other hand, given two paths $l_0,l_1:[0,1]\rightarrow LGr(n)$ with $l_0(0)\pitchfork l_1(0)$ and $l_0(1)\pitchfork l_1(1)$, define $l\coloneqq (l_0,l_1):t\!\in\![0,1]\mapsto (l_0(t),l_1(t))\!\in\! LGr(n)\times LGr(n)$. The \textbf{Maslov index of the paths}\index{Maslov index!of a pair of paths} $l_0,l_1$ is 
	\begin{equation}
		\text{ind}(l_0,l_1)\coloneqq \big|\text{im}(l)\cap\overline{LGr(n)}\big|\,,
	\end{equation}
	where $\overline{LGr(n)}\coloneqq \{(W,W')\in LGr(n)\times LGr(n)\mid W\,\cancel{\pitchfork}\,W'\}$. Put differently, $\text{ind}(l_0,l_1)$ is the (signed) number of times $l_0(t)$ is not transverse to $l_1(t)$, for increasing $0\leq t\leq 1$.
\end{Def}

Let us parse the geometric meaning of Maslov index for paths with help of an easy example.

\begin{Ex}												
	For simplicity, we take $l_0(t)\coloneqq \mathbb{R}^n\subset\mathbb{R}^{2n}\cong\mathbb{C}^n$ to be constant, and define $l_1(t)\coloneqq (e^{i\varphi_1(t)}\mathbb{R})\times...\times (e^{i\varphi_n(t)}\mathbb{R})$ so that all angles $\varphi_k$ vary uniformly from $-\pi/2$ to $\pi/2$. The Maslov index of $l_0$ and $l_1$ is precisely $\text{ind}(l_0,l_1)=n$. This is best seen for the case $n=1$, where $l_0=x\text{-axis}$: the straight line $l_1(t)$ rotates counterclockwise from $l_1(0)=y\text{-axis}$ (where $\varphi_1(0)=-\pi/2$) to $l_1(1)=y\text{-axis}$ (where $\varphi_1(1)=\pi/2$), intersecting $l_0$ transversely at all times, except at $\varphi(1/2)=0$, where the two lines actually coincide.\hfill $\blacklozenge$ 
\end{Ex}

Consider now a $J$-holomorphic strip $u:\mathbb{R}\times[0,1]\rightarrow M$ solving the Cauchy--Riemann problem \eqref{CRequation}--\eqref{CRenergy}. Write $q=u(-\infty,t)$ and $p=u(\infty,t)$ for any $t\in[0,1]$. Then the smooth map $u$ yields the symplectic pullback bundle\break $u^*TM=\{((s,t),(x,v))\in(\mathbb{R}\times[0,1])\times TM\mid u(s,t)=x\}$, and we can define the Maslov index of $u$ as follows.

\begin{Def}\label{Maslovforstrips}						
	After choosing a trivialization for $u^*TM\rightarrow\mathbb{R}\times[0,1]$ (see for example \cite[chapter 5]{[Mer14]}), we define two paths $l_i(s)\coloneqq ((u|_{\mathbb{R}\times\{i\}})^*TL_i)|_{(-s,i)}$ $\cong T_{u(-s,i)}L_i$ for $s\in\overline{\mathbb{R}}=[-\infty,+\infty]$ and $i=0,1$, whose image actually lies in $LGr(n)$ (by Remark \ref{LGr}) and such that each $l_i$ goes from $T_pL_i$ to $T_qL_i$, satisfying $l_0(-\infty)\pitchfork l_1(-\infty)$ and $l_0(\infty)\pitchfork l_1(\infty)$ (because $L_0\pitchfork L_1$).
	
	The Maslov index of the $J$-holomorphic strip $u$ is just the Maslov index of the two paths $l_0$ and $l_1$, $\text{ind}([u])\coloneqq \text{ind}(l_0,l_1)$, as in Definition \ref{Maslovforpaths} (which is still valid for the enlarged domain $\overline{\mathbb{R}}$).
\end{Def}

We remark that $\text{ind}(l_0,l_1)$ is actually independent from the chosen trivialization for $u^*TM$, and invariant under conjugation by an element in $\text{Sp}(2n,\mathbb{R})$. This makes $\text{ind}([u])$ well-defined. Moreover, it is additive under concatenation (further properties, along with a slightly modified construction, can be found in \cite[section 4]{[Sei99]}). Let us see the equivalent formulation as index of a suitable loop.

\begin{Def}\label{canonicalshortpath}					
	Consider $\lambda_0,\lambda_1\in LGr(n)$ such that $\lambda_0\pitchfork\lambda_1$. Identify the unique transformation $A\in \text{Sp}(2n,\mathbb{R})$ such that $A(\lambda_0)=\mathbb{R}^n\subset\mathbb{C}^n$ and $A(\lambda_1)=(i\mathbb{R})^n\subset\mathbb{C}^n$. Then we call $\lambda:t\in[0,1]\mapsto A^{-1}((e^{-i\pi t/2}\mathbb{R})^n)\in LGr(n)$ the \textit{canonical short path} from $\lambda_0=\lambda(0)$ to $\lambda_1=\lambda(1)$. 
\end{Def}

\begin{Def}\label{Maslovindex}					 
	Let $u:\mathbb{R}\times[0,1]\rightarrow M$ be a $J$-holomorphic strip solving the Cauchy--Riemann problem \eqref{CRequation}--\eqref{CRenergy}. Define the paths $l_0,l_1:\overline{\mathbb{R}}\rightarrow LGr(n)$ as in the paragraph above, and consider canonical short paths $\lambda_p, \lambda_q:[0,1]\rightarrow LGr(n)$ from $T_pL_0$ to $T_pL_1$ respectively from $T_qL_0$ to $T_qL_1$, as in Definition \ref{canonicalshortpath}. 
	
	We thus have a loop $\gamma_u\coloneqq \overline{l_0}\ast\lambda_p\ast l_1\ast\overline{\lambda_q}$ in $LGr(n)$ based at $T_qL_0$ (bars denote inverse paths). The \textbf{Maslov index}\index{Maslov index} of $u$ coincides with the corresponding winding number,
	\begin{equation}
		\text{ind}([u])\coloneqq\text{ind}(\gamma_u)=w(\gamma_u, T_qL_0)\,.
	\end{equation}   
\end{Def} 

\begin{Ex}											
	Take $M=\mathbb{R}^2$. Then the Maslov index of the $J$-holomorphic strip $u$ depicted in Figure \ref{holomorphicstrip2} can be computed using the following trick: we choose a trivialization $u^*TM$ making the path $l_0(s)=T_{u(-s,0)}L_0$ equal the constant tangent line $\mathbb{R}\times\{0\}$, and without loss of generality parametrizing $L_1$ as the downwards open parabola segment $\{(x,-x^2)\in\mathbb{R}^2\mid x\in[q,p]\}$ (refer to Figure \ref{holomorphicstrip2} for clarity). 
	
	It is then obvious that the path $l_1(s)=T_{u(-s,1)}L_1$ is non-transverse to $l_0$ only once, namely at $s=0$ (where the tangent line $l_1(0)$ is parallel to $\mathbb{R}\times\{0\}$).\break Therefore, completing the loop at $T_qL_0$ with canonical short paths as in Definition \ref{Maslovindex}, it holds that ind($[u]$)=1. \hfill $\blacklozenge$ 
\end{Ex}
\vspace*{0.3cm}

\noindent Let us discuss how Floer complexes can be equipped with a $\mathbb{Z}$-grading. Here is where our assumption of a 2-torsion first Chern class comes into play.

Consider the fibre bundle $LGr(TM)\rightarrow M$ of fibres $LGr(TM)_x\coloneqq LGr(T_xM)$ $\cong LGr(n)$ (this is the bundle $P\times_{\text{Sp}(2n,\mathbb{R})}LGr(n)$, where $P$ is the associated symplectic frame bundle, a principal $\text{Sp}(2n,\mathbb{R})$-bundle over $M$), each having universal cover $\upsilon_x:\widetilde{LGr}(T_xM)\rightarrow LGr(T_xM)$. Then $2c_1(TM)=0$ allows\footnote{Unfortunately, this is yet another technical bit which falls out of scope. It is addressed in \cite[Lemma 2.6]{[Fuk02b]} or \cite[section (11j)]{[Sei08]}, in the context of \textit{Lagrangian branes}.} us to glue all these covers to form a covering space $\widetilde{LGr}(TM)\rightarrow LGr(TM)$ such that $\widetilde{LGr}(TM)_x = \widetilde{LGr}(T_xM)$ for all $x\in M$. Elements of $\widetilde{LGr}(TM)$ are called \textit{graded Lagrangian planes} in $TM$.

Now, given the section $s\in\Gamma(L,LGr(TM)),\;x\in L\mapsto T_xL\in LGr(T_xM)$ on some Lagrangian submanifold $L\subset M$, we may wonder what is the obstruction to lifting $s$ to some section $\tilde{s}\in\Gamma(L,\widetilde{LGr}(TM))$, so to produce the commutative diagram
\vspace*{-0.1cm}
\begin{equation}\label{liftsection}
\begin{tikzcd}
& \widetilde{T_xL}\in\arrow[d, mapsto, "\upsilon_x"]\widetilde{LGr}(T_xM) \\
x\in L\arrow[ur, mapsto, end anchor={[yshift=-0.5ex]west}, "\tilde{s}"]\arrow[r, mapsto, "s"'] & T_xL\in LGr(T_xM)
\end{tikzcd}\quad.
\end{equation}

\noindent It turns out that we need the \textit{Maslov class} of $L$ to vanish:

\begin{Def}\label{phase}										
	Let $(M,\omega)$ be a symplectic manifold with $2c_1(TM)=0$, so that to each $W\in LGr(T_xM)$ we can assign a ``phase'' $\phi(W)\in\mathbb{S}^1$ and choose a lift $\tilde{\phi}(W)\in\mathbb{R}$ fulfilling $\phi(W)=e^{2\pi i\tilde{\phi}(W)}$. Then, given a Lagrangian submanifold $L\subset M$, we can define its ``phase function'' $\phi_L:L\rightarrow\mathbb{S}^1$ by $\phi_L(x)\coloneqq \phi(T_xL)$.
	
	The \textbf{Maslov class}\index{Maslov class} of $L$ is\footnote{Observe that, $\mathbb{Z}$ being divisible, $\text{Ext}(H_0(L),\mathbb{Z})$ vanishes, and hence $H^1(L;\mathbb{Z})\cong\textup{Hom}(H_1(L),\mathbb{Z})$. Now, working on connected components of $L$, the Hurewicz Theorem tells us that $H_1(L)\cong\pi_1(L)^{ab}=\pi_1(L)/[\pi_1(L),\pi(L)]$, and abelianity of $\mathbb{Z}$ guarantees that $\textup{Hom}(\pi_1(L),\mathbb{Z})\cong\textup{Hom}(\pi_1(L)^{ab},\mathbb{Z})$.} precisely 
	\begin{equation}
		\mu_L\equiv[\phi_L]\coloneqq \pi_1(\phi_L)\in\textup{Hom}(\pi_1(L),\pi_1(\mathbb{S}^1))\cong H^1(L;\mathbb{Z})\,.
	\end{equation}
	Moreover, the choice of a map $\tilde{\phi}_L:L\rightarrow\mathbb{R}$ lifting $\phi_L$, that is, fitting into the commutative diagram 
	\vspace*{-0.5cm}
	\begin{equation}
	\begin{tikzcd}
	& \tilde{\phi}(T_xL)\in\mathbb{R}\arrow[d, mapsto, "\text{exp}(2\pi i\cdot)"] \\
	x\in L\arrow[ur, mapsto, end anchor={[yshift=-0.5ex]west}, "\tilde{\phi}_L"]\arrow[r, mapsto, "\phi_L"'] & \phi(T_xL)\in\mathbb{S}^1
	\end{tikzcd}\quad,
	\end{equation}
	yields a \textbf{graded Lagrangian submanifold}\index{Lagrangian submanifold!graded} $(L,\tilde{\phi}_L)$ with \textbf{graded lift}\index{graded lift} $\tilde{\phi}_L$. 
\end{Def}                                                                                    
Choose now transversely-intersecting graded Lagrangian submanifolds\break $(L_0,\tilde{\phi}_{L_0})$, $(L_1,\tilde{\phi}_{L_1})\subset M$ with $[\phi_{L_0}]=[\phi_{L_1}]=0$, so that there are sections as in diagram \eqref{liftsection}. Pick any $p\in L_0\cap L_1$. Then there exists a favoured (homotopy class of a) path $l_p:[0,1]\rightarrow LGr(T_pM)$ such that $l_p(0)=T_pL_0$ and $l_p(1)=T_pL_1$, namely the one selected by the covering map $\upsilon_p$ after looking at the universal cover $\widetilde{LGr}(T_pM)$.

\begin{Def}\label{degree}								
	Let $(L_0,\tilde{\phi}_{L_0}),(L_1,\tilde{\phi}_{L_1})\subset M$ be graded Lagrangian submanifolds with vanishing Maslov class, and $p\in L_0\cap L_1$. Let $l_p:[0,1]\rightarrow LGr(T_pM)$ be the path from previous paragraph, and $\lambda_p:[0,1]\rightarrow LGr(T_pM)$ the canonical short path from $T_pL_0$ to $T_pL_1$. Then the \textbf{degree}\index{degree (of a Floer generator)} of $p$ is given by 
	\begin{equation}
		\text{deg}(p)\coloneqq w(\gamma_p)\,,
	\end{equation}
	where $\gamma_p\coloneqq l_p\ast\overline{\lambda_p}$ is a loop based at $T_pL_0$.
\end{Def}

After suitably identifying the tangent spaces $T_x(L_0\cap L_1)$, an immediate consequence of Definitions \ref{Maslovindex} and \ref{degree} is:

\begin{Cor}\label{generatordegree}						
	Let $u:\mathbb{R}\times[0,1]\rightarrow M$ be a $J$-holomorphic strip solving the Cauchy--Riemann problem \eqref{CRequation}--\eqref{CRenergy}, where $L_0$ and $L_1$ are transversely-intersecting graded Lagrangian submanifolds with $[\phi_{L_0}]=[\phi_{L_1}]=0$. Then: 
	\[
	\textup{ind}([u])=\textup{deg}(q)-\textup{deg}(p).
	\]
	In particular, $\textup{ind}([u])$ only depends on the two generators $p$, $q$ connected by $u$, so that $CF(L_0,L_1)$ can be equipped with a $\mathbb{Z}$-grading:
	\[
	CF^n(L_0,L_1)\coloneqq \mkern-18mu\bigoplus_{\substack{p\in\mathcal{X}(L_0,L_1): \\ \textup{deg}(p)=n}}\mkern-18mu \Lambda_\mathbb{K}\cdot p\;,\quad\text{for }n\in\mathbb{Z}.
	\] 
\end{Cor}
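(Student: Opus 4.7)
My plan is to reduce the Maslov index $\textup{ind}([u])$ to a sum of winding increments along the four segments of the loop $\gamma_u$, and then to use the graded structures on $L_0$ and $L_1$ to rewrite two of those increments in a form that cancels cleanly against $\textup{deg}(p)$ and $\textup{deg}(q)$. Concretely, I would first recall that under the isomorphism $\pi_1(LGr(n))\cong\mathbb{Z}$ induced by $\det^{2}$ the winding number of any loop in $LGr(T_pM)$ equals the net increment of a continuous real lift $\tilde\phi$ of the phase $\phi$. For a path $\ell\colon[0,1]\to LGr(T_pM)$, let $\Delta\tilde\phi(\ell):=\tilde\phi(\ell(1))-\tilde\phi(\ell(0))$ computed via such a continuous lift; the quantity is well-defined, additive under concatenation, and sign-reversing under reversal. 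Applying it to $\gamma_u$, $\gamma_p$, $\gamma_q$ separately yields
\[
\textup{ind}([u])=\Delta\tilde\phi(l_1)-\Delta\tilde\phi(l_0)+\Delta\tilde\phi(\lambda_p)-\Delta\tilde\phi(\lambda_q),
\]
together with $\textup{deg}(p)=\Delta\tilde\phi(l_p)-\Delta\tilde\phi(\lambda_p)$ and $\textup{deg}(q)=\Delta\tilde\phi(l_q)-\Delta\tilde\phi(\lambda_q)$, so the $\lambda$-terms cancel once the statement is rearranged and the corollary reduces to the single identity $\Delta\tilde\phi(l_1)-\Delta\tilde\phi(l_0)=\Delta\tilde\phi(l_q)-\Delta\tilde\phi(l_p)$.

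I would then verify this identity using the graded lifts. Because $[\phi_{L_i}]=0$, the canonical section $x\mapsto T_xL_i$ of $LGr(TM)|_{L_i}$ admits a global lift $\tilde s_{L_i}\colon x\mapsto\widetilde{T_xL_i}$ in $\widetilde{LGr}(TM)|_{L_i}$ (encoded by $\tilde\phi_{L_i}$). A symplectic trivialization of $u^*TM$ transports this section along $u|_{\mathbb{R}\times\{i\}}$ to a continuous lift of $l_i$ in $\widetilde{LGr}(T_pM)$ whose endpoints are $\widetilde{T_pL_i}$ and $\widetilde{T_qL_i}$, giving $\Delta\tilde\phi(l_i)=\tilde\phi_{L_i}(q)-\tilde\phi_{L_i}(p)$. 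By the very construction of the distinguished paths $l_p$, $l_q$ (which lift in $\widetilde{LGr}$ from $\widetilde{T_pL_0}$ to $\widetilde{T_pL_1}$, respectively from $\widetilde{T_qL_0}$ to $\widetilde{T_qL_1}$), one also has $\Delta\tilde\phi(l_p)=\tilde\phi_{L_1}(p)-\tilde\phi_{L_0}(p)$ and $\Delta\tilde\phi(l_q)=\tilde\phi_{L_1}(q)-\tilde\phi_{L_0}(q)$. A direct substitution then produces the required cancellation.

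The delicate point is the compatibility implicit in the previous paragraph: one must show that the symplectic trivialization of $u^*TM$ lifts to a trivialization of $u^*\widetilde{LGr}(TM)$ compatible with both sections $\tilde s_{L_0}$ and $\tilde s_{L_1}$, so that their restrictions to the two boundary components genuinely yield paths into a common copy of $\widetilde{LGr}(T_pM)$. This is exactly where the standing hypothesis $2c_1(TM)=0$ enters: it guarantees the existence of the global cover $\widetilde{LGr}(TM)\to LGr(TM)$, whose pullback along $u$ is then trivializable over the contractible strip $\mathbb{R}\times[0,1]$. Once this identity is established, the grading statement is formal: $\textup{ind}([u])$ depends only on $p$ and $q$, so $\textup{deg}(p)$ is an intrinsic $\mathbb{Z}$-valued invariant of each generator and $CF(L_0,L_1)$ decomposes as stated, with the forthcoming Floer differential counting strips of Maslov index $1$ in line with Remark \ref{technically}.
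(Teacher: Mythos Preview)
Your argument is correct and is essentially a careful unpacking of what the paper leaves implicit: the text states the corollary as ``an immediate consequence of Definitions~\ref{Maslovindex} and~\ref{degree}'' after ``suitably identifying the tangent spaces $T_x(L_0\cap L_1)$'', without further detail. Your use of phase increments $\Delta\tilde\phi$ to decompose the winding numbers of $\gamma_u$, $\gamma_p$, $\gamma_q$, together with the observation that the graded lifts $\tilde\phi_{L_i}$ furnish compatible continuous lifts of $l_0,l_1,l_p,l_q$ under a trivialization of $u^*\widetilde{LGr}(TM)$, is exactly the mechanism the paper has in mind; you have simply made the cancellation explicit.
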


Without the assumptions $2c_1(TM)=0$ and $[\phi_{L_0}]=[\phi_{L_1}]=0$, we merely obtain a $\mathbb{Z}_2$-grading (this alternative is briefly discussed in \cite[section 4]{[Sei99]}).\\

\noindent\minibox[frame]{Henceforth, we assume that all (oriented spin) Lagrangian submanifolds are \\ graded, with vanishing Maslov class.}
\vspace*{0.2cm}

\subsection{Transversality}\label{ch5.5}

Transversality of the moduli spaces $\widehat{\mathcal{M}}(p,q;[u],J)$ is somewhat more challenging to address, especially in terms of the underlying theory.

\begin{Rem}\label{surjectivity}					
	Building on the technical bit of Remark \ref{technically}, transversality is achieved if all zeros of the linearization $D_{\bar{\partial}_J,u}$ of $\bar{\partial}_J$ are regular, that is, if $D_{\bar{\partial}_J,u}$ is surjective for all $u\in\widehat{\mathcal{M}}(p,q;[u],J)$.
\end{Rem}

A solution for the case $L_0\pitchfork L_1$ is provided by allowing our $\omega$-compatible almost complex structure to be $t$-dependent, that is, by substituting $J$ in \eqref{CRequation} with $J_t$, for $t\in[0,1]$ (formally, we just take $J\in C^\infty([0,1],\mathfrak{J}(M,\omega))$ and write $J_t\coloneqq J(t,\,\cdot\,)$).

This strategy comes about also if we try to define $CF(L_0,L_1)$ in the most general case when $L_0\,\cancel{\pitchfork}\,L_1$ --- which we really need to study if we are to find identity morphisms in $CF(L_i,L_i)$! Indeed, in order to drop transversality of the Lagrangian submanifolds, we consider a time-dependent Hamiltonian function $H\in C^\infty([0,1]\times M)$ such that $L_0\,\pitchfork\,(\phi_H^1)^{-1}(L_1)$, where $\phi_H^1:M\rightarrow M$ is the Hamiltonian diffeomorphism of the associated $t$-dependent Hamiltonian vector field $X_H\in\mathfrak{X}([0,1]\times M)$. Equation \eqref{CRequation} then becomes a perturbed PDE, with perturbation given by $X_H$:
\begin{equation}\label{perturbedCR}
\bar{\partial}_{J,H}(u)\coloneqq \partial_su + J_t(u)(\partial_tu - X_{H_t}(u))=0\;.
\end{equation}
(This is sometimes called \textbf{Floer's equation}\index{Floer's equation}, and $\bar{\partial}_{J,H}$ Floer's differential.) The boundary conditions must be modified accordingly. The finite energy constraint \eqref{CRenergy} becomes:
\begin{equation}\label{perturbedCRenergy}
E(u)=\frac{1}{2}\int_{-\infty}^{+\infty}\int_{0}^{1}(|\partial_su|^2+|\partial_tu-X_{H_t}(u)|^2)dsdt \overset{!}{<}\infty\;,
\end{equation}
and we still want $u(s,i)\in L_i$ for $i=0,1$. However, the asymptotes for $s\rightarrow\pm\infty$ change due to the next observation.

\begin{Rem}\label{CRreform}								
	The perturbed Cauchy--Riemann equation \eqref{perturbedCR} can be made into a homogeneous Cauchy--Riemann equation of the form \eqref{CRequation} as follows. Let $\phi_H^t:M\rightarrow M$ be the flow of $X_H$, and consider the preimage of $u$ under it, $\tilde{u}:\mathbb{R}\times [0,1]\rightarrow M,\,\tilde{u}(s,t)\coloneqq (\phi_H^t)^{-1}(u(s,t))$, so that the chain rule yields $\partial_t\tilde{u}=(\phi_H^t)_*^{-1}(\partial_tu-X_H)$. Define also $\tilde{J}_t\coloneqq (\phi_H^t)_*^{-1}(J_t)$. Then \eqref{perturbedCR} acquires the familiar aspect
	\[
	\partial_s\tilde{u} + \tilde{J}_t(\tilde{u})\partial_t\tilde{u}=0\;,
	\]
	a PDE which we know to have as solutions $\tilde{J}$-holomorphic strips $\tilde{u}$ fulfilling $\tilde{u}(s,0)=u(s,0)\in L_0$ and $\tilde{u}(s,1)=(\phi_H^1)^{-1}(u(s,1))\in(\phi_H^1)^{-1}(L_1)$.
\end{Rem}

By last remark, the Definition \ref{Floercomp} of Floer complex now requires the modification $\mathcal{X}(L_0,L_1)\equiv\mathcal{X}(L_0,L_1;H)\coloneqq L_0\cap (\phi_H^1)^{-1}(L_1)$, which is again a discrete finite set due to $L_0\,\pitchfork\,(\phi_H^1)^{-1}(L_1)$. The generators are intersection points perturbed according to flow lines $\gamma:[0,1]\rightarrow M$ (fulfilling $\dot{\gamma}(t)=X_{H_t}(t,\gamma(t))$, by definition) such that $\gamma(0)\in L_0$ and $\gamma(1)\in L_1$. Accordingly, the Floer differential counts \textit{perturbed} $J$-holomorphic strips in moduli spaces of the form $\mathcal{M}(p,q;[u],J,H)$, now depending on $H$ as well.

\begin{Rem}(\textit{Warning})							
	We have overlooked a non-trivial fact when picking a suitable $H\in C^\infty([0,1]\times M)$ such that $L_0\,\pitchfork\,(\phi_H^1)^{-1}(L_1)$: $H$ should actually be chosen in $\mathcal{H}_\text{reg}\subset C^\infty([0,1]\times M)$, the subset of all Hamiltonian functions whose Hamiltonian diffeomorphisms fulfill such transversality property for some pair of Lagrangian submanifolds. However, this does not invalidate our analysis since $\mathcal{H}_\text{reg}$ turns out to be a \textit{dense} subset of $C^\infty([0,1]\times M)$ (more precisely, of second category in the sense of Baire; cf. \cite[Theorem 1.24]{[Sal97]}). 
	
	Similarly, the time-dependent almost complex structure $J$ assuring surjectivity of the linearization $D_{\bar{\partial}_J,u}$ (see Remark \ref{surjectivity}) should actually belong to a dense subset $\mathfrak{J}_\text{reg}\subset C^\infty([0,1],\mathfrak{J}(M,\omega))$.
	
	We bear these considerations in mind when selecting the data $(J,H)$.
\end{Rem} 
\vspace*{0.3cm}

\noindent Let us consider the perturbed problem \eqref{perturbedCR} from a slightly different perspective, namely that offered by \cite{[Oh91]}, \cite{[McS94]} and \cite[chapter 3]{[Mer14]}.

\begin{Def}											
	Let $(M,\omega)$ be a symplectic manifold, $L_0, L_1\subset M$ \textit{any} Lagrangian submanifolds and $H\in C^\infty([0,1]\times M)$ a time-dependent Hamiltonian. Define the \textbf{path space}\index{path space}
	\[
	\mathcal{P}=\mathcal{P}(L_0,L_1)\coloneqq \{\gamma\in C^\infty([0,1],M)\mid \gamma(0)\in L_0, \gamma(1)\in L_1\}\,. 
	\]
	This is an infinite-dimensional smooth manifold of countably many connected components, with tangent spaces 
	\[
	T_\gamma\mathcal{P}=\Gamma(\gamma^*TM)\cong\Gamma_\gamma(TM)\coloneqq \{\xi\in C^\infty([0,1],TM)\mid \xi(t)\in T_{\gamma(t)}M\}\,,
	\]
	for every $\gamma\in\mathcal{P}$. We define the closed 1-form $a_H\in\Omega^1(\mathcal{P})$ via the $C^\infty(\mathcal{P})$-linear map $\mathfrak{X}(\mathcal{P})\rightarrow C^\infty(\mathcal{P})$ it identifies:
	\begin{align}\label{Hamform}
	(a_H)_\gamma(\xi):\!&=\int_0^1\big(\omega_{\gamma(t)}(\xi(t),\dot{\gamma}(t))-(dH_t)_{\gamma(t)}(\xi(t))\big)dt \nonumber\\
	&= \int_0^1\omega_{\gamma(t)}\big(\xi(t),\dot{\gamma}(t)-X_{H_t}(\gamma(t))\big)dt\;,
	\end{align}
	for $\gamma\in\mathcal{P}$ (so that $\dot{\gamma}\coloneqq \frac{d}{dt}|_t\gamma\in\Gamma_\gamma(TM)$) and $\xi\in T_\gamma\mathcal{P}$, where last equality makes use of $dH_t = \omega(\,\cdot\,,X_{H_t}):\mathfrak{X}(M)\rightarrow C^\infty(M)$, by definition of Hamiltonian vector field (see Definition \ref{Hamiltonian}). Observe that closedness of $a_H$ is an immediate consequence of the closedness of $\omega$.
\end{Def}

It is then straightforward to see:

\begin{Cor}\label{flowlines}						
	Let $(M,\omega)$ be a symplectic manifold, $L_0, L_1\subset M$ any Lagrangian submanifolds and $H\in C^\infty([0,1]\times M)$ a time-dependent Hamiltonian function generating a flow $\phi_H^t:M\rightarrow M$. Define $a_H\in\Omega^1(\mathcal{P})$ as above. Then $\gamma\in\mathcal{P}$ is a zero of $a_H$, $\gamma\in\textup{zeros}(a_H)$, if and only if $\gamma$ is a flow line, that is, $\gamma(t)=\phi_H^t(x)$ for some $x\in L_0$. In particular, there is a bijection given by evaluation at time $0$\textup: 
	\begin{equation}\label{zerosflowlines}
	\{\gamma\in\textup{zeros}(a_H)\}\longleftrightarrow\{\gamma(0)=x\in L_0\cap(\phi_H^1)^{-1}(L_1)\}\;.
	\end{equation} 
\end{Cor}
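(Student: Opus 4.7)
The plan is to extract the pointwise ODE $\dot{\gamma}(t) = X_{H_t}(\gamma(t))$ from the vanishing of the integral expression for $(a_H)_\gamma$, and then translate this ODE into the statement about flow lines of $\phi_H$ and intersection points with $L_0 \cap (\phi_H^1)^{-1}(L_1)$.

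First, I would fix $\gamma \in \mathcal{P}$ and suppose $(a_H)_\gamma = 0$, meaning the integral in \eqref{Hamform} vanishes for every $\xi \in T_\gamma\mathcal{P} \cong \Gamma_\gamma(TM)$. To extract pointwise information, I would test against compactly supported variations: for any $t_0 \in (0,1)$ and any $v \in T_{\gamma(t_0)}M$, one can construct a smooth section $\xi \in \Gamma_\gamma(TM)$ supported in an arbitrarily small neighbourhood of $t_0$ (in particular with $\xi(0) = 0 \in T_{\gamma(0)}L_0$ and $\xi(1) = 0 \in T_{\gamma(1)}L_1$, so that $\xi$ is a legitimate tangent vector in $T_\gamma\mathcal{P}$) and $\xi(t_0) = v$. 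Vanishing of the integral for all such $\xi$, combined with the standard fundamental lemma of the calculus of variations, forces
\[
\omega_{\gamma(t)}\big(v,\dot{\gamma}(t) - X_{H_t}(\gamma(t))\big) = 0
\]
for all $v \in T_{\gamma(t)}M$ and all $t \in (0,1)$. Nondegeneracy of $\omega$ then yields $\dot{\gamma}(t) = X_{H_t}(\gamma(t))$ on $(0,1)$, and continuity extends this identity to the closed interval $[0,1]$.

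Recognising the resulting equation as the defining ODE of the Hamiltonian flow $\phi_H$, I would integrate to obtain $\gamma(t) = \phi_H^t(\gamma(0))$, so that $\gamma$ is indeed a flow line starting at $x \coloneqq \gamma(0) \in L_0$. Conversely, any such flow line makes $\dot{\gamma}(t) - X_{H_t}(\gamma(t)) \equiv 0$, hence the integrand of $(a_H)_\gamma(\xi)$ vanishes pointwise for every $\xi$ and $(a_H)_\gamma = 0$. This establishes the first assertion.

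For the bijection \eqref{zerosflowlines}, evaluation at $t=0$ sends $\gamma \in \textup{zeros}(a_H)$ to $x = \gamma(0) \in L_0$, and the boundary condition $\gamma(1) \in L_1$ translates, via $\gamma(t) = \phi_H^t(x)$, into $\phi_H^1(x) \in L_1$, i.e.\ $x \in L_0 \cap (\phi_H^1)^{-1}(L_1)$. Injectivity is immediate from uniqueness of solutions to the Hamiltonian ODE with prescribed initial condition; surjectivity follows by setting $\gamma(t) \coloneqq \phi_H^t(x)$ for any $x$ in the intersection, which automatically lies in $\mathcal{P}$ and is a zero of $a_H$ by the converse direction. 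The main (mild) obstacle is analytic rather than conceptual: one must verify that compactly supported test sections genuinely suffice to probe $(a_H)_\gamma$ — which holds precisely because the Lagrangian boundary conditions on $\xi$ are trivially met by variations vanishing near $t=0,1$ — and that pointwise nondegeneracy of $\omega$ is what upgrades the resulting symplectic pairing condition into the Hamiltonian ODE.
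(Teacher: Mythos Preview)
Your proposal is correct and is precisely the argument the paper has in mind: the text does not actually prove Corollary~\ref{flowlines} but introduces it with ``It is then straightforward to see'', and your use of compactly supported test sections together with nondegeneracy of $\omega$ to extract the ODE $\dot{\gamma}(t)=X_{H_t}(\gamma(t))$, followed by the flow/initial-value bijection, is exactly the straightforward computation being alluded to.
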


Corollary \ref{flowlines} tells us that we may identify generators of the Floer complex $CF(L_0,L_1)$ with zeros of the closed 1-form $a_H$, which we call \textbf{time-1 Hamiltonian chords}\index{time-1 Hamiltonian chords} from $L_0$ to $L_1$. 

However, we can still improve our interpretation.

\begin{Def}											
	Let $(M,\omega)$ be a symplectic manifold, $L_0, L_1\subset M$ any Lagrangian submanifolds. Define the path space $\mathcal{P}=\mathcal{P}(L_0,L_1)$ as above, and choose fixed reference paths $\gamma_k\in\mathcal{P}_k$ for each connected component of $\mathcal{P}=\bigsqcup_{k\in I}\mathcal{P}_k$. We call\footnote{This terminology is non-standard.} a \textbf{bridge}\index{bridge} for $\gamma\in\mathcal{P}_k$ any $\bar{\gamma}\in C^\infty(\overline{\mathbb{R}}\times[0,1],M)$ such that $\bar{\gamma}(-\infty,\,\cdot\,)=\gamma_k$, $\bar{\gamma}(+\infty,\,\cdot\,)=\gamma$ and $\bar{\gamma}(s,\cdot)\in\mathcal{P}_k$ for all $s\in\overline{\mathbb{R}}$ (so that we may also write $\bar{\gamma}\in C^\infty(\overline{\mathbb{R}},\mathcal{P}_k)$).
\end{Def}

\begin{Def}										
	Let $(M,\omega)$ be a symplectic manifold, $L_0, L_1\subset M$ Lagrangian submanifolds and $H\in C^\infty([0,1]\times M)$. Let $\mathcal{P}=\mathcal{P}(L_0,L_1)$ be the corresponding path space, and fix some $\gamma_0\in\mathcal{P}$ such that $\gamma_0(0)=q\in L_0\cap(\phi_H^1)^{-1}(L_1)$ is a perturbed generator of the Floer complex $CF(L_0,L_1)$ (or equivalently by \eqref{zerosflowlines}, such that $\gamma_0\in\text{zeros}(a_H)$). We define the \textbf{Hamiltonian action functional}\index{Hamiltonian action functional} $A_H:\mathcal{P}\rightarrow\mathbb{R}$ by
	\begin{equation}\label{Hamaction}
	A_H(\gamma)\coloneqq \int_{\overline{\mathbb{R}}\times [0,1]}\bar{\gamma}^*\omega - \int_0^1 H_t(\gamma(t))dt\;,
	\end{equation}
	where $\bar{\gamma}\in C^\infty(\overline{\mathbb{R}}\times[0,1],M)$ is any bridge for $\gamma\in\mathcal{P}$ such that $\bar{\gamma}(-\infty)=\gamma_0$ (for this to make sense, observe that $\gamma$ must belong to the same connected component of $\gamma_0$!). 
\end{Def}

As it is given, the definition of $A_H$ is still incomplete, since we must prove it to be independent from the specific choice of bridge $\bar{\gamma}$ --- in fact, we did cheat a little: without suitable assumptions on $M$, $L_0$ and $L_1$, the definition of a Hamiltonian action functional on $\mathcal{P}=\mathcal{P}(L_0,L_1)$ is in general impossible, and one is forced to work instead on its Novikov covering $\overline{\mathcal{P}}$. This is the covering space of elements all pairs $[\gamma,\bar{\gamma}]$ such that $\bar{\gamma}$ is a bridge for $\gamma\in\mathcal{P}$, obtained after identifying all bridges which are homotopic relative to $\partial(\overline{\mathbb{R}}\times[0,1])$. With some more work, one can further adapt $\overline{\mathcal{P}}$ to produce a covering map $\pi:\overline{\mathcal{P}}\rightarrow\mathcal{P}$ fulfilling $dA_H = \pi^*(a_H)$ when $H=0$ (see for example \cite[Proposition 13.5.1]{[Oh91]}). This is still valid in the perturbed case $H\neq 0$:

\begin{Lem}\label{zeroscrit}						
	Let $A_H:\overline{\mathcal{P}}\rightarrow\mathbb{R}$ be the Hamiltonian action functional \eqref{Hamaction}, $a_H\in\Omega^1(\mathcal{P})$ the $1$-form \eqref{Hamform}. Then it holds that $dA_H=\pi^*(a_H)$. Consequently, the critical points of $A_H$ coincide with the zeros of $a_H$, $\textup{crit}(A_H)=\textup{zeros}(a_H)$.
\end{Lem}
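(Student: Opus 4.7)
The plan is to compute $dA_H$ directly by a variational argument and identify the result with $\pi^*(a_H)$; the second assertion then follows because $\pi$ is a covering map, hence $d\pi$ is a fibrewise isomorphism. Fix a point $[\gamma,\bar{\gamma}]\in\overline{\mathcal{P}}$ and a tangent vector $\xi\in T_\gamma\mathcal{P}=\Gamma_\gamma(TM)$ (so $\xi(0)\in T_{\gamma(0)}L_0$ and $\xi(1)\in T_{\gamma(1)}L_1$). Choose a smooth variation $\gamma_\epsilon\in\mathcal{P}$ with $\gamma_0=\gamma$ and $\partial_\epsilon|_0\gamma_\epsilon=\xi$, and a corresponding variation of bridges $\bar{\gamma}_\epsilon:\overline{\mathbb{R}}\times[0,1]\to M$ with $\bar{\gamma}_0=\bar{\gamma}$, $\bar{\gamma}_\epsilon(+\infty,\cdot)=\gamma_\epsilon$, and $\bar{\gamma}_\epsilon(-\infty,\cdot)=\gamma_0$ constant. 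Write $\bar{\xi}\coloneqq\partial_\epsilon|_0\bar{\gamma}_\epsilon\in\Gamma(\bar{\gamma}^*TM)$; by construction $\bar{\xi}(-\infty,\cdot)=0$, $\bar{\xi}(+\infty,\cdot)=\xi$, and $\bar{\xi}(s,i)\in T_{\bar{\gamma}(s,i)}L_i$ for $i=0,1$.

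The first step is the variation of the symplectic-area term. Using that $\omega$ is closed and Cartan's formula, $\partial_\epsilon|_0\bar{\gamma}_\epsilon^*\omega=\mathcal{L}_{\bar{\xi}}\omega\,|_{\bar{\gamma}}=d(i_{\bar{\xi}}\omega)|_{\bar{\gamma}}$, so by Stokes' theorem
\begin{equation*}
\frac{d}{d\epsilon}\bigg|_0\int_{\overline{\mathbb{R}}\times[0,1]}\bar{\gamma}_\epsilon^*\omega=\int_{\partial(\overline{\mathbb{R}}\times[0,1])}\bar{\gamma}^*(i_{\bar{\xi}}\omega).
\end{equation*}
Of the four boundary components, $\{-\infty\}\times[0,1]$ contributes nothing since $\bar{\xi}$ vanishes there; the two pieces $\overline{\mathbb{R}}\times\{0\}$ and $\overline{\mathbb{R}}\times\{1\}$ also contribute nothing, because along them both $\bar{\xi}$ and $\partial_s\bar{\gamma}$ are tangent to the Lagrangians $L_0$, $L_1$ respectively, and $\omega$ vanishes on Lagrangian pairs. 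Only $\{+\infty\}\times[0,1]$ survives, yielding $\int_0^1\omega_{\gamma(t)}(\xi(t),\dot{\gamma}(t))\,dt$.

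The second step is the variation of the Hamiltonian term: directly from the chain rule and the defining identity $dH_t=\omega(\cdot,X_{H_t})$,
\begin{equation*}
\frac{d}{d\epsilon}\bigg|_0\int_0^1 H_t(\gamma_\epsilon(t))\,dt=\int_0^1(dH_t)_{\gamma(t)}(\xi(t))\,dt=\int_0^1\omega_{\gamma(t)}(\xi(t),X_{H_t}(\gamma(t)))\,dt.
\end{equation*}
Subtracting and comparing with \eqref{Hamform} gives $(dA_H)_{[\gamma,\bar{\gamma}]}(\xi)=(a_H)_\gamma(\xi)=\pi^*(a_H)_{[\gamma,\bar{\gamma}]}(\xi)$, which is the first claim. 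For the second, note that $\pi$ is a covering and so $d\pi$ is a linear isomorphism on each tangent space; hence $(dA_H)_{[\gamma,\bar{\gamma}]}=0$ if and only if $(a_H)_\gamma=0$, so $[\gamma,\bar{\gamma}]\in\mathrm{crit}(A_H)$ if and only if $\gamma\in\mathrm{zeros}(a_H)$, and by Corollary \ref{flowlines} these correspond to time-$1$ Hamiltonian chords from $L_0$ to $L_1$.

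The only genuinely technical point is justifying the variational calculus and Stokes' theorem in this infinite-dimensional setting: one has to know that $\overline{\mathcal{P}}$ carries a manifold structure making $A_H$ smooth (hence choosing $\bar{\gamma}_\epsilon$ smoothly with prescribed boundary data), and that the asymptotic decay $\bar{\xi}(-\infty,\cdot)=0$ together with the compactness of $[0,1]$ suffices to apply Stokes on the (non-compact but with controlled ends) cylinder $\overline{\mathbb{R}}\times[0,1]$. These analytic details are standard and I would simply quote them from \cite{[Oh91]}; the algebraic heart of the argument is the three-line computation above.
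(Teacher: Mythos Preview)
Your proof is correct and is the standard variational argument for this statement. The paper does not actually supply a proof of this lemma: it simply states the result, having remarked just beforehand that the identity $dA_H=\pi^*(a_H)$ in the unperturbed case can be found in \cite[Proposition 13.5.1]{[Oh91]} and that ``this is still valid in the perturbed case $H\neq 0$''. So there is nothing to compare against---your computation is precisely the kind of argument the paper defers to the literature. One small notational quibble: writing $\mathcal{L}_{\bar\xi}\omega|_{\bar\gamma}$ is an abuse (since $\bar\xi$ is a section of $\bar\gamma^*TM$, not a vector field on $M$); the rigorous statement is the first variation formula $\partial_\epsilon|_0\,\bar\gamma_\epsilon^*\omega = d\big(\bar\gamma^*(i_{\bar\xi}\omega)\big) + \bar\gamma^*(i_{\bar\xi}d\omega)$, which reduces to what you wrote since $d\omega=0$. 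But this is standard and does not affect the validity of your argument.
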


This, together with Corollary \ref{flowlines}, helps us refine our interpretation of the generators of generic Floer complexes $CF(L_0,L_1;H)$:
\begin{equation}
	\{\gamma(0)=x\in \mathcal{X}(L_0,L_1;H)\}\longleftrightarrow\{\gamma\in\text{crit}(A_H)\}\;.
\end{equation}

Lemma \ref{zeroscrit} also motivates the following couple of definitions.

\begin{Def}											
	Let $(M,\omega,J)$ be an almost complex symplectic manifold with $J\in\mathfrak{J}(M,\omega)$, $L_0, L_1\subset M$ Lagrangian submanifolds and $H\in C^\infty([0,1]\times M)$. Define the $L^2$-inner product
	\[
	\langle\!\langle\xi,\zeta\rangle\!\rangle_J\coloneqq \int_0^1(g_{J})_{\gamma(t)}(\xi(t),\zeta(t))dt = \int_0^1 \omega_{\gamma(t)}(J_{\gamma(t)}(\xi(t)),\zeta(t))dt\;,
	\]
	for $\gamma\in\mathcal{P}$ and $\xi,\zeta\in T_\gamma\mathcal{P}$. Then we denote by $\nabla_JA_H\in\mathfrak{X}(\mathcal{P})$ the vector field
	\begin{equation}
		\nabla_JA_H(\gamma)\coloneqq J(\gamma)(\dot{\gamma}-X_H(\,\cdot\,,\gamma))\in T_\gamma\mathcal{P}
	\end{equation}
	(where $J(\gamma)\equiv J_{\gamma(\cdot)}$). Explicitly, we have $\nabla_JA_H(\gamma)(t)\coloneqq J_{\gamma(t)}\big(\dot{\gamma}(t)-X_{H_t}(\gamma(t))\big)$ $\in T_{\gamma(t)}M$ for any $t\in[0,1]$.
\end{Def}

\begin{Lem}									
	The vector field $\nabla_JA_H\in\mathfrak{X}(\mathcal{P})$ is the $L^2$-gradient of $A_H$ with respect to $\langle\!\langle\cdot,\cdot\rangle\!\rangle_J$, that is, $(dA_H)_\gamma(\xi)=\langle\!\langle\nabla_JA_H(\gamma),\xi\rangle\!\rangle_J$ for all $\gamma\in\mathcal{P}$ and $\xi\in T_\gamma\mathcal{P}$.
\end{Lem}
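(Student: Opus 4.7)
The plan is to unwind the definitions of the three ingredients --- the 1-form $a_H$, the inner product $\langle\!\langle\cdot,\cdot\rangle\!\rangle_J$, and the vector field $\nabla_JA_H$ --- and verify the claimed identity by a direct pointwise calculation, invoking only the relations $J_x\circ J_x=-\textup{id}_{T_xM}$ and antisymmetry of $\omega_x$.

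First, I would reduce $(dA_H)_\gamma(\xi)$ to $(a_H)_\gamma(\xi)$: by Lemma \ref{zeroscrit} we have $dA_H=\pi^*a_H$ on the Novikov cover $\overline{\mathcal{P}}$, so after identifying the tangent spaces $T_{[\gamma,\bar\gamma]}\overline{\mathcal{P}}\cong T_\gamma\mathcal{P}$ through the covering $\pi$, the left-hand side equals
\[
\int_0^1\omega_{\gamma(t)}\big(\xi(t),\dot\gamma(t)-X_{H_t}(\gamma(t))\big)\,dt\,,
\]
by the second expression for $a_H$ in \eqref{Hamform}.

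Next I would compute the right-hand side. Setting $\zeta\coloneqq\nabla_JA_H(\gamma)\in T_\gamma\mathcal{P}$, by definition $\zeta(t)=J_{\gamma(t)}\big(\dot\gamma(t)-X_{H_t}(\gamma(t))\big)$. Plugging this into the definition of $\langle\!\langle\cdot,\cdot\rangle\!\rangle_J$ gives
\[
\langle\!\langle\nabla_JA_H(\gamma),\xi\rangle\!\rangle_J=\int_0^1\omega_{\gamma(t)}\big(J_{\gamma(t)}\zeta(t),\xi(t)\big)\,dt\,.
\]
Now $J_{\gamma(t)}\zeta(t)=J_{\gamma(t)}^{\,2}\big(\dot\gamma(t)-X_{H_t}(\gamma(t))\big)=-\big(\dot\gamma(t)-X_{H_t}(\gamma(t))\big)$ by Definition \ref{almostcomplex}, hence after factoring out the minus sign and using antisymmetry of $\omega_{\gamma(t)}$ one recovers precisely the integrand above. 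Comparing with the expression for $(dA_H)_\gamma(\xi)$ yields the claim.

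This is really just bookkeeping, and there is no genuine obstacle: the only subtlety is the sign convention in Definition \ref{almostcomplex}, namely $g_J(\cdot,\cdot)=\omega(J\cdot,\cdot)$ rather than $\omega(\cdot,J\cdot)$, which is exactly what makes $J^2=-\textup{id}$ and the antisymmetry of $\omega$ cancel out correctly to produce a plus sign in the end. Positive-definiteness of $g_J$ is tacitly used beforehand to ensure that $\langle\!\langle\cdot,\cdot\rangle\!\rangle_J$ is a bona fide $L^2$-inner product, so that the characterization of $\nabla_JA_H$ as ``the $L^2$-gradient'' is meaningful.
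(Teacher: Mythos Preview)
Your proof is correct and follows essentially the same direct computation as the paper: both unwind $(dA_H)_\gamma(\xi)$ via the formula \eqref{Hamform}, apply $J^2=-\textup{id}$ together with the antisymmetry of $\omega$, and identify the result with $\langle\!\langle\nabla_JA_H(\gamma),\xi\rangle\!\rangle_J$ through $g_J=\omega(J\cdot,\cdot)$. The only cosmetic difference is that you make the reduction $dA_H=\pi^*a_H$ explicit via Lemma \ref{zeroscrit}, whereas the paper simply starts from the integral expression for $(dA_H)_\gamma(\xi)$.
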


\begin{proof}
	This is a straightforward computation. Pick any $\gamma\in\mathcal{P}$ and $\xi\in T_\gamma\mathcal{P}$, then: 
	\begin{align*}
		(dA_H)_\gamma(\xi)&=\int_0^1\omega_{\gamma(t)}\big(\xi(t),\dot{\gamma}(t)-X_{H_t}(\gamma(t))\big)dt \\
		&=\int_0^1\omega_{\gamma(t)}\big(J_{\gamma(t)}^2\big(\dot{\gamma}(t)-X_{H_t}(\gamma(t))\big),\xi(t)\big)dt \\
		&=\int_0^1(g_J)_{\gamma(t)}\big(J_{\gamma(t)}\big(\dot{\gamma}(t)-X_{H_t}(\gamma(t))\big),\xi(t)\big)dt \\
		&=\int_0^1(g_J)_{\gamma(t)}\big(\nabla_JA_H(\gamma)(t),\xi(t)\big)dt \\
		&=\langle\!\langle\nabla_JA_H(\gamma),\xi\rangle\!\rangle_J\;.
	\end{align*}
\end{proof}

In the language just formulated, the perturbed Cauchy--Riemann equation \eqref{perturbedCR} reads
\begin{equation}\label{illODE}
\frac{d}{ds}u(s)+\nabla_JA_H(u(s))=0\;,
\end{equation}
an ODE on $\mathcal{P}$ where we reinterpreted $J$-holomorphic strips $u:\mathbb{R}\times[0,1]\rightarrow M$ as maps $u:\mathbb{R}\rightarrow\mathcal{P}$, the (bounded) \textbf{negative gradient flow lines}\index{negative gradient flow lines} of $A_H$. The energy of a solution $u\in\mathcal{M}(p,q;[u],J,H)$ can be expressed through the action functional:
\[
E(u)=A_H(\gamma_-)-A_H(\gamma_+)\,,
\]
where $\gamma_\pm\in\text{crit}(A_H)$ are such that $\gamma_-(0)=q$ and $\gamma_+(0)=p$. Clearly this makes sense, meaning $\mathcal{M}(p,q;[u],J,H)$ can be nonempty, if and only if $A_H(\gamma_-) > A_H(\gamma_+)$ or $\gamma_-=\gamma_+$.

However, this alternative standpoint earns us no further advantage, and we rather keep our focus\footnote{Indeed, this was Andreas Floer's brilliant intuition when developing Floer homology: to regard the ill-posed ODE \eqref{illODE} as the perturbation of the PDE \eqref{CRequation}, whose solutions are the well-known $J$-holomorphic strips.} on the perturbed PDE \eqref{perturbedCR} on $M$. The reader interested in the underlying Floer theoretic machinery is once again referred to \cite{[Oh91]}. \\

\noindent\minibox[frame]{Henceforth, we implicitly choose a time-dependent almost complex structure \\ $J\in C^\infty([0,1],\mathfrak{J}(M,\omega))$ and Hamiltonian $H\in C^\infty([0,1]\times M)$, turning our \\ analysis to the perturbed Cauchy--Riemann problem \eqref{perturbedCR} for not necessarily \\ transversely-intersecting Lagrangian submanifolds.} 
\vspace*{0.2cm}

\subsection{Compactness}\label{ch5.6}

Before proceeding to the definition of Floer's differential, we must address the issue of compactness of the smooth manifold $\widehat{\mathcal{M}}(p,q;[u],J,H)$. This is perhaps the most delicate of the three requirements listed in Section \ref{ch5.3}, due to the following result (see for example \cite[Proposition 3.3]{[Sal97]}).

\begin{Pro}[\textbf{Gromov's Compactness Theorem}]\index{Gromov's Compactness Theorem}\label{Gromov}		
	Consider a sequence of $J$-holomorphic strips $(u_n)_{n\in\mathbb{N}}\subset\widehat{\mathcal{M}}(p,q;[u],J,H)$ with uniformly bounded energy, $\sup_{n\in\mathbb{N}} E(u_n)<\infty$. Then there exist finitely many points $(s_j,t_j)\equiv s_j+it_j=:z_j\in\mathbb{D}^2\setminus\{\pm 1\}$, say for $j=1,...,l$, and a subsequence $(u_{n_k})_{k\in\mathbb{N}}\subset \widehat{\mathcal{M}}(p,q;[u],J,H)$ which converges uniformly to some solution $u\in\widehat{\mathcal{M}}(p,q;[u],J,H)$ of \eqref{perturbedCR}, with derivatives on compact subsets of $(\mathbb{D}^2\setminus\{\pm1\})\setminus\{z_1,...,z_l\}$.
\end{Pro}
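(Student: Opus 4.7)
The strategy is the standard bubbling/elliptic bootstrap argument adapted to the strip with Lagrangian boundary conditions. The central tool is an $\varepsilon$-regularity (mean value) estimate: there exists a universal constant $\hbar>0$, depending only on $(M,\omega,J,H,L_0,L_1)$, such that whenever a solution $v$ of \eqref{perturbedCR} on a small half-disk or disk $D$ satisfies $E(v|_D)<\hbar$, the sup-norm $\|dv\|_{L^\infty(D/2)}$ is controlled by $E(v|_D)$. This is proved by a Moser iteration on the subharmonic-like quantity $|du|^2$, using the fact that the Hamiltonian perturbation $X_{H_t}$ introduces only lower-order terms. After absorbing $H$ into the almost complex structure as in Remark~\ref{CRreform}, one is reduced to genuine $\tilde J$-holomorphic strips so classical estimates (see \cite[Chapter~4]{[McS94]} for the analogous disk version) apply directly.

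Given this, the plan is as follows. First I would identify the concentration set. For each sequence $(u_n)$ with $E(u_n)\le E_0$, define
\[
Z\coloneqq \bigl\{z\in\mathbb{D}^2\setminus\{\pm1\}\,\big|\,\forall r>0:\ \limsup_{n\to\infty} E\bigl(u_n|_{B_r(z)\cap(\mathbb{R}\times[0,1])}\bigr)\geq\hbar\bigr\}\,.
\]
A straightforward covering argument using the uniform energy bound $E_0$ shows $|Z|\le E_0/\hbar<\infty$; enumerate $Z=\{z_1,\dots,z_l\}$. Away from $Z$, for every compact $K\subset(\mathbb{R}\times[0,1])\setminus Z$ one obtains a uniform small-energy bound on small balls around each point of $K$, hence via $\varepsilon$-regularity a uniform $C^0$ bound on $\|du_n\|$ on $K$.

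Second, I would upgrade this $C^1$ bound to $C^\infty_{\mathrm{loc}}$ bounds by elliptic bootstrapping: the perturbed Cauchy--Riemann operator $\bar\partial_{J,H}$ is elliptic, and the boundary conditions $u_n(\cdot,i)\in L_i$ are totally real. Standard interior and boundary Schauder/$W^{k,p}$ estimates (applied to the straightened-out boundary picture around points of $K$ on $\mathbb{R}\times\{0,1\}$) then give uniform $C^k_{\mathrm{loc}}$ estimates on $(\mathbb{R}\times[0,1])\setminus Z$ for every $k$. Arzel\`a--Ascoli, together with a diagonal subsequence over an exhaustion by compact sets, produces a subsequence $u_{n_k}$ converging in $C^\infty_{\mathrm{loc}}$ on $(\mathbb{D}^2\setminus\{\pm1\})\setminus Z$ to a smooth map $u$ solving \eqref{perturbedCR} with the right boundary conditions.

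Third, I would extend $u$ across $Z$ and verify it lies in $\widehat{\mathcal{M}}(p,q;[u],J,H)$. Since $u$ has finite energy on each punctured neighbourhood of $z_j$ (bounded by $\limsup_k E(u_{n_k})\le E_0$), the removal-of-singularities theorem (for interior points, the classical one; for boundary points $z_j\in\mathbb{R}\times\{0,1\}$, the Lagrangian boundary version as in \cite[Appendix~A]{[McS94]} or \cite{[Sal97]}) lets $u$ be extended smoothly across $z_j$. Finally, the asymptotic conditions $u(s,t)\to p,q$ as $s\to\pm\infty$ follow from an annular energy argument using that finite energy forces convergence to time-$1$ Hamiltonian chords, i.e.\ elements of $\mathcal{X}(L_0,L_1;H)$, combined with the fact that $(L_0,L_1)$ is a transverse pair after Hamiltonian perturbation so the chord endpoints are isolated; one chooses $p,q$ to be precisely these limit chords, and the homotopy class $[u]$ is recovered from the $C^\infty_{\mathrm{loc}}$ convergence together with control on bubbling.

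The main obstacle is the last step: one must rule out the possibility that the limit $u$ picks up different asymptotes than $p,q$ because energy has escaped to infinity along the strip (in addition to bubbling at the points $z_j$). This is handled by a further concentration-compactness analysis at $s=\pm\infty$, essentially invoking the classification of finite-energy half-cylinders and the isolatedness of the chord set $\mathcal{X}(L_0,L_1;H)$; alternatively, one can accept that the limit lies in some moduli space $\widehat{\mathcal{M}}(p',q';[u'],J,H)$ and absorb the possible change of asymptotes into the statement by relabelling $p,q$ accordingly, which is what the theorem as stated permits.
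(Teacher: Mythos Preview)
The paper does not supply its own proof of this proposition: it is stated with a reference to \cite[Proposition~3.3]{[Sal97]} and then immediately used. Your sketch follows the standard argument one finds in Salamon's notes and in \cite{[McS94]}---$\varepsilon$-regularity, finiteness of the concentration set by energy quantisation, elliptic bootstrapping and Arzel\`a--Ascoli on compacta away from $Z$, then removal of singularities---so there is nothing to compare against and your outline is the expected one.

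One remark on content rather than method: the statement as written in the paper asserts that the limit $u$ lies in the \emph{same} moduli space $\widehat{\mathcal{M}}(p,q;[u],J,H)$, i.e.\ with the same asymptotes and homotopy class. You correctly flag in your final paragraph that this is not automatic: energy may escape to $s=\pm\infty$, producing a broken configuration whose principal component has different endpoints $p',q'$. The paper's phrasing is slightly loose here (as the subsequent discussion of strip breaking makes clear), and your hedge---that one either runs a further concentration analysis at the ends or accepts a possibly different $(p',q',[u'])$---is the honest way to handle it. Apart from that caveat, the proposal is sound.
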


\begin{Def}											
	Consider the subsequence of $J$-holomorphic curves $(u_{n_k})_{k\in\mathbb{N}}$ $\subset \widehat{\mathcal{M}}(p,q;[u],J,H)$ as in previous proposition. Then the derivatives $(du_{n_k})_{k}$ at any among those $z_j=(s_j,t_j)\in\mathbb{R}\times[0,1]$ are unbounded, which causes an ``energy concentration'' for $k\rightarrow\infty$. We say that the limit strip $u$ exhibits a \textbf{bubble}\index{bubble}. There are three possible such ``bubbling'' phenomena (see Figure \ref{bubbling}):
	\begin{itemize}[leftmargin=0.5cm]
		\item \textbf{Strip breaking}\index{strip breaking}, which occurs when $\lim_{k\to\infty}|(du_{n_k})(z)|=\infty$ at $z=(s,t)\in\mathbb{R}\times[0,1]$ for $s\rightarrow\pm\infty$, so that the limit strip is ``broken'' (that is, has an intermediate node). Regarding each component as a $J$-holomorphic disk, this is equivalent to having a bubble at one of the endpoints $p,q$. Formally, this means there is a sequence $a_n\rightarrow\pm\infty$ such that the translated strips $u_n(\cdot-a_n,\cdot)$ converge to a non-constant limit strip. 
		\item \textbf{Disk bubbling}\index{bubbling!disk}, which occurs when $\lim_{k\to\infty}|(du_{n_k})(z)|=\infty$ at some $z=(s,t)\in\mathbb{R}\times[0,1]$ for $t\in\{0,1\}$, so that the limit strip has a bubble consisting of a $J$-holomorphic disk whose boundary intersects --- and after suitable rescalings is entirely contained in --- $L_0$ or $L_1$.
		\item \textbf{Sphere bubbling}\index{bubbling!sphere}, which occurs when $\lim_{k\to\infty}|(du_{n_k})(z)|=\infty$ at some $z=(s,t)\in(\mathbb{R}\times[0,1])^\circ$, so that the limit strip exhibits a bubble which --- after suitable rescalings --- is a $J$-holomorphic sphere $\mathbb{C}P^1\rightarrow M$ intersecting $\text{im}(u)$ transversely at some interior point of $\text{im}(u)$. 
	\end{itemize}
\end{Def}

\begin{figure}[htp]
	\centering
	\includegraphics[width=0.9\textwidth]{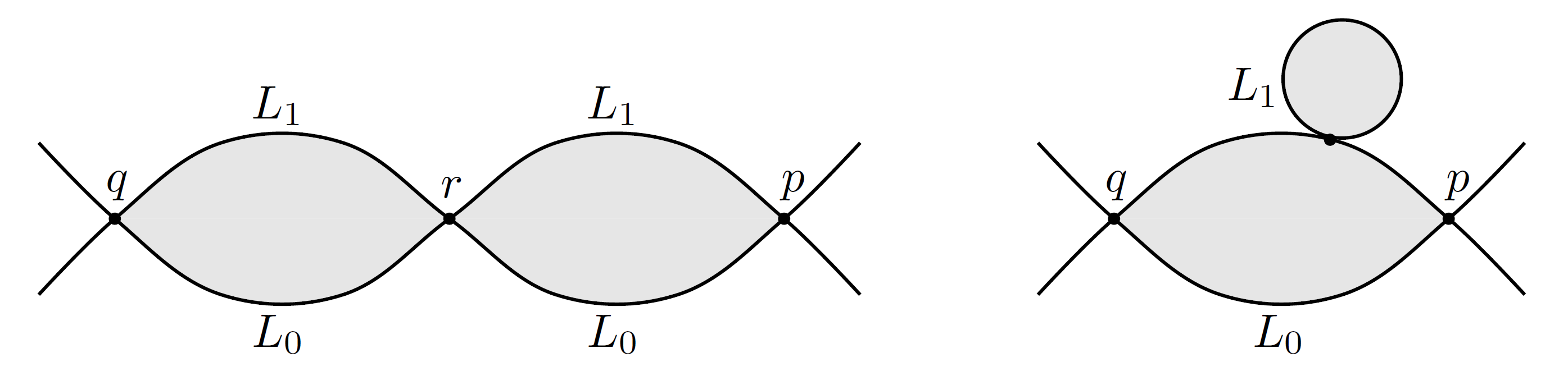}		  
	\caption{Two possible ``bubbling'' phenomena: strip breaking (left) and disk bubbling (right). [Source:\cite{[Aur13]}]}
	\label{bubbling}	
\end{figure}

Strip-breaking is the reason why Floer's differential squares to zero in absence of disk and sphere bubbles. The latters are undesirable since they also obstruct the transversality of $\widehat{\mathcal{M}}(p,q;[u],J,H)$; not even the perturbations $J\in C^\infty([0,1],\mathfrak{J}(M,\omega))$ and $H\in C^\infty([0,1]\times M)$ adopted in Section \ref{ch5.5} can solve the problem this time! Therefore, we wish to retain strip-breaking phenomena while discarding disk and sphere bubbling. This is achieved by simply requiring that the symplectic area of the associated $J$-holomorphic disks and spheres vanishes.

\begin{Lem}											
	Let $(M,\omega)$ be a symplectic manifold, $L_0, L_1\subset M$ Lagrangian submanifolds such that\footnote{Observe that $\langle\omega\rangle\in H_{dR}^2(M)\cong H^2(M;\mathbb{R}) \cong\textup{Hom}(H_2(M),\mathbb{R})\oplus\text{Ext}(H_1(M),\mathbb{R})=\textup{Hom}(H_2(M),\mathbb{R})$, hence, by identifying $\pi_2(M)$ with its image under the Hurewicz map $h:\pi_2(M)\rightarrow H_2(M)$, we can restrict $\langle\omega\rangle$ to $\pi_2(M)$. In other words, we impose the pairing $\langle\langle\omega\rangle, h(\cdot)\rangle\equiv \langle\omega\rangle\cdot\text{im}(h)$ between cohomology and homology to vanish, whence the notation adopted. The same holds when regarding $\langle\omega\rangle\in H_{dR}^2(M,L_i)$.} $\langle\omega\rangle\cdot\pi_2(M,L_i)=0$. Then $L_0$ and $L_1$ bound no $J$-holomorphic disk or sphere.
\end{Lem}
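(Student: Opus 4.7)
The plan rests on the fact, recorded in equation \eqref{symparea}, that any $J$-holomorphic curve $v : \Sigma \to M$ has non-negative symplectic area $E(v) = \int_\Sigma v^*\omega = \int_\Sigma |dv|^2$, and that $E(v) = 0$ forces $dv \equiv 0$, i.e.\ $v$ is constant. So it suffices to show that under the hypothesis $\langle\omega\rangle\cdot\pi_2(M,L_i)=0$ every $J$-holomorphic disk with boundary on $L_i$ and every $J$-holomorphic sphere in $M$ has vanishing symplectic area.

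For a $J$-holomorphic disk $v:(D^2,\partial D^2)\to (M,L_i)$, I would simply observe that $v$ tautologically defines a class $[v]\in\pi_2(M,L_i)$, and that Stokes' theorem (or rather the very definition of the pairing $H^2(M,L_i;\mathbb{R})\otimes\pi_2(M,L_i)\to\mathbb{R}$) gives
\[
E(v) \;=\; \int_{D^2} v^*\omega \;=\; \langle \langle\omega\rangle,\, [v]\rangle \;=\; 0,
\]
since the hypothesis $\langle\omega\rangle\cdot\pi_2(M,L_i)=0$ says exactly that this pairing is trivial. Combined with the non-negativity/rigidity from \eqref{symparea}, this forces $v$ to be constant, so $L_i$ bounds no non-constant $J$-holomorphic disk.

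For a $J$-holomorphic sphere $v:\mathbb{C}P^1\to M$, the strategy is to push the computation into the relative homotopy group via the long exact sequence of the pair $(M,L_i)$. Concretely, fixing a basepoint $x_0\in L_i$ and a path from $x_0$ to $v(\infty)$, one may view $v$ (composed with a suitable based cap sending $\partial D^2$ to $x_0$) as a map of pairs $\bar v:(D^2,\partial D^2,*)\to (M,L_i,x_0)$; under the natural map $\pi_2(M)\to\pi_2(M,L_i)$ the class $[v]$ goes to $[\bar v]$, and the cap contributes nothing to the integral since its image is a single point. Therefore
\[
E(v) \;=\; \int_{\mathbb{C}P^1} v^*\omega \;=\; \int_{D^2} \bar v^*\omega \;=\; \langle\langle\omega\rangle,[\bar v]\rangle \;=\; 0
\]
by hypothesis, and again \eqref{symparea} forces $v$ to be constant.

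The argument is essentially topological once the above area--pairing identity is in place, so I do not anticipate a serious obstacle: the only mildly delicate point is the sphere case, where one has to be careful to produce an honest element of $\pi_2(M,L_i)$ (rather than only of $\pi_2(M)$) in order to apply the hypothesis, which is done by the capping trick above. Note moreover that the argument is insensitive to whether the sphere bubble attaches at an interior point of the strip or not — only the homotopy class of $v$ in the pair $(M,L_i)$ matters — and the same applies to disk bubbles regardless of which boundary component of the strip they are attached to.
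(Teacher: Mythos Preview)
Your argument is correct and is precisely the reasoning the paper has in mind: the sentence immediately preceding the lemma states that bubbling is excluded ``by simply requiring that the symplectic area of the associated $J$-holomorphic disks and spheres vanishes,'' and the paper then states the lemma without further proof. Your write-up makes this explicit via the pairing $E(v)=\langle\langle\omega\rangle,[v]\rangle$ together with the positivity/rigidity of \eqref{symparea}, which is exactly the intended mechanism; the later Lemma~\ref{nobubbles} carries out the same energy computation in detail in the exact case using Stokes' theorem with a primitive of $\omega$.
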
 

This is just one of several possible conditions preventing the formation of disk and sphere bubbles. Later we will see that this comes for free when working with exact symplectic manifolds. \\

\noindent\minibox[frame]{Henceforth, we assume that all Lagrangian submanifolds $L\subset M$ satisfy the \; \\ ``no-bubbling'' condition $\langle\omega\rangle\cdot\pi_2(M,L)=0$.}
\vspace*{0.2cm}

\subsection{The Floer differential and Floer cohomology}\label{ch5.7}

We have finally collected all the necessary assumptions which make the ensuing theory well-posed. We proceed to define the second piece of data for Floer complexes.

\begin{Def}\label{Floerdifferential}			
	Let $(M,\omega)$ be a symplectic manifold, $L_0,L_1\subset M$ Lagrangian submanifolds with corresponding Floer complex $CF(L_0,L_1)\equiv CF(L_0,L_1;J,H)$ as in equation \eqref{Floercmplx} (for $\mathcal{X}(L_0,L_1)=L_0\cap(\phi_H^1)^{-1}(L_1)$). The \textbf{Floer differential}\index{Floer differential} $\partial\equiv \partial_{J,H}: CF(L_0,L_1)\rightarrow CF(L_0,L_1)$ is the $\Lambda_\mathbb{K}$-linear extension of
	\begin{equation}\label{Floersdiff}
		\partial(p) \coloneqq \mkern-18mu\sum_{\substack{q\in\mathcal{X}(L_0,L_1) \\ [u]:\text{ind}([u])=1}}\mkern-18mu (\#\mathcal{M}(p,q;[u],J,H))T^{\omega([u])}q\;,
	\end{equation}
	where $\#\mathcal{M}(p,q;[u],J,H)\in\mathbb{Z}$ (or $\mathbb{Z}_2$) is the (un)signed cardinality of the moduli space $\mathcal{M}(p,q;[u],J,H)$ coming from the perturbed Cauchy--Riemann problem \eqref{perturbedCR}, and $\omega([u])$ is the symplectic area (independent from the choice of representative, by the no-bubbling condition). Due to Corollary \ref{generatordegree}, $\partial$ is of degree 1. (Notice that the coefficient of $q$ in $\partial(p)$,
	\[
	\sum_{[u]:\text{ind}([u])=1}\mkern-18mu(\#\mathcal{M}(p,q;[u],J,H))T^{\omega([u])}\in\Lambda_\mathbb{K}\;,
	\]
	is an element of the Novikov field.)	
\end{Def}

To prove that $\partial$ is well defined, we need one last (non-trivial) result from manifold theory.

\begin{Lem}\label{boundarypoints}					
	Every compact $1$-dimensional smooth manifold $M$ has total signed number of boundary points equal zero.
\end{Lem}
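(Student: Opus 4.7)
The plan is to invoke the classification theorem for compact smooth $1$-manifolds with boundary, and then reduce the signed count to a component-wise computation. Concretely, I would first recall that any compact $1$-dimensional smooth manifold $M$ has only finitely many connected components, each of which is diffeomorphic either to the circle $\mathbb{S}^1$ or to the closed interval $[0,1]$. This is the content of the classical classification result (see e.g.\ Milnor's \emph{Topology from the Differentiable Viewpoint}, Appendix), and it is the backbone of the whole argument.

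Having this classification, the proof reduces to observing that each component contributes zero to the signed count of $\partial M$. For a circle component, the contribution is trivially zero because there are no boundary points at all. For an interval component $C \cong [0,1]$, orient $C$ and note that the induced boundary orientation assigns opposite signs to the two endpoints: the ``outgoing'' endpoint gets sign $+1$ and the ``incoming'' one gets sign $-1$ (this is just the standard convention for the boundary orientation of an oriented $1$-manifold, equivalent to Stokes' theorem $\int_C df = f(1)-f(0)$). Hence each interval contributes $+1-1=0$ to the total signed count.

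Summing over the finitely many components then yields $\sum_{p\in\partial M}\mathrm{sgn}(p)=0$, which is the claim. The only step requiring nontrivial input is the classification theorem itself; everything afterwards is a direct bookkeeping argument. I would not attempt to reprove the classification in this context, but simply cite it and focus the exposition on the sign convention and the telescoping across the two endpoints of each interval component.
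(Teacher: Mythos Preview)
Your proposal is correct and follows essentially the same approach as the paper: invoke the classification of compact smooth $1$-manifolds into circles and closed intervals, then observe that each component contributes zero to the (signed) boundary count. If anything, your treatment of the signed case is more explicit than the paper's, which just says ``a careful analysis reveals that the signed cardinality of $\partial M$ still cancels out'' without spelling out the boundary-orientation convention you describe.
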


\begin{proof}(\textit{Sketch})
	This is a consequence of the Classification Theorem for compact 1-dimensional smooth manifolds: any such must be diffeomorphic to a disjoint, finite union of closed real intervals and copies of $\mathbb{S}^1$. The formers have two boundary points each, the latters none. Since boundary points are preserved under diffeomorphism, it follows that $M$ has an even number of boundary points. 
	
	This readily proves the case for the unsigned count (when we work modulo 2). If we do care about orientations, thus taking into account signs, a careful analysis reveals that the signed cardinality of $\partial M$ still cancels out. 
\end{proof}

\begin{Thm}\label{HFwelldef}					
	Let $(M,\omega)$ be a symplectic manifold, $L_0,L_1\subset M$ Lagrangian submanifolds such that $\langle\omega\rangle \cdot\pi_2(M,L_i)=0$ for $i=0,1$. Then $\partial$ is well defined and satisfies $\partial^2=0$. Moreover, the induced cohomology group does not depend \textup(up to isomorphism\textup) on the choice of time-dependent $J$ and $H$.
\end{Thm}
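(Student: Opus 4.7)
(\textit{Plan})
The argument unfolds in three parts, each relying crucially on the no-bubbling hypothesis $\langle\omega\rangle\cdot\pi_2(M,L_i)=0$ to rule out the pathological components of $\widehat{\mathcal{M}}$ described in Section~\ref{ch5.6}.

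\textbf{Step 1: well-definedness of $\partial$.} Fix $p\in\mathcal{X}(L_0,L_1)$. For each homotopy class $[u]$ with $\textup{ind}([u])=1$, the assumed regularity of $(J,H)$ makes $\widehat{\mathcal{M}}(p,q;[u],J,H)$ a smooth $1$-manifold, so the quotient $\mathcal{M}(p,q;[u],J,H)$ is $0$-dimensional. Gromov's Compactness Theorem \ref{Gromov}, combined with the absence of disk/sphere bubbles (guaranteed by the topological condition on $L_i$), shows that each such $\mathcal{M}$ admits a compactification whose only non-interior points come from strip-breaking through intermediate chords; since no breaking can occur in a $0$-dimensional manifold (the broken configurations would consume index), $\mathcal{M}(p,q;[u],J,H)$ is itself compact and hence finite. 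To check that the infinite sum in \eqref{Floersdiff} converges in $\Lambda_\mathbb{K}$, one observes that the energy identity $E(u)=\omega([u])$ (up to the $H$-correction via $A_H$), combined with the same compactness principle applied uniformly in $[u]$, forces the multiset of exponents $\{\omega([u])\}$ appearing with nonzero coefficient to have only finitely many elements below any given threshold. This matches the definition \eqref{Novikov} of $\Lambda_\mathbb{K}$.

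\textbf{Step 2: $\partial^2=0$.} Expand $\partial^2(p)$ and look at the coefficient of a generator $r\in\mathcal{X}(L_0,L_1)$ with $\deg(r)=\deg(p)+2$. By Corollary~\ref{generatordegree}, this coefficient counts pairs of $\textup{ind}=1$ strips from $p$ to some $q$ and from $q$ to $r$, weighted by $T^{\omega([u_1])+\omega([u_2])}$. These pairs are precisely the broken trajectories in the Gromov compactification $\overline{\mathcal{M}}(p,r;[u],J,H)$ of the moduli space of $\textup{ind}([u])=2$ strips, which by transversality is a smooth $1$-manifold with boundary. Since the no-bubbling hypothesis forbids any other boundary stratum (disk or sphere bubbles would appear in codimension~$1$ otherwise), the compactified moduli space is a compact $1$-manifold whose boundary consists exactly of broken configurations as above. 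Lemma~\ref{boundarypoints} then gives that the signed count of these boundary points vanishes class by class in $[u]$, and summing over $[u]$ yields that the coefficient of $r$ in $\partial^2(p)$ is zero.

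\textbf{Step 3: independence of $(J,H)$.} Given two regular choices $(J^0,H^0)$ and $(J^1,H^1)$, pick a smooth homotopy $(J^s,H^s)_{s\in\mathbb{R}}$ which is constant equal to $(J^0,H^0)$ for $s\ll 0$ and to $(J^1,H^1)$ for $s\gg 0$, and generically regular in between. Counting $\textup{ind}=0$ solutions of the resulting $s$-dependent Floer equation defines a continuation map
\[
\Phi_{01}:CF(L_0,L_1;J^0,H^0)\longrightarrow CF(L_0,L_1;J^1,H^1),
\]
which is a chain map: analyzing the boundary of the $1$-dimensional $\textup{ind}=1$ moduli spaces for the $s$-dependent equation --- again using no-bubbling to exclude disk/sphere bubbles --- yields exactly the identity $\partial_1\Phi_{01}=\Phi_{01}\partial_0$. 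Reversing the homotopy produces $\Phi_{10}$ in the other direction, and a two-parameter moduli space argument (interpolating between the concatenated homotopy $(J^s,H^s)$ followed by its reverse and a constant homotopy) constructs an explicit chain homotopy between $\Phi_{10}\Phi_{01}$ and $\textup{id}$, and likewise for $\Phi_{01}\Phi_{10}$. Passing to cohomology, $\Phi_{01}$ becomes an isomorphism, so $HF(L_0,L_1)$ is well-defined up to canonical isomorphism.

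The delicate part throughout is the interplay between transversality and compactness: the assumption $\langle\omega\rangle\cdot\pi_2(M,L_i)=0$ is exactly what makes Gromov's Compactness Theorem yield the desired boundary-stratum description in all three steps, and without it one would need to develop the obstruction-theoretic machinery of \cite{[FOOO09]} to retain even $\partial^2=0$.
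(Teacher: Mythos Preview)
Your proposal is correct and follows essentially the same approach as the paper's own proof: well-definedness via transversality plus Gromov compactness with no bubbling, $\partial^2=0$ by identifying broken strips as the boundary of the compactified index-$2$ moduli space and invoking Lemma~\ref{boundarypoints}, and invariance via continuation maps from a homotopy of Floer data. The only minor difference is that you spell out the two-parameter homotopy argument for the chain homotopies slightly more explicitly than the paper does.
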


\begin{proof}
	(\textit{Sketch}) The counting in \eqref{Floersdiff} happens for all homotopy classes $[u]\in\pi_2(M,L_0\cup L_1)$ with $\text{ind}([u])=\text{dim}(\widehat{\mathcal{M}}(p,q;[u],J,H))=1$, hence the corresponding moduli spaces $\mathcal{M}(p,q;[u],J,H)=\widehat{M}(p,q;[u],J,H)/\mathbb{R}$ (which are smooth manifolds by transversality) are compact, oriented and 0-dimensional, thus are finite sets which can be counted with signs. Moreover, the sum in \eqref{Floersdiff} is well defined because, as a consequence of Gromov's Compactness Theorem, the finite energy bound \eqref{perturbedCRenergy} is satisfied by only finitely many classes $[u]$ whose moduli space is nonempty. Recall also that $\mathcal{X}(L_0,L_1)$ is finite.
	
	Fix two generators $p,q\in\mathcal{X}(L_0,L_1)$ and some $[u]\in\pi_2(M,L_0\cup L_1)$ with Maslov index 2. Then $\text{dim}(\widehat{\mathcal{M}}(p,q;[u],J,H)) = 2$, hence the smooth manifold $\mathcal{M}(p,q;[u],J,H)=\widehat{\mathcal{M}}(p,q;[u],J,H)/\mathbb{R}$ is 1-dimensional. Again by Proposition \ref{Gromov}, it can be compactified to a space $\overline{\mathcal{M}}(p,q;[u],J,H)$ by just adding broken strips connecting $p$ to $q$ and representing $[u]$ (by assumption, there is no risk to incur into disk/sphere bubbling). Accordingly, $\partial\overline{\mathcal{M}}(p,q;[u],J,H)$ is contained in the disjoint union of products of moduli spaces having the form $\mathcal{M}(p,r;[u'],J,H)\times\mathcal{M}(r,q;[u''],J,H)$, where $r\in CF(L_0,L_1)$ is any generator and $[u']+[u'']=[u]\in\pi_2(M,L_0\cup L_1)$ (see Figure \ref{bubbling} left). Since the Maslov index is additive under strip-breaking (by construction) and any non-constant strip has Maslov index at least 1 (by transversality), $\text{ind}([u])=2$ implies that $\text{ind}([u'])=\text{ind}([u'']) = 1$. Conversely, a \textit{gluing theorem}\footnote{Refer to \cite[section 8]{[Mer14]} or \cite[section 3.3]{[Sal97]} for some aimed formulations, and think about their Morse theoretic analogues.} tells us that $\overline{\mathcal{M}}(p,q;[u],J,H)$ is actually a 1-dimensional manifold with boundary
	\begin{equation}
	\partial\overline{\mathcal{M}}(p,q;[u],J,H) = \mkern-36mu\bigsqcup_{\begin{footnotesize}\substack{r\in\mathcal{X}(L_0,L_1) \\ [u']+[u'']=[u] \\ \text{ind}([u'])=\text{ind}([u''])=1}\end{footnotesize}}\mkern-36mu \mathcal{M}(p,r;[u'],J,H)\times\mathcal{M}(r,q;[u''],J,H)\;,
	\end{equation}
	whose orientation is compatible with the natural ones of its components (coming from the implicit choice of orientations and spin structures for $L_0$ and $L_1$). By Lemma \ref{boundarypoints}, the boundary of the compact 1-dimensional smooth manifold $\overline{\mathcal{M}}(p,q;[u],J,H)$ has zero signed cardinality:
	\[
	\mkern-36mu\sum_{\begin{footnotesize}\substack{r\in\mathcal{X}(L_0,L_1) \\ [u']+[u'']=[u] \\ \text{ind}([u'])=\text{ind}([u''])=1}\end{footnotesize}}\mkern-36mu (\#\mathcal{M}(p,r;[u'],J,H))\cdot(\#\mathcal{M}(r,q;[u''],J,H)) = 0\;.
	\]
	Multiplying last expression by the formal variable $T$ of $\Lambda_\mathbb{K}$ weighted by the total strip energy, $T^{\omega([u])}=T^{\omega([u'])+\omega([u''])}$, and summing over all classes $[u]\in\pi_2(M,L_0\cup L_1)$, we obtain precisely $\partial^2(p)$. Therefore, $\partial^2(p)=0$.
	
	We must still prove that the cohomology groups induced by $\partial=\partial_{J,H}$ do not depend on the choice of perturbation data $J$ and $H$. We just give an overview.\footnote{More details can be found in \cite[chapter 9]{[Mer14]}, for the non-Lagrangian case.} Take another pair $(J',H')$. Then, since $C^\infty([0,1],\mathfrak{J}(M,\omega))$ and $C^\infty([0,1]\times M)$ are both path-connected, there is an \textit{asymptotically constant path} $\chi:\tau\in\mathbb{R}\mapsto(J(\tau),H(\tau))\in C^\infty([0,1],\mathfrak{J}(M,\omega))\times C^\infty([0,1]\times M)$ such that $\chi(-\tau)=(J,H)$ and $\chi(\tau)=(J',H')$ for all $\tau\geq N$, given $N\geq 0$ big enough. One proceeds to construct a \textit{continuation map} $\Phi_\chi:CF(L_0,L_1;J,H)\rightarrow CF(L_0,L_1;J',H')$ by counting solutions to
	\[
	\partial_su + J_t(\tau,u)(\partial_tu -X_{H_t}(\tau,u)) = 0
	\]
	as follows: the coefficient of $p'\in\mathcal{X}(L_0,L_1;H')$ in $F(p)$ is the signed count of index 0 solutions converging to $p$ respectively $p'$ for $s\rightarrow \pm\infty$. Symmetrically, one can define a continuation map $\Phi_\chi':CF(L_0,L_1;J',H')\rightarrow CF(L_0,L_1;J,H)$. Then it can be shown that $\partial_{J',H'}\circ\Phi_\chi = \Phi_\chi\circ\partial_{J,H}$ and $\partial_{J,H}\circ\Phi_\chi' = \Phi_\chi'\circ\partial_{J',H'}$, meaning $\Phi_\chi$ and $\Phi_\chi'$ are inverse chain maps, so that $\Phi_\chi'\circ\Phi_\chi$ is chain homotopic to $\textup{id}_{CF(L_0,L_1;J,H)}$ respectively $\Phi_\chi\circ\Phi_\chi'$ to $\textup{id}_{CF(L_0,L_1;J',H')}$. Passing to cohomology, it follows that $H(\Phi_\chi):H(CF(L_0,L_1;J,H))\rightarrow H(CF(L_0,L_1;J',H'))$ is an isomorphism, as desired.
\end{proof}

\begin{Def}										
	Let $(M,\omega)$ be a symplectic manifold, $L_0,L_1\subset M$ Lagrangian submanifolds such that $\langle\omega\rangle \cdot\pi_2(M,L_i)=0$ for $i=0,1$. Then we call
	\begin{equation}\label{Floercohomring}
		HF(L_0,L_1)\coloneqq H(CF(L_0,L_1),\partial) = \frac{\text{ker}(\partial)}{\text{im}(\partial)}
	\end{equation}
	\textbf{Lagrangian Floer cohomology}\index{Lagrangian Floer cohomology} (by Theorem \ref{HFwelldef}, we can unambiguously choose any $J\in C^\infty([0,1],\mathfrak{J}(M,\omega))$ and $H\in C^\infty([0,1]\times M)$ to define $CF(L_0,L_1)$ and $\partial$). Explicitly, the $n$-th group reads $HF^n(L_0,L_1)=\text{ker}(\partial_n)/\text{im}(\partial_{n-1})$. 
\end{Def}

From the constructions of Section \ref{ch5.5} and the proof of Theorem \ref{HFwelldef}, it readily follows that Hamiltonian isotopic Lagrangian submanifolds $L_1, L_1'\subset M$ induce an isomorphism on Floer cohomology: $HF(L_0,L_1)\cong HF(L_0,L_1')$. On the other hand, if $L_0\cap L_1=\emptyset$, then $HF(L_0,L_1)=\{0\}$.

\begin{Rem}											
	To obtain Floer \textit{homology}, we must look at all previous constructions ``backwards''. For example, the strip of Figure \ref{holomorphicstrip2} goes from $q$ to $p$, ultimately causing a reversal of all gradings.
	
	Returning to cohomology, with a suitable choice of data $(J,H)$ one can show that the Floer complexes $CF(L_0,L_1)$ and $CF(L_1,L_0)$ along with their differentials are dual to one another (the strip from $q$ to $p$ is then an element of the latter). 
\end{Rem}

\begin{Ex}										
	It is instructive to see what goes wrong when we drop the assumptions $\langle\omega\rangle\cdot\pi_2(M,L_i)=0$, that is, when we allow Lagrangian submanifolds to bound $J$-holomorphic disks and spheres. Consider the 2-dimensional manifold $M\coloneqq\mathbb{R}\times\mathbb{S}^1$ with standard area form $\omega\coloneqq ds\wedge dt$, and Lagrangian submanifolds $L_0, L_1$ as in Figure \ref{cylinder} left. Then $L_0$ and $L_1$ bound two $J$-holomorphic strips $u, v$ of Maslov index 1, which go from $p$ to $q$ respectively from $q$ to $p$. By definition, $\partial(p)=\pm T^{\omega([u])}q$ and $\partial(q)=\pm T^{\omega([v])}p$, whence $\partial^2(p)=\pm T^{\omega([u])+\omega([v])}p\neq 0$.
	
	\begin{figure}[htp]
		\centering
		\includegraphics[width=0.5\textwidth]{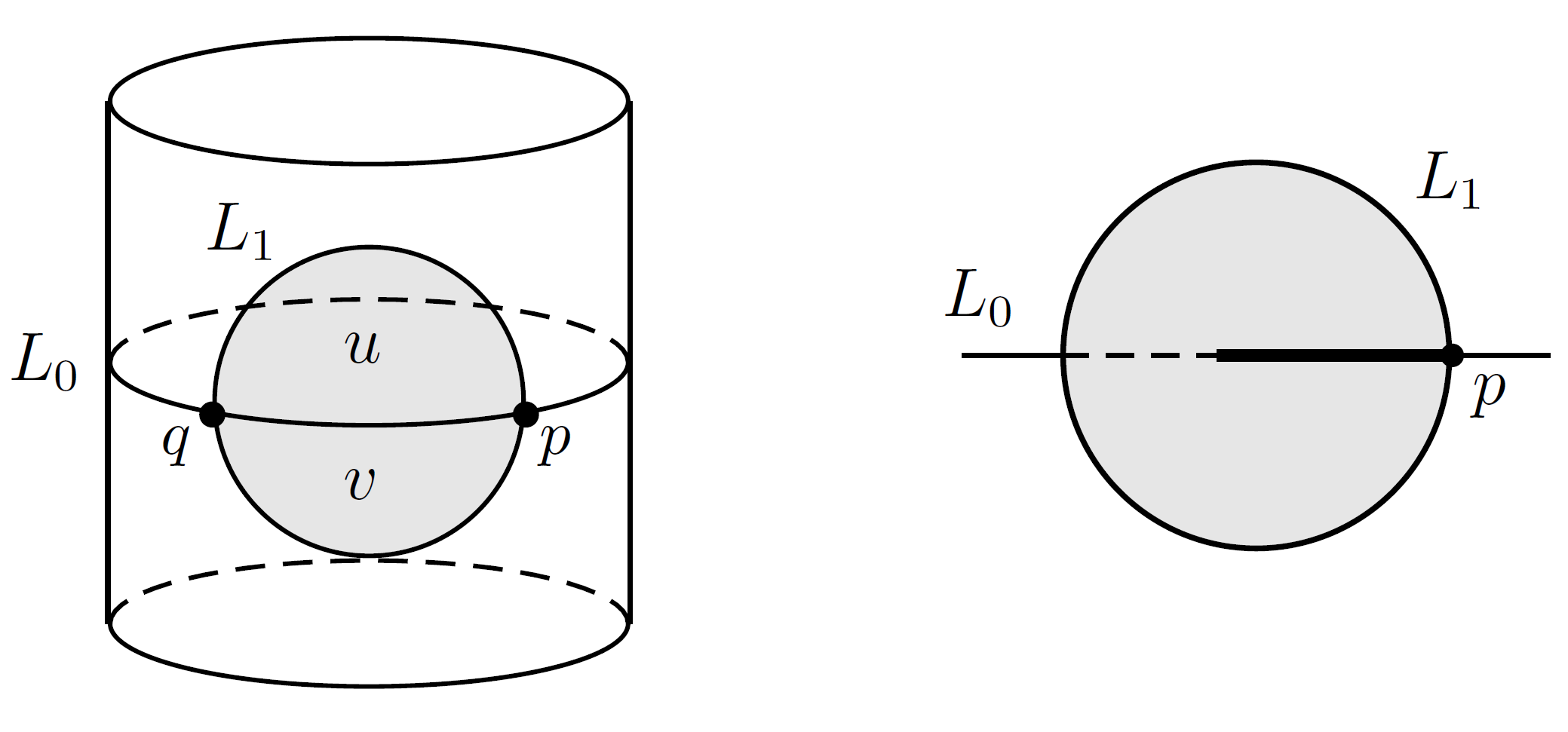}		  
		\caption{If Lagrangian submanifolds bound $J$-holomorphic disks, then Floer's differential does not square to 0. [Source:\cite{[Aur13]}]}
		\label{cylinder}	
	\end{figure}

	What happens algebraically? Look at the local picture of $M$ in Figure \ref{cylinder} right, after identifying $\mathbb{R}\times[0,1]\simeq\mathbb{D}^2\setminus\{\pm 1\}\subset\mathbb{C}$ so that $L_0\supset[-1,1]$ lies on $\mathbb{R}$ and $L_1$ coincides with $\mathbb{S}^1$. Focus on the index 2 moduli space of $J$-holomorphic strips from $p$ to itself whose image arises by removing a slit along $L_0$ (bold in figure). Any such strip is given by
	\[
	w_\alpha(z)\coloneqq\frac{z^2+\alpha}{1+\alpha z^2}\,,\quad\text{with }dw_\alpha(z)=2z\frac{1-\alpha^2}{(1+\alpha z^2)^2}\,,
	\]
	for $\alpha\in(-1,1)$ regulating the slit length. There are two limit cases:
	\begin{itemize}[leftmargin=0.5cm]
		\item If $\alpha\rightarrow -1$, the index two strips $(w_\alpha)_\alpha$ converge to a broken strip $w$ with index 1 components $u$ and $v$, just like in Figure \ref{bubbling} left (indeed, we can ``open up'' the disk of Figure \ref{cylinder} along the slit and diffeomorphically map it to the broken configuration). This means that energy concentrates at $z=\pm 1$, as confirmed by:
		\[
		\lim_{\alpha\to -1}|dw_\alpha(\pm 1)| = \lim_{\alpha\to -1}\Big|\frac{-4\alpha}{2(1+\alpha)}\Big| = \infty\;.  
		\]
		\item If $\alpha\rightarrow 1$, $(w_\alpha)_\alpha$ converges to a constant strip at $p$ with a disc bubble at $z=i$ mapping to the disk enclosed by $L_1$. Hence, energy concentrates at $i$:
		\[
		\lim_{\alpha\to 1}|dw_\alpha(i)| = \lim_{\alpha\to 1}\Big|\frac{4\alpha}{2(1-\alpha)}\Big| = \infty\;.
		\]
	\end{itemize} 
	The disk bubbling occurring for $\alpha\rightarrow 1$ causes $\partial^2(p)\neq 0$! \hfill $\blacklozenge$
\end{Ex}

Finally, we describe Floer cohomology in the case $L_0=L_1$. Again, the proof of the following statement is rather advanced for our level --- it involves Morse theory --- hence we just give a sketch and point the reader to \cite[Example 1.12]{[Aur13]}. 

\begin{Pro}\label{CFLL}								
	Let $(M^{2n},\omega)$ be a symplectic manifold, $L^n\subset M$ a Lagrangian submanifold such that $\langle\omega\rangle\cdot\pi_2(M,L)=0$. Then $HF(L,L)\cong H(L;\Lambda_\mathbb{K})$ \textup(singular cohomology with coefficients in the Novikov field\textup).
\end{Pro}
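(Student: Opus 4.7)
The plan is to reduce the Floer complex of $(L,L)$ to the Morse--Smale chain complex of $L$ by means of a small time-independent Hamiltonian perturbation, after which the result follows from the standard identification of Morse cohomology with singular cohomology.

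First, I would choose the perturbation data as follows. By the Weinstein Lagrangian neighbourhood theorem, a tubular neighbourhood of $L\subset M$ is symplectomorphic to a neighbourhood of the zero section in the cotangent bundle $(T^*L,\omega_{\mathrm{can}})$. Pick a Morse function $f\in C^\infty(L)$ whose negative gradient flow (with respect to some Riemannian metric on $L$) is Morse--Smale, and extend it to a time-independent smooth function $\tilde f$ on $M$ supported in this tubular neighbourhood. Take $H\coloneqq\varepsilon\tilde f$ for a sufficiently small $\varepsilon>0$ so that $(\phi_H^1)^{-1}(L)\pitchfork L$; under the cotangent identification, $\phi_H^1(L)$ is a $C^1$-small deformation of the zero section close to the graph of $\varepsilon\,df$, whence
\[
\mathcal{X}(L,L;H)\;=\;L\cap(\phi_H^1)^{-1}(L)\;\longleftrightarrow\;\operatorname{Crit}(f),
\]
via evaluation at $t=0$ of the associated Hamiltonian chords.

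Next, I would match gradings. Because the chords staying inside the Weinstein neighbourhood can be linearised, the canonical short path computation of Definition \ref{degree}, applied to each critical point $p$ of $f$ together with the grading $\tilde\phi_L$ on $L$, yields $\deg(p)=\operatorname{ind}_f(p)$ up to an overall shift depending only on the convention for $\tilde\phi_L$ (not on $p$). Hence the generators of $CF(L,L;J,H)$ are in graded bijection with the generators of the Morse cochain complex $CM^\bullet(L,f)$, after tensoring with $\Lambda_\mathbb{K}$.

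The main step — and the real obstacle — is the identification of differentials. The claim is that, for $\varepsilon$ sufficiently small and for a suitable choice of time-independent $J\in\mathfrak{J}(M,\omega)$ compatible with the cotangent structure near $L$, every solution $u$ of the perturbed Cauchy--Riemann problem \eqref{perturbedCR} with finite energy and endpoints at critical points $p,q\in\operatorname{Crit}(f)$ of relative Morse index $1$ is of the form $u(s,t)=\gamma(s)$, where $\gamma:\mathbb{R}\to L$ is a negative gradient flow line of $f$ from $p$ to $q$. The idea is the standard Floer argument: an a priori $C^0$-estimate (using the tubular neighbourhood and small energy coming from small $\omega([u])\le C\varepsilon$) forces all such strips to remain in the Weinstein chart, where a $t$-Fourier decomposition and rescaling by $\varepsilon$ collapses the strip to a trajectory on $L$, while a quantitative implicit function theorem (or Floer's original elliptic bootstrap) promotes the bijection to the level of moduli spaces with their orientations. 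The energy weight $T^{\omega([u])}$ for such a $t$-independent strip equals $T^{\varepsilon(f(p)-f(q))}$, which is a nonzero scalar in $\Lambda_\mathbb{K}$ and thus does not affect the cohomology.

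Granting this, the Floer differential $\partial_{J,H}$ is identified with the Morse coboundary $d_f\otimes\operatorname{id}_{\Lambda_\mathbb{K}}$, so
\[
HF(L,L)\;\cong\;H^\bullet\bigl(CM^\bullet(L,f)\otimes_\mathbb{K}\Lambda_\mathbb{K},\,d_f\otimes\operatorname{id}\bigr)\;\cong\;HM^\bullet(L,f)\otimes_\mathbb{K}\Lambda_\mathbb{K}\;\cong\;H^\bullet(L;\Lambda_\mathbb{K}),
\]
where the last isomorphism is the classical identification of Morse cohomology with singular cohomology. Independence of $HF(L,L)$ from the choice of $(J,H)$, already established in Theorem \ref{HFwelldef}, finally legitimises the computation for arbitrary regular perturbation data. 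The assumption $\langle\omega\rangle\cdot\pi_2(M,L)=0$ is what rules out disk/sphere bubbling in the compactification arguments underlying both Theorem \ref{HFwelldef} and the small-energy reduction above.
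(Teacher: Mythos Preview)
Your proposal is correct and follows essentially the same route as the paper's sketch: pass to a Weinstein neighbourhood, perturb by a small Morse Hamiltonian so that generators biject with critical points, and invoke Floer's identification of index-$1$ strips with gradient trajectories to match the Floer and Morse differentials. The only minor discrepancy is the grading convention---the paper records $\deg(p)=n-i(p)$ rather than $i(p)$ up to a global shift---but this is absorbed by the choice of $f$ versus $-f$ (or of graded lift) and does not affect the statement.
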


\begin{proof}
	(\textit{Sketch}) First, let $L$ be any compact real $n$-dimensional manifold. It is known\footnote{By \textit{Weinstein's Lagrangian Neighbourhood Theorem}; see \cite[Theorem 8.4]{[Sil12]}.} that a neighbourhood of $L\subset M$ is symplectomorphic to one of $L_0\coloneqq \mathfrak{o}(L)\subset T^*L$, where $\mathfrak{o}\in\Gamma(L,T^*L)$ denotes the zero section and $T^*L=$ $(T^*L,\theta=\sum_i dq_i\wedge dp_i)$ the cotangent bundle of $L$. Taking a Morse function $f\in C^\infty(L)$ and some $\varepsilon>0$, let $L_1\coloneqq \text{graph}(\varepsilon df)=\{(x,\varepsilon(df)_x)\in T_x^*L\mid x\in L\}$. Then $L_0,L_1\subset T^*L$ are Hamiltonian isotopic exact Lagrangian submanifolds, intersecting transversely at $\text{crit}(f)$, the set of critical points of $f$. 
	
	Choosing graded lifts, we obtain a graded Floer complex $CF(L_0,L_1)$ with generators any $p\in\text{crit}(f)$, of $\text{deg}(p)=n-i(p)$ (for $i(p)$ the Morse index of $p$). After suitable perturbations and rescalings, one can show that $HF(L_0,L_1)\cong HM(f;\Lambda_\mathbb{K})$, the Morse cohomology of $L$, in turn isomorphic to the ordinary singular cohomology $H(L;\Lambda_\mathbb{K})$. Together with invariance under Hamiltonian isotopy, we get $HF(L,L)\cong HF(L_0,L_0)\cong HF(L_0,L_1)\cong H(L;\Lambda_\mathbb{K})$.
	
	Now, let $L^n\subset M$ be a Lagrangian submanifold such that $\langle\omega\rangle\cdot\pi_2(M,L)=0$. For a small enough perturbing Hamiltonian, any $J$-holomorphic strip determining $HF(L,L)$ must be contained in a small tubular neighbourhood of $L$ (due to energy constraints). Therefore, the analysis carried above also applies to this general setting.
\end{proof}

Once again, the no-bubbling assumption $\langle\omega\rangle\cdot\pi_2(M,L)=0$ is essential: otherwise, $HF(L,L)$ might be --- whenever defined --- a smaller subgroup of $H(L;\Lambda_\mathbb{K})$.

\subsection{The exact manifold setting}\label{ch5.8}

Before moving on, let us briefly revisit the theory of last section in presence of exact Lagrangian submanifolds $L$ in an exact symplectic manifold $M$. As before, we assume that $2c_1(TM)=0$ and all $L$ are compact, oriented, spin, graded, with vanishing Maslov class.

\begin{Lem}\label{nobubbles}				
	Let $(M^{2n},\omega)$ be an exact symplectic manifold, $L_0, L_1\subset M$ exact Lagrangian submanifolds. Then the energy of any $J$-holomorphic strip defining $CF(L_0,L_1)$ is constant. Moreover, disk/sphere bubbling phenomena do not occur \textup(even though we do not require $\langle\omega\rangle\cdot\pi_2(M,L_i)=0$\textup!\textup).
\end{Lem}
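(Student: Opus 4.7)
The plan is to exploit the exactness hypothesis throughout: write $\omega = d\nu$ on $M$ and pick primitives $f_i \in C^\infty(L_i)$ with $\nu|_{L_i} = df_i$ (cf. Definition \ref{Lagrangian}). Then for any $J$-holomorphic map $v:\Sigma\to M$ the symplectic area $\int_\Sigma v^*\omega = \int_\Sigma d(v^*\nu)$ collapses under Stokes' theorem to $\int_{\partial\Sigma} v^*\nu$, and the three geometric configurations appearing in Gromov's Compactness Theorem (Proposition \ref{Gromov}) are then treated by analysing this boundary integral case by case.

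First I would handle the strip itself. For a solution $u:\mathbb{R}\times[0,1]\to M$ of \eqref{CRequation}--\eqref{CRenergy} with asymptotic intersection points $p, q\in\mathcal{X}(L_0,L_1)$, apply Stokes to the truncated rectangle $[-N,N]\times[0,1]$ and let $N\to\infty$. On the horizontal edges $u$ lies in $L_i$, where $u^*\nu = u^*(df_i) = d(f_i\circ u)$ is exact, so these contributions reduce to differences of $f_i$ evaluated at the two endpoints. The vertical edges at $s=\pm N$ collapse to the points $p, q$ thanks to the standard $C^1$-decay of finite-energy strips, so those contributions vanish in the limit. The resulting identity
\[
E(u) \;=\; \bigl(f_0(p) - f_0(q)\bigr) - \bigl(f_1(p) - f_1(q)\bigr)
\]
depends only on the asymptotic endpoints and not on the homotopy class $[u]$, which is exactly the claim that the energy is constant. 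In the perturbed setting of \eqref{perturbedCR} the same computation yields the analogue $E(u) = A_H(\gamma_-) - A_H(\gamma_+)$ in terms of the Hamiltonian action of the asymptotic chords, which is again a fixed function of the endpoints.

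For sphere bubbles $v:\mathbb{C}P^1\to M$, Stokes on the closed surface $\mathbb{C}P^1$ gives $E(v) = \int_{\mathbb{C}P^1} d(v^*\nu) = 0$ directly, since there is no boundary. But \eqref{symparea} says that any non-constant $J$-holomorphic curve has strictly positive energy, so $v$ must be constant, ruling out sphere bubbling. For disk bubbles $v:(\mathbb{D}^2,\partial\mathbb{D}^2)\to(M,L_i)$, the exactness of $\nu|_{L_i}$ yields
\[
E(v) \;=\; \int_{\partial\mathbb{D}^2} v^*\nu \;=\; \int_{\partial\mathbb{D}^2} d(f_i\circ v) \;=\; 0,
\]
since $\partial\mathbb{D}^2$ is a closed loop and the integral of an exact form around it vanishes; positivity of energy again forces $v$ to be constant and excludes disk bubbling.

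The one analytical step I expect to be genuinely delicate is the vanishing of the vertical boundary contributions in the strip computation. This is a classical exponential convergence result for finite-energy Floer trajectories, relying on \eqref{perturbedCRenergy} together with hyperbolicity of the asymptotic operator at the non-degenerate chords $p, q$; I would invoke it as a standard fact from Floer theory rather than reproduce the asymptotic analysis here.
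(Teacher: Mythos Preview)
Your proposal is correct and follows essentially the same Stokes-plus-exactness approach as the paper. If anything, your truncation argument on $[-N,N]\times[0,1]$ with explicit handling of the vertical edges via asymptotic decay is more careful than the paper's proof, which applies Stokes formally on the non-compact strip $\mathbb{R}\times[0,1]$ and treats $\partial(\mathbb{R}\times[0,1])$ as consisting only of the two horizontal lines, then applies Stokes again on those infinite lines to obtain the endpoint evaluations $f_i(p),\,f_i(q)$ without ever mentioning the decay needed to justify this.
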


\begin{proof}
	Let $\omega = d\nu\in\Omega^2(M)$ be the exact form attached to $M$, for some $\nu\in\Omega^1(M)$, and let $f_i\in C^\infty(L_i)$ be such that $\nu|_{L_i}=df_i\in\Omega^1(L_i)$ for $i=0,1$ (cf. Definition \ref{Lagrangian}). Consider a $J$-holomorphic strip $u:\mathbb{R}\times[0,1]\rightarrow M$ from $p$ to $q$ with boundary in $L_0\cup L_1$, as in Figure \ref{holomorphicstrip2}. Its energy is:
	\begingroup
	\allowdisplaybreaks
	\begin{align}\label{constenergy}
		E(u)&=\int_{\mathbb{R}\times[0,1]}\mkern-24mu u^*\omega=\int_{\mathbb{R}\times[0,1]}\mkern-24mu d(u^*\nu) & \text{(since $d$ commutes with pullbacks)} \nonumber \\
		&=\int_{\partial(\mathbb{R}\times[0,1])}\mkern-30mu u^*\nu & \text{(by Stokes' Theorem)} \nonumber \\
		&=\int_{\mathbb{R}\times\{1\}}\mkern-24mu u^*(\nu|_{L_1}) \;-\! \int_{\mathbb{R}\times\{0\}}\mkern-24mu u^*(\nu|_{L_0}) & \text{(because $\mathbb{R}\times\{i\}$ maps to $L_i$)} \nonumber \\
		&=\int_{\mathbb{R}\times\{1\}}\mkern-24mu d(u^*f_1) \;-\! \int_{\mathbb{R}\times\{0\}}\mkern-24mu d(u^*f_0) & \text{(since $d$ commutes with pullbacks)} \nonumber \\
		&=\int_{\partial(\mathbb{R}\times\{1\})}\mkern-30mu f_1\circ u \;-\! \int_{\partial(\mathbb{R}\times\{0\})}\mkern-30mu f_0\circ u & \text{(by Stokes' Thm and def. of $u^*$)} \nonumber \\
		&=f_1(u(\,\cdot\,,1))\Big|_{+\infty}^{-\infty} - f_0(u(\,\cdot\,,0))\Big|_{+\infty}^{-\infty} & \text{(because we integrate $p\rightarrow q$)} \nonumber \\
		&=(f_1(q)-f_1(p))-(f_0(q)-f_0(p))\,. 
	\end{align}
	\endgroup
	Therefore, the energy of $u$ is constant, as claimed.
	
	Consider now possible bubbling phenomena. If $v:\mathbb{D}^2\rightarrow M$ is a $J$-holomorphic disk whose image intersects $u$ at some $r\in L_1$ (see Figure \ref{bubbling} right), then its energy is:
	\[
	E(v)=\int_{\mathbb{D}^2}v^*\omega = \int_{\mathbb{S}^1}v^*\nu = \int_{\partial\mathbb{S}^1}v^*f_1 = 0
	\]
	(using commutativity of $d$ with $v^*$ and Stokes' Theorem twice, and $\partial\mathbb{S}^1=\emptyset$). Similarly, let $w:\mathbb{C}P^1\simeq\mathbb{S}^2\rightarrow M$ be a $J$-holomorphic sphere intersecting $u$ at some interior point. The associated energy is:
	\[
	E(w)=\int_{\mathbb{S}^2} w^*\omega = \int_{\partial\mathbb{S}^2} w^*\nu = 0
	\]
	(since $\partial\mathbb{S}^2=\emptyset$). It follows that, in both cases, there are no singular energy concentrations at any region of the strip. Accordingly, disk and sphere bubbling is excluded.
\end{proof}

Therefore, if we work with exact Lagrangian submanifolds, the compactness property required to make Floer's differential well-defined is automatically achieved. However, if $M$ is noncompact, then one needs an additional assumption on it (specifically, $M$ must be either convex at infinity or geometrically bounded; consult for example \cite{[CGK02]}). Also, in light of equation \eqref{constenergy} --- which tells us that all symplectic areas are equal --- we can rescale generators $p\in\mathcal{X}(L_0,L_1)$ by a factor $T^{f_1(p)-f_0(p)}$, so that the weights $T^{\omega([u])}$ in \eqref{Floersdiff} can be ignored, allowing us to directly work over the field $\mathbb{K}$. 

Most importantly, Lemma \ref{nobubbles} tells us that exact Lagrangian submanifolds cannot be obstructed, so that each one of them is a (non-zero) object of the Fukaya category. We summarize this scenario.

\begin{Def}											
	Let $(M,\omega)$ be an exact symplectic manifold equipped with some $J\in C^\infty([0,1],\mathfrak{J}(M,\omega))$ and $H\in C^\infty([0,1]\times M)$. Let $L_0, L_1\subset M$ be exact Lagrangian submanifolds (endowed with all data listed above). The Floer complex of $(L_0,L_1)$ is the free module
	\[
	CF(L_0,L_1)\equiv CF(L_0,L_1;H)\coloneqq \mkern-18mu\bigoplus_{\begin{footnotesize}p\in L_0\cap(\phi_H^1)^{\text{-1}}(L_1)\end{footnotesize}}\mkern-18mu\mathbb{K}\cdot p
	\]
	together with the boundary operator $\partial\equiv \partial_{J,H}:CF(L_0,L_1)\rightarrow CF(L_0,L_1)$ defined as the $\mathbb{K}$-linear extension of
	\[
	\partial(p)\coloneqq \mkern-18mu\sum_{\begin{footnotesize}\substack{q\in L_0\cap(\phi_H^1)^{\text{-1}}(L_1) \\ [u]:\text{ind}([u])=1}\end{footnotesize}}\mkern-18mu (\#\mathcal{M}(p,q;[u],J,H))q
	\]
	\big(in case one wishes to ignore orientations, it suffices to substitute $\mathbb{K}\leftarrow\mathbb{Z}_2$ and $\#\mathcal{M}(p,q;[u],J,H)\leftarrow |\mathcal{M}(p,q;[u],J,H)|\,(\text{mod}\,2)\,$\big). We obtain the Lagrangian Floer cohomology $HF(L_0,L_1)\coloneqq H(CF(L_0,L_1;H),\partial_{J,H})$, which by Theorem \ref{HFwelldef} we know to be $(J,H)$-independent.
\end{Def}

\newpage

\section{The Fukaya category} 
\thispagestyle{plain}

\subsection{The Floer product}\label{ch6.1}

We bring into play a third Lagrangian submanifold and adapt the geometrical constructions of the previous chapter so to define a product between two Floer complexes, the \textit{Floer product}. Our primary reference is \cite[section 2]{[Aur13]}.

Consider as usual an almost complex symplectic manifold $M^{2n}=(M,\omega,J)$ with $2c_1(TM)=0$ and $J\in\mathfrak{J}(M,\omega)$, and three Lagrangian submanifolds $L_0,L_1,L_2\subset M$, which we assume to be compact, oriented, spin, graded, with vanishing Maslov classes and enclosing no bubbles. Furthermore, we take them to be pairwise transversely intersecting, $L_i\pitchfork L_j$ for $i\neq j \in\{0,1,2\}$. Then transversality, compactness and orientability are readily achieved (the general case will be discussed afterwards). We seek a product operation between the associated Floer complexes of the form:
\[
CF(L_1,L_2)\otimes CF(L_0,L_1)\rightarrow CF(L_0,L_2)\;.
\]

With reference to Figure \ref{holomorphicstrip3}, we study the following problem: given $p_1\in\mathcal{X}(L_0,L_1)$, $p_2\in\mathcal{X}(L_1,L_2)$, $q\in\mathcal{X}(L_0,L_2)$ (where $\mathcal{X}(L_i,L_j)=L_i\cap L_j$ for now), and points $z_1=e^{-i\pi/3}$, $z_2=e^{i\pi/3}$, $z_0=-1\in\mathbb{S}^1$, we want $J$-holomorphic maps $u:\mathbb{D}^2\setminus\{z_0,z_1,z_2\}\rightarrow M$ with boundary conditions
\begin{equation}\label{CRboundarycond3}
	\left\{\begin{array}{llll}
		u(e^{i\alpha})\in L_0 & \text{if }\alpha\in\big(-\pi,-\frac{\pi}{3}\big) & \text{and} & \displaystyle\lim_{z\to z_0}u(z)=q\\
		u(e^{i\alpha})\in L_1 & \text{if }\alpha\in\big(-\frac{\pi}{3},\frac{\pi}{3}\big) & \text{and} & \displaystyle\lim_{z\to z_1}u(z)=p_1\\
		u(e^{i\alpha})\in L_2 & \text{if }\alpha\in\big(\frac{\pi}{3},\pi\big)& \text{and} & \displaystyle\lim_{z\to z_2}u(z)=p_2\\ 
	\end{array}\right.\;,
\end{equation}
and finite energy $E(u)<\infty$. These are solutions of a suitably formulated Cauchy--Riemann problem (see equation \eqref{generalCR} below), and the so obtained homotopy classes $[u]\in\pi_2(M,L_0\cup L_1\cup L_2)$ define Moduli spaces $\mathcal{M}(p_1,p_2,q;[u],J)$ which --- thanks to our assumptions regarding compactness and transversality --- is a smooth manifold of finite dimension. Similarly to Floer's differential, the desired product operation involves a weighted count of all such $J$-holomorphic maps, which we may also call \textbf{pseudoholomorphic triangles}\index{pseudoholomorphic triangle}, as Figure \ref{holomorphicstrip3} suggests. 
  
\begin{figure}[htp]
	\centering
	\includegraphics[width=0.9\textwidth]{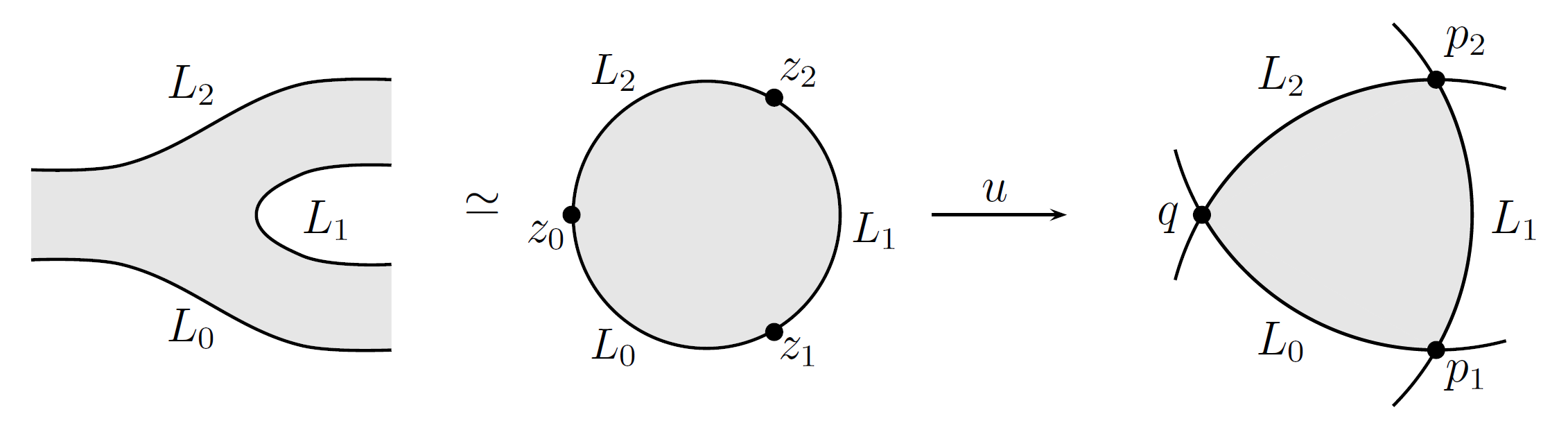}	
	\caption{A three-punctured $J$-holomorphic disk $u$ belonging to $\mathcal{M}(p_1,p_2,q;[u],J)$. [Source:\cite{[Aur13]}]}
	\label{holomorphicstrip3}	
\end{figure}

\begin{Rem}\label{striplike}					
	The similarities with \eqref{CRequation}--\eqref{CRenergy} are apparent. Indeed, we observe that any small enough open neighbourhood of each ``puncture'' $z_i$ in $\mathbb{D}^2$ is biholomorphic to $\mathbb{R}\times[0,1]\subset \mathbb{R}^2\cong\mathbb{C}$, as highlighted by Figure \ref{holomorphicstrip3}, which exhibits a biholomorphism between the three-punctured disk $\mathbb{D}^2\setminus\{z_0,z_1,z_2\}$ and the ``three-ends strip'' on the left-hand side. 
	
	On the other hand, Figure \ref{holomorphicstrip2} already alluded to the identification of $\mathbb{D}^2\setminus\{\pm 1\}$ with $\mathbb{R}\times[0,1]$. Thus one could --- and should, as we will shortly see --- interpret the problem \eqref{CRboundarycond3} as a ``degree 3 analogue'' of what we discussed throughout Chapter 5. 
\end{Rem}

Notice that, in contrast to Definition \ref{modulispace}, we did not require to quotient $\mathcal{M}(p_1,p_2,q;[u],J)$ by $\mathbb{R}$, as there is no global free action by translation on the space of solutions. Actually, since the automorphism group $\text{Aut}(\mathbb{D}^2)$ acts transitively on cyclically ordered triples of marked boundary points, even the choice of $z_j$'s is irrelevant. Then the dimension of $\mathcal{M}(p_1,p_2,q;[u],J)$ directly coincides with the Maslov index $\text{ind}([u])$:    

\begin{Lem}\label{moduli3dim}							
	Let $M^{2n}=(M,\omega,J)$ be an almost complex symplectic manifold with $2c_1(TM)=0$, and $L_0,L_1,L_2\subset M$ Lagrangian submanifolds as above, in particular graded with $[\phi_{L_i}]=0$. Then it holds:
	\[
	\textup{dim}(\mathcal{M}(p_1,p_2,q;[u],J))=\textup{ind}([u])=\textup{deg}(q)-\textup{deg}(p_1)-\textup{deg}(p_2)\,.
	\]
	\textup(Again, this is equal to the Fredholm index defined through $D_{\bar{\partial}_J,u}$.\textup) 
\end{Lem}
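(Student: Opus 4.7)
The plan is to reduce the claim to two separate computations: identifying $\dim\mathcal{M}(p_1,p_2,q;[u],J)$ with the Fredholm index of the linearized Cauchy--Riemann operator $D_{\bar\partial_J,u}$, and then identifying that Fredholm index with the Maslov index $\mathrm{ind}([u])$, which in turn decomposes as $\deg(q)-\deg(p_1)-\deg(p_2)$.

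First I would set up the Fredholm framework in the standard way. Using the biholomorphism pointed out in Remark \ref{striplike}, near each puncture $z_j$ the domain looks like a half-strip, so $u$ has well-defined asymptotics at $q$, $p_1$, $p_2$. Complete the space of maps with Lagrangian boundary conditions and prescribed asymptotics to a suitable Banach manifold (e.g.\ a weighted Sobolev space of $W^{1,p}_\delta$-maps), on which $\bar\partial_J$ is a smooth section of a Banach bundle whose linearization $D_{\bar\partial_J,u}$ is Fredholm. Transversality of the triples $L_i\pitchfork L_j$ (together with a generic choice of $J$, as in Section \ref{ch5.5}) guarantees that $D_{\bar\partial_J,u}$ is surjective at every solution, so $\mathcal{M}(p_1,p_2,q;[u],J)$ is a smooth manifold whose dimension equals $\mathrm{ind}(D_{\bar\partial_J,u})$. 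Since $\mathrm{Aut}(\mathbb{D}^2)\cong\mathrm{PSL}(2,\mathbb{R})$ acts simply transitively on cyclically ordered triples in $\partial\mathbb{D}^2$, fixing the marked points $z_0,z_1,z_2$ costs no dimensions and yields no residual reparametrization freedom; in particular, one does not quotient by $\mathbb{R}$ as in the bigon case.

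Next I would compute $\mathrm{ind}(D_{\bar\partial_J,u})$ as the Maslov index of the loop of Lagrangian subspaces associated to $u$, exactly in the spirit of Definition \ref{Maslovindex}. Choose a symplectic trivialization of $u^*TM$ over the closed disk (possible since $\mathbb{D}^2$ is contractible). Over each of the three boundary arcs one reads off a path $l_j$ in $LGr(n)$ given by $T_{u(\cdot)}L_j$, connecting the tangent spaces at adjacent punctures. At each puncture insert the canonical short path between the two relevant tangent Lagrangians, and concatenate to form a loop $\gamma_u$ in $LGr(n)$. The Riemann--Roch type index theorem for Cauchy--Riemann operators with totally real boundary conditions on a punctured disk (the three-punctured analogue of Definition \ref{Maslovforstrips}) gives $\mathrm{ind}(D_{\bar\partial_J,u})=\mathrm{ind}(\gamma_u)=\mathrm{ind}([u])$; this is the direct generalization of the bigon formula used in the strip case.

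Finally, I would use the gradings to split $\mathrm{ind}(\gamma_u)$ into contributions from the three punctures. Because each $L_i$ is graded with vanishing Maslov class, the path $l_j$ admits a lift to $\widetilde{LGr}(TM)$, so the total winding of $\gamma_u$ is the sum of the winding contributions localized at $q$, $p_1$, $p_2$. At each puncture the relevant winding is, by Definition \ref{degree}, precisely the degree of the intersection point, with sign dictated by whether the puncture is an output (at $q$, where the strip-like end is outgoing) or an input (at $p_1,p_2$). Adding these contributions yields $\mathrm{ind}([u])=\deg(q)-\deg(p_1)-\deg(p_2)$, which combined with the previous step proves the lemma.

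The main obstacle is the bookkeeping in the last step: one must carefully identify, puncture by puncture, which canonical short path is inserted and in which direction it is traversed, and match this against Definition \ref{degree} so that the $q$-puncture contributes $+\deg(q)$ while the $p_i$-punctures contribute $-\deg(p_i)$. Equivalently, one can avoid the direct winding computation by invoking additivity of the Maslov index under gluing of Cauchy--Riemann problems: gluing the triangle to thin strips representing the identities on $CF(L_0,L_1)$, $CF(L_1,L_2)$, $CF(L_0,L_2)$ reduces the formula to the already-known one of Corollary \ref{generatordegree} for each strip.
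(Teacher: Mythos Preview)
Your proposal is correct and follows essentially the same approach as the paper's (sketch) proof: trivialize $u^*TM$, build the loop $\gamma_u$ in $LGr(n)$ by concatenating the three boundary paths $l_j$ with the canonical short paths at the punctures, and then read off $\deg(q)-\deg(p_1)-\deg(p_2)$ from additivity under concatenation. Your write-up is in fact more thorough than the paper's, which treats the Fredholm setup and the index-equals-dimension step as understood and only spells out the loop construction.
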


\begin{proof}(\textit{Sketch})
	One needs only adapt the definition of Maslov index given in Section \ref{ch5.4}. Renaming $q\coloneqq p_0$ provisionally, we move counterclockwise along $\partial(\text{im}(u))$: after suitably trivializing $u^*TM$, consider paths $l_i$ from $T_{p_j}L_j$ to $T_{p_k}L_j$ as described in Definition \ref{Maslovforstrips}, as well as canonical short paths $\lambda_{p_i}$ from $T_{p_i}L_k$ to $T_{p_i}L_i$, where $(i,j,k)$ runs over cyclic permutations of $(0,1,2)$; then we study the loop $\gamma_u\coloneqq l_2\ast\lambda_{p_1}\ast l_0\ast\lambda_{p_2}\ast l_1\ast\lambda_{p_0}$ in $LGr(n)$ based at $T_{p_0}L_0$. Definition \ref{degree} about the degree of a generator is still valid in this context (and makes sense thanks to our assumptions). Then the claimed equality comes from additivity under path concatenation. 
\end{proof}

The proof of Lemma \ref{moduli3dim} additionally shows that $CF(L_1,L_2)\otimes CF(L_0,L_1)$ is equipped with the natural $\mathbb{Z}$-grading induced by the two individual Floer complexes (hence, in particular, $\text{deg}(p_2\otimes p_1)=\text{deg}(p_2)+\text{deg}(p_1)$). 

\begin{Def}										
	Let $M=(M,\omega,J)$ be an almost complex symplectic manifold with $2c_1(TM)=0$, and $L_0,L_1,L_2\subset M$ Lagrangian submanifolds with the usual decorations, in particular such that $\langle\omega\rangle\cdot\pi_2(M,L_i)=0$. The \textbf{Floer product}\index{Floer product} $\boldsymbol{\cdot}:\,CF(L_1,L_2)\otimes CF(L_0,L_1)\rightarrow CF(L_0,L_2)$ is the $\Lambda_\mathbb{K}$-linear extension of
	\begin{equation}\label{Floerprod}
		p_2\otimes p_1\longmapsto p_2\boldsymbol{\cdot} p_1 \coloneqq \mkern-18mu\sum_{\substack{q\in\mathcal{X}(L_0,L_2) \\ [u]:\text{ind}([u])=0}}\mkern-18mu (\#\mathcal{M}(p_1,p_2,q;[u],J))T^{\omega([u])}q\;,
	\end{equation}
	where $\mathcal{M}(p_1,p_2,q;[u],J)$ is the moduli space of solutions to the Cauchy--Riemann problem \eqref{CRboundarycond3} representing $[u]\in\pi_2(M,L_0\cup L_1\cup L_2)$. The sum in \eqref{Floerprod} is well defined (argue as in the proof of Theorem \ref{HFwelldef}), and Lemma \ref{moduli3dim} tells us that $\boldsymbol{\cdot}$ is of degree 0. 
\end{Def}

Now we investigate the key feature of the Floer product in relation to Floer's differential: it fulfills a graded Leibniz rule. We use the geometric reasoning behind this to fine-tune the choice of perturbing, time-dependent almost complex structures and Hamiltonians.

\begin{Pro}\label{Leibniz}							
	The Floer product satisfies the graded Leibniz rule
	\begin{equation}\label{Leibnizrule}
	\partial(p_2\boldsymbol{\cdot} p_1)=(-1)^{|p_1|}\partial(p_2)\boldsymbol{\cdot} p_1-p_2\boldsymbol{\cdot}\partial(p_1)\,,
	\end{equation}
	where $|p_1|\coloneqq \textup{deg}(p_1)$ and each $\partial$ is a suitable Floer differential as given in Definition \ref{Floerdifferential}. Therefore, the product $HF(L_1,L_2)\otimes HF(L_0,L_1)\rightarrow HF(L_0,L_2)$ induced on cohomology is well defined, and actually independent from the choice of perturbation data involved \textup(cf. the proof\textup).
\end{Pro}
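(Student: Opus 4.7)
The plan is to reproduce on the three-punctured disk the same compactness/gluing argument used to establish $\partial^2=0$ in Theorem \ref{HFwelldef}, reading off three boundary contributions instead of two. Concretely, I would first lift the problem to the generic setting: since $L_0,L_1,L_2$ are only assumed pairwise transverse in this subsection, a well-posed definition of $\boldsymbol{\cdot}$ requires a choice of \emph{consistent Floer data}, i.e.\ a domain-dependent pair $(J,H)$ on the punctured disk whose restriction to strip-like ends (see Remark \ref{striplike}) agrees with the data $(J_{ij},H_{ij})$ used to define each $CF(L_i,L_j)$. With such a choice, the perturbed Cauchy--Riemann problem with boundary conditions \eqref{CRboundarycond3} is Fredholm, transversality can be achieved by a generic choice within this class, and the no-bubbling hypothesis $\langle\omega\rangle\cdot\pi_2(M,L_i)=0$ together with Gromov's Compactness Theorem guarantees that the relevant moduli spaces admit compactifications.

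Next I would fix $p_1,p_2,q$ and pick a class $[u]\in\pi_2(M,L_0\cup L_1\cup L_2)$ with $\mathrm{ind}([u])=\deg(q)-\deg(p_1)-\deg(p_2)=1$. By Lemma \ref{moduli3dim} and transversality, $\mathcal{M}(p_1,p_2,q;[u],J)$ is then a smooth oriented $1$-manifold. A standard gluing theorem, combined with the ``no-bubbling'' assumption, identifies its Gromov compactification $\overline{\mathcal{M}}(p_1,p_2,q;[u],J)$ with a compact $1$-manifold with boundary, whose boundary points correspond exactly to the three possible strip-breaking configurations at the three punctures $z_0,z_1,z_2$:
\begin{align*}
\partial\overline{\mathcal{M}}(p_1,p_2,q;[u],J) \;\cong\;
& \bigsqcup_{q'\!,\,[u']+[v']=[u]}\!\!\mathcal{M}(p_1,p_2,q';[u'],J)\times\mathcal{M}(q',q;[v'],J) \\
\sqcup\;
& \bigsqcup_{p_1'\!,\,[w]+[u'']=[u]}\!\!\mathcal{M}(p_1,p_1';[w],J)\times\mathcal{M}(p_1',p_2,q;[u''],J) \\
\sqcup\;
& \bigsqcup_{p_2'\!,\,[w']+[u''']=[u]}\!\!\mathcal{M}(p_2,p_2';[w'],J)\times\mathcal{M}(p_1,p_2',q;[u'''],J),
\end{align*}
where additivity of the Maslov index forces the factors in each product to have indices $0$ and $1$, respectively. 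Applying Lemma \ref{boundarypoints} to this compact $1$-manifold and multiplying by $T^{\omega([u])}=T^{\omega([u'])+\omega([v'])}=\dots$, then summing over all $[u]$ of index $1$, the three boundary families exactly assemble into $\partial(p_2\boldsymbol{\cdot} p_1)$, $p_2\boldsymbol{\cdot}\partial(p_1)$, and $\partial(p_2)\boldsymbol{\cdot} p_1$. This is the core calculation that yields \eqref{Leibnizrule}; the graded sign $(-1)^{|p_1|}$ and the minus sign between the two terms on the right are forced by the induced boundary orientation of $\overline{\mathcal{M}}$, which depends on where in the cyclic ordering of the punctures the breaking occurs, via the Koszul conventions carried by the spin structures on the $L_i$.

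Having shown \eqref{Leibnizrule}, the Leibniz rule implies that $\boldsymbol{\cdot}$ descends to a well-defined pairing $HF(L_1,L_2)\otimes HF(L_0,L_1)\to HF(L_0,L_2)$: if $p_1$ and $p_2$ are cocycles, so is $p_2\boldsymbol{\cdot} p_1$, and replacing either by a coboundary changes $p_2\boldsymbol{\cdot} p_1$ by a coboundary. For independence of perturbation data, I would mimic the continuation argument sketched at the end of the proof of Theorem \ref{HFwelldef}: given two consistent choices $(J,H)$ and $(J',H')$, interpolate by an asymptotically constant homotopy on the three-punctured disk, count index $0$ solutions of the resulting parametrized Cauchy--Riemann problem to build a chain map, and apply the analogous boundary analysis of an index $1$ parametrized moduli space to produce a chain homotopy intertwining the Floer products on the two sides. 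The main obstacle is technical rather than conceptual: ensuring that the interpolating perturbation data remain \emph{consistent} (agreeing with chosen homotopies on each pairwise Floer complex along the strip-like ends) while being generic enough to achieve transversality, and verifying that the boundary orientations of the resulting $1$-dimensional moduli spaces produce exactly the signs in \eqref{Leibnizrule} — a computation that is most cleanly handled by the coherent orientation framework of \cite{[Sei08]} and which I would invoke rather than reproduce.
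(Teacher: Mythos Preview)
Your proposal is correct and follows essentially the same approach as the paper's (sketched) proof: compactify the index~$1$ moduli space of pseudoholomorphic triangles, identify its boundary with the three strip-breaking configurations at the three punctures (the paper's Figure~\ref{3bubbling}), apply Lemma~\ref{boundarypoints}, and sum over homotopy classes. The only organizational difference is that you front-load the discussion of consistent Floer data on the punctured disk, whereas the paper defers this to the end of the proof (introducing equation~\eqref{generalCR} there as the general-case setup for non-transverse $L_i$); your continuation argument for independence of perturbation data is also more explicit than the paper's, which simply asserts it.
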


\begin{proof}(\textit{Sketch})
	We explain qualitatively the geometric idea involved, keeping an eye on what we did in the proof of Theorem \ref{HFwelldef}. Consider generators $p_1\in\mathcal{X}(L_0,L_1)$, $p_2\in\mathcal{X}(L_1,L_2)$, $q\in\mathcal{X}(L_0,L_2)$ as in Figure \ref{holomorphicstrip3}, and a class $[u]\in\pi_2(M,L_0\cup L_1\cup L_2)$ with $\text{ind}([u])=1$. Assuming transversality holds, $\mathcal{M}(p_1,p_2,q;[u],J)$ is a smooth 1-dimensional manifold which, by Gromov's Compactness Theorem \ref{Gromov}, can be compactified to some $\overline{\mathcal{M}}(p_1,p_2,q;[u],J)$, by adding broken strips (disk/sphere bubbling is excluded, by assumption).
	
	But now we have three ``strip-like'' ends where energy can concentrate. By additivity of the Maslov index under strip-breaking, its non-negativity for disks (by transversality), and because non-constant strips have index at least 1, we have only three possible partitions $\text{ind}([u])=\text{ind}([u'])+\text{ind}([u''])$: up to relabeling, $u'$ must represent an index 1 strip and $u''$ an index 0 disk with boundary on $L_0,L_1,L_2$.\footnote{This is different from the disk bubbles we are excluding by assumption --- the latters have boundary on a \textit{single} submanifold, and are the only disks causing energy blow-ups.} Bluntly said, we can just ``pinch'' the pseudoholomorphic triangle $u$ at one corner; see Figure \ref{3bubbling}. Specifically, the pictured configurations contribute to the coefficient of $T^{\omega([u])}q$ in $\partial(p_2\boldsymbol{\cdot} p_1)$, $p_2\boldsymbol{\cdot}\partial(p_1)$ and $\partial(p_2)\boldsymbol{\cdot} p_1$ respectively.
	\begin{figure}[htp]
		\centering
		\includegraphics[width=0.9\textwidth]{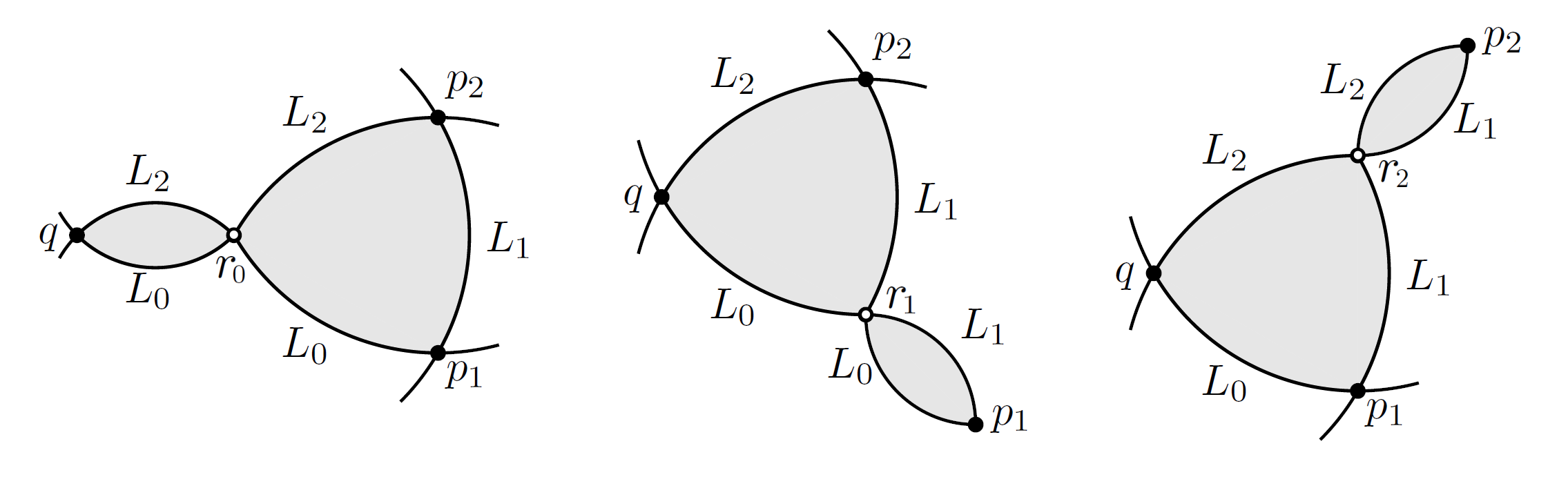}
		\caption{The three possible limit configurations of a three-punctured $J$-holomorphic disk in $\mathcal{M}(p_1,p_2,q;[u],J)$. [Source:\cite{[Aur13]}]}
		\label{3bubbling}	
	\end{figure}

	This means that $\partial\overline{\mathcal{M}}(p_1,p_2,q;[u],J)$ is contained in the disjoint union of products of moduli spaces like $\mathcal{M}(p_i,r_i;[u'],J)\times\mathcal{M}(r_i,p_j,p_k;[u''],J)$, where $r_j\in\mathcal{X}(L_i,L_j)$ are the ``pinched'' points (and we set again $q\coloneqq p_0$). The converse inclusion is provided by another \textit{gluing theorem}, so that $\overline{\mathcal{M}}(p_1,p_2,q;[u],J)$ is indeed a compact 1-dimensional smooth manifold with (coherently oriented) boundary as just described. Lemma \ref{boundarypoints} tells us that the latter's (un)signed cardinality is zero, thus summing over all homotopy classes --- and carefully accounting degrees --- we obtain the Leibniz rule \eqref{Leibnizrule}.
	
	The fact that \eqref{Floerprod} induces a well-defined product on cohomology is now an easy argument: if $\partial(p_1)=\partial(p_2)=0$, then $\partial(p_2\boldsymbol{\cdot} p_1)=0$ by \eqref{Leibnizrule}; if moreover $p_1=\partial(p_1')$, then $p_2\boldsymbol{\cdot} p_1=\partial(p_2\boldsymbol{\cdot} p_1')$. Thus cocycles and coboundaries are preserved, as desired.
	
	Finally, we discuss the general case when $L_i\,\cancel{\pitchfork}\,L_j$. To achieve transversality, we reintroduce time-dependent almost complex structures and Hamiltonians, specifically a pair $(J_{ij},H_{ij})\in C^\infty([0,1],\mathfrak{J}(M,\omega))\times C^\infty([0,1]\times M)$ for \textit{each} strip-like end, with labels $0\leq i< j\leq 2$ according to the bounding Lagrangian submanifolds (see Figure \ref{holomorphicstrip3} left). Notice this does make sense by Remark \ref{striplike}. This time, however, such data cannot be extended on the whole punctured disk: we must choose some global $J\in C^\infty(\mathbb{D}^2,\mathfrak{J}(M,\omega))$ and $H\in C^\infty(\mathbb{D}^2\times M)$ (that is, families smoothly varying in $\mathbb{D}^2$) which in a fixed neighbourhood of each $z_j$ correspond to the chosen pair $(J_{ij},H_{ij})$. 
	
	So we recover the perturbed PDE \eqref{perturbedCR} only locally about the punctures, where we can use holomorphic local complex coordinates $z=s+it$ for $(s,t)\in\mathbb{R}\times[0,1]$. To see this, fix some $\beta\in\Omega^1(\mathbb{D}^2)$ such that $\beta|_{\mathbb{S}^1}=0$ and $\beta=dt$ about the $z_j$'s. Perturbing the Cauchy--Riemann equation \eqref{CRequation}, we obtain
	\begin{equation}\label{generalCR}
	(du-X_H\otimes\beta)_J^{0,1}=0\;,
	\end{equation}
	which about the three punctures coincides exactly with Floer's equation: $0=(du-X_H\otimes dt)_J^{0,1}=\partial_su + J_t(u)(\partial_tu-X_{H_t}(u))$, where $J$ and $H$ are merely $t$-dependent. Analogously to what found in Section \ref{ch5.5}, it holds $\mathcal{X}(L_i,L_j)=L_i\cap(\phi_{H_{ij}}^1)^{-1}(L_j)$, and the proof of a few paragraphs above carries over to the perturbed product map $\boldsymbol{\cdot}:CF(L_1,L_2;J_{12},H_{12})\otimes CF(L_0,L_1;J_{01},H_{01})\rightarrow CF(L_0,L_2;J_{02},H_{02})$, then fulfilling the Leibniz rule.	
\end{proof}

\subsection{The higher order composition maps}\label{ch6.2}

Let us now consider higher order products involving $(d+1)$-many Lagrangian submanifolds $L_0,...,L_d\subset M$, with the usual decorations.

Given $d+1$ generators $p_1\in\mathcal{X}(L_0,L_1)$, $p_2\in\mathcal{X}(L_1,L_2)$, ..., $p_d\in\mathcal{X}(L_{d-1},L_d)$, $q\in\mathcal{X}(L_d,L_0)$ and boundary punctures $z_j=e^{i\pi(1+2j/(d+1))}$, $z_0=-1\in\mathbb{S}^1$ for $j=1,...,d$, we seek $J$-holomorphic maps $u:\mathbb{D}^2\setminus\{z_0,z_1,...,z_d\}\rightarrow M$ satisfying a degree-$(d+1)$ analogue of the perturbed Cauchy--Riemann equation \eqref{generalCR} with boundary conditions
\begin{equation}\label{CRboundarycondd+1}
	\left\{\begin{array}{llll}
		u(e^{i\alpha})\in L_0 & \text{if }\alpha\in\big(\pi,\pi+\frac{2\pi}{d+1}\big) & \text{and} & \displaystyle\lim_{z\to z_0}u(z)=q\\
		u(e^{i\alpha})\in L_j & \text{if }\alpha\in\big(\pi+\frac{2j\pi}{d+1},\pi+\frac{2(j+1)\pi}{d+1}\big) & \text{and} & \displaystyle\lim_{z\to z_j}u(z)=p_j\\ 
	\end{array}\right.,
\end{equation}
and finite energy $E(u)<\infty$. We call this the \textbf{generalized Cauchy--Riemann problem}\index{generalized Cauchy--Riemann problem}. Its solutions representing a fixed class $[u]\in\pi_2(M,\bigcup_{i=0}^d L_i)$ form up the moduli space $\mathcal{M}(p_1,...,p_d,q;[u],J,H)$. The situation is depicted in Figure \ref{holomorphicstripd}, which also makes clear why the images $\text{im}(u)$ of $J$-holomorphic solutions $u$ (and by extension, the $u$'s themselves) are called \textbf{pseudoholomorphic polygons}\index{pseudoholomorphic polygon}.

\begin{figure}[htp]
	\centering
	\includegraphics[width=0.95\textwidth]{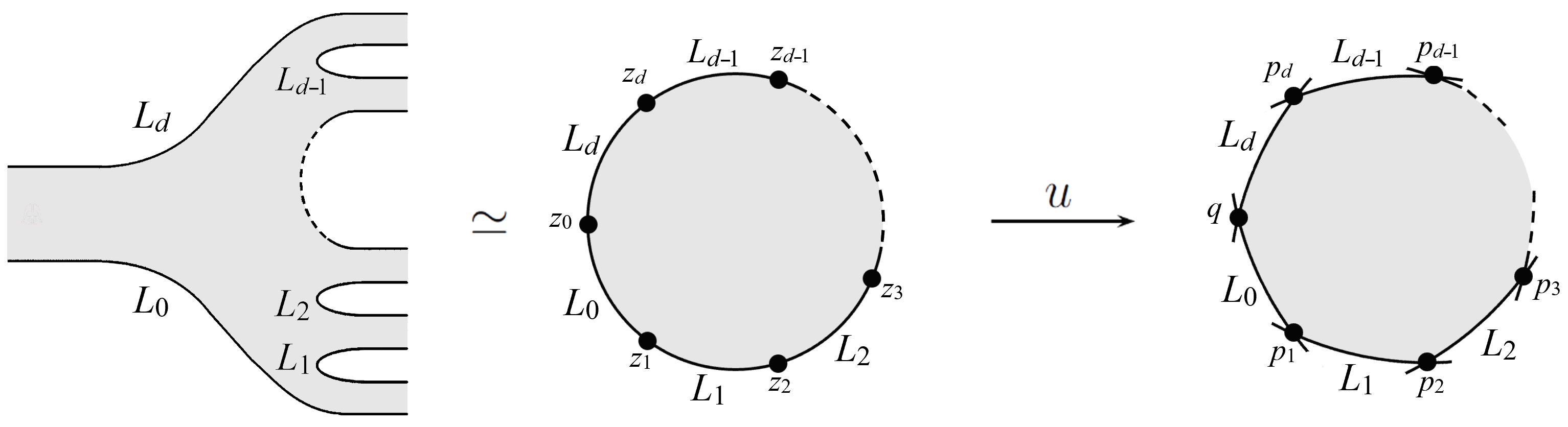}
	\caption{A $(d+1)$-punctured $J$-holomorphic disk $u$ in $\mathcal{M}(p_1,...,p_d,q;[u],J,H)$.}
	\label{holomorphicstripd}	
\end{figure}

Here we already chose suitable domain-dependent $J\in C^\infty(\mathbb{D}^2,\mathfrak{J}(M,\omega))$ and $H\in C^\infty(\mathbb{D}^2\times M)$, paying attention (see Definition \ref{perturbationdata} below) to the following fact: in contrast to the case $d=2$ of previous section, here there are many choices of cyclically ordered tuples $(z_0,z_1,...,z_d)\subset\mathbb{S}^1$, differing in the exact reciprocal position of each puncture along its arc.

\begin{Def}\label{conformal}							
	Let $\text{Punct}_{d+1}$ be the space of all possible ordered configurations $(z_0,z_1,...,z_d)\subset\mathbb{S}^1$ of boundary punctures, for $d\geq 2$, and $\text{Aut}(\mathbb{D}^2)$ the group of biholomorphisms of $\mathbb{D}^2$. Then $\mathcal{M}_{0,d+1}\coloneqq \text{Punct}_{d+1}/\text{Aut}(\mathbb{D}^2)$ is the \textbf{moduli space of conformal structures on $\mathbb{D}^2$}\index{moduli space!of conformal structures on $\mathbb{D}^2$}, a contractible $(d-2)$-dimensional smooth manifold.
	
	The space $\mathcal{M}_{0,d+1}$ has a natural compactification to a $(d-2)$-dimensional polytope $\overline{\mathcal{M}}_{0,d+1}$ --- which is (homeomorphically) none other than the associahedron $K_d$ we encountered back in Section \ref{ch1.0}! Its boundary $\partial\overline{\mathcal{M}}_{0,d+1}$ parametrizes all possible degenerations of $\mathbb{D}^2$ (\textit{nodal degenerations}) into subdisks each carrying at least one puncture $z_j$. Specifically, each $(d-3)$-skeleton component corresponds to a (homotopy class of a) degeneration into a pair of disks each retaining at least two punctures; likewise, to the $(d-4)$-skeleton we can assign configurations with three disks, and so on. 
\end{Def}

\begin{Ex}\label{K3}										
	If $d=3$, we know the compactified 1-dimensional polytope $\overline{\mathcal{M}}_{0,4}=K_3$ to be homeomorphic to $[0,1]\subset\mathbb{R}$ (see Figure \ref{associahedra}, bottom left). The two extremal points forming its 0-skeleton label the two possible degenerations of $\mathbb{D}^2$ for fixed punctures $z_1,z_2,z_3\in\mathbb{S}^1$ (without loss of generality): either $z_0\in\mathbb{S}^1$ can ``slide'' along $L_0$ towards $z_1$ and end up with it in a subdisk of the degenerate configuration corresponding to 0, or it can slide along $L_3$ towards $z_3$ to produce that associated to 1 (see Figure \ref{associahedron}). 
	\vspace*{0.1cm}
	\begin{figure}[htp]
		\centering
		\includegraphics[width=0.85\textwidth]{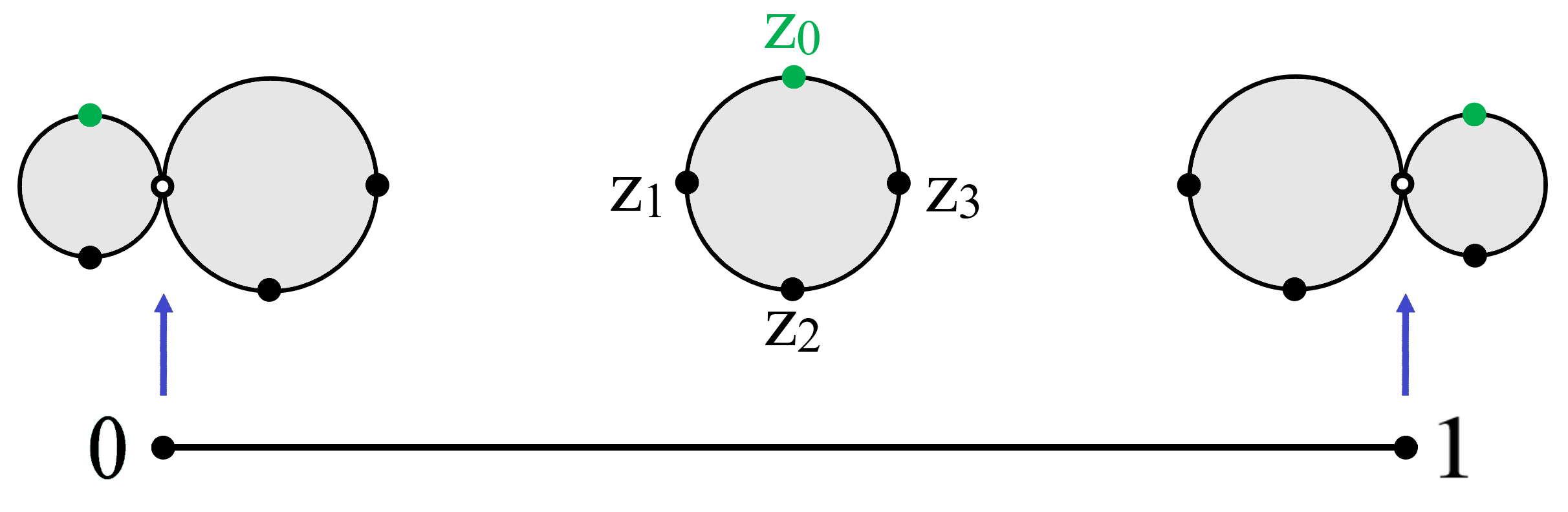}
		\caption{The polytope $\overline{\mathcal{M}}_{0,4}=K_3$ and its corresponding degenerate limit configurations (white dots denote newly generated nodes).}
		\label{associahedron}	
	\end{figure}
	
	Alternatively, we can fix $z_0,z_1,z_2$ and let $z_3$ vary. This ``sliding operation'' --- formalized by the homotopy $m_3$ of Section \ref{ch1.0} --- contributes to some higher order composition maps on Floer complexes, as we will show below. \hfill $\blacklozenge$
\end{Ex}

\begin{Rem}\label{hit}							
	Building on last example, we can justify the definition of $\mathcal{M}_{0,d+1}$ as follows. The group $\text{Aut}(\mathbb{D}^2)\cong\text{PSL}(2,\mathbb{R})$ is a 3-dimensional Lie group whose action can uniquely fix at most three boundary punctures of $\mathbb{D}^2$ (indeed $\mathcal{M}_{0,3}=\text{Punct}_{3}/\text{Aut}(\mathbb{D}^2)$ is 0-dimensional, thus negligible, as discussed right after Remark \ref{striplike}). If $d>2$, that is, if we work with more than three punctures, then these can slide freely along the arcs mapping to the Lagrangian submanifolds to produce non-trivial distinct classes in $\mathcal{M}_{0,d+1}$. Modulo the three punctures which can be fixed, it follows that $\mathcal{M}_{0,d+1}$ has structure of a smooth manifold of dimension $(d+1)-3=d-2$.
	
	In analogy to Example \ref{K3}, nodal degenerations of $\mathbb{D}^2$ arise when two or more punctures hit each other, causing a bubbling effect which resolves into subdisks carrying the points involved. The compactification of $\mathcal{M}_{0,d+1}$ amounts to including these limit configurations, then labelled by $\partial\overline{\mathcal{M}}_{0,d+1}$.
\end{Rem}

\begin{Ex}
	The intuition for $\overline{\mathcal{M}}_{0,5}=K_4$ is then the following. Given five punctures $z_0,z_1,z_2,z_3,z_4\in\mathbb{S}^1$, fix a (consecutive!) triple, say $z_1,z_2,z_3$, and let $z_0,z_4$ vary. We have ten ways to ``crash'' this pair. For example, $z_0$ can hit $z_1$, which we may suggestively write as $(z_0\ast z_1)$. The other combinations are (moving counterclockwise in Figure \ref{K4}): $(z_4\ast(z_0\ast z_1))$, $(z_4\ast z_0\ast z_1)$, $((z_4\ast z_0)\ast z_1)$, $(z_4\ast z_0)$, $(z_3\ast(z_4\ast z_0))$, $(z_3\ast z_4\ast z_0)$, $((z_3\ast z_4)\ast z_0)$, $(z_3\ast z_4)$ and $(z_0\ast z_1)(z_3\ast z_4)$. 
	
	Parentheses matter, as they regulate the collision order! Indeed, $(z_4\ast z_0\ast~z_1)$ corresponds to the nodal degeneration into two subdisks with one carrying $z_0, z_1, z_4$, while $(z_4\ast(z_0\ast z_1))$ encodes a configuration of three disks whose medial one carries $z_4$ and whose terminal one collects $z_0$ and $z_1$. Clearly, five among the listed combinations label the edges of the pentagon $K_4$ (that is, its 1-skeleton), while the remaining five its vertices (the 0-skeleton). 
	
	\begin{figure}[htp]
		\centering
		\includegraphics[width=0.80\textwidth]{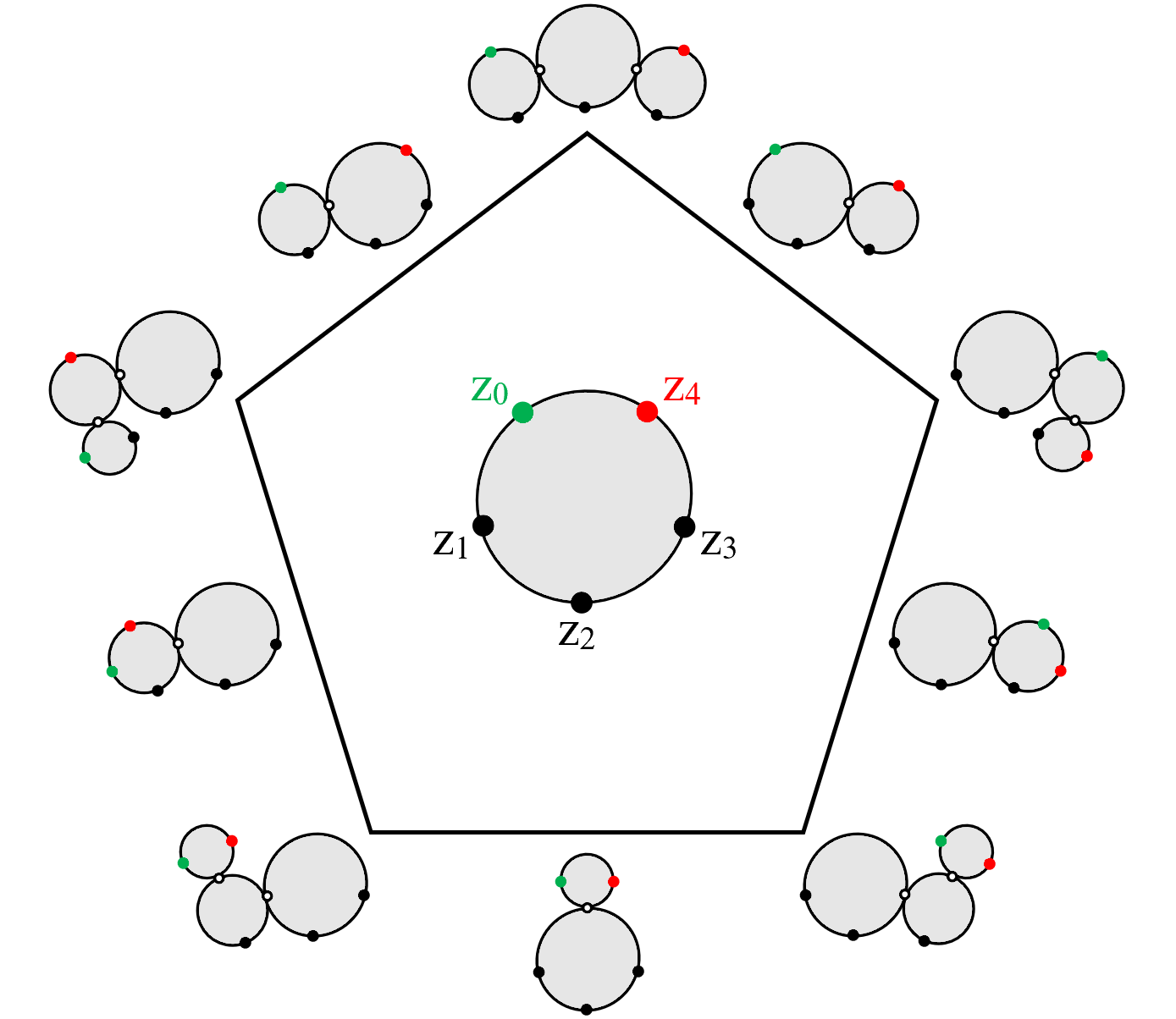}
		\caption{The polytope $\overline{\mathcal{M}}_{0,5}=K_4$ and its corresponding degenerate limit configurations (white dots denote newly generated nodes).}
		\label{K4}	
	\end{figure}
	
	We purposefully kept the variables in cyclic order to highlight yet another interpretation: a component of the $(d-r)$-skeleton of $K_d$ corresponds to a suitable insertion of $r-2$ pairs of parentheses in the string $z_0z_1...z_{d-1}z_d$. For example, $(z_4\ast(z_0\ast z_1))\equiv (z_0z_1))z_2z_3(z_4$. This makes the study of higher dimensional associahedra far easier. The reader may wish to investigate $K_5$ (the rightmost in Figure \ref{associahedra}), whose nine faces, twenty-one edges and fourteen vertices  parametrize the possible nodal degenerations into respectively two, three and four subdisks. \hfill $\blacklozenge$   	
\end{Ex}

We finally describe how to choose compatible perturbation data for the general case of $d+1$ Lagrangian submanifolds. The proof of its existence is addressed in \cite[Lemma 9.5]{[Sei08]}.

\begin{Def}\label{perturbationdata}						
	Given a conformal structure $(z_0,z_1,...,z_d)\in\mathcal{M}_{0,d+1}$, consider the associated generalized Cauchy--Riemann problem of the form \eqref{generalCR} with boundary conditions \eqref{CRboundarycondd+1} and finite energy constraint. Consider also the time-dependent $(J_{ij},H_{ij})\in C^\infty([0,1],\mathfrak{J}(M,\omega))\times C^\infty([0,1]\times M)$ determining the $CF(L_i,L_j)$'s, for each strip-like end (that is, defined on fixed neighbourhoods of the $p_j$'s and $q$ where holomorphic local coordinates are available), with $0\leq i< j\leq d$ according to the bounding Lagrangian submanifolds. To achieve transversality, we choose $J\in C^\infty(\mathbb{D}^2,\mathfrak{J}(M,\omega))$ and $H\in C^\infty(\mathbb{D}^2\times M)$ such that:
	\begin{itemize}[leftmargin=0.5cm]
		\item $(J,H)$ agrees with each $(J_{ij},H_{ij})$ in the respective strip-like end,
		\item upon nodal degeneration of $\mathbb{D}^2$ as in Definition \ref{conformal}, $J$ and $H$ are translation-invariant in the (strip-like) regions about the nodal points, and indeed coincide with the local Hamiltonians and almost complex structures there.
	\end{itemize}
	We call such a pair $(J,H)$ \textbf{Floer datum}\index{Floer datum}; by construction, it depends on $(z_0,z_1,...,z_d)$ as well as all $J_{ij}$'s and $H_{ij}$'s. Then the arising Floer cohomology is said to be ``unobstructed''.	
\end{Def}

The involvement of $\mathcal{M}_{0,d+1}$ in our discussion also implies we must adapt Lemma \ref{moduli3dim} and its proof accordingly.

\begin{Lem}\label{generatorddegree}				
	Fix a conformal structure $(z_0,z_1,...,z_d)\in\mathcal{M}_{0,d+1}$ and a compatible Floer datum $(J,H)\in C^\infty(\mathbb{D}^2,\mathfrak{J}(M,\omega))\times C^\infty(\mathbb{D}^2\times M)$ as specified above. Consider the associated generalized Cauchy--Riemann problem, yielding moduli spaces $\mathcal{M}(p_1,...,p_d,q;[u],J,H)$ for each solution $[u]\in\pi_2(M,\bigcup_{i=0}^d L_i)$. Then:
	\begin{align}\label{moduliddim}
		\textup{dim}(\mathcal{M}(p_1,...,p_d,q;[u],J,H)) &= \textup{dim}(\mathcal{M}_{0,d+1})+\textup{ind}([u]) \nonumber \\
		&= d-2+\textup{deg}(q)-\sum_{i=1}^d\textup{deg}(p_i)\;,
	\end{align}
	where $\textup{dim}(\mathcal{M}_{0,d+1})$ accounts for possible deformations of $(z_0,z_1,...,z_d)$.
\end{Lem}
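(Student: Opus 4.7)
The plan is to proceed in two stages: first establish the dimension formula in terms of the Maslov index, then unpack this index into degrees of the generators, essentially bootstrapping the argument of Lemma \ref{moduli3dim} to arbitrary $d$.

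First, I would fix the conformal structure $(z_0,\dots,z_d)\in\mathcal{M}_{0,d+1}$ and consider only the moduli space of solutions to the generalized Cauchy--Riemann problem for that rigidified domain; call it $\mathcal{M}^{\textup{fix}}(p_1,\dots,p_d,q;[u],J,H)$. As indicated in Remark \ref{technically}, the problem is Fredholm: the linearization $D_{\bar{\partial}_{J,H},u}$ of the perturbed Cauchy--Riemann operator at a solution $u$ is a Fredholm operator between suitable Sobolev completions, and the compatibility of the Floer datum near the strip-like ends guarantees the usual exponential decay needed to control its spectrum. By the standard transversality result for generic $(J,H)$ (in a Baire-dense subset of allowable Floer data, as already assumed in Definition \ref{perturbationdata}), $D_{\bar{\partial}_{J,H},u}$ is surjective, so $\mathcal{M}^{\textup{fix}}(p_1,\dots,p_d,q;[u],J,H)$ is a smooth manifold of dimension equal to the Fredholm index $\textup{ind}(D_{\bar{\partial}_{J,H},u})$, which by a Riemann--Roch type theorem for Cauchy--Riemann operators on a disk with totally real boundary conditions equals the Maslov index $\textup{ind}([u])$.

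Next I would allow the conformal structure to vary. Since the base $\mathcal{M}_{0,d+1}$ is a smooth contractible manifold of dimension $d-2$ (Definition \ref{conformal}) and the Floer datum has been chosen to vary smoothly with it, the full moduli space $\mathcal{M}(p_1,\dots,p_d,q;[u],J,H)$ fibers (locally, modulo transversality) over $\mathcal{M}_{0,d+1}$ with fiber $\mathcal{M}^{\textup{fix}}(p_1,\dots,p_d,q;[u],J,H)$, yielding
\[
\dim\mathcal{M}(p_1,\dots,p_d,q;[u],J,H)=\dim\mathcal{M}_{0,d+1}+\textup{ind}([u])=d-2+\textup{ind}([u]).
\]
This is the first equality. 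Note that for $d=2$ one has $\dim\mathcal{M}_{0,3}=0$, reproducing Lemma \ref{moduli3dim}.

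The second step is the identification $\textup{ind}([u])=\deg(q)-\sum_{i=1}^{d}\deg(p_i)$, which I would prove by generalizing the boundary-loop construction of Definition \ref{Maslovindex} and Lemma \ref{moduli3dim}. Relabel $q=:p_0$, choose a trivialization of $u^{*}TM$ (extending continuously to the punctures via the exponential convergence of $u$), and traverse $\partial(\mathbb{D}^2\setminus\{z_0,\dots,z_d\})$ counterclockwise. Along each boundary arc mapping into $L_j$, record the path $l_j:[0,1]\to LGr(n)$ from $T_{p_{j}}L_{j}$ to $T_{p_{j+1}}L_{j}$ (indices mod $d+1$) induced by the trivialization; at each puncture $z_j$, insert the canonical short path $\lambda_{p_j}:T_{p_j}L_{j-1}\to T_{p_j}L_j$ of Definition \ref{canonicalshortpath}. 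The concatenation
\[
\gamma_u\;\coloneqq\;l_d\ast\lambda_{p_d}\ast l_{d-1}\ast\lambda_{p_{d-1}}\ast\cdots\ast l_0\ast\lambda_{p_0}
\]
is a loop in $LGr(n)$ based at $T_{p_0}L_0$ whose winding number, by definition and by the same argument as in Lemma \ref{moduli3dim}, equals $\textup{ind}([u])$. Additivity of the winding number under concatenation, combined with Definition \ref{degree} applied at each puncture (each $\deg(p_j)$ corresponds to the winding contribution of the pair $(l_{p_j}\!,\lambda_{p_j})$ from the two adjacent tangent space lifts), then telescopes the sum into $\deg(p_0)-\sum_{i=1}^{d}\deg(p_i)=\deg(q)-\sum_{i=1}^{d}\deg(p_i)$, as required.

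The main obstacle will be the bookkeeping in this last winding-number computation: one has to check that the preferred lifts $l_{p_j}$ coming from the gradings $\tilde{\phi}_{L_j}$ on each Lagrangian fit together coherently across successive punctures, so that the contributions of the intermediate paths $l_j$ telescope correctly and only the punctures contribute to the final sum. This is the point where the hypotheses $2c_1(TM)=0$ and vanishing Maslov classes are indispensable, as they are what allow the graded Lagrangian planes to be defined consistently along the whole boundary.
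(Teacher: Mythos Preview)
Your proposal is correct and follows exactly the approach the paper intends: the paper does not give a separate proof for this lemma but simply says ``we must adapt Lemma~\ref{moduli3dim} and its proof accordingly,'' and your two-step argument (Fredholm index for fixed domain plus $d-2$ from the conformal moduli, then the boundary-loop Maslov computation generalizing the $d=2$ case) is precisely that adaptation spelled out in detail.
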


\begin{Def}\label{Floercompositions}			
	Let $(M,\omega,J)$ be an almost complex symplectic manifold with $2c_1(TM)=0$, and $L_0,...,L_d\subset M$ Lagrangian submanifolds with the usual decorations, in particular such that $\langle\omega\rangle\cdot\pi_2(M,L_i)=0$. Choose a Floer datum $(J,H)\in C^\infty(\mathbb{D}^2,\mathfrak{J}(M,\omega))\times C^\infty(\mathbb{D}^2\times M)$ to achieve transversality. The \textbf{$d$-th Floer composition map}\index{Floer composition map} $\mu^d:\,CF(L_{d-1},L_d)\otimes...\otimes CF(L_0,L_1)\rightarrow CF(L_0,L_d)$ is the $\Lambda_\mathbb{K}$-linear extension of
	\begin{equation}\label{Floercompo}
		\mu^d(p_d,...,p_1) \coloneqq \mkern-18mu\sum_{\substack{q\in\mathcal{X}(L_0,L_d) \\ [u]:\text{ind}([u])=2-d}}\mkern-18mu (\#\mathcal{M}(p_1,...,p_d,q;[u],J,H))T^{\omega([u])}q\;,
	\end{equation}
	where $\mathcal{M}(p_1,...,p_d,q;[u],J,H)$ is the moduli space of solutions to the generalized Cauchy--Riemann problem \eqref{generalCR} representing $[u]\in\pi_2(M,\bigcup_{i=0}^d L_i)$. The sum is well defined, by the usual argument, and we note that Lemma \ref{generatorddegree} makes $\mu^d$ a graded map of degree $2-d$.  
	
	In particular, $\mu^1=\partial_{J,H}$ is the (degree 1) Floer differential \eqref{Floersdiff} and $\mu^2=\boldsymbol{\cdot}$ the (degree 0) Floer product \eqref{Floerprod}. 
\end{Def}

\begin{Pro}\label{AinfasseqsFloer}				
	The Floer composition maps $\mu^d$ fulfill the $A_\infty$-associativity equations \eqref{Ainfcatasseqs}\textup:
	\begin{equation}\label{Floerasseqs}
	\mkern-30mu\sum_{\substack{1\leq m\leq d \\ \quad 0\leq n\leq d-m}}\mkern-24mu(-1)^{\dagger_n}\mu^{d-m+1}(p_d,...,p_{n+m+1},\mu^m(p_{n+m},...,p_{n+1}),p_n,...,p_1) = 0\;,
	\end{equation}
	in $CF(L_0,L_d)$, where $p_i\in CF(L_{i-1},L_i)$ and\footnote{In light of equation \eqref{Floerasseqs}, we definitely swap $\text{deg}(p_i)$ for $|p_i|$.} $\dagger_n \coloneqq (\sum_{i=1}^{n}|p_i|)-n$. 
	
	For $d=1,2$, the first two such equations read $\partial(\partial(p_1)) = 0$ and $\partial(p_2\boldsymbol{\cdot} p_1) = (-1)^{|p_1|}\partial(p_2)\boldsymbol{\cdot} p_1 -p_2\boldsymbol{\cdot} \partial(p_1)$ \textup(the graded Leibniz rule \eqref{Leibnizrule}\textup). In particular, the case $d=3$ implies that Floer's product $\mu^2$ is associative up to homotopy given by $\mu^3$, hence associative on cohomology.
\end{Pro}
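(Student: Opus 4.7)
The plan is to generalize the geometric argument already used in Proposition \ref{Leibniz} (the case $d=2$) to arbitrary $d\geq 1$, by exhibiting each equation in \eqref{Floerasseqs} as the total signed count of boundary points of a suitable compactified $1$-dimensional moduli space. Fix $d\geq 1$, Lagrangian submanifolds $L_0,\ldots,L_d$, generators $p_i\in\mathcal{X}(L_{i-1},L_i)$ and $q\in\mathcal{X}(L_0,L_d)$, together with a Floer datum $(J,H)$ as in Definition \ref{perturbationdata}. By Lemma \ref{generatorddegree}, restricting to homotopy classes $[u]\in\pi_2(M,\bigcup_i L_i)$ with $\textup{ind}([u])=3-d$ yields moduli spaces $\mathcal{M}(p_1,\ldots,p_d,q;[u],J,H)$ which are smooth manifolds of dimension $(d-2)+(3-d)=1$, and the coefficient of $q$ in the left-hand side of \eqref{Floerasseqs} decomposes, up to signs and the Novikov weights $T^{\omega([u])}$, exactly as a count of boundary configurations of these spaces.

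Next, I would invoke Gromov's Compactness Theorem \ref{Gromov} to compactify each $\mathcal{M}(p_1,\ldots,p_d,q;[u],J,H)$ to some $\overline{\mathcal{M}}$. The no-bubbling assumption $\langle\omega\rangle\cdot\pi_2(M,L_i)=0$ rules out disk and sphere bubbles, while the additivity of the Maslov index together with the transversality-enforced lower bound on index-contributions of non-constant components forces the degenerations in $\partial\overline{\mathcal{M}}$ to be of just one geometric type: the disk splits into two pseudoholomorphic subdisks joined at a new boundary node at some $r\in\mathcal{X}(L_n,L_{n+m})$. Concretely, one subdisk carries $m+1$ punctures labeled by $p_{n+1},\ldots,p_{n+m},r$ and contributes an index $2-m$ polygon (hence to $\mu^m(p_{n+m},\ldots,p_{n+1})$), and the other carries $d-m+2$ punctures labeled by $p_1,\ldots,p_n,r,p_{n+m+1},\ldots,p_d,q$ and contributes an index $1-(d-m)$ polygon (hence to $\mu^{d-m+1}(p_d,\ldots,p_{n+m+1},r,p_n,\ldots,p_1)$). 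The choice of $(J,H)$ being translation-invariant near the nodes, made in Definition \ref{perturbationdata}, is precisely what guarantees that a limit of the nodal degeneration decomposes as such a product; conversely, the standard gluing theorem (the analogue of the one invoked in Proposition \ref{Leibniz}) produces, from every such pair of rigid pseudoholomorphic polygons, a unique end of $\overline{\mathcal{M}}$. This identifies
\[
\partial\overline{\mathcal{M}}(p_1,\ldots,p_d,q;[u],J,H)=\mkern-12mu\bigsqcup\mkern-12mu\mathcal{M}(p_{n+1},\ldots,p_{n+m},r;[u_1])\times\mathcal{M}(p_1,\ldots,p_n,r,p_{n+m+1},\ldots,p_d,q;[u_2])
\]
where the disjoint union runs over $1\leq m\leq d$, $0\leq n\leq d-m$, intermediate generators $r$, and homotopy class splittings $[u_1]+[u_2]=[u]$.

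Applying Lemma \ref{boundarypoints} to the compact oriented $1$-manifold $\overline{\mathcal{M}}$ then yields that the signed cardinality of $\partial\overline{\mathcal{M}}$ is zero. Multiplying by $T^{\omega([u])}=T^{\omega([u_1])+\omega([u_2])}$ and summing over all $[u]$ with $\textup{ind}([u])=3-d$, each boundary product contributes precisely one summand of the form $\mu^{d-m+1}(p_d,\ldots,p_{n+m+1},\mu^m(p_{n+m},\ldots,p_{n+1}),p_n,\ldots,p_1)$ to the coefficient of $q$; since this coefficient vanishes for every $q$, equation \eqref{Floerasseqs} follows. The degree-shift identity $\textup{ind}(\mu^{d-m+1})+\textup{ind}(\mu^m)=3-d$ explains, via Lemma \ref{generatorddegree}, why only these partitions can appear.

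The main obstacle is the sign conventions: checking that the natural boundary orientation of $\overline{\mathcal{M}}$, induced from the chosen spin structures on the $L_i$ and from a coherent orientation of $\overline{\mathcal{M}}_{0,d+1}$ as in Definition \ref{conformal}, matches the Koszul-type factor $(-1)^{\dagger_n}$ appearing in \eqref{Ainfcatasseqs}. For $d=1,2$ this reduces respectively to the verified identity $\partial\circ\partial=0$ and to Proposition \ref{Leibniz}; in higher degree it requires a careful orientation analysis of the associahedron $\overline{\mathcal{M}}_{0,d+1}$ and of the linearized Cauchy--Riemann operator under gluing, which I would cite from the systematic treatment in the literature (e.g.\ \cite[chapter 12]{[Sei08]}) rather than reproduce. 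Once signs are pinned down, the cohomological consequences follow immediately: the case $d=3$ gives $\mu^2\circ(\textup{id}\otimes\mu^2-\mu^2\otimes\textup{id})=\mu^1\mu^3+\mu^3(\mu^1\otimes\textup{id}\otimes\textup{id}+\cdots)$, so $\mu^2$ descends to an associative product on Floer cohomology.
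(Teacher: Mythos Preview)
Your argument is essentially the same as the paper's: study the 1-dimensional moduli spaces with $\textup{ind}([u])=3-d$, compactify via Gromov, identify the boundary as products of rigid moduli spaces contributing the individual summands of \eqref{Floerasseqs}, and conclude by Lemma~\ref{boundarypoints}.

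The one organizational difference worth noting is that you present all boundary configurations as a single geometric type (``the disk splits into two subdisks at a node $r$''), letting $m$ range over $1\leq m\leq d$. The paper instead separates the boundary into two analytically distinct mechanisms: (i) strip breaking at one of the $d+1$ strip-like ends, which accounts for all terms involving $\mu^1$, and (ii) nodal degeneration of the domain disk governed by $\partial\overline{\mathcal{M}}_{0,d+1}$, which accounts for the remaining terms (both components carrying at least two original punctures). Your unified description is not wrong---both mechanisms do produce two-component broken curves---but it obscures the fact that the compactness analysis is genuinely different in the two cases: in (i) the conformal structure stays bounded in $\mathcal{M}_{0,d+1}$ while energy escapes down an end, whereas in (ii) the conformal structure itself degenerates. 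The paper's separation makes the gluing theorem and the role of Definition~\ref{perturbationdata} (consistency at both strip-like ends \emph{and} nodes) more transparent.
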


\begin{proof}(\textit{Sketch})
	The cases $d=1,2$ were addressed in Theorem \ref{HFwelldef} respectively Proposition \ref{Leibniz}. Proof of the latter also showed associativity of $\mu^2$ on cohomology. 
	
	To show that the general identity \eqref{Floerasseqs} holds, we follow the ``usual'' steps: in the framework of Definition \ref{Floercompositions}, we look at classes $[u]\in\pi_2(M,\bigcup_{i=0}^d L_i)$ of $\text{ind}([u])=3-d$ and their moduli spaces $\mathcal{M}(p_1,...,p_d,q;[u],J,H)$, which, by equation \eqref{moduliddim}, transversality and Proposition \ref{Gromov}, are then 1-dimensional smooth manifolds compactifying to some $\overline{\mathcal{M}}(p_1,...,p_d,q;[u],J,H)$ whose boundary points either correspond to
	\begin{itemize}[leftmargin=0.5cm]
		\item index 1 $J$-holomorphic strips breaking off at one among the $d+1$ generators, which will then be separated from the pseudoholomorphic polygon (as in Figure \ref{3bubbling}), or
		\item nodal degenerations of $\mathbb{D}^2$ described by the $(d-3)$-skeleton of $\overline{\mathcal{M}}_{0,d+1}$, which are pairs of disks each carrying \textit{at least two} punctures (as in Figure \ref{associahedron}).
	\end{itemize}
	The former case is caused by energy build-ups at one of the generators and produces all terms involving $\mu^1$ in \eqref{Floerasseqs}, while the second originates when punctures ``hit'' each other (as expalined in Remark \ref{hit}), contributing with all the remaining summands of \eqref{Floerasseqs}. The desired equation follows by arguing once again that the space $\overline{\mathcal{M}}(p_1,...,p_d,q;[u],J,H)$ is a compact 1-dimensional manifold with (coherently oriented) boundary whose signed cardinality must then vanish, and finally summing over all homotopy classes.\footnote{Consult \cite[section 3]{[Fuk02a]} for a slightly different approach, from the standpoint of Morse--Bott Floer theory.} 
	
	Let us discuss the case $d=3$. Boundary points in $\overline{\mathcal{M}}(p_1,p_2,p_3,q;[u],J,H)$ of the first type are index 1 strips breaking off from the index $-1$ pseudoholomorphic square at one of its four punctures; they are responsible for the terms $\partial(\mu^3(p_3,p_2,p_1))$, $\mu^3(p_3,p_2,\partial p_1)$, $\mu^3(p_3,\partial p_2, p_1)$ and $\mu^3(\partial p_3,p_2,p_1)$. Boundary points of the second type correspond instead to the two nodal configurations described in Example \ref{K3}, pairs of twice punctured index 0 disks; they yield $p_3\boldsymbol{\cdot}(p_2\boldsymbol{\cdot} p_1)$ and $(p_3\boldsymbol{\cdot} p_2)\boldsymbol{\cdot} p_1$. With coefficients dictated by the degrees of the generators, we see that the associated $A_\infty$-identity is indeed fulfilled. 
\end{proof}

\subsection{The $A_\infty$-structure of the Fukaya category}\label{ch6.3}

We are finally ready to start reaping the harvest of Part I.

\begin{Def}\label{Fukaya}						
	Let $M=(M,\omega)$ be a symplectic manifold with $2c_1(TM)=0$. Its (compact) \textbf{Fukaya category}\index{Fukaya category} $\mathscr{F}(M)$ is the $A_\infty$-category characterized by the following data:
	\begin{itemize}[leftmargin=0.5cm]
		\renewcommand{\labelitemi}{\textendash}
		\item A set of objects $\textup{obj}(\mathscr{F}(M))$ consisting of compact oriented spin graded Lagrangian submanifolds $L_i\subset M$ of vanishing Maslov class $\mu_{L_i}\equiv[\phi_{L_i}]=0\in H^1(L;\mathbb{Z})$ and fulfilling the no-bubbling condition $\langle\omega\rangle\cdot\pi_2(M,L_i)=0$, together with an implicit choice of graded lift $\tilde{\phi}_{L_i}:L_i\rightarrow\mathbb{R}$ and spin structure. 
		\item Perturbation data $(J_{L,L'},H_{L,L'})\in C^\infty([0,1],\mathfrak{J}(M,\omega))\times C^\infty([0,1]\times M)$ for every pair $(L,L')\subset\textup{obj}(\mathscr{F}(M))$, and Floer data $(J,H)\in C^\infty(\mathbb{D}^2,\mathfrak{J}(M,\omega))\times C^\infty(\mathbb{D}^2\times M)$ as in Definition \ref{perturbationdata} for every tuple $(L_0,...,L_d)\subset\textup{obj}(\mathscr{F}(M))$, thus compatible with all $(J_{L_i,L_j},H_{L_i,L_j})$'s in the strip-like regions of the $J$-holomorphic curves solving the underlying generalized Cauchy--Riemann problem \eqref{CRboundarycondd+1}. 
		\item For any two $L_0, L_1\in\textup{obj}(\mathscr{F}(M))$ (possibly equal), the $\mathbb{Z}$-graded free $\Lambda_\mathbb{K}$-module 
		\begin{equation}
		\textup{hom}_{\mathscr{F}(M)}(L_0,L_1) \coloneqq  CF(L_0,L_1;H_{L_0,L_1})\,,
		\end{equation}
		resulting from the Hamiltonian perturbation by $H_{L_0,L_1}$ of \eqref{Floercmplx}.
		\item (Multi)linear $\mathbb{Z}$-graded composition maps $\mu_{\mathscr{F}(M)}^{d}\coloneqq \mu^d$ as in \eqref{Floercompo}, for every $d\geq 1$ and $L_0,...,L_d\in\textup{obj}(\mathscr{F}(M))$ (then fulfilling the $A_\infty$-associativity equations, by Proposition \ref{AinfasseqsFloer}).
	\end{itemize} 
\end{Def}

\begin{Rem}\label{Fukayarem}						
	\ \vspace*{0.0cm}
	\begin{itemize}[leftmargin=0.5cm]
		\item We already hinted that there are many different definitions of Fukaya category, depending on the precise assumptions for both $M$ and its Lagrangian submanifolds. For example, dropping the requirements $2c_1(TM)$ $=0$ and $[\phi_{L_i}]=0$ for each Lagrangian submanifold reduces the $\mathbb{Z}$-grading to a $\mathbb{Z}_2$-grading, and choosing $\mathbb{K}$ of $\text{char}(\mathbb{K})=2$ allows us to ignore spin structures. Most prominently, the \textit{wrapped Fukaya category} considers only exact manifolds, building on what seen in Section \ref{ch5.8}; we briefly discuss this variant in Section \ref{ch6.5} below. The relative and projective versions of Fukaya categories are also discussed in \cite[section 8]{[Sei13]}.
		\item We can even tinker with the perturbation data; however, all the so obtained $A_\infty$-categories turn out to be \textit{quasi-isomorphic} (in the sense of Definition \ref{cohomfunctor}). This means that, up to quasi-isomorphism, $\mathscr{F}(M)$ is a ``symplectic invariant'' of $M$ (more on this in \cite[chapter 10]{[Sei08]}).
		\item If we drop instead the no-bubbling condition $\langle\omega\rangle\cdot\pi_2(M,L_i)=0$ and reshape our definitions in Morse--Bott Floer theoretic language, we ultimately obtain the notion of \textit{curved $A_\infty$-category}, where disk bubbles are encoded by images of a degree 0 composition map $\mu^0:L_i\subset M\mapsto\mu_{L_i}^0\in CF(L_i,L_i)$ which fits into the $A_\infty$-associativity equations (more details in \cite[Remark 2.11]{[Aur13]} and \cite[section 3]{[Fuk02a]}). In this scenario, one restricts the set of objects to those whose $\mu_L^0$ is a multiple of the c-unit $e_L$ (see equation \eqref{Fukayacunit} below), which in mirror symmetry corresponds to the so-called ``superpotential'' or ``central charge''.   
		\item In low dimension, the existence and well-definiteness of Fukaya categories might be guaranteed by topological constraints: for example, any 4-dimen-\break sional symplectic manifold $M^4$ (such as classical Calabi-Yau manifolds) with $2c_1(TM)=0$ admits no bubbling phenomena (see \cite[Lemma 3.3]{[AS09]}), hence its associated Fukaya category is properly defined (even though invariance under Hamiltonian isotopies of its objects --- Lagrangian surfaces $L^2\subset M$ of vanishing Maslov class --- is not a priori achieved; cf. \cite[Lemma 3.4]{[AS09]}).
	\end{itemize}
\end{Rem}

\begin{Def}										
	Let $M$ be a symplectic manifold with $2c_1(TM)=0$, and $\mathscr{F}(M)$ its Fukaya category. The induced cohomological category $H(\mathscr{F}(M))$ (as described in Definition \ref{cohomcategory}) is called the \textbf{Donaldson--Fukaya category}\index{Donaldson--Fukaya category}, or \textit{quantum category}. Specifically, its objects are still Lagrangian submanifolds and its Hom-sets are the total Floer cohomology rings \eqref{Floercohomring} induced by the Floer differentials.
	
	The subcategory $H^0(\mathscr{F}(M))\subset H(\mathscr{F}(M))$ only considers the zeroth cohomology of each morphism space, $\textup{Hom}_{H^0(\mathscr{F}(M))}(L_0,L_1)\coloneqq HF^0(L_0,L_1)$.
\end{Def}

Observe that, since Hamiltonian isotopic Lagrangian submanifolds $L_1, L_1'\subset M$ induce isomorphisms $\textup{Hom}_{H(\mathscr{F}(M))}(L_0,L_1)=HF(L_0,L_1)\cong HF(L_0,L_1')=\textup{Hom}_{H(\mathscr{F}(M))}(L_0,L_1')$ for any $L_0\subset M$ (cf. Definition \ref{isotopy} and Remark \ref{CRreform}), setting $L_0\coloneqq L_1$ proves that they are isomorphic as objects of $H(\mathscr{F}(M))$. This ticks off the only restriction we imposed back in Section \ref{ch5.2}.

Now that Fukaya categories are ready to go, we may wonder about their unitality as $A_\infty$-categories. The next few observations are discussed, for example, in \cite[section 3]{[AS09]}.

\begin{Lem}										
	Given any Fukaya category $\mathscr{F}(M)$, each Lagrangian submanifold $L\in\textup{obj}(\mathscr{F}(M))$ admits a cohomological unit $e_L\in\textup{hom}_{\mathscr{F}(M)}^0(L,L)$, which we recall is a degree $0$ cocycle \textup(thus $\mu_{\mathscr{F}(M)}^1(e_L)=0$\textup) such that $\langle e_L\rangle=\textup{id}_L\in HF(L,L)$ in the Donaldson--Fukaya category. Therefore, $H(\mathscr{F}(M))$ is an actual category.
\end{Lem}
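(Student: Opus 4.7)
The plan is to construct $e_L$ explicitly using Proposition \ref{CFLL} and then verify it acts as an identity under the Floer product. I will build the proof in three stages.

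First, I would recall the setup from the proof of Proposition \ref{CFLL}: a Weinstein neighbourhood identifies a tubular neighbourhood of $L\subset M$ with one of the zero section in $T^*L$, and a small Morse function $f\in C^\infty(L)$ with associated Hamiltonian perturbation realises $HF(L,L)\cong HM(f;\Lambda_{\mathbb K})\cong H(L;\Lambda_{\mathbb K})$. In this model, the generators of $CF(L,L)$ correspond to critical points of $f$ with $\deg(p)=n-i(p)$, so the global maxima of $f$ (one per connected component of $L$) sit in degree $0$. I would then take $e_L \in CF^0(L,L)$ to be (a suitable cycle representative of) the class corresponding to the multiplicative unit $1\in H^0(L;\Lambda_{\mathbb K})$ under this identification. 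By construction, $e_L$ is a cocycle, i.e.\ $\mu^1(e_L)=\mu^1_{\mathscr F(M)}(e_L)=0$.

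Second, I would verify that $\langle e_L\rangle = \mathrm{id}_L$ in $H(\mathscr{F}(M))$. This reduces to two claims: for every $L'\in\mathrm{obj}(\mathscr F(M))$ and every cocycle $b\in CF(L',L)$, the cohomology class $\langle \mu^2(e_L,b)\rangle$ equals $\langle b\rangle$, and symmetrically for right-multiplication (up to the Koszul sign $(-1)^{|a|}$ appearing in \eqref{cohomcatcompo}). For this, I would invoke the standard fact that the isomorphism of Proposition \ref{CFLL} intertwines the induced product on $HF(L,L)$ with the cup product on $H(L;\Lambda_{\mathbb K})$, and that the Floer product $\mu^2:HF(L',L)\otimes HF(L,L)\to HF(L',L)$ exhibits $HF(L',L)$ as a (right) module over the ring $HF(L,L)$; associativity (mod coboundaries) of these actions is exactly the $d=3$ case of Proposition \ref{AinfasseqsFloer}. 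Since $1\in H^0(L;\Lambda_{\mathbb K})$ is the multiplicative unit of the cohomology ring, it acts as identity on any module, giving $\langle\mu^2(e_L,b)\rangle=\langle b\rangle$ and, symmetrically with the prescribed sign, $\langle\mu^2(a,e_L)\rangle=(-1)^{|a|}\langle a\rangle$.

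Third, once cohomological units exist for every object, the composition \eqref{cohomcatcompo} on $H(\mathscr F(M))$ is associative (again by Proposition \ref{AinfasseqsFloer}) and unital, so $H(\mathscr F(M))$ is an honest linear graded category as claimed.

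The main obstacle is the module-identity property in the second step: it is essentially a statement about moduli of pseudoholomorphic triangles with boundary on $(L,L,L')$ in which the corner labelled by $e_L$ is pushed into the Weinstein neighbourhood and allowed to degenerate. Rigorously, one either runs a PSS-type argument identifying $\mu^2(e_L,\cdot)$ with a continuation map from $CF(L',L)$ to itself, or inspects the degeneration of a $1$-parameter family of triangles in which one of the two $L$-labelled edges collapses to a point, recovering the identity on the chain level up to an exact term. Both routes are analytically involved; since they lie outside the scope of our presentation, I would state them as the geometric input and refer to the Floer-theoretic literature (e.g.\ \cite{[Aur13]}, \cite{[Sei08]}) for the full verification.
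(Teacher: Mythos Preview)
Your proposal is correct but takes a different route from the paper. The paper constructs $e_L$ via a ``$d=0$ analogue'' of the generalized Cauchy--Riemann problem: one counts pseudoholomorphic disks with a \emph{single} boundary puncture on $L$, setting
\[
e_L\coloneqq\mkern-18mu\sum_{q\in\mathcal{X}(L,L):|q|=0}\mkern-18mu(\#\mathcal{M}(q;[u],J,H))\,T^{\omega([u])}q\,,
\]
and then invokes a TQFT-style gluing argument (gluing the once-punctured disk onto an input of a strip/triangle recovers the strip/triangle itself) to see that $\langle e_L\rangle$ acts as the identity. You instead pick $e_L$ as a cocycle representing $1\in H^0(L;\Lambda_{\mathbb K})$ under the Morse--Floer isomorphism of Proposition~\ref{CFLL}, and argue the unit property via a PSS/continuation-type degeneration. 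Both routes are valid and both honestly defer the analytical core to the literature. The paper's construction is more canonical from the operadic viewpoint (the unit is literally the nullary operation in the $A_\infty$-structure, on the same footing as the $\mu^d$), while yours avoids introducing a new moduli problem and stays within the Morse model already set up. The two agree in cohomology: in the Morse--Bott limit the once-punctured disk count recovers precisely the fundamental class you chose. One small caution in your second step: the sentence ``since $1$ is the unit of the ring, it acts as identity on any module'' is not an argument here --- it is exactly the statement to be proved --- but you correctly flag this in your final paragraph as the genuine analytical input.
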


\begin{proof}(\textit{Sketch})
	Recall that
	\[
	CF^0(L,L)=\mkern-18mu\bigoplus_{p\in \mathcal{X}(L,L): |p|=0}\mkern-18mu\Lambda_\mathbb{K}\cdot p\;,
	\]
	where $\mathcal{X}(L,L)=L\cap(\phi_H^1)^{-1}(L)$ is the transverse intersection of $L$ with a copy perturbed by a suitable Hamiltonian function $H\in C^\infty(M\times [0,1])$. Among all these perturbed degree 0 generators $p$, the cohomological unit $e_L$ is obtained by considering the degree 1 (so $d=0$) analogue of the Cauchy--Riemann problem \eqref{perturbedCR}, where we choose $L_0=L_1\coloneqq L$ and start with a single boundary puncture of $\mathbb{D}^2$ (then mapping to $e_L$). The cocycle we seek turns out to be
	\begin{equation}\label{Fukayacunit}
	e_L\coloneqq \mkern-18mu\sum_{q\in\mathcal{X}(L,L):|q|=0}\mkern-18mu (\#\mathcal{M}(q;[u],J,H))T^{\omega([u])}q\;.
	\end{equation}
	One can show with help of Proposition \ref{CFLL} that this does indeed represent the identity morphism in $HF(L,L)\cong H(L;\Lambda_\mathbb{K})$. The precise proof involves a gluing argument from TQFT --- far exceeding our scope.  
\end{proof}

\begin{Lem}										
	The perturbation data defining $\mathscr{F}(M)$ can be chosen in such a way that the cohomological unit $e_L\in\textup{hom}_{\mathscr{F}(M)}^0(L,L)$ of any Lagrangian submanifold $L\subset M$ fulfills $\mu_{\mathscr{F}(M)}^2(e_L,e_L)=e_L$ and $\mu_{\mathscr{F}(M)}^d(e_L,...,e_L)=0$ if $d>2$.
\end{Lem}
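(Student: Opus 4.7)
The plan is to construct the Floer data for $(d{+}1)$-punctured disks inductively in $d$ and simultaneously for all tuples of Lagrangian submanifolds, in such a way that marked points labeled by the cohomological unit $e_L$ can be ``forgotten'' in a controlled manner. The key geometric input is that $e_L$ itself, as given by \eqref{Fukayacunit}, is defined as a weighted count of index $1$ disks with a \emph{single} boundary puncture and boundary on $L$. Near a boundary stratum of $\overline{\mathcal{M}}_{0,d+1}$ where one marked point bubbles off (i.e., collides with an adjacent one and is ``absorbed'' into a sub-disk carrying only that puncture), I would choose the Floer datum on the $(d{+}1)$-punctured disk so that it restricts, on the bubbled 1-punctured disk, precisely to the perturbation data $(J_{L,L},H_{L,L})$ used to build $e_L$, and on the residual $d$-punctured disk to the already-chosen Floer datum for $\mu^{d-1}$. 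This ``consistent with forgetful gluing'' prescription is the inductive step; it can be realized because the space of admissible Floer data is contractible, so the compatibility can be propagated from the strata of $\partial\overline{\mathcal{M}}_{0,d+1}$ into the interior.

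Given such a choice, I would handle the equation $\mu^2_{\mathscr{F}(M)}(e_L,e_L)=e_L$ geometrically. Expanding $\mu^2(e_L,e_L)$ by substituting the definition of each $e_L$, the weighted count becomes a count of configurations consisting of a pseudoholomorphic triangle with two 1-punctured disks glued at two of its corners. By the forgetful compatibility, such glued configurations are the boundary of a moduli space of broken objects whose other boundary consists exactly of the 1-punctured disks defining $e_L$ itself (i.e., after a nodal degeneration in which the two input punctures merge and are absorbed into a single terminal 1-punctured disk on the output side). Applying Lemma \ref{boundarypoints} to this $1$-dimensional moduli space and reading off boundary contributions then yields the chain-level identity $\mu^2(e_L,e_L)=e_L$.

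For $d>2$, I would argue that the moduli space $\mathcal{M}(e_L,\ldots,e_L,q;[u],J,H)$ can be arranged to be empty for dimensional reasons in generic classes $[u]$, while the classes that would otherwise contribute decompose, via the same forgetful compatibility, into products of moduli spaces in which at least one factor has non-positive virtual dimension and has already been arranged to vanish in a lower inductive step, or carries a free cyclic action permuting two consecutive $e_L$-labeled punctures (possible because their Floer data are identical) whose quotient reduces the signed count to zero. Combining these contributions with the inductive compatibility, the total weighted count defining $\mu^d(e_L,\ldots,e_L)$ vanishes identically.

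The main obstacle, and the technically heaviest step, is the inductive construction of Floer data globally consistent across all associahedra $\overline{\mathcal{M}}_{0,d+1}$ and all tuples $(L_0,\ldots,L_d)$ allowing repeated $L$'s, while preserving the transversality needed for the perturbed Cauchy--Riemann problem to cut out smooth moduli spaces of the expected dimension \eqref{moduliddim}. In practice this is carried out by refining Definition \ref{perturbationdata}: one adds to the compatibility requirements a ``behaviour at $e_L$-labeled punctures'' clause, and then invokes that the space of Floer data satisfying all prior constraints is non-empty and connected at every stratum, extending inward by contractibility as in the proof of \cite[Lemma 9.5]{[Sei08]}.
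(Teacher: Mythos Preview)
Your approach is far more elaborate than what the paper does, and some of your steps are dubious. The paper's argument is a two-line degree count. It chooses the perturbation datum for the pair $(L,L)$ to be the one coming from a Morse function $f$ on $L$, as in Proposition~\ref{CFLL}; then $CF(L,L)$ is the Morse complex of $f$, which is concentrated in degrees $0$ through $n=\dim L$. Since $e_L$ has degree $0$ and $\mu^d$ has degree $2-d$, the element $\mu^d(e_L,\ldots,e_L)$ lies in $CF^{2-d}(L,L)$, which is zero for $d>2$. For $d=2$, one simply takes $f$ with a unique minimum (and maximum) on each component, so that $CF^0(L,L)$ is one-dimensional; then $\mu^2(e_L,e_L)$ is forced to be a scalar multiple of $e_L$, and cohomological unitality pins the scalar to $1$.

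By contrast, you are attempting to build a full ``forgetful'' compatibility into the Floer data across all associahedra --- essentially the machinery behind homotopy units --- to establish a much weaker statement. Besides being unnecessary here, your argument for $d>2$ has a genuine gap: the claimed ``free cyclic action permuting two consecutive $e_L$-labeled punctures'' is not a well-defined symmetry of the moduli problem. Swapping two adjacent boundary punctures on a disk is not realized by any element of $\mathrm{Aut}(\mathbb{D}^2)$ preserving the remaining marked points and the orientation, so there is no reason the signed count should cancel in pairs. Likewise, your cobordism for $\mu^2(e_L,e_L)=e_L$ is only sketched: you have not identified a concrete $1$-dimensional moduli space whose boundary strata are exactly the two sides of the equation. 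The paper sidesteps all of this by exploiting that the statement concerns only the very special inputs $(e_L,\ldots,e_L)$, where grading alone already forces the answer.
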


\begin{proof}(\textit{Sketch})
	Consider the Morse function $f\in C^\infty(L)$ we used in the proof of Proposition \ref{CFLL}, obtaining $HF(L,L)\cong HM(f;\Lambda_\mathbb{K})$. Morse homology is isomorphic to singular homology, which, in the case of a cell complex (such as a smooth manifold), is in turn isomorphic to cellular homology (see \cite[Theorem 2.35]{[Hat09]}). Since $L^n\subset M^{2n}$ is $n$-dimensional, it follows that $CF(L,L)$ is concentrated in the degrees between 0 and $n$. Therefore, as $\mu_{\mathscr{F}(M)}^d(e_L,...,e_L)\in CF^{2-d}(L,L)$ ($\mu_{\mathscr{F}(M)}^d$ being of degree $2-d$), $d>2$ forces it to vanish.
	
	To prove $\mu_{\mathscr{F}(M)}^2(e_L,e_L)=e_L$, we just take $f$ to have a unique maximum and minimum on each connected component of $L$.
\end{proof}

The last lemma tells us that $\mathscr{F}(M)$ is \textit{not} strictly unital, since back in equation \eqref{unitalrequirm} we imposed the stronger requirement that the two identities should hold whenever \textit{at least one} of the inputs is a unit $e_L$.

However, we can resort to some formal diffeomorphism $\Phi:\mathscr{F}(M)\rightarrow \widetilde{\mathscr{F}}(M)$ with target a strictly unital $A_\infty$-category $\widetilde{\mathscr{F}}(M)$, even though this will be geometrically meaningless: the composition maps $\mu_{\widetilde{\mathscr{F}}(M)}^d$ will no longer be counts of pseudoholomorphic polygons!
\vspace*{0.5cm}

\noindent Therefore, when dealing with Fukaya categories, we can work with cohomological units at best --- this is the reason why we focused so intensely on c-unital $A_\infty$-categories, after all! With the fundamental structure of $\mathscr{F}(M)$ clarified, let us now summarize the main results we know from Part I:
\begin{itemize}[leftmargin=0.5cm]				
	\item First, we can study $\mathscr{F}(M)$ through the $A_\infty$-functors mapping to it: for any non-unital $A_\infty$-category $\mathcal{C}$, $\mathcal{Q}\coloneqq nu\text{-}\!fun(\mathcal{C},\mathscr{F}(M))$ is c-unital, and if $E_\mathcal{G}\in\textup{hom}_\mathcal{Q}^0(\mathcal{G},\mathcal{G})$ is a c-unit, then $E_\mathcal{G}^0(X)\in CF^0(\mathcal{G}(X),\mathcal{G}(X))$ is the c-unit \eqref{Fukayacunit} for any $X\in\textup{obj}(\mathcal{C})$ (by Proposition \ref{funccatunitality}). Moreover, homotopic $A_\infty$-functors $\mathcal{G}_0\simeq\mathcal{G}_1\in nu\text{-}\!fun(\mathcal{C},\mathscr{F}(M))$ are isomorphic as objects of $H^0\big(nu\text{-}\!fun(\mathcal{C},\mathscr{F}(M))\big)$ (by Lemma \ref{homotopicareiso}).
	\newline On the other hand, for any c-unital $A_\infty$-category $\mathcal{B}$ and quasi-equivalence $\mathcal{G}\in fun(\mathcal{B},\mathscr{F}(M))$, there exists an inverse quasi-equivalence $\mathcal{F}\in fun(\mathscr{F}(M),\mathcal{B})$ so that the compositions are identity functors in the respective zeroth cohomological categories of c-unital $A_\infty$-functors (by Theorem \ref{aboutquasiequiv}, particularly useful if one wishes to relate Fukaya categories of two different manifolds).
	
	\item We can consider the $A_\infty$-category \[\mathscr{Q}(M)\!\coloneqq \!mod(\mathscr{F}(M))\] of c-unital $A_\infty$-modules over $\mathscr{F}(M)$, whose objects are c-unital $A_\infty$-functors $\mathcal{M}\!:\!\mathscr{F}(M)^{opp}\!\!\rightarrow\mathsf{Ch}$, yielding $\{\mathcal{M}(L)\!=\!\bigoplus_{n\in\mathbb{Z}}\mathcal{M}(L)^n \mid  L\in\textup{obj}(\mathscr{F}(M))\}$, with composition maps
	\[
	\mu_\mathcal{M}^d:\mathcal{M}(L_{d-1})\otimes CF(L_{d-2},L_{d-1})\otimes...\otimes CF(L_0,L_1)\rightarrow\mathcal{M}(L_0)[2-d]
	\]
	for all $d\geq 1$ and $L_i\in\textup{obj}(\mathscr{F}(M))$, and pre-module homomorphisms as in \eqref{tshit} (cf. Definition \ref{Ainfmod}). 
	\newline Then we can define a Yoneda embedding \[\Upsilon\equiv\Upsilon_{\mathscr{F}(M)}:\mathscr{F}(M)\rightarrow\mathscr{Q}(M)\,,\]cohomologically full and faithful, given by $\Upsilon(L)(\tilde{L})\coloneqq CF(\tilde{L},L)$ and $\mu_{\Upsilon(L)}^d\coloneqq\mu^d: CF(L_{d-1},L)\otimes...\otimes CF(L_0,L_1)\rightarrow CF(L_0,L)$ as in \eqref{Floercompo} (by Definition \ref{Yonedaemb}). For any $\mathcal{M}\in\textup{obj}(\mathscr{Q}(M))$ and $L\in\textup{obj}(\mathscr{F}(M))$, the Yoneda Lemma \ref{Yonedalemma} makes the extension $\lambda_\mathcal{M}:\mathcal{M}(L)\rightarrow\textup{hom}_{\mathscr{Q}(M)}(\Upsilon(L),\mathcal{M}(L))$ of
	\begin{align*}
	\Upsilon^1(p)^d:\; & CF(L_{d-1},\tilde{L})\otimes CF(L_{d-2},L_{d-1})\otimes...\otimes CF(L_0,L_1)\rightarrow CF(L_0,L),\\
	&(\tilde{p},p_{d-1},...,p_1)\mapsto\mu^{d+1}(p,\tilde{p},p_{d-1},...,p_1)
	\end{align*}
	a quasi-isomorphism (where $p\in CF(\tilde{L},L)$).
	
	\item The map $\lambda$ comes into play with Lemma \ref{quasi-repr}, which produces the following identifications between Floer cohomology rings:\footnote{The same warning we gave at the beginning of Section \ref{ch3.2} applies: direct sums, tensor products and shifts of Lagrangian submanifolds are to be understood as formal algebraic operations between objects of $\mathscr{F}(M)$, not geometrically meaningful --- and in any case, only when corresponding quasi-representatives exist!}
	\begin{align*}
		HF(L,L_0\oplus L_1)&\cong HF(L,L_0)\oplus HF(L,L_1)\;, \\ 
		HF(L_0,Z\otimes L_1)&\cong H(Z)\otimes HF(L_0,L_1)\;, \\
		HF(L_0,S^\sigma L_1)&\cong HF(L_0,L_1)[\sigma]\;, \\
		HF(S^\sigma L_0,L_1)&\cong HF(L_0,L_1)[-\sigma]\;, \\
		HF(S^\sigma L_0,S^\sigma L_1)&\cong HF(L_0,L_1)\;,	
	\end{align*}
	for all $L,L_0,L_1\in\textup{obj}(\mathscr{F}(M))$ and $Z\in\textup{obj}(\mathsf{Ch})$. 
	\newline Most significantly, any cocycle $p\in CF^0(L_0,L_1)$ identifies an abstract mapping cone $\mathcal{C}one(p)=\mathcal{C}\in\textup{obj}(\mathscr{Q}(M))$ (as in Definition \ref{abstractmappingcone}), which assigns to any $L\in\textup{obj}(\mathscr{F}(M))$ the cochain complex
	\[
	\mathcal{C}one(p)(L)=(\mathcal{C}(L),\mu_\mathcal{C}^1)\coloneqq\Big(CF(L,L_0)[1]\oplus CF(L,L_1),\begin{pmatrix}\mu^1 & 0 \\ \mu^2(p,\cdot) & \mu^1 \end{pmatrix}\Big)\;,
	\]
	and fits into the diagram \eqref{Qtriangle} formed by the ``abtract'' inclusion and projection, which in turn detects exact triangles \eqref{Atriangle} in the Donaldson--Fukaya category $H(\mathscr{F}(M))$ (all those whose Yoneda image is isomorphic to \eqref{Qtriangle}).
	
	\item Constructing an additive enlargement $\varSigma\mathscr{F}(M)$ for the Fukaya category as in Definition \ref{addenl}, we can ultimately define the $A_\infty$-category $Tw\mathscr{F}(M)$ of twisted complexes over $\mathscr{F}(M)$: objects are pairs $(L,\delta_L)$ where $L=(I,\{L^i\},\{V^i\})\in\textup{obj}(\varSigma\mathscr{F}(M))$ and $\delta_L=(\delta_{L}^{j,i})\in\textup{hom}_{\varSigma\mathscr{F}(M)}^1(L,L)$ is strictly lower-triangular, morphism spaces are as in equation \eqref{addenlmorphspaces}, and composition maps are given by \eqref{twistedcompo}. 
	\newline Our usual Lagrangian submanifolds can be seen as trivial twisted complexes thanks to the full inclusion $\mathscr{F}(M)\hookrightarrow Tw\mathscr{F}(M)$. In particular, $Tw\mathscr{F}(M)$ is c-unital (by Lemma \ref{TwAcunital}), and Corollary \ref{TwFquasiequiv} relates it through quasi-equivalences to other $A_\infty$-categories of twisted complexes.
	
	\item In $Tw\mathscr{F}(M)$ the direct sum, tensor product and shift operations listed above are valid also at the level of Floer complexes (Definition \ref{TwAops}). Similarly, any $p\in\textup{hom}_{Tw\mathscr{F}(M)}(L_0,L_1)$ yields a twisted mapping cone
	\[
	Cone(p)=(C,\delta_C)\coloneqq \Big(SL_0\oplus L_1,\begin{pmatrix}\delta_{L_0} & 0 \\ p & \delta_{L_1} \end{pmatrix}\Big)\in\textup{obj}(Tw\mathscr{F}(M))
	\]
	such that $\Upsilon(Cone(p))\cong\mathcal{C}one(p)$ (cf. Definition \ref{mappingcone}, which involves the $A_\infty$-shift functor $S:Tw\mathscr{F}(M)\rightarrow Tw\mathscr{F}(M)$), and fitting into diagram \eqref{Aexacttriangle} in $H^0(Tw\mathscr{F}(M))$.
	\newline As discussed, the advantage of twisted complexes is that $Tw\mathscr{F}(M)$ is a \textit{triangulated} $A_\infty$-category (by Corollary \ref{TwAtriang}): we can associate to any cocycle $p\in CF^0(L_0,L_1)$ the aforementioned canonical exact triangle
	\begin{equation}\label{Ftriangle}
		\begin{tikzcd}
			L_0\arrow[rr,"\langle p\rangle"] & & L_1\arrow[dl, "\langle i\rangle"]\\
			& Cone(p)\arrow[ul, "{\langle \pi\rangle[1]}"]
		\end{tikzcd}
	\end{equation}
	in $H^0(Tw\mathscr{F}(M))$ (where we denoted the canonical projection by $\pi$, so to avoid confusion). Proposition \ref{H0TwAtriangulated} shows that $(H^0(Tw\mathscr{F}(M)),H^0(S))$ is a standard triangulated category, then fulfilling the axioms of Section \ref{ch3.1}. For example, we can rotate diagram \eqref{Ftriangle} to produce the two exact triangles of \eqref{rotatedtriang}, or form long exact sequences of vector spaces in $H(Tw\mathscr{F}(M))$: 
	\begin{align*}
		& ...\rightarrow HF^n(L,L_0)\rightarrow HF^n(L,L_1)\rightarrow HF^n(L,C)\rightarrow HF^{n+1}(L,L_0)\rightarrow...\;, \\
		& ...\rightarrow HF^n(L_1,L)\rightarrow HF^n(L_0,L)\rightarrow HF^n(C,L)\rightarrow HF^{n+1}(L_1,L)\rightarrow...\;, 
	\end{align*}
	for any $L\in\textup{obj}(Tw\mathscr{F}(M))$.
	\newline Then we can use these results to show the criteria of Proposition \ref{exactcriterium} testing exactness of triangles in $H(\mathscr{F}(M))$ and of Lemma \ref{ifembquasiequiv} for triangularity of $\mathscr{F}(M)$.
	
	\item Moreover, $Tw\mathscr{F}(M)$ is generated by $\mathscr{F}(M)$ (by Lemma \ref{TwAgeneratedbyA}), and vice versa $(Tw\mathscr{F}(M),\imath)$ is a triangulated envelope for $\mathscr{F}(M)$ (by Proposition \ref{envelopeexists}), so that we can associate to the Fukaya category its bounded derived category $\mathsf{D^b}(\mathscr{F}(M))\coloneqq H^0(Tw\mathscr{F}(M))$.
	\newline We can include split objects by taking the split-closure $(\Pi\mathscr{F}(M),\!\Upsilon_{\!\mathscr{F}(M)})$ of $\mathscr{F}(M)$ (Proposition \ref{uniqueclosure}). For twisted complexes, $\Pi (Tw\mathscr{F}(M))$ has the advantage of being still $A_\infty$-triangulated (by Lemma \ref{splitclosuretriang}), so that the split-closed derived category \[
	\mathsf{D}^\pi(\mathscr{F}(M))\coloneqq H^0\big(\Pi (Tw\mathscr{F}(M))\big)\] is a standard triangulated category. Specifically, $\mathscr{F}(M)$ split-generates\break $\Pi(Tw\mathscr{F}(M))$, meaning that any object of the latter can be constructed from Lagrangian submanifolds by forming all possible mapping cones and shifts, iterating and finally taking images of all the resulting idempotent endomorphisms.
	\newline A useful result is that split-closed derived categories of quasi-equivalent Fukaya categories are themselves equivalent, which can be refined to quasi-equivalent split-generating full subcategories (this is Corollary \ref{splitequivgen}).
	
	\item About twisting: as per Definition \ref{abstracttwistfunc}, we can define an abstract twist functor $\mathcal{T}_{L_0}:\mathscr{Q}(M)\rightarrow\mathscr{Q}(M)$ along any $L_0\in\textup{obj}(\mathscr{F}(M))$, which on Yoneda modules $\Upsilon(L_1)\in\textup{obj}(\mathscr{Q}(M))$ produces graded vector spaces 
	\[
	\mathcal{T}_{L_0}(\Upsilon(L_1))(L)\coloneqq(CF(L_0,L_1)\otimes CF(L,L_0))[1]\oplus CF(L,L_1),
	\]
	for each $L\in\mathscr{F}(M)$. Any quasi-representative $T_{L_0}(L_1)\in\textup{obj}(\mathscr{F}(M))$ thereof is called twist of $L_1$ along $L_0$, and sits into the exact triangle
	\begin{equation}\label{Ftwisttriangle}
	\begin{tikzcd}
		HF(L_0,L_1)\otimes L_0\arrow[rr, "\langle \textup{ev}\rangle"] & & L_1\arrow[dl, "\langle i\rangle"] \\
		& T_{L_0}(L_1)\arrow[ul, "{\langle \pi\rangle[1]}"]
	\end{tikzcd}
	\end{equation}
	formed by the morphism $\textup{ev}: CF(L_0,L_1)\otimes L_0\rightarrow L_1$ mapping under $\Upsilon$ to the abstract evaluation $\epsilon\in CF^0(CF(L_0,L_1)\otimes\Upsilon(L_0),\Upsilon(L_1))$ of Definition \ref{evalmorph}. If we work with twisted complexes, then we obtain $T_{L_0}(L_1)= S(HF(L_0,L_1)\otimes L_0)\oplus L_1 = Cone(\textup{ev})\in\textup{obj}(Tw\mathscr{F}(M))$, allowing us to legitimately define a twist functor $T_{L_0}: Tw\mathscr{F}(M)\rightarrow Tw\mathscr{F}(M)$.     
\end{itemize} 

Roughly speaking, $\mathsf{D}^\pi(\mathscr{F}(M))$ maximizes the load of nice ``algebraic features'' we can ever hope to extract from $\mathscr{F}(M)$, whereas taking cohomology immediately, thus dealing with the Donaldson--Fukaya category, would for example result in the loss of the triangulation properties.

But why do we care so much about exact triangles? And what is even the point of studying Fukaya categories if all we can infer about them is just a matter of notational reformulation of Part I? Well, it turns out that the notion of mapping cones is geometrically meaningful in this particular context. We discuss this in the coming section.

\subsection{Geometric features of Fukaya categories}\label{ch6.4}

We begin with a brief aside regarding the geometrical relevance of Hochschild cohomology (discussed in \cite[section 3]{[AS09]}).

\begin{Rem}\label{quantumHochschild}			
	Consider a Hamiltonian function $H\in C^\infty(\mathbb{R}/\mathbb{Z}\times M)$ and the \textit{Hamiltonian} Floer complex it defines,
	\[
	CF(H)\coloneqq \mkern-12mu\bigoplus_{\gamma\in\mathcal{P}_1^\circ(H)}\mkern-12mu\mathbb{Z}_2\cdot\langle\gamma\rangle\;,
	\]
	where $\mathcal{P}_1^\circ(H)$ is the set of contractible 1-periodic orbits of the associated Hamiltonian vector field $X_H\in\mathfrak{X}(\mathbb{R}/\mathbb{Z}\times M)$ (see for example \cite[chapter 2]{[Mer14]}). Pick one such generator $\gamma_x\coloneqq \phi_H^1(x)\in\mathcal{P}_1^\circ(H)$ ($x\in M$ must be a fixed point of the flow $\phi_H^t$ of $H$). Then we can define a map
	\[
	\Psi: CF(H)\rightarrow CC(\mathscr{F}(M))
	\]
	with target the Hochschild complex of Definition \ref{Hochcohom} which sends $\langle\gamma_x\rangle$ to a sequence of (multi)linear graded maps $\Psi(\langle\gamma_x\rangle)=(h^0,h^1,...)$ whose $d$-th term $h^d$ counts pseudoholomorphic disks with an interior puncture converging to $\gamma_x$ and having $d$-many boundary punctures (the inputs of $h^d$, as usual). 
	
	The map $\Psi$ is actually a chain map, then inducing a homomorphism $H(\Psi)$ from the quantum cohomology $QH(M)\coloneqq H(M)\otimes\Lambda_\mathbb{K}$ of $M$ to the Hochschild cohomology $HH(\mathscr{F}(M))$ of its Fukaya category. On the other hand, the assignment $\Xi: CC(\mathscr{F}(M))\rightarrow CF(L,L),\,(h^d)_{d\geq 0}\mapsto h_L^0$ is itself a chain map, thus inducing a homomorphism $H(\Xi): HH(\mathscr{F}(M))\rightarrow HF(L,L)$, for any $L\in\mathscr{F}(M)$. In particular, as $L$ bounds no pseudoholomorphic disk (by definition of objects of $\mathscr{F}(M)$), the composition
	\[
	H(\Xi)\circ H(\Psi): QH(M)\rightarrow HH(\mathscr{F}(M)) \rightarrow HF(L,L)
	\]
	is just the restriction on cohomology (\cite[Lemma 3.11]{[AS09]}).     
\end{Rem}

Let us discuss something of more significance to our journey: Dehn twists. The relevant theory can be found in \cite[chapter 1]{[Sei02]}.

\begin{Def}											
	Consider the cotangent bundle $T\coloneqq T^*\mathbb{S}^n\cong\mathbb{S}^n\times\mathbb{R}^{n+1}$ equipped with standard symplectic form $\omega_T\in\Omega^2(T)$ which, in global coordinates $(q,p)\in\mathbb{S}^n\times\mathbb{R}^{n+1}$ such that $\langle q, p\rangle_\text{std} = 0$, reads $\omega_T=\sum_{i=1}^n dp_i\wedge dq_i$. Then for each $\lambda\geq 0$ we define the subbundle 
	\[
	T(\lambda)\coloneqq \{(q,p)\in T\mid |\!|p|\!|\leq\lambda\}\subset T\,,
	\]
	a tubular neighbourhood of the zero section $\mathbb{S}^n\equiv T(0)\subset T$, and the fibre-length functional $\varrho:T\rightarrow\mathbb{R},\,\varrho(q,p)\coloneqq |\!|p|\!|$, which generates the normalized geodesic flow on $\mathbb{S}^n$:
	\[
	\Theta:\;(t,(q,p))\in(0,\pi)\times T\longmapsto\Big(\cos(t)p-\sin(t)|\!|p|\!|q,\,\cos(t)q+\sin(t)\frac{p}{|\!|p|\!|}\Big)\in T\;.
	\]
	(recall that $T\cong T\mathbb{S}^n$). Note $\Theta_\pi$ would be the antipodal map $a(q,p)\coloneqq(-q,-p)$, which continuously extends over $T(0)$.
\end{Def}

The following is the key technical step, proved in \cite[Lemma 1.8]{[Sei02]}.

\begin{Lem}\label{technical}						
	Let $h\in C^\infty(\mathbb{R})$ be a smooth function such that $h(t)=0$ if $t\gg 0$, and $h(-t)=h(t)-kt$ for $|t|$ small and some $k\in\mathbb{Z}$. Define the Hamiltonian function $H\coloneqq h\circ\varrho|_{T\setminus T(0)}\in C^\infty(T\setminus T(0))$. Then the associated Hamiltonin diffeomorphism $\phi_H^{2\pi}:T\setminus T(0)\rightarrow T\setminus T(0)$ extends smoothly over $T(0)$ to a compactly supported $\phi\in\textup{Symp}(T,\omega_T)$.
	
	Moreover, if $\textup{supp}(h)\subset (-\infty,\lambda)$ for some $\lambda>0$, then $\phi|_{T(\lambda)}\!\in\!\textup{Symp}(T(\lambda),\omega_T)$ and is the identity near the boundary.	
\end{Lem}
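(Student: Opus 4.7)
The plan is to compute the Hamiltonian flow of $H=h\circ\varrho$ explicitly on $T\setminus T(0)$, then to exploit the algebraic symmetry $h(-t)=h(t)-kt$ to split off the non-smooth part whose time-$2\pi$ flow is either the identity or the antipodal map. First, since any smooth function of $\varrho$ Poisson-commutes with $\varrho$, the Hamiltonian vector field of $H$ equals $h'(\varrho)\,X_\varrho$, with $h'(\varrho)$ a first integral of its own flow. Because $X_\varrho$ integrates (by construction) to the normalized geodesic flow $\Theta$, one immediately gets the closed form
\[
\phi_H^{2\pi}(q,p) \;=\; \Theta_{2\pi\,h'(\varrho(q,p))}(q,p)\qquad\text{on }T\setminus T(0),
\]
which is manifestly a symplectomorphism with support in the preimage under $\varrho$ of $\mathrm{supp}(h)$.

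The algebraic step is the decomposition of $h$ near $0$. Differentiating $h(-t)=h(t)-kt$ at $t=0$ yields $h'(0)=k/2$, so the function $g(t):=h(t)-(k/2)t$ is smooth and, by direct substitution, satisfies $g(-t)=g(t)$ on the neighbourhood where the hypothesis holds. The classical theorem that an even smooth function factors through squaring then provides a smooth $\tilde h$ with $g(t)=\tilde h(t^2)$, so that
\[
H \;=\; \tilde h(\varrho^{2})\;+\;\tfrac{k}{2}\varrho
\]
on a neighbourhood of $T(0)$. Both summands are functions of $\varrho$ and hence Poisson-commute, so their time-$2\pi$ flows commute and factor $\phi_H^{2\pi}$ as $\phi_{\tilde h(\varrho^2)}^{2\pi}\circ\phi_{(k/2)\varrho}^{2\pi}$.

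Each factor is then extended across $T(0)$ separately. The second factor is $\Theta_{k\pi}=a^{k}$, a finite iterate of the globally defined antipodal involution, which extends smoothly to all of $T$ as $\mathrm{id}$ if $k$ is even and as $a$ if $k$ is odd. The first factor equals $\Theta_{4\pi\varrho\,\tilde h'(\varrho^{2})}(q,p)$; writing the rotation angle as $\varrho\cdot\alpha(p)$ with $\alpha(p):=4\pi\tilde h'(|p|^{2})$ smooth in $p$, I use that $\cos(\varrho\alpha)$ depends smoothly on the even combination $\varrho^{2}\alpha(p)^{2}=|p|^{2}\alpha(p)^{2}$, while $\sin(\varrho\alpha)/\varrho=\alpha\,\mathrm{sinc}(\varrho\alpha)$ is likewise smooth in $p$; every occurrence of $1/\varrho$ in the explicit formula for $\Theta$ is paired with the factor $p=\varrho v$, so the apparent singularity cancels. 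Hence $\phi_{\tilde h(\varrho^{2})}^{2\pi}$ extends smoothly over $T(0)$ with restriction $\mathrm{id}$ on $T(0)$, and composing with the extension of $a^{k}$ produces the desired global smooth extension $\phi$. It is a symplectomorphism because it agrees on the open dense set $T\setminus T(0)$ with a composition of Hamiltonian time-$2\pi$ maps, and smoothness propagates this identity to all of $T$.

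The hardest part is this entrywise smoothness check at $T(0)$: one must keep track of which factors of $\varrho$ in the $\Theta$-formula are absorbed by trigonometric functions of $\varrho\alpha$ and which by the vector $p$, to confirm that no residual $|p|$ survives; the decomposition $H=\tilde h(\varrho^{2})+(k/2)\varrho$ is engineered precisely to let this cancellation go through. Everything else is cosmetic. For the final assertion: if $\mathrm{supp}(h)\subset(-\infty,\lambda)$ then $H$ vanishes on $\{\varrho\geq\lambda-\varepsilon\}$ for some $\varepsilon>0$, whence $\phi=\mathrm{id}$ there; consequently $\phi$ preserves $T(\lambda)$ and is the identity on a neighbourhood of $\partial T(\lambda)$, so $\phi|_{T(\lambda)}\in\mathrm{Symp}(T(\lambda),\omega_{T})$ as claimed.
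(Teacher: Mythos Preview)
The paper does not actually supply a proof of this lemma; it merely cites \cite[Lemma 1.8]{[Sei02]} as the source. Your argument is correct and is essentially the standard one found there: compute $\phi_H^{2\pi}=\Theta_{2\pi h'(\varrho)}$, use the hypothesis $h(-t)=h(t)-kt$ to write $h(t)=\tilde h(t^{2})+\tfrac{k}{2}t$ near $0$ (via Whitney's even-function lemma), factor the time-$2\pi$ map near $T(0)$ as $a^{k}\circ\Theta_{4\pi\varrho\,\tilde h'(\varrho^{2})}$, and then check that the trigonometric expressions in $\Theta$ become smooth in $p$ once the angle is $\varrho$ times a smooth function of $|p|^{2}$. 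Your entrywise cancellation check (pairing each $1/\varrho$ with a factor of $p$ or with $\sin(\varrho\alpha)$) is exactly the point of the argument, and the compact-support and boundary claims follow immediately from $h'\equiv 0$ outside $\mathrm{supp}(h)$.
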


\begin{Def}												
	If $k=1$ and $\text{supp}(h)\subset (-\infty,\lambda)$ in the last lemma, then we call the resulting symplectomorphism $\tau_{\,\mathbb{S}^n}\coloneqq \phi\in\text{Symp}(T,\omega_T)$ a (model) \textbf{Dehn twist}\index{Dehn twist}. Explicitly:
	\begin{equation}\label{dehntwist}
		\tau_{\,\mathbb{S}^n}(q,p) \coloneqq
		\begin{cases}
			\Theta_{2\pi h'(|\!|p|\!|)}(q,p) & $if $ (q,p)\in T(\lambda)\setminus T(0) \\ 
			a(q,p) & $if $ (q,p)\in T(0)
		\end{cases}\;. 
	\end{equation}
	An easy manipulation, along with the assumptions on $h$, shows that the angle of rotation $2\pi h'(|\!|p|\!|)$ continuously decreases from $2\pi h'(0)=\pi$, where $\Theta_\pi\equiv a$ is the antipodal map sending $\mathbb{S}^n$ onto itself, to $2\pi h'(\lambda)=0$, where $\Theta_0\equiv \textup{id}$. Any other Dehn twist in $T(\lambda)$ is symplectically isotopic to \eqref{dehntwist}. 
\end{Def}

\begin{figure}[htp]
	\centering
	\includegraphics[width=0.6\textwidth]{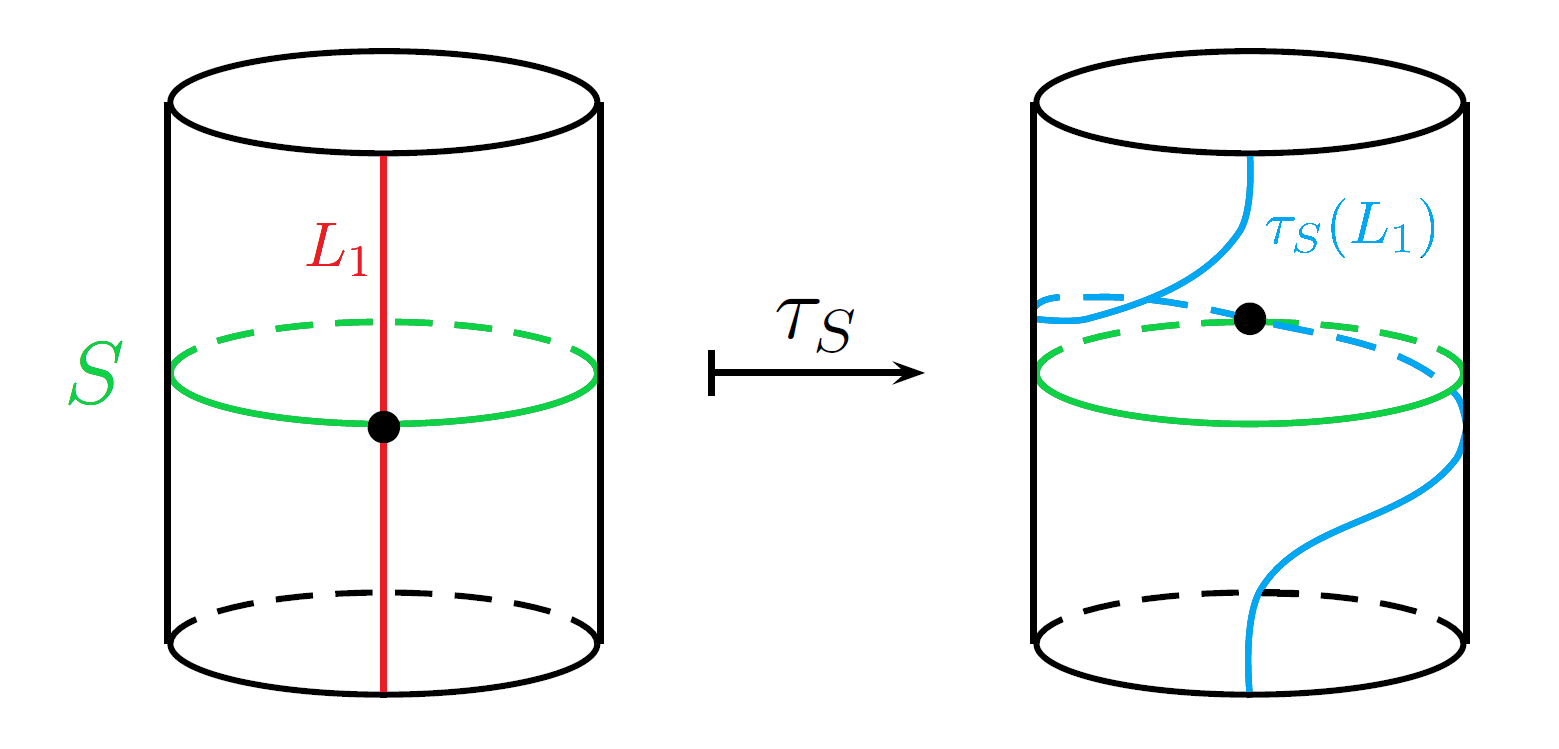}
	\caption{The Dehn twist $\tau_S$ along the sphere $S\coloneqq\mathbb{S}^n$ at work: the chosen fibre $L_1$ spirals around $S$ once and is asymptotically preserved. [Source:\cite{[Aur13]}]}
	\label{Dehn}	
\end{figure}

The action of the Dehn twist $\tau_{\,\mathbb{S}^n}$ on a fibre of $T(\lambda)$ is depicted in Figure \ref{Dehn}. It is sometimes convenient to control the ``wobbliness'' of $\tau_{\,\mathbb{S}^n}$ by imposing that $h'|_{\mathbb{R}_{\geq 0}}\geq 0$ and $h''<0$ whenever the ``twisting speed'' $h'$ exceeds some tolerance $0<\varepsilon<1/2$. Then one can deduce where the twisted fibre will intersect (transversely) another non-twisted one; this will happen exactly at the zero section, precisely as shown in Figure \ref{Dehn}, if the two fibers have antipodal footpoints (see \cite[Lemma 1.9]{[Sei02]}).

We transport the theory to the manifold world.

\begin{Def}											
	Let $M^{2n}=(M,\omega)$ be a symplectic manifold. A compact Lagrangian submanifold $S^n\subset M$ is called a \textbf{Lagrangian sphere}\index{Lagrangian sphere} if there exists a diffeomorphism $f:\mathbb{S}^n\rightarrow S$. It is \textit{framed} if there is a class $[f]$ of such diffeomorphisms, where $f_1\sim f_2$ if and only if $f_2^{-1}\circ f_1$ can be continuously deformed inside $\text{Diff}(\mathbb{S}^n)$ to some orthogonal matrix of $\text{O}(n+1)$.
	
	We can associate a Dehn twist $\tau_S\in\text{Symp}(M,\omega)$ to any framed Lagrangian sphere $S=(S,[f])$. Given some $\lambda>0$, choose any representative map\break $f:\mathbb{S}^n\equiv T(0)\rightarrow S$ and extend it to $T(\lambda)$, which embeds symplectically into $M$ via $\iota: T(\lambda)\hookrightarrow M$, and construct the Dehn twist \eqref{dehntwist} supported inside $T(\lambda)$. The \textbf{Dehn twist along $S$}\index{Dehn twist!along a Lagrangian sphere} is\footnote{The reader should not be fooled by the notation adopted: it is currently unknown whether $[\tau_S]\in\pi_0(\text{Symp}(M,\omega))$ depends on the specific class $[f]$ or not, although all other choices made in the construction are in fact irrelevant. For practicity, we avoid mention of the specific framing $[f]$, and implicitly consider all Lagrangian spheres to be framed.} then:
	\begin{equation}
		\tau_S(x) \coloneqq 
		\begin{cases}
			\iota\circ\tau_{\,\mathbb{S}^n}\circ\iota^{-1}(x) & $if $ x\in \text{im}(\iota) \\ 
			x & $else $
		\end{cases}\;.
	\end{equation}
	(Concretely, $\tau_S$ is the unique possible extension of $\phi_H^{2\pi}:M\setminus S\rightarrow M\setminus S$ over $S$, where $H\coloneqq h\circ\varrho\circ\iota^{-1}|_{\text{im}(\iota)\setminus S}$ is zero on $M\setminus\text{im}(\iota)$.) In particular, $\tau_S$ maps $S$ to itself antipodally.   
\end{Def}

The last definition made implicit use of the Weinstein's Lagrangian Neighbourhood Theorem (\cite[Theorem 8.4]{[Sil12]}): any neighbourhood of $S\subset M$ is symplectomorphic to a neighbourhood $T(\lambda)$ of $\mathbb{S}^n\equiv T(0)\subset T^*\mathbb{S}^n$ (we exploited this also back in Proposition \ref{CFLL}).

Being a symplectomorphism, $\tau_S$ maps Lagrangian submanifolds to Lagrangian submanifolds, preserving all their decorations (non-trivial!). In other words, $L\in\textup{obj}(\mathscr{F}(M))$ implies $\tau_S(L)\in\textup{obj}(\mathscr{F}(M))$. The main result we are interested in is:

\begin{Thm}\label{Dehncone}							
	Let $M$ be a symplectic manifold with associated Fukaya category $\mathscr{F}(M)$. Let $S\in\textup{obj}(\mathscr{F}(M))$ be a Lagrangian sphere and $L\in\textup{obj}(\mathscr{F}(M))$ any Lagrangian submanifold. Then 
	\begin{equation}
	\tau_S(L)\cong T_S(L)=Cone(\textup{ev})\in\textup{obj}\big(\mathsf{D^b}(\mathscr{F}(M))\big)\,,
	\end{equation}
	where $T_S(L)$ is the algebraic twist of $L$ along $S$, coinciding with the mapping cone of $\textup{ev}: CF(S,L)\otimes S\rightarrow L$ as in Definition \ref{evalmorph}. Therefore, there is an exact triangle
	\begin{equation}\label{dehnexacttriangle}
	\begin{tikzcd}
		HF(S,L)\otimes S\arrow[rr, "\langle \textup{ev}\rangle"] & & L\arrow[dl, "\langle c\rangle"] \\
		& \tau_S(L)\arrow[ul, "{\langle \pi\rangle[1]}"]
	\end{tikzcd}
	\end{equation}
	in $\mathsf{D^b}(\mathscr{F}(M))$. Consequently, application of $HF(\tilde{L},\square)$ respectively $HF(\square,\tilde{L})$ yields long exact sequences
	\begin{align}
	&...\rightarrow (HF(S,L)\otimes HF(\tilde{L},S))^n\xrightarrow{\langle\mu^2\rangle} HF^n(\tilde{L},L)\rightarrow HF^n(\tilde{L},\tau_S(L))\xrightarrow{[1]}... \nonumber \\
	&... \rightarrow HF^n(L,\tilde{L})\rightarrow (HF(S,L)\otimes HF(S,\tilde{L}))^n\xrightarrow{\langle d\rangle[1]} HF^{n+1}(\tau_S(L),\tilde{L})\rightarrow...\nonumber
	\end{align}
	for any $\tilde{L}\in\textup{obj}(\mathscr{F}(M))$, where $\mu^2$ is the Floer product and $\langle d\rangle$ is a composition involving the canonical isomorphism $HF(L,S)\cong HF(\tau_S(L),\tau_S(S))=HF(\tau_S(L),S)$ and the Donaldson ``pair-of-pants'' product.
\end{Thm}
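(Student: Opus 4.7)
The plan is to prove the core isomorphism $\tau_S(L) \cong T_S(L)$ in $\mathsf{D^b}(\mathscr{F}(M))$; everything else follows formally. Indeed, the exact triangle \eqref{dehnexacttriangle} is obtained from the canonical twist triangle \eqref{Ftwisttriangle} by substituting $T_S(L)$ with $\tau_S(L)$ along the isomorphism, and the long exact sequences are the usual consequence of applying $HF(\tilde L,\square)$ and $HF(\square,\tilde L)$ to an exact triangle in the triangulated category $\mathsf{D^b}(\mathscr{F}(M))$ (cf.\ the summary at the end of Section \ref{ch6.3}), combined with the identification $HF(\tilde L, HF(S,L)\otimes S) \cong HF(S,L)\otimes HF(\tilde L, S)$ that follows from the Yoneda-style isomorphisms listed there.

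To establish the isomorphism, I would invoke Lemma \ref{detecttwist} with $Y_0 \coloneqq S$, $Y_1 \coloneqq L$, $Y_2 \coloneqq \tau_S(L)$. The task reduces to producing (i) a degree-zero cocycle $c \in CF^0(L, \tau_S(L))$, and (ii) a map $k : CF(S, L) \to CF(S, \tau_S(L))[-1]$ satisfying the chain-level relation
\[
\mu^1(k(c_0)) - k(\mu^1(c_0)) + \mu^2(c, c_0) = 0 \qquad \forall c_0 \in CF(S,L),
\]
and then verifying the ``suitable additional data'' conditions ensuring that the resulting module homomorphism $t : \mathcal{T}_S(\mathcal Y_L) \to \mathcal Y_{\tau_S(L)}$ is a quasi-isomorphism. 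The cocycle $c$ is canonically constructed as follows: outside a Weinstein neighbourhood $T(\lambda)$ of $S$, the submanifolds $L$ and $\tau_S(L)$ coincide, and the Dehn twist generates a distinguished ``short'' Hamiltonian deformation of $L$ into $\tau_S(L)$; after an arbitrarily small perturbation achieving transversality, the unique count-one intersection point arising from this deformation (weighted by the appropriate symplectic area) provides a preferred cocycle $c$. Equivalently, $c$ can be obtained from a section-counting argument in a Lefschetz fibration over the half-disk, as we describe next.

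The hard technical step is producing the homotopy $k$, and this is where Seidel's Lefschetz-fibration technology enters. The idea is to realise the Dehn twist $\tau_S$ as the monodromy around the unique critical value of a model Lefschetz fibration $\pi : E \to \mathbb D^2$ whose vanishing cycle is precisely $S$, and then to enhance the generalized Cauchy--Riemann problem of Section \ref{ch6.2} to a \emph{parametrised} Cauchy--Riemann problem for pseudoholomorphic sections of $\pi$, with boundary mapping into suitably chosen Lagrangian ``matching'' lifts of $S$, $L$ and $\tau_S(L)$. A three-punctured disk with two incoming generators in $CF(S,L)$ and $CF(\,\cdot\,,S)$ and one outgoing generator in $CF(\,\cdot\,,\tau_S(L))$ produces, after counting index-zero rigid solutions, the chain-level map whose components yield both the cocycle $c$ and the homotopy $k$. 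The verification of the Maurer--Cartan-type relation displayed above is then the analysis of the boundary of the one-dimensional strata of this parametric moduli space: possible degenerations split off either an index-one Floer strip (giving the $\mu^1$ and $k\circ\mu^1$ terms) or factor through an intermediate pseudoholomorphic triangle contributing the Floer product (giving the $\mu^2(c,\,\cdot\,)$ term), precisely matching the signs by orientability of the moduli spaces.

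The principal obstacle will be the analytic package underlying that parametric moduli space: establishing transversality of the section moduli (requiring a careful choice of almost complex structure on the total space of the Lefschetz fibration, compatible with the fibrewise perturbations already chosen to define $\mathscr{F}(M)$), ensuring Gromov-type compactness despite the non-trivial monodromy, and classifying all boundary strata of the one-dimensional component so that no extraneous terms appear in the chain homotopy relation. Granting this analytic input (which is Seidel's long exact sequence theorem), Lemma \ref{detecttwist} applies and concludes $T_S(L) \cong \tau_S(L)$ in $H^0(Tw\mathscr{F}(M)) = \mathsf{D^b}(\mathscr{F}(M))$. The promised long exact sequences then follow from the cohomological functors $HF(\tilde L, \square)$ and $HF(\square, \tilde L)$ applied to \eqref{dehnexacttriangle}, with the connecting map in the second sequence identified via the canonical isomorphism $HF(L,S) \cong HF(\tau_S(L), \tau_S(S)) = HF(\tau_S(L), S)$ (since $\tau_S$ fixes $S$ up to the antipodal map, which is accounted for by the framing) composed with the Donaldson product, as claimed.
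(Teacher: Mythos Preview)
Your proposal is correct and follows essentially the same approach as the paper's sketch: both reduce the core isomorphism to an application of Lemma~\ref{detecttwist} by constructing the cocycle $c$ and the homotopy $k$ via Seidel's Lefschetz-fibration section-counting, then derive the exact triangle and long exact sequences formally. The only step you compress is the passage from the module-level isomorphism $\mathcal{T}_S(\Upsilon(L))\cong\Upsilon(\tau_S(L))$ in $H^0(\mathscr{Q}(M))$ to the statement in $\mathsf{D^b}(\mathscr{F}(M))$, which the paper makes explicit via the quasi-equivalence $\tilde{\Upsilon}=\imath^*\circ\Upsilon_{Tw\mathscr{F}(M)}$ and the identity $\tilde{\Upsilon}(T_{Y_0}(Y_1))\cong\mathcal{T}_{Y_0}(\tilde{\Upsilon}(Y_1))$ from Remark~\ref{abouttwists}.
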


\begin{proof}(\textit{Sketch}) 
	We just give a very vague outline of how to proceed --- a precise proof is the scope of the whole paper \cite{[Sei02]} by Seidel. Goal is to work first at the $A_\infty$-module level in order to apply Lemma \ref{detecttwist}. Relabeling $L_0\coloneqq S$, $L_1\coloneqq L$ for consistency and choosing $L_2\in\textup{obj}(\mathscr{F}(M))$ to be isotopic to $\tau_{L_0}(L_1)$, one defines a Floer cocycle $c\in CF(L_1,L_2)$ (thus $\mu^1(c)=0$) and a chain homotopy $k:CF(L_0,L_1)\rightarrow CF(L_0,L_2)[-1]$ between $\mu^2(c,\cdot):CF(L_0,L_1)\rightarrow CF(L_0,L_2)$ and the zero map: $\mu^1\circ k+ k\circ\mu^1 + \mu^2(c,\cdot)=0$. 
	
	After producing the suitable additional data required by \cite[Lemma 5.3]{[Sei08]} (this is done throughout sections (17d)--(17f) of \cite{[Sei08]}), Lemma \ref{detecttwist} yields a well-defined module isomorphism $t:\mathcal{T}_{L_0}(\Upsilon(L_1))\rightarrow \Upsilon(L_2)$.	One can improve this result to take into account signs, gradings and brane structures \cite[Theorem 17.16]{[Sei08]}. Therefore, $\mathcal{T}_{L_0}(\Upsilon(L_1))\cong \Upsilon(L_2)\in\textup{obj}\big(H^0(\mathscr{Q}(M))\big)$.
	
	Now, we move to twisted complexes by applying the isomorphisms $\mathcal{T}_{Y_0}(\tilde{\Upsilon}(Y_1))$ $\cong\tilde{\Upsilon}(T_{Y_0}(Y_1))$ of the second bullet point in Remark \ref{abouttwists}. Letting without loss of generality $L_2=\tau_{L_0}(L_1)\in\textup{obj}(Tw\mathscr{F}(M))$, we obtain: 
	\[
	\tilde{\Upsilon}(T_S(L))=\tilde{\Upsilon}(T_{L_0}(L_1))\cong\mathcal{T}_{L_0}(\tilde{\Upsilon}(L_1))\cong\tilde{\Upsilon}(\tau_{L_0}(L_1))=\tilde{\Upsilon}(\tau_S(L))\,,
	\]
	whence $T_S(L)\cong\tau_S(L)\in\textup{obj}\big(\mathsf{D^b}(\mathscr{F}(M))\big)$ (recall that $\tilde{\Upsilon}$ is a quasi-equivalence).
	
	It follows immediately from diagram \eqref{Ftwisttriangle} that $\tau_S(L)$ fits into the exact triangle \eqref{dehnexacttriangle}, where we recognize $c:L\rightarrow\tau_S(L)$ to be precisely the Floer cocycle constructed above, while $\langle\pi\rangle\in\textup{Hom}_{\mathsf{D^b}(\mathscr{F}(M))}\big(\tau_S(L),(HF(S,L)\otimes S)[1]\big)\cong\big(HF(S,L)\otimes HF(\tau_S(L),S)\big)^1\cong\big(HF(S,L)\otimes HF(L,S)\big)^n$. Moreover, one easily checks that indeed $HF(\tilde{L},\langle \textup{ev}\rangle)=\langle\mu^2\rangle$. For the exact definition of $d$, we point again the reader to \cite{[Sei02]}.
\end{proof}

Recalling that twisted mapping cones quasi-represent their abstract counterpart through the Yoneda embedding, from the proof of Theorem \ref{Dehncone} we also deduce:

\begin{Cor}										
	Let $M$ be a symplectic manifold with associated Fukaya category $\mathscr{F}(M)$. Let $S\in\textup{obj}(\mathscr{F}(M))$ be a Lagrangian sphere and $L\in\textup{obj}(\mathscr{F}(M))$ any Lagrangian submanifold. Then the Dehn twist $\tau_S(L)\in\textup{obj}\big(\mathsf{D^b}(\mathscr{F}(M))\big)$ of $L$ along $S$ quasi-represents the abstract algebraic twist $\mathcal{T}_S(\Upsilon(L))\in\textup{obj}(\mathscr{Q}(M))$ of the Yoneda module $\Upsilon(L)\in\textup{obj}(\mathscr{Q}(M))$ along $S$.  
\end{Cor}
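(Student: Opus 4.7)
The strategy is to combine Theorem~\ref{Dehncone}, which identifies $\tau_S(L)$ with the algebraic twist $T_S(L)$ in $\mathsf{D^b}(\mathscr{F}(M))$, with the compatibility between twisting at the object and module level recorded in Remark~\ref{abouttwists}. Per Definition~\ref{quasirepresentative}, what I need to exhibit is an isomorphism in $\textup{Hom}_{H(\mathscr{Q}(M))}^0\bigl(\Upsilon(\tau_S(L)),\,\mathcal{T}_S(\Upsilon(L))\bigr)$ between the Yoneda module of $\tau_S(L)$ and the abstract twist module.

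First I would invoke Theorem~\ref{Dehncone} to obtain an isomorphism $\tau_S(L)\cong T_S(L)$ in $\textup{obj}\bigl(H^0(Tw\mathscr{F}(M))\bigr)$. Passing to the $A_\infty$-module level via the quasi-equivalence $\tilde{\Upsilon}:Tw\mathscr{F}(M)\rightarrow\tilde{\mathsf{Q}}\subset\mathscr{Q}(M)$ of Remark~\ref{alternenv}, which is cohomologically full and faithful and hence preserves $H^0$-isomorphisms, I get $\tilde{\Upsilon}(\tau_S(L))\cong\tilde{\Upsilon}(T_S(L))$ in $H^0(\mathscr{Q}(M))$. The second bullet of Remark~\ref{abouttwists} then supplies the canonical isomorphism $\tilde{\Upsilon}(T_{S}(L))\cong\mathcal{T}_{S}(\tilde{\Upsilon}(L))$ in $H^0(\mathscr{Q}(M))$. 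Composing these yields
\[
\tilde{\Upsilon}(\tau_S(L))\;\cong\;\tilde{\Upsilon}(T_S(L))\;\cong\;\mathcal{T}_S(\tilde{\Upsilon}(L))\quad\text{in}\quad H^0(\mathscr{Q}(M))\,.
\]

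Finally, on the full subcategory $\mathscr{F}(M)\subset Tw\mathscr{F}(M)$ (so in particular on $L$ and on the Lagrangian submanifold $\tau_S(L)$ itself) the functor $\tilde{\Upsilon}$ agrees up to canonical isomorphism with the ordinary Yoneda embedding $\Upsilon=\Upsilon_{\mathscr{F}(M)}$, again by Remark~\ref{alternenv}; hence $\tilde{\Upsilon}(L)\cong\Upsilon(L)$ and $\tilde{\Upsilon}(\tau_S(L))\cong\Upsilon(\tau_S(L))$ in $H^0(\mathscr{Q}(M))$. Substituting these identifications into the chain above produces an isomorphism $\Upsilon(\tau_S(L))\cong\mathcal{T}_S(\Upsilon(L))$, which is exactly the condition that $\tau_S(L)$ quasi-represents $\mathcal{T}_S(\Upsilon(L))$. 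No substantial obstacle is anticipated, since the corollary is essentially a reformulation of Theorem~\ref{Dehncone} obtained by pushing it through the Yoneda embedding; the only mild subtlety will be the bookkeeping between $\Upsilon$ and $\tilde{\Upsilon}$ on the two sides of the comparison, which is settled once and for all by the commutative diagram of Remark~\ref{alternenv}.
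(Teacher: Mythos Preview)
Your argument is correct, but it takes a more circuitous route than the paper. The paper observes that the desired isomorphism $\mathcal{T}_S(\Upsilon(L))\cong\Upsilon(\tau_S(L))$ in $H^0(\mathscr{Q}(M))$ is already established \emph{inside} the proof of Theorem~\ref{Dehncone}: there, Lemma~\ref{detecttwist} produces directly a module isomorphism $t:\mathcal{T}_{L_0}(\Upsilon(L_1))\rightarrow\Upsilon(L_2)$ with $L_0=S$, $L_1=L$, $L_2\simeq\tau_S(L)$, which is literally the quasi-representation statement. So the Corollary is read off from an intermediate step of that proof, with no further work.

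You instead use only the \emph{conclusion} of Theorem~\ref{Dehncone} (the isomorphism $\tau_S(L)\cong T_S(L)$ in $\mathsf{D^b}(\mathscr{F}(M))$) and then push it back to the module level via $\tilde{\Upsilon}$ and Remark~\ref{abouttwists}. This is a legitimate black-box derivation and has the virtue of not depending on the internals of the proof, at the cost of the extra bookkeeping between $\Upsilon$ and $\tilde{\Upsilon}$ that you correctly flag. Either way works; the paper's route is simply shorter because the relevant module isomorphism was already on the table.
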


As the reader may have suspected by now, the underlying theory is \textit{way} deeper than what we touched upon --- it revolves around Lefschetz fibrations, which are among the fundamental objects of interest of Picard--Lefschetz theory. Unfortunately, a thorough study falls out of our reach; we refer the hungry reader to the third part of \cite{[Sei08]}.
\vspace*{0.5cm}

\noindent A second geometrically meaningful aspect of Fukaya categories is the so-called \textit{Lagrangian surgery}, whose construction is outlined in \cite[section 10.54]{[FOOO07]}. First, another piece of technicality.

\begin{Def}\label{technical2}							
	For $\varepsilon\in\mathbb{R}$ with $|\varepsilon|>0$ small enough, consider the smooth function $f_\varepsilon:\mathbb{R}^n\setminus\{0\}\rightarrow\mathbb{R},\,f_\varepsilon(x)\coloneqq \varepsilon \log(|\!|x|\!|)$ and the 1-form it defines:
	\[
	df_\varepsilon(x)=\varepsilon\frac{x\cdot dx}{|\!|x|\!|^2}=\sum_{j=1}^ny_jdx_j\in T_x^*(\mathbb{R}^n\setminus\{0\})\,,\quad\text{with }y_j\coloneqq \varepsilon\frac{x_j}{|\!|x|\!|^2}\in\mathbb{R}\;. 
	\]
	Then $H_\varepsilon\coloneqq \text{graph}(df_\varepsilon)=\{(x_1,y_1,...,x_n,y_n)\mid x\in\mathbb{R}^n\setminus\{0\}\}\subset T^*(\mathbb{R}^n\setminus\{0\})\cong\mathbb{C}^n\setminus(i\mathbb{R})^n$ is a Lagrangian submanifold which tends asymptotically to $(i\mathbb{R})^n$ (the cotangent fibre over $0\in\mathbb{R}^n$) as $|\!|x|\!|\rightarrow 0$, and to the zero section $\mathbb{R}^n$ as $|\!|x|\!|\rightarrow\infty$, where the identification with the complex space follows by setting $z_j\coloneqq x_j+iy_j\in\mathbb{C}$.
	
	Observe that, denoting by $r:\mathbb{C}^n\rightarrow\mathbb{C}^n$ the reflection along the diagonal $(x_j+iy_j\mapsto y_j+ix_j)$ for all $j=1,...,n$, it holds $r(H_\varepsilon)=H_\varepsilon$ (indeed, symmetry of the construction implies that $H_\varepsilon$ is also the mirrored graph consisting of all $(y_1,x_1,...,y_n,x_n)\in T^*(\mathbb{R}^n\setminus\{0\})$ such that $y\in\mathbb{R}^n\setminus\{0\}$ and $x_j=\varepsilon y_j/|\!|y|\!|^2$).
	
	Now take a corrective smooth function $g\in C^\infty(\mathbb{R}_{>0})$ such that $g(x)=\log(x)-|\varepsilon|$ if $0<x\leq x_0$, where $x_0\equiv x_0(|\varepsilon|)$ is small enough, and is constant if $x\geq 2x_0$, with $g'\geq 0$ and $g''\leq 0$ (compare this with the map $h$ of Lemma \ref{technical}). We define the adjusted smooth function 
	\[
	\tilde{f}_\varepsilon:\mathbb{R}^n\setminus\{0\}\rightarrow\mathbb{R},\,\tilde{f}_\varepsilon(x)\coloneqq \varepsilon g(|\!|x|\!|) = 
	\begin{cases}
		f_\varepsilon(x)-\varepsilon|\varepsilon| & \text{if } 0<x\leq x_0 \\ 
		\text{constant} & \text{else}
	\end{cases}\;,
	\]
	and look at its graph $\tilde{H}_\varepsilon\coloneqq \text{graph}(d\tilde{f}_\varepsilon)\subset T^*(\mathbb{R}^n\setminus\{0\})$, again a Lagrangian submanifold invariant under reflection by $r$. In particular, $\tilde{H}_\varepsilon=\mathbb{R}^n\cup(i\mathbb{R})^n$ outside the $n$-dimensional complex ball $B_{2x_0}^n(0)\subset\mathbb{C}^n$ at 0 of radius $2x_0$.  
\end{Def}

\begin{Def}											
	Let $M^{2n}=(M,\omega)$ be a symplectic manifold with associated Fukaya category $\mathscr{F}(M)$. Let $L_1, L_2\in\textup{obj}(\mathscr{F}(M))$ be transversely intersecting at some $p\in M$. With a suitable Darboux chart\footnote{This is a chart of local coordinates $(x^j,y^j)_{j=1}^n$ such that $\omega|_U=\sum_{j=1}^ndx^j\wedge dy^j$, always existing by \cite[Theorem 8.1]{[Sil12]}.} $\psi:U\subset M\rightarrow V\subset\mathbb{R}^{2n}\cong \mathbb{C}^n$ on a neighbourhood $U$ of $p$, centered at $p$ (so $\psi(p)=0$), we can arrange the two Lagrangian submanifolds in such a way that $\psi(L_1\cap U)=\mathbb{R}^n\cap V$ and $\psi(L_2\cap U)= (i\mathbb{R})^n\cap V$.
	
	Now construct $\tilde{H}_\varepsilon$ as in Definition \ref{technical2}, adapting $\varepsilon$ so that $B_{2x_0}^n(0)\subset\psi(U)$. Then
	\begin{equation}
		L_1\#_\varepsilon L_2 \coloneqq 
		\begin{cases}
			(L_1\cup L_2)\setminus U & \text{on } M\setminus U \\ 
			\psi^{-1}(\tilde{H}_\varepsilon)\cap U & \text{on } U
		\end{cases}\;,
	\end{equation}
	is the \textbf{Lagrangian connected sum}\index{Lagrangian connected sum} (or \textbf{Lagrangian surgery}\index{Lagrangian surgery}) of $L_1$ and $L_2$ at $p$ (sometimes just shortened $L_1\# L_2$). It is a Lagrangian submanifold of $M$ inheriting all the desired decorations (non-trivial!), meaning that $L_1\#_\varepsilon L_2\in\textup{obj}(\mathscr{F}(M))$. The region $(L_1\#_\varepsilon L_2)\cap U=\psi^{-1}(\tilde{H}_\varepsilon)\cap U$ is called \textit{Lagrangian handle}.	
\end{Def}

The 2-dimensional case is shown in Figure \ref{surgery}, while the 4-dimensional is discussed for example in \cite[section 3]{[AS09]}.

From the above constructions, it follows that $L_1\#_\varepsilon L_2 = L_2\#_{-\varepsilon} L_1$ (\textit{not} $L_2\#_{\varepsilon} L_1$, which is not even isotopic to $L_1\#_\varepsilon L_2$!). Moreover, $L_1\#_\varepsilon L_2$ is Hamiltonian isotopic to some $L_1\#_\delta L_2$ as soon as $\text{sgn}(\varepsilon)=\text{sgn}(\delta)$ (this can be seen by working in a suitable tubular neighbourhood of the Lagrangian handle). However, if we perform surgeries at multiple distinct points, the end result loses isotopy invariance. 

\begin{figure}[htp]
	\centering
	\includegraphics[width=0.7\textwidth]{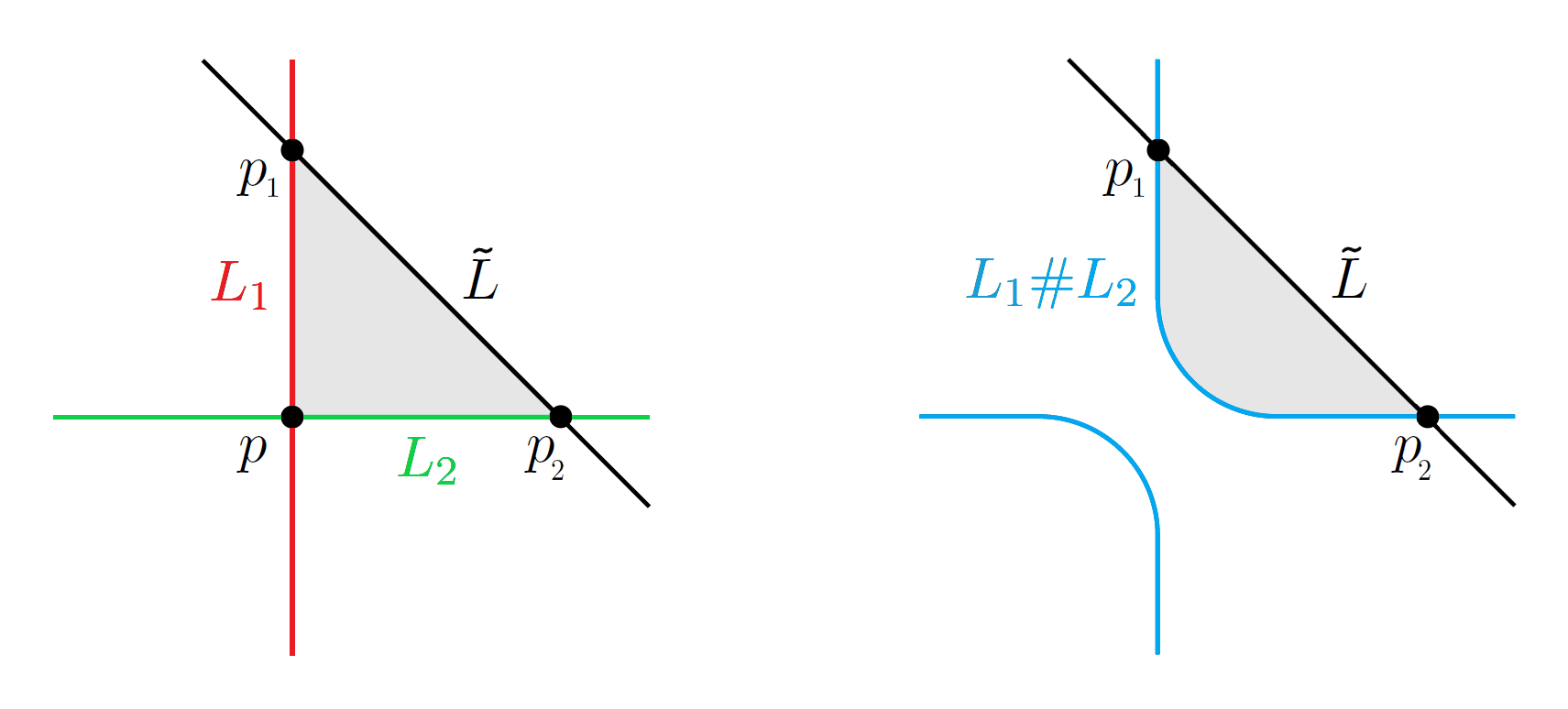}
	\caption{The Lagrangian connected sum $L_1\# L_2$ of two 1-dimensional Lagrangian submanifolds $L_1,L_2\subset M^2$ transversely intersecting at $p\in M$, which are mapped by a suitable Darboux chart to the real respectively imaginary axis in a neighbourhood of 0 in $\mathbb{C}$. Here the ``curved region'' of $L_1\# L_2$ is what we call Lagrangian handle. [Source:\cite{[Aur13]}]}
	\label{surgery}	
\end{figure}

\begin{Rem}\label{twistequivsurg}				
	Take $L_2=S\in\textup{obj}(\mathscr{F}(M))$ to be a Lagrangian sphere transversely intersecting $L_1$ at a single point $p$. Then $L_1\# S$ is Hamiltonian isotopic to the Dehn twist $\tau_S(L_1)$, 
	\begin{equation}
		L_1\# S\simeq\tau_S(L_1)
	\end{equation}
	(compare to Figure \ref{Dehn}). Therefore, $S$ regulates the twisting, while $L_1$ governs the asymptotical behaviour. In the 2-dimensional case, the local Darboux picture of $L_1$ corresponds to the imaginary line $i\mathbb{R}$ and that of $S$ to $\mathbb{S}^1$ centered at $1\in\mathbb{C}$. Then the Lagrangian surgery amounts to smoothing the angular regions squeezed between $\mathbb{S}^1$ and $i\mathbb{R}$ about 0. This can be taken informally as the general recipe to produce Lagrangian connected sums.   
\end{Rem}

Similarly to Dehn twists, Lagrangian connected sums are closely related to twisted mapping cones:

\begin{Thm}\label{surgerycone}				
	Let $L_1,L_2\in\textup{obj}(\mathscr{F}(M))$ intersect transversely at a single $p\in M$, thus forcedly\footnote{This is immediate by definition of Floer's complex and differential: $\mu^1$ is of degree 1, and therefore cannot preserve the only generator $p\in\mathcal{X}(L_2,L_1)$.} a Floer cocycle of $CF(L_2,L_1)$. Consider a third $\tilde{L}\in\textup{obj}(\mathscr{F}(M))$ with intersections $\tilde{L}\cap(L_1\#_\varepsilon L_2) = (\tilde{L}\cap L_1) \cup (\tilde{L}\cap L_2)$ \textup(possible for $|\varepsilon|$ small enough; cf. Figure \ref{surgery}\textup). Write $p_i$ for the transverse intersection point in $\tilde{L}\cap L_i$. Then for a suitable almost complex structure $J\in\mathfrak{J}(M,\omega)$ on the region bounded by $\tilde{L}$ and $L_1\#_\varepsilon L_2$, we have the bijective correspondence
	\[
	\left\{
	\begin{aligned}
		& J\text{-holomorphic strips from} \\
		& \text{$p_2$ to $p_1$ with boundary} \\
		& \text{on $\tilde{L}$ and $L_1\#_\varepsilon L_2$}
	\end{aligned} 
	\right\} \longleftrightarrow
	\left\{
	\begin{aligned}
		& J\text{-holomorphic triangles from} \\
		& \text{$p_2$ to $p_1$ to $p$ with boundary} \\
		& \text{on $\tilde{L}$, $L_1$ and $L_2$}
	\end{aligned}
	\right\}\;.
	\] 
	As a consequence, $CF(\tilde{L},L_1\#_\varepsilon L_2)$ is the standard mapping cone of $\mu^2(p,\cdot):CF(\tilde{L},L_2)\rightarrow CF(\tilde{L},L_1)$ \textup(in the chain complex sense\textup), thus fitting into the short exact sequence
	\[
	\{0\}\rightarrow CF(\tilde{L},L_1)\rightarrow CF(\tilde{L},L_1\#_\varepsilon L_2)\rightarrow CF(\tilde{L},L_2)\rightarrow \{0\}
	\] 
	inducing the long exact sequence
	\begin{equation}\label{surgexactseq}
		...\rightarrow HF^n(\tilde{L},L_1)\rightarrow HF^n(\tilde{L},L_1\#_\varepsilon L_2)\rightarrow HF^n(\tilde{L},L_2)\xrightarrow{\langle\mu^2(p,\cdot)\rangle[1]}...\quad. 
	\end{equation}
	In fact, there is an exact triangle
	\begin{equation}\label{surgexacttriangle}
		\begin{tikzcd}
			L_2\arrow[rr, "\langle p\rangle"] & & L_1\arrow[dl, "\langle i\rangle"] \\
			& L_1\#_\varepsilon L_2\arrow[ul, "{\langle \pi\rangle[1]}"]
		\end{tikzcd}
	\end{equation}
	in $\mathsf{D^b}(\mathscr{F}(M))$, meaning that 
	\begin{equation}
	L_1\#_\varepsilon L_2\cong Cone(p)\in\textup{obj}\big(\mathsf{D^b}(\mathscr{F}(M))\big)\,.
	\end{equation}
\end{Thm}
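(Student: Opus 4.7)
The plan is to prove the geometric correspondence first, then upgrade it first to a chain-level statement, then to the required isomorphism in $\mathsf{D^b}(\mathscr{F}(M))$. The overall strategy mirrors that of Theorem \ref{Dehncone}, but with Lagrangian surgery replacing Dehn twists.

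First I would establish the bijection of moduli spaces. Geometrically, the Lagrangian handle $\psi^{-1}(\tilde{H}_\varepsilon)\cap U$ is supported in the ball $B_{2x_0}^n(0)\subset\psi(U)$, whose size is controlled by $\varepsilon$. As $\varepsilon\to 0$, the handle collapses to the wedge $(L_1\cup L_2)\cap U$ with node at $p$. Picking a Floer datum $(J,H)$ compatible with this degeneration (constant near $p$, so that the local model of Definition \ref{technical2} applies), any $J$-holomorphic strip $u:\mathbb{R}\times[0,1]\to M$ from $p_2$ to $p_1$ with boundary on $\tilde{L}$ and $L_1\#_\varepsilon L_2$ must traverse the handle, and a Gromov-type compactness argument as $\varepsilon\to 0$ forces $u$ to converge to a $J$-holomorphic triangle with corners at $p_2$, $p$, $p_1$ and boundary on $\tilde{L}$, $L_2$, $L_1$. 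Conversely, a gluing theorem for triangles across the handle furnishes a unique (up to homotopy) strip for small $|\varepsilon|$. The hardest step here is the transversality and gluing analysis, since these are delicate in presence of the handle geometry; I would appeal to the construction in \cite[section 10.54]{[FOOO07]} to take this bijection on faith.

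Second, I would translate this bijection into a chain-level identification. Because $L_1\pitchfork L_2$ at the single point $p$ and $\tilde{L}\cap (L_1\#_\varepsilon L_2) = (\tilde{L}\cap L_1)\cup(\tilde{L}\cap L_2)$, the module $CF(\tilde{L},L_1\#_\varepsilon L_2)$ splits as a graded $\Lambda_{\mathbb{K}}$-module into $CF(\tilde{L},L_1)\oplus CF(\tilde{L},L_2)[1]$ (the degree shift coming from the Maslov index computation at corners of the handle, exactly as in Definition \ref{abstractmappingcone}). The Floer differential on $CF(\tilde{L},L_1\#_\varepsilon L_2)$ decomposes accordingly: strips entirely on the $L_i$-side contribute the unperturbed $\mu^1$ on each summand, while mixed strips (those genuinely crossing the handle) correspond, by the bijection above, to counts of triangles contributing to $\mu^2(p,\cdot):CF(\tilde{L},L_2)\to CF(\tilde{L},L_1)$. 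Writing this in matrix form yields exactly the boundary operator of the standard mapping cone of $\mu^2(p,\cdot)$, whence the short exact sequence and the induced long exact sequence \eqref{surgexactseq}.

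Third, I would lift this to the promised isomorphism in $\mathsf{D^b}(\mathscr{F}(M))$. The chain-level identification of the previous step is natural in $\tilde{L}$, so applying the Yoneda embedding $\Upsilon:\mathscr{F}(M)\to\mathscr{Q}(M)$ produces an isomorphism $\Upsilon(L_1\#_\varepsilon L_2)\cong\mathcal{C}one(\Upsilon^1(p))$ in $H^0(\mathscr{Q}(M))$, since $\mathcal{C}one(\Upsilon^1(p))(\tilde{L})$ is by Definition \ref{abstractmappingcone} precisely the chain complex computed above. Passing through the quasi-equivalence $\tilde{\Upsilon}=\imath^*\circ\Upsilon_{Tw\mathscr{F}(M)}$ of Remark \ref{alternenv}, and using that $\tilde{\Upsilon}$ sends the twisted cone $Cone(p)\in\textup{obj}(Tw\mathscr{F}(M))$ to $\mathcal{C}one(\Upsilon^1(p))$ (via the identification \eqref{coneidentification}), we obtain $L_1\#_\varepsilon L_2\cong Cone(p)$ in $\mathsf{D^b}(\mathscr{F}(M))=H^0(Tw\mathscr{F}(M))$. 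The exact triangle \eqref{surgexacttriangle} is then just diagram \eqref{Ftriangle} applied to the cocycle $p\in CF^0(L_2,L_1)$, with the canonical inclusion and projection matching, under this isomorphism, the natural inclusion $L_1\hookrightarrow L_1\#_\varepsilon L_2$ and the collapse map onto $L_2$. The main obstacle throughout remains the first step: rigorously controlling the degeneration of strips into triangles requires careful analytic work on the handle region, which is why I would import it as a black box from the surgery literature.
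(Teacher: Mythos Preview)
Your proposal is correct and aligns with the paper's treatment: the paper does not give its own proof but simply refers to \cite[sections 10.59--10.62]{[FOOO07]} (their ``Theorem Z''), noting that the argument relies on gluing techniques and the local model in $\mathbb{C}^n$. Your three-step outline (degeneration/gluing bijection, chain-level cone identification, Yoneda lift to $\mathsf{D^b}(\mathscr{F}(M))$) is a faithful expansion of exactly that strategy, and you correctly flag the first step as the analytic black box imported from the surgery literature.

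One small elision worth noting in your third step: saying the chain-level identification is ``natural in $\tilde{L}$'' does not by itself produce an isomorphism of $A_\infty$-modules, since a module homomorphism $t\in\textup{hom}_{\mathscr{Q}(M)}(\Upsilon(L_1\#_\varepsilon L_2),\mathcal{C}one(\Upsilon^1(p)))$ consists of all higher $t^d$'s, not just a family of chain maps. The clean way to close this is via Lemma~\ref{quasi-repr}: exhibit a degree~$0$ cocycle $c\in\mathcal{C}one(\Upsilon^1(p))(L_1\#_\varepsilon L_2)$ (built from the canonical intersection generators near the handle) and check that $b\mapsto\mu_\mathcal{C}^2(c,b)$ is a quasi-isomorphism for each $\tilde{L}$, which is precisely what your chain-level computation shows. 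This is a packaging issue rather than a gap, and the paper does not spell it out either.
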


A proof of this result is developed throughout \cite[sections 10.59--10.62]{[FOOO07]} (where it is referred to as ``Theorem Z''). In order to study the moduli space $\mathcal{M}(p_2,p_1;[u],J,H)$ associated to $J$-holomorphic strips bounded by $L_1\#_\varepsilon L_2$ and $\tilde{L}$, this proof heavily relies on the analytical tools we occasionally mentioned in Chapter 5, as well as gluing techniques, exploiting the local construction in the complex space $\mathbb{C}^n$.

\begin{Rem}											
	Take again $L_2=S\in\textup{obj}(\mathscr{F}(M))$ to be a Lagrangian sphere so that $L_1\# S\simeq\tau_S(L_1)$. The claims of Theorem \ref{surgerycone} are then compatible with those of Theorem \ref{Dehncone}. Indeed, the long exact sequence induced by \eqref{dehnexacttriangle} is just \eqref{surgexactseq} after fixing the input $\langle p\rangle\in HF(S,L_1)$ for the connecting homomorphism $\langle\mu^2\rangle:HF(S,L_1)\otimes HF(\tilde{L},S)\rightarrow HF(\tilde{L},L_1)$ (and renaming $L\coloneqq L_1$). Similarly, diagram \eqref{dehnexacttriangle} takes the shape of \eqref{surgexacttriangle}.
\end{Rem}

To conclude, we warn that the identifiability of Dehn twists and Lagrangian connected sums with twisted mapping cones is two-edged: the advantage of such a dictionary between algebraic and geometrical objects is partially compensated by the necessity to deal with higher order Floer composition maps $\mu^d$ --- think about the local picture of Figure \ref{surgery} when several surgeries are performed. Consequently, studying the Lagrangian constituents unavoidably requires us to enhance the analytical tools at disposal. This is beyond our scope; we venture no further.
\vspace*{0.5cm}

\noindent Finally, some heuristics about the importance of the split-closure $\mathsf{D}^\pi(\mathscr{F}(M))$. Given finitely many Lagrangian submanifolds $L_i\in\textup{obj}(\mathscr{F}(M))$ which generate an $A_\infty$-subcategory $\mathcal{A}\subset\mathscr{F}(M)$, the $A_\infty$-category $Tw\mathcal{A}$ contains all possible mapping cones which can be iteratively constructed from them, in particular Dehn twists and Lagrangian connected sums. Surgery of all starting objects may produce a Lagrangian submanifold which is disconnected. Though an element of $\mathcal{A}$, its components may no longer be as much. Therefore, it is conjectured that taking the idempotent completion of $Tw\mathcal{A}$ could include the individual constituents as objects of $\mathsf{D}^\pi(\mathcal{A})$, solving this deficiency. 
This is particularly true in the case of the 2-torus:

\begin{Ex}\label{2torus}						
	Consider the \textbf{symplectic 2-torus}\index{symplectic 2-torus} $\mathbb{T}^2=\mathbb{S}^1\times\mathbb{S}^1$ equipped with the standard area form $\omega \coloneqq d\theta_1\wedge d\theta_2$, where $\theta_1$ and $\theta_2$ are the angle coordinates\footnote{Observe that $\theta_j(e^{2\pi it})\coloneqq t\in(0,1)$ is well defined only on $\mathbb{S}^1\setminus\{1\}$, not globally, so that $d\theta_j$ does \textit{not} denote the 1-form obtained as exterior derivative of $\theta_j$!} on the respective factors $\mathbb{S}^1\subset\mathbb{C}$. Objects of $\mathscr{F}(\mathbb{T}^2)$, our decorated Lagrangian submanifolds, are all simple loops (that is, non-self-intersecting closed paths) on $\mathbb{T}^2$, identifiable as class representatives of $\mathbb{Z}\oplus\mathbb{Z}=\pi_1(\mathbb{T}^2)$ and determined (up to homotopy) by their rational slope in the quotient space representation of $\mathbb{T}^2$. 
	
	Fix a preferred longitude $L_1\coloneqq \alpha\in\textup{obj}(\mathscr{F}(\mathbb{T}^2))$ and meridian $L_2\coloneqq \beta\in\textup{obj}(\mathscr{F}(\mathbb{T}^2))$ along the two factors of the identification polygon of $\mathbb{T}^2$ (Figure \ref{firsttorus} top left); these belong to the classes $(1,0)$ respectively $(0,1)$ of $\pi_1(\mathbb{T}^2)$. Call $p\in CF(L_2,L_1)$ the unique intersection point of $\alpha$ and $\beta$. Since $\beta$ is clearly diffeomorphic to $\mathbb{S}^1$, it is a Lagrangian sphere, so that we can construct the Dehn twist of $\alpha$ along it. According to Remark \ref{twistequivsurg} and Theorem \ref{surgerycone}, this yields a new simple loop $\tau_\beta(\alpha)\simeq\alpha\#\beta\cong Cone(p)\in\textup{obj}\big(\mathsf{D^b}(\mathscr{F}(\mathbb{T}^2))\big)$, depicted in Figure \ref{firsttorus} top right. The Dehn twisting procedure can be iterated indefinitely (this amounts to repeatedly take algebraic twists as discussed in Section \ref{ch4.4}), applying also $\tau_\alpha$. 
	
	\begin{figure}[htp]
		\centering
		\includegraphics[width=0.45\textwidth]{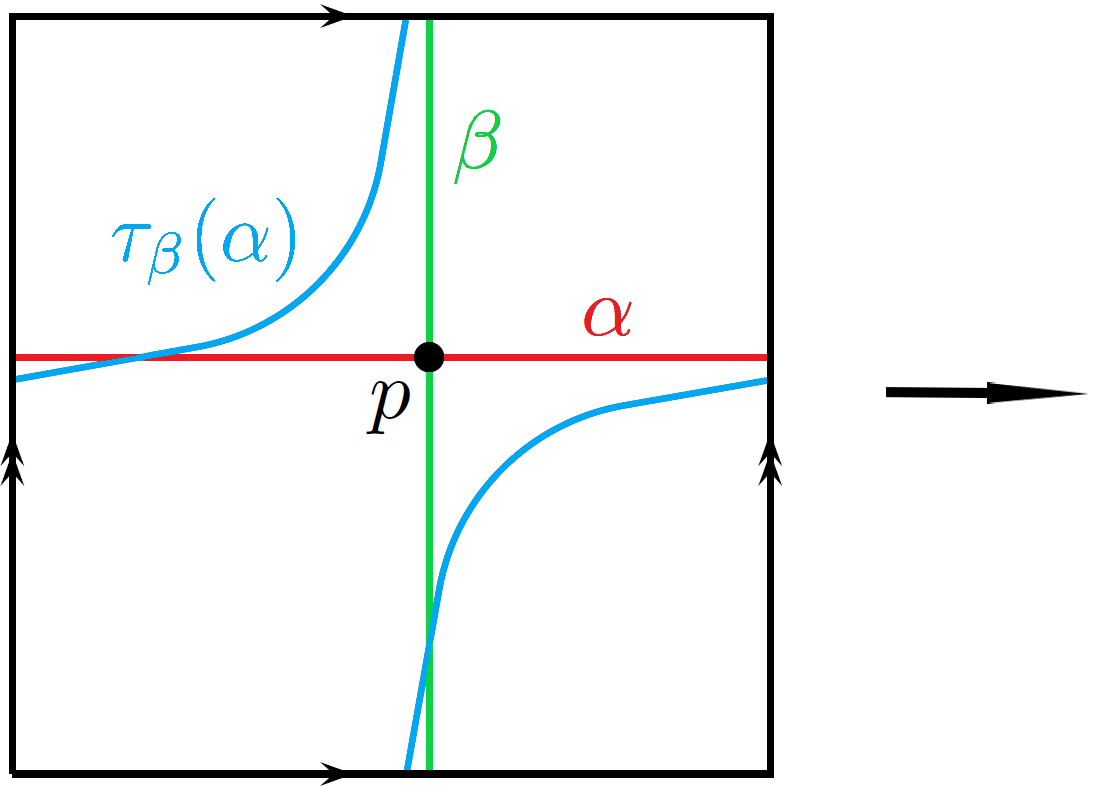}\quad
		\includegraphics[width=0.45\textwidth]{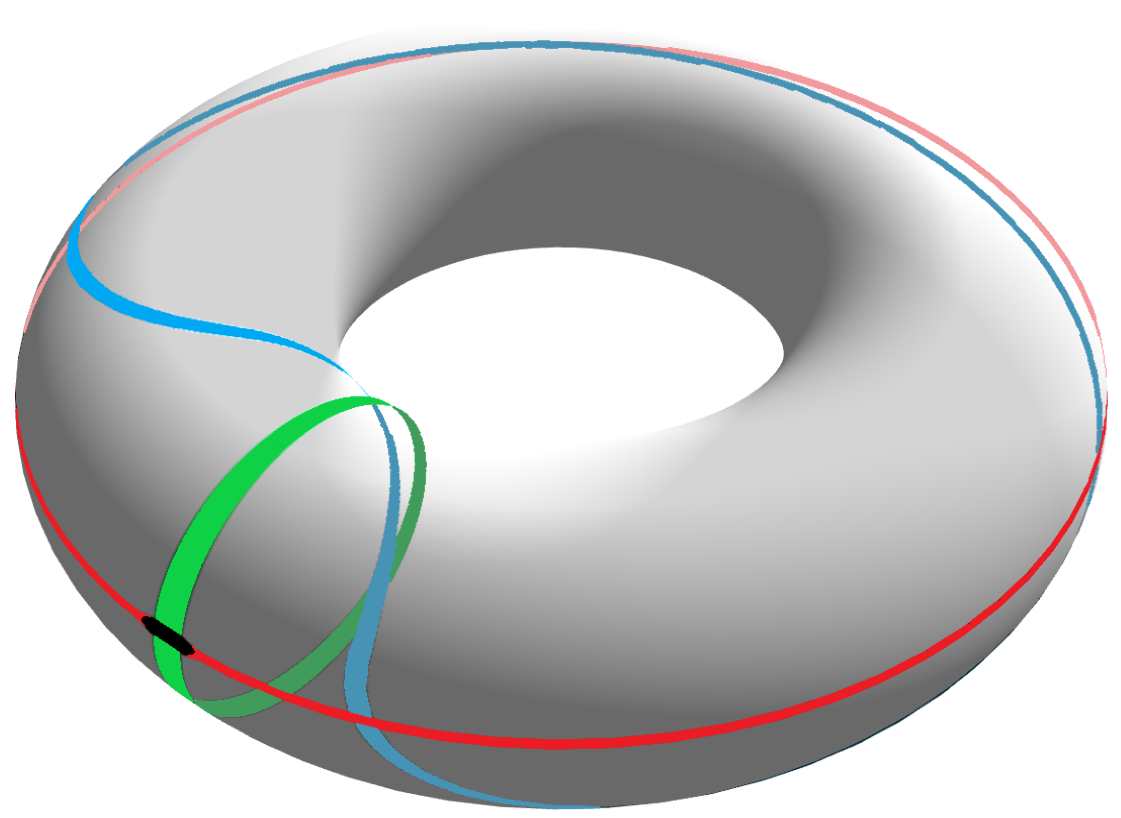}
		
		\vspace{0.5cm}
		
		\includegraphics[width=0.45\textwidth]{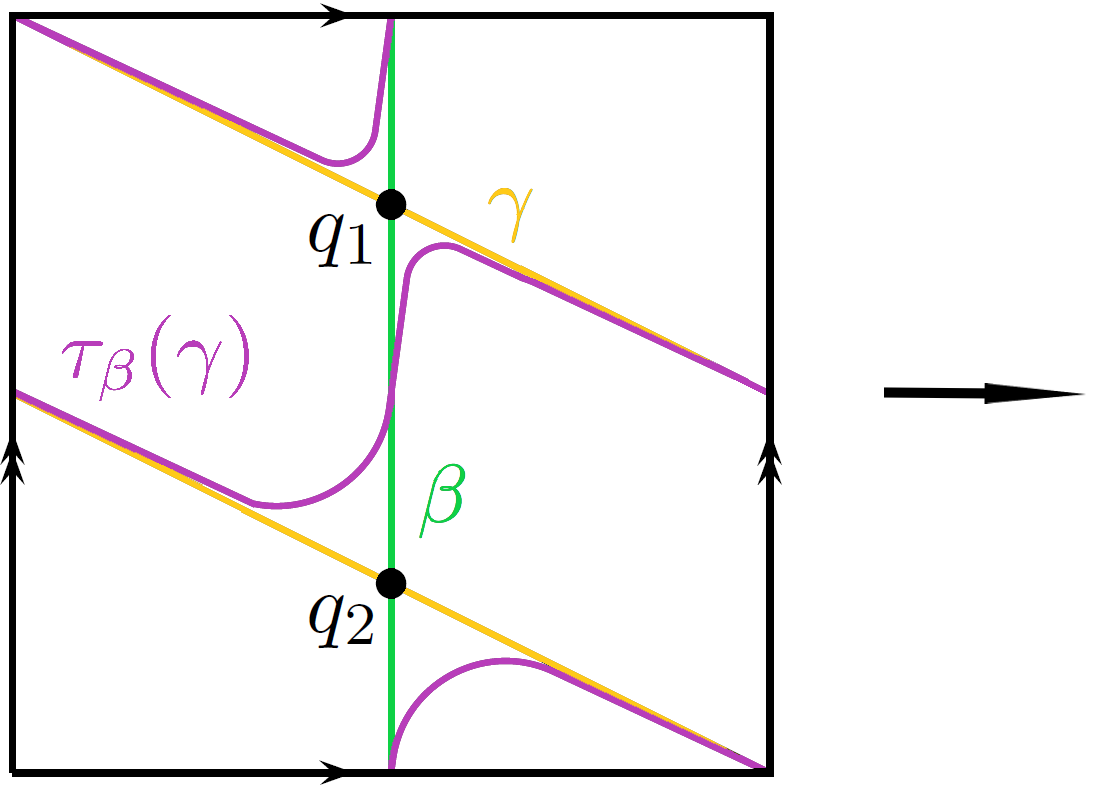}\quad
		\includegraphics[width=0.45\textwidth]{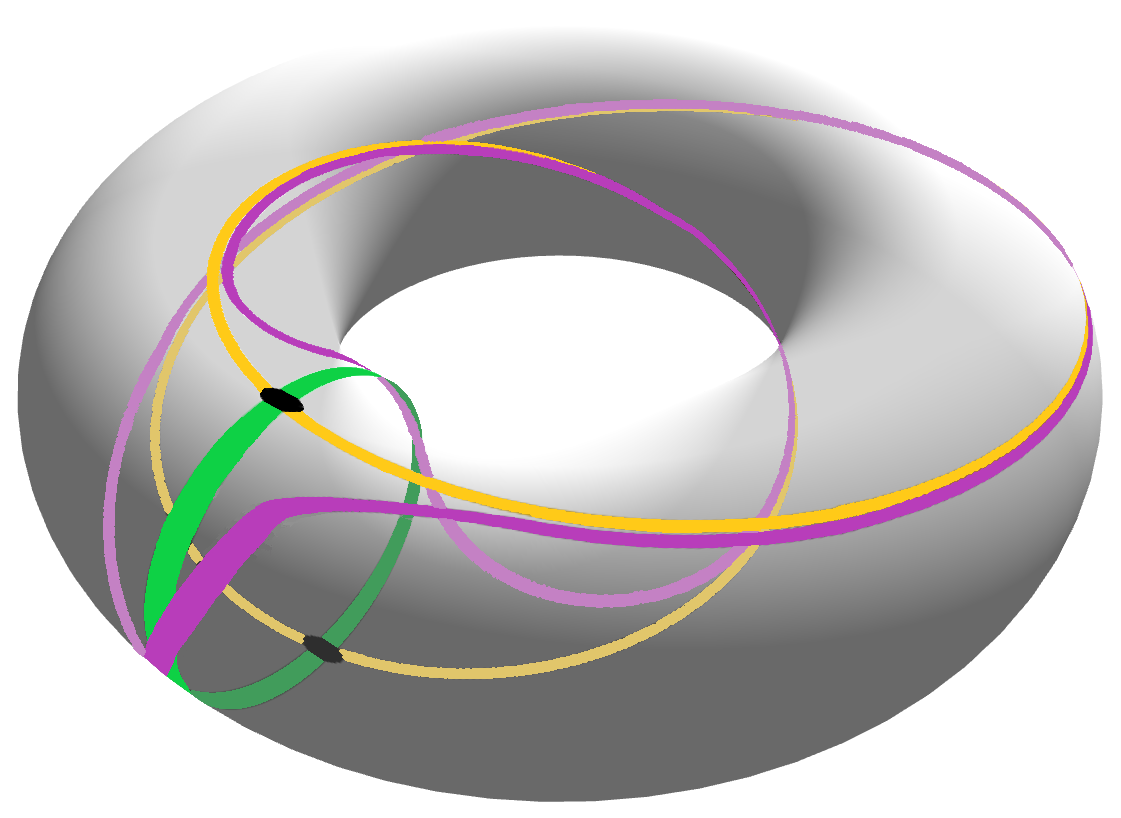}
		\caption{The Lagrangian connected sum of two simple loops possibly yields a  direct sum of unbalanced simple loops (bottom row). These are genuine objects of the split-closed derived category $\mathsf{D}^\pi(\mathscr{F}(\mathbb{T}^2))$.
		[Source: \cite{[Aur13]} (identification polygons); special thanks to S. Bone for creating these gorgeous tori!]}
		\label{firsttorus}	
	\end{figure}
	
	Since we will come across all homotopy classes of $\pi_1(\mathbb{T}^2)$ --- indeed, $\tau_\alpha$, $\tau_\beta\in\text{Diff}(\mathbb{T}^2)$ generate the mapping class group of $\mathbb{T}^2$, consisting of ambient isotopy classes of its diffeomorphisms --- one would legitimately expect to obtain all objects of $\mathscr{F}(\mathbb{T}^2)$; however, this is far from true: let $\nu\in\Omega^1(\mathbb{T}^2\setminus\{p\})$ be such that $d\nu=\omega$ (this is possible on the punctured torus!) and $\int_\alpha \nu =\int_\beta\nu = 0$, which means $\alpha,\beta\subset\mathbb{T}^2$ are \textit{balanced} (or exact) Lagrangian submanifolds with $\nu|_\alpha = d\theta_\alpha$ and $\nu|_\beta = d\theta_\beta$ (so that indeed $\int_\alpha \nu =\int_\alpha d\theta_\alpha=\theta_\alpha(e^{2\pi i})-\theta_\alpha(e^0)=0$, and similarly on $\beta$). Then also all the successive mapping cones constructed above will be exact: looking at $\tau_\beta(\alpha)$ in Figure \ref{firsttorus} top left, we perceive that its bijectivity on $[0,1]\times[0,1]$ causes the Dehn twisted Lagrangian submanifold it describes to be exact --- put differently, if we follow along the corresponding simple loop, we see it revolves around each factor exactly once (indeed, $\tau_\beta(\alpha)$ represents $(1,1)\in\pi_1(\mathbb{T}^2)$). This is manifestly a constraint we need to remove, as not all simple loops are exact!
	
	For example, consider a third simple loop $L_3\coloneqq \gamma\in\textup{obj}(\mathscr{F}(\mathbb{T}^2))$ of rational slope $-1/2$ (Figure \ref{firsttorus} bottom left), representing the class $(2,1)\in\pi_1(\mathbb{T}^2)$ (that of $\tau_\alpha(\tau_\alpha(\beta))$), which intersects $\beta$ twice, say at $q_1,q_2\in CF(L_2,L_3)$. Constructing the (double) connected sum of $\gamma$ with $\beta$ and carefully applying Theorem \ref{surgerycone}, it turns out that $\tau_\beta(\gamma)\simeq\gamma\#\beta\cong Cone(T^{w_1}q_1+T^{w_2}q_2)$ in $\mathsf{D^b}(\mathscr{F}(\mathbb{T}^2))$, for suitable weights $w_1,w_2\in\mathbb{R}$. The geometrical outcome is precisely the direct sum of the two purple, non-exact simple loops of Figure \ref{firsttorus} bottom right, which both lie in the homotopy class $(1,0)$ of $\alpha$. The mindful reader already knows the next move: we take the split-closure of $\mathscr{F}(\mathbb{T}^2)$, which enables us to include both summands of the mapping cone as objects of $\mathsf{D}^\pi(\mathscr{F}(\mathbb{T}^2))$.
	
	To summarize, we can obtain all simple loops of $\mathbb{T}^2$ (up to Hamiltonian isotopy) by iterated twist along $\alpha$ and $\beta$ and eventual inclusion of the arising direct summands. A far more systematic discussion is given in \cite{[LP11]}, with special focus on Lagrangian surgery and the arising exact triangles. Our take aims to provide a geometrical intuition behind the next result. \hfill $\blacklozenge$
\end{Ex}

\begin{Pro}\label{splittorus}					
	Any pair of longitude and meridian of the symplectic\break $2$-torus $\mathbb{T}^2=\mathbb{S}^1\times\mathbb{S}^1$ split-generates its Fukaya category $\mathscr{F}(\mathbb{T}^2)$. Consequently, any compact $2$-dimensional symplectic manifold $M^2$ diffeomorphic to $\mathbb{T}^2$ has Fukaya category $\mathscr{F}(M)$ split-generated isomorphically by any pair of longitude and meridian.
\end{Pro}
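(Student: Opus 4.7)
The plan is to make rigorous the geometric reasoning sketched in Example~\ref{2torus}, combining the classification of simple closed curves on $\mathbb{T}^2$ with the algebraic dictionary between Dehn twists and twisted mapping cones provided by Theorems~\ref{Dehncone} and~\ref{surgerycone}. Every object of $\mathscr{F}(\mathbb{T}^2)$ is, up to Hamiltonian isotopy, a disjoint union of simple closed curves, each labelled by a class $(p,q)\in\pi_1(\mathbb{T}^2)\cong\mathbb{Z}^2$: primitive classes (those with $\gcd(p,q)=1$) correspond to connected loops, while $(kp,kq)$ with $k>1$ is realised by $k$ parallel copies at prescribed heights on the torus, the heights being encoded via Novikov weights.

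First I would show that every connected simple loop lies in the triangulated subcategory $\langle\alpha,\beta\rangle\subset\mathsf{D^b}(\mathscr{F}(\mathbb{T}^2))$. The mapping class group $\mathrm{MCG}(\mathbb{T}^2)\cong\mathrm{SL}(2,\mathbb{Z})$ acts transitively on primitive homotopy classes and is generated by the Dehn twists $\tau_\alpha,\tau_\beta$, so for any primitive $(p,q)$ there exists a word $w$ in $\tau_\alpha^{\pm 1},\tau_\beta^{\pm 1}$ with $w(\alpha)$ representing $(p,q)$. By Theorem~\ref{Dehncone}, each application of $\tau_\alpha$ or $\tau_\beta$ corresponds in $\mathsf{D^b}(\mathscr{F}(\mathbb{T}^2))$ to an algebraic twist realisable as a twisted mapping cone $Cone(\mathrm{ev})\in\textup{obj}(Tw\,\mathscr{F}(\mathbb{T}^2))$; iterating places $w(\alpha)$ in the triangulated envelope of $\{\alpha,\beta\}$ by virtue of Lemma~\ref{TwAgeneratedbyA}.

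Next, for non-primitive classes, Theorem~\ref{surgerycone} identifies $\tau_\beta(\gamma)\simeq\gamma\#\beta$, for a primitive $\gamma$ intersecting $\beta$ at $k$ points $q_1,\dots,q_k$, with the twisted cone of a Floer cocycle $c=\sum_{i=1}^{k}T^{w_i}q_i\in CF(\beta,\gamma)^0$; the corresponding geometric object is a disjoint union of $k$ parallel copies of a primitive loop in the twisted class, heights recorded by the $T^{w_i}$. In $\mathsf{D}^\pi(\mathscr{F}(\mathbb{T}^2))$ this cone splits as $\bigoplus_i L_i$, so that each individual parallel copy becomes a genuine object. Combining both steps, every simple loop on $\mathbb{T}^2$ is reached from $\{\alpha,\beta\}$ by iterated cones, shifts and idempotent splitting, which is split-generation. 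For the second claim, if $M^2\cong\mathbb{T}^2$ diffeomorphically, then $H^2(M;\mathbb{R})\cong\mathbb{R}$, so after rescaling Moser's theorem gives a symplectomorphism $(M,\omega_M)\simeq(\mathbb{T}^2,c\,\omega_{\mathbb{T}^2})$; the Fukaya category is unchanged by rescaling $\omega$, so this yields a quasi-equivalence $\mathscr{F}(M)\simeq\mathscr{F}(\mathbb{T}^2)$ carrying any longitude/meridian pair of $M$ to one of $\mathbb{T}^2$. Since split-generation is preserved by quasi-equivalences (via the same machinery underlying Corollary~\ref{splitequivgen}), split-generation of $\mathscr{F}(M)$ follows.

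The main obstacle will be the precise analysis of the non-primitive step: one must carefully track the Novikov weights $T^{w_i}$ appearing in the cocycle $c$ of Theorem~\ref{surgerycone} and verify that the induced idempotent decomposition of $Cone(c)$ reproduces, up to Hamiltonian isotopy, the correct geometric configuration of $k$ parallel copies of the primitive loop at the prescribed heights on $\mathbb{T}^2$. This weight bookkeeping is precisely where the full content of Floer theory with Novikov coefficients enters the argument, and without it the identification between algebraic direct summands and geometric connected components would remain purely heuristic.
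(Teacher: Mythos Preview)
Your approach is genuinely different from the paper's, and as written it contains a gap that your ``main obstacle'' paragraph does not quite identify.

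The paper does \emph{not} try to reach each Lagrangian by an explicit sequence of twists and splittings. Instead it exploits the relation $(\tau_{S_1}\circ\tau_{S_2})^{6k}=\mathds{1}$ in $\mathrm{SL}(2,\mathbb{Z})$ together with the iterated-twist exact triangle from Section~\ref{ch4.4}: for \emph{any} $L$ one gets a canonical morphism $i:L\to(\tau_{S_1}\circ\tau_{S_2})^{6k}(L)\cong L$ sitting in an exact triangle whose third vertex $Y$ is built from $S_1,S_2$. The class $\langle i\rangle$ lives in $HF^0(L,L)\cong H^0(L)$; once one shows $\langle i\rangle=0$ (which holds for $k=2$, by an argument deferred to \cite{[AS09]}), Remark~\ref{t2rem} gives $Y\cong L\oplus L$, so $L$ is a summand of something in $\langle S_1,S_2\rangle$. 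This handles all $L$ uniformly.

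The gap in your route is in Step~1. You assert that ``every connected simple loop lies in the triangulated subcategory $\langle\alpha,\beta\rangle$'' because $\mathrm{MCG}(\mathbb{T}^2)$ acts transitively on primitive homotopy classes. But objects of $\mathscr{F}(\mathbb{T}^2)$ are Lagrangians up to \emph{Hamiltonian} isotopy, not up to smooth isotopy: within each primitive class there is a circle's worth of non-isomorphic objects (the ``heights''). As Example~\ref{2torus} explains, iterated Dehn twists of balanced $\alpha,\beta$ produce only \emph{balanced} representatives. So Step~1 misses every unbalanced primitive curve, and your Step~2, which you frame as treating ``non-primitive classes'', does not address them either. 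To salvage the strategy you would need to show that for every primitive class and every height, the corresponding curve arises as a summand of some cone of balanced curves with specified Novikov weights --- essentially a full Floer computation case by case (this is the content of \cite{[LP11]}). The paper's argument sidesteps this entirely by never having to enumerate heights or classes.
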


\begin{proof}(\textit{Sketch})
	We work directly on $M$. Let $S_1, S_2\in\textup{obj}(\mathscr{F}(M))$ be diffeomorphic to some fixed longitude respectively meridian as in previous example. Choose any homotopically essential closed curve (the manifold analogue of simple loops in $\mathbb{T}^2$). We want to show it lies in the split-closure of $\{S_1,S_2\}$. Then so will the whole Fukaya category $\mathscr{F}(M)$.
	
	Firstly, one can show that for all $k\geq 1$ holds $(\tau_{S_1}\circ\tau_{S_2})^{6k}=\mathds{1}\in SL(2,\mathbb{Z})$. We thus have a canonical morphism $i: L\rightarrow(\tau_{S_1}\circ\tau_{S_2})^{6k}(L)\cong L$ in $\mathsf{D^b}(\mathscr{F}(M))$ fitting into the diagram
	\[
	\begin{tikzcd}
		Y\arrow[rr] & & L\arrow[dl, "\langle i\rangle"] \\
		& (\tau_{S_1}\circ\tau_{S_2})^{6k}(L)\arrow[ul]
	\end{tikzcd}\quad,
	\]
	where by axiom (T1) for triangulated categories we can find some $Y\cong Cone(i)$ $\in\textup{obj}\big(\mathsf{D^b}(\mathscr{F}(M))\big)$ completing the exact triangle. Note that $\langle i\rangle\in HF^0(L,L)\cong H^0(L)$ is just a singular cohomology class, by Proposition \ref{CFLL}. If it vanishes, it follows immediately from the discussion of Remark \ref{t2rem} that $Cone(i)\cong L\oplus L$, making $L$ a direct summand which will therefore be included in the split-closure of $\{S_1,S_2\}$, as claimed.
	
	One can show that if $k=2$, then $\langle i\rangle = 0$. However the argument behind this is too convoluted for the limited set of tools we developed. We refer the reader to \cite[Proposition 6.1]{[AS09]} for an outline of the main strategy.   
\end{proof}

\subsection{The wrapped Fukaya category}\label{ch6.5}

To conclude this chapter, we discuss briefly, and quite vaguely, the wrapped version of Fukaya categories, which deals with \textit{exact} symplectic manifolds (recall the setting of Section \ref{ch5.8}). The main advantage is that we no longer need to impose compactness of the exact Lagrangian submanifolds, only a certain behaviour at infinity. We follow yet again the outline of \cite[section 4.1]{[Aur13]}, with an eye on \cite[section 2]{[Abo10]}. First, some more working tools (see \cite{[Sil12]}).

\begin{Def}										
	Let $M^n$ be a smooth manifold. A point $p\in M$ is called a \textit{contact point} if it has a tangent hyperplane $H_p\subset T_pM$, that is, a subspace with $\text{dim}(H_p)=n-1$. 
	
	A smooth section $H\in\Gamma(TM)$ assigning $p\mapsto H_p$ (a \textit{contact distribution}) can be locally written as $H=\text{ker}(\alpha)$ for some not unique $\alpha\in\Omega^1(M)$ such that $d\alpha|_H$ is a symplectic form. Then each $H_p$ is the kernel of a linear map $0\neq\alpha_p: T_p\rightarrow \mathbb{R}$. We call $\alpha$ a \textbf{contact form}\index{contact form} on $M$. Its non-degeneracy forces the dimension $n$ of $M$ to be odd.  
\end{Def}

\begin{Def}										
	An exact symplectic manifold $M^{2n}=(M,\omega)$ with $\omega=d\lambda\in\Omega^2(M)$ is called a \textbf{Liouville manifold}\index{Liouville manifold} if equipped with some complete $Z\in\mathfrak{X}(M)$ which is outward-pointing at infinity (see next paragraph) and fulfills $\iota_Z(\omega)=\lambda$, or equivalently, $\mathcal{L}_Z(\omega)=\omega$. Then $Z$ is the \textbf{Liouville vector field}\index{Liouville vector field} associated to the \textbf{Liouville form}\index{Liouville form} $\lambda\in\Omega^1(M)$.
\end{Def}

We are particularly interested in Liouville manifolds $(M^{2n},\omega=d\lambda)$ possessing some compact domain $M^{in}\subset M$ with the following properties (see Figure \ref{wrappedman}):
\begin{itemize}[leftmargin=0.5cm]
	\item the $(2n-1)$-dimensional boundary $\partial M^{in}$ is a smooth hypersurface endowed with contact form $\alpha\coloneqq \lambda|_{\partial M^{in}}\in\Omega^1(\partial M^{in})$,
	\item $\text{zeros}(Z)\subset M^{in}$ and $Z$ is positively transverse to $\partial M^{in}$, which means $Z(x)\notin T_x(\partial M^{in})$ and $Z(x)(x^1)>0$ (on a suitable half-space chart about $x$) for all $x\in\partial M^{in}$.  
\end{itemize}

The flow $\phi_Z^t:M\rightarrow M$ of $Z$ allows us to identify $M\setminus M^{in}$ with the symplectization $[1,\infty)\times\partial M^{in}$, where, given the radial coordinate function $r\in C^\infty([1,\infty))$, we have $Z=r\frac{\partial}{\partial r}$ and $\lambda = r\alpha$. By abuse of notation, we can informally write our Liouville manifold as the adjunction space 
\begin{equation}
M=M^{in}\sqcup_{\partial M^{in}} ([1,\infty) \times\partial M^{in})\,.
\end{equation}
For later use, we let $\psi_Z^{\rho}\coloneqq\phi_Z^{\log\rho}$ denote the time-$\log\rho$ flow of $Z$ for $\rho>1$ (observe that $\phi_Z^t$ is conformally symplectic, implying that $\psi_Z^\rho\in\text{Symp}(M,\omega)$).

\begin{figure}[htp]
	\centering
	\includegraphics[width=0.8\textwidth]{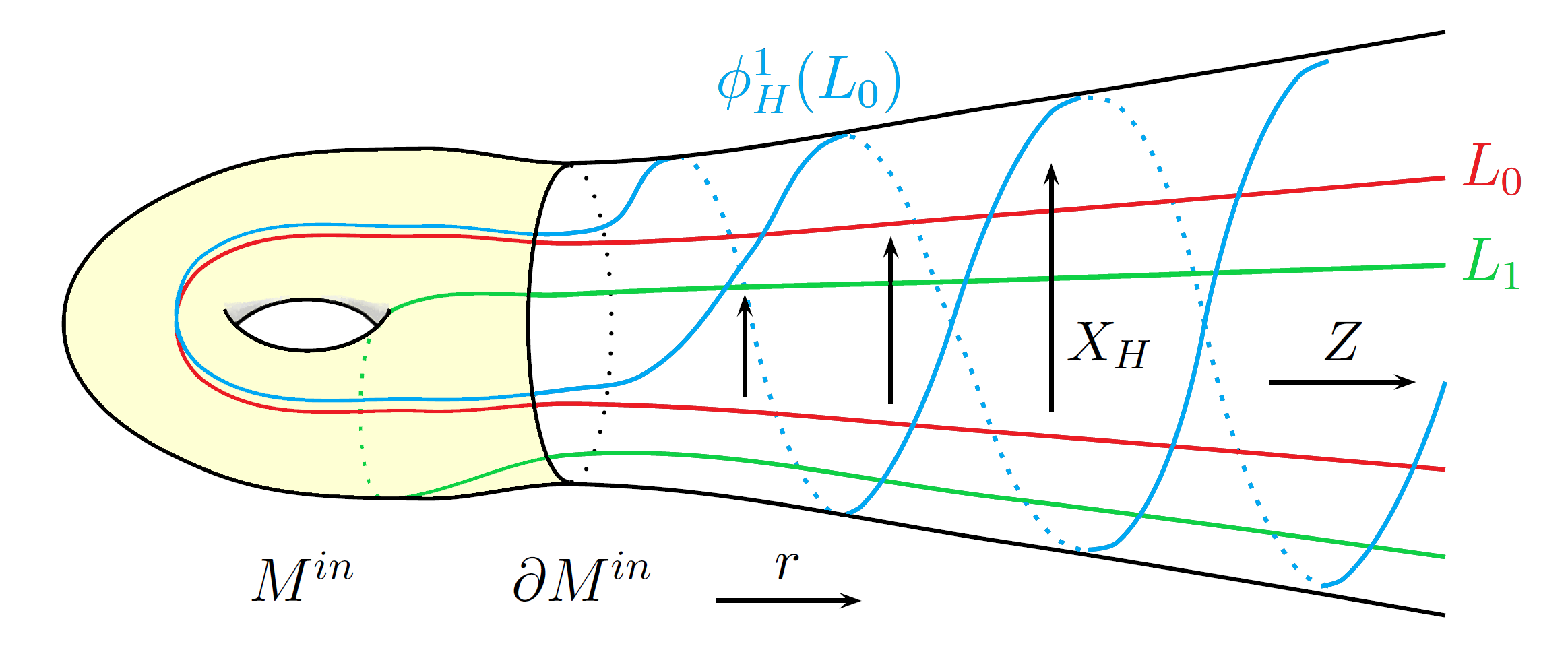}
	\caption{Exact Lagrangian submanifolds $L_0,L_1\in\textup{obj}(\mathscr{W}\!(M))$ of a Liouville 2-dimensional manifold $M$. The flow of the ``wrapping Hamiltonian'' $H\in C^\infty(M)$, quadratic outside of some compact subset of $M$, causes $L_0$ to spiral outwards in radial direction. [Source:\cite{[Aur13]}]}
	\label{wrappedman}	
\end{figure}

\noindent Given a Liouville manifold $(M^{2n},\omega=d\lambda)$ as just described, with $2c_1(TM)=0$, its \textbf{wrapped Fukaya category}\index{wrapped Fukaya category} $\mathscr{W}\!(M)$ is the $A_\infty$-category constructed as follows (refer to \cite{[Abo10]} for a more leisurely investigation):
\begin{itemize}[leftmargin=0.5cm]				
	\renewcommand{\labelitemi}{\textendash}
	\item Objects in $\textup{obj}(\mathscr{W}\!(M))$ are opportunely decorated (in the sense of Chapter 5) connected exact Lagrangian submanifolds 
	\begin{equation}\label{wrappedlagr}
		L=L^{in}\sqcup_{\partial L^{in}} ([1,\infty) \times\partial L^{in})\subset M
	\end{equation}
	(``conical at infinity'') with $L^{in}\subset M^{in}$, such that $\lambda$ vanishes on the region $[1,\infty)\times\partial L^{in}$ (whence $\partial L^{in}=L\cap\partial M^{in}$ is a Legendrian submanifold). Consequently, any such $L$ must be invariant under $\phi_Z^t$ outside of $L^{in}$.
	
	\item To ensure transversality of the relevant moduli spaces, Hamiltonian perturbations $H\in C^\infty(M)$ must be quadratic outside of some compact subset $C\subset M$, that is, $H|_{M\setminus C}=r^2$; there, the associated Hamiltonian vector field $X_H\in\mathfrak{X}(M)$ reads $X_H|_{M\setminus C}=2rR_\alpha$, where $R_\alpha$ is the \textit{Reeb vector field} of $\alpha=\lambda|_{\partial M^{in}}$, the unique vector field such that $d\alpha(R_\alpha,\cdot)=0$ and $\alpha(R_\alpha)=1$. Moreover, we look at almost complex structures $J\in C^\infty([0,1],\mathfrak{J}(M,\omega))$ which are \textit{of contact type} on $\partial M^{in}$: $\lambda\circ J|_{\partial M^{in}}= r\alpha\circ J|_{\partial M^{in}}=dr$ for all $t\in[0,1]$.
	
	\item The \textbf{wrapped Floer complex}\index{wrapped Floer complex} $CW(L_0,L_1)\equiv CW(L_0,L_1;H)$ is then the free module (over $\mathbb{K}$!) generated by the perturbed intersection points of $\mathcal{X}(L_0,L_1)\coloneqq \phi_H^1(L_0)\cap L_1$, where $\phi_H^1(L_0)\pitchfork L_1$ (cf. Figure \ref{wrappedman}). According to Corollary \ref{flowlines}, the latter are just time-1 Hamiltonian chords from $L_0$ to $L_1$ inside of $M^{in}$, while they are non-degenerate \textit{Reeb chords} from $\partial L_0$ to $\partial L_1$ in its complement.
	
	\item Like for standard Fukaya categories, the first composition map is the Floer differential $\mu_{\mathscr{W}\!(M)}^1\coloneqq\partial_{J,H}:CW(L_0,L_1)\rightarrow CW(L_0,L_1)[1]$ as defined in \eqref{Floersdiff}, counting perturbed $J$-holomorphic strips of Maslov index 1 solving Floer's equation \eqref{perturbedCR}. Its well-definiteness and independence from $(J,H)$ in cohomology is justified along the same lines of Theorem \ref{HFwelldef} (refer to \cite[section 2]{[Abo10]} for an adaptation of the proof). We just emphasize that the choices made above guarantee that any solving strip remains within a bounded subset of $M$. Therefore, we can talk about \textbf{wrapped Floer cohomology}\index{wrapped Floer cohomology} rings $HW(L_0,L_1)$.
	
	\item Here is however the first caveat: in order $d\geq 2$, the defining equation \eqref{generalCR} of the generalized Cauchy--Riemann problem is not well behaved and does not remedy the non-compactness of $M$. Instead, we must impose the 1-form $\beta\in\Omega^1(\mathbb{D}^2)$ to additionally satisfy $d\beta\leq 0$, which can be achieved through a rescaling trick  found by Abouzaid (\cite[section 3.2]{[Abo10]}): for $\rho>1$ is $\psi_Z^\rho(L_i)$ exact Lagrangian isotopic to $L_i\in\textup{obj}(\mathscr{W}\!(M))$, and $\frac{1}{\rho}H\circ\psi_Z^\rho=\rho H$ ($=2r^2$ for $\rho=2$) at infinity, so that one can define a Floer product
	\begin{align*}
		\mu_{\mathscr{W}\!(M)}^2:\;\;&CW(L_1,L_2;J,H)\otimes CW(L_0,L_1;J,H)\\
		&\rightarrow CW(\psi_Z^2(L_0),\psi_Z^2(L_2);(\psi_Z^2)_*J,\begin{tiny}\tfrac{1}{2}\end{tiny}H\circ\psi_Z^2)\cong CW(L_0,L_2;J,H)\;.
	\end{align*}
	This results from the count of index 0 $\tilde{J}$-holomorphic triangles of finite energy solving equation \eqref{generalCR}, for a suitably rescaled Floer datum $(\tilde{J},\tilde{H})\in C^\infty([0,1],\mathfrak{J}(M,\omega))\times C^\infty([0,1]\times M)$ coinciding with the chosen $(J,H)$ about the ``input punctures'' $z_1, z_2$ (where $\beta=dt$) and with $(\frac{1}{4}H\circ\psi_Z^2,(\psi_Z^2)_*J)$ near the ``output puncture'' $z_0$ (where $\beta=2dt$). Finally, we applied the natural isomorphisms $CW(L_0,L_1;J,H)\cong CW(\psi_Z^\rho(L_0),\psi_Z^\rho(L_1);(\psi_Z^\rho)_*J,\frac{1}{\rho}H\circ\psi_Z^\rho)$ induced by exact Lagrangian isotopy.
	\item The higher order composition maps $\mu_{\mathscr{W}\!(M)}^d$ are constructed in a similar fashion (see \cite[section 4.1]{[Abo10]}). 
\end{itemize}

\begin{Rem}									
	Notice that in the first step we did \textit{not} prevent the exact Lagrangian submanifolds $L\in\textup{obj}(\mathscr{W}\!(M))$ to be compact, which is the case for those lying in $M^{in}$. From the construction outlined, it then follows that $\mathscr{F}(M^{in})\subset\mathscr{W}\!(M)$ is a full $A_\infty$-subcategory, as defined in Section \ref{ch5.8}. 
\end{Rem}

The concept of wrapped Fukaya category is especially useful in the study of cotangent bundles $M=T^*N$ of compact manifolds $N$, which are naturally equipped with the Liouville form $\lambda\coloneqq p dq\in\Omega^1(M)$. Here the ``wrapping Hamiltonian'' one resorts to is $H\coloneqq |\!|p|\!|^2$. The prototypical example is $T^*\mathbb{S}^1$, for which $H$ is once again the quadratic function $r^2$ (this is discussed in \cite[section 4.2]{[Aur13]}). One can deduce several important results, both for wrapped Floer cohomology and for $\mathscr{W}\!(M)$ itself; some are highlighted in \cite[section 4.3]{[Aur13]}.

\newpage

\section{Towards Homological Mirror Symmetry}
\thispagestyle{plain}

\subsection{Physical origins of mirror symmetry}\label{ch7.1}

Mirror Symmetry is a phenomenon arising from \textit{string theory}, a branch of high-energy theoretical physics attempting to unify gravitation with standard particles physics, imposing itself as a ``theory of everything'' encompassing quantum gravity (\cite{[BBS07]}). While pre-string physics historically relied on classical geometry, string theory, first studied around the late 1960s/early 1970s, is built upon --- and actively helped building --- an enriched version known as \textit{quantum geometry}, whose new features become relevant only at Planck's scale, $\hbar\sim 10^{-35}$ m (see \cite{[Gre97]} for a broader discussion). 

The core idea of quantum geometry is to substitute the quantum field theoretic description of particles as point-like, 0-dimensional entities with \textit{strings}. These are very small, 1-dimensional objects, either closed loops or open segments, whose different vibrational states determine fundamental properties such as mass, charge and spin, effectively replicating characteristics and interacting behaviours of all elementary particles and force carriers (\cite{[BBS07]}).

However, for this model to be physically and mathematically consistent, one needs to add extra dimensions to the ambient spacetime inhabited by strings, along which their oscillations can occur. Therefore, Minkowskian spacetime $\mathbb{R}^4$ must be replaced with some higher dimensional manifold $M=\mathbb{R}^4\times X$. Later iterations of string theory successfully integrated the concept of \textit{supersymmetry}, which establishes a physical duality between bosons and fermions, giving birth to five major \textit{superstring theories} (consult \cite{[Zwi09]}), all requiring $X$ to be some specific 6-dimensional compact Riemannian manifold. 

To explain consistency with the experiments conducted in our 4-dimensional Minkowskian limit, the six additional (spatial) dimensions must be macroscopically undetectable, that is, they must ``close up'' onto themselves within the scale of $\text{diam}(X)\sim \hbar$. Such a model is called a \textit{compactification}. Accounting for supersymmetry, one must necessarily take $X$ to be some 3-dimensional complex \textit{Calabi--Yau manifold/variety} (see \cite{[GHJ03]} and \cite{[YN10]}; we will introduce such an object in next section). Inside them, strings form compact 2-dimensional surfaces called \textit{worldsheets} (as opposed to worldlines spanned by point particles), onto which physicists construct 2-dimensional \textit{superconformal quantum field theories} (SCFT), whose properties depend on invariants of $X$ (\cite{[GHJ03]}).

Now, in the late 1980s it was observed that several non-trivial physical dualities existed between the various superstring theories, most prominently a \textit{T-duality} connecting string theories of type IIA and IIB, which implies the equivalence of their SCFTs, hence of the described physics (cf. \cite{[BBS07]}). This was found to be reflected in the correspondence between some invariants of their underlying Calabi--Yau manifolds, thus said to form a ``mirror pair''. The ensuing studies soon took life of their own, making \textit{Mirror Symmetry} a legitimate mathematical branch (\cite{[ABC+09]}). Enumerative geometry, a field concerned with counting solutions to geometric problems, benefited significantly from this fresh addition (as discussed in \cite{[HKK+03]}): for example, it was found that the number of lines which can be embedded in a quintic Calabi--Yau manifold (such as the one pictured in Figure \ref{calabiyau} at the end of this chapter) is 2875, while the count of conics is 609250 (refer to \cite{[YN10]}).

By the end of the decade, many longstanding such puzzles had been solved. Then, in 1991, the research \cite{[CdGP91]} of physicists Candelas, de la Ossa, Green and Parkes brought renewed interest to the subject: they showed that mirror symmetry can reduce hard problems on a model Calabi--Yau manifold $X$ to significantly easier questions on its mirror manifold $X^\dagger$, such as predicting the number of embedded rational curves of \textit{any} dimension! Currently, one of the most important and best formalized statement about mirror symmetry is Conjecture \ref{MS} (cf. \cite{[Aur09]}), which connects a pair $(X,X^\dagger)$ as follows:
\begin{quote}
	The A-model on $X$ corresponds to the B-model on $X^\dagger$, and the B-model on $X$ corresponds to the A-model on $X^\dagger$.
\end{quote}
Roughly, the (\textit{topologically twisted nonlinear sigma}) \textit{A-model} selects only the symplectic structure of the manifold, while the (\textit{topologically twisted nonlinear sigma}) \textit{B-model} captures just its complex, algebro-geometric features; one often talks about A-side and B-side (such terminology coming from the aforementioned type IIA and IIB superstring theories). Understanding this conjecture is the aim of Section \ref{ch7.2}. \\

By the early 1990s, most of the mathematical machinery was built upon ``closed'' string theories, whose objects are closed strings forming tiny loops spanning tubular worldsheets. However, new research impetus was found when considering open strings with endpoints lying on \textit{D-branes}. A \textit{$p$-brane} is a generalized string of dimension $p$, sweeping a $(p+1)$-dimensional spacetime volume (\cite{[Moo05]}). $D$-branes are a particular class thereof, imposing Dirichlet boundary conditions to any worldsheet surface of open strings with endpoints on them (whence the ``D''). Figure \ref{strings} illustrates some possible configurations.

\begin{figure}[htp]
	\centering
	\includegraphics[width=0.8\textwidth]{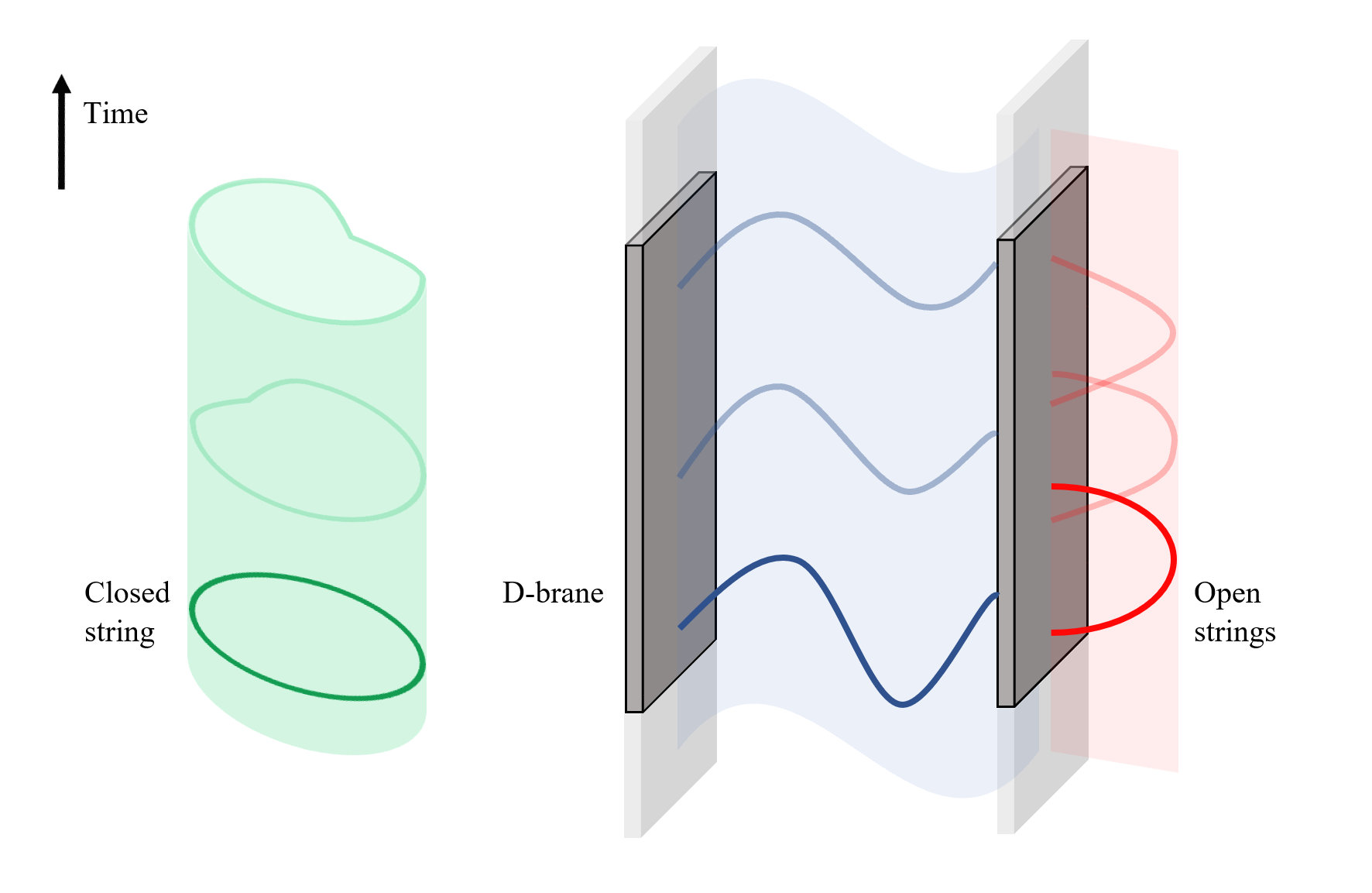}
	\caption{Some possible string configurations and their worldsheets. The blue, open string with boundary on distinct D-branes should be reminiscent of a \protect\hyperlink{certainprob}{certain problem} we discussed a lot, back in Chapter 5...}
	\label{strings}	
\end{figure}

The axioms of quantum field theory already suggested that $D$-branes should be organized in mathematical categories (cf. \cite{[ABC+09]}). Specifically, in the A-model the relevant objects (\textit{A-branes}) are Lagrangian submanifolds with flat line bundles, while in the B-model they are complex submanifolds with a choice of holomorphic vector bundle (\textit{B-branes}). We will investigate both types in Section \ref{ch7.3}. 
  
In 1994, boosted by this open string viewpoint, the mathematician M. Kontsevich entered the frame. During the ICM held in Zürich, he gave a homological refinement of the Mirror Symmetry Conjecture (published in \cite{[Kon94]}), where he claimed that, given a $2n$-dimensional symplectic manifold $(X,\omega)$ with $c_1(X)=0$ and dual $n$-dimensional complex algebraic manifold $X^\dagger$, 
\begin{quote}
	``the derived category constructed from the Fukaya category $\mathscr{F}(X)$ (or a suitably enlarged one) is equivalent to the derived category of coherent sheaves on $X^\dagger$'',
\end{quote}
drawing a parallel between open string theory and triangulated categories. Half of this argument we already have confidence with. Hence, goal of Section \ref{ch7.3} is to plausibly explain what that mirror side exactly is and how it fits in Conjecture \ref{HMS}, an (overreaching) version of Kontsevich's assertion (outlined in \cite{[Fuk02a]}). At the very end, we also mention a conjecture of the trio Strominger, Yau and Zaslow, whose work \cite{[SYZ96]} from 1996 explains qualitatively how to ``concretely'' construct a mirror pair of Calabi--Yau manifolds.

Before moving on, we emphasize that a rigorous mathematical formulation of string theory and its subdisciplines, mirror symmetry included, is still very much work in progress, whence their conjectural nature. On top of this fact, we will also opt for a more direct and less dispersive approach, discussing all necessary ingredients just at a definitory level. Apart from common sense, the reason for this is twofold: on the one hand, we aim at a self-contained survey allowing us to see where the theory developed so far embeds within the ``larger picture''; on the other, this chapter serves as a tentative roadmap to orientate potential future work.

\subsection{Mirror symmetry}\label{ch7.2}

The protagonists of the Mirror Symmetry Conjecture are Calabi--Yau manifolds. The definition we provide for them requires a few preliminary steps in the realm of Kähler geometry. First, we give an alternative characterization for complex manifolds, bypassing the holomorphic atlas (a proof is easily found throughout literature).

\begin{Lem}[Newlander--Nirenberg Theorem]\label{maybecomplex}	
	Let $M^n=(M,J)$ be an almost complex manifold of dimension $n$ over $\mathbb{C}$. Then it is a \textbf{complex manifold}\index{complex manifold} \textup(as described in Section \ref{ch5.1}\textup) if and only if its almost complex structure $J\in\mathcal{T}^{1,1}(M)$ makes the \textup{Nijenhuis tensor} 
	\[
	N_J(X,Y)\coloneqq [X,Y]+J([JX,Y]+[X,JY])-[JX,JY]
	\]
	vanish for all $X,Y\in\mathfrak{X}(M)$ \textup(then $J$ is called an \textup{integrable complex structure} for $M$\textup).   
\end{Lem}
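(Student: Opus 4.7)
The plan is to prove the two directions separately, with the first step being a quick verification that the expression $N_J(X,Y)$ is genuinely a tensor field despite its dependence on Lie brackets. Using the Leibniz-type identity $[fX,gY]=fg[X,Y]+f(Xg)Y-g(Yf)X$ in each of the four summands defining $N_J$, all derivative terms cancel precisely because $J^2=-\textup{id}$; the result is $C^\infty(M)$-bilinear and antisymmetric, so $N_J$ is a section of $\Lambda^2 T^*M\otimes TM$ and may be checked pointwise in any frame.

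For the forward implication, suppose $M$ is a complex manifold. On a holomorphic chart with coordinates $z^k=x^k+iy^k$, the transition functions between such charts satisfy the Cauchy--Riemann equations, which makes the prescription $J\partial_{x^k}\coloneqq \partial_{y^k}$, $J\partial_{y^k}\coloneqq -\partial_{x^k}$ intrinsic and coincide with the given almost complex structure (this is essentially the content of Definition \ref{almostcomplex} applied in holomorphic coordinates). Because the coordinate vector fields $\partial_{x^k},\partial_{y^k}$ all commute, substituting them into the Nijenhuis expression kills each of the four bracket terms, giving $N_J=0$ on the chart; by tensoriality this extends globally.

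For the reverse implication, the standard strategy is to reformulate vanishing of $N_J$ on the complexified tangent bundle $T_{\mathbb C}M\coloneqq TM\otimes_{\mathbb R}\mathbb C$. The $\mathbb C$-linear extension of $J$ decomposes $T_{\mathbb C}M=T^{1,0}M\oplus T^{0,1}M$ into $(\pm i)$-eigenbundles, where $T^{1,0}M=\{X-iJX\mid X\in TM\}$ and $T^{0,1}M=\overline{T^{1,0}M}$. A direct computation with complexified brackets shows that $N_J\equiv 0$ is equivalent to the involutivity condition $[\Gamma(T^{0,1}M),\Gamma(T^{0,1}M)]\subset \Gamma(T^{0,1}M)$. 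One then seeks, about every point, $n$ smooth complex-valued functions $z^1,\dots,z^n$ annihilated by every section of $T^{0,1}M$ (equivalently, $\bar\partial z^k=0$) and with $\mathbb C$-linearly independent differentials; such a tuple is automatically a holomorphic chart, and compatibility between overlapping charts follows from the same $\bar\partial$-equation.

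The main obstacle lies precisely in producing these functions. In the real-analytic category the holomorphic Frobenius theorem applies and the conclusion is immediate. In the merely $C^\infty$ setting, however, the complex distribution $T^{0,1}M$ need not be integrable in the ordinary Frobenius sense, and one must invoke the hard analytic Newlander--Nirenberg theorem, whose proof relies on delicate elliptic estimates for a $\bar\partial$-type operator. Since this PDE machinery is squarely outside the scope of this thesis, I would at this point refer to the classical literature (e.g. \cite{[Sil12]}) rather than attempt to reproduce the argument.
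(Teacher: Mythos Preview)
Your sketch is correct. The paper itself does not prove this lemma at all: it simply states the result and remarks that ``a proof is easily found throughout literature'', so your outline already goes further than the source by recording tensoriality of $N_J$, the easy direction via holomorphic coordinate frames, the reformulation as involutivity of $T^{0,1}M$, and the honest acknowledgement that the $C^\infty$ case requires the hard PDE argument of Newlander--Nirenberg.
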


Compatibility with the usual definition of complex manifold is due to the Nijenhuis tensor $N_J$ being the obstruction to the existence of ($J$-)\textbf{holomorphic functions}\index{holomorphic function}: these are smooth functions $f\in C^\infty(M,\mathbb{C})$ such that $(df)_x\circ J=i (df)_x$ for all $x\in M$ (where $df\coloneqq d(\mathfrak{Re}(f))+id(\mathfrak{Im}(f))$; compare this with Definition \ref{Jholocurve} of $J$-holomorphic curves). 

Since, fixed some Riemannian metric (always existing), symplectic manifolds are naturally almost complex, we sometimes capture their being additionally complex through the notation $(M^{2n},J,\omega)$, where the dimension $2n$ is over $\mathbb{R}$. 

\begin{Rem}								
	Here are some fundamental notions of complex differential geometry (we refer the reader to \cite[chapters 14, 15]{[Sil12]} for a more digestible introduction).
	\begin{itemize}[leftmargin=0.5cm]
		\item An almost complex structure $J\in\mathcal{T}^{1,1}(M)\cong\Gamma(\text{End}(TM))$ on $M^{2n}$ can be extended to the \textit{complexified tangent bundle} $TM^\mathbb{C}\coloneqq TM\otimes\mathbb{C}$ of fibers $T_xM^\mathbb{C}=T_xM\otimes\mathbb{C}$ simply by imposing $J(X\otimes z)\coloneqq J(X)\otimes z$ for all $X\in\mathfrak{X}(M)$ and $z\in\mathbb{C}$. This yields a tensor field $J\in\mathcal{T}^{1,1}(TM^\mathbb{C})$ of eigenvalues $\pm i$ (by definition), so that we may decompose $TM^\mathbb{C}$ into the respective eigenvalue subbundles, $TM^\mathbb{C}=TM^{1,0}\oplus TM^{0,1}$. Similarly, we can form the \textit{complexified cotangent bundle} $T^*M^\mathbb{C}\coloneqq T^*M\otimes\mathbb{C}=T^*M^{1,0}\oplus T^*M^{0,1}$.
		
		\item In particular, it makes sense to talk about complex-valued forms on $(M,J)$: letting $\Lambda^{p,q}(T^*M^\mathbb{C})\coloneqq \Lambda^p(T^*M^{1,0})\wedge\Lambda^q(T^*M^{0,1})$, a \textbf{$(p,q)$-form}\index{pq@$(p,q)$-form} is an element of $\Omega^{p,q}(M)\coloneqq \Gamma(\Lambda^{p,q}(T^*M^\mathbb{C}))$. Then we define 
		\[
		\Omega^r(M)\coloneqq \bigoplus_{p+q=r}\Omega^{p,q}(M)\,.
		\]
		Assuming $M$ to be complex, we can think for example of $\Omega^{1,1}(M)$ as being locally spanned by $dz\wedge d\bar{z}$, for some (holomorphic) coordinate function $z=x+iy\in C^\infty(M,\mathbb{C})$ (thus fulfilling $dz\circ J=i dz=idx-dy$).	
		
		\item Let $\pi^{p,q}:\Lambda^r(T^*M^\mathbb{C})\coloneqq\bigoplus_{p+q=r}\Lambda^{p,q}(T^*M^\mathbb{C})\rightarrow \Lambda^{p,q}(T^*M^\mathbb{C})$ denote the projection onto the $(p,q)$-th summand. Then the (extended) exterior differential operator $d:\Omega^\bullet(M)\rightarrow\Omega^{\bullet+1}(M)$ yields two differentials $\partial, \bar{\partial}$ defined by 
		\[
		\partial_p\coloneqq \pi^{p+1,q}\circ d:\Omega^{p,q}(M)\rightarrow\Omega^{p+1,q}(M) \,,
		\]
		\[
		\newline \bar{\partial}_q\coloneqq \pi^{p,q+1}\circ d:\Omega^{p,q}(M)\rightarrow\Omega^{p,q+1}(M)\,.
		\]
		For functions $f\in C^\infty(M,\mathbb{C})$, it holds $d_0f=\partial_0 f+\bar{\partial}_0f$, so that in particular $f$ is holomorphic if and only if $\bar{\partial}_0f=0$.
		
		\item In fact, on complex(!) manifolds $M$, the equality $d=\partial+\bar{\partial}$ holds for any form $\nu\in\Omega^r(M)$, and we can deduce from $0=d^2\nu=\partial^2\nu + (\partial\bar{\partial}+\bar{\partial}\partial)\nu+\bar{\partial}^2\nu$ that $\partial,\bar{\partial}$ are actual (anticommuting) coboundary operators for the cochain complexes $\Omega^{\bullet,q}$ respectively $\Omega^{p,\bullet}$. Thus, the \textbf{Dolbeault cohomology groups}\index{Dolbeault cohomology group} 
		\begin{equation}
			H_\text{Db}^{p,q}(M)\coloneqq \frac{\text{ker}(\bar{\partial}_q)}{\text{im}(\bar{\partial}_{q-1})}
		\end{equation}
		are well defined for all $p,q\in\mathbb{Z}$.
		
		\item Then the \textbf{Dolbeault Theorem}\index{Dolbeault Theorem} gives isomorphisms 
		\begin{equation}
			H_\text{Db}^{p,q}(M)\cong H^q(M;\Omega_M^{p,0})\,,
		\end{equation}
		where the right-hand side is the \textit{$q$-th sheaf cohomology group} associated to the sheaf $\Omega_M^{p,0}$ of $(p,0)$-forms on $M$ (sheaf cohomology is discussed, for example, in \cite[chapter III]{[Har77]}). This result can be regarded as the complex analogue of de Rham's Theorem.
	\end{itemize}
\end{Rem}

\begin{Def}\label{Kahlerman}						
	Let $M^n=(M,J)$ be a complex manifold equipped with a Riemannian metric $h\in\Gamma(T^*M\otimes T^*M)$. Then $h$ is a \textit{Hermitian metric} if $h(X,Y)=h(JX,JY)$ for all $X,Y\in\mathfrak{X}(M)$ (this means we have a smoothly varying positive definite hermitian inner product on $TM$). We call $(M,J,h)$ a \textbf{Hermitian manifold}\index{Hermitian manifold}. Note that these data identify a real $(1,1)$-form 
	\[
	\kappa\coloneqq-\mathfrak{Im}(h)\in\Omega^2(M)\quad\text{fulfilling}\quad \kappa(J\cdot,\cdot)=h(\cdot,\cdot)\,.
	\]
	  
	Moreover, $h$ is also a \textit{Kähler metric} if additionally $\nabla J=0$ (for $\nabla$ the Levi--Civita connection of $h$), or equivalently, if $d\kappa=0$ or $\nabla\kappa=0$. When true, we call $M=(M,J,h,\kappa)$ a \textbf{Kähler manifold}\index{Kähler manifold} and $\kappa$ its \textbf{Kähler form}\index{Kähler form}, on a local chart $(U,(z_j)_{j=1}^n)$ given by 
	\begin{equation}
	\kappa=\frac{i}{2}\sum_{j,k=1}^n G_{jk}dz_j\wedge d\bar{z}_k\,,
	\end{equation}
	where $(G_{jk}(x))$ is a positive definite hermitian matrix at all $x\in M$ (as shown in \cite[section 16.1]{[Sil12]}).
\end{Def}

From the symplectic viewpoint, a Kähler manifold is just a symplectic manifold $(M^{2n},\kappa)$ equipped with an integrable complex structure $J\in\mathfrak{J}(M,\kappa)$, and subtended Kähler metric $\kappa^\mathbb{C}\coloneqq B-i\kappa$ --- also called \textbf{complexified Kähler form}\index{Kähler form!complexified} --- where $B=\mathfrak{Re}(\kappa^\mathbb{C})$ (the ``B-field'') is the well-defined Riemannian metric on $M$ given by $B(\cdot,\cdot)\coloneqq \kappa(J\cdot,\cdot)$ (cf. Definition \ref{almostcomplex}). In light of this, we drop the redundancy and simply denote Kähler manifolds by $(M,J,\kappa^\mathbb{C})$, with $\kappa=-\mathfrak{Im}(\kappa^\mathbb{C})$.

Moreover, observe that $J\in\text{Symp}(M,\kappa)$, the reason why $\kappa\in\Omega^2(M)$ is actually a ($\partial/\bar{\partial}$-closed) $(1,1)$-form, then yielding a well-defined Dolbeault cohomology class $\langle\kappa\rangle\in H_{Db}^{1,1}(M)$.

For compact Kähler manifolds there is a direct relation between de Rham and Dolbeault cohomologies:

\begin{Thm}[Hodge]\label{Hodge}						
	Let $M^n=(M,J,\kappa^\mathbb{C})$ be a compact Kähler manifold. Then we have the following \textbf{Hodge decomposition}\index{Hodge decomposition}:
	\[
	H_{dR}^r(M;\mathbb{C})\cong\bigoplus_{p+q=r}H_{Db}^{p,q}(M)\,,
	\]
	with $H_{Db}^{p,q}(M)\cong \overline{H_{Db}^{q,p}(M)}$ and all $H_{Db}^{p,q}(M)$ finite-dimensional. Denoting by $b^r(M)\coloneqq \textup{dim}(H_{dR}^r(M;\mathbb{C}))$ the Betti numbers and by $h^{p,q}(M)\coloneqq \textup{dim}(H_{Db}^{p,q}(M))$ the \textbf{Hodge numbers}\index{Hodge numbers} of $M$, we therefore have $b^r(M)=\sum_{p+q=r}h^{p,q}(M)$ and $h^{p,q}(M)=h^{q,p}(M)$. 
\end{Thm}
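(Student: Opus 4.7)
The plan is to prove this via Hodge theory for elliptic operators, specialized to Kähler manifolds through the so-called Kähler identities. First, I would equip $M$ with the Riemannian metric $B=\mathfrak{Re}(\kappa^{\mathbb{C}})$ and its natural Hermitian extension to complex-valued forms, producing $L^2$-inner products on $\Omega^{r}(M;\mathbb{C})$ which restrict orthogonally to each piece $\Omega^{p,q}(M)$ of the bigrading. With respect to these, I can define formal adjoints $d^{*}$, $\partial^{*}$, $\bar{\partial}^{*}$ and the associated Laplacians $\Delta_{d}=dd^{*}+d^{*}d$, $\Delta_{\partial}=\partial\partial^{*}+\partial^{*}\partial$, $\Delta_{\bar{\partial}}=\bar{\partial}\bar{\partial}^{*}+\bar{\partial}^{*}\bar{\partial}$, each a second-order elliptic, self-adjoint, non-negative operator.

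Second, I would invoke the general Hodge Theorem for such operators on a compact oriented Riemannian manifold (a standard result in global analysis): for each Laplacian $\Delta$ one has an $L^{2}$-orthogonal decomposition $\Omega^{\bullet}=\ker(\Delta)\oplus\mathrm{im}(\Delta)$, with $\ker(\Delta)$ finite-dimensional and consisting of smooth forms. Applied to $\Delta_{d}$ this identifies $H_{dR}^{r}(M;\mathbb{C})\cong\mathcal{H}^{r}_{d}(M)$, and applied to $\Delta_{\bar{\partial}}$ on each bidegree yields $H_{Db}^{p,q}(M)\cong\mathcal{H}^{p,q}_{\bar{\partial}}(M)$, giving at once the claimed finite-dimensionality of the Dolbeault groups.

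Third, and this is where the Kähler hypothesis enters decisively, I would establish the Kähler identities $[\Lambda,\bar{\partial}]=-i\partial^{*}$ and $[\Lambda,\partial]=i\bar{\partial}^{*}$, where $\Lambda$ denotes the formal adjoint of the Lefschetz wedging operator $\alpha\mapsto\kappa\wedge\alpha$. The strategy is to reduce the verification to a pointwise statement, using $\nabla J=0$ (equivalently $\nabla\kappa=0$) to choose, at any prescribed point, holomorphic normal coordinates in which the metric osculates to the flat Hermitian metric on $\mathbb{C}^{n}$ to second order; there the identities reduce to a direct linear-algebraic computation on the standard Kähler model. From these one derives the fundamental consequence $\Delta_{d}=2\Delta_{\partial}=2\Delta_{\bar{\partial}}$ after expanding the brackets.

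Finally, the last identity has two powerful corollaries. On the one hand, $\Delta_{\bar{\partial}}$ preserves the bidegree $(p,q)$, so by equality so does $\Delta_{d}$; hence the space of complex harmonic $r$-forms splits as $\mathcal{H}^{r}_{d}(M)=\bigoplus_{p+q=r}\mathcal{H}^{p,q}_{\bar{\partial}}(M)$, and passing to cohomology produces the Hodge decomposition. On the other hand, the real operator $\Delta_{d}$ commutes with complex conjugation, while conjugation interchanges $\Omega^{p,q}$ with $\Omega^{q,p}$; thus it induces a conjugate-linear isomorphism $\mathcal{H}^{p,q}_{\bar{\partial}}(M)\cong\mathcal{H}^{q,p}_{\bar{\partial}}(M)$, which descends to $H_{Db}^{p,q}(M)\cong\overline{H_{Db}^{q,p}(M)}$ and yields $h^{p,q}=h^{q,p}$. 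Summing dimensions in the Hodge decomposition gives $b^{r}=\sum_{p+q=r}h^{p,q}$. The main obstacle is the proof of the Kähler identities: the Hodge Theorem for elliptic operators is an analytic black box that can be cited, but the identities themselves demand a careful local computation in which the Kähler condition $\nabla J=0$ is indispensable — without it one cannot align the pointwise model with the flat case and the symbolic bracket computation fails.
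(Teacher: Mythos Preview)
Your outline is the standard, correct route to the Hodge decomposition via elliptic Hodge theory and the K\"ahler identities $\Delta_{d}=2\Delta_{\partial}=2\Delta_{\bar{\partial}}$. However, the paper does not supply any proof of this theorem at all: it is stated as the classical Hodge Theorem and used as a black box, with the surrounding text immediately moving on to the Hodge diamond. So there is nothing in the paper to compare your argument against; your proposal simply fills in what the paper deliberately omits, and does so along the expected lines.
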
	

Information about Hodge numbers is encoded in the so-called \textbf{Hodge diamond}\index{Hodge diamond}:
\begin{equation}
	\begin{tabular}{c c c c c}
				& 				& $h^{n,n}$		&				&			 	\\
				& $h^{n,n-1}$ 	& 				& $h^{n-1,n}$ 	& 				\\
	$h^{n,n-2}$	&				& $h^{n-1,n-1}$	&				& $h^{n-2,n}$ 	\\
	\vdots		&				& \vdots		&				& \vdots		\\
	$h^{2,0}$ 	&				& $h^{1,1}$ 	&				& $h^{0,2}$		\\
				& $h^{1,0}$		& 				& $h^{0,1}$		& 				\\
				& 				& $h^{0,0}$		&				&			 	
	\end{tabular}
\end{equation}
The central column is made of all $h^{k,k}(M)$, which are shown to be positive integers. Complex conjugation of the underlying Dolbeault cohomology groups corresponds to the mirroring with respect to this central column, while symmetry about the diamond center is given by the \textit{Hodge $\ast$-operator}, which is the map $\ast:\Lambda(T_xM)\rightarrow\Lambda(T_xM),\, \ast(v_1\wedge...\wedge v_k)\coloneqq v_{k+1}\wedge...\wedge v_n$ (with $\ast(1)=v_1\wedge...\wedge v_n$ and $\ast(v_1\wedge...\wedge v_n)=1$), yielding isomorphisms $H_{Db}^{p,q}(M)\cong H_{Db}^{n-p.n-q}(M)$. 

One last technical bit to go.

\begin{Def}											
	Let $M=(M,g)$ be a Riemannian manifold equipped with Levi--Civita connection $\nabla$. The \textbf{Riemannian holonomy group}\index{Riemannian holonomy group} of $g$ at $x\in M$ is \begin{align*}
		\text{Hol}_x(g)\coloneqq \text{Hol}^\nabla(x)=\{\mathbb{P}_\gamma:T_xM\rightarrow T_xM \mid\;
		& \mathbb{P}_\gamma\text{ parallel transport map},\\
		& \gamma \text{ piecewise smooth loop at }x\}\,,
	\end{align*}
	which can be regarded as a subgroup\footnote{Recall that, up to conjugation, $\text{Hol}^\nabla(x)<\text{GL}(n,\mathbb{R})$ as subgroup for any $x\in M$, assuming $M$ is connected (which we implicitly do).} $\text{Hol}(g)<\text{O}(n)$, as explained in \cite[section 2.5]{[GHJ03]}.
	In particular, given a Kähler manifold $(M^n,J,\kappa^\mathbb{C})$, one can show that $\text{Hol}(\kappa^\mathbb{C})$ $\coloneqq\text{Hol}(\mathfrak{Re}(\kappa^\mathbb{C}))<\text{U}(n)$ (itself a subgroup of $\text{O}(2n)$; see \cite[section 4.2]{[GHJ03]}).  
\end{Def}

We are finally ready to define Calabi--Yau manifolds (with reference to \cite[Definition 4.3]{[GHJ03]}). 

\begin{Def}											
	A \textbf{Calabi--Yau manifold}\index{Calabi--Yau manifold} $X^n=(X,\varpi)$ is a compact Kähler manifold $(X,J,\kappa^\mathbb{C})$ with $\text{Hol}(\kappa^\mathbb{C})=\text{SU}(n)$ and equipped with a holomorphic volume form $\varpi\in\Omega^{n,0}(X)$ fulfilling $\kappa^n/n!=(-1)^{n(n-1)/2}(i/2)^n\varpi\wedge\overline{\varpi}$, which we call its \textbf{Calabi--Yau form}\index{Calabi--Yau form}.
\end{Def}

\begin{Rem}(\textit{Warning})							
	There are actually several, possibly inequivalent definitions of Calabi--Yau manifold adopted in literature. Some drop the compactness requirement or the identity for global holonomy, while others do not mention the holomorphic volume form $\varpi$ (this in fact being uniquely set, up to phase; see \cite[Lemma 4.4]{[GHJ03]}). 
	
	Many authors just impose triviality of the \textit{canonical line bundle} $K_X\coloneqq \Lambda^n(T^*X)\cong \Omega_X^{n,0}$, that is, $K_X\cong\mathcal{O}_X$ (the sheaf of holomorphic functions on $X$, defined in Example \ref{holosheaf} below), of which $\varpi$ is a constant section. 
	In all cases, it follows that $c_1(TX)=0$ and that $X$ has vanishing Ricci curvature (also consequence of the \textit{Calabi conjecture} proved by Yau).
\end{Rem}

Let us summarize the chain of implications between the possible structures of manifolds we have seen so far:
\[
\begin{tikzpicture}[mybox/.style={draw, inner sep=1.5pt}]
	\node[mybox] (box){
\begin{tikzcd}[column sep = small, row sep = 0cm]
	& & \mkern-12mu\text{\begin{small}complex\end{small}}\arrow[dr, Rightarrow, shorten <=-0.0cm, shorten >=0.0cm, start anchor={[yshift=-1ex]east}, end anchor={[yshift=0.5ex]west}] & & \\
	\text{\begin{small}Calabi--Yau\end{small}}\arrow[r, Rightarrow] & \text{\begin{small}Kähler \end{small}} 
	\arrow[ur, Rightarrow, shorten <=0.2cm, shorten >=0.2cm, start anchor={[xshift=-2ex]east}, end anchor={[yshift=-0.2ex]west}] 
	\arrow[dr, Rightarrow, shorten <=0.2cm, shorten >=0.1cm, start anchor={[xshift=-2ex]east}, end anchor={[yshift=-0.0ex]west}] 
	& & \text{\begin{small}almost complex\end{small}}\arrow[r, Rightarrow] & \begin{small}\text{smooth even-dim.}\end{small} \\
	& & \mkern-6mu\text{\begin{small}symplectic\end{small}}\arrow[ur, Rightarrow, shorten <=-0.1cm, shorten >=0.0cm, start anchor={[yshift=0.8ex]east}, end anchor={[yshift=-1ex]west}] & & \\
\end{tikzcd}
};
\end{tikzpicture}
\]

Finally, the following statement is a very dried up version of the Mirror Symmetry Conjecture. A (more) complete formulation brings into play \textit{Gromov--Witten theory} and the so-called \textit{Yukawa couplings}; the reader is referred to \cite[chapters 15--17]{[GHJ03]}. 

\begin{Con}[\textbf{Mirror Symmetry Conjecture}]\index{Mirror Symmetry Conjecture}\label{MS}		
	Given a Calabi--Yau manifold $X^n=(X,J,\kappa^\mathbb{C},\varpi)$, there exists a ``mirror'' Calabi--Yau manifold $X^\dagger=(X^\dagger,J^\dagger,\kappa^{\mathbb{C}\dagger},\varpi^\dagger)$ of same dimension such that there are natural isomorphisms
	\begin{equation}\label{MSconj}
		H^q(X,\Omega^p(TX))\cong H^q(X^\dagger,\Omega_{X^\dagger}^p),\;\;\;
		H^q(X^\dagger,\Omega^p(TX^\dagger))\cong H^q(X,\Omega_X^p).
	\end{equation}
	Since $\Lambda^{p,0}(TX^\mathbb{C})\rightarrow\Omega^{n-p,0}(X),\, v_1\wedge...\wedge v_p\mapsto (i_{v_1}\circ...\circ i_{v_p})(\varpi)$ is a natural isomorphism, whence $\Omega^p(TX)\cong\Omega_X^{n-p,0}$ \textup(and similarly $\Omega^p(TX^\dagger)\cong\Omega_{X^\dagger}^{n-p,0}$\textup), we can simply rephrase \eqref{MSconj} through Dolbeault cohomology:
	\begin{equation}
		H_{Db}^{n-p,q}(X)\cong H_{Db}^{p,q}(X^\dagger)\quad\text{and}\quad H_{Db}^{n-p,q}(X^\dagger)\cong H_{Db}^{p,q}(X)\,. 
	\end{equation}
	In particular, defining the moduli spaces $\mathcal{M}_{symp}(X)\coloneqq \{\kappa\in\Omega^{1,1}(X)\mid \kappa\text{ Kähler}\}$ and $\mathcal{M}_{cplx}(X)\coloneqq \{J\in\textup{Symp}(X,\kappa)\mid J\; \kappa\text{-compatible}\}/\textup{Diff}(X)$, one expects two ``mirror maps'' $\mathcal{M}_{symp}(X)\rightarrow\mathcal{M}_{cplx}(X^\dagger)$ and $\mathcal{M}_{symp}(X^\dagger)\rightarrow\mathcal{M}_{cplx}(X)$ relating $H_{Db}^{1,1}(X)\cong H^1(X^\dagger,\Omega_{X^\dagger}^{n-1})\cong H^1(X^\dagger, TX^\dagger)$ and vice versa\footnote{In the words of Kontsevich from \cite{[Kon94]}, it is conjectured that: ``Complex variations of pure Hodge structures constructed using Gromov--Witten invariants of the symplectic [Calabi--Yau] manifold $X$ are locally equivalent to algebro-geometric variations [on $X^\dagger$]''.} \textup(the group $H^1(X,TX)$ governs ``deformations'' of $(X,J)$\textup).	
\end{Con}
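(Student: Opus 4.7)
The plan is to construct the mirror $X^\dagger$ via a duality procedure rather than an abstract existence argument, because the isomorphisms \eqref{MSconj} only become natural once we have an explicit geometric correspondence. The most promising route is the Strominger--Yau--Zaslow approach hinted at by the author: seek a special Lagrangian torus fibration $\pi: X \to B$ over a real $n$-dimensional base $B$ (with singular locus of real codimension $\geq 2$) and dualize it fiberwise to produce $\pi^\dagger: X^\dagger \to B$, where each smooth torus fiber $T^n = \mathbb{R}^n/\Lambda$ is replaced by its dual $(T^n)^\vee = \mathbb{R}^n/\Lambda^\vee$. This swaps the roles of the symplectic and complex data: the Kähler class of $X$, read off along the affine structure that $\pi$ induces on $B$, encodes the complex structure of $X^\dagger$, and vice versa. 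Compactness, the Calabi--Yau condition and the correct dimension for $X^\dagger$ must then be verified directly from the structural properties of the fibration.

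First I would work over the smooth locus $B^\circ \subset B$ and realize the ``semi-flat'' model: $X|_{B^\circ} \cong T^*B^\circ/\Lambda^*$ with its canonical symplectic structure, and $X^\dagger|_{B^\circ} \cong TB^\circ/\Lambda$ with its canonical complex structure inherited from the affine structure on $B$. In this model, $\kappa^{\mathbb{C}\dagger}$ is built from the metric and affine coordinates dual to those defining $\kappa^{\mathbb{C}}$, and $\varpi^\dagger$ arises as the pullback of the canonical volume form on the affine base, twisted by $\kappa^\mathbb{C}$. One checks that $\mathrm{Hol}(\kappa^{\mathbb{C}\dagger}) = \mathrm{SU}(n)$ precisely because the Legendre-dual potential solves the real Monge--Amp\`ere equation whenever the original does, which is the content of the Calabi--Yau condition on $X$.

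Next, to extract the cohomological identities, I would interpret both sides of \eqref{MSconj} through the Leray spectral sequences for $\pi$ and $\pi^\dagger$. The graded pieces $H^i(B, R^j\pi^\dagger_*\Omega_{X^\dagger}^p)$ and $H^i(B, R^j\pi_*\Omega^p(TX))$ can be compared via Poincar\'e duality on the torus fibers, which exchanges $R^j\pi^\dagger_*$ with $R^{n-j}\pi_*$, together with the identification $\Omega^p(TX) \cong \Omega_X^{n-p,0}$ already remarked by the author. The mirror map on moduli $\mathcal{M}_{symp}(X) \to \mathcal{M}_{cplx}(X^\dagger)$ would then emerge as the Legendre transform on the affine base, matching deformations of the K\"ahler class on $X$ with complex deformations of $X^\dagger$ via the isomorphism $H^{1,1}_{Db}(X) \cong H^1(X^\dagger, TX^\dagger)$.

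The hard part will be handling the singular locus $B \setminus B^\circ$ and the ``quantum'' corrections to the semi-flat picture. Near a singular fiber of $\pi$, the naive SYZ dual must be corrected by contributions counting holomorphic disks with boundary on the torus fibers; these instanton corrections are exactly what the Gross--Siebert program encodes via log geometry and tropical data on $B$. Compatibility of the two spectral sequences past the discriminant locus is non-trivial precisely because the monodromy representations on $R^j\pi_*$ and $R^{n-j}\pi^\dagger_*$ must match, and obstructions live in higher sheaf cohomology over $B$. Moreover, the existence of a special Lagrangian fibration on an arbitrary Calabi--Yau $X$ is itself an open problem, so a complete proof along these lines is currently out of reach; the strategy goes through unconditionally only for abelian varieties, K3 surfaces and toric degenerations, leaving the general statement conjectural.
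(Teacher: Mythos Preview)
The statement you are attempting to prove is labeled \textbf{Conjecture} in the paper, and the paper provides no proof for it --- because none exists in general. Your proposal is not a proof but rather an outline of the SYZ program (which the paper itself presents separately as Conjecture~\ref{SYZ}), and you correctly acknowledge in your final paragraph that the existence of special Lagrangian torus fibrations is open and that the argument only goes through in special cases. So there is no discrepancy to diagnose between your approach and the paper's: the paper offers no proof, and you have sketched a well-known heuristic strategy whose incompleteness you already identify.

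That said, one point deserves emphasis. Even granting the existence of an SYZ fibration and a well-behaved dual, your derivation of the cohomological isomorphisms \eqref{MSconj} via Leray spectral sequences and fiberwise Poincar\'e duality is itself only a semi-flat approximation. The instanton corrections you mention do not merely affect the mirror map on moduli; they deform the complex structure of $X^\dagger$ itself away from the semi-flat model, so that the Dolbeault groups $H_{Db}^{p,q}(X^\dagger)$ are not a priori computed by the naive Leray sequence for $\pi^\dagger$. Matching the spectral sequences past the discriminant therefore presupposes control over these corrections that is currently available only in the toric-degeneration setting of Gross--Siebert. This is the substantive gap beyond the existence of the fibration, and it is why the statement remains conjectural even in cases where SYZ fibrations are known.
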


Physically speaking, last statement claims the existence of the correspondences  ``A-model on $(X,\kappa^\mathbb{C})\leftrightarrow$ B-model on $(X^\dagger,J^\dagger)$'' and ``B-model on $(X,J)\leftrightarrow$ A-model on $(X^\dagger,\kappa^\mathbb{C\dagger})$'' we anticipated back in Section \ref{ch7.1}. Furthermore, the conjecture predicts that the SCFTs on $X$ and $X^\dagger$ (thus ``their physics'') are equivalent.

\begin{Rem}												
	For Calabi--Yau manifolds $X^n$, an additional symmetry on Dolbeault cohomology comes from the Hodge $\ast$-operator and the wedge product with the Calabi--Yau form, namely $H_{Db}^{p,0}(X)\cong H_{Db}^{n,n-p}(X)\cong H^{n-p}(X,\mathcal{O}_X)\cong H_{Db}^{0,n-p}(X)$, whence $h^{p,0}(X)=h^{n-p,0}(X)$, in addition to $h^{p,q}(X)=h^{q,p}(X)$ and $h^{p,q}(X)=h^{n-p,n-q}(X)$ from Theorem \ref{Hodge}. 
	
	Observe further that $H_{Db}^{n,0}(X)\cong H^0(X,\mathcal{O}_X)\cong \mathcal{O}_X(X)$ (last isomorphism is a standard result for sheaf cohomology), whence $h^{n,0}(X)=h^{0,n}(X)=1$. Moreover, one can prove that $H_{Db}^{0,0}(X)\cong\mathbb{C}$, thus $b^0(X)=h^{0,0}(X)=1$, and $h^{k,0}(X)=0$ for $0<k<n$ (see \cite[Proposition 5.6]{[GHJ03]}). 
\end{Rem}

\begin{Ex}\label{mirrorsymmex}							
	Let us consider a 3-dimensional Calabi--Yau manifold $X$. From previous remark we know that $h^{2,0}(X)=h^{0,2}(X)=0$, thus $h^{1,3}(X)=h^{3,1}(X)=0$, and $h^{1,0}(X)=h^{0,1}(X)=0$, thus $h^{2,3}(X)=h^{3,2}(X)=0$, along with $h^{3,3}(X)=h^{0,0}(X)=1$ and $h^{3,0}(X)=h^{0,3}(X)=1$. Therefore, we obtain the following reduced Hodge diamond (also exploiting $h^{2,2}(X)=h^{1,1}(X)$ and $h^{2,1}(X)=h^{1,2}(X)$):\vspace*{-0.2cm}
	\begin{equation}
		\begin{tabular}{c c c c c c c c}
				&		& 			& 	1		&			&		&		\\
				&		& 	0 		& 			& 	0 		& 		&		\\
				&	0	&			& $h^{1,1}$	&			& 0 	&		\\
			1	&		& $h^{1,2}$ & 			& $h^{1,2}$	& 		&	1	\\
			 	&	0	&			& $h^{1,1}$ &			& 0		&		\\
				& 		& 	0		&			&	0		& 		& 		\\
				& 		& 			&	1		&			&		&			 	
		\end{tabular}
	\end{equation}
	Then the Mirror Symmetry Conjecture is ``simply'' the statement that there exists a dual 3-dimensional Calabi--Yau manifold $X^\dagger$ whose Hodge diamond results from a $90^\circ$ rotation of the above one around its center. In particular, $h^{1,1}(X)=h^{1,2}(X^\dagger)$ and $h^{1,2}(X)=h^{1,1}(X^\dagger)$.
	
	In a similar vein, in complex dimension 1 respectively 2 we obtain Hodge diamonds:
	\begin{equation}
		\begin{tabular}{c c c c c c c c c}
			&		&	& $\mkern+72mu$	&		& 			& 	1		&			&		\\
			& 	1 	&	& 				&		& 	0 		& 			& 	0 		& 		\\
			1 &	  & 1 	& 				&	1	&			& 	20		&			& 1 	\\
			& 	1 	&	& 				&		& 	0		&			&	0		& 		\\
			&		&   & 				&		& 			&	1		&	  &					 	
		\end{tabular}
	\end{equation}
	Indeed, a 1-dimensional Calabi--Yau manifold is necessarily a complex elliptic curve $\mathbb{C}/\Lambda$ for $\Lambda$ some $\mathbb{Z}_2$-lattice (in particular, it is a symplectic 2-torus $\mathbb{T}^2$ as that studied in Example \ref{2torus}). As hinted by the uniqueness of their Hodge diamond, Conjecture \ref{MS} claims that such manifolds are self-dual, producing a direct correspondence between symplectic and complex structures on them. 
	
	On the other hand, 2-dimensional Calabi--Yau manifolds are \textit{$K3$-surfaces}, that is, compact complex surfaces $X$ with $h^{1,0}(X)=0$ and trivial canonical bundle, which can be realized as algebraic surfaces in $\mathbb{C}P^3$ (such as the projective variety $X=V_p(x_0^4+x_1^4+x_2^4+x_3^4)$, the \textit{Fermat quartic}). Conversely, any such surface can be given a Calabi--Yau structure. Since all $K3$-surfaces are diffeomorphic, with $b^2(X)=22$, we obtain the Hodge diamond pictured above. In dimension 2, mirror symmetry thus translates into an intimate relation between $K3$-surfaces.
	
	However, starting from $n=3$, we see that the conjecture acquires a non-trivial flavour, especially since it is currently unknown whether there exists a finite number of topological families of Calabi--Yau 3-folds or not. The case of non-degenerate quintic 3-folds --- algebraic surfaces such as the \textit{Fermat quintic} $V_p(x_0^5+x_1^4+x_2^4+x_3^4+x_4^5)\subset\mathbb{C}P^4$ --- was the one originally studied in \cite{[CdGP91]}, where the respective Hodge diamonds were shown to have $h^{1,1}(X)=1=h^{1,2}(X^\dagger)$ and $h^{1,2}(X)=101=h^{1,1}(X^\dagger)$. \hfill $\blacklozenge$      
\end{Ex}

\subsection{Homological mirror symmetry}\label{ch7.3}

We start by collecting the relevant definitions from algebraic geometry, the predominant theory governing the B-side of the Homological Mirror Symmetry Conjecture (more extensive accounts are provided in \cite{[Har77]} or \cite{[Gat20]}).

\begin{Def}									
	Let $X$ be a topological space, $\mathsf{C}$ a category. A \textbf{presheaf $\mathcal{F}$ on $X$ with values in $\mathsf{C}$}\index{presheaf with values in a category} consists of:
	\begin{itemize}[leftmargin=0.5cm]
		\item an object $\mathcal{F}(U)\in\textup{obj}(\mathsf{C})$ for all open subsets $U\subset X$, whose elements $s\in\mathcal{F}(U)$ we call \textit{sections} of $\mathcal{F}(U)$,
		\item \textit{restriction maps} $\rho_U^V\in\textup{Hom}_\mathsf{C}(\mathcal{F}(V),\mathcal{F}(U))$ for all pairs of open subsets $U\subset V\subset X$,
	\end{itemize}
	fulfilling: $\mathcal{F}(\emptyset)=\{0\}$, $\rho_U^U=\textup{id}_{\mathcal{F}(U)}$ for all open $U\subset X$, and $\rho_U^V\circ\rho_V^W=\rho_U^W$ for all triples of open subsets $U\subset V\subset W\subset X$.
	
	The presheaf $\mathcal{F}$ is a \textbf{sheaf on $X$ with values in $\mathsf{C}$}\index{sheaf!with values in a category} if the ``gluing property'' holds: for any open subset $U\subset X$ with open cover $\{U_i\}_{i\in I}$ so that all sections $s_i\in\mathcal{F}(U_i)$ are compatible, $s_i|_{U_i\cap U_j} = s_j|_{U_i\cap U_j}$, there exists a unique $s\in\mathcal{F}(U)$ such that $s|_{U_i}=s_i$ for all $i\in I$.
	
	When $\mathsf{C}=\mathsf{Sets}$ and, specifically, $\mathcal{F}(U)\subset\textup{Hom}_\mathsf{Sets}(U,\mathbb{K})$ for all open subsets $U\subset X$, $\mathcal{F}$ is a \textit{sheaf of $\mathbb{K}$-valued functions} ($\mathbb{K}$ being any ground field). 
\end{Def}

Direct sums and restrictions of (pre)sheaves are themselves (pre)sheaves under the obvious assignments. Here we are most interested in the case $\mathbb{K}=\mathbb{C}$ and $\mathsf{C}=\mathbb{C}$-$\mathsf{Alg}$ (the category of $\mathbb{C}$-algebras):

\begin{Ex}\label{holosheaf}							
	Let $(X,J)$ be a complex manifold. We define the \textbf{sheaf of holomorphic functions}\index{sheaf!of holomorphic functions} $\mathcal{O}_X$ on $X$ through the assignment 
	\[
	U\mapsto \mathcal{O}_X(U)\coloneqq \{f\in C^\infty(U,\mathbb{C})\mid f\text{ $J$-holomorphic}\}
	\]
	(we defined $J$-holomorphic functions right at the beginning of Section \ref{ch7.2}). This is a sheaf of $\mathbb{C}$-valued functions with values in $\mathbb{C}$-$\mathsf{Alg}$. \hfill $\blacklozenge$
\end{Ex}

\begin{Def}											
	Let $X$ be a topological space endowed with sheaves $\mathcal{F},\mathcal{G}$ with values in some category $\mathsf{C}$. A \textbf{morphism of sheaves}\index{morphism!of sheaves} $\chi:\mathcal{F}\rightarrow\mathcal{G}$ over $X$ is a collection of maps $\chi_U\in\textup{Hom}_\mathsf{C}(\mathcal{F}(U),\mathcal{G}(U))$ on all open subsets $U\subset X$ fulfilling $\rho_U^V\circ\chi_V=\chi_U\circ\rho_U^V\in\textup{Hom}_\mathsf{C}(\mathcal{F}(V),\mathcal{G}(U))$ for any pair of open subsets $U\subset V\subset X$ (where $\rho$ denotes the restriction maps of both $\mathcal{F}$ and $\mathcal{G}$). 
	
	Given another morphism of sheaves $\tilde{\chi}:\mathcal{G}\rightarrow\mathcal{H}$ over $X$, we simply define $\tilde{\chi}\circ\chi$ ``pointwise'' by $(\tilde{\chi}\circ\chi)_U\coloneqq\tilde{\chi}_U\circ\chi_U\in\textup{Hom}_\mathsf{C}(\mathcal{F}(U),\mathcal{H}(U))$. These operations yield a well-defined category $\mathsf{Sh}(X;\mathsf{C})$, the \textbf{category of sheaves on $X$ with values in $\mathsf{C}$}\index{category!of sheaves with values in a category}.
	
	Choosing for example the category of rings $\mathsf{C=Rings}$ and $\mathcal{F}\in\mathsf{Sh}(X;\mathsf{Rings})$ makes $X=(X,\mathcal{F})$ a \textbf{ringed space}\index{ringed space} with \textit{structure sheaf} $\mathcal{F}$.
\end{Def}

\begin{Ex}										
	Let $X$ be a complex manifold, $\mathcal{O}_X$ its sheaf of holomorphic functions. Then $(X,\mathcal{O}_X)$ is a well-defined ringed space (for $\mathbb{C}$-algebras are just enriched rings). \hfill $\blacklozenge$
\end{Ex}	
	
\noindent\minibox[frame]{Henceforth, when talking about complex manifolds $X$, we will implicitly re- \;\\ gard them as ringed spaces with structure sheaf $\mathcal{O}_X$.}
\vspace*{0.2cm}

\begin{Rem}										
	In algebraic geometry, one usually studies the algebraically richer notion of locally ringed space. Let $X$ be a topological space, $\mathcal{F}$ a presheaf on $X$ with values in $\mathsf{C}$. The \textit{stalk} of $\mathcal{F}$ at $x\in X$ is $\mathcal{F}_x\coloneqq \{(U,s)\mid x\in U,\, U\subset X \text{ open}, \,s\in\mathcal{F}(U)\}/\!\sim$, where $(U,s)\sim(U',s')$ if and only if there exists some open subset $V\subset U\cap U'$ such that $x\in V$ and $s|_V=s'|_V$. We call a class $[(U,s)]\in\mathcal{F}_x$ a \textit{germ} of $\mathcal{F}$ at $x$. Observe that stalks inherit the structure of an object of $\mathsf{C}$.
	
	Then $(X,\mathcal{F})$ for $\mathcal{F}\in\mathsf{Sh}(X;\mathsf{Rings})$ is a \textit{locally ringed space} if it is a ringed space where additionally each stalk $\mathcal{F}_x$ is a local ring (meaning it has a unique maximal ideal).  

\end{Rem}

\begin{Def}											
	Let $(X,\mathcal{F})$ be a ringed space. Then a \textbf{sheaf of $\mathcal{F}$-modules}\index{sheaf!of $\mathcal{F}$-modules} over $X$ (also abbreviated to \textit{$\mathcal{F}$-module}) is a sheaf $\mathcal{Q}$ on $X$ such that $\mathcal{Q}(V)$ is an $\mathcal{F}(V)$-module for every open subset $V\subset X$, and the associated restrictions are $\mathcal{F}(V)$-module homomorphisms: $\rho_U^V(s_1+s_2)=\rho_U^V(s_1)+\rho_U^V(s_2)$ and $\rho_U^V(\lambda\cdot s)=\rho_U^V(\lambda)\cdot\rho_U^V(s)$, for all open $U\subset V\subset X$ and $s,s_1,s_2\in\mathcal{Q}(V)$, $\lambda\in\mathcal{F}(V)$.
	
	Given another sheaf $\tilde{\mathcal{Q}}$ of $\mathcal{F}$-modules on $X$, $\chi:\mathcal{Q}\rightarrow\tilde{\mathcal{Q}}$ is a \textbf{morphism of $\mathcal{F}$-modules}\index{morphism!of $\mathcal{F}$-modules} if each ring morphism $\chi_U:\mathcal{Q}(U)\rightarrow\tilde{\mathcal{Q}}(U)$ is $\mathcal{F}(U)$-linear. With usual composition between any two such morphisms, we obtain again the structure of a category, and $\mathcal{F}$-modules can be restricted or direct summed to form other $\mathcal{F}$-modules within it.
	
	\noindent Now, $\mathcal{Q}$ is also a \textbf{coherent sheaf of $\mathcal{F}$-modules}\index{coherent sheaf!of $\mathcal{F}$-modules} over $X$ if moreover:
	\begin{itemize}[leftmargin=0.5cm]
		\item $\mathcal{Q}$ is \textit{finitely generated}, that is, for each point $x\in X$ there exists some open neighbourhood $U\subset X$ and a surjective map of (restricted) sheaves $\psi:\mathcal{F}|_U^{\oplus n}\rightarrow \mathcal{Q}|_U$, for some $n\in\mathbb{N}$.
		\item For all open $U\subset X$, $n\in\mathbb{N}$ and morphisms of $\mathcal{F}$-modules $\chi:\mathcal{F}|_U^{\oplus n}\rightarrow \mathcal{Q}|_U$, the \textit{kernel sheaf} $\mathcal{K}er(\chi)$ of $\chi$ is finitely generated (this is the sheaf set by $V\mapsto\text{ker}(\chi_V)$).
	\end{itemize}
	Furthermore, if the surjective map $\psi$ in the first condition is actually an isomorphism of sheaves (if and only if each $\psi_V$ is bijective), then $\mathcal{Q}$ is \textbf{locally free of rank $n$}\index{coherent sheaf!locally free of rank $n$}.
	
	Defining morphisms between coherent sheaves of $\mathcal{F}$-modules over $X$ to be exactly the underlying morphisms of $\mathcal{F}$-modules, we obtain $\mathsf{Coh}(X,\mathcal{F})$, the \textbf{category of coherent sheaves of $\mathcal{F}$-modules over $X$}\index{category!of coherent sheaves of $\mathcal{F}$-modules}.   
\end{Def}

\begin{Rem}\label{bundlesheaf}					
	Let $X=(X,J)$ be a complex manifold, then there exists a bijective correspondence between complex vector bundles $\pi: E\rightarrow X$ of rank $k$ and locally free sheaves $\mathcal{E}_E\cong\bigoplus_{i=1}^k\mathcal{O}_X$ of $\mathcal{O}_X$-modules of rank $k$ (this is explained for example in \cite[Remark 14.18]{[Gat20]} or \cite[Exercise 5.18]{[Har77]}). In particular, if $k=1$, then we talk about line bundles; we will therefore interchange notation accordingly: $E\leftrightarrow\mathcal{E}$ or $TL\leftrightarrow\mathcal{L}$.
	
	A similar statement identifies holomorphic vector bundles on $X$ --- which are complex vector bundles with holomorphic transition functions --- with coherent sheaves of $\mathcal{O}_X$-modules over $X$. Quite reductively, we call any complex submanifold $Y\subset X$ equipped with a holomorphic vector bundle $\pi:E\rightarrow Y$ a \textbf{(B-)brane}\index{B-brane} of the B-model compactified by $(X,J)$. We denote by $\mathcal{V}(X,J)$ the corresponding \textbf{moduli space of (stable) B-branes on $X$}\index{moduli space!of (stable) B-branes}. This plays an active role in the Homological Mirror Symmetry Conjecture. 
\end{Rem}

\begin{Rem}									
	Any coherent sheaf $\mathcal{Q}$ in $\mathsf{Coh}(X,\mathcal{F})$ is automatically quasi-coherent, meaning that there is an open cover $\{U_i\}_{i\in I}$ of $X$ and exact sequences
	\[
	\bigoplus_{j\in I_i}\mathcal{F}|_{U_i}\xrightarrow{\psi_i}\bigoplus_{j\in J_i}\mathcal{F}|_{U_i}\xrightarrow{\chi_i}\mathcal{Q}|_{U_i}\rightarrow\{0\}
	\]
	for (possibly infinite) index sets $I_i,J_i$, for all $i\in I$. Then observe that $\mathcal{Q}$ is locally the cokernel of a morphism of $\mathcal{F}$-modules: $\mathcal{Q}|_{U_i}=\text{im}(\chi_i)\cong \big(\mathcal{F}|_{U_i}^{\oplus |J_i|}\big)/\text{ker}(\chi_i)=\text{coker}(\psi_i)$, by exactness and the isomorphism theorem for sheaves.
	
	Indeed, $\mathsf{Coh}(X,\mathcal{F})$ is an abelian category, thus containing kernels and cokernels of morphisms. 
\end{Rem}
\vspace*{0.3cm}

\noindent By last remark, we can legitimately talk about the bounded derived category $\mathsf{D^b}(\mathsf{Coh}(X,\mathcal{F}))$. Since this turns out to be the counterpart of the bounded derived category of $\mathscr{F}(M)$ in the homological mirror conjecture, we aim at a closer look to its structure. In the spirit of Definition \ref{derivedcat}, we just give an outline and refer the reader to \cite[sections III.5--III.6]{[GM03]} for a more rigorous derivation.

\begin{Def}\label{derivedfunc}					
	Let $\mathsf{A},\mathsf{B}$ be abelian categories, $\mathsf{F}:\mathsf{A}\rightarrow\mathsf{B}$ an additive left (respectively right) exact functor. A class $\mathcal{R}\subset\mathsf{A}$ is \textit{adapted to $\mathsf{F}$} if: 
	\begin{itemize}[leftmargin=0.5cm]
		\item $\mathcal{R}$ is stable under direct sums,
		\item for $C^\bullet\in\textup{obj}(\mathsf{Kom}^+(\mathcal{R}))$ any acyclic complex in $\mathsf{A}$ \textit{bounded from below} (respectively \textit{from above} if in $\mathsf{Kom}^-(\mathcal{R})$), $\mathsf{F}(C^\bullet)$ is an acyclic complex in $\mathsf{B}$,
		\item for all $A\in\textup{obj}(\mathsf{A})$ there exist some $R\in\textup{obj}(\mathcal{R})$ and $i\in\textup{Hom}_\mathsf{A}(A,R)$ which is injective ($\mathcal{R}$ is said to have ``enough injectives''; conversely, $\mathcal{R}$ has ``enough projectives'' if there are surjective morphisms $p\in\textup{Hom}_\mathsf{A}(R,A)$). 
	\end{itemize}
	If such a class exists, we can define a functor $\mathsf{RF}$ from $\mathsf{Kom}^+(\mathcal{R})$ (suitably localized) to $\mathsf{D}^+(\mathsf{B})$ (the corresponding derived category bounded from below) by setting $\mathsf{RF}(C^\bullet)^i\coloneqq\mathsf{F}(C^i)$. With some additional choices, this can be extended to a well-defined triangulated functor $\mathsf{RF}:\mathsf{D}^+(\mathsf{A})\rightarrow\mathsf{D}^+(\mathsf{B})$, called the \textbf{right derived functor}\index{functor!right derived} of $\mathsf{F}$ (if $\mathsf{F}$ is right exact, we obtain the left derived functor $\mathsf{LF}:\mathsf{D}^-(\mathsf{A})\rightarrow\mathsf{D}^-(\mathsf{B})$, coinciding with $\mathsf{RF}$ in case $\mathsf{F}$ is exact).
	
	We let $\mathsf{R}^k\mathsf{F}\coloneqq H^k(\mathsf{RF}):\mathsf{D}^+(\mathsf{A})\rightarrow\mathsf{Ab}$, thus given by $\mathsf{R}^k\mathsf{F}(C^\bullet)=H^k(\mathsf{F}(C^\bullet))$ (and similarly $\mathsf{L}^k\mathsf{F}\coloneqq H^k(\mathsf{LF}):\mathsf{D}^-(\mathsf{A})\rightarrow\mathsf{Ab}$). Then, by triangularity of $\mathsf{RF}$, a short exact sequence $\{0\}\rightarrow A^\bullet\rightarrow B^\bullet\rightarrow C^\bullet\rightarrow\{0\}$ induces a long exact sequence
	\[
	...\rightarrow\mathsf{R}^k\mathsf{F}(A^\bullet)\rightarrow\mathsf{R}^k\mathsf{F}(B^\bullet)\rightarrow\mathsf{R}^k\mathsf{F}(C^\bullet)\rightarrow\mathsf{R}^{k+1}\mathsf{F}(A^\bullet)\rightarrow...
	\]
	(and similarly for $\mathsf{LF}$). From this construction, it is also plain to see that $\mathsf{R}^k\mathsf{F}=0$ for $k<0$ and $\mathsf{R}^0\mathsf{F}=\mathsf{F}$ (respectively, $\mathsf{L}^k\mathsf{F}=0$ for $k>0$ and $\mathsf{L}^0\mathsf{F}=\mathsf{F}$).   
\end{Def}

\begin{Def}\label{extgroups}					
	Let $\mathsf{A}$ be an abelian category and $A,B\in\textup{obj}(\mathsf{A})$, identifying single object complexes $A^\bullet\coloneqq(...\rightarrow\{0\}\rightarrow A\rightarrow\{0\}\rightarrow...)$ and $B^\bullet\coloneqq(...\rightarrow\{0\}\rightarrow B\rightarrow\{0\}\rightarrow...)$ concentrated in degree 0 (whereas $B^\bullet[k]\coloneqq B^{\bullet+k}$ means that $B$ is concentrated in degree $-k$, as usual). Then for $k\in\mathbb{Z}$ we define the \textbf{$k$-th Ext-abelian group}\index{Ext-abelian group} 
	\begin{equation}\label{Ext}
		\text{Ext}_\mathsf{A}^k(A,B)\coloneqq \textup{Hom}_{\mathsf{D^b}(\mathsf{A})}(A^\bullet,B^\bullet[k])
	\end{equation} ($=\textup{Hom}_{\mathsf{D^b}(\mathsf{A})}(A^\bullet[l],B^\bullet[k+l])$ for any $l\in\mathbb{Z}$, under $l$-fold application of the translation functor of $\mathsf{D^b}(\mathsf{A})$). 
	
	This yields functors $\text{Ext}_\mathsf{A}^k:\mathsf{A}^\textup{opp}\times\mathsf{A}\rightarrow\mathsf{Ab}$, with $\text{Ext}_\mathsf{A}^k(A,\square)$ covariant and $\text{Ext}_\mathsf{A}^k(\square,A)$ contravariant for any $A\in\textup{obj}(\mathsf{A})$. Consequently, the composition $A^\bullet\rightarrow B^\bullet[k]\rightarrow C^\bullet[k+l]$ in $\mathsf{D^b}(\mathsf{A})$ yields a bilinear product
	\begin{equation}\label{Yonedaprod}
		\diamond:\text{Ext}_\mathsf{A}^k(A,B)\otimes\text{Ext}_\mathsf{A}^l(B,C)\rightarrow\text{Ext}_\mathsf{A}^{k+l}(A,C)\,,
	\end{equation}
	the \textbf{Yoneda product}\index{Yoneda product}.
\end{Def}

\begin{Pro}									
	Let $\mathsf{A}$ be an abelian category, $A,B\in\textup{obj}(\mathsf{A})$ fixed. The \textup{Ext}-groups have the following properties:
	\begin{itemize}[leftmargin=0.5cm]
		\item Short exact sequences $\{0\}\rightarrow A\rightarrow A'\rightarrow A''\rightarrow\{0\}$ and $\{0\}\rightarrow B\rightarrow B'\rightarrow B''\rightarrow\{0\}$ in $\mathsf{A}$ \textup(seen as distinguished triangles in $\mathsf{D^b}(\mathsf{A})$\textup) induce respectively long exact sequences
		\[
		...\rightarrow \textup{Ext}_\mathsf{A}^k(A'',B)\rightarrow \textup{Ext}_\mathsf{A}^k(A',B)\rightarrow \textup{Ext}_\mathsf{A}^k(A,B)\rightarrow \textup{Ext}_\mathsf{A}^{k+1}(A'',B)\rightarrow...\;,
		\]
		\[
		...\rightarrow \textup{Ext}_\mathsf{A}^k(A,B)\rightarrow \textup{Ext}_\mathsf{A}^k(A,B')\rightarrow \textup{Ext}_\mathsf{A}^k(A,B'')\rightarrow \textup{Ext}_\mathsf{A}^{k+1}(A,B)\rightarrow...\;.
		\]
		\item $\textup{Ext}_\mathsf{A}^k(A,B)=\{0\}$ if $k<0$, while $\textup{Ext}_\mathsf{A}^0(A,B)=\textup{Hom}_\mathsf{A}(A,B)$.
		\item If $\mathsf{A}$ has ``enough injectives'', the left exact functor $\textup{Hom}_\mathsf{A}(A,\square):\mathsf{A}\rightarrow~\!\!\mathsf{Ab}$ has right derived functor $\mathsf{R}\textup{Hom}_\mathsf{A}(A,\square):\mathsf{D^+(A)}\rightarrow\mathsf{D^+(Ab)}$ which fulfills\break $\mathsf{R}^k\textup{Hom}_\mathsf{A}(A,\square)\cong\textup{Ext}_\mathsf{A}^k(A,\square)$ \textup(compatible with \eqref{Ext}\textup).
		\item If $\mathsf{A}$ has ``enough projectives'', the left exact functor  $\textup{Hom}_\mathsf{A}(\square,B)\!:\mathsf{A}^\textup{opp}\!\rightarrow~\!\mathsf{Ab}$ has right derived functor $\mathsf{R}\textup{Hom}_\mathsf{A}(\square,B):\mathsf{D^+}(\mathsf{A}^\textup{opp})\rightarrow\mathsf{D^+(Ab)}$ which fulfills\break $\mathsf{R}^k\textup{Hom}_\mathsf{A}(\square,B)\cong\textup{Ext}_\mathsf{A}^k(\square,B)$ \textup(compatible with \eqref{Ext}\textup). 
	\end{itemize}
	\textup(Note: the last two bullet points imply the first two together with Definition \ref{derivedfunc}, and can in fact be used as an alternative definition for the \textup{Ext}-groups.\textup)
\end{Pro}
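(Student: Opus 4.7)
My plan is to treat the four bullet points separately, exploiting the fact that by Theorem \ref{DAtriangulated} and Definition \ref{Ext}, everything happens inside the triangulated category $\mathsf{D^b}(\mathsf{A})$, so that standard formalism of distinguished triangles and triangulated (co)homological functors applies.

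For the first bullet point, I would observe that any short exact sequence $\{0\}\rightarrow A\rightarrow A'\rightarrow A''\rightarrow\{0\}$ in $\mathsf{A}$, viewed as a short exact sequence of complexes concentrated in degree $0$, produces a canonical distinguished triangle $A^\bullet\rightarrow A'^\bullet\rightarrow A''^\bullet\rightarrow A^\bullet[1]$ in $\mathsf{D^b}(\mathsf{A})$ (as sketched before Theorem \ref{DAtriangulated}; the cone of $A^\bullet\rightarrow A'^\bullet$ is quasi-isomorphic to $A''^\bullet$ precisely because the sequence is exact). Applying the cohomological functor $\textup{Hom}_{\mathsf{D^b}(\mathsf{A})}(\square,B^\bullet[k])$ and rotating (axiom (T2)) produces the desired long exact sequence upon interpreting $\textup{Hom}_{\mathsf{D^b}(\mathsf{A})}(A^\bullet,B^\bullet[k+1])=\textup{Ext}_\mathsf{A}^{k+1}(A,B)$. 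The covariant version is symmetric.

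For the second bullet point, I would argue via the ``roof'' description of morphisms in $\mathsf{D^b}(\mathsf{A})$ from Definition \ref{derivedcat}: a class in $\textup{Ext}_\mathsf{A}^k(A,B)$ is represented by a diagram $A^\bullet\xleftarrow{s}X^\bullet\xrightarrow{f}B^\bullet[k]$ with $s$ a quasi-isomorphism. Since the total cohomology of $A^\bullet$ is concentrated in degree $0$, so is that of $X^\bullet$; but $B^\bullet[k]$ has cohomology only in degree $-k$, and for $k<0$ any chain map $X^\bullet\rightarrow B^\bullet[k]$ is nullhomotopic by a direct degree count (no non-zero component of $f$ survives at the level of $H^\bullet$, and a standard homotopy argument in $\mathsf{H(A)}$ kills the representative). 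For $k=0$, the calculus of fractions collapses and gives a natural bijection with $\textup{Hom}_{\mathsf{H(A)}}(A^\bullet,B^\bullet)=\textup{Hom}_\mathsf{A}(A,B)$.

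For the last two bullet points, I would invoke Definition \ref{derivedfunc}: the full subcategory $\mathsf{Inj}\subset\mathsf{A}$ of injective objects is adapted to the left exact functor $\textup{Hom}_\mathsf{A}(A,\square)$ (it is stable under direct sums, contains enough injectives by assumption, and takes bounded-below acyclic complexes of injectives to acyclic complexes of abelian groups, by a classical splitting argument). Thus $\mathsf{R}\textup{Hom}_\mathsf{A}(A,\square):\mathsf{D^+(A)}\rightarrow \mathsf{D^+(Ab)}$ exists. Choosing an injective resolution $B\rightarrow I^\bullet$, one has $\mathsf{R}^k\textup{Hom}_\mathsf{A}(A,B)=H^k(\textup{Hom}_\mathsf{A}(A,I^\bullet))$, and I would identify this with $\textup{Ext}_\mathsf{A}^k(A,B)$ by using that $B^\bullet\rightarrow I^\bullet$ is a quasi-isomorphism, so $\textup{Hom}_{\mathsf{D^+(A)}}(A^\bullet,B^\bullet[k])=\textup{Hom}_{\mathsf{D^+(A)}}(A^\bullet,I^\bullet[k])=\textup{Hom}_{\mathsf{H(A)}}(A^\bullet,I^\bullet[k])$, where the last equality uses that morphisms into a bounded-below complex of injectives in $\mathsf{H(A)}$ already represent all morphisms in $\mathsf{D^+(A)}$ (no further localization is needed). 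This coincides with $H^k(\textup{Hom}_\mathsf{A}(A,I^\bullet))$ by unwinding definitions, giving the compatibility with \eqref{Ext}. The projective case is completely dual.

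The main obstacle will be the identification in the last step: showing rigorously that $\textup{Hom}_{\mathsf{D^+(A)}}(A^\bullet,I^\bullet[k])=\textup{Hom}_{\mathsf{H(A)}}(A^\bullet,I^\bullet[k])$ requires the classical lemma that quasi-isomorphisms targeting a complex of injectives admit homotopy inverses, which is the only nontrivial input and the reason why ``enough injectives/projectives'' is indispensable.
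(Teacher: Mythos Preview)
The paper states this proposition without proof --- it is presented as a catalogue of standard facts about Ext-groups (with the parenthetical remark that the last two bullets can serve as an alternative definition), so there is no argument in the paper to compare your proposal against.

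Your outline is the standard one and is essentially correct. One small refinement on the second bullet for $k<0$: saying that ``no non-zero component of $f$ survives at the level of $H^\bullet$'' does not by itself yield null-homotopy. The clean fix is to first replace $X^\bullet$ by its good truncation $\tau_{\leq 0}X^\bullet$ (legitimate since $H^i(X^\bullet)\cong H^i(A^\bullet)=0$ for $i>0$, so the truncation map is still a quasi-isomorphism); after that, $X^\bullet$ lives in degrees $\leq 0$ while $B^\bullet[k]$ is concentrated in degree $-k>0$, and there are literally no nonzero chain maps for degree reasons. For $k=0$ the same truncation, followed by $\tau_{\geq 0}$, lets you assume $X^\bullet$ is concentrated in degree $0$, and then the roof collapses as you say. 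For the last two bullets your argument is exactly the classical one, and the ``main obstacle'' you isolate --- that quasi-isomorphisms into a bounded-below complex of injectives admit homotopy inverses --- is indeed the one nontrivial lemma needed (this is \cite[Theorem III.5.21]{[GM03]} in the paper's reference).
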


\begin{Def}									
	Consider the abelian category $\mathsf{Coh}(X)$ of a complex manifold $X=(X,\mathcal{O}_X)$. Then, according to Definitions \ref{derivedcat} and \ref{extgroups}, we can describe its \textbf{bounded derived category of coherent sheaves}\index{bounded derived category!of coherent sheaves} $\mathsf{D^b}(\mathsf{Coh}(X))$ as follows:
	\begin{itemize}[leftmargin=0.5cm]
		\renewcommand{\labelitemi}{\textendash}
		\item Objects in $\textup{obj}\big(\mathsf{D^b}(\mathsf{Coh}(X))\big)=\textup{obj}(\mathsf{Kom^b(\mathsf{Coh}(X))})$ are (homotopy classes of) bounded cochain complexes of coherent sheaves over $X$, such as $\mathcal{Q}^\bullet\coloneqq(...\rightarrow\mathcal{Q}^{i-1}\rightarrow\mathcal{Q}^i\rightarrow\mathcal{Q}^{i+1}\rightarrow...)$ with each $\mathcal{Q}^i\in\textup{obj}(\mathsf{Coh}(X))$.
		\item For $\mathcal{Q}^\bullet,\tilde{\mathcal{Q}}^\bullet\in\textup{obj}\big(\mathsf{D^b}(\mathsf{Coh}(X))\big)$ one-object complexes, the Hom-groups\break $\textup{Hom}_{\mathsf{D^b}(\mathsf{Coh}(X))}(\mathcal{Q}^\bullet,\tilde{\mathcal{Q}}^\bullet)$ can be identified with the Ext-groups $\text{Ext}_{\mathsf{Coh}(X)}(\mathcal{Q},\tilde{\mathcal{Q}})$ through equation \eqref{Ext}, and this extends to generic complexes of coherent sheaves over $X$. 
		\newline Alternatively, by definition, morphisms $\mathcal{Q}^\bullet\rightarrow\tilde{\mathcal{Q}}^\bullet$ in $\mathsf{D^b}(\mathsf{Coh}(X))$ are (classes of) ``roofs''
		\begin{equation}\label{roof}
			\begin{tikzcd}
				& \mathcal{Q}^\bullet\arrow[dl, "s"']\arrow[dr, "t"] & \\
				\mathcal{U}^\bullet & & \tilde{\mathcal{Q}}^\bullet
			\end{tikzcd}
		\end{equation}
		in the homotopy category of cochain complexes $\mathsf{H}(\mathsf{Coh}(X))$, where $s\in\mathsf{S}$ \big(the localizing class of all quasi-isomorphisms in $\mathsf{H}(\mathsf{Coh}(X))$\big) and $t\in\textup{Hom}_{\mathsf{H}(\mathsf{Coh}(X))}(\mathcal{Q}^\bullet,\tilde{\mathcal{Q}}^\bullet)$.
		\item The composition map $\textup{Hom}_{\mathsf{D^b}(\mathsf{Coh}(X))}(\mathcal{Q}_1^\bullet,\mathcal{Q}_2^\bullet)\times\textup{Hom}_{\mathsf{D^b}(\mathsf{Coh}(X))}(\mathcal{Q}_0^\bullet,\mathcal{Q}_1^\bullet)\rightarrow\textup{Hom}_{\mathsf{D^b}(\mathsf{Coh}(X))}(\mathcal{Q}_0^\bullet,\mathcal{Q}_2^\bullet)$ is the extension of the corresponding Yoneda product \eqref{Yonedaprod}.
	\end{itemize}
	Now, Theorem \ref{DAtriangulated} explains how $\mathsf{D^b}(\mathsf{Coh}(X))$ can be regarded as a triangulated category. 
\end{Def}
\vspace*{0.3cm}

\noindent To properly state the Homological Mirror Symmetry Conjecture, we must also enrich our definition of Fukaya category.

\begin{Def}\label{localsystem}						
	Let $X$ be a complex symplectic manifold, $L\subset X$ a Lagrangian submanifold. A \textbf{local system}\index{local system} for $L$ is\footnote{More generally, a (linear) local system over a topological space is a locally constant sheaf on it whose stalks have structure of finite-dimensional vector spaces. Local systems thus serve as coefficients for sheaf cohomology.} a complex vector bundle $E\rightarrow L$ equipped with a connection $\nabla$ which is flat (hence the corresponding distribution on $E$ is integrable) and has unitary holonomy groups $\text{Hol}^\nabla(x)<\text{U}(n)$ for all $x\in L$. It is \textit{hermitian} if $E$ is equipped with a compatible hermitian metric $h\in\Gamma(E^*\otimes E^*)$ (in the sense of Definition \ref{Kahlerman}). We write succintly $\mathcal{L}\coloneqq (L,E,\nabla)$.
\end{Def}

\begin{Rem}\label{branes}							
	Let $X=(X,J,\kappa^\mathbb{C},\varpi)$ be Calabi--Yau, with complexified Kähler form $\kappa^\mathbb{C}=B-i\kappa$. In the context of the A-model compactified by $(X,\kappa^\mathbb{C})$, we call $\mathcal{L}=(L,E,\nabla)$ an \textbf{(A-)brane}\index{aab@A-brane} if the curvature of $\nabla$ is $R^\nabla=iB|_L$ (for $E$ possibly a flat $U(1)$-line bundle).
	
	Then we call two branes $\mathcal{L}_0=(L_0,E_0,\nabla_0)$ and $\mathcal{L}_1=(L_1,E_1,\nabla_1)$ \textit{Hamiltonian equivalent} if $L_0, L_1$ are Hamiltonian isotopic through some $\phi\in\text{Ham}(X,\kappa)$ (cf. Definition \ref{isotopy}) such that $\phi^*E_1\cong E_0$, and there exists a connection $\tilde{\nabla}$ on $L_0\times[0,1]$ satisfying $R^{\tilde{\nabla}}=i\phi^*(B)$ and $\tilde{\nabla}|_{L_0\times\{t\}}=\phi_t^*(\nabla_t)$, for $t=0,1$. We denote the induced quotient set by $\mathcal{L}ag(X,\kappa^\mathbb{C})$. 
	
	Some additional work (see \cite[chapter 2]{[Fuk02a]}) allows us to regard $\mathcal{L}ag(X,\kappa^\mathbb{C})$ as the \textbf{moduli space of (stable) (A-)branes on $X$}\index{moduli space!of (stable) A-branes} of the A-model compactified by $(X,\kappa^\mathbb{C})$. Remarkably, it has a complex structure with tangent spaces $T_\mathcal{L}\mathcal{L}ag(X,\kappa^\mathbb{C})\cong H^1(L;\Lambda_\mathbb{C})$. 
\end{Rem}

\begin{Def}										
	Let $X=(X,\omega)$ be a complex symplectic manifold with $2c_1(TX)=0$. The \textbf{``enriched'' Fukaya category}\index{enriched@``enriched'' Fukaya category} $\widetilde{\mathscr{F}}(X)$ of $X$ has the following structure:
	\begin{itemize}[leftmargin=0.5cm]
	 	\renewcommand{\labelitemi}{\textendash}
	 	\item Objects are (opportunely decorated) compact Lagrangian submanifolds $\mathcal{L}=(L,E,\nabla)$ equipped with a hermitian local system.
	 	\item The morphism space of $\mathcal{L}_0,\mathcal{L}_1\in\textup{obj}(\widetilde{\mathscr{F}}(X))$ is
	 	\begin{equation}
	 	\textup{hom}_{\widetilde{\mathscr{F}}(X)}(\mathcal{L}_0,\mathcal{L}_1) = CF(\mathcal{L}_0,\mathcal{L}_1) \coloneqq \mkern-18mu\bigoplus_{p\in\mathcal{X}(L_0,L_1)}\mkern-18mu\textup{Hom}(E_0|_p,E_1|_p)\otimes\Lambda_\mathbb{C}\;.
	 	\end{equation}
	 	\item Given $\mathcal{L}_0,...,\mathcal{L}_d\in\textup{obj}(\widetilde{\mathscr{F}}(X))$ so that $L_d\cap L_0=\{q\coloneqq p_{d+1}\equiv p_0\}$ and\break $L_i \cap$ $L_{i+1} = \{p_{i+1}\}$ for $i=0,...,d-1$, parallel transport (with respect to the corresponding connections $\nabla_i$) along the Lagrangian arcs bounding any solution $u\in\mathcal{M}(p_1,...,p_d,q;[u],J,H)$ of \eqref{generalCR} yields linear isomorphisms $\mathbb{P}_{\gamma_i}\in\textup{Hom}(E_i|_{p_i},E_i|_{p_{i+1}})$. Then we define the composition maps $\mu^d\equiv\mu_{\widetilde{\mathscr{F}}(X)}^d$ to be the $\Lambda_\mathbb{K}$-linear extension of
	 	\begin{equation}
	 	\mu^d(f_d,...,f_1)\coloneqq \mkern-30mu\sum_{\substack{q\in\mathcal{X}(L_0,L_d) \\ [u]:\text{ind}([u])=2-d}}\mkern-30mu (\#\mathcal{M}(p_1,...,p_d,q;[u],J,H))T^{\omega([u])}\eta_{[u],f_d,...,f_1},
	 	\end{equation}
	 	where $f_i\in\textup{hom}_{\widetilde{\mathscr{F}}(X)}(\mathcal{L}_{i-1},\mathcal{L}_i)$ and $\eta_{[u],f_d,...,f_1}\coloneqq \mathbb{P}_{\gamma_d}\circ f_d\circ...\circ\mathbb{P}_{\gamma_1}\circ f_1\circ\mathbb{P}_{\gamma_0}\in\textup{Hom}(E_0|_q,E_d|_q)$.
	\end{itemize}
 	Taking cohomology, we obtain the ``enriched'' Fukaya--Donaldson category\break $H(\widetilde{\mathscr{F}}(X))$, whose Hom-groups are Floer cohomology rings $\textup{Hom}_{H(\widetilde{\mathscr{F}}(X))}(\mathcal{L}_0,\mathcal{L}_1)$ $=HF(\mathcal{L}_0,\mathcal{L}_1)$ coming from the cochain complexes $(CF(\mathcal{L}_0,\mathcal{L}_1),\mu^1)$. 
\end{Def}

We will not discuss the underlying Floer theoretic details upon which this variant is built (such an analysis is partially carried in \cite[chapter 2]{[Fuk02a]}), though we shall preserve our notation for Floer complexes and homologies. We also just point out that the definition of wrapped Fukaya category can be adapted to fit this enriched version. 

As we know by now, immediately taking cohomology to produce $H(\widetilde{\mathscr{F}}(X))$ kills too much information. We rather consider the split-closed derived category of $\widetilde{\mathscr{F}}(X)$, $\mathsf{D^\pi}(\widetilde{\mathscr{F}}(X))=H^0\big(\Pi(Tw\widetilde{\mathscr{F}}(X))\big)$, which retains triangularity and split summands. For an explicit enunciation of the latter's structure, we invite the reader to revise Sections \ref{ch3.5} and \ref{ch3.6}.

The homological version of Conjecture \ref{MS} can then be phrased as follows (cf. \cite[chapter 2]{[Fuk02a]}).

\begin{Con}[\textbf{Homological Mirror Symmetry Conjecture}]\index{Homological Mirror Symmetry Conjecture}\label{HMS}		
	Given a Calabi--Yau manifold $X^n=(X,J,\kappa^\mathbb{C},\varpi)$, there exist a ``mirror'' Calabi--Yau manifold $X^\dagger=(X^\dagger, J^\dagger,\kappa^{\mathbb{C}\dagger},\varpi^\dagger)$ of same dimension and equivalences of triangulated categories
	\begin{equation}\label{HMSconj}
		\mathsf{D}^\pi(\widetilde{\mathscr{F}}(X))\cong \mathsf{D^b}(\mathsf{Coh}(X^\dagger)) \quad\,\text{and}\quad\, \mathsf{D}^\pi(\widetilde{\mathscr{F}}(X^\dagger))\cong \mathsf{D^b}(\mathsf{Coh}(X))\,.
	\end{equation}
	Therefore, the following correspondences hold\textup:
	\begin{itemize}[leftmargin=0.5cm]
		\renewcommand{\labelitemi}{\textendash}
		\item \textup(Split-closed twisted complexes built out of\textup) Lagrangian submanifolds $\mathcal{L}\in\textup{obj}\big(\mathsf{D}^\pi(\widetilde{\mathscr{F}}(X))\big)$ correspond to \textup(homotopy classes of\textup) bounded cochain complexes of coherent sheaves $\mathcal{E}^\dagger\in\textup{obj}\big(\mathsf{D^b}(\mathsf{Coh}(X^\dagger))\big)$, and specularly $\mathcal{L}^\dagger\in\textup{obj}\big(\mathsf{D}^\pi(\widetilde{\mathscr{F}}(X^\dagger))\big)$ to $\mathcal{E}\in\textup{obj}\big(\mathsf{D^b}(\mathsf{Coh}(X))\big)$.
		\item \textup{Hom}-groups are isomorphic to \textup{Ext}-groups,  \begin{align*}
			& \textup{Hom}_{\mathsf{D}^\pi(\widetilde{\mathscr{F}}(X))}(\mathcal{L}_0,\mathcal{L}_1)\equiv HF(\mathcal{L}_0,\mathcal{L}_1)\cong\textup{Ext}(\mathcal{E}_0^\dagger,\mathcal{E}_1^\dagger)\equiv\textup{Ext}_{\mathsf{Coh}(X^\dagger)}(\mathcal{E}_0^\dagger,\mathcal{E}_1^\dagger)\,, \\
			&\textup{Hom}_{\mathsf{D}^\pi(\widetilde{\mathscr{F}}(X^\dagger))}(\mathcal{L}_0^\dagger,\mathcal{L}_1^\dagger)\equiv\break HF(\mathcal{L}_0^\dagger,\mathcal{L}_1^\dagger)\cong\textup{Ext}(\mathcal{E}_0,\mathcal{E}_1)\equiv\textup{Ext}_{\mathsf{Coh}(X)}(\mathcal{E}_0,\mathcal{E}_1)\,.
		\end{align*} 
		Remarkably, $\textup{Ext}^k(\mathcal{E}^\dagger,\mathcal{E}^\dagger)\cong H^k(L;\Lambda_\mathbb{C})$ \textup(by an adaptation of Proposition \ref{CFLL}\textup), and vice versa.
		\item Under these isomorphisms, the Floer products 
		\begin{align*}
		& \langle \mu^2\rangle: HF(\mathcal{L}_1,\mathcal{L}_2)\otimes HF(\mathcal{L}_0,\mathcal{L}_1)\rightarrow HF(\mathcal{L}_0,\mathcal{L}_2)\quad\text{and} \\
		& \langle \mu^2\rangle: HF(\mathcal{L}_1^\dagger,\mathcal{L}_2^\dagger) \otimes HF(\mathcal{L}_0^\dagger,\mathcal{L}_1^\dagger)\rightarrow HF(\mathcal{L}_0^\dagger,\mathcal{L}_2^\dagger)
		\end{align*}
		turn respectively into the Yoneda products 
		\begin{align*}
		& \diamond:\textup{Ext}(\mathcal{E}_1^\dagger,\mathcal{E}_2^\dagger)\otimes\textup{Ext}(\mathcal{E}_0^\dagger,\mathcal{E}_1^\dagger) \rightarrow\textup{Ext}(\mathcal{E}_0^\dagger,\mathcal{E}_2^\dagger)\quad\text{and} \\
		& \diamond:\textup{Ext}(\mathcal{E}_1,\mathcal{E}_2)\otimes\textup{Ext}(\mathcal{E}_0,\mathcal{E}_1) \rightarrow\textup{Ext}(\mathcal{E}_0,\mathcal{E}_2)\,.
		\end{align*}  
	\end{itemize}
	Physically speaking, we obtain a bijective correspondence\footnote{More precisely, there is an isomorphism between a neighbourhood $\mathcal{U}\subset\mathcal{V}(X^\dagger,J^\dagger)$ of $\mathcal{E}^\dagger$ and a scheme $Y\subset\text{Ext}^1(\mathcal{E}^\dagger,\mathcal{E}^\dagger)\cong H^1(L;\Lambda_\mathbb{C})\cong T_\mathcal{L}\mathcal{L}ag(X,\kappa^\mathbb{C})$.} between the mod-\break uli space $\mathcal{L}ag(X,\kappa^\mathbb{C})$ of \textup(stable\textup) A-branes $\mathcal{L}$ on $X$ of the A-model compacti-\break fied by $(X,\kappa^\mathbb{C})$ and the moduli space $\mathcal{V}(X^\dagger,J^\dagger)$ of \textup(stable\textup) B-branes $\mathcal{E}^\dagger$ on $X^\dagger$ of the B-model compactified by $(X^\dagger,J^\dagger)$ \textup(cf. Remarks \ref{bundlesheaf} and \ref{branes}\textup), and specularly.
\end{Con}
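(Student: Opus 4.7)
The statement is a conjecture rather than a theorem, and a general proof remains out of reach; I will outline the strategy that has succeeded in the known cases (elliptic curves, quartic $K3$-surfaces, the mirror quintic, abelian varieties, certain toric Fanos treated on the wrapped side). The overarching plan splits into three phases: first, produce the mirror pair $(X,X^\dagger)$ concretely; second, identify explicit split-generating collections on both sides; third, match their endomorphism $A_\infty$-algebras and promote the match to the full derived equivalences of \eqref{HMSconj}.

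For the first phase, the natural input is the SYZ picture alluded to at the end of the chapter: realize $X$ as a special Lagrangian $T^n$-fibration $\pi: X\to B$ over a base $B$ (with discriminant locus), and construct $X^\dagger$ as the dual torus fibration, with complex structure twisted by the $B$-field $B=\mathfrak{Re}(\kappa^\mathbb{C})$ and holomorphic volume form $\varpi^\dagger$ built from integration over the SYZ fibers. Outside of the semiflat limit this is delicate, but for the worked examples it reduces to lattice-theoretic data; for the elliptic curve $X=\mathbb{C}/(\mathbb{Z}\oplus\tau\mathbb{Z})$ equipped with Kähler class $\mathrm{Area}\cdot\langle\kappa\rangle$, the mirror is the elliptic curve $X^\dagger$ with complex parameter $\tau^\dagger=\mathrm{Area}\cdot\tau$.

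For the second phase, I invoke Corollary \ref{splitequivgen}: it suffices to exhibit full $A_\infty$-subcategories $\widetilde{\mathscr{F}}_{\!0}(X)\subset\widetilde{\mathscr{F}}(X)$ and $\mathcal{B}_0\subset\mathsf{Coh}(X^\dagger)$ which split-generate their ambient categories and are quasi-equivalent as $A_\infty$-categories (with $\mathcal{B}_0$ regarded through its dg-enhancement inside $\mathsf{D^b}(\mathsf{Coh}(X^\dagger))$). On the A-side, split-generators are typically furnished by a longitude-meridian pair on $\mathbb{T}^2$ (as in Proposition \ref{splittorus}), or more generally by a Lagrangian section of $\pi$ together with a Lagrangian torus fiber; on the B-side, one uses a tilting object or exceptional collection, e.g.\ line bundles $\{\mathcal{O}_{X^\dagger},\mathcal{O}_{X^\dagger}(1),\dots,\mathcal{O}_{X^\dagger}(n)\}$, or the structure sheaf plus a skyscraper at a point. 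The dictionary assigns a Lagrangian section to a line bundle (its first Chern class recorded by the slope) and a torus fiber to a skyscraper sheaf at the mirror point.

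For the third and most difficult phase, I would compute both sides and construct the desired $A_\infty$-functor $\mathcal{F}:\widetilde{\mathscr{F}}_{\!0}(X)\to\mathcal{B}_0$ with $\mathcal{F}^1$ identifying $CF(\mathcal{L}_0,\mathcal{L}_1)$ with $\mathrm{Ext}^\bullet(\mathcal{E}_0^\dagger,\mathcal{E}_1^\dagger)$ (via the correspondence of Proposition \ref{CFLL} on the diagonal, and via theta-function bases off-diagonal), and $\mathcal{F}^d$ matching the Floer compositions $\mu^d$ with the $A_\infty$-structure underlying the Yoneda products $\diamond$. The main obstacle lies here: the counts of $J$-holomorphic polygons of \eqref{Floercompo}, weighted by the Novikov variable $T^{\omega([u])}$, must be reorganized into the modular identities satisfied by the algebro-geometric side. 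For $X=\mathbb{T}^2$ this was carried out by Polishchuk--Zaslow, where pseudoholomorphic triangles bounded by three straight-line Lagrangians count lattice points whose generating series is precisely the classical addition formula for Jacobi theta functions; this yields associativity of $\mu^2$ on the nose at cohomology level and matches the Yoneda product on $\bigoplus_k\mathrm{Ext}(\mathcal{O},\mathcal{O}(k))$. Once the quasi-equivalence $\mathcal{F}$ is established, Corollary \ref{splitequivgen} upgrades it to the first equivalence in \eqref{HMSconj}; the second equivalence would, in principle, follow by running the same argument with the roles of $X$ and $X^\dagger$ exchanged, but proving both simultaneously amounts to proving functoriality of mirror symmetry under SYZ duality, which is the deepest remaining obstruction.
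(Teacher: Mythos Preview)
Your opening sentence is exactly right: the paper does not prove this statement. It is labelled as a \emph{Conjecture} and treated as such; there is no proof environment following it. What the paper provides instead is a cautionary Remark (noting that the version stated is ``inflated'' and too general to hold literally) and an Example listing verified cases with pointers to the literature (Polishchuk--Zaslow for elliptic curves, Abouzaid--Smith for $\mathbb{T}^2$ and $\mathbb{T}^4$, Seidel for quartic surfaces, Fukaya for abelian varieties, Kontsevich--Soibelman for torus fibrations). So your recognition that no proof is expected here is correct.

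Your three-phase outline (SYZ construction of the mirror, exhibiting split-generators on each side, matching the $A_\infty$-structures and invoking Corollary~\ref{splitequivgen}) goes well beyond what the paper offers, and is a fair summary of how the cited references actually proceed. In particular, your use of Corollary~\ref{splitequivgen} as the bridge from a quasi-equivalence of split-generating subcategories to the full derived equivalence is exactly the role that result plays, and your mention of the Polishchuk--Zaslow theta-function computation and of Proposition~\ref{splittorus} for split-generation on $\mathbb{T}^2$ matches the paper's own illustrations. The paper does not attempt to unify these into a general strategy; you have, and the outline is sound as far as it goes.
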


\begin{Rem}											
	Conjecture \ref{HMS} is an inflated version of that given by Kontsevich in \cite{[Kon94]}, to the extent of being too general to hold true. For example, the bijection on objects is broken due to $\textup{obj}\big(\mathsf{D^b}(\mathsf{Coh}(X^\dagger))\big)$ being usually bigger than $\textup{obj}\big(\mathsf{D}^\pi(\widetilde{\mathscr{F}}(X))\big)$. Also, $\mathcal{L}ag(X,\kappa^\mathbb{C})$ is a priori unobstructed, but $\mathcal{V}(X^\dagger,J^\dagger)$ might be still, thus jeopardizing the correspondences among morphism groups and composition maps.\footnote{A possible solution, resorting to \textit{bounding chains} within curved $A_\infty$-categories (as described in Remark \ref{Fukayarem}), is suggested by Fukaya in \cite[Conjecture 3.21]{[Fuk02a]}. Otherwise, $HF(\mathcal{L},\mathcal{L})$ and $H(L;\Lambda_\mathbb{C})$ are in general merely related through a spectral sequence.} 
	
	Consequently, part of the Calabi--Yau decorations are sometimes dropped in favour of just a symplectic structure on $X=(X,\kappa)$ and a complex structure on its mirror $X^\dagger=(X^\dagger,J^\dagger)$, this choice being also dictated by the absence --- so far, to the author's knowledge --- of examples exhibiting the correspondence $(X,J)\leftrightarrow(X^\dagger,\kappa^\dagger)$.
	
	In all cases, we stress that both existence and uniqueness of a mirror manifold are not guaranteed --- we are talking a conjecture, after all.
\end{Rem}

\begin{Ex}											
	Let us briefly mention a few examples and point towards the relevant literature.\footnote{More exhaustive lists of verified cases can be easily found online, for example at \texttt{https://ncatlab.org/nlab/show/mirror+symmetry}.}
	\begin{itemize}[leftmargin=0.5cm]
		\item The first non-trivial case is that of complex elliptic curves (whose mirror symmetry we already discussed in Example \ref{mirrorsymmex}), proved by Polishchuk and Zaslow in \cite{[PZ00]}. The advantage here is that the construction of $\mathsf{D}^\pi(\widetilde{\mathscr{F}}(X))$ involving twisted complexes can be bypassed. Instead, the authors work directly with the enriched Donaldson--Fukaya category $\mathscr{F}^0(X)\coloneqq H(\widetilde{\mathscr{F}}(X))$ and prove
		\begin{equation}
		\mathscr{F}^0(E^\rho)\cong\mathsf{D^b}(\mathsf{Coh}(E_\tau))\,,
		\end{equation}
		where $X\coloneqq E^\rho$ is the symplectic 2-torus $\mathbb{T}^2$ equipped with complexified Kähler form $\rho=B+i\kappa$ (recall Example \ref{2torus}) and $X^\dagger\coloneqq E_\tau$ is the elliptic curve $\mathbb{C}/\Lambda$ with $\Lambda\coloneqq\langle 1, \tau\rangle$ (for $\mathfrak{Im}(\tau)>0$). The duality translates to $\rho\leftrightarrow\tau$.
		
		\item In \cite{[PZ00]}, elliptic curves are treated over a generic ground field $\mathbb{K}$, while the symplectic side of the 2-torus is (without loss of generality) discussed over $\mathbb{C}$. Abouzaid and Smith specialize the analysis to $\mathbb{K}=\Lambda_\mathbb{R}$, proving \cite[Theorem 6.4]{[AS09]}:
		\begin{equation}
		\mathsf{D}^\pi(\mathscr{F}(\mathbb{T}^2))\cong\mathsf{D^b}(\mathsf{Coh}(E_A))\,,
		\end{equation}
		where $\mathbb{T}^2$ is equipped with the standard area form $\kappa=Ad\theta_1\wedge d\theta_2$, while $E_A\coloneqq\Lambda_\mathbb{R}^*/\langle T^A\rangle$ is the (analytification of the) \textit{Tate elliptic curve}, defined algebraically by its coordinate ring $\Lambda_\mathbb{R}[x,y]/\{y^2+xy=x^3+a_4(T)x+ a_6(T)\}$ (see \cite[Definition 6.3]{[AS09]} for the definition of the infinite series $a_4, a_6$ in the formal variable $T\in\Lambda_\mathbb{R}$). This result is later used to show that $QH(\mathbb{T}^2;\Lambda_\mathbb{R})\cong HH(\mathscr{T}^2)$; cf. Remark \ref{quantumHochschild}.
		
		\item In \cite{[AS09]}, Abouzaid and Smith then move along to their main goal, namely an equivalence over $\Lambda_\mathbb{R}$ for the 4-torus\footnote{A similar analysis to that hinted in Example \ref{2torus} and Proposition \ref{splittorus} reveals $\mathscr{F}(\mathbb{T}^4)$ to be split-generated by the four 2-tori obtained taking pairwise products of longitudes and meridians.}  $\mathbb{T}^4=\mathbb{R}^4/\mathbb{Z}^4$:
		\begin{equation}
		\mathsf{D}^\pi(\mathscr{F}(\mathbb{T}^4))\cong\mathsf{D^b}(\mathsf{Coh}(E_A\times E_A))\,,
		\end{equation}
		which extends to $\mathsf{D}^\pi(\mathscr{F}(\mathbb{T}^{2k}))\cong\mathsf{D^b}(\mathsf{Coh}(E_A^k))$ for any $k\in\mathbb{Z}$ (and an unobstructed version of Fukaya category). 
		
		\item In \cite{[Sei13]}, Seidel proves the conjecture over $\Lambda_\mathbb{Q}$ for any quartic surface $X_0\subset\mathbb{C}P^3$ (that is, any projective variety given by the vanishing locus of a degree 4 homogeneous polynomial in 4 variables).
		
		\item Furthermore, parts of the Homological Mirror Symmetry Conjecture are discussed by Fukaya for abelian varieties (in \cite{[Fuk00]}) and by Kontsevich and Soibelman for torus fibrations (in \cite{[KS01]}).
		
		\item There are also several extensions to non-Calabi--Yau manifolds, such as surfaces of genus $g\geq 2$ (see for example \cite{[Sei11]}) or Fano varieties whose mirrors are Landau--Ginzburg models (\cite{[Aur07]}). \hfill $\blacklozenge$        
	\end{itemize} 
\end{Ex}
\vspace*{0.3cm}

\noindent Now, the reader may be legitimately wondering: ``How can mirror pairs of Calabi--Yau manifolds be constructed, in practice?''. The answer is provided by the work \cite{[SYZ96]} of Strominger, Yau and Zaslow, published two years after Kontsevich's conjecture. The motivation comes from the following observation.

\begin{Rem}\label{specialrem}					
	Points $p\in X^\dagger$ in the mirror Calabi--Yau manifold are in bijection with \textbf{skyscraper sheaves}\index{skyscraper sheaf} $\mathcal{O}_p\in\textup{obj}(\mathsf{Coh}(X^\dagger))\subset\textup{obj}\big(\mathsf{D^b}(\mathsf{Coh}(X^\dagger))\big)$, which are defined by the assignment
	\vspace*{-0.2cm}
	\begin{equation}
	U\subset X^\dagger\text{ open}\longmapsto\mathcal{O}_p(U)\coloneqq 
	\begin{cases}
	\mathbb{C} & $if $ p\in U \\ 
	\{0\} & $else $
	\end{cases}\,. 
	\end{equation}
	(The main features of skyscraper sheaves, especially within the realm of algebraic geometry, are discussed in \cite[chapters 13, 14]{[Gat20]}.) Therefore, we can identify $X^\dagger$ with the moduli space of skyscraper sheaves over $X^\dagger$, which in turn, by Conjecture \ref{HMS}, should correspond to a moduli space of certain objects $\mathcal{L}_p\in\mathsf{D}^\pi(\widetilde{\mathscr{F}}(X))$.
	
	Now, it is an exercise in sheaf cohomology to prove that $\text{Ext}_{\mathsf{Coh}(X^\dagger)}^k(\mathcal{O}_p,\mathcal{O}_p)\cong\Lambda^kT_pX^\dagger$ (see for example \cite[Lecture 16]{[Aur09]}). Since moreover $\text{Ext}(\mathcal{O}_p,\mathcal{O}_p)\cong H(\mathbb{T}^n;\mathbb{C})\cong HF(\mathbb{T}^n,\mathbb{T}^n)$ (as graded vector spaces, again exploiting Proposition \ref{CFLL}), we are tempted to say that the $\mathcal{L}_p$ are none other than \textbf{Lagrangian tori}\index{Lagrangian torus} $T^n$, $2n$-dimensional symplectic manifolds diffeomorphic to the standard torus $\mathbb{T}^n\subset\mathbb{R}^{2n}\cong\mathbb{C}^n$. 
\end{Rem} 
	
This is close enough to what we are interested in. We only require the following notions.

\begin{Def}										
	Let $X=(X,J,\kappa^\mathbb{C})$ be a Kähler manifold, $\varpi\in\Omega^{n,0}(X)$ a holomorphic volume form. Then $X$ is \textit{almost Calabi--Yau} if $|\varpi|_\kappa\coloneqq \sqrt{|\kappa(\varpi,\varpi)|}$ $=g\in C^\infty(X,\mathbb{R}_{>0})$ and $\varpi\wedge\overline{\varpi}=const(n)g^2\kappa^n$. 
	
	A Lagrangian torus $T\subset X$ fulfills $\varpi|_T=e^{i\phi}g\cdot\text{vol}_\kappa|_T$ for some phase function $e^{i\phi}:T\rightarrow\mathbb{S}^1$ (recall Definition \ref{phase}). We call it \textit{special} if $e^{i\phi}$ is actually constant (eventually implying $\mathfrak{Im}(\varpi|_T)=0$, the condition for a general Lagrangian submanifold to be special). 
\end{Def}

We finally state the ``geometrical'' Mirror Symmetry Conjecture, named after Strominger, Yau and Zaslow. It builds on the intuition from Remark \ref{specialrem} that generic points of the mirror Calabi--Yau manifold $X^\dagger$ correspond to isomorphism classes of some $(T,\nabla)$, where $T\subset X$ is a Lagrangian torus and $\nabla$ a flat $U(1)$-connection on $\underline{\mathbb{C}}\rightarrow T$ \big(thus they correspond to elements of $\textup{Hom}(\pi_1(T),U(1))$\big). 

Originally, in \cite{[SYZ96]}, the conjecture was not cast in a formal mathematical statement --- on which there is currently no agreement yet. Here we favoured the formulation of \cite[section 12.2]{[GHJ03]} and \cite[Lecture 21]{[Aur09]}.  

\begin{Con}[\textbf{SYZ Conjecture}]\index{SYZ Conjecture}\label{SYZ}				
	Let $X$ be an $n$-dimensional Calabi--Yau manifold \textup(originally $n=3$\textup). Then there exists a mirror $n$-dimensional Calabi--Yau manifold $X^\dagger$ and some compact $n$-dimensional \textup(over $\mathbb{R}$\textup) topological base manifold $B$ such that $X$ and $X^\dagger$ carry dual fibrations by special Lagrangian tori,
	\begin{equation}
	\begin{tikzcd}
		\mathbb{T}^n \arrow[r] & X \arrow[d, "\pi"'] \\
		& B
	\end{tikzcd}\quad\Longleftrightarrow\quad
	\begin{tikzcd}
		X^\dagger \arrow[d, "\pi^\dagger"] & (\mathbb{T}^n)^\vee\arrow[l]  \\
		B &
	\end{tikzcd},	
	\end{equation}
	for $(\mathbb{T}^n)^\vee\coloneqq \textup{Hom}(\pi_1(\mathbb{T}^n),U(1))$, meaning that
	\begin{align}
		X^\dagger=\{(T,\nabla)\mid T\!=\!(\pi^\dagger)^{-1}(b)\!\subset\! X^\dagger\text{ special Lagr. \!torus},\,\nabla\!\in\!(\mathbb{T}^n)^\vee\}\,,
	\end{align} 
	and that the fibres $\pi^{-1}(b)$, $(\pi^\dagger)^{-1}(b)$ should be dual to each other for all $b\in B$ \textup(some fibres may be singular tori, so that $B$ should actually be restricted to some subset $B_{reg}\subset B$\textup). The specular statement holds as well.
\end{Con}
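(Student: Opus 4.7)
The plan is to construct $X^\dagger$ as the moduli space of pairs $(T,\nabla)$ with $T$ a special Lagrangian fiber and $\nabla$ a flat $U(1)$-connection on the trivial complex line bundle over $T$, justified by the heuristic from Remark 7.5: skyscraper sheaves $\mathcal{O}_p \in \textup{obj}(\mathsf{Coh}(X^\dagger))$ satisfy $\textup{Ext}^\bullet(\mathcal{O}_p,\mathcal{O}_p) \cong H^\bullet(\mathbb{T}^n;\mathbb{C}) \cong HF^\bullet(T,T)$, forcing $p \in X^\dagger$ to correspond to a Lagrangian torus together with $n$ extra real parameters — the holonomy of $\nabla$.

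First I would establish the existence of a special Lagrangian torus fibration $\pi: X \to B$ over the regular locus $B_{reg} \subset B$. By McLean's theorem, special Lagrangian submanifolds have unobstructed deformations with tangent space canonically isomorphic to the space of harmonic $1$-forms $\mathcal{H}^1(T) \cong H^1(T;\mathbb{R}) \cong \mathbb{R}^n$, so a single smooth special Lagrangian torus sits in an $n$-parameter family, which locally sweeps out an open subset of $X$. Using the Calabi--Yau form $\varpi$, one endows $B_{reg}$ with two natural affine structures — one from the symplectic form $\kappa$ (via action coordinates) and one from $\mathfrak{Im}(\varpi)$ (via the special Lagrangian condition) — related by a Legendre transform in a potential function. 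Second, I would construct the semi-flat mirror $X^\dagger_{reg}$ by dualizing fibers fiberwise: explicitly $(\pi^\dagger)^{-1}(b) = H^1(T_b;\mathbb{R}) / H^1(T_b;\mathbb{Z})$, and the two affine structures on $B_{reg}$ swap roles under mirroring, which gives $X^\dagger_{reg}$ its complex structure $J^\dagger$ while the symplectic form $\kappa^\dagger$ comes from the Hessian of the dual potential. Verifying the Calabi--Yau decorations for $X^\dagger_{reg}$ is then a direct computation in these affine coordinates.

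The main obstacle — and the reason this is still only a conjecture in full generality — is the compactification across the discriminant locus $\Delta = B \setminus B_{reg}$ where special Lagrangian fibers degenerate. The naive semi-flat complex structure does not extend across $\Delta$, and must be corrected by \emph{instanton contributions}: counts of Maslov index $0$ pseudoholomorphic disks with boundary on the Lagrangian tori $T_b$ (exactly the kind of disk bubbling we took such pains to avoid in Section 5.6). These quantum corrections assemble into a wall-crossing structure on $B$, and the hard step is proving that they glue coherently into a global complex structure $J^\dagger$ on a genuine compact Calabi--Yau manifold. I would approach this following the Kontsevich--Soibelman and Gross--Siebert programs, reformulating everything in terms of rigid tropical/affine manifolds with singularities, and reconstructing $X^\dagger$ from a scattering diagram on $B$. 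I do not expect a self-contained proof; rather, the plan articulates where each Floer-theoretic ingredient enters, and identifies the disk-counting wall-crossing as the unavoidable hard core where full generality has so far resisted rigorous treatment.
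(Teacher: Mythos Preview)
This statement is a \emph{conjecture}, not a theorem, and the paper does not attempt to prove it. The paper presents the SYZ Conjecture purely as a formulation (following \cite{[GHJ03]} and \cite{[Aur09]}), motivated by the preceding Remark~\ref{specialrem} on skyscraper sheaves and the identification $\textup{Ext}(\mathcal{O}_p,\mathcal{O}_p)\cong H(\mathbb{T}^n;\mathbb{C})$, and follows it only with a Remark discussing its limitations: that the statement as phrased is too general, that it should really be read as an assertion about the large complex structure limit of a family $\{(X_t,X_t^\dagger)\}_t$, and that there are two main lines of attack in the literature (a symplectic-topological one and a local-geometric one). No argument, not even a sketch, is offered.

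Your proposal is a competent outline of the known programs --- McLean's deformation theory for special Lagrangians, the semi-flat mirror over $B_{reg}$ via dual affine structures and Legendre transform, and the Kontsevich--Soibelman and Gross--Siebert reconstruction via scattering diagrams across the discriminant --- and you correctly flag that a complete proof in full generality does not exist. But since the paper contains no proof whatsoever for this statement, there is nothing here to compare your sketch against: you have written a miniature survey of an open research program, whereas the paper simply records the conjecture and moves on.
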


The situation is inspirationally depicted in Figure \ref{calabiyau}. We don't indulge on the details, but limit ourselves to a few remarks:

\begin{Rem}										
	Like for Conjecture \ref{HMS}, the SYZ conjecture we gave is affected by its high level of generality. A more sound statement \textit{presupposes} the existence of a dual pair $(X,X^\dagger)$, only in dimension 3, and claims that the fibres $\pi^{-1}(b)$, $(\pi^\dagger)^{-1}(b)$ are non-singular special Lagrangian tori $T^3$ in $X$ respectively $T^{\dagger3}$ in $X^\dagger$. Topologically, this can be interpreted as $H^1(T)\cong H_1(T^\dagger)$ in singular (co)homology. Geometrically, this makes sense for tori $T=V/\Lambda$ with \textit{flat} Riemannian metrics (for $\Lambda$ a lattice in some vector space $V$), whose duals then take the form $T^\dagger=V^*/\Lambda^*$ (the dual vector space quotient the dual lattice).
	
	Latter point is the reason why Conjecture \ref{SYZ} should really be understood as a statement about the ``large complex structure limit'' $t\rightarrow 0$ of a family of dual 3-dimensional Calabi--Yau manifolds $\{(X_t,X_t^\dagger)\}_t$. In this limit, the fibers diameters are small compared to that of the base manifold $B$, and their metrics are approximately flat. There are two main approaches (discussed in \cite[Sections 12.3, 12.4]{[GHJ03]}):
	\begin{itemize}[leftmargin=0.5cm]
		\item A symplectic topological one, which treats $X$ and $X^\dagger$ as merely 6-dimensional real symplectic manifolds and $\pi$, $\pi^\dagger$ as non-special \textit{smooth} Lagrangian fibrations over a \textit{smooth} 3-dimensional real manifold $B$. The resulting duality is well understood, especially for quintic 3-folds and Calabi--Yau hypersurfaces in 4-tori.
		\item A local geometric one, stressing singular behaviours of special Lagrangian fibrations for \textit{generic} 3-dimensional (almost) Calabi--Yau manifolds. This perspective shows that $\pi$, $\pi^\dagger$ need not be smooth, but only piecewise smooth, and this determines the nature of the singular base locus $B\setminus B_{reg}$: either of codimension 1 or 2 in $B$, with respective fibres singular along an $\mathbb{S}^1$ or at finitely many points.    
	\end{itemize}     
\end{Rem}     

\begin{figure}[htp]
	\centering
	\includegraphics[width=0.95\textwidth]{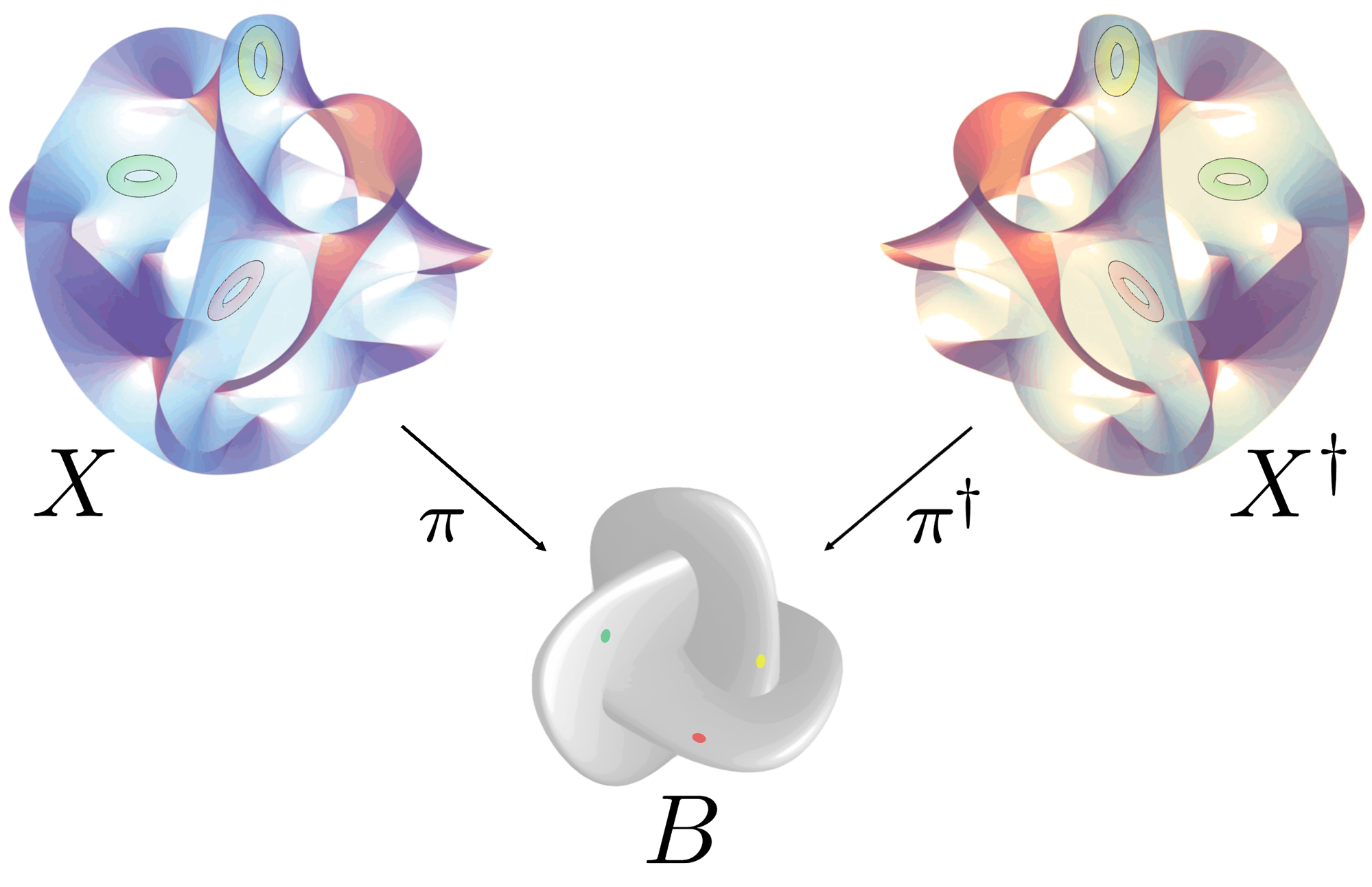}
	\caption{The SYZ conjecture relates the fibres, special Lagrangian tori, of a mirror pair $(X,X^\dagger)$ of Calabi--Yau manifolds.\newline [Sources: \begin{footnotesize}\texttt{http://hci.stanford.edu/~cagatay/projects/boys/Demiralp-2009-CLF.pdf}, \texttt{https://plus.maths.org/content/hidden-dimensions}\end{footnotesize}; special thanks to KoMa, who suggested the best way to edit this and many other figures!]}
	\label{calabiyau}	
\end{figure}

\newpage
\thispagestyle{plain}

\clearpage
\section*{Outlook}
\addcontentsline{toc}{section}{Outlook}

Before wrapping up the discussion, a little retrospection is in order. Homological mirror symmetry brought us on a journey which, starting from a good amount of ``abstract nonsense'' (read category theory and homological algebra), led us to the geometrical realm described by Floer theory, to ultimately arch back to Kontsevich's algebraic conjecture, our declared goal. This parabola required us to borrow notions from many fundamental branches of pure mathematics: symplectic and complex geometry, hints of functional analysis, some ever-present algebraic topology and, ultimately, glimpses of algebraic geometry, all firmly accompanied by categorical notation. 
Useless to say, we barely scratched what this topic has to offer. And in this seemingly never-ending potential lies perhaps its undeniable charm.  

After all, it seems only fair that a tentative physical ``theory of everything'' would gather so much mathematical richness and point out its unexpected connections, promoting the vision of a more subtly interwoven theoretical landscape and, by reflection, reality. Of course, part of what presented here is --- to the author's best guess --- already quite obsolete, and the homological mirror conjecture remains, as of now, still unproven in its most general form. But the lessons remain out there, and the symphony is one worth listening to.

\newpage
\thispagestyle{plain}

\clearpage
\section*{}
\addcontentsline{toc}{section}{References}

\newpage
\thispagestyle{plain}

\printindex

\end{document}